\setlist[1]{leftmargin=*}
\setlist[enumerate,1]{label=(\alph*)}
\setlist[enumerate,2]{label=(\roman*), ref=(\alph{enumi}.\roman*)}
\newlist{enumerate-alt}{enumerate}{1}
\setlist[enumerate-alt,1]{label=(\roman*)}
\newlist{longlist}{enumerate}{1}
\setlist[longlist]{label=\small{(\arabic*)}, itemsep=0.2em}
\newlist{shortlist}{enumerate}{1}
\setlist[shortlist]{label=\small{(\Alph*)}}
\newlist{parlist}{enumerate}{1}
\setlist[parlist]{leftmargin=0cm, itemindent=2\parindent, label=(\alph*), itemsep=0.5em}
\newlist{steps}{enumerate}{1}
\setlist[steps]{align=left, listparindent=\parindent, parsep=\parskip, leftmargin=0em, labelwidth=0pt, itemindent=1em,labelsep=.4em, topsep=.4em, itemsep=.4em}	
\setlist[steps,1]{label={\textbf{Step~\arabic*.}},  ref=\textbf{\arabic*}}
\newlist{parts}{enumerate}{3}
\setlist[parts,1]{leftmargin=0cm, itemindent=2\parindent, label=(\textbf{\arabic*}), ref=(\textbf{\arabic*}), itemsep=0.2em}
\setlist[parts,2]{label=(\alph*), ref=(\arabic{partsi}.\alph*), topsep=0.2em}
\setlist[parts,3]{label=(\roman*), ref=(\arabic{partsi}.\alph{partsii}.\roman*)}
\newcounter{foo} 
\newlist{casesp}{enumerate}{5} 
\setlist[casesp]{align=left, 
	listparindent=\parindent, 
	parsep=\parskip, 
	font=\normalfont\bfseries, 
	leftmargin=0pt, 
	labelwidth=0pt, 
	itemindent=.4em,labelsep=.4em, 
	topsep=.6em, 
	itemsep=.6em, 
}
\setlist[casesp,1]{label=Case~\arabic*:,ref=\arabic*}
\setlist[casesp,2]{label=Subcase~\thecasespi.\arabic*:,ref=\thecasespi.\arabic*}
\setlist[casesp,3]{label=Subcase~\thecasespii.\arabic*:,ref=\thecasespii.\arabic*}
\setlist[casesp,4]{label=Subcase~\thecasespiii.\arabic*:,ref=\thecasespiii.\arabic*}
\setlist[casesp,5]{label=Subcase~\thecasespiv.\arabic*:,ref=\thecasespiv.\arabic*}
\newcommand\litem[1]{\item{\bfseries #1.\enspace}}
\renewcommand\subsection{\@startsection{subsection}{3}
	\z@{.5\linespacing\@plus.7\linespacing}{.5\linespacing}
	{\bfseries\itshape}} 
\renewcommand\paragraph{\@startsection{paragraph}{4}%
	\z@{.5\linespacing\@plus.7\linespacing}{-.5\linespacing}%
	{\normalfont\bfseries}}
\renewcommand\subparagraph{\@startsection{subparagraph}{5}%
	\z@{.3\linespacing\@plus.3\linespacing}{-.5\linespacing}%
	{\normalfont\bfseries}}
\makeatletter \renewenvironment{proof}[1][\proofname]{
	\par\pushQED{\qed}\normalfont
	\topsep6\p@\@plus6\p@\relax
	\trivlist\item[\hskip\labelsep\bfseries#1\@addpunct{.}]
	\ignorespaces}{
	\popQED\endtrivlist\@endpefalse} \makeatother
\theoremstyle{plain}
\newtheorem{theorem}{Theorem}[section]
\newtheorem*{theorem*}{Theorem}
\newtheorem{theoremA}{Theorem} 
\newtheorem{propositionA}[theoremA]{Proposition} 
\theoremstyle{definition}
\newtheorem{definition}[theorem]{Definition}
\newtheorem{lemma}[theorem]{Lemma}
\newtheorem{proposition}[theorem]{Proposition}
\newtheorem{example}[theorem]{Example}
\newtheorem{notation}[theorem]{Notation}
\newtheorem{remark}[theorem]{Remark}
\newtheorem*{remark*}{Remark}
\newtheorem{claim}{Claim}
\newtheorem*{claim*}{Claim}
\let\sec\S 
\newcommand{\A}{\mathbb{A}}
\newcommand{\C}{\mathbb{C}}
\newcommand{\F}{\mathbb{F}}
\renewcommand{\P}{\mathbb{P}}
\newcommand{\Q}{\mathbb{Q}}
\newcommand{\Z}{\mathbb{Z}}
\newcommand{\cC}{\mathcal{C}}
\newcommand{\cD}{\mathcal{D}}
\newcommand{\cN}{\mathcal{N}}
\newcommand{\cO}{\mathcal{O}}
\newcommand{\cP}{\mathcal{P}}
\newcommand{\Phtl}{\cP_{\height\leq 3}}
\newcommand{\Pht}{\cP_{\height=3}}
\newcommand{\Phtres}{\tilde{\cP}_{\height=3}}
\newcommand{\cS}{\mathcal{S}}
\newcommand{\cX}{\mathcal{X}}
\newcommand{\rA}{\mathrm{A}}
\newcommand{\rD}{\mathrm{D}}
\newcommand{\rE}{\mathrm{E}}
\renewcommand{\ll}{\mathscr{l}}
\newcommand{\cc}{\mathscr{c}}
\newcommand{\qq}{\mathscr{q}}
\newcommand{\hh}{\mathscr{h}}
\newcommand{\kk}{\mathscr{k}}
\newcommand{\ww}{\mathscr{w}}
\renewcommand{\epsilon}{\varepsilon}
\renewcommand{\phi}{\varphi}
\renewcommand{\theta}{\vartheta}
\DeclareFontFamily{U}{mathx}{}
\DeclareFontShape{U}{mathx}{m}{n}{<-> mathx10}{}
\DeclareSymbolFont{mathx}{U}{mathx}{m}{n}
\DeclareMathAccent{\widehat}{0}{mathx}{"70}
\DeclareMathAccent{\widecheck}{0}{mathx}{"71}
\renewcommand{\tilde}{\widetilde}
\renewcommand{\check}{\widecheck}
\renewcommand{\hat}{\widehat}
\renewcommand{\bar}{\overline}
\newcommand{\citestacks}[1]{\cite[\href{https://stacks.math.columbia.edu/tag/#1}{Tag #1}]{stacks-project}}
\renewcommand{\leq}{\leqslant}
\renewcommand{\geq}{\geqslant}
\renewcommand{\to}{\longrightarrow}
\newcommand{\map}{\dashrightarrow}
\newcommand{\sqto}{\rightsquigarrow}
\newcommand{\cha}{\operatorname{char}} 
\newcommand{\Aut}{\operatorname{Aut}}
\newcommand{\Sing}{\operatorname{Sing}}
\newcommand{\NS}{\operatorname{NS}}
\newcommand{\Exc}{\operatorname{Exc}}
\newcommand{\Supp}{\operatorname{Supp}}
\newcommand{\redd}{_{\mathrm{red}}} 
\newcommand{\Bs}{\operatorname{Bs}} 
\newcommand{\reg}{^{\mathrm{reg}}} 
\newcommand{\trp}{^{\scriptscriptstyle{\top}}} 
\newcommand{\PGL}{\mathrm{PGL}}
\newcommand{\ld}{\operatorname{ld}} 
\newcommand{\cf}{\operatorname{cf}}
\newcommand{\lts}[2]{\mathcal{T}_{#1}(-\log #2)} 
\newcommand{\hor}{_{\mathrm{hor}}} 
\renewcommand{\vert}{_{\mathrm{vert}}} 
\newcommand{\ftip}[1]{\mathrm{tip}^{+}(#1)} 
\newcommand{\ltip}[1]{\mathrm{tip}^{-}(#1)} 
\newcommand{\cp}[1]{^{(#1)}} 
\newcommand{\height}{\operatorname{ht}} 
\newcommand{\width}{\operatorname{wd}}
\newcommand{\id}{\mathrm{id}}
\newcommand{\Sec}{\Xi}
\newcommand{\bs}[1]{\boldsymbol{#1}} 
\newcommand{\ub}[1]{\uline{\bs{#1}}} 
\newcommand{\dec}[1]{^{|#1|}}
\newcommand{\ldec}[1]{\prescript{|#1|}{}} 
\newcommand{\adec}[1]{^{\langle #1 \rangle}} 
\newcommand{\aadec}[1]{^{\lbrace #1 \rbrace}} 
\newcommand{\pr}{\operatorname{pr}}
\newcommand{\Astst}{\P^1\setminus \{0,1,\infty\}}
\newcommand{\de}{\coloneqq} 
\newcommand{\tables}{Tables \ref{table:ht=3_char=0}--\ref{table:ht=3_char=2}\xspace}
\newbox\LT@firstfoot
\def\endfirstfoot{\LT@end@hd@ft\LT@firstfoot}
\newdimen\LT@footdiff
\def\LT@start{%
	\let\LT@start\endgraf
	\endgraf\penalty\z@
	\vskip\LTpre\endgraf
	\LT@footdiff-\ht\LT@foot
	\advance\LT@footdiff\ht\LT@firstfoot
	\dimen@\pagetotal
	\advance\dimen@ \ht\ifvoid\LT@firsthead\LT@head\else\LT@firsthead\fi
	\advance\dimen@ \dp\ifvoid\LT@firsthead\LT@head\else\LT@firsthead\fi
	\advance\dimen@ \ht\ifvoid\LT@firstfoot\LT@foot\else\LT@firstfoot\fi
	\dimen@ii\vfuzz
	\vfuzz\maxdimen
	\setbox\tw@\copy\z@
	\setbox\tw@\vsplit\tw@ to \ht\@arstrutbox
	\setbox\tw@\vbox{\unvbox\tw@}%
	\vfuzz\dimen@ii
	\advance\dimen@ \ht
	\ifdim\ht\@arstrutbox>\ht\tw@\@arstrutbox\else\tw@\fi
	\advance\dimen@\dp
	\ifdim\dp\@arstrutbox>\dp\tw@\@arstrutbox\else\tw@\fi
	\advance\dimen@ -\pagegoal
	\ifdim \dimen@>\z@\vfil\break\fi
	\global\@colroom\@colht
	\ifvoid\LT@firstfoot
	\ifvoid\LT@foot
	\else
	\advance\vsize-\ht\LT@foot
	\global\advance\@colroom-\ht\LT@foot
	\dimen@\pagegoal\advance\dimen@-\ht\LT@foot\pagegoal\dimen@
	\maxdepth\z@
	\fi
	\else
	\advance\vsize-\ht\LT@firstfoot
	\global\advance\@colroom-\ht\LT@firstfoot
	\dimen@\pagegoal\advance\dimen@-\ht\LT@firstfoot\pagegoal\dimen@
	\maxdepth\z@
	\fi
	\ifvoid\LT@firsthead\copy\LT@head\else\box\LT@firsthead\fi\nobreak
	\output{\LT@output}%
}
\def\LT@output{%
	\ifnum\outputpenalty <-\@Mi
	\ifnum\outputpenalty > -\LT@end@pen
	\LT@err{floats and marginpars not allowed in a longtable}\@ehc
	\else
	\setbox\z@\vbox{\unvbox\@cclv}%
	\ifdim \ht\LT@lastfoot>\ht\LT@foot
	\dimen@\pagegoal
	\advance\dimen@-\ht\LT@lastfoot
	\ifdim\dimen@<\ht\z@
	\setbox\@cclv\vbox{\unvbox\z@\copy\LT@foot\vss}%
	\@makecol
	\@outputpage
	\setbox\z@\vbox{\box\LT@head}%
	\fi
	\fi  
	\global\@colroom\@colht
	\global\vsize\@colht   
	\vbox
	{%
		\unvbox\z@
		\box
		\ifvoid\LT@lastfoot
		\ifvoid\LT@firstfoot
		\LT@foot
		\else
		\LT@firstfoot
		\fi
		\else
		\LT@lastfoot
		\fi
	}%
	\fi
	\else
	\ifvoid\LT@firstfoot
	\setbox\@cclv\vbox{\unvbox\@cclv\copy\LT@foot\vss}%
	\@makecol
	\@outputpage
	\global\vsize\@colroom
	\else
	\setbox\@cclv\vbox{\unvbox\@cclv\box\LT@firstfoot\vss}%
	\@makecol
	\@outputpage
	\global\advance\@colroom\LT@footdiff
	\global\vsize\@colroom
	\fi
	\copy\LT@head\nobreak
	\fi
}
\tikzset{
	partial ellipse/.style args={#1:#2:#3}{
		insert path={+ (#1:#3) arc (#1:#2:#3)}
	},
	thick/.style=      {line width=0.8pt},
	add/.style args={#1 and #2}{
		to path={%
			($(\tikztostart)!-#1!(\tikztotarget)$)--($(\tikztotarget)!-#2!(\tikztostart)$)%
			\tikztonodes},add/.default={.2 and .2}}
}
\begin{document}
	\title[del Pezzo surfaces III]{Classification of del {P}ezzo surfaces of rank one \\  III. Height 3} 
	
\author{Karol Palka}
\address{Institute of Mathematics, Polish Academy of Sciences, \'{S}niadeckich 8, 00-656 Warsaw, Poland}
\email{palka@impan.pl}
	\thanks{This project was funded by the National Science Centre, Poland, grant number 2021/41/B/ST1/02062. For the purpose of Open Access, the authors have applied a CC-BY public copyright license to any Author Accepted Manuscript version arising from this submission.}

\author{Tomasz Pe{\l}ka}
\address{University of Warsaw, Faculty of Mathematics, Informatics and Mechanics, Banacha 2, 02-097 Warsaw, Poland}
\email{tpelka@mimuw.edu.pl}

\subjclass[2020]{14J10, 14D06; 14J45, 14R05}

\begin{abstract}
	This article is a part of a series aimed at classifying normal del Pezzo surfaces of Picard rank one over an algebraically closed field of arbitrary characteristic, up to an isomorphism. The key invariant guiding our classification is the \emph{height}, defined as the minimal number $\hh$ such that the minimal resolution of singularities admits a $\P^1$-fibration whose fiber meets the exceptional divisor $\hh$ times. It is expected that every singular del Pezzo surface of rank one is of height $\hh\leq 4$, with minor exceptions in characteristics $2$ and $3$.
	
	Having settled the case $\hh\leq 2$ in \cite{PaPe_ht_2}, we now give a classification in case the height equals $3$.
\end{abstract}
%

\maketitle
\setcounter{tocdepth}{1}
\tableofcontents
\section{Introduction}

A normal surface $\bar{X}$ is \emph{del Pezzo} if its anti-canonical divisor $-K_{\bar{X}}$ is ample. This article continues a series started in \cite{PaPe_ht_2}, whose goal is to give a self-contained classification of del Pezzo surfaces of Picard rank one, over an algebraically closed field $\kk$ of arbitrary characteristic. Such surfaces, especially the log terminal ones, are important building blocks in the birational classification of varieties. They have a rich, interesting geometry on their own: for instance, if $\kk=\C$ then their smooth loci are covered by images of $\A^1$ \cite{Keel-McKernan_rational_curves} and have finite fundamental groups \cite{GurZha_1}. Despite many partial results, log terminal del Pezzo surfaces of rank one are still not classified, for instance, no list of their singularity types is available yet. 

Our classification is guided by a new geometric invariant of $\bar{X}$, its \emph{height}. It is introduced in \cite[Definition 1.1]{PaPe_MT} as follows. For a smooth projective surface $X$ and a reduced simple normal crossings divisor $D$ on $X$ we define the \emph{height} of $(X,D)$ as 
\begin{equation}\label{eq:height}
		\height(X,D)\de \inf\{\hh : \mbox{ there is a } \P^{1}\mbox{-fibration of } X \mbox{ with fiber } F \mbox{ satisfying } F\cdot D=\hh\}.
\end{equation}	
For a normal surface $\bar{X}$ we put $\height(\bar{X})\de \height(X,D)$, where $X\to \bar{X}$ is the minimal resolution of singularities and $D$ is its reduced exceptional divisor. Its key expected property is that it is bounded. More precisely, it is expected 
that any singular del Pezzo surface $\bar{X}$ has height at most $4$, with some exceptions if $\cha\kk=2,3$. In \cite{PaPe_ht_2} we classified del Pezzo surfaces of height at most $2$. We also introduced and described a class of del Pezzo surfaces of rank one which have \emph{descendants with elliptic boundary}, that is, such that $X$ admits a morphism onto a normal surface $\bar{Y}$ mapping $D$ onto an irreducible curve of arithmetic genus $1$ contained in the smooth locus of~$\bar{Y}$, see Definition \ref{def:GK}.

In this article we settle the case when $\height(\bar{X})=3$ and $\bar{X}$ has no descendant with elliptic boundary. By \cite[Proposition B]{PaPe_ht_2} this condition implies that $\bar{X}$ is log terminal, with some exceptions if $\cha\kk=2$. 
Our classification is summarized in Theorem \ref{thm:ht=3}. 
Within the proof we establish the following results:
\begin{itemize}
	\item In Proposition \ref{prop:ht=3_models} we construct a birational morphism $\psi\colon X\to Z$, where $Z$, a minimal model of $X$, is isomorphic to $\P^1\times \P^1$ or $\P^2$, and $\psi_{*}D$ is a simple configuration of curves of low degree (drawn in the second-to-last column of Table \ref{table:intro}).
	\item In Proposition \ref{prop:ht=3_swaps} we factor $\psi$ as $\begin{tikzcd}
		[cramped,column sep=scriptsize] X \ar[r,"\phi_0"] & X_1 \ar[r,"\phi_1"] & \dots \ar[r,"\phi_{n-1}"] & X_n \ar[r] & Z
	\end{tikzcd}$, where each $\phi_j$ is a blowup, each $X_j$ is a minimal resolution of a del Pezzo surface of rank one, and $X_n$, a vertically primitive model of $X$ with a fixed $\P^1$-fibration of height $3$ with respect to $D$, is particularly simple (shown in the last column of Table \ref{table:intro}). 
\end{itemize}

We note that in case $\cha\kk\neq 2,3$, Lacini \cite{Lacini} obtained an independent rough classification of all log terminal del Pezzo surfaces. Extending the method \cite{Keel-McKernan_rational_curves} he arranged such surfaces in 24 non-disjoint series. Nonetheless, the question of uniqueness of surfaces of a given singularity type has not been addressed, and an explicit list of such types has not been given.  We compare this result with our Theorem~\ref{thm:ht=3} in Section~\ref{sec:comparison}. 

Numerous particular classes of del Pezzo surfaces are well described in the literature, for instance, those of index $\leq 3$ are classified in \cite{Alexxev-Nikulin_delPezzo-index-2,Fujita-Yasutake_delPezzo-index-3}. Other partial classification results such as \cite{Zhang_dP3,Kojima_Sing-1,Belousov_4-sings} are surveyed in \cite[\sec 7]{PaPe_ht_2}.

\subsection{Main result} 

We now state the main result of this article. Since non--log terminal del Pezzo surfaces of rank one are described in \cite[Proposition B]{PaPe_ht_2}, see Remark below, we restrict our attention to the log terminal ones.

A \emph{singularity type} of such a surface is the weighted graph of the exceptional divisor of its minimal resolution. We denote it using conventions summarized in Section \ref{sec:notation}; in particular, $[a_1,\dots,a_n]$ stands for a rational chain whose subsequent components have self-intersection numbers $-a_1,\dots, -a_n$. We also use standard notation $\rA_{k},\rD_k,\rE_k$ for the canonical singularity types. Given a singularity type $\cS$, we denote by $\Phtl(\cS)$ the set of isomorphism classes of normal del Pezzo surfaces of rank one, height at most $3$ and singularity type $\cS$. 

\begin{theoremA}[Del Pezzo surfaces of height $3$]\label{thm:ht=3}
	Let $\bar{X}$ be a log terminal del Pezzo surface of rank~$1$ and height~$3$, with no descendant with elliptic boundary. Let $\cS$ be the singularity type of $\bar{X}$. Then the following hold.
	\begin{enumerate}
		\item\label{item:thm-lt-types} The type $\cS$ is listed in Lemmas \ref{lem:w=3}, \ref{lem:w=2_cha_neq_2}, \ref{lem:w=2_cha=2}, \ref{lem:w=1_cha-neq-3} or  \ref{lem:w=1_cha=3}, see \tables. 
		\item\label{item:thm-lc-uniqueness} We either have $\#\Phtl(\cS)=1$, 
		or one of the following exceptional cases holds.
		\begin{enumerate}
			\item \label{item:uniq_exotic} 
			$\cS=[2,2,3,(2)_{5}]+[3,2]$. Then $\#\Phtl(\cS)=2$ and $\bar{X}$ is as in 
			Example \ref{ex:ht=3_pair},
			\item \label{item:uniq_cha=2} 
			$\cha\kk=2$, $\cS$ is 
			listed in Table \ref{table:ht=3_char=2_moduli}, and $\Phtl(\cS)$ has moduli dimension $1$.
		\end{enumerate}
	\end{enumerate}
\end{theoremA}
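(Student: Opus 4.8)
The plan is to prove both parts at once by reducing $(X,D)$ --- the minimal resolution of $\bar X$ together with its reduced exceptional divisor --- to an explicit model on $Z\in\{\P^1\times\P^1,\P^2\}$, and then reading off the type from that model, respectively reconstructing the surface from it. I would begin by fixing a $\P^1$-fibration $\pi\colon X\to B\cong\P^1$ realizing the height, so that a general fiber $F$ satisfies $F\cdot D=3$. Decomposing $D=D\hor+D\vert$ into horizontal and vertical parts, the horizontal part satisfies $F\cdot D\hor=3$, and the number of its components is the width $w\in\{1,2,3\}$, which organizes the whole argument. Running the relative MMP --- contracting the superfluous vertical $(-1)$-curves and the remaining redundant fiber components --- produces the birational morphism $\psi\colon X\to Z$ of Proposition \ref{prop:ht=3_models}, with $\psi_*D$ a low-degree configuration on $Z$.

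For part \eqref{item:thm-lt-types} I would enumerate the admissible weighted dual graphs of $D$, stratified by width and characteristic; this is exactly the split among Lemmas \ref{lem:w=3}, \ref{lem:w=2_cha_neq_2}, \ref{lem:w=2_cha=2}, \ref{lem:w=1_cha-neq-3} and \ref{lem:w=1_cha=3}. The del Pezzo condition --- ampleness of $-K_{\bar X}$, equivalently the negativity constraints it imposes on the intersection form of $D$ and on the fibers of $\pi$ --- bounds the length and shape of each vertical fiber and of the horizontal chains, leaving finitely many combinatorial possibilities to check against $Z$. The characteristic enters precisely through $D\hor$: when $w\leq 2$ a horizontal component is a multisection of $\pi$, and its ramification behaviour (inseparability of a $2$-section in $\cha\kk=2$, of a $3$-section in $\cha\kk=3$) forces the separate statements for $w=2,\ \cha\kk=2$ and for $w=1,\ \cha\kk=3$. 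Assembling the lemma-by-lemma lists gives \tables and proves \eqref{item:thm-lt-types}.

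For part \eqref{item:thm-lc-uniqueness} I fix a type $\cS$ and count the surfaces realizing it. Using Proposition \ref{prop:ht=3_swaps} I factor $\psi$ as a chain of blowups $X\to X_1\to\dots\to X_n\to Z$ in which every intermediate $X_j$ is again the minimal resolution of a rank-one del Pezzo surface and $X_n$ is vertically primitive; this rigidity means that at each stage the centre of the next blowup is constrained to lie on prescribed components of the current boundary. The isomorphism class of $\bar X$ is thus determined by $(Z,\psi_*D)$ together with the positions of these centres, taken modulo the automorphisms of $Z$ preserving the configuration. In almost all cases $\Aut(Z,\psi_*D)$ acts transitively on the admissible choices, forcing $\#\Phtl(\cS)=1$. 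The exceptions are exactly where a genuine parameter survives: in case \ref{item:uniq_exotic} a discrete choice yields the two surfaces of Example \ref{ex:ht=3_pair}, while in case \ref{item:uniq_cha=2} a continuous cross-ratio--type parameter in $\cha\kk=2$ cannot be absorbed by any automorphism, producing the $1$-dimensional moduli of Table \ref{table:ht=3_char=2_moduli}.

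The main obstacle is twofold. First, making the enumeration in \eqref{item:thm-lt-types} genuinely exhaustive: one must verify that no admissible fiber or boundary configuration is overlooked, that each listed graph is actually realized by a del Pezzo surface (checking ampleness of $-K_{\bar X}$ on every candidate), and that these types do not already occur at height $\leq 2$, so that the count over $\Phtl(\cS)$ is the correct one. Second, and harder, is the rigidity analysis in \eqref{item:thm-lc-uniqueness}: proving uniqueness modulo $\Aut(Z,\psi_*D)$ requires a careful comparison of the blow-up centres across the factorization, and isolating precisely the characteristic-$2$ configurations for which a modulus persists --- as opposed to those in which an apparent parameter is killed by an automorphism --- is the delicate heart of the argument.
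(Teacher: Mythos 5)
Your proposal follows essentially the same route as the paper: stratify by width, construct the minimalization $\psi\colon X\to Z$ (Proposition \ref{prop:ht=3_models}), factor it through a vertically primitive model (Proposition \ref{prop:ht=3_swaps}), and reconstruct $(X,D)$ by blowups whose centres are forced onto prescribed boundary components by log terminality and the ampleness inequality, with the exotic pair and the characteristic-$2$ families arising exactly as you describe. The only sketch-level omissions are the verification that the two surfaces of Example \ref{ex:ht=3_pair} are genuinely non-isomorphic (the paper's Proposition \ref{prop:exception}) and the semiuniversal-family computation behind \enquote{moduli dimension $1$}, both of which the paper supplies but neither of which changes the strategy.
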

\noindent
For the definition of \emph{moduli dimension} see \cite[Definition 1.10]{PaPe_ht_2}, we recall it in Section \ref{sec:moduli}. 
\begin{remark}[{cf.\ \cite[Remark 1.14]{PaPe_ht_2}}]
In a forthcoming article we will show that for each singularity type $\cS$ as in Theorem \ref{thm:ht=3}, the set $\Phtl(\cS)$ consists of isomorphism classes of \emph{all} del Pezzo surfaces of rank one and singularity type $\cS$, i.e.\ $\Phtl(\cS)=\cP(\cS)$. 
\end{remark}

\subsection{Choosing witnessing $\P^1$-fibrations and their primitive models}\label{sec:intro-structure}

In the lemmas cited in Theorem \ref{thm:ht=3} we describe the singularity types $\cS$  together with the structure of a specific  $\P^1$-fibration $p\colon X\to \P^1$ of the minimal log resolution $(X,D)$ of $\bar{X}$ which realizes the infimum in formula \eqref{eq:height}, i.e.\ whose fiber $F$ satisfies $F\cdot D=3$. We call such $p$ a \emph{witnessing} $\P^1$-fibration. 

Blowing down $(-1)$-curves in degenerate fibers of $p$ and in their images we obtain a morphism onto some Hirzebruch surface $\F_{m}$; in case $m=1$ we contract the negative section, too. A description of this \emph{minimalization} morphism $\psi\colon X\to Z$ provides a recipe to reconstruct $(X,D)$, hence the original surface $\bar{X}\in \Phtl(\cS)$. 

As a first step towards proving Theorem \ref{thm:ht=3}, we show how to choose a witnessing $\P^1$-fibration $p\colon X\to \P^1$ and the minimalization $\psi\colon X\to Z$ in such a way that $Z$ is isomorphic to $\P^1\times \P^1$ or $\P^2$ and $\psi_{*}D$ is one of the simple configurations shown in Table \ref{table:intro} below. This is the content of Proposition \ref{prop:ht=3_models} below.

It is convenient to choose $p$ in such a way that $D$ has a maximal number of horizontal components among all witnessing $\P^1$-fibrations: we call this number the \emph{width} of $\bar{X}$ and denote by $\width(\bar{X})$, see \cite[Definition~2.2]{PaPe_ht_2}. Clearly, $1\leq \width(\bar{X})\leq \height(\bar{X})$. This way, we avoid as much as possible the situation when $D$ contains a multi-section. If $\width(\bar{X})\leq 2$, i.e.\ if $D$ necessarily contains a multi-section $H$, the classification splits into cases depending on whether the finite morphism $p|_{H}\colon H\to \P^1$ is separable or not, the latter case leading to many additional examples in characteristics $2$ and $3$, cf.\ Table \ref{table:intro}.

\begin{propositionA}[Choosing minimal models of $X$, see Table \ref{table:intro}]\label{prop:ht=3_models}
	Let $\bar{X}$ be a log terminal del Pezzo surface of rank one and height $3$, and let $(X,D)$ be its minimal log resolution. Then there is a birational morphism $\psi\colon X\to Z$ such that one of the following holds.
	\begin{enumerate}
		\item  \label{item:w=3_models} $\width(\bar{X})=3$, $Z = \P^1\times \P^1$, and $\psi_{*}D$ is a sum of three vertical and three horizontal lines, see Figure \ref{fig:w=3}.
		\item \label{item:w=2_models} $\width(\bar{X})=2$, $Z=\P^2$, and $\psi_{*}D$ is a sum of a conic $\cc$ and lines $\ll_1,\dots,\ll_{\nu}$ meeting at a common point off $\cc$, such that one of the following holds:
		\begin{enumerate}
			\item \label{item:w=2_cha-not-2} $\cha\kk\neq 2$, $\nu=3$, the lines $\ll_1,\ll_2$ are tangent to $\cc$ and $\ll_3$ is not, see Figure \ref{fig:w=2_cha_neq_2},
			\item \label{item:w=2_cha=2} $\cha\kk=2$, $\nu\in \{3,4\}$ and all lines $\ll_1,\dots,\ll_{\nu}$ are tangent to $\cc$, see Figure \ref{fig:w=2_cha=2}.
		\end{enumerate}
		\item\label{item:w=1_models} $\width(\bar{X})=1$, $Z=\P^2$, and $\psi_{*}D=\qq+\ll_1+\ll_2+\ll_{3}$, where $\qq$ is a cuspidal  cubic and $\ll_{1},\ll_2,\ll_3$ are lines meeting at a common point away from $\qq$, such that $\ll_1$ is tangent to the cusp of $\qq$, and $\ll_2,\ll_3$ are tangent to $\qq\reg$ with multiplicity $2$ if $\cha\kk\neq 3$ and with multiplicity $3$ if $\cha\kk=3$, see Figure \ref{fig:w=1}.
	\end{enumerate}
\end{propositionA}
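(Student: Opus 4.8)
The plan is to fix one witnessing fibration realizing the width and to analyze its minimalization case by case. First I would choose a witnessing $\P^1$-fibration $p\colon X\to\P^1$ for which $D$ has the maximal number $\width(\bar X)$ of horizontal components, and decompose $D=D\hor+D\vert$ into its horizontal and vertical parts. Since a general fibre $F$ satisfies $F\cdot D=3$ and $F\cdot D\vert=0$, the horizontal components carry total intersection $F\cdot D\hor=3$, and their distribution is governed by the width: three sections when $\width(\bar X)=3$, a section $S$ together with a bisection $B$ (the only partition $1+2$) when $\width(\bar X)=2$, and a single trisection $B$ when $\width(\bar X)=1$.

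Next I would construct the minimalization. Contracting $(-1)$-curves contained in fibres of $p$ and their successive images produces a morphism onto a Hirzebruch surface $\F_m$; when $m=1$ I would further contract the negative section, and let $\psi\colon X\to Z$ denote the resulting map. Under $\psi$ the horizontal components descend to sections or multisections, whereas the vertical components are either contracted or descend to fibres of the ruling. To pin down $Z$ and the number of surviving fibres I would invoke the rank-one hypothesis, which fixes the number of boundary components as $\rho(X)-1$, together with the del Pezzo positivity of $-K_{\bar X}$; the bookkeeping of blowups inside the degenerate fibres then controls how many fibre-lines remain.

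I would then treat the three cases separately. For $\width(\bar X)=3$ the three sections descend to pairwise disjoint irreducible sections; since $\F_m$ with $m\geq 1$ admits no three pairwise disjoint irreducible sections (any two disjoint sections force the third to meet one of them), we must have $Z=\F_0=\P^1\times\P^1$, the three sections becoming one ruling and three surviving fibres the other. For $\width(\bar X)=2$ the section $S$ becomes the negative section of $\F_1$ and contracts to the base point $P$, while the bisection $B$ descends to a conic $\cc$ disjoint from $P$ and the surviving fibres descend to the lines $\ll_i$ through $P$. Here a line $\ll_i$ is tangent to $\cc$ exactly when the corresponding fibre is a ramification point of the degree-$2$ cover $p|_B\colon B\to\P^1$: in characteristic $\neq 2$ this cover is separable with precisely two branch points, giving two tangent lines and one transversal line, whereas in characteristic $2$ it may be inseparable, forcing every line to be tangent and permitting $\nu\in\{3,4\}$. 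For $\width(\bar X)=1$ the trisection $B$ descends to a plane cubic $\qq$; since $B$ is rational, $\qq$ is nodal or cuspidal, and the degenerate-fibre configuration forces the cuspidal case, with the cuspidal tangent descending to $\ll_1$. The tangencies of $\ll_2,\ll_3$ to $\qq\reg$ reflect the ramification of the degree-$3$ cover $p|_B\colon B\to\P^1$, of order $2$ when $\cha\kk\neq 3$ and of order $3$ (flex tangency) in the inseparable case $\cha\kk=3$.

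The main obstacle I anticipate lies in the precise control of the degenerate fibres of $p$: I must show that after minimalization exactly the claimed number of fibre-lines survives, with no superfluous components, and that each multisection descends to an irreducible curve of the stated type (conic, respectively cuspidal cubic). This demands a careful analysis of the trees of rational curves forming the degenerate fibres, coupled with the constraints imposed by rank one and by $-K_{\bar X}$ being ample, and it is most delicate in characteristics $2$ and $3$, where the inseparability of $p|_B$ simultaneously governs the tangency orders and the number of tangent lines.
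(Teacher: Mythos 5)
Your overall strategy --- fix a witnessing fibration of maximal width, minimalize onto a Hirzebruch surface, and split into cases by width, with Hurwitz/separability controlling the tangencies --- is the same as the paper's. But there is a genuine gap at the step you treat as automatic: the assertion that in the width-$3$ case the three horizontal components of $D$ descend to \emph{pairwise disjoint} sections, and in the width-$2$ case that the $1$-section and the $2$-section are disjoint so that the former becomes the negative section of $\F_1$. For a fixed witnessing fibration this is simply not true: $D$ is a disjoint union of admissible chains and forks, and nothing prevents several horizontal components from lying in the same chain --- e.g.\ $D\hor$ can be a chain $H_1+H_2+H_3$ with $H_1\cdot H_2=H_2\cdot H_3=1$, a case the paper must explicitly confront. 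Since contracting curves only increases intersection numbers, no minimalization can ever separate intersecting components, so your argument \enquote{three disjoint sections force $\F_0$} never gets off the ground (the $\F_m$, $m\geq 1$, computation itself is fine, but its hypothesis is unjustified). Establishing this disjointness is precisely the hard content of Lemmas \ref{lem:H_disjoint} and \ref{lem:w=2_disjoint}: one must repeatedly \emph{replace} the fibration (by the second ruling of $\P^1\times \P^1$, by pencils of lines through a base point of $\psi^{-1}$, by pencils of conics with prescribed tangencies) and eliminate the intersecting configurations using admissibility of chains and forks (Lemma \ref{lem:admissible_forks}) together with the ampleness criterion (Lemma \ref{lem:delPezzo_criterion}). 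Nothing in your proposal performs, or even flags, this step.

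A second, smaller issue: you do flag the control of degenerate fibers as your \enquote{main obstacle}, and it is indeed nontrivial, but it is not \enquote{bookkeeping}: the equalities $\nu=3$ in widths $1$ and $3$ (Lemmas \ref{lem:nu_1=1} and \ref{lem:w=1_psi}), the fact that every surviving fiber image actually lies in $\psi_{*}D$ (the equality $\nu=\eta$ in Lemma \ref{lem:w=2_psi}), and the bound $\nu\in\{3,4\}$ with all lines tangent in characteristic $2$ all rest on the log-discrepancy inequality from Lemma \ref{lem:delPezzo_criterion} and on fork admissibility, and in places again require changing the fibration. Your Hurwitz picture for the tangencies (separable versus inseparable $p|_{H}$) and the mechanism producing a cuspidal rational cubic are consistent with the paper --- in the paper the cusp arises automatically from the elementary transformation at the point of triple tangency, rather than by excluding the nodal case --- but what is missing throughout is the engine that rules out the bad configurations and licenses the changes of fibration.
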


To state the next result, we recall the definition of an \emph{elementary vertical swap} from  \cite[Definition 1.3]{PaPe_ht_2}. 
Let $X$ be a smooth projective surface and let $D$ be a reduced snc divisor on $X$. Fix a $\P^1$-fibration $p$ of $X$. Assume that there is a vertical $(-1)$-curve $L\not\subseteq D$ such that $L\cdot D\leq 2$ and $L$ meets exactly one $(-2)$-curve $C\subseteq D$. Assume that $C$ is vertical, too (this is a typical situation, cf.\  Lemma \ref{lem:fibrations-Sigma-chi}). Let $\phi\colon X\to X'$ be the contraction of $L$. Then $\phi(C)$ is a new vertical $(-1)$-curve; let $D'=\phi_{*}(D-C)$. The operation $(X,D)\sqto (X',D')$ is called an \emph{elementary vertical swap}. A \emph{vertical swap} is a composition of elementary vertical swaps with respect to the induced $\P^1$-fibrations. We say that $(X,D,p)$ is \emph{vertically primitive} if it admits no vertical swaps.

We say that $(X,D)$ \emph{swaps vertically} to $(Y,D_Y)$ if there is a vertical swap $(X,D)\sqto (Y,D_Y)$. If $(Y,D_Y)$ with the induced $\P^1$-fibration is vertically primitive, we call it a \emph{vertically primitive model} of $(X,D,p)$, and we often omit $p$ in the notation if it is clear from the context. If both $(X,D)$ and $(Y,D_Y)$ are minimal log resolutions of normal surfaces $\bar{X}$ and $\bar{Y}$, we abuse the terminology by saying that $\bar{X}$ vertically swaps to $\bar{Y}$, and call $\bar{Y}$ vertically primitive if $(Y,D_Y)$ with the induced $\P^1$-fibration is vertically primitive.

If $\bar{X}$ is a log terminal del Pezzo surface of rank 1, and $(X,D)\sqto (X_1,D_1)\sqto \dots \sqto (X_n,D_n)=(Y,D_Y)$ is a sequence of elementary vertical swaps then by Lemma \ref{lem:cascades}\ref{item:cascades-still-dP} each $(X_i,D_i)$ is the minimal log resolution of a log terminal del Pezzo surface of rank 1, too, so we get a \enquote{cascade} of del Pezzo surfaces $\bar{X}\sqto\bar{X}_1\sqto\dots\sqto \bar{Y}$. 
\smallskip

If $\bar{X}$ is a log terminal del Pezzo surface of rank one and height at most $2$ then \cite[Theorem A(1)]{PaPe_ht_2} shows that $\bar{X}$ vertically swaps to a canonical surface, with some exceptions in case $\cha\kk=2$. An analogous statement in case $\height(\bar{X})=3$ is our next intermediate result, Proposition \ref{prop:ht=3_swaps}. It implies that $\bar{X}$ vertically swaps to a surface $\bar{Y}$ which is canonical or has a descendant with elliptic boundary, with some exceptions if $\cha\kk=3$.

\begin{propositionA}[Choosing vertically primitive models]\label{prop:ht=3_swaps}
	Let $\bar{X}$ be a log terminal  del Pezzo surface of rank~$1$ and height~$3$. Then for some witnessing $\P^1$-fibration of its minimal log resolution, the surface $\bar{X}$ swaps vertically to one of the following vertically primitive surfaces $\bar{Y}$, see Table \ref{table:intro}. We write $\cS(\bar{Y})$ for the singularity type of $\bar{Y}$. 
	\begin{enumerate}
		\item\label{item:swap-to-can} The surface $\bar{Y}$ is canonical, and one of the following holds:
		\begin{enumerate}
			\item\label{item:ht=w=3} $\width(\bar{X})=3$, $\cS(\bar{Y})$ equals $\rA_{1}+\rA_{2}+\rA_{5}$ or $2\rA_{4}$ and $\bar{Y}$ is as in Example \ref{ex:w=3},
			\item\label{item:ht=3,w=2,cha-neq-2_can} $\cha\kk\neq 2$, $\width(\bar{X})=2$, $\cS(\bar{Y})$ equals $\rA_{1}+\rA_{2}+\rA_{5}$ or $2\rA_{1}+2\rA_{3}$, and $\bar{Y}$ is as  in Example \ref{ex:w=2_cha_neq_2}\ref{item:w=2_A1+A2+A5}, \ref{item:w=2_2A1+2A3}.
		\end{enumerate}
		\item\label{item:swap_to_GK} The surface $\bar{Y}$ has a descendant of singularity type $\cD$ with elliptic boundary, and one of the following holds:
		\begin{enumerate}
			\item\label{item:ht=3,w=2,cha-neq-2_GK} $\cha\kk\neq 2$, $\width(\bar{X})=2$, $\cD=\rA_1+\rA_7$, $\cS(\bar{Y})=\cD+[3]$ and $\bar{Y}$ is as  in Example  \ref{ex:w=2_cha_neq_2}\ref{item:ht=3_exception},
			\item\label{item:swap_ht=3,w=1_cha-neq-3} $\cha\kk\neq 2,3$, $\width(\bar{X})=1$, $\cD=\rA_{1}+\rA_{2}+\rA_{5}$, $\cS(\bar{Y})=\cD+[3]$ and $\bar{Y}$ is as  in Example \ref{ex:w=1}\ref{item:w=1_cha-neq-3},
			\item\label{item:swap_ht=3,w=1_cha=3_GK} $\cha\kk=3$, $\width(\bar{X})=1$, $\cD=3\rA_2$, $\cS(\bar{Y})=\cD+\rA_1+2\cdot [3]$ and $\bar{Y}$ is as  in Example \ref{ex:w=1}\ref{item:w=1_cha=3_GK},
			\item\label{item:swap_cha=2_GK} $\cha\kk=2$, $\width(\bar{X})=2$, and either $\cD=3\rA_1+\rD_4$ and  $\cS(\bar{Y})$ equals $\cD+[3,2]$ or $\cD+\rA_{1}+[3]+[4]$; or $\cD=2\rA_{1}+\rA_3$, $\cS(\bar{Y})=\cD+2\rA_{1}+[3]$. In either case, $\bar{Y}$ is as in Example \ref{ex:w=2_cha=2}.
		\end{enumerate}
		\item\label{item:swap_cha=3} $\cha\kk=3$, $\width(\bar{X})=1$, $\cS(\bar{Y})=4\cdot [3]+3\rA_{2}$ and $\bar{Y}$ is as in Example \ref{ex:w=1}\ref{item:w=1_cha=3_not-GK}.
	\end{enumerate}
\end{propositionA}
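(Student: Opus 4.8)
The plan is to feed the explicit minimal models of Proposition \ref{prop:ht=3_models} into a cascade of elementary vertical swaps, stopping at the first vertically primitive stage, and to identify the resulting surface $\bar{Y}$ in each case. Recall from the introduction that running $\psi\colon X\to Z$ backwards factors it into blowups $\phi_j$; those blowups whose contracted $(-1)$-curve $L\not\subseteq D$ meets exactly one boundary $(-2)$-curve $C$ (with $L\cdot D\leq 2$) are exactly the elementary vertical swaps, so the first task is to make the degenerate fibers of the witnessing fibration $p$ fully explicit. For $Z=\P^1\times\P^1$ the fibration is one ruling; for $Z=\P^2$ it is the pencil of lines through the common point $P$ of $\ll_1,\dots,\ll_\nu$, where $P$ is the image of the negative section of the intermediate $\F_1$ and the lines $\ll_i$ are the images of the special fibers. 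Reconstructing $X$ over $Z$, each tangency of $\cc$ or $\qq$ to a line $\ll_i$ and each incidence of $\ll_i$ with a singular or cuspidal point of the multisection prescribes a chain of infinitely near points, whose exceptional curves, together with the vertical $(-2)$-curves of $D$, assemble the degenerate fibers.

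With the fibers written out, I would repeatedly contract a vertical $(-1)$-curve $L\not\subseteq D$ meeting a single boundary $(-2)$-curve $C$, promoting $C$ to the new $(-1)$-curve and deleting it from the boundary. Each elementary swap strictly lowers the number of components of $D$, so the process terminates; by Lemma \ref{lem:cascades} every intermediate pair is again the minimal log resolution of a log terminal del Pezzo surface of rank one, keeping the whole cascade $\bar{X}\sqto\cdots\sqto\bar{Y}$ inside our class. The terminal pair is vertically primitive by construction, and its dual graph is read off directly from the bookkeeping of the contractions.

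In the width $3$ case, and in the $\cha\kk\neq 2$, $\width(\bar{X})=2$ case, peeling off all swappable $(-2)$-curves leaves $D$ on $\bar{Y}$ a disjoint union of $\rA$-chains with no $(-1)$-curve adjacent to a single $(-2)$-curve; computing the graph identifies $\cS(\bar{Y})$ as $\rA_1+\rA_2+\rA_5$, $2\rA_4$ or $2\rA_1+2\rA_3$ and matches it with Examples \ref{ex:w=3} and \ref{ex:w=2_cha_neq_2}, giving the canonical alternative \ref{item:swap-to-can}. In the remaining cases the primitive boundary is not canonical, and I would instead exhibit the descendant: the morphism contracting every boundary component except the multisection maps it onto a single irreducible curve which, because the contraction glues two points of the conic into a node (width $2$) or because the cuspidal cubic already has arithmetic genus $1$ (width $1$), has arithmetic genus $1$ and lies in the smooth locus of the image surface. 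This is a descendant with elliptic boundary of Du Val type $\cD$ in the sense of Definition \ref{def:GK}, and the residual vertical curves $[3]$, $[3,2]$, $\rA_1+[3]$, $\dots$ record the difference between $\cS(\bar{Y})$ and $\cD$; the precise list of \ref{item:swap_to_GK} is dictated by $\cha\kk$ together with the tangency multiplicities ($2$ versus $3$) and the number of tangent lines ($3$ versus $4$) prescribed in Proposition \ref{prop:ht=3_models}. The one genuinely anomalous outcome is $\cha\kk=3$, $\width(\bar{X})=1$, where the trisection $p|_{\qq}$ is inseparable: one sub-case still yields an elliptic descendant of type $3\rA_2$, while the other produces no elliptic boundary and leaves the exceptional primitive type $4\cdot[3]+3\rA_2$ of \ref{item:swap_cha=3}.

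The main obstacle I anticipate lies in the inseparable characteristic $2$ and $3$ situations, where the multisection is tangent to every fiber it meets: the blowups resolving $D$, and hence the resulting $(-2)$-chains and the available swaps, differ substantially from the generic picture, so one must verify case by case both that the swaps terminate in the claimed model and that the arithmetic-genus and dual-graph computation correctly separates the canonical outcome from the elliptic-boundary outcome (and from the stray $\cha\kk=3$ exception). The most error-prone point is confirming vertical primitivity of the terminal model, namely that truly no vertical $(-1)$-curve still meets a single boundary $(-2)$-curve, which should be checked directly against the explicit fibers recorded in the referenced Examples.
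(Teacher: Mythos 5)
Your overall architecture matches the paper's outline (choose a witnessing fibration, factor the minimalization of Proposition \ref{prop:ht=3_models}, swap down), but there is a genuine gap at the crucial step. A greedy cascade of elementary vertical swaps with respect to the fibration handed to you by Proposition \ref{prop:ht=3_models} certainly terminates (each swap drops $\#D$ by one, and Lemma \ref{lem:cascades} keeps you in the class of log terminal del Pezzo surfaces of rank one), but it terminates at \emph{some} vertically primitive triple --- nothing in your argument shows that this triple is one of the finitely many models listed in the proposition. That identification is exactly where the content lies, and it cannot be done by ``reading off the dual graph'' from the fibers of the given $p$: the quantifier in the statement is ``for \emph{some} witnessing $\P^1$-fibration'', and the paper's proofs of Lemmas \ref{lem:w=3_swaps}, \ref{lem:w=2_swaps} and \ref{lem:w=1_psi}\ref{item:w=1_swaps} repeatedly have to \emph{replace} the fibration before the factorization $\psi=\phi\circ\phi_+$ through the models of Examples \ref{ex:w=3}--\ref{ex:w=1} exists: switching the two projections of $\P^1\times\P^1$ (Remark \ref{rem:tilde_p}, used to kill the case $\sigma(F)=3$ in Lemma \ref{lem:w=3_swaps}\ref{item:w=3_nu}), passing to pencils of conics tangent to $\cc$ at two of the $p_j$ (both cases of the proof of Lemma \ref{lem:w=2_swaps}), and normalizing the base points by the Fano-plane symmetries in characteristic $2$ (Remark \ref{rem:7A1}, Lemma \ref{lem:further-swaps}). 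Pinning down which points of $Z$ are base points of $\psi^{-1}$ in the first place is also not bookkeeping: it uses the height-$3$ hypothesis (no pullback pencil may have height $\leq 2$), the absence of circular subdivisors of $D$, and admissibility of forks (Lemma \ref{lem:admissible_forks}) --- see, e.g., the proof that $v_{21}\in\Bs\psi^{-1}$ in case \ref{lem:w=3_swaps}\ref{item:swap-to_A1+A2+A5_rivet}. Without these adjustments your cascade can stall at a primitive model that is not on the list (this is visible in the proof of Lemma \ref{lem:w=2_swaps}, Case $k_1=1$, where the original fibration must be abandoned).

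A secondary but real error: your mechanism for producing the descendants with elliptic boundary --- ``contract every boundary component except the multisection'' --- is not how these arise and does not in general give an irreducible arithmetic-genus-$1$ curve in the smooth locus. The paper instead exhibits an \emph{elliptic tie} (Definition \ref{def:tie}, Lemma \ref{lem:tie}): a specific $(-1)$-curve \emph{not} contained in $D$, e.g.\ the conic $\cc'$ with $(\cc'\cdot\ll_1)_{p_1}=2$, $(\cc'\cdot\cc)_{p_3}=3$ in Example \ref{ex:w=2_cha_neq_2}\ref{item:ht=3_exception}, the $(-1)$-curve over $p_1$ in Example \ref{ex:w=2_cha=2}, and, in Example \ref{ex:w=1}\ref{item:w=1_cha-neq-3}, a singular cubic whose existence requires a separate construction valid only for $\cha\kk\neq 2,3$ (Proposition \ref{prop:primitive}\ref{item:primitive-deb}). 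So both the case split \ref{item:swap-to-can} versus \ref{item:swap_to_GK} and the exceptional case \ref{item:swap_cha=3} need the tie constructions, not a genus computation on the image of the multisection.
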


Once Proposition \ref{prop:ht=3_swaps} is proved, to get Theorem \ref{thm:ht=3} we reconstruct the cascade over each $\bar{Y}$. Note that by Proposition \ref{prop:primitive}\ref{item:primitive-uniqueness-Y} each surface $\bar{Y}$ above is uniquely determined, up to an isomorphism, by its singularity type $\cS(\bar{Y})$, except for those constructed in Examples \ref{ex:w=2_cha=2}\ref{item:swap-to-nu=4_small} and  \ref{ex:w=2_cha=2}\ref{item:swap-to-nu=4_large}, which form families of moduli dimension $1$.  To reconstruct the cascade from $\bar{Y}$ we blow up points on vertical $(-1)$-curves as long as the resulting anti-canonical divisor is ample. The latter condition turns out to be so restrictive that (in most cases) we need to stop before we can perform an outer blowup, hence each blown up point is uniquely determined by the singularity type of the surface we are reconstructing, see \cite[Observation 2.10]{PaPe_ht_2}. This is the reason behind the uniqueness part of Theorem \ref{thm:ht=3}. The same principle allows us to infer the following Proposition \ref{prop:rigidity}, which describes the cohomology of the logarithmic tangent sheaf $\lts{X}{D}$. For basic properties of $\lts{X}{D}$, in particular for its relation with infinitesimal deformations of $(X,D)$, we refer to \cite{FZ-deformations,Kawamata_deformations}.

\begin{propositionA}[Rigidity and unobstructedness, cf.\ {\cite[Proposition D]{PaPe_ht_2}}]\label{prop:rigidity}
	Let $\bar{X}$ be a log terminal del Pezzo surface of rank one and height $3$ which does not have a descendant with elliptic boundary. Let $(X,D)$ be its minimal log resolution and let $h^{i}\de h^{i}(\lts{X}{D})$. Then the following hold. 
	\begin{enumerate}
		\item\label{item:rigidity-h0} We have $h^0=0$. 
		\item\label{item:rigidity-h1} We have $h^1=0$, unless $\cha\kk=2$ and $\bar{X}$ is as in Theorem \ref{thm:ht=3}\ref{item:uniq_cha=2}, in which case $h^1=1$.
		\item\label{item:rigidity-h2} We have $h^2=0$, 
		unless $\cha\kk\in \{2,3\}$ and $\bar{X}$ vertically swaps to a vertically primitive surface $\bar{Y}$ from Examples \ref{ex:w=2_cha=2}\ref{item:swap-to-nu=4_small}, \ref{ex:w=2_cha=2}\ref{item:swap-to-nu=3} or Examples \ref{ex:w=2_cha=2}\ref{item:swap-to-nu=4_large},  \ref{ex:w=1}\ref{item:w=1_cha=3_not-GK}, in which case $h^2$ equals $1$ or $2$, respectively. 
	\end{enumerate}
\end{propositionA}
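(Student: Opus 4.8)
The plan is to determine $h^0$ and $h^2$ directly and to recover $h^1$ from the Euler characteristic via the relation $h^1=h^2-\chi(\lts{X}{D})$ (which uses $h^0=0$). Since $X$ is a smooth rational surface, $\chi(\mathcal{O}_X)=1$, and Riemann--Roch for the rank-two bundle $\lts{X}{D}$ reads
\[
\chi(\lts{X}{D})=2+\tfrac{1}{2}(K_X+D)\cdot(2K_X+D)-\etop(X\setminus D),
\]
where $\etop(X\setminus D)=c_2(\Omega^1_X(\log D))$. Both terms on the right are determined by the singularity type $\cS$ together with the dual graph of $D$, so computing $\chi$ is a finite bookkeeping task once $\cS$ is fixed through Theorem \ref{thm:ht=3}\ref{item:thm-lt-types}.

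\textbf{Vanishing of $h^0$.} For part \ref{item:rigidity-h0} I would show that $(X,D)$ carries no nonzero logarithmic vector field. A global section of $\lts{X}{D}$ lies in $H^0(\mathcal{T}_X)$, and since $X\to Z$ is a composition of blowups it is induced by a vector field on $Z\in\{\P^2,\P^1\times\P^1\}$ tangent to $\psi_{*}D$ and vanishing at the points contracted by the minimalization $\psi$ of Proposition \ref{prop:ht=3_models}. Consequently $H^0(\lts{X}{D})$ injects into $H^0(\lts{Z}{\psi_{*}D})$, and a short, characteristic-independent computation shows the latter already vanishes: in the split case (Figure \ref{fig:w=3}) a field tangent to three vertical and three horizontal lines must vanish in each $\P^1$-factor, since a vector field on $\P^1$ vanishing at three points is zero; in the remaining cases the concurrent tangent lines together with the conic or cuspidal cubic (Figures \ref{fig:w=2_cha_neq_2}, \ref{fig:w=2_cha=2}, \ref{fig:w=1}) force vanishing as well. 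Hence $h^0=0$.

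\textbf{Computation of $h^2$.} For part \ref{item:rigidity-h2} I would pass to the dual picture by Serre duality,
\[
h^2(\lts{X}{D})=h^0\bigl(\Omega^1_X(\log D)\otimes\omega_X\bigr),
\]
so that a nonzero class is exactly a nonzero inclusion $\mathcal{O}_X(-K_X)\hookrightarrow\Omega^1_X(\log D)$. As $\bar X$ is del Pezzo, $-K_X$ is nef and big, hence such a line subsheaf would have Kodaira--Iitaka dimension $2$, which the logarithmic Bogomolov--Sommese vanishing theorem forbids whenever it is available; this gives $h^2=0$ in characteristic $0$, and more generally outside small characteristics. The exceptional values $h^2\in\{1,2\}$ can therefore only arise where this vanishing fails. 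To locate and evaluate them I would use the blowup behaviour of $\lts{}{}$ (as in the proof of \cite[Proposition D]{PaPe_ht_2}): at a node of the boundary $\lts{}{}$ pulls back, so Leray propagates the groups $h^i$ unchanged along the reconstruction cascade, reducing the question to the finitely many vertically primitive models of Proposition \ref{prop:ht=3_swaps}. On the exceptional models in Examples \ref{ex:w=2_cha=2} and \ref{ex:w=1} one then exhibits the extra global logarithmic forms explicitly --- they originate from the additional tangencies forced in characteristics $2$ and $3$ --- and counts them to obtain $h^2=1$ or $2$, while checking in all other cases that the vanishing persists.

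\textbf{Computation of $h^1$ and the main obstacle.} Part \ref{item:rigidity-h1} then follows from $h^1=h^2-\chi$: substituting the computed values of $\chi$ and $h^2$ yields $h^1=0$ except for the characteristic-$2$ family of Theorem \ref{thm:ht=3}\ref{item:uniq_cha=2}, where the one-parameter deformation of the primitive model forces $h^1=1$, in agreement with its moduli dimension $1$. The main obstacle I anticipate is precisely the small-characteristic exceptional cases: establishing that logarithmic Bogomolov--Sommese vanishing genuinely fails there, and computing the \emph{exact} dimension of $H^0(\Omega^1_X(\log D)\otimes\omega_X)$ on the primitive models rather than merely bounding it, requires a delicate analysis of the special logarithmic forms created by the extra tangencies. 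It is here that one must keep track of the gap between the moduli dimension ($h^1$) and the obstruction space ($h^2$), i.e.\ of the genuinely obstructed deformations appearing only in characteristics $2$ and $3$.
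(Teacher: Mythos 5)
Your overall architecture --- propagate $h^i$ along the reconstruction cascade using the blowup behaviour of $\lts{}{}$ and then compute on the vertically primitive models --- is in fact the paper's skeleton: its proof is a citation of Lemmas \ref{lem:w=3_uniqueness}\ref{item:ht=3_h1}, \ref{lem:w=2_uniqueness}\ref{item:w=2_uniq-hi}, \ref{lem:w=1_uniqueness}\ref{item:w=1_hi}, which reduce everything (via inner blowups, plus the single outer blowup responsible for $h^1=1$ in the characteristic-$2$ moduli case) to Proposition \ref{prop:primitive}\ref{item:primitive-hi}. The problem is that both of your base-case mechanisms fail precisely in the characteristics where the statement has content.

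First, the $h^0$ argument. Your claim that $H^0(\lts{Z}{\psi_{*}D})=0$ by a ``characteristic-independent computation'' is false in characteristics $2$ and $3$, because the relevant configurations involve \emph{strange} curves. In $\cha\kk=2$ every tangent line of the conic $\cc$ passes through one point, namely $p_0$ (this is exactly why the tangent lines $\ll_1,\dots,\ll_{\nu}$ in Example \ref{ex:w=2_cha=2} are automatically concurrent); hence the radial vector field centered at $p_0$ is tangent to $\cc$ and to every line through $p_0$, i.e.\ it is a nonzero section of $\lts{\P^2}{\psi_{*}D}$. The same happens in $\cha\kk=3$ for the width-one configuration of Example \ref{ex:w=1}: the cuspidal cubic $\qq$ is strange with strange point $p_0$, so again the radial field at $p_0$ lies in $H^0(\lts{\P^2}{\psi_{*}D})$. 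Your injection $H^0(\lts{X}{D})\hookrightarrow H^0(\lts{Z}{\psi_{*}D})$ therefore yields nothing in these cases; one must instead show that this radial field does not lift to $X$, which is what the paper's route (inner-blowup invariance together with the computations quoted from \cite{PaPe_ht_2}) actually accomplishes.

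Second, the $h^2$ argument. Bogomolov--Sommese vanishing is a characteristic-zero theorem; it is unavailable in characteristics $2$ and $3$ --- indeed Remark \ref{rem:Bernasconi} recalls that the model of Example \ref{ex:w=1}\ref{item:w=1_cha=3_not-GK} is Bernasconi's counterexample to Kawamata--Viehweg vanishing in characteristic $3$ --- yet the proposition asserts $h^2=0$ for many characteristic-$2$ and characteristic-$3$ surfaces as well (all width-$3$ surfaces, and the characteristic-$3$ surfaces of width $2$). Your proposal defers exactly these cases to an unspecified ``explicit analysis'', which is where the actual content lies; and even for $\cha\kk=p\geq 5$ one cannot simply quote Bogomolov--Sommese, since positive-characteristic substitutes carry hypotheses (e.g.\ liftability) that would have to be verified. (A smaller slip: $-K_X$ is big but in general \emph{not} nef on the minimal resolution --- a $(-3)$-curve $C$ in $D$ has $-K_X\cdot C=-1$ --- though bigness alone suffices for your intended contradiction.) The paper never invokes a vanishing theorem: it computes $h^i$ on the primitive models by citing \cite{PaPe_ht_2} (Proposition D and Theorem E) for those of height $\leq 2$ or with elliptic-boundary descendants, and treats the one genuinely new model, Bernasconi's surface, via the exact sequence $0\to\lts{Y}{D_Y}\to \lts{Y}{(D_Y-G)}\to \cN_{G}\to 0$, which relates it to the model of Example \ref{ex:w=1}\ref{item:w=1_cha=3_GK}. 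Your Riemann--Roch computation of $h^1$ is fine in principle, but it inherits the gap since it presupposes the exact value of $h^2$.
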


We remark that in case $\height(\bar{X})\leq 2$ the ampleness of $-K_{\bar{X}}$ follows simply from the log terminality, see \cite[Lemma 2.6]{PaPe_ht_2} or Lemma \ref{lem:delPezzo_criterion}, so the structure of del Pezzo surfaces of rank one and height $\leq 2$ is much less rigid. Indeed, even if $\cha\kk=0$ there exist positive-dimensional families of such surfaces which are pairwise non-isomorphic but have the same singularity type, see \cite[Corollary 1.6(a)]{PaPe_ht_2}.

\begin{remark}[Non--log terminal del Pezzo surfaces of rank $1$ and height $3$]
	Assume that $\bar{X}$ is a non--log terminal del Pezzo surface of rank one and height~$3$. Then by \cite[Proposition B]{PaPe_ht_2}, we have $\cha\kk\in \{2,3,5\}$. Assume furthermore that $\bar{X}$ has no descendant with elliptic boundary. Then $\cha\kk=2$ and $\bar{X}$ swaps vertically to the surface from Example \ref{ex:w=2_cha=2}\ref{item:swap-to-nu=3}, hence satisfies the conclusion of Propositions \ref{prop:ht=3_models}\ref{item:w=2_cha=2} and \ref{prop:ht=3_swaps}\ref{item:swap_cha=2_GK}. Moreover, if $\bar{X}$ is log canonical then its isomorphism class is uniquely determined by its singularity type, 
	listed in  Table~\ref{table:ht=3_non-lt}.
\end{remark}

\subsection{Outline of the proof}

In Section \ref{sec:basic} we construct vertically primitive models of $\bar{X}$ from Proposition \ref{prop:ht=3_swaps}. 
In Sections \ref{sec:w=3}, \ref{sec:w=2} and \ref{sec:w=1} we prove Theorem \ref{thm:ht=3} in cases $\width(\bar{X})=3,2,1$ respectively. In each case the proof is organized as follows.
\begin{steps}
	\item\label{step:choose} We choose a witnessing $\P^1$-fibration $p\colon X\to \P^1$ in such a way that contracting vertical $(-1)$-curves on $X$ and its images we get a minimalization morphism $\psi\colon X\to \P^2$ or $X\to \P^1\times \P^1$ as in Proposition \ref{prop:ht=3_models}.
	\item\label{step:adjust} We adjust the chosen witnessing $\P^1$-fibration $p$ so that the minimalization $\psi$ factors through a minimal resolution of a vertically primitive surface $\bar{Y}$ as in Proposition \ref{prop:ht=3_swaps}.
	\item\label{step:reverse} We reconstruct $(X,D)$ by reversing vertical swaps. 
	Here we need to control all possible \enquote{cascades} over $\bar{Y}$, ensuring that $(X,D)$ is the minimal log resolution of a log terminal surface $\bar{X}$ \emph{with ample $-K_{\bar{X}}$}.
\end{steps} 	

The proofs of Propositions \ref{prop:ht=3_models}, \ref{prop:ht=3_swaps} are summarized \hyperref[proofs]{at the end of Section \ref{sec:w=1_swaps}}; those of Theorem \ref{thm:ht=3} and of Proposition \ref{prop:rigidity} are summarized  \hyperref[final]{at the end of Section \ref{sec:w=1_cha=3}.} 
Eventually, in Section \ref{sec:comparison} we compare our classification with the one given by Lacini \cite{Lacini} in case $\cha\kk\not\in \{2,3\}$, see Tables \ref{table:Lacini} and \ref{table:ht=3_char=0}.

\begin{table}[htbp]
{\renewcommand{\arraystretch}{1.2}	
\begin{tabular}{r|c|p{2.1cm}|c|c}
	\multirow{2}{*}{$\width(\bar{X})$} & \multirow{2}{*}{$\cha\kk$} &  del Pezzo $\bar{X}$ & minimal model & vertically primitive model \\ 
	&& (Theorem \ref{thm:ht=3}) & (Proposition \ref{prop:ht=3_models}) & (Proposition \ref{prop:ht=3_swaps})  \\
	\hline\hline 
	3 & any &  Lemma~\ref{lem:w=3},\newline Table~\ref{table:ht=3_char=0}\newline (top part)  &  
	\raisebox{-.6\height}{	
		\begin{tikzpicture}[scale=0.6]
			\draw (0,2.5) -- (3,2.5);
			\draw (0,1.5) -- (3,1.5);
			\draw (0,0.5) -- (3,0.5);
			\draw (0.5,3) -- (0.5,0);
			\draw (1.5,3) -- (1.5,0);
			\draw (2.5,3) -- (2.5,0);
			\node at (1.5,-0.5) {\small{$\P^1\times \P^1$}};
		\end{tikzpicture}
	}
	&
	\raisebox{-.6\height}{	
		\begin{tikzpicture}[scale=0.6]
		\path[use as bounding box] (-0.5,-1.4) rectangle (4,3.5);	
			\draw (-0.2,3) -- (2.6,3);
			\draw (0.2,3.2) -- (0,2);
			\draw[dashed] (0,2.2) -- (0.2,1);
			\draw (0.2,1.2) -- (0,0);
			\draw[dashed] (1.2,3.2) -- (1,2);
			\draw (1,2.2) -- (1.2,1);
			\draw[dashed] (1.2,1.2) -- (1,0);
			\draw[dashed] (2.4,3.2) -- (2.2,2.2);
			\draw (2.2,2.4) -- (2.4,1.4);
			\draw (2.4,1.6) -- (2.2,0.6);
			\draw[dashed] (2.2,0.8) -- (2.4,-0.2);
			\draw (-0.2,0.7) -- (0.2,0.7) to[out=0,in=180] (1,2.4) -- (1.2,2.4) to[out=0,in=180] (2.4,0.2) -- (2.6,0.2);
			\draw (-0.2,0.2) -- (1.2,0.2) to[out=0,in=-120] (1.75,0.75);   
			\draw (1.95,1.15) to[out=60,in=180] (2.4,1.8) -- (2.6,1.8);
			\node at (1.2,-0.4) {\small{$\rA_1+\rA_2+\rA_5$}};
			\node at (1.2,-1) {\small{Ex.\ \ref{ex:w=3}\ref{item:ht=3_A1+A2+A5}}};
	\end{tikzpicture} 
	\begin{tikzpicture}[scale=0.6]
	\path[use as bounding box] (-0.5,-1.4) rectangle (3.2,3.5);
	\draw (-0.2,3) -- (2.6,3);
	\draw (0.2,3.2) -- (0,2);
	\draw[dashed] (0,2.2) -- (0.2,1);
	\draw (0.2,1.2) -- (0,0);
	\draw[dashed] (1.2,3.2) -- (1,2);
	\draw (1,2.2) -- (1.2,1);
	\draw[dashed] (1.2,1.2) -- (1,0);
	\draw (2.2,3.2) -- (2,2.2);
	\draw (2,2.4) -- (2.2,1.4);
	\draw[dashed] (2.2,1.6) -- (2,0.6);
	\draw[dashed] (2,2.8) to[out=0,in=80] (2.4,0);
	\draw (-0.2,0.7) -- (0.2,0.7) to[out=0,in=180] (1,1.4) -- (1.2,1.4) to[out=0,in=180] (2,0.8)-- (2.2,0.8);
	\draw (-0.2,0.2) -- (2.6,0.2);
	\node at (1.2,-0.4) {\small{$2\rA_4$}};
	\node at (1.2,-1) {\small{Ex.\ \ref{ex:w=3}\ref{item:ht=3_2A4}}};
\end{tikzpicture} }
	\\ \hline 2 & $\neq 2$  & Lemma~\ref{lem:w=2_cha_neq_2},\newline Table \ref{table:ht=3_char=0}\newline (middle part) & 
	\raisebox{-.6\height}{	
		\begin{tikzpicture}[scale=0.6]
			\draw (0,0) circle (1);
			\draw[add= 0.1 and 1] (0,2) to (-0.866,0.5);
			\draw[add= 0.1 and 1] (0,2) to (0.866,0.5);
			\draw (0,2.2) -- (0,-1.2);
			\node at (0,-1.5) {\small{$\P^2$}};
		\end{tikzpicture}
	}
	&
	\raisebox{-.6\height}{	
		\begin{tikzpicture}[scale=0.6]
			\path[use as bounding box] (-0.7,-1.4) rectangle (4,3.5);
			\draw (0,3) -- (3.2,3);
			\draw[dashed] (0.2,3.2) -- (0,2.2);
			\draw (0,2.4) -- (0.2,1.4);
			\node at (-0.4,1.9) {\small{$-3$}};
			\draw[dashed] (0.2,1.6) -- (0,0.6); 
			\draw (0,0.8) -- (0.2,-0.2);
			\draw (1.4,3.2) -- (1.2,2);
			\draw[dashed] (1.2,2.2) -- (1.4,1);
			\draw (1.4,1.2) -- (1.2,0); 
			\draw (3,3.2) -- (2.8,2);
			\draw (2.8,2.2) -- (3,1);
			\draw[dashed] (3,1.5) -- (1.8,1.7);
			\draw (3,1.2) -- (2.8,0); 
			\draw (-0.2,1.15) to[out=0,in=180] (0.1,1.1) to[out=0,in=180] (1.2,2.8) -- (1.4,2.8) to[out=0,in=170] (2.4,1.6) to[out=-10,in=180] (2.6,1.65);
			\draw (-0.2,1.05) to[out=0,in=180] (0.1,1.1) to[out=0,in=180] (1.2,0.4) -- (1.4,0.4) to[out=0,in=170] (2.4,1.6) to[out=-10,in=180] (2.6,1.45);
			\node at (1.4,-0.4) {\small{$\rA_1+\rA_7+[3]$}};
			\node at (1.4,-1) {\small{Ex.\ \ref{ex:w=2_cha_neq_2}\ref{item:ht=3_exception}}};
		\end{tikzpicture}
	\begin{tikzpicture}[scale=0.6]
		\path[use as bounding box] (-0.6,-1.4) rectangle (4,3.5);
		\draw (-0.4,3) -- (2.8,3);
		\draw (-0.2,3.2) -- (-0.4,2);
		\draw (-0.4,2.2) -- (-0.2,1);			
		\draw[dashed] (-0.4,1.5) -- (0.8,1.7);
		\draw (-0.2,1.2) -- (-0.4,0);
		\draw[dashed] (1.4,3.2) -- (1.2,2);
		\draw (1.2,2.2) -- (1.4,1);
		
		\draw[dashed] (1.4,1.2) -- (1.2,0); 
		\draw (2.6,3.2) -- (2.4,2);
		\draw[dashed] (2.4,2.2) -- (2.6,1);
		\draw (2.6,1.2) -- (2.4,0);
		\draw (0.2,1.7) to[out=-10,in=-170] (0.5,1.65) to[out=10,in=180] (1,1.8) -- (1.8,1.8)  to[out=0,in=180] (2.5,1.55) to[out=0,in=180] (2.8,1.6);
		\draw (0.2,1.5) to[out=20,in=-170] (0.5,1.65) to[out=10,in=180] (1.2,0.4) -- (1.4,0.4) to[out=0,in=190] (2.5,1.55) to[out=0,in=180] (2.8,1.5);
		\node at (1.2,-0.4) {\small{$\rA_1+\rA_2+\rA_5$}};
		\node at (1.2,-1) {\small{Ex.\ \ref{ex:w=2_cha_neq_2}\ref{item:w=2_A1+A2+A5}}};
	\end{tikzpicture} 
	\begin{tikzpicture}[scale=0.6]
		\path[use as bounding box] (-0.6,-1.4) rectangle (3,3.5);
		\draw (-0.4,3) -- (2.8,3);
		\draw (-0.2,3.2) -- (-0.4,2);
		\draw (-0.4,2.2) -- (-0.2,1);			
		\draw[dashed] (-0.4,1.5) -- (0.8,1.7);
		\draw (-0.2,1.2) -- (-0.4,0);
		\draw[dashed] (1.4,3.2) -- (1.2,2);
		\draw (1.2,2.2) -- (1.4,1);
		\draw[dashed] (1.4,1.2) -- (1.2,0); 
		\draw (2.6,3.2) -- (2.4,2);
		\draw[dashed] (2.4,2.2) -- (2.6,1);
		\draw (2.6,1.2) -- (2.4,0);
		\draw (0.2,1.7) to[out=-10,in=-170] (0.5,1.65) to[out=10,in=180] (1,1.8) -- (1.8,1.8)  to[out=0,in=180] (2.5,1.55) to[out=0,in=180] (2.8,1.6);
		\draw (0.2,1.5) to[out=20,in=-170] (0.5,1.65) to[out=10,in=180] (1.2,0.4) -- (1.4,0.4) to[out=0,in=190] (2.5,1.55) to[out=0,in=180] (2.8,1.5);
		\node at (1.1,-0.4) {\small{$2\rA_1+2\rA_3$}};
		\node at (1.1,-1) {\small{Ex.\ \ref{ex:w=2_cha_neq_2}\ref{item:w=2_2A1+2A3}}};
	\end{tikzpicture}
	}
   \\  \cline{2-5}  & $=2$ & Lemma~\ref{lem:w=2_cha=2},\newline Tables \ref{table:ht=3_char=2_moduli}, \ref{table:ht=3_char=2}   & 
	\raisebox{-.6\height}{	
		\begin{tikzpicture}[scale=0.6]
			\draw (0,0) circle (1);
			\draw[add= 0.1 and 0.8] (0,2) to (-0.866,0.5);
			\draw[add=0.1 and -0.65] (0,2) to (-0.5,-0.6);
			\draw[add=-0.45 and 0] (0,2) to (-0.5,-0.6);
			\draw (-0.5,-0.6) to[out=-107,in=156] (-0.4,-0.92) to[out=-24,in=-107] (-0.15,-0.6);
			\draw[add=0.1 and -0.65] (0,2) to (0.5,-0.6);
			\draw[add=-0.45 and 0] (0,2) to (0.5,-0.6);
			\draw (0.5,-0.6) to[out=-73,in=24] (0.4,-0.92) to[out=196,in=-73] (0.15,-0.6);
			\draw[add= 0.1 and 0.8] (0,2) to (0.866,0.5);
			\node at (0,-1.4) {\small{$\P^2$}};
		\end{tikzpicture}
	}
	&
	\raisebox{-.6\height}{	
		\begin{tikzpicture}[scale=0.6]
			\path[use as bounding box] (-0.3,-1.4) rectangle (5,3.5);
			\draw (0,3) -- (4,3);
			\draw[dashed] (0.2,3.2) -- (0,2);
			\draw (0,2.2) -- (0.2,1);			
			\draw[dashed] (0.2,1.2) -- (0,0);
			\draw (1.4,3.2) -- (1.2,2);
			\draw[dashed] (1.2,2.2) -- (1.4,1);
			\draw (1.4,1.2) -- (1.2,0); 
			\draw (2.6,3.2) -- (2.4,2);
			\draw[dashed] (2.4,2.2) -- (2.6,1);
			\draw (2.6,1.2) -- (2.4,0); 
			\draw (3.8,3.2) -- (3.6,2);
			\draw[dashed] (3.6,2.2) -- (3.8,1);
			\draw (3.8,1.2) -- (3.6,0); 
			\draw[thick] (0,1.1) -- (0.4,1.1) to[out=0,in=180] (1,1.6) -- (3.8,1.6);
			\node at (3.1,1.9) {\small{$-3$}};
			\node at (1.8,-0.4) {\small{$3\rA_1+\rD_4+[3,2]$}};
			\node at (1.8,-1) {\small{Ex.\ \ref{ex:w=2_cha=2}\ref{item:swap-to-nu=4_small}}};
		\end{tikzpicture}
	\begin{tikzpicture}[scale=0.6]
		\path[use as bounding box] (-0.7,-1.4) rectangle (4.2,3.5);
		\draw (0,3) -- (4,3);
		\draw[dashed] (0.2,3.2) -- (0,2.2);
		\draw (0,2.4) -- (0.2,1.4);
		\node at (-0.35,2) {\small{$-3$}};
		\draw[dashed] (0.2,1.6) -- (0,0.6); 
		\draw (0,0.8) -- (0.2,-0.2);
		\draw (1.4,3.2) -- (1.2,2);
		\draw[dashed] (1.2,2.2) -- (1.4,1);				
		\draw (1.4,1.2) -- (1.2,0); 
		\draw (2.6,3.2) -- (2.4,2);
		\draw[dashed] (2.4,2.2) -- (2.6,1);				
		\draw (2.6,1.2) -- (2.4,0); 
		\draw (3.8,3.2) -- (3.6,2);
		\draw[dashed] (3.6,2.2) -- (3.8,1);	
		\draw (3.8,1.2) -- (3.6,0); 
		\draw[thick] (0,1.1) -- (0.4,1.1) to[out=0,in=180] (1,1.6) -- (3.8,1.6);
		\node at (3.1,1.9) {\small{$-4$}};
		\node at (1.8,-0.4) {\small{$4\rA_1+\rD_4+[4]+[3]$}};
		\node at (1.8,-1) {\small{Ex.\ \ref{ex:w=2_cha=2}\ref{item:swap-to-nu=4_large}}};
	\end{tikzpicture}
	}
	\\ \cline{4-5}  &&&
	\raisebox{-.6\height}{	
		\begin{tikzpicture}[scale=0.6]
			\draw (0,0) circle (1);
			\draw[add= 0.1 and 0.8] (0,2) to (-0.866,0.5);
			\draw[add=0.1 and -0.65] (0,2) to (-0.5,-0.6);
			\draw[add=-0.45 and 0] (0,2) to (-0.5,-0.6);
			\draw (-0.5,-0.6) to[out=-107,in=156] (-0.4,-0.92) to[out=-24,in=-107] (-0.15,-0.6);
			\draw[add= 0.1 and 0.8] (0,2) to (0.866,0.5);
			\node at (0,-1.4) {\small{$\P^2$}};
		\end{tikzpicture}
	}
	&
	\raisebox{-.6\height}{	
		\begin{tikzpicture}[scale=0.6]
			\path[use as bounding box] (-0.7,-1) rectangle (3.2,3.5);
			\draw (0,3) -- (2.8,3);
			\draw[dashed] (0.2,3.2) -- (0,2.2);
			\draw (0,2.4) -- (0.2,1.4);
			\node at (-0.35,1.95) {\small{$-3$}};
			\draw[dashed] (0.2,1.6) -- (0,0.6); 
			\draw (0,0.8) -- (0.2,-0.2);
			\draw (1.4,3.2) -- (1.2,2);
			\draw[dashed] (1.2,2.2) -- (1.4,1);			
			\draw (1.4,1.2) -- (1.2,0); 
			\draw (2.6,3.2) -- (2.4,2);
			\draw[dashed] (2.4,2.2) -- (2.6,1);
			\draw (2.6,1.2) -- (2.4,0); 
			\draw[thick] (0,1.1) -- (0.4,1.1) to[out=0,in=180] (1,1.6) -- (2.6,1.6);
			\node at (1.3,-0.5) {\small{$4\rA_1+\rA_3+[3]$, Ex.\ \ref{ex:w=2_cha=2}\ref{item:swap-to-nu=3}}};
		\end{tikzpicture}
	}
	  \\ \hline 1 & $\neq 2,3$ & Lemma~\ref{lem:w=1_cha-neq-3},\newline Table \ref{table:ht=3_char=0}\newline (bottom row)  & 
	\raisebox{-.6\height}{	
		\begin{tikzpicture}[scale=0.6]
			\draw[add=0.1 and 0.8] (0,2) to (0,0.5);
			\draw[add= 0.1 and 0.6] (0,2) to (-0.866,0.5);
			\draw[add= 0.1 and 0.6] (0,2) to (0.866,0.5);
			\draw (-0.866,2) to[out=-60,in=60] (-0.3,1) to[out=-120,in=60] (-0.866,0.5) to[out=-120,in=90] (0,-0.5) to[out=90,in=-60] (0.866,0.5) to[out=120,in=-60] (0.3,1) to[out=120,in=-120]  (0.866,2);
			\node at (0,-1) {\small{$\P^2$}};
		\end{tikzpicture}
	}
	&\raisebox{-.6\height}{	
		\begin{tikzpicture}[scale=0.6]
			\path[use as bounding box] (-0.4,-1) rectangle (4.3,3.5);
			\draw (0.2,3.2) -- (0,2);
			\draw (0,2.2) -- (0.2,1);
			\draw[dashed] (0,1.6) -- (1.2,1.8);
			\draw (0.2,1.2) -- (0,0);
			\draw (2.4,3.2) -- (2.2,2.2);
			\draw (2.2,2.4) -- (2.4,1.4);
			\draw[dashed] (2.4,1.6) -- (2.2,0.6);
			\draw (2.2,0.8) -- (2.4,-0.2); 
			\node at (1.9,0.2) {\small{$-3$}};
			\draw (4,3.2) -- (3.8,2);
			\draw[dashed] (3.8,2.2) -- (4,1);
			\draw (4,1.2) -- (3.8,0); 
			\draw (0.3,1.75) to[out=0,in=-170] (0.6,1.7) to[out=10,in=180] (1,2) -- (1.2,2) to[out=0,in=180] (2.3,1.1)  to[out=0,in=180] (3.2,1.7) -- (3.5,1.7) to[out=0,in=180] (3.9,1.6) to[out=0,in=180] (4.1,1.65);
			\draw (0.3,1.55) to[out=0,in=-160] (0.6,1.7) to[out=20,in=180] (1.4,0.9) -- (2,0.9) to[out=0,in=180] (2.3,1.1) to[out=0,in=180] (2.8,0.9) -- (3,0.9) to[out=0,in=180] (3.9,1.6) to[out=0,in=180] (4.1,1.55);
			\draw (0,3) -- (1,3) to[out=0,in=90] (2.1,1.6) to[out=-90,in=180] (2.3,1.1) to[out=0,in=-90] (2.6,1.6) to[out=90,in=180] (3.9,3) -- (4.1,3);
			\node at (2,-0.6) {\small{$\rA_1+\rA_2+\rA_5+[3]$, Ex.\ \ref{ex:w=1}\ref{item:w=1_cha-neq-3}}};
		\end{tikzpicture}
	}
	\\ \cline{2-5} & $=3$ & Lemma \ref{lem:w=1_cha=3},\newline Table \ref{table:ht=3_char=3} &
\raisebox{-.6\height}{	
	\begin{tikzpicture}[scale=0.6]
		\draw[add=0.1 and 0.8] (0,2) to (0,0.5);
		\draw[add= 0.1 and 0.6] (0,2) to (-0.866,0.5);
		\draw[add= 0.1 and 0.6] (0,2) to (0.866,0.5);
		\draw (-0.866,2) to[out=-60,in=60] (-0.866,0.5) to[out=-120,in=90] (0,-0.5) to[out=90,in=-60] (0.866,0.5) to[out=120,in=-120]  (0.866,2);
		\node at (0,-1) {\small{$\P^2$}};
	\end{tikzpicture}
}
&
\raisebox{-.6\height}{	
	\begin{tikzpicture}[scale=0.6]
		\path[use as bounding box] (-0.9,-1.4) rectangle (4.5,3.5);	
		\draw (0.2,3.2) -- (0,2.2);
		\draw[dashed] (0,2.4) -- (0.2,1.4);
		\draw (0.2,1.6) -- (0,0.6);
		\draw (0,0.8) -- (0.2,-0.2); 
		\node at (-0.3,2.7) {\small{$-3$}};
		\draw (1.4,3.2) -- (1.2,2.2);
		\draw[dashed] (1.2,2.4) -- (1.4,1.4);
		\draw (1.4,1.6) -- (1.2,0.6);
		\draw (1.2,0.8) -- (1.4,-0.2); 
		\node at (0.9,2.7) {\small{$-3$}};
		\draw (2.6,3.2) -- (2.3,1.8);
		\draw[dashed] (2.3,2) -- (2.6,0.8);
		\draw (2.6,1) -- (2.3,-0.2); 
		\draw[thick] (-0.2,1.9) -- (2.8,1.9);
		\node at (1.3,-0.6) {\small{$3\rA_2+\rA_1+2\cdot [3]$}};
		\node at (1.3,-1.2) {\small{Ex.\ \ref{ex:w=1}\ref{item:w=1_cha=3_GK}}};
	\end{tikzpicture}
	\begin{tikzpicture}[scale=0.6]
		\path[use as bounding box] (-0.7,-1.4) rectangle (3,3.5);
		\draw (0.2,3.2) -- (0,2.2);
		\draw[dashed] (0,2.4) -- (0.2,1.4);
		\draw (0.2,1.6) -- (0,0.6);
		\draw (0,0.8) -- (0.2,-0.2); 
		\node at (-0.3,2.7) {\small{$-3$}};
		\draw (1.4,3.2) -- (1.2,2.2);
		\draw[dashed] (1.2,2.4) -- (1.4,1.4);
		\draw (1.4,1.6) -- (1.2,0.6);
		\draw (1.2,0.8) -- (1.4,-0.2); 
		\node at (0.9,2.7) {\small{$-3$}};
		\draw (2.6,3.2) -- (2.4,2.2);
		\draw[dashed] (2.4,2.4) -- (2.6,1.4);
		\draw (2.6,1.6) -- (2.4,0.6);
		\draw (2.4,0.8) -- (2.6,-0.2); 
		\node at (2.1,2.7) {\small{$-3$}};
		\draw[thick] (-0.2,1.9) -- (3.6,1.9);
		\node at (3.2,2.1) {\small{$-3$}};
		\node at (1.5,-0.6) {\small{$4\cdot [3]+3\rA_2$}};
		\node at (1.5,-1.2) {\small{Ex.\ \ref{ex:w=1}\ref{item:w=1_cha=3_not-GK}}};
	\end{tikzpicture}
}
\end{tabular}
}
	\caption{The structure of the chosen $\P^1$-fibrations.}
	\label{table:intro}
\end{table}

\clearpage
\section{Preliminaries}

We work over an algebraically closed field $\kk$ of arbitrary characteristic. In this section, we briefly recall the notation used in the previous part \cite{PaPe_ht_2}, which mostly follows \cite{Fujita-noncomplete_surfaces} and \cite{Palka_almost_MMP}.

\subsection{Divisors on surfaces}\label{sec:log_surfaces}

Curves and surfaces are understood to be irreducible and reduced. Given a birational morphism $\psi\colon X\to Y$ between normal surfaces we write $\Exc \psi$ for its reduced exceptional divisor, and denote  by $\Bs\psi^{-1}$ the base locus of the inverse map $\psi^{-1}\colon Y\map X$. We often abuse the notation and write infinitely near points as elements of $\Bs\psi^{-1}$: for instance, if $\psi=\psi_{1}\circ\psi_{2}$, where $\psi_{1}$ is a blowup at $y\in Y$ and $\psi_{2}$ is a blowup at $y'\in \Exc\psi_{1}$ then we write $y,y'\in \Bs\psi^{-1}$. 
We denote by $\rho(\psi)$ the relative Picard rank of $\psi$, i.e.\ $\rho(\psi)=\rho(X)-\rho(Y)$ is the number of irreducible components of $\Exc\psi$.

A \emph{log surface} $(X,D)$ consists of a proper normal surface $X$ and a \emph{boundary} Weil divisor $D$ with coefficients in the interval $[0,1]$, such that $K_{X}+D$ is $\Q$-Cartier. In this article, we consider only log surfaces with reduced boundary. 
A birational morphism of log surfaces $\psi\colon (X,D)\to (\bar{X},\bar{D})$ is a birational morphism $\psi\colon X\to \bar{X}$ such that $\psi_{*}D=\bar{D}$. It is a \emph{log resolution of $(\bar{X},\bar{D})$} if $X$ is smooth, $D=\psi^{-1}_{*}\bar{D}+\Exc\psi$, and $D$ is a simple normal crossing (snc) divisor. 
	A log resolution is \emph{minimal} if it does not dominate birationally any other log resolution. A (minimal) log resolution of a normal surface $\bar{X}$ is a (minimal) log resolution of $(\bar{X},0)$.
\smallskip

Let $D$ be an effective divisor on a normal surface $X$. We say that $D$ is \emph{connected} if $\Supp D$ is connected (in the Zariski topology). We write $D\redd$ for $D$ with reduced structure. By a \emph{component} of $D$ we mean an irreducible component of $D\redd$; we denote their number by $\#D$. A \emph{subdivisor} of $D$ is an effective divisor $T$ such that $D-T$ is effective, too. For two effective divisors $D_1,D_2$ we write $D_1\cap D_2$ for the intersection of their supports. If they are reduced, we write $D_1\wedge D_2$ for the sum of their common components. 

Let $X$ be a smooth projective surface. A \emph{curve} $C$ on $X$ is called an \emph{$n$-curve} if $C\cong \P^1$ and $C^2=n$. Let $D$ be a reduced snc divisor on $X$, and let $C$ be a component of $D$. The \emph{branching number} of $C$ in $D$ is $\beta_{D}(C)\de C\cdot (D-C)$, i.e.\ the number of common points of $C$ with the remaining part of $D$. We say that $C$ is a \emph{tip of $D$} if $\beta_{D}(C)\leq 1$, and $C$ is \emph{branching in} $D$ if $\beta_{D}(C)\geq 3$.

We say that $D$ is \emph{negative definite} if so is its intersection matrix $[T_{i}\cdot T_{j}]_{1\leq i,j\leq \#D}$, where $T_1,\dots, T_{\#D}$ are components of $D$. A \emph{discriminant} of $D$ is defined as $d(D)\de \det[-T_{i}\cdot T_{j}]_{1\leq i,j\leq \#D}$ if $D\neq 0$; and as $1$ if $D=0$. 
\begin{lemma}[{Recursive formula for discriminants, see \cite[Section 3]{Fujita-noncomplete_surfaces}}]\label{lem:discriminants}
	Let $D_{1}$ and $D_{2}$ be reduced snc divisors on $X$ with no common component. Assume that $D_{1}\cdot D_{2}=1$, and for $j\in \{1,2\}$ let $C_{j}$ be the unique component of $D_{j}$ which meets $D_{3-j}$. Then
		\begin{equation*}
			d(D_{1} + D_{2}) = d(D_{1})d(D_{2})-d(D_{1}-C_{1})d(D_{2}-C_{2}).
		\end{equation*}
\end{lemma}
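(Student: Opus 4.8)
The plan is to read off the formula from a single block decomposition of the intersection matrix of $D_{1}+D_{2}$, expanded via the Leibniz (permutation) formula. First I would order the components so that $C_{1}$ is the last component of $D_{1}$ and $C_{2}$ is the first component of $D_{2}$, and write $M$ for the negated intersection matrix $[-(U_{i}\cdot U_{j})]$ of $D_{1}+D_{2}$, where $U_{i}$ runs over all components. Because $D_{1}\cdot D_{2}=1$ and $C_{1},C_{2}$ are by hypothesis the unique components of $D_{1},D_{2}$ meeting the other divisor, we have $C_{1}\cdot C_{2}=1$ and every other product between a component of $D_{1}$ and one of $D_{2}$ vanishes. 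Hence $M$ has the block shape
\begin{equation*}
	M=\begin{pmatrix} A & E \\ E^{\top} & C \end{pmatrix},
\end{equation*}
where $A$ and $C$ are the negated intersection matrices of $D_{1}$ and $D_{2}$ (so that $\det A=d(D_{1})$ and $\det C=d(D_{2})$), and $E$ has a single nonzero entry, equal to $-(C_{1}\cdot C_{2})=-1$, in the position indexed by $(C_{1},C_{2})$.

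Next I would expand $\det M=d(D_{1}+D_{2})$ by the Leibniz formula and classify the contributing permutations $\sigma$ according to how they use the two \enquote{cross} entries $M_{C_{1},C_{2}}$ and $M_{C_{2},C_{1}}$, which are the only nonzero entries linking the two blocks. The key combinatorial observation is that a permutation contributing a nonzero term must use either both cross entries or neither: if $\sigma$ used exactly one, say $\sigma(C_{1})=C_{2}$ while $\sigma(C_{2})\neq C_{1}$, then no $D_{2}$-row could be sent to a column indexed by a component of $D_{1}$, so the $D_{1}$-rows other than that of $C_{1}$ would have to cover every column of $A$ --- one more column than there are such rows --- which is impossible. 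The permutations using neither cross entry preserve both blocks and sum to $\det A\cdot\det C=d(D_{1})d(D_{2})$. Those using both must realize the cross as the transposition interchanging the indices $C_{1},C_{2}$ and then restrict to arbitrary permutations of the components of $D_{1}-C_{1}$ and of $D_{2}-C_{2}$; since $M_{C_{1},C_{2}}M_{C_{2},C_{1}}=(-1)(-1)=1$ while the transposition contributes a sign $-1$, these sum to $-d(D_{1}-C_{1})\cdot d(D_{2}-C_{2})$. Adding the two contributions yields the asserted identity.

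I expect the only delicate point to be this sign-and-counting bookkeeping --- verifying that \enquote{exactly one cross entry} is impossible and correctly tracking the sign contributed by interchanging the two special indices. A quicker, if less self-contained, alternative would be to treat the within-block entries of $A$ and $C$ as indeterminates and first establish the identity under the assumption $d(D_{2})=\det C\neq 0$ by means of the Schur complement $\det M=\det C\cdot\det\left(A-EC^{-1}E^{\top}\right)$, noting that $EC^{-1}E^{\top}$ is the rank-one matrix equal to $(C^{-1})_{C_{2},C_{2}}$ times the elementary matrix at $(C_{1},C_{1})$, with $(C^{-1})_{C_{2},C_{2}}=d(D_{2}-C_{2})/d(D_{2})$; expanding this rank-one update of $A$ gives the formula on the locus $\{\det C\neq 0\}$, and one then extends to the degenerate case by observing that both sides are polynomials in the chosen entries and agree on a dense set.
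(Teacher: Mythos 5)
Your proof is correct. Note that the paper itself gives no argument for this lemma --- it is stated with a citation to Fujita's work --- so there is no internal proof to compare against; your block-matrix Leibniz expansion is essentially the standard determinant computation underlying the cited result. The two delicate points both check out: a permutation using exactly one cross entry forces the $\#D_1-1$ rows of $D_1-C_1$ to cover all $\#D_1$ columns of the first block, which is impossible; and a permutation using both cross entries factors as the transposition of the indices of $C_1,C_2$ (sign $-1$, entry product $(-1)(-1)=1$) times independent permutations of $D_1-C_1$ and $D_2-C_2$, giving the term $-d(D_1-C_1)\,d(D_2-C_2)$, with the convention $d(0)=1$ handling the degenerate cases. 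Your Schur-complement alternative is also sound, since $(C^{-1})_{C_2,C_2}=d(D_2-C_2)/d(D_2)$ and the matrix determinant lemma give $\det M=d(D_2)\bigl(d(D_1)-\tfrac{d(D_2-C_2)}{d(D_2)}d(D_1-C_1)\bigr)$ on the locus $\det C\neq 0$, extending to all cases by polynomial identity.
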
 

Let $T$ be an snc divisor on a smooth projective surface $X$. We say that $T$ is \emph{rational} all its components are. If $T$ is connected and has no branching component then $T$ is called a \emph{chain} if it has a tip and \emph{circular} if it does not. A connected snc divisor is a \emph{tree} if it has no circular subdivisor. A \emph{fork} is a tree with exactly one branching component, which additionally has branching number $3$.

Let $T$ be a chain. It is \emph{ordered} if it has a distinguished \emph{first} tip, denoted by $\ftip{T}$. We write $T\cp{i}$ for the $i$-th component of $T$ in the natural order, given by $T\cp{1}=\ftip{T}$, $T\cp{i}\cdot T\cp{i+1}=1$, $1\leq i\leq \#T-1$. Then $\ltip{T}\de T\cp{\#T}$ is the \emph{last} tip of $T$. We write $T\trp$ for a chain $T$ with the  opposite order. 

An ordered rational subchain $T$ of a reduced divisor $D$ is a \emph{twig} of $D$ if $\ftip{T}$ is a tip of $D$, and no component of $T$ is branching in $D$. In this case, either $T=D$ or $\ltip{T}$ meets $D-T$. We say that a twig of $D$ is \emph{maximal} if it is not properly contained in any other twig of $D$. A \emph{$(-2)$-chain} (\emph{$(-2)$-fork}, \emph{$(-2)$-twig}) is a chain (fork, twig) whose all components are $(-2)$-curves. 

\subsection{Types of chains and forks}\label{sec:types}

A \emph{type} of an ordered rational chain $T$ is a sequence of integers $[a_1,\dots,a_{\#T}]$, where $a_{i}=-(T\cp{i})^{2}$.  We write $(a)_{m}$ for an integer $a$ repeated $m$ times. We say that $T$ is \emph{admissible} if $a_1,\dots,a_{\#T}\geq 2$. 

For types $T_1=[a_1,\dots, a_k]$, $T_2=[b_1,\dots, b_l]$ we write $[T_1,T_2]=[T_1,b_1,\dots,b_l]=[a_1,\dots,a_k,b_1,\dots,b_l]$ etc.  We  also use the following conventions:
\begin{equation}\label{eq:conventions_Tono}
	\begin{split} 
	[a_{1},\dots,a_{k}]*[b_{1},\dots,b_{l}]& \de [a_{1},\dots, a_{k-1},a_{k}+b_{1}-1,b_{2},\dots, b_{l}],\\
	[(2)_{-1}]*[b_{1},\dots, b_{l}]& \de [b_{1}+1,b_2\dots, b_{l}],\quad [(2)_{-1},b_{1},\dots,b_{l}]\de [b_{2},\dots, b_{l}].
	\end{split}
\end{equation}

If $T$ is an admissible chain or $T=[1]$, there is a unique type $T^{*}$  such that a chain of type $[T,1,T^{*}]$ can be blown down to a $0$-curve, see \cite[Proposition 4.7]{Fujita-noncomplete_surfaces} or \cite{Russell_formal-aspects}. A chain $[T,1,T']$ blows down to a smooth point if and only if $T'=T^{*}*[(2)_{k}]$ for some $k\geq -1$. Moreover, if a curve $C$ meets a chain $R=[T,1,T^{*}]*[(2)_{k}]$ once, normally, in $\ftip{R}$, then blowing down $R$ to a smooth point increases the self-intersection number of $C$ by $k+2$. We refer to \cite{Russell_formal-aspects} for more details on the combinatorics of such blowdowns.
\smallskip

Let $T$ be a fork. Then $T=B+T_1+T_2+T_3$, where $\beta_{T}(B)=3$, and $T_{j}$ are maximal twigs of $T$. Assume that $T$ is rational, so $B=[b]$ for some integer $b$. Then we say that $T$ is of \emph{type} $\langle b;T_1,T_2,T_3 \rangle$. We say that $T$ is \emph{admissible} if $b\geq 2$, each twig $T_j$ is admissible, and $\sum_{j=1}^{3}\frac{1}{d(T_j)}>1$. Solving this inequality one gets the following known result, cf.\ \cite[I.5.3.4]{Miyan-OpenSurf}. 

\begin{lemma}[{Admissible forks}
	] \label{lem:admissible_forks}
	Let $F=\langle b;T_1,T_2,T_3\rangle$ be an admissible fork. Then the triple $\{d(T_1),d(T_2),d(T_3)\}$ is one of the following: $\{2,2,k\}$ for some $k\geq 2$; $\{2,3,3\}$, $\{2,3,4\}$, $\{2,3,5\}$. In particular, the following hold.
	\begin{enumerate}
		\item\label{item:has_-2} At least one of the twigs of $F$ is of type $[2]$.
		\item\label{item:chains-with-low-d} For each $i$ we have either $T_{i}\in \{[d],[(2)_{d-1}]\}$, where $d=d(T_i)\in \{2,3,4,5\}$, or $T_i\in \{[2,3],[3,2]\}$, $d(T_i)=5$. 
		\item\label{item:long-twig} If $\#T_{i}\geq 3$ then either $F=\langle b;[2],[2],T_i\rangle$ or $T_i=[(2)_{d-1}]$ for some $d\in \{4,5\}$.
	\end{enumerate}
\end{lemma}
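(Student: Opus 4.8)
The plan is to reduce everything to an elementary Diophantine inequality together with a finite enumeration of admissible chains of small discriminant. First I would record two facts about discriminants of admissible chains, both consequences of the recursion furnished by Lemma \ref{lem:discriminants}, which for a chain $T=[a_1,\dots,a_n]$ reads $d(T)=a_1\,d([a_2,\dots,a_n])-d([a_3,\dots,a_n])$. \emph{Fact one:} every nonempty admissible chain satisfies $d(T)\geq 2$, and $d([(2)_{m}])=m+1$. \emph{Fact two:} $d(T)\geq \#T+1$, with equality if and only if $T=[(2)_{\#T}]$. Fact two follows by induction from the recursion applied to the last tip, since $a_n\geq 2$ gives $d(T)\geq 2\,d(T-\ltip{T})-d(T-\ltip{T}-\ftip{(T-\ltip T)})>d(T-\ltip{T})$, so the integer discriminant strictly increases when a tip is appended. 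This is the linchpin of the argument: it bounds the length of an admissible chain by its discriminant, $\#T\leq d(T)-1$, which is what makes the later enumeration finite.

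Second, I would classify the triple $\{d(T_1),d(T_2),d(T_3)\}$. By Fact one each $d(T_j)$ is an integer $\geq 2$, so the admissibility condition $\sum_{j}1/d(T_j)>1$ is exactly the classical triangle-group inequality. Ordering $d(T_1)\leq d(T_2)\leq d(T_3)$: if $d(T_1)\geq 3$ the sum is $\leq 1$, forcing $d(T_1)=2$; if moreover $d(T_2)=2$ then any $d(T_3)=k\geq 2$ works, giving $\{2,2,k\}$; if $d(T_2)=3$ the inequality $1/d(T_3)>\tfrac12-\tfrac13=\tfrac16$ forces $d(T_3)\in\{3,4,5\}$; and $d(T_2)\geq 4$ yields a sum $\leq 1$. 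This produces precisely the listed triples.

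Third, for the "in particular" clauses I would enumerate, for each $d\in\{2,3,4,5\}$, all admissible chains of discriminant $d$. By Fact two such a chain has length $\leq d-1$, leaving finitely many candidates, and a direct check using the recursion and $d([a,b])=ab-1$ gives: for $d=2$ only $[2]$; for $d=3$ only $[3]$ and $[(2)_2]$; for $d=4$ only $[4]$ and $[(2)_3]$ (there is no length-$2$ chain, as $ab=5$ is impossible with $a,b\geq 2$); for $d=5$ only $[5]$, $[(2)_4]$, $[2,3]$ and $[3,2]$ (length-$3$ chains other than $[(2)_3]$ already have $d\geq 7$). This is exactly \ref{item:chains-with-low-d}. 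Then \ref{item:has_-2} follows since each admissible triple contains the value $2$ and the only admissible chain of discriminant $2$ is $[2]$. For \ref{item:long-twig}, $\#T_i\geq 3$ forces $d(T_i)\geq 4$ by Fact two; if $d(T_i)\leq 5$ the enumeration gives $T_i=[(2)_{d-1}]$ with $d\in\{4,5\}$, while if $d(T_i)>5$ the triple must be $\{2,2,d(T_i)\}$ (the remaining three triples have all entries $\leq 5$), so the other two twigs have discriminant $2$, hence equal $[2]$, i.e.\ $F=\langle b;[2],[2],T_i\rangle$.

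All the arithmetic is routine; the only step demanding genuine care is Fact two, the monotonicity estimate $d(T)\geq \#T+1$ for admissible chains, because it simultaneously supplies the length bound driving the finite enumeration in the third step and the lower bound $d(T_i)\geq 4$ used for long twigs in \ref{item:long-twig}. Once it is in place, the remainder is the standard solution of $\sum_j 1/d_j>1$ and a short bookkeeping of small continuants.
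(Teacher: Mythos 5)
Your proof is correct, and it follows precisely the route the paper indicates: the paper states this lemma \emph{without} proof, as a known result obtained by ``solving this inequality'' (citing Miyanishi), and your argument is a complete write-up of exactly that — the triangle-group analysis of $\sum_{j}1/d(T_j)>1$ plus the finite enumeration of admissible chains of discriminant at most $5$, with your Fact two ($d(T)\geq \#T+1$, i.e.\ discriminants strictly increase when a tip is appended) supplying the length bound that makes the enumeration finite and the bound $d(T_i)\geq 4$ for long twigs. One cosmetic slip: in the displayed inequality proving Fact two the subtracted term should be $d\bigl(T-\ltip{T}-\ltip{(T-\ltip{T})}\bigr)$ — the recursion of Lemma \ref{lem:discriminants} removes the component \emph{adjacent} to the deleted tip — rather than involving $\ftip{(T-\ltip{T})}$; since $d$ is independent of the orientation of a chain, the induction goes through unchanged either way.
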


\subsection{Log discrepancies}

Let $\pi\colon X\to \bar{X}$ be a resolution of singularities of a normal surface, with exceptional divisor $D$. Let $T_{1},\dots, T_{k}$ be the components of $D$. Since $D$ is negative definite, the formula
\begin{equation*}
	\phi^{*}K_{\bar{X}}=K_{X}+\sum_{i=1}^{k}\cf(T_{i})T_{i}
\end{equation*}
uniquely defines rational numbers $\cf(T_i)$, called the \emph{coefficients} of $T_{i}$ (with respect to $\bar{X}$). The \emph{log discrepancy} of $T_{i}$ is $\ld(T_i)\de 1-\cf(T_i)$. A normal surface is \emph{log terminal} if for some (or, equivalently, any) resolution, every component of the exceptional divisor has positive log discrepancy \cite[Definition 2.8]{Kollar_singularities_of_MMP}. By \cite[3.40]{Kollar_singularities_of_MMP}, a singularity is log terminal if and only if the exceptional divisor of its minimal resolution is an admissible chain or an admissible fork. If $D$ is a disjoint sum of admissible chains and admissible forks, and $C$ is a component of $D$, we write $\ld_{D}(C)$ for the log discrepancy of $C$ with respect to the log terminal surface obtained by contracting $D$. These numbers can be easily computed using Lemma \ref{lem:discriminants} and formulas in Lemma \ref{lem:ld_formulas} below. We will use them very frequently, often without explicit reference.

\begin{lemma}[{Formulas for log discrepancies, cf.\ \cite[II.3.3]{Miyan-OpenSurf} or \cite[3.2]{Flips_and_abundance}}]\label{lem:ld_formulas}\ 
	\begin{enumerate}
		\item\label{item:ld_chain} Let $T$ be an ordered admissible chain. Put 
		\begin{equation*}
			T\cp{<j}=\sum_{i=0}^{j-1}T\cp{i},\quad T\cp{>j}=\sum_{i=j+1}^{\#T}T\cp{i}. \qquad \mbox{Then} \quad 
			\ld_{T}(T\cp{j})=\frac{d(T\cp{>j})+d(T\cp{<j})}{d(T)}.
		\end{equation*}
	\item\label{item:ld_fork} Let $T=\langle b;T_1,T_2,T_3\rangle$ be an admissible fork with a branching component $B$. Put
	\begin{equation*}
		\delta=\sum_{i=1}^{3}\frac{1}{d(T_{i})},\quad e=\sum_{i=1}^{3}\frac{d(T_i-\ltip{T_i})}{d(T_i)}.
	\end{equation*}
	Then $\delta>1$, $e<2\leq b$, and 
	\begin{equation*}
		\ld_{T}(B)=\frac{\delta-1}{b-e},\quad \ld_{T}(T_{i}\cp{j})=\frac{\ld_{T}(B)\cdot d(T_i\cp{<j})+d(T_i\cp{>j})}{d(T_i)}.
	\end{equation*}
	\end{enumerate}
\end{lemma}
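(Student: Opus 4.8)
The plan is, in each case, to write down the linear system characterising the log discrepancies and to verify that the proposed expressions solve it; the solution is unique because the intersection matrix is negative definite, hence invertible. To set it up, let $\pi\colon X\to\bar{X}$ contract $T$ and abbreviate $\alpha_i\de\ld_{T}(T\cp{i})$ (and $\alpha_B\de\ld_{T}(B)$ in the fork case). Intersecting $\pi^{*}K_{\bar{X}}=K_X+\sum_i \cf(T_i)\,T_i$ with a component $T_j$ kills the left-hand side by the projection formula, while substituting $\cf(T_i)=1-\alpha_i$ and using adjunction $K_X\cdot T_j=-2-T_j^{2}$ on the right gives
\begin{equation*}
	\sum_i \alpha_i\,(T_i\cdot T_j)=(K_X+T)\cdot T_j=\beta_{T}(T_j)-2 .
\end{equation*}
So $\alpha$ is the unique solution of $M\alpha=b$ with $M=[T_i\cdot T_j]$ and $b_j=\beta_{T}(T_j)-2$; recall $\det(-M)=d(T)$. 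For a chain $b$ equals $-1$ at the two tips and $0$ elsewhere, and for a fork it equals in addition $+1$ at $B$.

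Everywhere I will use the continuant recursion $d([a_1,\dots,a_k])=a_k\,d([a_1,\dots,a_{k-1}])-d([a_1,\dots,a_{k-2}])$, which is Lemma \ref{lem:discriminants} with $D_2=[a_k]$. It shows that, along an ordered chain, the prefix discriminants $P_j\de d(T\cp{<j})$ and suffix discriminants $S_j\de d(T\cp{>j})$ both satisfy the three-term relation $x_{j-1}-a_jx_j+x_{j+1}=0$, where $a_j=-(T\cp{j})^{2}$, with the boundary values $P_1=S_{\#T}=1$, $P_0=S_{\#T+1}=0$ and $S_0=P_{\#T+1}=d(T)$. For part (a) I substitute $\alpha_j=(S_j+P_j)/d(T)$ into $M\alpha=b$: every interior equation becomes the sum of the two three-term relations and hence holds, while each tip equation collapses to the continuant identity $d(T)=a_1S_1-S_2$ (respectively its mirror at the last tip). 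Positivity of the three discriminants appearing in the formula is clear for an admissible chain, so these values are genuinely the log discrepancies.

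For part (b) I first fix the (still unknown) value $\alpha_B$ and solve on each twig separately. Orienting $T_i$ so that $\ftip{T_i}$ is the tip of $T$ and $\ltip{T_i}$ meets $B$, the equations along $T_i$ coincide with the chain equations of part (a), except that the incidence of $\ltip{T_i}$ with $B$ replaces the phantom value $1$ at that end by $\alpha_B$. Substituting $\alpha_j=\bigl(\alpha_B\,d(T_i\cp{<j})+d(T_i\cp{>j})\bigr)/d(T_i)$ and invoking the same two recursions confirms all tip and interior equations, and the $B$-end equation $\alpha_{\#T_i-1}-a_{\#T_i}\alpha_{\#T_i}+\alpha_B=0$ holds identically (the degenerate case $\#T_i=1$ is a one-line check). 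It then remains to impose the single equation at $B$, namely $\sum_{i=1}^{3}\ld_{T}(\ltip{T_i})-b\,\alpha_B=1$. Since $d(T_i\cp{>\#T_i})=1$ and $d(T_i\cp{<\#T_i})=d(T_i-\ltip{T_i})$, each summand equals $\bigl(\alpha_B\,d(T_i-\ltip{T_i})+1\bigr)/d(T_i)$, so the equation reads $e\,\alpha_B+\delta-b\,\alpha_B=1$, giving $\alpha_B=(\delta-1)/(b-e)$ exactly as claimed.

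It remains to check the inequalities $\delta>1$ and $e<2\leq b$, which guarantee that this fraction is positive and well defined. The bounds $\delta>1$ and $b\geq2$ are part of the definition of an admissible fork. For $e<2$ I note that along an admissible chain the discriminants grow by at least $1$: the recursion gives $d_k-d_{k-1}=(a_k-1)d_{k-1}-d_{k-2}\geq d_{k-1}-d_{k-2}\geq\cdots\geq a_1-1\geq1$. Hence $d(T_i)-d(T_i-\ltip{T_i})\geq1$, so $d(T_i-\ltip{T_i})/d(T_i)\leq1-1/d(T_i)$ and therefore $e\leq 3-\delta<2$. I expect the only real obstacle to be the bookkeeping of twig orientations, namely making sure the boundary value $\alpha_B$ lands on the $\ltip{T_i}$ end and the phantom value $1$ on the $\ftip{T_i}$ end; once the relations for $P_j$ and $S_j$ are recorded, each verification reduces to a single cancellation.
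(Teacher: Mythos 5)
Your proof is correct, and it is essentially the standard argument behind the sources the paper cites for this lemma (the paper itself gives no proof, deferring to \cite[II.3.3]{Miyan-OpenSurf} and \cite[3.2]{Flips_and_abundance}): one characterizes the log discrepancies as the unique solution of the linear system $\sum_i \ld_T(T_i)\,(T_i\cdot T_j)=\beta_T(T_j)-2$ obtained by intersecting $\pi^{*}K_{\bar{X}}=K_X+\sum_i\cf(T_i)T_i$ with each contracted component, and then verifies the closed-form expressions via the continuant recursion for discriminants, exactly as you do. The only point worth flagging is the degenerate chain $\#T=1$, where the single component carries $\beta_T-2=-2$ rather than $-1$ at each of "the two tips"; your formula $(P_1+S_1)/d(T)=2/a_1$ still satisfies this equation, so nothing breaks, but the bookkeeping sentence "$b$ equals $-1$ at the two tips" should be read with the two coincident tip contributions superimposed.
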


The \emph{weighted graph} $\Gamma$ of a reduced snc divisor $D$ has a vertex $v_{C}$ of weight $-C^2$ for each component $C$ of $D$, and an edge between $v_{C}$, $v_{C'}$ for each point of $C\cap C'$. We say that a subgraph $\Gamma'$ of $\Gamma$ is a \emph{weighted subgraph} of $\Gamma$ if the weight of each vertex of $\Gamma'$ does not exceed its weight as a vertex of $\Gamma$.

\begin{lemma}[{Log discrepancies do not decrease in weighted subgraphs \cite[L.1(2)]{Keel-McKernan_rational_curves}, cf.\ 
		\cite[Lemma 7.6]{Palka_almost_MMP}}] \label{lem:Alexeev}
		Let $D_1,D_2$ be rational chains or rational forks whose components have self-intersection numbers at most $-2$. Assume that $D_2$ is admissible, and the weighted graph of $D_1$ is a weighted subgraph of the weighted graph of $D_2$. Then $D_1$ is admissible, too, and if $C_1$ is a component of $D_1$ corresponding to a component $C_2$ of $D_2$, then 
	\begin{equation*}
		\ld_{D_1}(C_1)\geq \ld_{D_2}(C_2).
	\end{equation*}
\end{lemma}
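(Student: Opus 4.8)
The plan is to encode the log discrepancies of an admissible configuration as the solution of a linear system governed by its (positive definite) intersection form, and then to deduce the inequality from a discrete maximum principle. For a negative definite reduced divisor $D=\sum_i T_i$ with all $T_i\cong \P^1$, write $M(D)=[-T_i\cdot T_j]_{i,j}$ for the matrix whose determinant is $d(D)$. Intersecting the defining relation $\phi^{*}K_{\bar X}=K_X+\sum_i \cf(T_i)T_i$ with $T_j$ and using $\phi^{*}K_{\bar X}\cdot T_j=0$ together with the adjunction identity $K_X\cdot T_j=-2-T_j^{2}$ yields $M(D)\,\bs{c}=(a_j-2)_j$ for the coefficient vector $\bs{c}=(\cf(T_j))_j$, where $a_j=-T_j^{2}\geq 2$; equivalently, for the log discrepancy vector $\bs{\ell}=(\ld_D(T_j))_j$ one gets
\[
M(D)\,\bs{\ell}=\big(2-\beta_D(T_j)\big)_j .
\]
Since $D$ is negative definite, $M(D)$ is a symmetric positive definite matrix with nonpositive off-diagonal entries, i.e.\ a Stieltjes matrix, so $M(D)^{-1}$ has nonnegative entries. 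Two consequences will be used throughout: $\bs{\ell}>0$ (this is log terminality), and, from $M(D)\bs{c}=(a_j-2)_j\geq 0$, that $\cf(T_j)\geq 0$, hence $\ld_D(T_j)\leq 1$ for every component.

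Next I would check that $D_1$ is admissible. If $D_1$ is a chain this is automatic, as all its weights are $\geq 2$. If $D_1$ is a fork, its branching vertex has degree $\geq 3$ in $D_2$, hence it is the unique branching vertex of the fork $D_2$, and each twig of $D_1$ is then a weighted subgraph of the corresponding twig of $D_2$. By the recursion of Lemma \ref{lem:discriminants}, a short induction shows that for admissible chains the discriminant only decreases under truncation and under lowering of weights, so $d(T_j^{(1)})\leq d(T_j^{(2)})$ for the three twigs. Therefore $\sum_j 1/d(T_j^{(1)})\geq \sum_j 1/d(T_j^{(2)})>1$, so $D_1$ is admissible and $M(D_1)$ is again a Stieltjes matrix.

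Finally I would compare the two configurations directly. Let $u=\bs{\ell}^{(1)}-\bs{\ell}^{(2)}|_{D_1}$ be the difference of the log discrepancy vectors, indexed by the vertices of $D_1$. Using $M(D_1)\bs{\ell}^{(1)}=(2-\beta_{D_1}(v))_v$ and expanding $M(D_1)\,\bs{\ell}^{(2)}|_{D_1}$ by means of the $D_2$-system, the entry of $M(D_1)\,u$ at a vertex $v$ works out to
\[
\big(a_v^{(2)}-a_v^{(1)}\big)\,\ld_{D_2}(v)+\sum_{w}\big(1-\ld_{D_2}(w)\big),
\]
where $w$ ranges over the vertices adjacent to $v$ in $D_2$ but not in $D_1$, and where I use that the drop $\beta_{D_2}(v)-\beta_{D_1}(v)$ equals the number of such $w$. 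The first term is $\geq 0$ because the weights do not increase, and each summand of the second term is $\geq 0$ by the bound $\ld_{D_2}(w)\leq 1$ established above. Hence $M(D_1)\,u\geq 0$ entrywise, and multiplying by $M(D_1)^{-1}\geq 0$ gives $u\geq 0$, which is exactly $\ld_{D_1}(C_1)\geq \ld_{D_2}(C_2)$ for each corresponding pair.

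The hard part is precisely the sign analysis in this last display: it hinges on the two positivity facts for Stieltjes matrices — nonnegativity of $M(D)^{-1}$ and the universal bound $\ld_D\leq 1$ for log terminal configurations — so I would establish these cleanly first. A secondary point needing attention is the bookkeeping that converts the raw entry of $M(D_1)u$ into the displayed form, namely matching the deleted edges of the graph with the decrease of branching numbers in passing from $D_2$ to $D_1$ (noting that a vertex $w$ may survive in $D_1$ while the edge $vw$ is dropped, in which case $\ld_{D_2}(w)\leq 1$ is still available).
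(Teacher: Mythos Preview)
Your argument is correct. The paper does not supply its own proof of this lemma, only citing Keel--McKernan and Pa\l ka; your approach via the Stieltjes/M-matrix property of the intersection matrix and the resulting discrete maximum principle is precisely the standard one underlying those references. The key identity
\[
(M(D_1)u)_v=(a_v^{(2)}-a_v^{(1)})\ld_{D_2}(v)+\sum_{w\sim_2 v,\;w\not\sim_1 v}(1-\ld_{D_2}(w))
\]
is computed correctly, and the two positivity inputs you isolate (nonnegativity of $M(D)^{-1}$ for admissible $D$, and $\cf_{D_2}\geq 0$ from $M(D_2)^{-1}\geq 0$ applied to the nonnegative vector $(a_j-2)_j$) are exactly what is needed.

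One small remark: your closing parenthetical about a vertex $w$ surviving in $D_1$ while the edge $vw$ is dropped cannot actually occur here. Since $D_2$ is a tree, the edge $vw$ is the unique $D_2$-path between $v$ and $w$; if both lie in the connected subgraph $D_1$, the $D_1$-path between them is also a $D_2$-path and hence equals $vw$, forcing $vw\in D_1$. So the neighbours $w$ appearing in your sum always lie outside $D_1$. This does not affect the argument, but it simplifies the bookkeeping you flagged.
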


A normal surface $\bar{X}$ is \emph{log terminal} if all its singularities are log terminal. The exceptional divisor of the minimal resolution of $\bar{X}$ is a disjoint union of admissible chains and forks, say $T_1,\dots, T_k$. The \emph{singularity type} of $\bar{X}$ is the collection of their types; we write it as $T_1+\dots+T_k$.

\subsection{\texorpdfstring{$\P^1$-}{P1-}-fibrations} \label{sec:P1-fibrations}

Let $X$ be a smooth projective surface. A \emph{$\P^{1}$-fibration} of $X$ is a morphism $p\colon X\to B$ onto a curve $B$ whose general fiber $F$ is isomorphic to $\P^1$. Let $D$ be a reduced snc divisor on $X$. The number $F\cdot D$ is called the \emph{height} of $p$ (with respect to $D$). The height of $(X,D)$, denoted by $\height(X,D)$, is the infimum of heights with respect to $D$ of all $\P^1$-fibrations of $X$, see formula~\eqref{eq:height}. A $\P^1$-fibration of $X$ whose height with respect to $D$ is equal to $\height(X,D)$ is called a \emph{witnessing} $\P^1$-fibration. 
\smallskip

Assume that $X$ admits a $\P^1$-fibration, and fix one such. For a reduced divisor $T$ we write $T\vert$ for the sum of components of $T$ which are \emph{vertical}, i.e.\ are contained in fibers. We call $T\hor\de T-T\vert$ the \emph{horizontal part} of $T$. A horizontal curve $H$ is called an $n$-section if $H\cdot F=n$ for some (hence every) fiber $F$. 

The \emph{width} of $(X,D)$, denoted by $\width(X,D)$, is the maximal number $\#D\hor$ among all witnessing $\P^1$-fibrations, see \cite[Definition 2.2]{PaPe_ht_2}. Clearly, $\width(X,D)\leq \height(X,D)$, and the equality holds if and only if $X$ admits a witnessing $\P^1$-fibration such that $D\hor$ consists of $1$-sections. If $(X,D)$ is the minimal log resolution of a normal surface $\bar{X}$ then we write $\height(\bar{X})\de \height(X,D)$, $\width(\bar{X})\de \width(X,D)$.
\smallskip

A fiber $F$ which is not isomorphic to $\P^1$ is called \emph{degenerate}. Such fibers are constructed by blowing up over $0$-curves, so their structure is easy to understand, see \cite[\S 4]{Fujita-noncomplete_surfaces}. In particular, the $(-1)$-curves in $F\redd$ are non-branching, and if $F\redd$ contains a unique $(-1)$-curve $L$ then $F$ has exactly two components of multiplicity one, and they are tips of $F\redd$. Furthermore, if those tips lie in different connected components of $F\redd-L$ then $F\redd$ is a chain of type $[T,1,T^{*}]$.

We will frequently use the following consequence of \cite[4.16]{Fujita-noncomplete_surfaces} proved in \cite[Lemma 2.6]{PaPe_MT}.

\begin{lemma}[Vertical $(-1)$-curves]
	\label{lem:fibrations-Sigma-chi}
	Let $(X,D)$ be the minimal log resolution of a del Pezzo surface of rank one. Fix a $\P^1$-fibration of $X$, let $F$ be a degenerate fiber and let $\sigma(F)$ be the number of $(-1)$-curves in $F$. 
	\begin{enumerate}
		\item \label{item:-1_curves}  A component of $F$ is a $(-1)$-curve if and only if it is not contained in $D$.
		\item \label{item:Sigma} We have $\#D\hor-1=\sum_{F}(\sigma(F)-1)$, where the sum runs over all degenerate fibers.
	\end{enumerate}
\end{lemma}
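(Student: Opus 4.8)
The plan is to treat part \ref{item:-1_curves} as the substance and to deduce part \ref{item:Sigma} from it by a Picard-rank count. Write $\pi\colon X\to\bar X$ for the minimal log resolution, so that $D=\Exc\pi$ and, since $\bar X$ has Picard rank one, $\rho(X)=1+\#D$. The single structural input I would isolate first is that $K_X$ is $\pi$-nef: this is equivalent to $D$ containing no $(-1)$-curve, hence to $\cf(T_i)\ge0$ for every component $T_i$ of $D$ (the latter because the intersection form on $D$ is negative definite). In the situation at hand $D$ is a disjoint union of admissible chains and forks, so the minimal log resolution coincides with the minimal resolution and this $\pi$-nefness is immediate.

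For part \ref{item:-1_curves} I would argue through adjunction. Any component $C$ of a degenerate fiber $F$ is a smooth rational curve which is not proportional to $F$, so by Zariski's lemma $C^2<0$, and adjunction gives $-K_X\cdot C=2+C^2$. If $C\subseteq D$ then $C$ is $\pi$-exceptional, so $K_X\cdot C\ge0$ by $\pi$-nefness, forcing $C^2\le-2$; thus $C$ is not a $(-1)$-curve. Conversely, if $C\not\subseteq D$ then $\bar C\de\pi(C)$ is a nonzero effective curve, and expanding $-K_X=\pi^*(-K_{\bar X})+\sum_i\cf(T_i)T_i$ yields
\[
	-K_X\cdot C=(-K_{\bar X})\cdot\bar C+\sum_i\cf(T_i)(T_i\cdot C)\ge(-K_{\bar X})\cdot\bar C>0,
\]
where I use $\cf(T_i)\ge0$, $T_i\cdot C\ge0$, and the ampleness of $-K_{\bar X}$. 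Hence $2+C^2>0$, so $C^2\ge-1$, and with $C^2<0$ this gives $C^2=-1$. This is the one step where the del Pezzo hypothesis is essential, through the ampleness of $-K_{\bar X}$; I expect verifying $\cf(T_i)\ge0$ for the minimal log resolution (rather than merely for the minimal resolution) to be the only delicate point.

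Finally I would obtain part \ref{item:Sigma} by combining two rank formulas. On one hand $\rho(X)=1+\#D$ as above; on the other, contracting $(-1)$-curves in fibers relatively minimalizes $p$ to a Hirzebruch surface $\F_m$ with $\rho(\F_m)=2$, and since each contraction drops $\rho$ by one while removing one fiber component, $\sum_F(\#F\redd-1)=\rho(X)-2$. Together these give $\sum_F(\#F\redd-1)=\#D-1$. By part \ref{item:-1_curves} the components of each degenerate fiber split into those lying in $D$ and the $\sigma(F)$ many $(-1)$-curves, so $\#F\redd=\#(F\wedge D)+\sigma(F)$. Summing over the $n$ degenerate fibers and using $\sum_F\#(F\wedge D)=\#D\vert$ (every vertical component of $D$ lies in exactly one degenerate fiber, no component of $D$ being a full fiber) I get $\#D\vert+\sum_F\sigma(F)-n=\#D-1=\#D\vert+\#D\hor-1$. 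Cancelling $\#D\vert$ leaves $\sum_F(\sigma(F)-1)=\#D\hor-1$, as desired; the remaining work is purely bookkeeping.
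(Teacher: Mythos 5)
Your derivation of part \ref{item:Sigma} from part \ref{item:-1_curves} is correct and is the standard one: the two rank counts $\rho(X)=1+\#D$ and $\rho(X)=2+\sum_{F}(\#F\redd-1)$, together with $\#F\redd=\#(F\wedge D)+\sigma(F)$ and $\sum_F\#(F\wedge D)=\#D\vert$, give exactly the claimed identity, and the side facts you invoke (no component of $D$ is a whole fiber, by negative definiteness of $\Exc\pi$) are fine. The problem is in part \ref{item:-1_curves}, at the sentence \enquote{In the situation at hand $D$ is a disjoint union of admissible chains and forks, so the minimal log resolution coincides with the minimal resolution and this $\pi$-nefness is immediate.} That assertion is precisely the statement that $\bar{X}$ is \emph{log terminal} (this is the characterization of log terminality recalled in the preliminaries), and log terminality is not a hypothesis of the lemma: it is stated for an arbitrary del Pezzo surface of rank one, and the paper relies on that generality — see the Remark on non--log terminal del Pezzo surfaces of rank one and height $3$ and Table \ref{table:ht=3_non-lt}, where this fibration formalism is applied to log canonical, non--log terminal surfaces. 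The paper's own \enquote{proof} is the citation to \cite[Lemma 2.6]{PaPe_MT}, which obtains the statement in full generality as a consequence of Fujita's structure theory of degenerate fibers \cite[4.16]{Fujita-noncomplete_surfaces}, a route that never passes through $\pi$-nefness of $K_X$.

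Concretely, for non--log terminal $\bar{X}$ the divisor $D$ can contain an elliptic curve, a circular configuration, or non-admissible chains and forks, and — worse for your argument — the minimal log resolution can strictly dominate the minimal resolution: over a cusp singularity whose minimal-resolution exceptional curve is an irreducible nodal rational curve, the minimal log resolution contains a $(-1)$-curve $E\subseteq D$ (the blowup of the node) meeting the remaining component twice. Then $K_X$ is not $\pi$-nef, so your \enquote{only if} direction ($C\subseteq D\Rightarrow C^2\le -2$) has no proof — and this direction is now genuinely contentful, since part \ref{item:-1_curves} asserts that such an $E$ is horizontal for every $\P^1$-fibration of $X$; nothing in your argument addresses this. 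Likewise the inequality $\sum_i\cf(T_i)(T_i\cdot C)\ge 0$ behind your \enquote{if} direction is no longer automatic: $\cf\ge 0$ on the minimal log resolution is guaranteed by $\pi$-nefness of $K_X$ in the log terminal (more generally, rational) case, but for non-minimal resolutions of worse singularities the coefficients can a priori be negative, and your proof offers no substitute. In short: what you wrote is a complete and correct proof for log terminal $\bar{X}$ (which is in fact the only case used in the body of this paper), but as a proof of the lemma as stated it has a genuine gap exactly at the point you yourself flagged as \enquote{the only delicate point} and then dismissed by invoking a hypothesis the lemma does not make.
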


In the proof of Theorem \ref{thm:ht=3} (Step \ref{step:reverse}) 
we will reconstruct cascades of minimal log resolutions of del Pezzo surfaces of rank 1 by blowing up within degenerate fibers, or in other words, reversing vertical swaps, see Section \ref{sec:intro-structure}. The following elementary results tell us when to stop. More precisely, once we reach a log surface for which inequality \eqref{eq:ld_phi_H} fails, then Lemma \ref{lem:cascades}\ref{item:cascades-still-dP} implies that blowing up further cannot yield a del Pezzo surface, so the whole cascade has been reconstructed.

\begin{lemma}[{Criterion for ampleness of $-K_{\bar{X}}$, \cite[Lemma 2.6]{PaPe_ht_2}}]\label{lem:delPezzo_criterion}
	Let $\bar{X}$ be a normal surface of rank 1 such that $K_{\bar{X}}$ is $\Q$-Cartier. Let $\pi\colon X\to \bar{X}$ be its resolution, $D\de\Exc\pi$. Assume that $X$ admits a $\P^1$-fibration, let $F$ be its fiber and let $H_1,\dots, H_{h}$ be all horizontal components of $D$. Then $\bar{X}$ is del Pezzo if and only if
	\begin{equation}\label{eq:ld_phi_H}
		\sum_{j=1}^{h}\ld(H_j)\, H_j\cdot F>D\cdot F-2.
	\end{equation}
\end{lemma}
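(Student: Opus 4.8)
The plan is to derive the ampleness criterion from the intersection theory of divisors on the resolution, reducing the question of whether $-K_{\bar X}$ is ample to a single numerical inequality on the horizontal boundary components. The key point is that on a normal surface of Picard rank $1$, a $\Q$-Cartier divisor is ample precisely when it has positive intersection with one (equivalently, any) curve. Since $-K_{\bar X}$ is $\Q$-Cartier and $\rho(\bar X)=1$, ampleness of $-K_{\bar X}$ is equivalent to $-K_{\bar X}\cdot \pi_{*}\Gamma>0$ for a suitable test curve $\Gamma$ on $X$. The natural test curve to use is a general fiber $F$ of the given $\P^1$-fibration, pushed forward to $\bar X$.

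First I would write $K_{X}=\pi^{*}K_{\bar X}+\sum_{i}\cf(T_i)T_i$, where the $T_i$ are the components of $D=\Exc\pi$, so that by the projection formula $\pi^{*}K_{\bar X}\cdot F = (K_X-\sum_i\cf(T_i)T_i)\cdot F$. Intersecting with $F$ and using that a general fiber satisfies $F^2=0$ and $K_X\cdot F=-2$ (adjunction for the rational fibration, since $F\cong\P^1$), I would compute $-\pi^{*}K_{\bar X}\cdot F = 2+\sum_i\cf(T_i)\,T_i\cdot F$. Now only horizontal components meet a general fiber nontrivially; the vertical components of $D$ satisfy $T_i\cdot F=0$ and drop out. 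Writing the horizontal components as $H_1,\dots,H_h$ and recalling $\ld(H_j)=1-\cf(H_j)$, the sum becomes $\sum_{j}\cf(H_j)H_j\cdot F=\sum_j(1-\ld(H_j))H_j\cdot F = D\cdot F-\sum_j\ld(H_j)H_j\cdot F$, where I use $D\cdot F=\sum_j H_j\cdot F$ since vertical components contribute $0$. Substituting gives $-\pi^{*}K_{\bar X}\cdot F = 2-D\cdot F+\sum_j\ld(H_j)H_j\cdot F$.

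The last step is to translate the positivity of $-\pi^{*}K_{\bar X}\cdot F$ into the positivity of $-K_{\bar X}\cdot\pi_{*}F$, which is immediate from the projection formula $\pi^{*}K_{\bar X}\cdot F = K_{\bar X}\cdot\pi_{*}F$ since $\pi$ is birational and $F$ is horizontal (so $\pi_*F$ is an honest curve, not contracted). Then $-K_{\bar X}\cdot\pi_{*}F>0$ rearranges to exactly $\sum_{j=1}^{h}\ld(H_j)\,H_j\cdot F>D\cdot F-2$, which is \eqref{eq:ld_phi_H}. For the equivalence with ampleness I invoke the rank-one hypothesis: since $\rho(\bar X)=1$, the class of $\pi_*F$ spans the $\Q$-divisor class group up to scaling, so $-K_{\bar X}$ is ample iff it meets $\pi_*F$ positively.

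The main obstacle I anticipate is justifying the equivalence "positive on $\pi_*F$ $\Leftrightarrow$ ample" rigorously in the normal, rank-one setting, rather than the numerical computation, which is routine. One must ensure $\pi_*F$ is a genuine effective nonzero curve class (it is, as $F$ is horizontal and $F\cdot D=D\cdot F\geq h\geq 1$ forces $F$ to dominate $\bar X$), and that on a rank-one normal projective surface the Nakai--Moishezon-type criterion reduces to a single positive intersection number together with positive self-intersection; the self-intersection positivity $(-K_{\bar X})^2>0$ follows once $-K_{\bar X}$ is numerically a positive multiple of an ample generator. This is precisely the content invoked from \cite[Lemma 2.6]{PaPe_ht_2}, so I would cite it for the rank-one reduction and present the displayed computation above as the derivation of the explicit inequality.
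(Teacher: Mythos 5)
Your argument is correct and is the same one the paper relies on: the paper does not reprove this lemma but quotes it from \cite[Lemma 2.6]{PaPe_ht_2}, and that proof is exactly your computation — intersect the ramification formula with a general fiber $F$ (using $F\cong\P^1$, $F^2=0$, $K_X\cdot F=-2$, and $T\cdot F=0$ for vertical $T$), apply the projection formula, and use that on a rank-one normal projective surface a $\Q$-Cartier divisor is ample if and only if it meets one (equivalently any) curve positively, ampleness being a numerical property. Two small repairs to your write-up: first, your opening decomposition $K_{X}=\pi^{*}K_{\bar{X}}+\sum_{i}\cf(T_i)T_i$ has the opposite sign to the paper's convention $\pi^{*}K_{\bar{X}}=K_{X}+\sum_{i}\cf(T_{i})T_{i}$, so your later substitution $\cf(H_j)=1-\ld(H_j)$ is inconsistent with your own normalization; the two sign slips cancel, and your displayed identity $-\pi^{*}K_{\bar{X}}\cdot F=2-D\cdot F+\sum_{j}\ld(H_j)\,H_j\cdot F$ is the correct one, but the intermediate chain as written does not cohere. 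Second, ``forces $F$ to dominate $\bar{X}$'' should say that $\pi(F)$ is a curve; the cleaner justification is simply that a general fiber is not a component of $D=\Exc\pi$ and hence is not contracted — this also covers the case $h=0$, where your inequality $F\cdot D\geq h\geq 1$ fails but the criterion (reading $0>D\cdot F-2$) still makes sense and your argument still applies.
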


\begin{lemma}[{Log discrepancies do not decrease after swaps, \cite[Lemma 2.9]{PaPe_ht_2}}]\label{lem:cascades}
	Let $(X,D)$ be the minimal log resolution of a log terminal del Pezzo surface of rank one. Let $(X,D)\sqto (X',D')$ be a vertical swap.
	\begin{enumerate}
		\item\label{item:cascades-ld} Let $T'$ be a component of $D'$ and let $T$ be its proper transform on $X$. Then $\ld_{D'}(T')\geq \ld_{D}(T)$.
		\item\label{item:cascades-still-dP} The log surface $(X',D')$ is the minimal log resolution of a log terminal del Pezzo surface of rank one, too.
	\end{enumerate}
\end{lemma}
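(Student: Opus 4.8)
The plan is to reduce to a single elementary vertical swap and then induct on the number of elementary swaps composing the given one. Indeed, once the elementary case is settled, applying it to an intermediate surface $(X_i,D_i)$ shows that $(X_{i+1},D_{i+1})$ is again the minimal log resolution of a log terminal del Pezzo surface of rank one, so that the elementary case applies again at the next step, while the inequalities in part \ref{item:cascades-ld} compose along proper transforms. So fix an elementary swap contracting a vertical $(-1)$-curve $L\not\subseteq D$ via $\phi\colon X\to X'$, where $L$ meets the vertical $(-2)$-curve $C\subseteq D$ transversally (so $L\cdot C=1$, forced by $\phi(C)^2=-1$) and, if $L\cdot D=2$, one further component $C'\subseteq D$; here $C'$ is not a $(-2)$-curve, hence $(C')^2\leq -3$ and $\phi(C')^2\leq -2$. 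By definition $D'=\phi_{*}(D-C)$.

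First I would record how $\phi$ changes the weighted dual graph of the boundary. Among the components of $D-C$ only $C'$ meets $L$, so the single contraction $\phi$ creates no new incidence between components of $D-C$ and raises the self-intersection of exactly one component, $C'$, by one. Hence the dual graph of $D'$ is the induced subgraph of the dual graph of $D$ on the vertices other than $C$, with the weight $-(C')^2$ lowered by one. Since $D$ is a disjoint union of chains and forks, this is again a disjoint union of chains and forks, and each connected component of $D'$ is a weighted subgraph of a connected component of $D$, where a component of $D'$ is identified with its proper transform in $D$. As $\bar X$ is log terminal, each component of $D$ is an admissible chain or fork, so Lemma \ref{lem:Alexeev} applies and gives both that every component of $D'$ is an admissible chain or fork and the inequality $\ld_{D'}(T')\geq \ld_D(T)$ of part \ref{item:cascades-ld}.

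This already secures most of part \ref{item:cascades-still-dP}. The divisor $D'$ is snc (only $C'$ meets $L$, so no non-snc point is created), negative definite, and has all self-intersections $\leq -2$; contracting it therefore yields a log terminal surface $\bar X'$ for which $X'\to\bar X'$ is the minimal resolution, equal to the minimal log resolution as the exceptional chains and forks are snc. For the Picard rank, contracting $L$ lowers $\rho$ by one and $\#D'=\#D-1$, whence $\rho(\bar X')=\rho(X')-\#D'=(\rho(\bar X)+\#D-1)-(\#D-1)=1$.

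The remaining and, I expect, decisive point is ampleness of $-K_{\bar X'}$, for which I would invoke Lemma \ref{lem:delPezzo_criterion} applied to the $\P^1$-fibration induced on $X'$ (well defined since $L$ is vertical, and $K_{\bar X'}$ is $\Q$-Cartier by log terminality). As $C$ is vertical, the horizontal components of $D'$ are precisely the images $\phi(H_1),\dots,\phi(H_h)$ of the horizontal components $H_1,\dots,H_h$ of $D$, and $\phi(H_j)\cdot F'=H_j\cdot F$ for a general fibre, so $D'\cdot F'=D\cdot F$. Combining the criterion for $\bar X$ with part \ref{item:cascades-ld} then gives
\begin{equation*}
	\sum_{j=1}^{h}\ld_{D'}(\phi(H_j))\,\phi(H_j)\cdot F' \;\geq\; \sum_{j=1}^{h}\ld_D(H_j)\,H_j\cdot F \;>\; D\cdot F-2 \;=\; D'\cdot F'-2,
\end{equation*}
so $\bar X'$ is del Pezzo. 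The genuinely delicate part is the dual-graph bookkeeping of the first step — pinning down the position of $C$ (tip or internal) and of $C'$ and verifying the weighted-subgraph relation uniformly — after which parts \ref{item:cascades-ld} and \ref{item:cascades-still-dP} follow formally from Lemmas \ref{lem:Alexeev} and \ref{lem:delPezzo_criterion}.
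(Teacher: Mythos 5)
Your proof is correct and follows essentially the same route as the paper: part \ref{item:cascades-ld} is exactly the weighted-subgraph comparison of Lemma \ref{lem:Alexeev} (after the dual-graph bookkeeping for one elementary swap, which you carry out correctly, including $L\cdot C=1$ and $\phi(C')^2\leq -2$), and part \ref{item:cascades-still-dP} is deduced from \ref{item:cascades-ld} together with the fibration criterion of Lemma \ref{lem:delPezzo_criterion}, which is precisely the paper's argument (with details deferred there to \cite[Lemma 2.9]{PaPe_ht_2}). Your additional reduction to elementary swaps and the induction are the natural way to fill in those deferred details and introduce no gap.
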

\begin{proof}
	Part \ref{item:cascades-ld} follows from Lemma \ref{lem:Alexeev}, and \ref{item:cascades-still-dP} follows from \ref{item:cascades-ld} and Lemma \ref{lem:delPezzo_criterion}, see \cite[Lemma 2.9]{PaPe_ht_2}.
\end{proof}

\subsection{Representing families and moduli dimension}\label{sec:moduli}

We now recall the definition of \emph{moduli dimension} used in Theorem \ref{thm:ht=3}\ref{item:uniq_cha=2}, see \cite[Definitions 1.9, 1.10]{PaPe_ht_2} for details. Let $\cC$ be a set consisting of some isomorphism classes of normal surfaces. We say that a smooth family $f\colon (\cX,\cD)\to B$, where $\cD$ is a divisor on $\cX$, \emph{represents} $\cC$ if the following hold. 
\begin{enumerate-alt}
	\item\label{item:moduli-irreducible} The restriction of $f$ to each component of  $\cD$ is a smooth surjective morphism with irreducible fibers.
	\item\label{item:moduli-iso-classes} The set of isomorphism classes of fibers $(X_b,D_b)\de (f^{-1}(b),\cD|_{f^{-1}(b)})$ equals the set of isomorphism classes of minimal log resolutions of surfaces in $\cC$. 
	\setcounter{foo}{\value{enumerate-alti}}
\end{enumerate-alt}
We say that the set $\cC$ \emph{has moduli dimension $d$} if it is represented by a family $f\colon (\cX,\cD)\to B$ with $\dim B=d$, which is \emph{almost universal}, that is, satisfies the following additional properties. 

\begin{enumerate-alt}	
	\setcounter{enumerate-alti}{\value{foo}}
	\item\label{item:moduli-group-action} The morphism $f$ is equivariant with respect to the action of some finite group $G$, such that two fibers $(X_b,D_b)$ and $(X_{b'},D_{b'})$ are isomorphic if and only if $b$ and $b'$ lie in the same $G$-orbit. 
	\item\label{item:moduli-semiuniversal} The formal germ of $f$ at each $b\in B$ is a semiuniversal deformation of $(X_b,D_b)$. \\ In particular, $h^{1}(\lts{X_b}{D_b})=d$.
\end{enumerate-alt}
The image in $\Aut(B)$ of the finite group $G$ from \ref{item:moduli-group-action} is called the \emph{symmetry group} of $f$.

\begin{remark}\label{rem:bases}
We will see in Lemma \ref{lem:w=2_cha=2}\ref{item:w=2-cha=2-moduli} that each set $\Phtl(\cS)$ in Theorem~\ref{thm:ht=3}\ref{item:uniq_cha=2} is represented by an almost universal families over $\Astst$, with symmetry group listed in Table \ref{table:ht=3_char=2_moduli}.
\end{remark} 

\begin{remark}\label{rem:exotic}
Theorem \ref{thm:ht=3} and \cite[Proposition C]{PaPe_ht_2} imply that for every log terminal singularity type $\cS$ other than $[2,2,3,(2)_{5}]+[3,2]$ the set $\Phtl(\cS)$ is represented by a family.

For $\cS= [2,2,3,(2)_{5}]+[3,2]$ this is not the case. To see this write $\Phtl(\cS)=\{\bar{X}_{1},\bar{X}_2\}$. By Proposition~\ref{prop:rigidity}\ref{item:rigidity-h1} the minimal log resolution $(X_i,D_i)$ of $\bar{X}_i$ for each $i=1,2$ has $h^{1}(\lts{X_i}{D_i})=0$, so its infinitesimal deformations are trivial. We note that if $\kk=\C$ then the smooth loci of $\bar{X}_{1}$ and $\bar{X}_2$ are not even homeomorphic in the Euclidean topology, as $H_{1}(\bar{X}_1\reg;\Z)=\Z/3$ and $H_1(\bar{X}_2\reg;\Z)=0$ by Proposition \ref{prop:exception}\ref{item:exception_H1}.
\end{remark}

\subsection{Descendants with elliptic boundary}

We now recall the definition of descendants with elliptic boundary from \cite[Definition 1.11]{PaPe_ht_2}, and introduce we the notion of an \emph{elliptic tie}, which will be used to distinguish del Pezzo surfaces with such descendants. Those surfaces are excluded in the assumptions of Theorem \ref{thm:ht=3}, because their common structure makes them easier to understand together, regardless of their height. They have been classified in \cite[Theorem E]{PaPe_ht_2}.

\begin{definition}[{Descendant with elliptic boundary, see \cite[Definition 1.11]{PaPe_ht_2}}]\label{def:GK}
	Let $(\bar{Y},\bar{T})$ be a log surface. We say that the boundary $\bar{T}$ is \emph{elliptic} if it is an irreducible curve of arithmetic genus one contained in the smooth locus of $\bar{Y}$. We say that $(\bar{Y},\bar{T})$ is a \emph{descendant} of a normal projective surface $\bar{X}$ if the minimal log resolution $(X,D)$ of $\bar{X}$ admits a birational morphism $\phi\colon X\to \bar{Y}$ such that $\phi_{*}D=\bar{T}$.
\end{definition}

\begin{definition}[An elliptic tie]\label{def:tie}
	Let $X$ be a smooth projective surface, and let $D$ be a reduced snc divisor on $X$. Let $L$ be a $(-1)$-curve on $X$ which is not contained in $D$, and let $E$ be the connected component of $D+L$ containing $L$. We say that $L$ is an \emph{elliptic tie} on $(X,D)$ if there is a birational morphism $\sigma\colon X\to Y$ onto a smooth surface $Y$ such that $\Exc\sigma\subseteq E$ and $\sigma_{*}E$ is an irreducible curve of arithmetic genus 1.
\end{definition}

\begin{lemma}\label{lem:tie}
	A normal projective surface of rank one has a descendant with elliptic boundary if and only if its minimal log resolution has an elliptic tie.
\end{lemma}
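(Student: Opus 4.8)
The key computational tool is the behaviour of arithmetic genus under contraction: if a morphism $\tau$ contracts a $(-1)$-curve $L'$ meeting a curve $G\neq L'$ with $G\cdot L'=m$, then $p_a(\tau_*G)=p_a(G)+\binom{m}{2}$. In particular, a rational curve acquires arithmetic genus $1$ precisely when, along a contraction, a single $(-1)$-curve meets its proper transform in two points (counted with tangency), while all other contracted curves meet it at most once. Both directions of the lemma amount to locating this \emph{tying} $(-1)$-curve.

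For the implication ``$\Leftarrow$'', suppose $(X,D)$ carries an elliptic tie $L$, witnessed by $\sigma\colon X\to Y$ with $\Exc\sigma\subseteq E$ and $\sigma_*E$ irreducible of genus $1$, where $E$ is the connected component of $D+L$ containing $L$. Since $X\to\bar X$ is a minimal log resolution, every component of $E\wedge D$ has self-intersection at most $-2$, so $L$ is the only $(-1)$-curve in $E$; as $\sigma_*E$ is not rational, $\sigma$ is not an isomorphism, hence the first curve it contracts is $L$ and $\sigma_*L=0$. Therefore $\sigma_*E=\sigma_*(E-L)$ is an irreducible genus-$1$ curve $\bar T$, equal to $\sigma_*D_0$ for the unique surviving component $D_0$ of $E-L\subseteq D$. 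Contracting the remaining connected components of $D$ --- which are disjoint from $E$, hence untouched by $\sigma$, and negative definite, hence contractible --- yields a normal surface $\bar Y$ and a birational morphism $\phi\colon X\to\bar Y$ with $\phi_*D=\bar T$; the curve $\bar T$ lies in $Y$ and avoids the new singular points, so $\bar T\subseteq\bar Y\reg$. Thus $(\bar Y,\bar T)$ is a descendant of $\bar X$ with elliptic boundary.

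For the implication ``$\Rightarrow$'', let $\phi\colon X\to\bar Y$ be a descendant with $\phi_*D=\bar T$, irreducible of genus $1$ and contained in $\bar Y\reg$, and let $D_0\subseteq D$ be the unique component with $\phi_*D_0=\bar T$. As $D_0\cong\P^1$ has genus $0$ while $p_a(\bar T)=1$, the restriction of $\phi$ over the smooth locus $\bar Y\reg$, a composition of blow-downs of $(-1)$-curves, must by the genus formula contract exactly one $(-1)$-curve meeting the proper transform of $\bar T$ in two points; this forces $\bar T$ to have a single node or cusp at a point $q$. I would then reorder this blow-down sequence so as to contract first those exceptional curves over $q$ that have been blown up on the tying curve or at its points of intersection with $D_0$; after doing so the tying curve becomes an honest $(-1)$-curve $L$ on $X$ meeting $D_0$ in two points (or tangentially). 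Since $D$ is a disjoint union of trees, $L\not\subseteq D$, for otherwise $L$ and $D_0$ would form a cycle in $D$. Taking $E$ to be the connected component of $D+L$ containing $L$ and letting $\sigma\colon X\to Y$ contract $L$ together with the components of $E-L-D_0$ needed to make $\sigma_*E$ irreducible, the genus formula gives $p_a(\sigma_*E)=p_a(\sigma_*D_0)=1$, so $L$ is an elliptic tie.

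I expect the main obstacle to be this last reordering step in ``$\Rightarrow$'': one must show that the $(-1)$-curve responsible for the genus jump can be realized on $X$ itself as a $(-1)$-curve meeting $D_0$ in two points, and that the accompanying contraction $\sigma$ can be confined to the connected component $E$. This requires a careful analysis of the blow-down factorization of $\phi$ over the singular point $q$ of $\bar T$, using the tree structure of $D$ and the fact that all curves contracted over $q$ lie in the connected component of $D+\Exc\phi$ containing $D_0$.
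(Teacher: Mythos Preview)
Your ($\Leftarrow$) direction is correct and matches the paper's argument: given the elliptic tie, one contracts $\sigma_*(D+L-E)$ on $Y$ to obtain the descendant. For ($\Rightarrow$) the paper simply invokes \cite[Lemma~1.12(b)]{PaPe_ht_2}, so your direct attempt goes beyond what is written here.

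The reordering step has a genuine gap, and the issue is more basic than you indicate: reordering the blow-down factorization of $\phi$ does not change which curves on $X$ are $(-1)$-curves. If the tying curve has proper transform $L'$ on $X$ with $(L')^2<-1$ (because points on it were subsequently blown up), no reordering makes $L'$ a $(-1)$-curve \emph{on $X$}; your sentence ``after doing so the tying curve becomes an honest $(-1)$-curve $L$ on $X$'' is false in general. The elliptic tie is typically a \emph{different} curve, and locating it uses the rank-one hypothesis on $\bar X$, which your argument never invokes. In the log terminal case one can argue directly: a Picard count gives $\#(\Exc\phi\setminus D)=2-\rho(\bar Y)\leq 1$, and the value $0$ is impossible because the connected component $C\subseteq\Exc\phi$ over the singular point $q\in\bar T$ contracts to a smooth point of $\bar Y$, hence contains a $(-1)$-curve, while $D$ has none. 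The unique curve $L\in\Exc\phi\setminus D$ then lies in $C$ and is its only $(-1)$-curve, so it is a $(-1)$-curve on $X$; taking $\sigma$ to contract all of $C$ (one checks $C=E-D_0$) exhibits $L$ as an elliptic tie. Note that $L$ need not meet $D_0$ on $X$ at all. You also assume $D_0\cong\P^1$; this is automatic in the log terminal setting but not for arbitrary normal surfaces, where the lemma as stated really needs the cited reference.
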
 
\begin{proof}
	Let $(X,D)$ be the minimal log resolution of a normal surface $\bar{X}$. Assume that $(X,D)$ has an elliptic tie. Let $L$, $E$ and  $\sigma\colon X\to Y$ be as in Definition \ref{def:tie}. Since $\sigma$ is an isomorphism in a neighborhood of $D+L-E$, there is a birational morphism $\alpha\colon Y\to \bar{Y}$ with $\Exc\alpha=\sigma_{*}(D+L-E)$, so $(\bar{Y},(\alpha\circ\sigma)_{*}E)$ is a descendant of $\bar{X}$ with elliptic boundary. The converse follows from \cite[Lemma 1.12(b)]{PaPe_ht_2}, see Notation 6.1 loc.\ cit.
\end{proof}

We will see that del Pezzo surfaces of rank one and types listed in Theorem \ref{thm:ht=3} have 
no descendants with elliptic boundaries. For most types, this is a consequence of the following criterion.

\begin{lemma}[{Types with no elliptic-boundary descendants, \cite[Theorem E(b)]{PaPe_ht_2}}]\label{lem:no-deb}
	Let $\bar{X}$ be a del Pezzo surface of rank one and height at least $3$. Assume that the minimal resolution of all canonical singularities of $\bar{X}$ has at most $5$ exceptional components. Then $\bar{X}$ has no descendant with elliptic boundary.
\end{lemma}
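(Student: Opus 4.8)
Suppose, aiming for a contradiction, that $\bar{X}$ had a descendant with elliptic boundary. By Lemma~\ref{lem:tie} this is equivalent to its minimal log resolution $(X,D)$ carrying an elliptic tie, so the plan is to show that no such tie can exist under the stated hypothesis. I would fix a $(-1)$-curve $L\not\subseteq D$, the connected component $E\subseteq D+L$ containing $L$, and a birational morphism $\sigma\colon X\to Y$ onto a smooth surface with $\Exc\sigma\subseteq E$ and $C\de\sigma_{*}E$ irreducible of arithmetic genus $1$, as in Definition~\ref{def:tie}. Since $\sigma$ contracts only components of $E$ while $C$ is irreducible, exactly one component $H$ of $E$ survives, and $C=\sigma_{*}H$.

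First I would determine the shape of $E$. Because $X\to\bar{X}$ is the minimal resolution, every component of $D$ has self-intersection $\leq-2$, so $L$ is the only $(-1)$-curve in $E$; moreover the negative definiteness of $D$ makes it a disjoint union of trees, so every cycle in the dual graph of $E$ must run through $L$. Writing $\sigma$ as a sequence of $(-1)$-curve contractions and tracking the arithmetic genus by the blow-down formula $p_a(f_{*}E')=p_a(E')+\binom{m-1}{2}$ with $m=L'\cdot(E'-L')$ (a one-line consequence of $f^{*}f_{*}E'=E'+(m-1)L'$ and adjunction), I get $1=p_a(C)=p_a(E)+\sum_i\binom{m_i-1}{2}$. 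Since the only available $(-1)$-curve at the start is $L$, this forces $L$ to meet $D$ with intersection at least two, and the components it meets, together with $L$, to support the unique arithmetic-genus-one cycle of $C$: either a loop already present in $E$, or the singular point (node or cusp) created by the first contraction when $L\cdot(E-L)\geq 3$. In every case the cycle consists of $L$ and a connected chain $T$ of components of $D$.

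Next I would bound $T$ from below. The surviving component $H$ carries the node or cusp of $C$, so the two arcs of the cycle joining $H$ to itself must each contract to a \emph{smooth} point of $Y$; by the blow-down combinatorics recalled in Section~\ref{sec:types} each arc, with $L$ inserted, has the form $[T',1,(T')^{*}]$. Feeding this into the ampleness criterion for $-K_{\bar{X}}$ (Lemma~\ref{lem:delPezzo_criterion}) and using $F\cdot D\geq 3$ to constrain how $L$ can meet the horizontal part of $D$, I expect to pin the self-intersections along $T$ to $-2$ away from a few controlled positions, and to bound the admissible chains and forks of $D$ abutting the cycle via Lemma~\ref{lem:admissible_forks}. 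A direct enumeration of the surviving possibilities should then show that the $(-2)$-curves resolving the canonical singularities of $\bar{X}$ number at least six.

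The hard part is precisely this final enumeration: ruling out every short configuration, i.e.\ showing that no elliptic tie can be supported on a canonical part of at most five components while $-K_{\bar{X}}$ stays ample and $\height(\bar{X})\geq 3$. The most economical way to close it is to compare with the explicit classification of del Pezzo surfaces of rank one admitting an elliptic-boundary descendant in \cite[Theorem~E]{PaPe_ht_2}: every surface there has canonical part among $3\rA_2$, $\rA_1+\rA_2+\rA_5$, $\rA_1+\rA_7$, $3\rA_1+\rD_4$ and the like, each with at least six $(-2)$-components, so the contrapositive yields the statement.
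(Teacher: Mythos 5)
The first thing to note is that the paper does not prove Lemma \ref{lem:no-deb} at all: the bracketed citation in its header \emph{is} the proof. The statement is imported verbatim as Theorem E(b) of \cite{PaPe_ht_2}, where the classification of rank-one del Pezzo surfaces admitting elliptic-boundary descendants is actually carried out. So the only step of your proposal that genuinely closes the argument --- the final appeal to \cite[Theorem~E]{PaPe_ht_2} --- coincides with the paper's approach, and everything before it is an attempted independent proof that the paper never undertakes (and whose content is essentially the theorem being cited).

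That independent attempt has concrete gaps. First, your blow-down genus formula is off by one: contracting a $(-1)$-curve $L'$ with $m=L'\cdot(E'-L')$ raises the arithmetic genus by $\binom{m}{2}$, not $\binom{m-1}{2}$ (for $m=2$ the image acquires a node, so the genus jumps by $1$, whereas $\binom{m-1}{2}=0$); your identity $f^{*}f_{*}E'=E'+(m-1)L'$ is correct, but adjunction then yields $(m-1)^{2}+(m-1)=2\binom{m}{2}$. Second, the enumeration you yourself call ``the hard part'' is exactly the content of the lemma, and you leave it undone --- ruling out all short configurations compatible with ampleness of $-K_{\bar X}$ and $\height\geq 3$ is the substance of Theorem E, not a routine check. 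Third, even the fallback as you phrase it is imprecise: the types $3\rA_2$, $\rA_1+\rA_2+\rA_5$, $\rA_1+\rA_7$, $3\rA_1+\rD_4$ you list are \emph{descendant} types $\cD$, not the canonical parts of $\bar X$ itself, and the two need not agree; for instance $\cD=2\rA_1+\rA_3$, with only five components, occurs in Proposition \ref{prop:ht=3_swaps}\ref{item:swap_cha=2_GK}, while the corresponding surface has canonical part $4\rA_1+\rA_3$. One would have to verify the bound for the canonical part of \emph{every} surface possessing such a descendant (all surfaces in the cascades, in every characteristic), which ``and the like'' does not do. The clean way to finish is to cite \cite[Theorem E(b)]{PaPe_ht_2}, which is word-for-word the lemma --- i.e.\ to do exactly what the paper does.
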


\subsection{Summary of the notation}\label{sec:notation}

Let $(X,D)$ be the minimal log resolution of a log terminal del Pezzo surface $\bar{X}$ of rank~1. In the lemmas quoted in Theorem \ref{thm:ht=3} we describe the singularity type of $\bar{X}$ together with the structure of a certain witnessing $\P^1$-fibration $p\colon X\to \P^1$ (chosen so that it leads to vertical swaps in  Proposition  \ref{prop:ht=3_swaps}). More precisely, we describe the weighted graph of $D+\sum_{j}A_j$, where $A_1,\dots A_{n}$ are all vertical $(-1)$-curves, together with its decomposition into horizontal and vertical parts. We refer to this data as the \emph{combinatorial type of $(X,D,p)$}. To present it in an efficient way, we first write down the weighted graph of $D$ 
using notation from Section \ref{sec:types}, and then we decorate it as explained in \cite[\sec 2G]{PaPe_ht_2}. We now briefly recall this notation.
\smallskip 

The preimage of each singular point of $\bar{X}$ is an admissible chain or fork. We write down its type as follows. 
\begin{itemize}
	\item $[a_1,\dots,a_n]$ is a rational chain whose subsequent components have self-intersection numbers $-a_1,\dots,-a_n$,
	\item $\langle b;T_1,T_2,T_3\rangle$ is a rational fork with a branching component $[b]$ and twigs $T_1,T_2,T_3$,
	\item $(m)_{k}$ stands for an integer $m$ repeated $k$ times,
	\item we use the operator $*$ and convention for $(2)_{-1}$ introduced in formula \eqref{eq:conventions_Tono}.
\end{itemize}
We write the singularity type of $\bar{X}$ as a formal sum of types of its singularities, and decorate it as follows.
	\begin{itemize}
		\item We put in boldface those numbers which correspond to horizontal components of $D$. 
		\item If $D$ contains a $1$- and a $2$-section, then we underline the (bold) number corresponding to the $2$-section.
		\item We decorate by $\dec{j}$ those numbers which correspond to the components meeting the vertical $(-1)$-curve $A_j$. 
		\item We write $\ldec{j}[a_1,\dots,a_n]\de [a_{1}\dec{j},a_2,\dots,a_n]$ and $[a_1,\dots,a_n]\dec{j}\de [a_1,\dots,a_{n-1},a_{n}\dec{j}]$.
	\end{itemize} 
In the figures, we use the following notation:
\begin{itemize}
	\item A solid line with label \enquote{$n$} denotes an $n$-curve in $D$. Label $n=-2$ is skipped.
	\item A dashed line denotes one of the vertical $(-1)$-curves $A_j$. 
\end{itemize}
Sometimes, especially in Section \ref{sec:w=2}, we will enumerate the curves $A_j$ starting from $0$, not $1$.

\begin{notation}\label{not:P}
	For a singularity type $\cS$ and for $w\in \{1,2,3\}$ we denote by $\Pht^{\width=w}(\cS)$ the set of isomorphism classes of del Pezzo surfaces of rank one, height $3$ and width $w$.
\end{notation}

\clearpage
\section{Vertically primitive \texorpdfstring{$\P^1$}{P1}-fibrations of height $3$}\label{sec:basic} 

In this section we construct del Pezzo surfaces $\bar{Y}$ which serve as vertically primitive models in Proposition \ref{prop:ht=3_swaps}. More precisely, we construct log surfaces $(Y,D_Y)$ which are minimal log resolutions of del Pezzo surfaces $\bar{Y}$ of singularity types listed in Proposition \ref{prop:ht=3_swaps}, together with $\P^1$-fibrations $p_Y\colon Y\to \P^1$ of height $3$ with respect to $D_Y$, such that $(Y,D_Y,p_Y)$ is vertically primitive, see Section \ref{sec:intro-structure} for a definition. The log surface $(Y,D_Y)$, together with all degenerate fibers of $p_Y$, is shown in the last column of Table \ref{table:intro}. We put $\ww\de \#(D_Y)\hor$. Proposition \ref{prop:ht=3_swaps} asserts that the minimal log resolution of a log terminal del Pezzo surface of rank one, height $3$ and width $\ww$, with some witnessing $\P^1$-fibration, swaps vertically to some $(Y,D_Y,p_Y)$ constructed below.
\smallskip

In each case, the construction proceeds as follows. We start with a log surface $(Z,B)$ as in Proposition \ref{prop:ht=3_models} and define a morphism $\phi\colon (Y,D_Y)\to (Z,B)$ where $D_{Y}$ equals $\phi^{*}B\redd$ minus all $(-1)$-curves. The required $\P^1$-fibration of height $3$ on $(Y,D_Y)$ will be a pullback of the first projection in case $Z=\P^1\times \P^1$, and of a pencil of lines through a point $p_0$ in case $Z=\P^2$.

We write $\cS(\bar{Y})$ for the singularity type of $\bar{Y}$. Some basic properties of $\bar{Y}$ are listed in Proposition \ref{prop:primitive}.

\subsection{Case $\ww=3$, see Propositions \ref{prop:ht=3_models}\ref{item:w=3_models} and \ref{prop:ht=3_swaps}\ref{item:w=3_swaps}}\label{sec:basic-w=3}

\begin{example}[$\ww=3$, see Figure \ref{fig:w=3}]\label{ex:w=3}
	Let $Z=\P^1\times \P^1$ and $B=\sum_{j=1}^{3}(V_j+H_j)$, where $V_{j}$ and $H_{j}$ are 
	vertical and horizontal lines, respectively. Write $\{p_{ij}\}=V_{i}\cap H_{j}$ and let $v_{ij}$, $h_{ij}$ be the points infinitely near to $p_{ij}$ on the proper transforms of $V_{i}$ and $H_{j}$, respectively. Let $\phi\colon Y\to Z$ be the composition of the following blowups. 
	\begin{enumerate}
		\item \label{item:ht=3_A1+A2+A5} Blow up at $p_{13},v_{13},p_{21},h_{21},p_{32},h_{32},p_{33}$. Then $\cS(\bar{Y})=\rA_{1}+\rA_{2}+\rA_{5}$, see Figure \ref{fig:w=3_A1+A2+A5}.
		\item\label{item:ht=3_2A4} Blow up at $p_{13},v_{13},p_{23},p_{22},p_{32},p_{31},h_{31}$. Then $\cS(\bar{Y})=2\rA_{4}$, see Figure \ref{fig:w=3_2A4}. 
	\end{enumerate}
\begin{figure}[ht]
	\subcaptionbox{$\rA_{1}+\rA_{2}+\rA_{5}$ \label{fig:w=3_A1+A2+A5}}[.53\linewidth]{
	\begin{tikzpicture}
	\path[use as bounding box] (0,0) rectangle (8,3.2);
		\begin{scope}
			\draw (0,3) -- (3.2,3);
			\node at (0.5,3.2) {\small{$H_3$}};
			\draw (0,1.6) -- (3.2,1.6);
			\node at (0.5,1.8) {\small{$H_2$}};
			\draw (0,0.2) -- (3.2,0.2);
			\node at (0.5,0.4) {\small{$H_1$}};
			\draw (0.2,3.2) -- (0.2,0);
			\node at (0,0.8) {\small{$V_1$}};
			\draw (1.6,3.2) -- (1.6,0);
			\node at (1.4,0.8) {\small{$V_2$}};
			\draw (3,3.2) -- (3,0);
			\node at (2.8,0.8) {\small{$V_3$}};
			\filldraw (0.2,3) circle (0.06);
			\node at (-0.05,2.8) {\small{$p_{13}$}};
			\draw[-stealth] (0.3,2.9) -- (0.3,2.5);
			\node at (0.55,2.7) {\small{$v_{13}$}};
			\filldraw (1.6,0.2) circle (0.06);
			\node at (1.4,0) {\small{$p_{21}$}};
			\draw[-stealth] (1.5,0.3) -- (1.1,0.3);
			\node at (1.3,0.5) {\small{$h_{21}$}};
			\filldraw (3,3) circle (0.06);
			\node at (2.75,2.8) {\small{$p_{33}$}};
			\filldraw (3,1.6) circle (0.06);
			\node at (2.75,1.4) {\small{$p_{32}$}};
			\draw[-stealth] (2.9,1.7) -- (2.5,1.7);
			\node at (2.7,1.9) {\small{$h_{32}$}};
			\node at (4,1.75) {\small{$\phi$}};
			\draw[<-] (3.6,1.6) -- (4.4,1.6);
		\end{scope}
		\begin{scope}[shift={(5,0)}]
			\draw (-0.8,3) -- (2.6,3);
			\node at (-0.7,3.2) {\small{$H_3$}};
			\draw (0.2,3.2) -- (0,2);
			\draw[dashed] (0,2.2) -- (0.2,1);
			\draw (0.2,1.2) -- (0,0);
			\node at (-0.1,0.5) {\small{$V_1$}};
			\draw[dashed] (1.2,3.2) -- (1,2);
			\node at (0.9,2.7) {\small{$V_2$}};
			\draw (1,2.2) -- (1.2,1);
			\draw[dashed] (1.2,1.2) -- (1,0);
			\draw[dashed] (2.4,3.2) -- (2.2,2.2);
			\draw (2.2,2.4) -- (2.4,1.4);
			\draw (2.4,1.6) -- (2.2,0.6);
			\draw[dashed] (2.2,0.8) -- (2.4,-0.2);
			\node at (2.1,2) {\small{$V_3$}}; 
			\draw (-0.8,0.9) -- (0.2,0.9) to[out=0,in=180] (1,2.4) -- (1.2,2.4) to[out=0,in=180] (2.4,0.2) -- (2.6,0.2);
			\node at (-0.7,1.1) {\small{$H_2$}};
			\draw (-0.8,0.2) -- (1.2,0.2) to[out=0,in=-120] (1.8,0.85);   
			\draw (1.9,1.05) to[out=60,in=180] (2.4,1.8) -- (2.6,1.8);
			\node at (-0.7,0.4) {\small{$H_1$}};
		\end{scope}
	\end{tikzpicture}
	}
	\subcaptionbox{$2\rA_{4}$ \label{fig:w=3_2A4}}[.46\linewidth]{
		\begin{tikzpicture}
		\path[use as bounding box] (0,0) rectangle (7.6,3.2);
			\begin{scope}
				\draw (0,3) -- (3.2,3);
				\node at (0.5,3.2) {\small{$H_3$}};
				\draw (0,1.6) -- (3.2,1.6);
				\node at (0.5,1.8) {\small{$H_2$}};
				\draw (0,0.2) -- (3.2,0.2);
				\node at (0.5,0.4) {\small{$H_1$}};
				\draw (0.2,3.2) -- (0.2,0);
				\node at (0,0.8) {\small{$V_1$}};
				\draw (1.6,3.2) -- (1.6,0);
				\node at (1.4,0.8) {\small{$V_2$}};
				\draw (3,3.2) -- (3,0);
				\node at (2.8,0.8) {\small{$V_3$}};
				\filldraw (0.2,3) circle (0.06);
				\node at (-0.05,2.8) {\small{$p_{13}$}};
				\draw[-stealth] (0.3,2.9) -- (0.3,2.5);
				\node at (0.55,2.7) {\small{$v_{13}$}};
				\filldraw (1.6,3) circle (0.06);
				\node at (1.9,2.8) {\small{$p_{23}$}};
				\filldraw (1.6,1.6) circle (0.06);
				\node at (1.9,1.8) {\small{$p_{22}$}};
				\filldraw (3,1.6) circle (0.06);
				\node at (2.75,1.8) {\small{$p_{32}$}};
				\filldraw (3,0.2) circle (0.06);
				\node at (2.8,0) {\small{$p_{31}$}};
				\draw[-stealth] (2.9,0.3) -- (2.5,0.3);
				\node at (2.7,0.5) {\small{$h_{31}$}};
				\node at (4,1.75) {\small{$\phi$}};
				\draw[<-] (3.6,1.6) -- (4.4,1.6);
			\end{scope}
			\begin{scope}[shift={(5,0)}]
				\draw (-0.8,3) -- (2.6,3);
				\node at (-0.7,3.2) {\small{$H_3$}};
				\draw (0.2,3.2) -- (0,2);
				\draw[dashed] (0,2.2) -- (0.2,1);
				\draw (0.2,1.2) -- (0,0);
				\node at (-0.1,0.5) {\small{$V_1$}};
				\draw[dashed] (1.2,3.2) -- (1,2);
				\draw (1,2.2) -- (1.2,1);
				\node at (0.9,1.6) {\small{$V_2$}};
				\draw[dashed] (1.2,1.2) -- (1,0);
				\draw (2.2,3.2) -- (2,2.2);
				\node at (1.85,2.65) {\small{$V_3$}};
				\draw (2,2.4) -- (2.2,1.4);
				\draw[dashed] (2.2,1.6) -- (2,0.6);
				\draw[dashed] (2,2.8) to[out=0,in=80] (2.4,0);
				\draw (-0.8,0.9) -- (0.2,0.9) to[out=0,in=180] (1,1.4) -- (1.2,1.4) to[out=0,in=180] (2,0.8)-- (2.2,0.8);
				\node at (-0.7,1.1) {\small{$H_1$}};
				\draw (-0.8,0.2) -- (2.6,0.2);
				\node at (-0.7,0.4) {\small{$H_2$}};
			\end{scope}
		\end{tikzpicture}
	}
	\caption{Example \ref{ex:w=3}: vertically primitive surfaces in Proposition \ref{prop:ht=3_swaps}\ref{item:ht=w=3}, $\ww=3$.}
	\label{fig:w=3}
\end{figure}
\end{example}

\subsection{Case $\ww=2$, see Propositions \ref{prop:ht=3_models}\ref{item:w=2_models} and \ref{prop:ht=3_swaps}\ref{item:ht=3,w=2,cha-neq-2_can},\ref{item:ht=3,w=2,cha-neq-2_GK},\ref{item:swap_cha=2_GK}}\label{sec:basic-w=2}

\begin{example}[$\ww=2$, $\cha\kk\neq 2$, see Figure \ref{fig:w=2_cha_neq_2}]\label{ex:w=2_cha_neq_2}
	Assume $\cha\kk\neq 2$. Let $B\subseteq \P^2$ be the sum of a conic $\cc$ and three lines $\ll_1,\ll_2,\ll_3$ meeting at a point $p_0\not\in \cc$, such that for some points $p_1,p_2,p_3$ we have $(\ll_{1}\cdot \cc)_{p_{1}}=1$ and $(\ll_{j}\cdot \cc)_{p_{j}}=2$ for $j\in \{2,3\}$. 
	For $i,j\in \{0,\dots, 3\}$ let $p_{ij}$ and $p_{i}',p_{i}'',\dots$ be the points infinitely near to $p_i$ on the proper transforms of $\ll_{j}$ and $\cc$; respectively. Let $\phi\colon Y\to \P^2$ be the composition of the following blowups. 
	\begin{enumerate}
		\item\label{item:ht=3_exception} Blow up at $p_0,p_{02},p_{1},p_{11},p_{2},p_{2}',p_{3},p_{3}',p_{3}''$. Then $\cS(\bar{Y})=\rA_{1}+\rA_{7}+[3]$, see Figure \ref{fig:ht=3_exception}.
		\item \label{item:w=2_A1+A2+A5} Blow up at $p_0,p_{01},p_{1},p_{2},p_{2}',p_{2}'',p_{3},p_{3}'$. Then $\cS(\bar{Y})=\rA_{1}+\rA_{2}+\rA_{5}$, see Figure \ref{fig:w=2_A1+A2+A5}.
		\item\label{item:w=2_2A1+2A3} Blow up at $p_0,p_{01},p_{1},p_{1}',p_{2},p_{2}',p_{3},p_{3}'$. Then $\cS(\bar{Y})=2\rA_{1}+2\rA_{3}$, see Figure \ref{fig:w=2_2A1+2A3}.
	\end{enumerate}	
\end{example} 

\begin{figure}
	\subcaptionbox{$\rA_1+\rA_7+[3]$ \label{fig:ht=3_exception}}[.34\linewidth]
	{
	\begin{tikzpicture}
		\begin{scope}
			\draw (0,3) -- (3.2,3);
			\draw[dashed] (0.2,3.2) -- (0,2.2);
			\draw (0,2.4) -- (0.2,1.4);
			\node at (-0.2,2) {\small{$-3$}};
			\node at (0.3,2) {\small{$L_2$}};
			\draw[dashed] (0.2,1.6) -- (0,0.6); 
			\draw (0,0.8) -- (0.2,-0.2);
			\draw (1.4,3.2) -- (1.2,2);
			\node at (1.5,2.4) {\small{$L_1$}};
			\draw[dashed] (1.2,2.2) -- (1.4,1);
			\draw (1.4,1.2) -- (1.2,0); 
			\draw (3,3.2) -- (2.8,2);
			\node at (3.1,2.4) {\small{$L_3$}};
			\draw (2.8,2.2) -- (3,1);
			\draw[dashed] (3,1.5) -- (1.8,1.7);
			\draw (3,1.2) -- (2.8,0); 
			\draw (-0.2,1.15) to[out=0,in=180] (0.1,1.1) to[out=0,in=180] (1.2,2.8) -- (1.4,2.8) to[out=0,in=170] (2.4,1.6) to[out=-10,in=180] (2.6,1.65);
			\draw (-0.2,1.05) to[out=0,in=180] (0.1,1.1) to[out=0,in=180] (1.2,0.4) -- (1.4,0.4) to[out=0,in=170] (2.4,1.6) to[out=-10,in=180] (2.6,1.45);
			\node at (2,0.8) {\small{$C$}};
			\draw[->] (1.5,0) -- (1.5,-0.8);
			\node at (1.7,-0.4) {\small{$\phi$}};
		\end{scope}
		\begin{scope}[shift={(1.5,-3.2)}]
			\draw (0,0) circle (1);
			\node at (0.9,-0.7) {\small{$\cc$}};
			\draw[add= 0.1 and 1] (0,2) to (-0.866,0.5);
			\node at (-1.3,-0.7) {\small{$\ll_2$}};
			\draw[add= 0.1 and 1] (0,2) to (0.866,0.5);
			\node at (1.8,-0.7) {\small{$\ll_3$}};
			\draw (0,2.2) -- (0,-1.2);
			\node at (0.2,0) {\small{$\ll_1$}};
			\filldraw (0,2) circle (0.06);
			\node at (0.3,2) {\small{$p_0$}};
			\draw[-stealth] (-0.1,2) -- (-0.355,1.7);
			\filldraw (-0.866,0.5) circle (0.06);
			\node at (-1.1,0.6) {\small{$p_2$}};
			\draw (0,0) [-stealth, partial ellipse=150:180:0.9 and 0.9];
			\filldraw (0.866,0.5) circle (0.06);
			\node at (1.1,0.6) {\small{$p_3$}};
			\draw (0,0) [-stealth, partial ellipse=30:60:0.9 and 0.9];
			\draw (0,0) [-stealth, partial ellipse=35:65:0.8 and 0.8];
			\filldraw (0,-1) circle (0.06);
			\node at (0.2,-0.8) {\small{$p_1$}};
			\draw[-stealth] (-0.1,-0.9) -- (-0.1,-0.5); 
		\end{scope}
	\end{tikzpicture}
	}
	\subcaptionbox{$\rA_1+\rA_2+\rA_5$ \label{fig:w=2_A1+A2+A5}}[.34\linewidth]
	{
		\begin{tikzpicture}
			\begin{scope}
				\draw (-0.4,3) -- (2.8,3);
				\draw (-0.2,3.2) -- (-0.4,2);
				\node at (-0.1,2.4) {\small{$L_2$}};	
				\draw (-0.4,2.2) -- (-0.2,1);			
				\draw[dashed] (-0.4,1.5) -- (0.8,1.7);
				\draw (-0.2,1.2) -- (-0.4,0);
				\draw[dashed] (1.4,3.2) -- (1.2,2);
				\draw (1.2,2.2) -- (1.4,1);
				\node at (1.55,1.6) {\small{$L_1$}};				
				\draw[dashed] (1.4,1.2) -- (1.2,0); 
				\draw (2.6,3.2) -- (2.4,2);
				\node at (2.7,2.4) {\small{$L_3$}};
				\draw[dashed] (2.4,2.2) -- (2.6,1);
				\draw (2.6,1.2) -- (2.4,0);
				\draw (0.2,1.7) to[out=-10,in=-170] (0.5,1.65) to[out=10,in=180] (1,1.8) -- (1.8,1.8)  to[out=0,in=180] (2.5,1.55) to[out=0,in=180] (2.8,1.6);
				\draw (0.2,1.5) to[out=20,in=-170] (0.5,1.65) to[out=10,in=180] (1.2,0.4) -- (1.4,0.4) to[out=0,in=190] (2.5,1.55) to[out=0,in=180] (2.8,1.5);
				\node at (2,0.6) {\small{$C$}};
				\draw[->] (1.5,0) -- (1.5,-0.8);
				\node at (1.7,-0.4) {\small{$\phi$}};
			\end{scope}
			\begin{scope}[shift={(1.5,-3.2)}]
				\draw (0,0) circle (1);
				\node at (0.9,-0.7) {\small{$\cc$}};
				\draw[add= 0.1 and 1] (0,2) to (-0.866,0.5);
				\node at (-1.3,-0.7) {\small{$\ll_2$}};
				\draw[add= 0.1 and 1] (0,2) to (0.866,0.5);
				\node at (1.8,-0.7) {\small{$\ll_3$}};
				\draw (0,2.2) -- (0,-1.2);
				\node at (0.2,0) {\small{$\ll_1$}};
				\filldraw (0,2) circle (0.06);
				\node at (0.3,2) {\small{$p_0$}};
				\draw[-stealth] (-0.1,1.7) -- (-0.1,1.3);
				\filldraw (-0.866,0.5) circle (0.06);
				\node at (-1.1,0.6) {\small{$p_2$}};
				\draw (0,0) [-stealth, partial ellipse=150:180:0.9 and 0.9];
				\draw (0,0) [-stealth, partial ellipse=155:185:0.8 and 0.8];
				\filldraw (0.866,0.5) circle (0.06);
				\node at (1.1,0.6) {\small{$p_3$}};
				\draw (0,0) [-stealth, partial ellipse=30:60:0.9 and 0.9];
				\filldraw (0,-1) circle (0.06);
				\node at (0.2,-0.8) {\small{$p_1$}};
			\end{scope}
		\end{tikzpicture}
	}
	\subcaptionbox{$2\rA_1+2\rA_3$ \label{fig:w=2_2A1+2A3}}[.3\linewidth]
	{
		\begin{tikzpicture}
			\begin{scope}
				\draw (0,3) -- (2.8,3);
				\draw (0.2,3.2) -- (0,2);
				\node at (0.3,2.4) {\small{$L_2$}};	
				\draw[dashed] (0,2.2) -- (0.2,1);			
				\draw (0.2,1.2) -- (0,0);
				\draw[dashed] (1.4,3.2) -- (1.2,2.2);
				\draw (1.2,2.4) -- (1.4,1.4);
				\node at (1.55,1.9) {\small{$L_1$}};
				\draw (1.4,1.6) -- (1.2,0.6); 
				\draw[dashed] (1.2,0.8) -- (1.4,-0.2);
				\draw (2.6,3.2) -- (2.4,2);
				\node at (2.7,2.4) {\small{$L_3$}};
				\draw[dashed] (2.4,2.2) -- (2.6,1);
				\draw (2.6,1.2) -- (2.4,0); 
				\draw (-0.2,1.55) to[out=0,in=180] (0.1,1.5) to[out=0,in=180] (1,2.1) -- (1.8,2.1)  to[out=0,in=180] (2.5,1.55) to[out=0,in=180] (2.8,1.6);
				\draw (-0.2,1.45) to[out=0,in=180] (0.1,1.5) to[out=0,in=180] (1.2,0.2) -- (1.4,0.2) to[out=0,in=190] (2.5,1.55) to[out=0,in=180] (2.8,1.5);
				\node at (2.1,0.6) {\small{$C$}};
				\draw[->] (1.5,0) -- (1.5,-0.8);
				\node at (1.7,-0.4) {\small{$\phi$}};
		\end{scope}
		\begin{scope}[shift={(1.5,-3.2)}]
				\draw (0,0) circle (1);
				\node at (0.9,-0.7) {\small{$\cc$}};
				\draw[add= 0.1 and 1] (0,2) to (-0.866,0.5);
				\node at (-1.3,-0.7) {\small{$\ll_2$}};
				\draw[add= 0.1 and 1] (0,2) to (0.866,0.5);
				\node at (1.8,-0.7) {\small{$\ll_3$}};
				\draw (0,2.2) -- (0,-1.2);
				\node at (0.2,0) {\small{$\ll_1$}};
				\filldraw (0,2) circle (0.06);
				\node at (0.3,2) {\small{$p_0$}};
				\draw[-stealth] (-0.1,1.7) -- (-0.1,1.3);
				\filldraw (-0.866,0.5) circle (0.06);
				\node at (-1.1,0.6) {\small{$p_2$}};
				\draw (0,0) [-stealth, partial ellipse=150:180:0.9 and 0.9];
				\filldraw (0.866,0.5) circle (0.06);
				\node at (1.1,0.6) {\small{$p_3$}};
				\draw (0,0) [-stealth, partial ellipse=30:60:0.9 and 0.9];
				\filldraw (0,-1) circle (0.06);
				\node at (0.2,-0.7) {\small{$p_1$}};
				\draw (0,0) [-stealth, partial ellipse=275:305:0.9 and 0.9];
			\end{scope}
		\end{tikzpicture}
	}
	\caption{Example \ref{ex:w=2_cha_neq_2}: vertically primitive surfaces in Proposition  \ref{prop:ht=3_swaps}\ref{item:ht=3,w=2,cha-neq-2_GK}: $\ww=2$, $\cha\kk\neq 2$.}
	\label{fig:w=2_cha_neq_2}
\end{figure}

\begin{example}[$\ww=2$, $\cha\kk=2$, see Figure \ref{fig:w=2_cha=2}]\label{ex:w=2_cha=2}
	Assume $\cha\kk=2$. Let $B\subseteq \P^2$ be a sum of a conic $\cc$ and $\nu$ lines $\ll_{1},\dots,\ll_{\nu}$ tangent to $\cc$. Since $\cha\kk=2$, the lines $\ll_1,\dots,\ll_{\nu}$ meet at a common point $p_0\not\in \cc$. Write $\{p_j\}=\ll_j\cap \cc$. Let $p_{ij}, p_{j}'$ be the infinitely near points as in Example \ref{ex:w=2_cha_neq_2}. Let $\phi\colon Y\to \P^2$ be the composition of the following blowups. 
	\begin{enumerate}
		\item\label{item:swap-to-nu=4_small} Take $\nu=4$. Blow up at $p_{0}$, $p_{01}$, $p_{1}$, $p_{j},p_{j}'$, $j\in \{2,3,4\}$. Then $\cS(\bar{Y})=3\rA_{1}+\rD_{4}+[2,3]$, see Figure \ref{fig:swap-to-nu=4_small}. 
		\item\label{item:swap-to-nu=4_large}  Take $\nu=4$. Blow up at $p_{0}$, $p_{01}$, $p_{j},p_{j}'$, $j\in \{1,2,3,4\}$. Then $\cS(\bar{Y})=4\rA_{1}+\rD_{4}+[4]+[3]$, see Figure  \ref{fig:swap-to-nu=4_large}.
		\item\label{item:swap-to-nu=3} Take $\nu=3$. Blow up at $p_{0}$, $p_{01}$, $p_{j},p_{j}'$, $j\in \{1,2,3\}$. Then $\cS(\bar{Y})=4\rA_{1}+\rA_{3}+[3]$, see Figure \ref{fig:swap-to-nu=3}.
	\end{enumerate}
\end{example}

\begin{figure}
	\subcaptionbox{$3\rA_{1}+\rD_{4}+[2,3]$ \label{fig:swap-to-nu=4_small}}[.34\linewidth]
	{
		\begin{tikzpicture}
			\begin{scope}
				\draw (0,3) -- (4,3);
				\draw[dashed] (0.2,3.2) -- (0,2);
				\node at (0.3,1.6) {\small{$L_1$}};	
				\draw (0,2.2) -- (0.2,1);			
				\draw[dashed] (0.2,1.2) -- (0,0);
				\draw (1.4,3.2) -- (1.2,2);
				\draw[dashed] (1.2,2.2) -- (1.4,1);
				\node at (1.5,2.4) {\small{$L_2$}};				
				\draw (1.4,1.2) -- (1.2,0); 
				\draw (2.6,3.2) -- (2.4,2);
				\draw[dashed] (2.4,2.2) -- (2.6,1);
				\node at (2.7,2.4) {\small{$L_3$}};				
				\draw (2.6,1.2) -- (2.4,0); 
				\draw (3.8,3.2) -- (3.6,2);
				\draw[dashed] (3.6,2.2) -- (3.8,1);
				\node at (3.9,2.4) {\small{$L_4$}};				
				\draw (3.8,1.2) -- (3.6,0); 
				\draw[thick] (0,1.1) -- (0.4,1.1) to[out=0,in=180] (1,1.6) -- (3.8,1.6);
				\node at (3.1,1.8) {\small{$-3$}};
				\node at (3.1,1.4) {\small{$C$}};
				\draw[->] (1.9,0) -- (1.9,-0.8);
				\node at (2.1,-0.4) {\small{$\phi$}};
			\end{scope}
			\begin{scope}[shift={(1.9,-3.2)}]
				\draw (0,0) circle (1);
				\node at (1.05,-0.5) {\small{$\cc$}};
				\draw[add= 0.1 and 0.8] (0,2) to (-0.866,0.5);
				\node at (-1.5,-0.1) {\small{$\ll_1$}};
				\filldraw (-0.866,0.5) circle (0.06);
				\node at (-1.1,0.6) {\small{$p_1$}};
				\draw[add=0.1 and -0.65] (0,2) to (-0.5,-0.6);
				\draw[add=-0.45 and 0] (0,2) to (-0.5,-0.6);
				\draw (-0.5,-0.6) to[out=-107,in=156] (-0.4,-0.92) to[out=-24,in=-107] (-0.15,-0.6);
				\node at (-0.2,-0.1) {\small{$\ll_2$}};
				\filldraw (-0.4,-0.92) circle (0.06);
				\node at (-0.65,-1.05) {\small{$p_2$}};
				\draw (0,0) [-stealth, partial ellipse=246:276:1.1 and 1.1];
				\draw[add=0.1 and -0.65] (0,2) to (0.5,-0.6);
				\draw[add=-0.45 and 0] (0,2) to (0.5,-0.6);
				\draw (0.5,-0.6) to[out=-73,in=24] (0.4,-0.92) to[out=196,in=-73] (0.15,-0.6);
				\node at (0.65,-0.1) {\small{$\ll_3$}};
				\filldraw (0.4,-0.92) circle (0.06);
				\node at (0.8,-1.05) {\small{$p_3$}};
				\draw (0,0) [-stealth, partial ellipse=294:324:1.1 and 1.1];
				\draw[add= 0.1 and 0.8] (0,2) to (0.866,0.5);
				\node at (1.5,-0.1) {\small{$\ll_4$}};
				\filldraw (0.866,0.5) circle (0.06);
				\node at (1.1,0.6) {\small{$p_4$}};	
				\draw (0,0) [-stealth, partial ellipse=30:60:0.9 and 0.9];			
				\filldraw (0,2) circle (0.06);
				\node at (0.3,2) {\small{$p_0$}};
				\draw[-stealth] (-0.1,2) -- (-0.355,1.7);
			\end{scope}
		\end{tikzpicture}
	}
	\subcaptionbox{$4\rA_{1}+\rD_{4}+[4]+[3]$ \label{fig:swap-to-nu=4_large}}[.34\linewidth]
	{
		\begin{tikzpicture}
			\begin{scope}
				\draw (0,3) -- (4,3);
				\draw[dashed] (0.2,3.2) -- (0,2.2);
				\draw (0,2.4) -- (0.2,1.4);
				\node at (-0.2,2) {\small{$-3$}};
				\node at (0.3,2) {\small{$L_1$}};
				\draw[dashed] (0.2,1.6) -- (0,0.6); 
				\draw (0,0.8) -- (0.2,-0.2);
				\draw (1.4,3.2) -- (1.2,2);
				\draw[dashed] (1.2,2.2) -- (1.4,1);
				\node at (1.5,2.4) {\small{$L_2$}};				
				\draw (1.4,1.2) -- (1.2,0); 
				\draw (2.6,3.2) -- (2.4,2);
				\draw[dashed] (2.4,2.2) -- (2.6,1);
				\node at (2.7,2.4) {\small{$L_3$}};				
				\draw (2.6,1.2) -- (2.4,0); 
				\draw (3.8,3.2) -- (3.6,2);
				\draw[dashed] (3.6,2.2) -- (3.8,1);
				\node at (3.9,2.4) {\small{$L_4$}};				
				\draw (3.8,1.2) -- (3.6,0); 
				\draw[thick] (0,1.1) -- (0.4,1.1) to[out=0,in=180] (1,1.6) -- (3.8,1.6);
				\node at (3.1,1.8) {\small{$-4$}};
				\node at (3.1,1.4) {\small{$C$}};
				\draw[->] (1.9,0) -- (1.9,-0.8);
				\node at (2.1,-0.4) {\small{$\phi$}};
			\end{scope}
		\begin{scope}[shift={(1.9,-3.2)}]
				\draw (0,0) circle (1);
				\node at (1.05,-0.5) {\small{$\cc$}};
				\draw[add= 0.1 and 0.8] (0,2) to (-0.866,0.5);
				\node at (-1.5,-0.1) {\small{$\ll_1$}};
				\filldraw (-0.866,0.5) circle (0.06);
				\node at (-1.1,0.6) {\small{$p_1$}};
				\draw (0,0) [-stealth, partial ellipse=150:180:0.9 and 0.9];
				\draw[add=0.1 and -0.65] (0,2) to (-0.5,-0.6);
				\draw[add=-0.45 and 0] (0,2) to (-0.5,-0.6);
				\draw (-0.5,-0.6) to[out=-107,in=156] (-0.4,-0.92) to[out=-24,in=-107] (-0.15,-0.6);
				\node at (-0.2,-0.1) {\small{$\ll_2$}};
				\filldraw (-0.4,-0.92) circle (0.06);
				\node at (-0.65,-1.05) {\small{$p_2$}};
				\draw (0,0) [-stealth, partial ellipse=246:276:1.1 and 1.1];
				\draw[add=0.1 and -0.65] (0,2) to (0.5,-0.6);
				\draw[add=-0.45 and 0] (0,2) to (0.5,-0.6);
				\draw (0.5,-0.6) to[out=-73,in=24] (0.4,-0.92) to[out=196,in=-73] (0.15,-0.6);
				\node at (0.65,-0.1) {\small{$\ll_3$}};
				\filldraw (0.4,-0.92) circle (0.06);
				\node at (0.8,-1.05) {\small{$p_3$}};
				\draw (0,0) [-stealth, partial ellipse=294:324:1.1 and 1.1];
				\draw[add= 0.1 and 0.8] (0,2) to (0.866,0.5);
				\node at (1.5,-0.1) {\small{$\ll_4$}};
				\filldraw (0.866,0.5) circle (0.06);
				\node at (1.1,0.6) {\small{$p_4$}};	
				\draw (0,0) [-stealth, partial ellipse=30:60:0.9 and 0.9];			
				\filldraw (0,2) circle (0.06);
				\node at (0.3,2) {\small{$p_0$}};
				\draw[-stealth] (-0.1,2) -- (-0.355,1.7);
			\end{scope}
		\end{tikzpicture}
	}
	\subcaptionbox{$4\rA_{1}+\rA_{3}+[3]$, see \cite[3.9]{PaPe_ht_2} \label{fig:swap-to-nu=3}}[.3\linewidth]
	{
		\begin{tikzpicture}
			\begin{scope}
				\draw (0,3) -- (2.8,3);
				\draw[dashed] (0.2,3.2) -- (0,2.2);
				\draw (0,2.4) -- (0.2,1.4);
				\node at (-0.2,2) {\small{$-3$}};
				\node at (0.3,2) {\small{$L_1$}};
				\draw[dashed] (0.2,1.6) -- (0,0.6); 
				\draw (0,0.8) -- (0.2,-0.2);
				\draw (1.4,3.2) -- (1.2,2);
				\draw[dashed] (1.2,2.2) -- (1.4,1);
				\node at (1.5,2.4) {\small{$L_2$}};				
				\draw (1.4,1.2) -- (1.2,0); 
				\draw (2.6,3.2) -- (2.4,2);
				\draw[dashed] (2.4,2.2) -- (2.6,1);
				\node at (2.7,2.4) {\small{$L_3$}};				
				\draw (2.6,1.2) -- (2.4,0); 
				\draw[thick] (0,1.1) -- (0.4,1.1) to[out=0,in=180] (1,1.6) -- (2.6,1.6);
				\node at (1.9,1.4) {\small{$C$}};
				\draw[->] (1.5,0) -- (1.5,-0.8);
				\node at (1.7,-0.4) {\small{$\phi$}};
			\end{scope}
			\begin{scope}[shift={(1.5,-3.2)}]
				\draw (0,0) circle (1);
				\node at (1.05,-0.5) {\small{$\cc$}};
				\draw[add= 0.1 and 0.8] (0,2) to (-0.866,0.5);
				\node at (-1.5,-0.1) {\small{$\ll_1$}};
				\filldraw (-0.866,0.5) circle (0.06);
				\node at (-1.1,0.6) {\small{$p_1$}};
				\draw (0,0) [-stealth, partial ellipse=150:180:0.9 and 0.9];
				\draw[add=0.1 and -0.65] (0,2) to (-0.5,-0.6);
				\draw[add=-0.45 and 0] (0,2) to (-0.5,-0.6);
				\draw (-0.5,-0.6) to[out=-107,in=156] (-0.4,-0.92) to[out=-24,in=-107] (-0.15,-0.6);
				\node at (-0.2,-0.1) {\small{$\ll_2$}};
				\filldraw (-0.4,-0.92) circle (0.06);
				\node at (-0.65,-1.05) {\small{$p_2$}};
				\draw (0,0) [-stealth, partial ellipse=246:276:1.1 and 1.1];
				\draw[add= 0.1 and 0.8] (0,2) to (0.866,0.5);
				\node at (1.5,-0.1) {\small{$\ll_3$}};
				\filldraw (0.866,0.5) circle (0.06);
				\node at (1.1,0.6) {\small{$p_3$}};	
				\draw (0,0) [-stealth, partial ellipse=30:60:0.9 and 0.9];			
				\filldraw (0,2) circle (0.06);
				\node at (0.3,2) {\small{$p_0$}};
				\draw[-stealth] (-0.1,2) -- (-0.355,1.7);
			\end{scope}
		\end{tikzpicture}
	}
	\caption{Example \ref{ex:w=2_cha=2}: vertically primitive surfaces in  Prop.\ \ref{prop:ht=3_swaps}\ref{item:ht=3,w=2,cha-neq-2_GK},\ref{item:swap_cha=2_GK}: {$\ww=2$,~$\cha\kk=2$}.} 
	\label{fig:w=2_cha=2}
\end{figure}

\subsection{Case $\ww=1$, see Propositions \ref{prop:ht=3_models}\ref{item:w=1_models} and \ref{prop:ht=3_swaps}\ref{item:swap_ht=3,w=1_cha-neq-3},\ref{item:swap_ht=3,w=1_cha=3_GK},\ref{item:swap_cha=3}}\label{sec:basic-w=1}

\begin{example}[$\ww=1$, see Figure \ref{fig:w=1}]\label{ex:w=1}
	Assume $\cha\kk\neq 2$. Let $B\subseteq \P^2$ be a sum of a cubic $\qq$ with a cusp $p_1$, a line $\ll_1$ tangent to $\qq$ at $p_1$, and lines $\ll_2,\ll_3$ which meet at a point $p_0\in \ll_1$ and are tangent to $\qq$ at some $p_2,p_3\in \qq\reg$. A direct computation, see \cite[Lemma 5.5]{PaPe_MT}, shows that such a divisor $B$ is unique up to a projective equivalence, and for $j\in \{2,3\}$ we have $(\ll_j\cdot \qq)_{p_j}=2$ if $\cha\kk\neq 3$, and $(\ll_j\cdot \qq)_{p_j}=3$ if $\cha\kk=3$. 
	
	We define $\phi\colon Y\to \P^2$ as follows. Blow up once at $p_0$, three times at $p_1$, $p_2$ and $m$ times at $p_3$, each time on the proper transform of $\qq$. We get the following surfaces $\bar{Y}$.
	\begin{enumerate}
		\item\label{item:w=1_cha-neq-3} Assume $\cha\kk\neq 3$ and take $m=2$. Then $\cS(\bar{Y})=\rA_{1}+\rA_{2}+\rA_{5}+[3]$, see Figure \ref{fig:w=1_cha-neq-3}.
		\item\label{item:w=1_cha=3_GK} Assume $\cha\kk=3$ and take $m=2$. Then $\cS(\bar{Y})=3\rA_{2}+\rA_1+2\cdot [3]$, see Figure \ref{fig:w=1_cha=3_GK}.
		\item\label{item:w=1_cha=3_not-GK} Assume $\cha\kk=3$ and take $m=3$. Then $\cS(\bar{Y})=4\cdot [3]+3\rA_{2}$, see Figure \ref{fig:w=1_cha=3_not-GK}. 
	\end{enumerate}
\end{example}
\begin{figure}[ht]
	\subcaptionbox{$\rA_{1}+\rA_{2}+\rA_{5}+[3]$, $\cha\kk\neq 2,3$ \label{fig:w=1_cha-neq-3}}[.38\linewidth]{
		\begin{tikzpicture}
			\begin{scope}
				\draw (0.2,3.2) -- (0,2);
				\node at (0.35,2.7) {\small{$L_2$}};
				\draw (0,2.2) -- (0.2,1);
				\draw[dashed] (0,1.6) -- (1.2,1.8);
				\draw (0.2,1.2) -- (0,0);
				\draw (2.4,3.2) -- (2.2,2.2);
				\node at (2.55,2.7) {\small{$L_1$}};
				\draw (2.2,2.4) -- (2.4,1.4);
				\draw[dashed] (2.4,1.6) -- (2.2,0.6);
				\draw (2.2,0.8) -- (2.4,-0.2); 
				\node at (2,0.2) {\small{$-3$}};
				\draw (4,3.2) -- (3.8,2);
				\node at (4.15,2.7) {\small{$L_3$}};
				\draw[dashed] (3.8,2.2) -- (4,1);
				\draw (4,1.2) -- (3.8,0); 
				\draw (0.3,1.75) to[out=0,in=-170] (0.6,1.7) to[out=10,in=180] (1,2) -- (1.2,2) to[out=0,in=180] (2.3,1.1)  to[out=0,in=180] (3.2,1.7) -- (3.5,1.7) to[out=0,in=180] (3.9,1.6) to[out=0,in=180] (4.1,1.65);
				\draw (0.3,1.55) to[out=0,in=-160] (0.6,1.7) to[out=20,in=180] (1.4,0.9) -- (2,0.9) to[out=0,in=180] (2.3,1.1) to[out=0,in=180] (2.8,0.9) -- (3,0.9) to[out=0,in=180] (3.9,1.6) to[out=0,in=180] (4.1,1.55);
				\draw (0,3) -- (1,3) to[out=0,in=90] (2.1,1.6) to[out=-90,in=180] (2.3,1.1) to[out=0,in=-90] (2.6,1.6) to[out=90,in=180] (3.9,3) -- (4.1,3);
				\node at (1.5,1.5) {\small{$Q$}};
				\draw [->] (2,-0.5) -- (2,-1.5);
				\node at (2.2,-1) {\small{$\phi$}}; 
			\end{scope}
			\begin{scope}[shift={(2,-4)}]
				\draw[add=0.1 and 0.8] (0,2) to (0,0.5);
				\node at (0.2,0.5) {\small{$\ll_1$}};
				\filldraw (0,-0.5) circle (0.06);
				\node at (-0.2,-0.5) {\small{$p_1$}};	
				\draw[add= 0.1 and 0.6] (0,2) to (-0.866,0.5);
				\node at (-1.5,-0.1) {\small{$\ll_2$}};
				\filldraw (-0.866,0.5) circle (0.06);
				\node at (-1.1,0.6) {\small{$p_2$}};
				\draw[add= 0.1 and 0.6] (0,2) to (0.866,0.5);
				\node at (1.5,-0.1) {\small{$\ll_3$}};
				\filldraw (0.866,0.5) circle (0.06);
				\node at (1.1,0.6) {\small{$p_3$}};	
				\filldraw (0,2) circle (0.06);
				\node at (0.3,2) {\small{$p_0$}};
				\draw (-0.866,2) to[out=-60,in=60] (-0.3,1) to[out=-120,in=60] (-0.866,0.5) to[out=-120,in=90] (0,-0.5) to[out=90,in=-60] (0.866,0.5) to[out=120,in=-60] (0.3,1) to[out=120,in=-120]  (0.866,2);
				\node at (-0.75,-0.1) {\small{$\qq$}};
				\draw[-stealth] (-0.6,0.6) to[out=-150,in=90] (-0.8,0.4) to[out=-90,in=150] (-0.6,0.2);
				\draw[-stealth] (-0.45,0.6) to[out=-150,in=90] (-0.65,0.4) to[out=-90,in=150] (-0.45,0.2);
				\draw[stealth-] (0.6,0.6) to[out=-30,in=90] (0.8,0.4) to[out=-90,in=30] (0.6,0.2);
				\draw[-stealth] (0.1,-0.45) to[out=60,in=-150] (0.5,-0.15);
				\draw[-stealth] (0.15,-0.55) to[out=60,in=-150] (0.55,-0.25);
			\end{scope}
		\end{tikzpicture}
	}
	\subcaptionbox{$3\rA_{2}+\rA_1+2\cdot [3]$, $\cha\kk=3$ \label{fig:w=1_cha=3_GK}}[.3\linewidth]{
		\begin{tikzpicture}
			\begin{scope}
				\draw (0.2,3.2) -- (0,2.2);
				\node at (0.35,2.7) {\small{$L_2$}};
				\draw[dashed] (0,2.4) -- (0.2,1.4);
				\draw (0.2,1.6) -- (0,0.6);
				\draw (0,0.8) -- (0.2,-0.2); 
				\node at (-0.2,2.7) {\small{$-3$}};
				\draw (1.4,3.2) -- (1.2,2.2);
				\node at (1.55,2.7) {\small{$L_1$}};
				\draw (1.2,2.4) -- (1.4,1.4);
				\draw[dashed] (1.4,1.6) -- (1.2,0.6);
				\draw (1.2,0.8) -- (1.4,-0.2); 
				\node at (1,0.2) {\small{$-3$}};
				\draw (2.6,3.2) -- (2.4,2);
				\node at (2.75,2.7) {\small{$L_3$}};
				\draw[dashed] (2.4,2.2) -- (2.6,1);
				\draw (2.6,1.2) -- (2.4,0); 
				\draw[thick] (-0.2,1.9) -- (0.2,1.9) to[out=0,in=180] (1.2,1.1) -- (1.6,1.1) to[out=0,in=180] (2.4,2.1) -- (2.8,2.1);
				\node at (0.9,1.5) {\small{$Q$}};
				\draw [->] (1.4,-0.5) -- (1.4,-1.5);
				\node at (1.6,-1) {\small{$\phi$}}; 
			\end{scope}
			\begin{scope}[shift={(1.4,-4)}]
				\draw[add=0.1 and 0.8] (0,2) to (0,0.5);
				\node at (0.2,0.5) {\small{$\ll_1$}};
				\filldraw (0,-0.5) circle (0.06);
				\node at (-0.2,-0.5) {\small{$p_1$}};	
				\draw[add= 0.1 and 0.6] (0,2) to (-0.866,0.5);
				\node at (-1.5,-0.1) {\small{$\ll_2$}};
				\filldraw (-0.866,0.5) circle (0.06);
				\node at (-1.1,0.6) {\small{$p_2$}};
				\draw[add= 0.1 and 0.6] (0,2) to (0.866,0.5);
				\node at (1.5,-0.1) {\small{$\ll_3$}};
				\filldraw (0.866,0.5) circle (0.06);
				\node at (1.1,0.6) {\small{$p_3$}};	
				\filldraw (0,2) circle (0.06);
				\node at (0.3,2) {\small{$p_0$}};
				\draw (-0.866,2) to[out=-60,in=60] (-0.866,0.5) to[out=-120,in=90] (0,-0.5) to[out=90,in=-60] (0.866,0.5) to[out=120,in=-120]  (0.866,2);
				\node at (-0.75,-0.1) {\small{$\qq$}};
				\draw[-stealth] (-0.65,0.6) to[out=-120,in=90] (-0.8,0.4) to[out=-90,in=150] (-0.6,0.2);
				\draw[-stealth] (-0.5,0.6) to[out=-120,in=90] (-0.65,0.4) to[out=-90,in=150] (-0.45,0.2);
				\draw[stealth-] (0.65,0.6) to[out=-60,in=90] (0.8,0.4) to[out=-90,in=30] (0.6,0.2);
				\draw[-stealth] (0.1,-0.45) to[out=60,in=-150] (0.5,-0.15);
				\draw[-stealth] (0.15,-0.55) to[out=60,in=-150] (0.55,-0.25);
			\end{scope}
		\end{tikzpicture}
	}	
	\subcaptionbox{$4\cdot [3]+3\rA_{2}$, $\cha\kk=3$ \label{fig:w=1_cha=3_not-GK}}[.3\linewidth]{
		\begin{tikzpicture}
			\begin{scope}
				\draw (0.2,3.2) -- (0,2.2);
				\node at (0.35,2.7) {\small{$L_2$}};
				\draw[dashed] (0,2.4) -- (0.2,1.4);
				\draw (0.2,1.6) -- (0,0.6);
				\draw (0,0.8) -- (0.2,-0.2); 
				\node at (-0.2,2.7) {\small{$-3$}};
				\draw (1.4,3.2) -- (1.2,2.2);
				\node at (1.55,2.7) {\small{$L_1$}};
				\draw (1.2,2.4) -- (1.4,1.4);
				\draw[dashed] (1.4,1.6) -- (1.2,0.6);
				\draw (1.2,0.8) -- (1.4,-0.2); 
				\node at (1,0.2) {\small{$-3$}};
				\draw (2.6,3.2) -- (2.4,2.2);
				\node at (2.75,2.7) {\small{$L_3$}};
				\draw[dashed] (2.4,2.4) -- (2.6,1.4);
				\draw (2.6,1.6) -- (2.4,0.6);
				\draw (2.4,0.8) -- (2.6,-0.2); 
				\node at (2.2,2.7) {\small{$-3$}};
				\draw[thick] (-0.2,1.9) -- (0.2,1.9) to[out=0,in=180] (1.2,1.1) -- (1.6,1.1) to[out=0,in=180] (2.4,1.9) -- (2.8,1.9);
				\node at (0.5,1.25) {\small{$-3$}};
				\node at (0.9,1.5) {\small{$Q$}};
				\draw [->] (1.4,-0.5) -- (1.4,-1.5);
				\node at (1.6,-1) {\small{$\phi$}}; 
			\end{scope}
			\begin{scope}[shift={(1.4,-4)}]
				\draw[add=0.1 and 0.8] (0,2) to (0,0.5);
				\node at (0.2,0.5) {\small{$\ll_1$}};
				\filldraw (0,-0.5) circle (0.06);
				\node at (-0.2,-0.5) {\small{$p_1$}};	
				\draw[add= 0.1 and 0.6] (0,2) to (-0.866,0.5);
				\node at (-1.5,-0.1) {\small{$\ll_2$}};
				\filldraw (-0.866,0.5) circle (0.06);
				\node at (-1.1,0.6) {\small{$p_2$}};
				\draw[add= 0.1 and 0.6] (0,2) to (0.866,0.5);
				\node at (1.5,-0.1) {\small{$\ll_3$}};
				\filldraw (0.866,0.5) circle (0.06);
				\node at (1.1,0.6) {\small{$p_3$}};	
				\filldraw (0,2) circle (0.06);
				\node at (0.3,2) {\small{$p_0$}};
				\draw (-0.866,2) to[out=-60,in=60] (-0.866,0.5) to[out=-120,in=90] (0,-0.5) to[out=90,in=-60] (0.866,0.5) to[out=120,in=-120]  (0.866,2);
				\node at (-0.75,-0.1) {\small{$\qq$}};
				\draw[-stealth] (-0.65,0.6) to[out=-120,in=90] (-0.8,0.4) to[out=-90,in=150] (-0.6,0.2);
				\draw[-stealth] (-0.5,0.6) to[out=-120,in=90] (-0.65,0.4) to[out=-90,in=150] (-0.45,0.2);
				\draw[stealth-] (0.65,0.6) to[out=-60,in=90] (0.8,0.4) to[out=-90,in=30] (0.6,0.2);
				\draw[stealth-] (0.5,0.6) to[out=-60,in=90] (0.65,0.4) to[out=-90,in=30] (0.45,0.2);
				\draw[-stealth] (0.1,-0.45) to[out=60,in=-150] (0.5,-0.15);
				\draw[-stealth] (0.15,-0.55) to[out=60,in=-150] (0.55,-0.25);
			\end{scope}
		\end{tikzpicture}
	}	
	\caption{Example \ref{ex:w=1}: vertically primitive surfaces in Proposition  \ref{prop:ht=3_swaps}\ref{item:swap_ht=3,w=1_cha=3_GK},\ref{item:swap_cha=3}: $\ww=1$.}
	\label{fig:w=1}
\end{figure}

\begin{remark}[Alternative construction of the surface from Example \ref{ex:w=1}\ref{item:w=1_cha=3_not-GK}]\label{rem:Bernasconi}
	The surface $\bar{Y}$ from Example \ref{ex:w=1}\ref{item:w=1_cha=3_not-GK} has been first constructed by Bernasconi in \cite{Bernasconi_char_3-example} as a counterexample to the Kawamata--Viehweg vanishing in characteristic~$3$. In fact, Theorem 3.6 loc.\ cit.\ shows that $H^{1}(\bar{Y},\cO_{\bar{Y}}(-\bar{A}))\neq 0$ for the ample divisor  $\bar{A}=\bar{A}_1+\bar{A}_2-\bar{A}_3$, where $\bar{A}_j$ is the image of the $(-1)$-curve over $p_j$.
	
	We now recall the construction from \cite[\sec 3.1]{Bernasconi_char_3-example} and show that it indeed yields a surface isomorphic to $\bar{Y}$ from Example \ref{ex:w=1}\ref{item:w=1_cha=3_not-GK}. This construction will be useful in the proof of Lemma \ref{lem:w=1_cha=3_not-GK_Aut}. Fix coordinates $[x:y]$ and $[z:w]$ on each factor of $\P^1\times \P^1$, and put $H=\{xz^{3}=yw^3\}\subseteq \P^1\times \P^1$. Since $\cha\kk=3$, the curve $H$ is tangent with multiplicity $3$ to each fiber of the  first projection $\P^1\times \P^1\to \P^1$. Fix three such fibers, say $F_1,F_2,F_3$, let $(X,\tilde{D})\to (\P^1\times \P^1,H+F_1+F_2+F_3)$ be the minimal log resolution, and let $X\to \bar{X}$ be the contraction of $D\de \tilde{D}-A_1-A_2-A_3$, where $A_j$ is the $(-1)$-curve in the preimage of $F_j$. Now $\bar{X}$ is the surface from \cite{Bernasconi_char_3-example}. To see that it is isomorphic to the one from Example \ref{ex:w=1}\ref{item:w=1_cha=3_not-GK}, let $\sigma\colon \P^1\times \P^1\map \P^2$ be a blowup at the point $F_1\cap H$, followed by the contraction of the proper transform of $F_1$ and of the horizontal line passing through $F_1\cap H$. Then $\sigma_{*}(H+\sum_{i}F_i)$ is projectively equivalent to $\qq+\sum_{i}\ll_i$, so $\sigma$ lifts to the required isomorphism.
\end{remark}

We now summarize basic properties of surfaces $\bar{Y}$ constructed above. Most importantly, we note that each $\bar{Y}$ is unique up to an isomorphism, except in Examples \ref{ex:w=2_cha=2}\ref{item:swap-to-nu=4_small},\ref{item:swap-to-nu=4_large}, where the choice of the fourth line yields a one-parameter family. This observation will be crucial for the proof of the uniqueness part of Theorem \ref{thm:ht=3}. Proposition \ref{prop:primitive}\ref{item:primitive-uniqueness} below is a slightly stronger version of this statement, formulated using the language of \cite[\sec 2F]{PaPe_ht_2} which will be convenient later.

Furthermore, we note that even though the $\P^1$-fibrations $p_Y$ constructed above are vertically primitive, i.e.\ do not admit vertical swaps, the surfaces $\bar{Y}$ itself may be non-primitive, i.e.\ $(Y,D_Y)$ may admit swaps which do not preserve the chosen $\P^1$-fibrations $p_Y$, see \cite[Definition 1.3]{PaPe_ht_2}. Similarly, we may have $\height(\bar{Y})=2$, so $p_Y$ may be a non-witnessing $\P^1$-fibration. Such cases are listed in  Proposition \ref{prop:primitive}\ref{item:primitive} and \ref{item:primitive-ht}, respectively.

\begin{proposition}[{Properties of vertically primitive models from Proposition~\ref{prop:ht=3_swaps}}]\label{prop:primitive}
	Let $\bar{Y}$ be a surface as in one of Examples \ref{ex:w=3}--\ref{ex:w=1} above. Then the following hold.
	\begin{enumerate}
		\item\label{item:primitive-deb} The surface $\bar{Y}$ has a descendant with elliptic boundary, unless it is as in Example \ref{ex:w=1}\ref{item:w=1_cha=3_not-GK}. 
		\item\label{item:primitive} The surface $\bar{Y}$ is primitive, see \cite[Definition 1.3(e)]{PaPe_ht_2}, 
		unless it is as in in Examples \ref{ex:w=2_cha_neq_2}\ref{item:ht=3_exception} or \ref{ex:w=2_cha=2}\ref{item:swap-to-nu=4_large},\ref{item:swap-to-nu=3}.
		\item\label{item:primitive-ht} We have $\height(\bar{Y})=2$ in Examples \ref{ex:w=3}, \ref{ex:w=2_cha_neq_2}\ref{item:w=2_A1+A2+A5},\ref{item:w=2_2A1+2A3} and \ref{ex:w=2_cha=2}\ref{item:swap-to-nu=3}. In all the other cases we have $\height(\bar{Y})=3$.
		\item\label{item:primitive-uniqueness}  Let $\check{D}_Y$ be the sum of $D_Y$ and all vertical $(-1)$-curves except those meeting $D$ in a node (i.e.\ except the one over $p_1$ in Example \ref{ex:w=2_cha=2}\ref{item:swap-to-nu=4_small} and $p_3$ in Example \ref{ex:w=1}\ref{item:w=1_cha=3_not-GK}). Let $\check{\cS}$ be the combinatorial type of $(Y,\check{D}_Y)$. Then $\#\cP_{+}(\check{\cS})=1$, unless $\check{\cS}$ is as in Example \ref{ex:w=2_cha=2}\ref{item:swap-to-nu=4_small},\ref{item:swap-to-nu=4_large}. In the latter case $\cP_{+}(\check{\cS})$ is represented by an $\Aut(\check{\cS})$-faithful universal family over $\Astst$, parametrized by the choice of the fourth line $\ll_{4}$.
		\item\label{item:primitive-uniqueness-Y} The isomorphism class of $\bar{Y}$ is uniquely determined by the singularity type of $\bar{Y}$, unless $\bar{Y}$ is as in Example \ref{ex:w=2_cha=2}\ref{item:swap-to-nu=4_small} or \ref{ex:w=2_cha=2}\ref{item:swap-to-nu=4_large}. In the latter cases, the set of isomorphism classes of surfaces $\bar{Y}$ has moduli dimension $1$.
		\item\label{item:primitive-hi} Put $h^i\de h^i(\lts{Y}{D_Y})$. Then $h^0=0$ and the following hold.
		\begin{enumerate}
			\item $h^1=1$ in Examples \ref{ex:w=2_cha=2}\ref{item:swap-to-nu=4_small},\ref{item:swap-to-nu=4_large}; and $h^1=0$ otherwise.
			\item $h^2=2$ in Examples  \ref{ex:w=2_cha=2}\ref{item:swap-to-nu=4_large} and  \ref{ex:w=1}\ref{item:w=1_cha=3_not-GK}; $h^2=1$ in Examples  \ref{ex:w=2_cha=2}\ref{item:swap-to-nu=4_small},\ref{item:swap-to-nu=3} and 	
			\ref{ex:w=1}\ref{item:w=1_cha=3_GK}; and $h^2=0$ otherwise.
		\end{enumerate}
		In particular, if $\cha\kk\neq 2,3$ then $h^i=0$ for all $i\geq 0$.
	\end{enumerate}
\end{proposition}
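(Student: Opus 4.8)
The plan is to verify all six assertions by a direct, case-by-case analysis of the explicit configurations built in Examples \ref{ex:w=3}--\ref{ex:w=1}. In every case $Y$ is obtained from $Z\in\{\P^2,\P^1\times\P^1\}$ by blowing up at most nine (infinitely near) points, so the complete list of negative curves on $Y$ — the exceptional curves $A_j$ together with the proper transforms of the lines, fibers, conic and cubic through the blown-up points — is finite and can be read off from the figures. I would fix, once and for all in each case, this list of negative curves and the combinatorial type of $(Y,D_Y,p_Y)$; all six parts then become finite checks on the resulting weighted graph. For \ref{item:primitive-deb} I would produce the elliptic tie explicitly and invoke Lemma \ref{lem:tie}. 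In the cases over $\P^2$ the natural candidate is the horizontal curve $C$ (the proper transform of $\cc$, resp.\ $Q$ the proper transform of $\qq$): contracting the chain of curves of the component $E$ attached to it inside a degenerate fiber identifies two points of $C$ or creates a cusp, turning the rational curve into one of arithmetic genus $1$, and one checks that the relevant $\sigma$ satisfies $\Exc\sigma\subseteq E$ with $\sigma_{*}E$ irreducible, as required by Definition \ref{def:tie}. For the single exception, Example \ref{ex:w=1}\ref{item:w=1_cha=3_not-GK}, I would instead show that for \emph{every} $(-1)$-curve $L\not\subseteq D_Y$ the contraction of the maximal contractible subdivisor of the component $E\ni L$ yields an image of arithmetic genus $0$ — a finite computation on the graph — so that no elliptic tie exists, in accordance with this being the \GK-free case.

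Parts \ref{item:primitive-ht} and \ref{item:primitive} are handled in the same spirit. For \ref{item:primitive-ht} the bound $\height(\bar{Y})\leq 3$ is immediate from the constructed $p_Y$; to obtain $\height(\bar{Y})=2$ in the listed cases I would exhibit a second $\P^1$-fibration realizing $F\cdot D_Y=2$, concretely the pencil spanned by a suitable $0$-curve among the negative curves already listed, while for the remaining cases I would rule out any height-$2$ fibration using Lemma \ref{lem:fibrations-Sigma-chi} together with the ampleness criterion of Lemma \ref{lem:delPezzo_criterion}, which constrains how $D_Y\hor$ can meet a fiber. For \ref{item:primitive} I would test the elementary-swap conditions of \cite[Definition 1.3]{PaPe_ht_2} against the finite list of $(-1)$-curves with respect to every $\P^1$-fibration of $Y$: in the three exceptional examples a swap with respect to a non-chosen fibration is exhibited, and in all others the absence of a $(-1)$-curve meeting $D_Y$ in exactly one $(-2)$-curve in the admissible way establishes primitivity.

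For the uniqueness statements \ref{item:primitive-uniqueness} and \ref{item:primitive-uniqueness-Y} I would reverse the construction: the combinatorial type $\check{\cS}$ determines $Z$ and the combinatorial shape of $B$, so reconstructing $\bar{Y}$ reduces to classifying the configurations $B$ (a conic with tangent and secant lines through a common point off $\cc$, resp.\ the cuspidal cubic with prescribed tangencies) up to projective equivalence. A parameter count against the relevant $\PGL$-action shows these configurations are rigid — the tangency and concurrency conditions leave no moduli — except in the two four-line cases, where the cross-ratio of the four tangency points on $\cc\cong\P^1$ is a genuine modulus; after discarding the degenerate values this parameter sweeps out $\Astst$, yielding the claimed universal family over $\Astst$ and moduli dimension $1$. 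Here I would rely on \cite[Lemma 5.5]{PaPe_MT} for the uniqueness of the cuspidal configuration and on the language of $\cP_{+}$ from \cite[\sec 2F]{PaPe_ht_2}.

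Finally, for \ref{item:primitive-hi} I would compute $\chi(\lts{Y}{D_Y})$ from $K_Y$, $D_Y$ and the Euler characteristic (all read off from the explicit resolution), deduce $h^0=0$ from the rigidity of the pair — the boundary has enough components to annihilate every global logarithmic vector field — and then pin down $h^1$ and $h^2$ from $\chi$ together with the moduli information of \ref{item:primitive-uniqueness-Y}: the value $h^1=1$ for the four-line families matches their moduli dimension via property \ref{item:moduli-semiuniversal}, and the nonzero values of $h^2$ in characteristics $2$ and $3$ reflect the failure of Kawamata--Viehweg vanishing, of which Example \ref{ex:w=1}\ref{item:w=1_cha=3_not-GK} is precisely Bernasconi's surface (Remark \ref{rem:Bernasconi}). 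I expect the genuine difficulties to be the non-existence arguments — ruling out height-$2$ fibrations in \ref{item:primitive-ht} and swaps in \ref{item:primitive} — and the exact $h^2$ computations in \ref{item:primitive-hi}, since there the default vanishing breaks down and the dimension of the obstruction space must be determined by hand rather than inferred from $\chi$ and $h^0$ alone.
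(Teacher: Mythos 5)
Your overall plan (case-by-case verification on the explicit blown-up configurations) is the right spirit, and your treatments of parts \ref{item:primitive-ht}, \ref{item:primitive} and of the cross-ratio modulus in \ref{item:primitive-uniqueness} are workable in principle, though much heavier than the paper's route, which simply quotes the classification of surfaces of height $\leq 2$ and of surfaces with elliptic-boundary descendants from \cite{PaPe_ht_2} (Remark 5.7, Lemma 6.3, Proposition 6.15, Lemma 6.14, Proposition 6.17(b)). However, there are two genuine gaps. First, in part \ref{item:primitive-deb} your candidate for the elliptic tie is wrong: by Definition \ref{def:tie} a tie is a $(-1)$-curve \emph{not} contained in $D_Y$, so the horizontal boundary curve $C$ (resp.\ $Q$) cannot serve; it can only be a component of the divisor $E$ whose image acquires genus $1$. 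More seriously, in Examples \ref{ex:w=2_cha_neq_2}\ref{item:ht=3_exception} and \ref{ex:w=1}\ref{item:w=1_cha-neq-3} the tie is not among the exceptional curves of $\phi$ at all: it is the proper transform of a conic, resp.\ of a nodal cubic, with prescribed tangencies to the configuration, and its \emph{existence} is a geometric statement, not a ``finite check on the weighted graph.'' The paper must construct the cubic explicitly (via a standard quadratic transformation and a coordinate computation, where $\cha\kk\neq 2,3$ is used to keep two auxiliary conics smooth); nothing in your framework produces this curve. Likewise, your exclusion of ties in Example \ref{ex:w=1}\ref{item:w=1_cha=3_not-GK} by testing ``every $(-1)$-curve'' presupposes the complete list of $(-1)$-curves on $Y$, a computation in $\NS_{\Q}(Y)$ you do not indicate how to perform; the paper avoids it by citing \cite[Theorem E(c)]{PaPe_ht_2} (more than two non-canonical singularities).

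Second, part \ref{item:primitive-hi} is not actually proved: you correctly observe that $h^2$ cannot be extracted from $\chi$ and $h^0$, but you supply no substitute, and since the semiuniversality claim in \ref{item:primitive-uniqueness}--\ref{item:primitive-uniqueness-Y} requires knowing $h^1$, this gap propagates backwards into your moduli argument as well. The paper closes it in two steps: (i) for every case of height $2$ or with an elliptic-boundary descendant, the values of $h^i$ are quoted from \cite[Proposition D and Theorem E]{PaPe_ht_2}; (ii) for the single remaining case, Example \ref{ex:w=1}\ref{item:w=1_cha=3_not-GK}, one contracts the $(-1)$-curve over $p_3$ to reduce, via invariance of $h^i(\lts{Y}{D_Y})$ under inner blowups, to Example \ref{ex:w=1}\ref{item:w=1_cha=3_GK}, and then uses the exact sequence $0\to\lts{Y}{D_Y}\to \lts{Y}{(D_Y-G)}\to \cN_{G}\to 0$ for the removed $(-2)$-curve $G$, with $h^{1}(\cN_G)=1$ and $h^{i}(\cN_G)=0$ otherwise, to obtain $h^2=2$. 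Some device of this kind --- a concrete cohomological mechanism tying the unknown case to a computed one --- is indispensable and missing from your proposal.
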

\begin{proof} 
	\ref{item:primitive-deb}
	The surface from Example \ref{ex:w=1}\ref{item:w=1_cha=3_not-GK} has no descendant with elliptic boundary because it has more than two non-canonical singularities, which is excluded in \cite[Theorem E(c)]{PaPe_ht_2}. In the remaining cases we need to find an elliptic tie, see Definition \ref{def:tie}. We check directly that each of the following is an elliptic tie:
	\begin{itemize}
		\item In Example \ref{ex:w=3}\ref{item:ht=3_A1+A2+A5}: the $(-1)$-curve over $p_{32}$, see Figure \ref{fig:w=3_A1+A2+A5}.
		\item In Example \ref{ex:w=3}\ref{item:ht=3_2A4}: the $(-1)$-curve over $p_{22}$, see Figure \ref{fig:w=3_2A4}.
		\item In Example \ref{ex:w=2_cha_neq_2}\ref{item:ht=3_exception}: the proper transform of the unique conic $\cc'$ such that $(\cc'\cdot \ll_{1})_{p_{1}}=2$, $(\cc'\cdot \cc)_{p_{3}}=3$ (we note that in this case, we get the surface from \cite[Proposition 6.4(a)]{PaPe_ht_2}, with $\cS_Y=\rA_1+\rA_7$).
		\item In Examples \ref{ex:w=2_cha_neq_2}\ref{item:w=2_A1+A2+A5},\ref{item:w=2_2A1+2A3} and \ref{ex:w=2_cha=2}: the $(-1)$-curve over $p_1$, see Figures \ref{fig:w=2_A1+A2+A5}, \ref{fig:w=2_2A1+2A3} and \ref{fig:w=2_cha=2}.
		\item In  Example \ref{ex:w=1}\ref{item:w=1_cha-neq-3}: the proper transform of a cubic $\cc$ with a double point at $p_1$, such that $(\cc\cdot \qq)_{p_2}=3$ and $(\cc\cdot \qq)_{p_{3}}=2$, see Figure \ref{fig:w=1_cha-neq-3}. The existence of $\cc$ is proved below (in this case, we get the surface from \cite[Proposition 6.4(a)]{PaPe_ht_2}, with $\cS_Y=\rA_1+\rA_2+\rA_5$).
		\item In  Example \ref{ex:w=1}\ref{item:w=1_cha=3_GK}: the proper transform of the line joining $p_1$ with $p_3$, see Figure \ref{fig:w=1_cha=3_GK} (in this case, we get the surface from \cite[Proposition 6.4(b)]{PaPe_ht_2}, with $\cS_Y=3\rA_2$).
	\end{itemize} 
	It remains to construct the cubic $\cc$ used in the case \ref{ex:w=1}\ref{item:w=1_cha-neq-3} above. Fix four points $q_0,r_1,r_2,r_3\in \P^2$ in a general position. For $i,j\in \{1,2,3\}$ let $\bar{\ll}_i,\bar{\ll}_{ij}$ be the lines joining $r_i$ with $q_0$ and $r_j$, respectively. For $\{i,j,k\}=\{1,2,3\}$ write $\{q_i\}=\bar{\ll}_i\cap \bar{\ll}_{jk}$. Let $\bar{\qq}$ be the unique conic passing through $r_1$, $q_2$, $q_3$ and tangent to $\bar{\ll}_{23}$ at $q_1$, and let $\bar{\cc}$ be the unique conic passing through $r_1$, $q_0$, $q_3$ and tangent to $\bar{\qq}$ at $q_2$. Recall that in Example \ref{ex:w=1}\ref{item:w=1_cha-neq-3} we assume $\cha\kk\neq 2,3$: this assumption implies that the conics $\bar{\cc}$, $\bar{\qq}$ are smooth. Indeed, in the coordinates $[x:y:z]$ on $\P^2$ such that $q_0=[1:1:1]$,  $r_1=[1:0:0]$, $r_2=[0:1:0]$, $r_3=[0:0:1]$, we have $\bar{\qq}=\{(y-z)^2=x(y+z)\}$ and $\bar{\cc}=\{y(y-x)=3z(x-z)\}$. The first conic is smooth since $\cha\kk\neq 2$, and the second one since $\cha\kk\neq 2,3$. Let $\sigma\colon \P^2\map \P^2$ be the standard quadratic transformation centered at $r_1,r_2,r_3$, i.e.\ we blow up those points and blow down the proper transforms of the lines joining them. Put $\ll_i=\sigma_{*}\bar{\ll}_i$, $\cc=\sigma_{*}\bar{\cc}$, $\qq=\sigma_{*}\bar{\qq}$. Then $(\P^2,\qq+\ll_1+\ll_2+\ll_3)$ is as in Example  \ref{ex:w=1}\ref{item:w=1_cha-neq-3}, and $\cc$ is the required cubic. 
	\smallskip

	\ref{item:primitive} The surface $\bar{Y}$ from Examples \ref{ex:w=2_cha_neq_2}\ref{item:ht=3_exception} and  
	\ref{ex:w=2_cha=2}\ref{item:swap-to-nu=4_large},\ref{item:swap-to-nu=3} is not primitive because we can swap the $(-1)$-curve over $p_0$. For the remaining surfaces $\bar{Y}$, primitivity follows from  \cite[Remark 5.7]{PaPe_ht_2} if $\bar{Y}$ is canonical, and from Proposition 6.15 loc.\ cit.\ if it is non-canonical but has a descendant with elliptic boundary. Eventually, the surface $\bar{Y}$ from Example \ref{ex:w=1}\ref{item:w=1_cha=3_not-GK} is obviously primitive: indeed, if $L\subseteq Y$ was a $(-1)$-curve as in \cite[Definition 1.3(a)]{PaPe_ht_2} then $L+D_Y$ would be nonpositive definite, which is impossible.
	\smallskip

	\ref{item:primitive-ht} Clearly, we have $\height(\bar{Y})\leq 3$. An elementary exercise, or an application of \cite[Lemma 4.1]{PaPe_ht_2}, shows that $\height(\bar{Y})>1$. Witnessing $\P^1$-fibrations of height $2$ are obtained by pulling back the following pencils: 
	\begin{itemize}
		\item In  Example \ref{ex:w=1}\ref{item:ht=3_A1+A2+A5}: second projection $\P^1\times \P^1\to\P^1$. 
		\item In  Example \ref{ex:w=1}\ref{item:ht=3_2A4}: pencil of curves of type $(1,1)$ passing through $p_{22}$, $p_{32}$. 
		\item In  Examples \ref{ex:w=2_cha_neq_2}\ref{item:w=2_A1+A2+A5},\ref{item:w=2_2A1+2A3} or \ref{ex:w=2_cha=2}\ref{item:swap-to-nu=3} pencil of conics which are tangent to  $\ll_3$ at $p_3$ and 
		\begin{itemize}
			\item pass through $p_1,p_2$ in  Example \ref{ex:w=2_cha_neq_2}\ref{item:w=2_A1+A2+A5}, 
			\item are tangent to $\ll_2$ at $p_2$ in  Examples \ref{ex:w=2_cha_neq_2}\ref{item:w=2_2A1+2A3},  \ref{ex:w=2_cha=2}\ref{item:swap-to-nu=3}.
		\end{itemize}
	\end{itemize}	
	In the remaining cases when $\bar{Y}$ has a descendant with elliptic boundary we have $\height(\bar{Y})\geq 3$ by \cite[Lemma 6.3]{PaPe_ht_2}. By \ref{item:primitive-deb} it remains to prove that $\height(\bar{Y})\geq 3$ in  Example    \ref{ex:w=1}\ref{item:w=1_cha=3_not-GK}. Suppose the contrary, so $\height(\bar{Y})\leq 2$. Then some degenerate fiber $F$ of a witnessing $\P^1$-fibration contains a $(-3)$-curve. By Lemma \ref{lem:fibrations-Sigma-chi} $F\redd$ is a sum of some components of $D_Y$ and at most two $(-1)$-curves, so $F\redd=[3,1,2,2]$ or $[1,3,1,2]$. We have seen in \ref{item:primitive} that $\bar{Y}$ is primitive, so each $(-1)$-curve in $F$ meets a component of $(D_Y)\hor$. Thus  $F\cdot D_Y\geq 3$, a contradiction.
	\smallskip

	\ref{item:primitive-uniqueness} Fix an element of $\cP_{+}(\check{\cS})$, i.e.\ a log surface $(Y,\check{D}_Y)$ together with the fixed order of components of $\check{D}_Y$, see \cite[\sec 2F]{PaPe_ht_2}. Then a subdivisor of $\check{D}_Y$ supports a fiber of a $\P^1$-fibration as above. Thus we get a morphism $\phi\colon (Y,\check{D}_Y)\to (Z,B)$ as above such that the center of each blowup in the decomposition of $\phi$ is uniquely determined by $(Z,B)$ and the fixed order of components of $B$. If $\bar{Y}$ is not as in Example \ref{ex:w=2_cha=2}\ref{item:swap-to-nu=4_small},\ref{item:swap-to-nu=4_large}, then the latter is unique up to an isomorphism 
	so by the universal property of blowing up we get $\#\cP_{+}(\check{\cS})=1$, as needed. 
	
	Consider $\bar{Y}$ as in  Example \ref{ex:w=2_cha=2}\ref{item:swap-to-nu=4_small},\ref{item:swap-to-nu=4_large}. We have seen in \ref{item:primitive-deb} that $\bar{Y}$ admits a descendant with elliptic boundary of type $3\rA_1+\rD_4$. Hence the required family is constructed in \cite[Lemma 6.14 and Proposition 6.17(b)]{PaPe_ht_2}. To be precise, loc.\ cit.\ yields families representing the sets of isomorphism classes of log surfaces $(Y,D_Y)$, but it is clear from their construction that their total spaces contain divisors corresponding to the $(-1)$-curves $\check{D}_Y-D_Y$.

	\ref{item:primitive-uniqueness-Y} This follows from \ref{item:primitive-uniqueness} and \cite[Lema 2.20(c)]{PaPe_ht_2}. Alternatively, in all cases except \ref{ex:w=1}\ref{item:w=1_cha=3_not-GK} parts \ref{item:primitive-deb}, \ref{item:primitive-ht} show that  $\height(\bar{Y})\leq 2$ or $\bar{Y}$ has a descendant with elliptic boundary, so \ref{item:primitive-uniqueness-Y} follows from the classification \cite{PaPe_ht_2}. 
	
	\ref{item:primitive-hi} If $\height(\bar{Y})=2$ or $\height(\bar{Y})=3$ and $\bar{Y}$ has a descendant with elliptic boundary then the numbers $h^i$ are computed in \cite[Proposition D and Theorem E]{PaPe_ht_2}. Thus by  \ref{item:primitive-deb} and \ref{item:primitive-ht} we can assume that $\bar{Y}$ is as in Example  \ref{ex:w=1}\ref{item:w=1_cha=3_not-GK}. Let $A=[1]$, $G=[2]$ be the third and the second exceptional curve over $p_3$, and let $\sigma\colon (Y,D_Y-G)\to (Y',D_{Y'})$ be the contraction of $A$, so $(Y',D_{Y'})$ is as in Example  \ref{ex:w=1}\ref{item:w=1_cha=3_GK}, see Figure \ref{fig:w=1}. The blowup $\sigma$ is inner, so $h^{i}(\lts{Y}{(D_Y-G)})=h^{i}(\lts{Y'}{D_{Y'}})$, see \cite[Lemma 2.11(a),(b)]{PaPe_ht_2}. Thus $h^{i}(\lts{Y}{(D_Y-G)})=0$ for $i=0,1$ and $h^{2}(\lts{Y}{(D_Y-G)})=1$. We have an exact sequence $0\to\lts{Y}{D_Y}\to \lts{Y}{(D_Y-G)}\to \cN_{G}\to 0$, where $\cN_{G}$ is the normal bundle to $G$ in $Y$, see Lemma 2.12(a) loc.\ cit. Since $h^{1}(\cN_{G})=1$ and $h^i(\cN_G)=0$ for $i\neq 1$, we get $h^{i}=0$ for $i=0,1$ and $h^{2}=2$, as claimed.
\end{proof}

\subsection{Automorphisms of primitive surfaces in case $\ww=1$, $\cha\kk=3$}

The last result of this section describes the automorphism groups of surfaces from Example \ref{ex:w=1}\ref{item:w=1_cha=3_GK},\ref{item:w=1_cha=3_not-GK}. Some of these automorphisms will be used in the proof of Lemma \ref{lem:w=1_cha=3}, i.e.\ in case $\width(\bar{X})=1$, $\cha\kk=3$ of Theorem~\ref{thm:ht=3}.

\begin{lemma}[$\Aut(\textnormal{\ref{ex:w=1}\ref{item:w=1_cha=3_GK}})\cong \rD_{6}$]\label{lem:w=1_cha=3_GK_Aut}
	Let $\bar{Y}$ be the surface from Example \ref{ex:w=1}\ref{item:w=1_cha=3_GK}, and let $(Y,D_Y)$ be its minimal log resolution. Then $\Aut(\bar{Y})=\Aut(Y,D_Y)$ is isomorphic to the dihedral group of order $12$, and is generated by the following automorphisms, cf.\ Remark \ref{rem:hexagon}:
	\begin{enumerate}
			\item\label{item:Aut_rotation} $(G_1,G_1',G_2,G_2',H,H',T_1,T_2)\mapsto
			(H,H',G_1',G_1,G_2,G_2',T_2,T_1)$,
			\item\label{item:Aut_symmetry} $
			(G_1,G_1',G_2,G_2',H,H',T_1,T_2)\mapsto
			(G_2',G_2,G_1',G_1,H',H,T_1,T_2)$,
	\end{enumerate}
	where $H=\phi^{-1}_{*}\qq$, $H+H'=[2,2]$; and for $i\in \{1,2\}$ we have $(\phi^{*}\ll_i)\redd=[3,1,2,2]= T_i+A_i+G_{i}+G_{i}'$.
\end{lemma}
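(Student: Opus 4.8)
The plan is to reduce the statement to the resolution, then bound $\Aut(Y,D_Y)$ from below and above, organizing everything around the hexagonal picture of Remark~\ref{rem:hexagon}. First I would record the equality $\Aut(\bar Y)=\Aut(Y,D_Y)$: the minimal log resolution is canonical, so every automorphism of $\bar Y$ lifts uniquely to $Y$ and preserves the reduced exceptional divisor $D_Y$, and conversely any element of $\Aut(Y,D_Y)$ descends. Here $D_Y$ is the disjoint union of three chains $[2,2]$ (the $\rA_2$'s $H+H'$, $G_1+G_1'$, $G_2+G_2'$), one $[2]$ (the $\rA_1$, which I call $N$) and two $(-3)$-curves $T_1,T_2$, so its weighted dual graph alone has automorphism group of order $96$; the whole point is therefore to single out the geometrically realizable subgroup. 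To do so I would pass to the finite set of negative curves consisting of the components of $D_Y$ together with the relevant $(-1)$-curves: the three fibre $(-1)$-curves $A_1,A_2,A_3$, one in each of the three degenerate fibres over $\ll_1,\ll_2,\ll_3$ (there is exactly one per fibre by Lemma~\ref{lem:fibrations-Sigma-chi}, since $\width(\bar Y)=1$), and the section over $p_0$. Every $\alpha\in\Aut(Y,D_Y)$ permutes these curves preserving the intersection form.

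For the lower bound I would construct the two generators. The key observation is that $r$ does not preserve $p_Y$: it sends the vertical curve $G_1$ to the horizontal $3$-section $H$, so it must instead permute cyclically the three witnessing $\P^1$-fibrations of $Y$, in each of which a different one of the three $\rA_2$'s plays the role of the horizontal section. Accordingly I would realize $r$ on the symmetric model underlying Remark~\ref{rem:hexagon}, in which the six curves $G_1,H,G_2,G_1',H',G_2'$ are interchanged cyclically and the three fibrations stand on equal footing, and then read off that its induced action on $D_Y$ is the stated permutation, namely the $6$-cycle $(G_1,H,G_2,G_1',H',G_2')$ together with the transposition $(T_1\,T_2)$. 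The involution $s$ fixes $T_1,T_2$ and merely reflects the hexagon, exchanging the $\rA_2$'s $\{G_1,G_1'\}$ and $\{G_2,G_2'\}$ and flipping $H+H'$, so it should come from an evident symmetry of the defining configuration $\qq+\ll_1+\ll_2+\ll_3$; I would verify directly that it induces the stated permutation.

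For the upper bound, an arbitrary $\alpha$ fixes $N$ (the unique $\rA_1$), preserves $\{T_1,T_2\}$ (the only $(-3)$-curves) and permutes the three $\rA_2$'s; tracking its action on $A_1,A_2,A_3$ and the intersection numbers with $H$ then forces the induced permutation of the six curves to respect the hexagon, i.e.\ to lie in its symmetry group $D_6$. The resulting homomorphism $\Aut(Y,D_Y)\to D_6$ is injective by rigidity: an automorphism fixing every negative curve together with all their intersection points must be the identity. Since $r$ and $s$ already realize all of $D_6$, this yields $\Aut(Y,D_Y)=\langle r,s\rangle$. Finally I would identify the abstract group through the relations $r^6=s^2=1$ and $srs^{-1}=r^{-1}$, all checked directly on the permutations; as $s$ fixes $T_1,T_2$ whereas $r,r^3,r^5$ swap them and $r^2,r^4$ act nontrivially on the hexagon, $s$ is not a power of $r$, so $\langle r,s\rangle$ is dihedral of order $12$.

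The main obstacle is the construction of $r$: producing the order-$6$ automorphism that permutes the three fibrations requires making explicit the hidden hexagonal symmetry special to $\cha\kk=3$ (and, via Proposition~\ref{prop:primitive}\ref{item:primitive-deb}, tied to the elliptic-boundary descendant of type $3\rA_2$). By contrast, the reflection $s$, the reduction to the resolution, and the graph- and intersection-theoretic bookkeeping for the upper bound are routine.
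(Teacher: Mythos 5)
Your proposal has a genuine gap at its heart: the construction of the generators, which is precisely the nontrivial content of the lemma. Your plan is to ``realize $r$ on the symmetric model underlying Remark~\ref{rem:hexagon}'', but no such model exists prior to the lemma --- the hexagon of Remark~\ref{rem:hexagon} is only a mnemonic for the action of the group whose existence this lemma asserts, so this step is circular; you yourself flag it as ``the main obstacle'' and leave it unproved. Worse, your suggested source for the reflection $s$ (``an evident symmetry of the defining configuration $\qq+\ll_1+\ll_2+\ll_3$'') cannot work: $s$ sends $H=\phi^{-1}_{*}\qq$ to the \emph{vertical} curve $H'$, so it does not preserve the fibration $p_Y$ and a fortiori cannot descend to an automorphism of $\P^2$ preserving the configuration (the same applies to $r$, which sends the vertical $G_1$ to the $3$-section $H$). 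The paper produces both generators by a different mechanism: it first shows that every $\alpha\in\Aut(Y,D_Y)$ fixes the $(-1)$-curve $L$, the proper transform of the line through $p_1$ and $p_3$ (since $\alpha(L)$ meets $D_Y$ in the same pattern as $L$ and $K_Y$ together with the components of $D_Y$ generate $\NS_{\Q}(Y)$, one gets $\alpha(L)\equiv L$, hence $\alpha(L)\cdot L=L^2<0$, hence $\alpha(L)=L$); contracting $L$ gives $\gamma\colon (Y,D_Y-G)\to (Z,D_Z)$ with $(Z,D_Z)$ the minimal log resolution of the canonical del Pezzo surface of type $4\rA_2$, and $\Aut(Y,D_Y)$ is identified with the lift of the stabilizer of $\gamma(L)$ in $\Aut(Z,D_Z)$, which is known explicitly from \cite[Lemma 7.2(d)]{PaPe_MT}. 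It is exactly this imported, characteristic-$3$-specific symmetry of the $4\rA_2$ surface that supplies the order-$6$ automorphism; without it (or an equivalent explicit model) your lower bound does not materialize.

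Your upper bound is also not as routine as you present it. The assertion that tracking $A_1,A_2,A_3$ and intersections with $H$ ``forces the induced permutation of the six curves to respect the hexagon'' presupposes knowing which pairs of $(-2)$-curves are joined by $(-1)$-curves through a $T_i$ --- but in the paper that incidence structure (the twelve $(-1)$-curves of Remark~\ref{rem:hexagon}) is \emph{derived from} the group action, not available beforehand; establishing enough of it independently requires effective/irreducibility arguments in $\NS_{\Q}(Y)$ that you do not supply. Likewise the injectivity (``rigidity'') step needs an argument, e.g.\ the one the paper uses for the companion Lemma~\ref{lem:w=1_cha=3_not-GK_Aut}: an automorphism fixing every component of $D_Y$ acts trivially on $\NS_{\Q}(Y)$, hence fixes every $(-1)$-curve, hence descends to an automorphism of $\P^2$ fixing $\qq+\sum_i\ll_i$ componentwise, which must be the identity. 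In the paper's approach both bounds come for free from the stabilizer identification, which is why it organizes the proof around the curve $L$ rather than around the hexagon.
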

\begin{proof}
	Let $L\subseteq Y$ be the proper transform of the line joining $p_1$ with $p_3$, and let $G$ be the connected component of $D_Y$ of type $[2]$. For every automorphism $\alpha\in \Aut(Y,D_Y)$ we have $\alpha(G)=G$ and $\alpha(\{T_1,T_2\})=\{T_1,T_2\}$, so $\alpha(L)$ is a $(-1)$-curve meeting $D_Y$ normally in $G$, $T_1$ and $T_2$. Since $K_Y$ and the components of $D_Y$ generate the group $\NS_{\Q}(Y)$, we get $\alpha(L)\equiv L$, hence $\alpha(L)\cdot L=L^2<0$ and therefore $\alpha(L)=L$. Thus every automorphism of $(Y,D_Y)$ fixes $L$ and $G$. It follows that $\Aut(Y,D_Y)$ is a lift of the stabilizer subgroup of $\Aut(Z,D_Z)$ with respect to the point $\gamma(L)$, where $\gamma\colon (Y,D_Y-G)\to (Z,D_Z)$ is the contraction of $L$. 
	
	The log surface $(Z,D_Z)$ is the minimal log resolution of the canonical del Pezzo surface $\bar{Z}$ of rank one and type $4\rA_2$. An explicit construction of $\bar{Z}$ is given for instance in \cite[Example 7.1]{PaPe_MT}. The curves $G_1\cp{1}$, $G_{1}\cp{2}$, $G_2\cp{2}$, $G_2\cp{1}$, $L_1$, $L_2$, $Q$ and $R$ from loc.\ cit.\ can be identified with the images of $G_{1}$, $G_{1}'$, $G_{2}$, $G_{2}'$, $H$, $H'$, $T_1$ and $T_2$, respectively (note the reverse order of $G_2\cp{j}$). Under this identification we have $\gamma(L)=Q\cap R$. The stabilizer of the point $\gamma(L)$ in $\Aut(Z,D_Z)$ is generated by the automorphisms (i), (ii), (iv) from \cite[Lemma 7.2(d)]{PaPe_MT}. They lift to $\sigma_{\textnormal{\ref{item:Aut_rotation}}}^{3}$, $\sigma_{\textnormal{\ref{item:Aut_rotation}}}^{-2}$,  $\sigma_{\textnormal{\ref{item:Aut_symmetry}}}$, where $\sigma_{\textnormal{\ref{item:Aut_rotation}}}$, $\sigma_{\textnormal{\ref{item:Aut_symmetry}}}$ are the automorphisms described in \ref{item:Aut_rotation}, \ref{item:Aut_symmetry}, respectively.
\end{proof}

\begin{remark}[Visualizing $\Aut(\textnormal{\ref{ex:w=1}\ref{item:w=1_cha=3_GK}})$, see Figure \ref{fig:hexagon}]\label{rem:hexagon}
	One can view the isomorphism $\Aut(Y,D_Y)\cong \rD_{6}$ from Lemma \ref{lem:w=1_cha=3_GK_Aut} as follows, see Figure \ref{fig:hexagon}. Consider a regular hexagon with vertices labeled subsequently by $G_1,H,G_2,G_{1}',H',G_{2}'$, and edges labeled alternatively by $T_1$, $T_2$. Now the automorphisms from Lemma \ref{lem:w=1_cha=3_GK_Aut}\ref{item:Aut_rotation} and \ref{item:Aut_symmetry} are, respectively, the $60^{\circ}$ rotation and a symmetry through the axis perpendicular to $HH'$.  
\begin{figure}[htbp]
	\begin{tikzpicture}
		\begin{scope}[scale=1.2]
			\draw (0.2,3.2) -- (0,2.2);
			\node at (0.35,2.8) {\small{$T_1$}};
			\node at (0.12,2.8) {\tiny{$\spadesuit$}};
			\node at (0.07,2.55) {\tiny{$\diamondsuit$}};
			\node at (-0.15,2.8) {\small{$-3$}};
			\draw[dashed] (0,2.4) -- (0.2,1.4);
			\node at (0.3,2.1) {\small{$A_1$}};
			\draw (0.2,1.6) -- (0,0.6);
			\node at (0.35,1.1) {\small{$G_1$}};
			\node at (0.12,1.2) {\small{$\circ$}};
			\node at (0.08,1) {\tiny{$\clubsuit$}};
			\draw (0,0.8) -- (0.2,-0.2); 
			\node at (0.35,0.3) {\small{$G_1'$}};
			\node at (0.08,0.4) {\tiny{$\spadesuit$}};
			\node at (0.13,0.15) {\tiny{$\heartsuit$}};
			\node at (0.18,-0.1) {$\bullet$};
			\draw (1.8,3.2) -- (1.6,2.2);
			\node at (1.95,2.8) {\small{$T_2$}};
			\node at (1.72,2.8) {\tiny{$\heartsuit$}};
			\node at (1.67,2.55) {\tiny{$\clubsuit$}};
			\node at (1.45,2.8) {\small{$-3$}};
			\draw[dashed] (1.6,2.4) -- (1.8,1.4);
			\node at (1.9,2.1) {\small{$A_2$}};
			\draw (1.8,1.6) -- (1.6,0.6);
			\node at (1.95,1.1) {\small{$G_2$}};
			\node at (1.72,1.2) {\small{$\circ$}};
			\node at (1.68,1) {\tiny{$\spadesuit$}};
			\draw (1.6,0.8) -- (1.8,-0.2); 
			\node at (1.95,0.3) {\small{$G_2'$}};
			\node at (1.68,0.4) {\tiny{$\diamondsuit$}};
			\node at (1.73,0.15) {\tiny{$\clubsuit$}};
			\node at (1.78,-0.1) {$\bullet$};
			\draw (3.4,3) -- (3.2,1.8);
			\node at (3.55,2.4) {\small{$H'$}};
			\node at (3.36,2.76) {\tiny{$\heartsuit$}};
			\node at (3.32,2.52) {\tiny{$\diamondsuit$}};
			\node at (3.28,2.28) {\small{$\circ$}};
			\draw[dashed] (3.2,2) -- (3.4,0.8);
			\node at (3.28,1.52) {\tiny{$\spadesuit$}};
			\node at (3.32,1.28) {\small{\tiny{$\clubsuit$}}};
			\node at (3.55,1.3) {\small{$A_3$}};
			\draw (3.4,1) -- (3.2,-0.2);
			\node at (3.5,0.3) {\small{$G$}};
			\node at (3.32,0.52) {\small{$\circ$}};
			\node at (3.28,0.28) {$\bullet$};
			\draw (-0.6,1.9) -- (3.5,1.9);
			\node at (-0.4,2.1) {\small{$H$}};
			\node at (0.9,1.9) {$\bullet$};
			\draw[dashed] (0,3.1) -- (3.8,3.1) to[out=0,in=90] (4,3) --(4,0.2) to[out=-90,in=0] (3.8,0) -- (3,0);
			\node at (4.2,1.3) {\small{$L$}};
			\draw[dashed] (-0.6,1.75) -- (0.2,1.75) to[out=0,in=180] (1.5,0.7) -- (2.5,0.7) to[out=0,in=-150] (3.383,0.9) -- (3.583,1);
			\node at (-0.4,1.55) {\small{$V_1$}};
			\node at (2.4,0.7) {\tiny{$\diamondsuit$}};
			\draw[dashed] (-0.6,0.7) -- (0.4,0.7) to[out=0,in=-120]  (0.87,0.94);
			\draw[dashed] (1.03,1.26) to[out=60,in=180] (1.5,1.75) -- (2.5,1.75) to[out=0,in=120] (3.383,0.9) -- (3.483,0.7);
			\node at (-0.4,0.9) {\small{$V_2$}};
			\node at (2.4,1.75) {\tiny{$\heartsuit$}};
		\end{scope}
		\begin{scope}[shift={(10,1.7)}, scale=0.8]
			\coordinate (H) at (-2,0);
			\coordinate (G2) at (-1,-1.732);
			\coordinate (G1') at (1,-1.732);
			\coordinate (H') at (2,0);
			\coordinate (G2') at (1,1.732);
			\coordinate (G1) at (-1,1.732);
			\draw (H) -- (G2);
			\draw[thick] (G2) -- (G1');
			\draw (G1') -- (H');
			\draw[thick] (H') -- (G2');
			\draw (G2') -- (G1);
			\draw[thick] (G1) -- (H);
			\filldraw (H) circle (0.1);
			\draw (G2) circle (0.1);
			\filldraw (G1') circle (0.1);
			\draw (H') circle (0.1);
			\filldraw (G2') circle (0.1);
			\draw (G1) circle (0.1);
			\node[left] at (H) {\small{$H$}};
			\node[right] at (H') {\small{$H'$}};
			\node[above left] at (G1) {\small{$G_1$}};
			\node[above right] at (G2') {\small{$G_2'$}};
			\node[below left] at (G2) {\small{$G_2$}};
			\node[below right] at (G1') {\small{$G_1'$}};
			\draw (H) -- (H');
			\node at ($(H)+(1,0.2)$) {\small{$A_3$}};
			\draw (G1) -- (G1');
			\node at ($(G1)+(0.6,-0.6)$) {\small{$V_2$}};
			\draw (G2) -- (G2');
			\node at ($(G2)+(0.65,0.6)$) {\small{$V_1$}};
			\node at (0.3,0.15) {\tiny{$G$}};
			\filldraw (0,0) circle (0.04);
			\node at (-1.7,1.1) {\small{$A_1$}};
			\node at (-1.2,0.8) {\tiny{$T_1$}};
			\node at (-1.7,-1.1) {\small{$A_2$}};
			\node at (-1.2,-0.8) {\tiny{$T_2$}};
			\node at (1.7,-1.1) {\small{$\heartsuit$}};
			\node at (1.2,0.8) {\tiny{$T_1$}};
			\node at (1.7,1.1) {\small{$\diamondsuit$}};
			\node at (1.2,-0.8) {\tiny{$T_2$}};
			\node at (0,2) {\small{$\clubsuit$}};
			\node at (0,1.5) {\tiny{$T_2$}};
			\node at (0,-2) {\small{$\spadesuit$}};
			\node at (0,-1.5) {\tiny{$T_1$}};
		\end{scope}
	\end{tikzpicture}
	\caption{Remark \ref{rem:hexagon}: the automorphism group of the surface from Example \ref{ex:w=1}\ref{item:w=1_cha=3_GK} viewed as the symmetry group of a regular hexagon.}
	\label{fig:hexagon}
\end{figure}
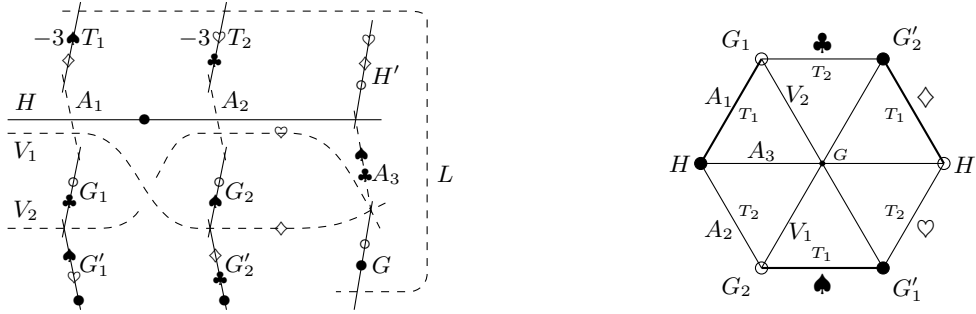	

	This identification leads to a convenient description of the action of $\Aut(Y,D_Y)$ on the set of $(-1)$-curves on $Y$. The $(-1)$-curve $L$ is fixed by $\Aut(Y,D_Y)$. The $(-1)$-curve $A_1$ meets $D_Y$ on $T_1$, $G_1$ and $H$. Acting by $\Aut(Y,D_Y)$, we see that for every edge of the hexagon, there is a $(-1)$-curve on $Y$ meeting $D_Y$ exactly three times, on the $(-3)$-curves corresponding to that edge, and the $(-2)$-curves corresponding to its vertices. Those $(-1)$-curves are: $A_1,A_2$ and, say, $A_{\spadesuit}$, $A_{\heartsuit}$, $A_{\diamondsuit}$ and $A_{\clubsuit}$: in Figure \ref{fig:hexagon}, we mark the common points of $A_{\star}$ with $D_Y$ by $\star\in \{\spadesuit,\heartsuit,\diamondsuit,\clubsuit\}$. Note that $A_{\heartsuit}=\phi^{-1}(p_0)$. Those $(-1)$-curves are pairwise disjoint since $A_1\cap A_2=\emptyset$. 
	
	Similarly, the orbit of $A_3$ is  $\{A_3,V_1,V_2\}$, where for $i\in \{1,2\}$ the $(-1)$-curve $V_i$ satisfies $V_i\cdot A_i=1$, $V_i\cdot D_Y=3$ and passes through $G_{3-i}\cap G_{3-i}'$ and $A_3\cap G$. To see them in the hexagon,  label the diagonals $HH'$ and $G_iG_i'$ by $A_3$ and $V_i$, and label their common point by $G$. Now, each of those three $(-1)$-curves meets three $(-2)$-curves whose labels lie on the chosen diagonal; and two $(-1)$-curves corresponding to edges parallel to it. 
	
	Eventually, let $A_{\bullet}$ and $A_{\circ}$ be the proper transforms of, respectively, the line joining $p_2$ with $p_3$ and the conic passing through $p_1$ and tangent to $\ll_i$ at $p_i$, $i\in \{2,3\}$. Then $\{A_{\circ},A_{\bullet}\}$ is another orbit of $\Aut(Y,D_Y)$. To see it in the hexagon, we label the vertices $H,G_1',G_{2}'$ by \enquote{$\bullet$} and the remaining ones by  \enquote{$\circ$}, so the $(-1)$-curve $A_{\star}$ meets all $(-2)$-curves labeled by $\star\in \{\bullet,\circ\}$. 
	
	The  points where some two of the above twelve  $(-1)$-curves meet are: $V_1\cap V_2\cap V_3$, and the orbit of $A_3\cap A_{\spadesuit}$.
	
	These twelve $(-1)$-curves exhaust all $(-1)$-curves on $Y$. We will not use this fact, so we leave the proof as an exercise for the reader. One can see this either from a direct computation in $\NS_{\Q}(Y)$, or from the description of the $(-1)$-curves on the minimal resolution of del Pezzo surface of type $4\rA_2$ given e.g.\ in \cite[p.\ 595]{BBD_canonical}.
\end{remark}

\begin{remark}[$\Aut(\textnormal{\ref{ex:w=1}\ref{item:w=1_cha=3_GK}})\leq \PGL_{3}(\F_3)$]
	The automorphism group of the surface $\bar{Y}$ from Example \ref{ex:w=1}\ref{item:w=1_cha=3_GK} can be described in yet another way, as follows. By \cite[Proposition 5.1(5)]{KN_Pathologies}, cf.\ \cite[Remark 7.3]{PaPe_MT}, the minimal log resolution $(Z,D_Z)$ of a del Pezzo surface of rank one and type $4\rA_2$ is isomorphic to the blowup of $\P^2$ in $p_1,\dots,p_{8}$, where $p_0,\dots,p_{8}$ are all $\F_3$-rational points which do not lie on a fixed $\F_3$-rational line $\ll$. The divisor  $D_Z$ is the proper transform of all $\F_3$-rational lines not passing through $p_0$ and different from $\ll$. Now  $\Aut(\bar{Y})$ can be identified with the stabilizer in $\PGL_{3}(\F_3)$ of $p_0$, $\ll$, and  one $\F_3$-rational point on $\ll$. 
\end{remark}

\begin{lemma}[$\Aut(\textnormal{\ref{ex:w=1}\ref{item:w=1_cha=3_not-GK}})\cong S_4$]\label{lem:w=1_cha=3_not-GK_Aut}
	Let $\bar{Y}$ be the surface from Example \ref{ex:w=1}\ref{item:w=1_cha=3_not-GK}, and let $(Y,D_Y)$ be its minimal log resolution. Then $\Aut(\bar{Y})=\Aut(Y,D_Y)$ is isomorphic to the symmetric group $S_4$, permuting the $(-3)$-curves in $D_Y$. It is generated by all permutations of the three degenerate fibers $\phi^{*}\ll_i$, $i\in \{1,2,3\}$ and the involution
	\begin{equation}\label{eq:Bernasconi_involution}
		(G_{1},G_{1}',G_{2},G_{2}',G_3,G_3',H,T_3,T_1,T_2)\mapsto
		(G_{1}',G_{1},G_{2}',G_{2},G_3,G_3',T_3,H,T_2,T_1),
	\end{equation}
	where $H=\phi^{-1}_{*}\qq$ and for $i\in \{1,2,3\}$ we have $(\phi^{*}\ll_i)\redd=[3,1,2,2]= T_i+A_i+G_{i}+G_{i}'$, cf.\ Remark \ref{rem:cube}.
\end{lemma}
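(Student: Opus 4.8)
The plan is to realise $\Aut(Y,D_Y)$ as the full symmetric group on the four $(-3)$-curves. Since $\pi\colon Y\to\bar Y$ is the minimal resolution and $D_Y=\Exc\pi$, every automorphism of $\bar Y$ lifts uniquely to $(Y,D_Y)$ and conversely, so $\Aut(\bar Y)=\Aut(Y,D_Y)$. The $(-3)$-components of $D_Y$ are exactly $T_1,T_2,T_3,H$ (the remaining components being the six $(-2)$-curves $G_i,G_i'$), hence every automorphism permutes them; this defines a homomorphism $r\colon\Aut(Y,D_Y)\to\mathrm{Sym}\{T_1,T_2,T_3,H\}\cong S_4$, and it remains to show $r$ is bijective.

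For injectivity, take $\alpha\in\ker r$. Each degenerate fibre of the height-$3$ fibration $p$ is $\phi^{*}\ll_i=T_i+3A_i+2G_i+G_i'$, and $A_i$ is the unique $(-1)$-curve meeting both $T_i$ and $H$ (consistently with $H\cdot A_i=1$); as $\alpha$ fixes $T_i$ and $H$ it fixes $A_i$, hence the whole chain, so it fixes every component of $D_Y$ and every $A_i$. Then $\alpha$ fixes three fibres of $p$ as divisors, so it preserves $p$ and acts trivially on the base (an automorphism of $\P^1$ fixing three points is the identity). Because $A_i\cong\P^1$ meets the three fixed curves $T_i,G_i,H$ in distinct points, $\alpha|_{A_i}=\id$; the three distinct points $A_i\cap H$ are then fixed, so $\alpha|_{H}=\id$. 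Here $\cha\kk=3$ enters decisively: by Remark \ref{rem:Bernasconi} the $3$-section $H$ is the graph of Frobenius, so $H\to\P^1$ is purely inseparable and $H$ is tangent to every fibre. Thus for a general fibre $F$ the point $y_F=H\cap F$ is fixed with $d(\alpha|_F)_{y_F}=\id$, forcing $\alpha|_F$ to be the identity or a parabolic automorphism whose only fixed point is $y_F$. Since $\alpha$ also fixes the section $\phi^{-1}(p_0)$ (pinned by its incidences with the fixed components of $D_Y$), $\alpha|_F$ has a second fixed point; hence $\alpha|_F=\id$ for general $F$ and $\alpha=\id$.

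For surjectivity it suffices to realise the point-stabiliser $S_3$ of $H$ together with one double transposition moving $H$, as a transitive subgroup of $S_4$ containing such an $S_3$ is all of $S_4$. The $S_3$ permuting $T_1,T_2,T_3$ and fixing $H$ comes from the model of Remark \ref{rem:Bernasconi}: the automorphisms $(\psi\cp{3},\psi)$ of $\P^1\times\P^1$ with $\psi\in\PGL_2$ permuting the three base points of $F_1,F_2,F_3$ preserve $H+F_1+F_2+F_3$ and lift to $(Y,D_Y)$. The remaining generator is the involution \eqref{eq:Bernasconi_involution}, interchanging $H$ with $T_3$ and $T_1$ with $T_2$. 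I would obtain it from a second height-$3$ fibration $p'$ of $(Y,D_Y)$ in which $T_3$ is the $3$-section and $H,T_1,T_2$ lie on the degenerate fibres: exhibiting the fibre class of $p'$ in $\NS(Y)$ and checking its combinatorial type agrees with that of $p$, the uniqueness of the configuration (Proposition \ref{prop:primitive}\ref{item:primitive-uniqueness},\ref{item:primitive-uniqueness-Y}) yields an automorphism of $(Y,D_Y)$ carrying $p$ to $p'$, which a direct check identifies with \eqref{eq:Bernasconi_involution}. Conjugating it by the $S_3$ produces the Klein four-group of double transpositions, so $r$ is onto and $\Aut(Y,D_Y)\cong S_4$.

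The main obstacle is the construction of this involution. Unlike the $S_3$, it does not descend to $\P^1\times\P^1$ — it exchanges the $(1,3)$-curve $H$ with a fibre — so it lives genuinely on the blown-up surface and must be produced either by writing down the second fibration $p'$ and invoking rigidity of the configuration, or, in the spirit of Lemma \ref{lem:w=1_cha=3_GK_Aut}, by contracting to a canonical model of type $4\rA_2$ and matching the resulting symmetry with the cube picture of Remark \ref{rem:cube}, where the four $(-3)$-curves appear as the four space diagonals permuted by $S_4$. A secondary delicate point, already addressed above, is the final step of injectivity, which is peculiar to $\cha\kk=3$: it is exactly the inseparability of $H\to\P^1$ that collapses $H\cap F$ to a single tangential point and makes the parabolic dichotomy available.
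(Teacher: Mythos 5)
Your overall frame agrees with the paper's: the homomorphism $r\colon\Aut(Y,D_Y)\to S_4$ given by the action on the four $(-3)$-curves, injectivity plus surjectivity, the reduction of surjectivity to the stabilizer $S_3$ of $H$ together with one double transposition, and the realization of that $S_3$ through the automorphisms $(\psi^{(3)},\psi)$ of the $\P^1\times\P^1$ model of Remark \ref{rem:Bernasconi}. But the proof has a genuine gap exactly at its crux: the involution \eqref{eq:Bernasconi_involution} is never constructed. You offer two strategies --- exhibit a second height-$3$ fibration $p'$ with $3$-section $T_3$ and invoke the rigidity of Proposition \ref{prop:primitive}, or contract to a canonical model and match symmetries with the cube picture --- but you execute neither: no fibre class of $p'$ is written down, no check that it is nef with the required degenerate fibres is made, and no verification that a symmetry of a contracted model fixes the blown-up points (which is exactly what lifting requires). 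Since the $S_3$ only stabilizes $H$, without this involution surjectivity, and hence the lemma, is unproved. The paper closes this gap concretely and economically: it contracts the single $(-1)$-curve $A_3$, i.e.\ takes $(Y,D_Y-G_3)\to(Z,D_Z)$, observes that $(Z,D_Z)$ is the minimal log resolution of the surface of Example \ref{ex:w=1}\ref{item:w=1_cha=3_GK} (the images of $H$ and $T_3$ becoming the $[2,2]$-component $H+H'$ there), and lifts the $180^{\circ}$ rotation $\sigma_{\textnormal{\ref{item:Aut_rotation}}}^{3}$ of Lemma \ref{lem:w=1_cha=3_GK_Aut}, which swaps $H\leftrightarrow H'$ and therefore fixes the blown-up point $H\cap H'$; the lift is \eqref{eq:Bernasconi_involution}. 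Any honest completion of your sketch would have to do an equivalent amount of work.

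Your injectivity argument has a secondary, repairable gap: it rests on the unproved assertions that $A_i$ is \emph{the unique} $(-1)$-curve meeting both $T_i$ and $H$, and later that the section $\phi^{-1}(p_0)$ is ``pinned by its incidences''. Both are true, but they are weak forms of the classification of $(-1)$-curves on $Y$, which the paper explicitly relegates to a direct computation in $\NS_{\Q}(Y)$ in Remark \ref{rem:cube} and deliberately avoids using. Note that $\alpha\in\ker r$ fixes $H,T_1,T_2,T_3$ but could a priori permute the three $\rA_2$-components, so at this stage $\alpha$ need not act trivially on $\NS_{\Q}(Y)$, and $\alpha(A_i)$ could a priori be a horizontal $(-1)$-curve meeting $T_i$, $H$ and some $G_j$ or $G_j'$; ruling this out requires real computation. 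The paper's ordering removes the difficulty: from $\alpha(H)=H$ alone, a general fibre $F$ satisfies $\alpha(F)\cdot H=F\cdot H$, $\alpha(F)\cdot V=0$ for every (vertical) component $V$ of $D_Y-H$, and $\alpha(F)\cdot K_Y=F\cdot K_Y$, whence $\alpha(F)\equiv F$ and $\alpha$ preserves the fibration; each degenerate fibre is then fixed because it contains a unique $T_i$, its chain $[3,1,2,2]$ is fixed componentwise, so $\alpha$ is trivial on $\NS_{\Q}(Y)$, fixes every $(-1)$-curve, and descends to the identity of $\P^2$. If you reorder your argument this way, your characteristic-$3$ tangency/parabolic trick --- which is correct as far as it goes, and a nice observation about the inseparability of $p|_H$ --- becomes unnecessary.
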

\begin{proof}
	Consider the group homomorphism $\sigma\colon \Aut(Y,D_Y)\to S_4$ given by the action of $\Aut(Y,D_Y)$ on the set of $(-3)$-curves in $D_Y$. We will show that $\sigma$ is surjective and injective.
	
	The original construction of $\bar{Y}$ from \cite{Bernasconi_char_3-example}, recalled in Remark \ref{rem:Bernasconi}, shows that $\Aut(Y,D_Y)$ contains the symmetric group $S_3$ permuting the degenerate fibers. To construct the involution \eqref{eq:Bernasconi_involution}, let $(Y,D_Y-G_3)\to (Z,D_Z)$ be the contraction of $A_3$. Then $(Z,D_Z)$ is the minimal log resolution of the surface from Example \ref{ex:w=1}\ref{item:w=1_cha=3_GK}, and the involution $\sigma_{\textnormal{\ref{item:Aut_rotation}}}^3$, where $\sigma_{\textnormal{\ref{item:Aut_rotation}}}$ is as in Lemma \ref{lem:w=1_cha=3_GK_Aut}\ref{item:Aut_rotation}, lifts to \eqref{eq:Bernasconi_involution}. We conclude that $\sigma$ is surjective.
	
	To see that $\sigma$ is injective fix $\alpha\in\ker\sigma$. Then $\alpha(H)=H$, so we have $\alpha(F)\cdot H=3$ for a fiber $F$ and $\alpha(F)\cdot V=0$ for every component $V$ of $D_Y-H$. Since $K_Y$ and the components of $D_Y$ generate $\NS_{\Q}(Y)$, it follows that $\alpha(F)\equiv F$, so $\alpha$ maps fibers to fibers. Since $\alpha\in \ker\sigma$, it follows that $\alpha$ fixes each component of $D_Y$, so it acts trivially on $\NS_{\Q}(Y)$, and therefore fixes each $(-1)$-curve on $Y$. We conclude that $\alpha$ descends to an automorphism of $\P^2$ fixing $\phi_{*}D_Y=\qq+\sum_{i}\ll_i$ componentwise, so $\alpha=\id$, as claimed.
\end{proof}

\begin{remark}[Visualizing $\Aut(\textnormal{\ref{ex:w=1}\ref{item:w=1_cha=3_not-GK}})$, see Figure \ref{fig:cube}]\label{rem:cube}
	One can view the isomorphism $\Aut(Y,D_Y)\cong S_4$ from Lemma \ref{lem:w=1_cha=3_not-GK_Aut} as follows. Recall that $S_4$ is the rotation group of a cube, permuting its four diagonals \cite[Exercise 5.5]{Serre_linear-reps}. We label those diagonals by the $(-3)$-curves $H,T_1,T_2,T_3$. Next, we label the faces of the cube by the $(-2)$-curves $G_i$, $G_i'$, $i\in \{1,2,3\}$, so that $G_{i}$ is opposite to $G_{i}'$; the diagonal $T_i$ passes through the common vertex of $G_i$ and $G_{j}'$ for both $j\in \{1,2,3\}\setminus \{i\}$; and $H$ passes through the common vertex of $G_1$, $G_2$ and $G_3$. Now the rotation group $S_4$ acts on those labels as in the same way as $\Aut(Y,D_Y)$ acts on the components of $D_Y$. For example, the involution \eqref{eq:Bernasconi_involution} is a  $180^{\circ}$ rotation about the axis joining the centers of faces $G_3$ and $G_3'$.
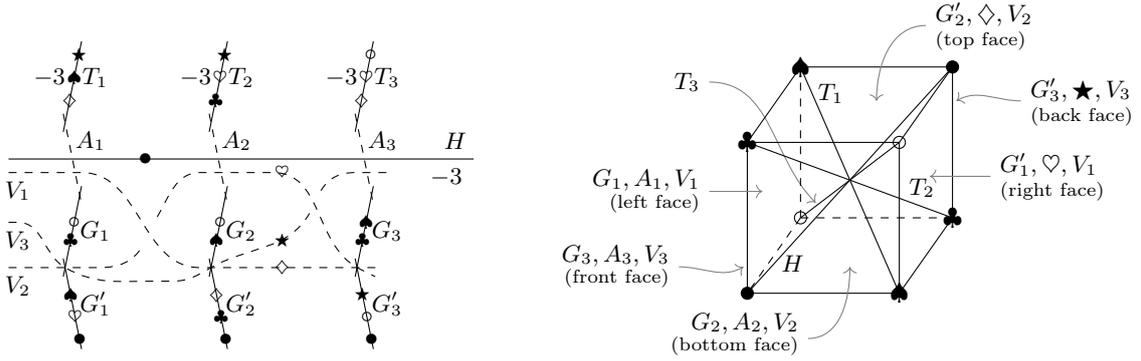
\begin{figure}[htbp]
	\begin{tikzpicture}
		\begin{scope}[scale=1.2]
			\draw (0.2,3.2) -- (0,2.2);
			\node at (0.35,2.8) {\small{$T_1$}};
			\node at (0.17,3.05) {\tiny{$\bigstar$}};
			\node at (0.12,2.8) {\tiny{$\spadesuit$}};
			\node at (0.07,2.55) {\tiny{$\diamondsuit$}};
			\node at (-0.15,2.8) {\small{$-3$}};
			\draw[dashed] (0,2.4) -- (0.2,1.4);
			\node at (0.3,2.1) {\small{$A_1$}};
			\draw (0.2,1.6) -- (0,0.6);
			\node at (0.35,1.1) {\small{$G_1$}};
			\node at (0.12,1.2) {\small{$\circ$}};
			\node at (0.08,1) {\tiny{$\clubsuit$}};
			\draw (0,0.8) -- (0.2,-0.2); 
			\node at (0.35,0.3) {\small{$G_1'$}};
			\node at (0.08,0.4) {\tiny{$\spadesuit$}};
			\node at (0.13,0.15) {\tiny{$\heartsuit$}};
			\node at (0.18,-0.1) {$\bullet$};
			\draw (1.8,3.2) -- (1.6,2.2);
			\node at (1.95,2.8) {\small{$T_2$}};
			\node at (1.77,3.05) {\tiny{$\bigstar$}};
			\node at (1.72,2.8) {\tiny{$\heartsuit$}};
			\node at (1.67,2.55) {\tiny{$\clubsuit$}};
			\node at (1.45,2.8) {\small{$-3$}};
			\draw[dashed] (1.6,2.4) -- (1.8,1.4);
			\node at (1.9,2.1) {\small{$A_2$}};
			\draw (1.8,1.6) -- (1.6,0.6);
			\node at (1.95,1.1) {\small{$G_2$}};
			\node at (1.72,1.2) {\small{$\circ$}};
			\node at (1.68,1) {\tiny{$\spadesuit$}};
			\draw (1.6,0.8) -- (1.8,-0.2); 
			\node at (1.95,0.3) {\small{$G_2'$}};
			\node at (1.68,0.4) {\tiny{$\diamondsuit$}};
			\node at (1.73,0.15) {\tiny{$\clubsuit$}};
			\node at (1.78,-0.1) {$\bullet$};
			\draw (3.4,3.2) -- (3.2,2.2);
			\node at (3.55,2.8) {\small{$T_3$}};
			\node at (3.37,3.05) {\small{$\circ$}};
			\node at (3.32,2.8) {\tiny{$\heartsuit$}};
			\node at (3.27,2.55) {\tiny{$\diamondsuit$}};
			\node at (3.05,2.8) {\small{$-3$}};
			\draw[dashed] (3.2,2.4) -- (3.4,1.4);
			\node at (3.5,2.1) {\small{$A_3$}};
			\draw (3.4,1.6) -- (3.2,0.6);
			\node at (3.55,1.1) {\small{$G_3$}};
			\node at (3.32,1.2) {\tiny{$\spadesuit$}};
			\node at (3.28,1) {\tiny{$\clubsuit$}};
			\draw (3.2,0.8) -- (3.4,-0.2); 
			\node at (3.55,0.3) {\small{$G_3'$}};
			\node at (3.28,0.4) {\tiny{$\bigstar$}};
			\node at (3.33,0.15) {\small{$\circ$}};
			\node at (3.38,-0.1) {$\bullet$};
			\draw (-0.6,1.9) -- (4.5,1.9);
			\node at (4.3,2.1) {\small{$H$}};
			\node at (4.2,1.7) {\small{$-3$}};
			\node at (0.9,1.9) {$\bullet$};
			\draw[dashed] (-0.6,1.75) -- (0.2,1.75) to[out=0,in=180] (1.5,0.7) -- (2.5,0.7) to[out=0,in=180] (3.2,0.7) -- (3.5,0.7);
			\node at (-0.5,1.55) {\small{$V_1$}};
			\node at (2.4,0.7) {\tiny{$\diamondsuit$}};
			\draw[dashed] (-0.6,1.2) -- (-0.5,1.2) to[out=0,in=150] (0.02,0.7) to[out=-30,in=180] (1,0.55) -- (1.2,0.55) to[out=0,in=-150] (1.62,0.7) to[out=30,in=-120] (2.7,1.3);
			\draw[dashed] (2.825,1.55) to[out=60,in=180] (3.2,1.75) -- (3.6,1.75);
			\node at (-0.5,1) {\small{$V_3$}};
			\node at (2.4,1) {\tiny{$\bigstar$}};
			\draw[dashed] (-0.6,0.7) -- (0.4,0.7) to[out=0,in=-120]  (0.87,0.94);
			\draw[dashed] (1.03,1.26) to[out=60,in=180] (1.5,1.75) -- (2.3,1.75) to[out=0,in=150] (3.22,0.7) -- (3.42,0.6);
			\node at (-0.5,0.5) {\small{$V_2$}};
			\node at (2.4,1.75) {\tiny{$\heartsuit$}};
		\end{scope}
		\begin{scope}[shift={(9,0.5)}]
			\coordinate (H) at (0,0);
			\coordinate (T1) at (2,0);
			\coordinate (T3) at (0.7,1);
			\coordinate (T2) at (2.7,1);
			\coordinate (T2') at (0,2);
			\coordinate (T3') at (2,2);
			\coordinate (T1') at (0.7,3);
			\coordinate (H') at (2.7,3);
			\node at ($(H)+(0.6,0.4)$) {\small{$H$}};
			\node at ($(T2)+(-0.4,0.4)$) {\small{$T_2$}};
			\node at ($(T1')+(0.4,-0.35)$) {\small{$T_1$}};
			%
			\filldraw (H) circle (0.08);
			\filldraw (H') circle (0.08);
			\node at (T1) {\small{$\spadesuit$}};
			\node at (T1') {\small{$\spadesuit$}};
			\node at (T2) {\small{$\clubsuit$}};
			\node at (T2') {\small{$\clubsuit$}};
			\draw (T3) circle (0.08);
			\draw (T3') circle (0.08);
			\draw[gray, ->] (-0.5,2.8) to[out=0,in=135] ($(T3)+(0.2,0.2)$);
			\node[left] at (-0.5,2.8)  {\small{$T_3$}};
			\draw (H) -- (T1) -- (T3') -- (T2') -- (H);
			\draw[dashed] (T3) -- (H);
			\draw[dashed] (T3) -- (T2);
			\draw[dashed] (T3) -- (T1');
			\draw (H') -- (T1') -- (T2');
			\draw (H') -- (T3');
			\draw (H') -- (T2) -- (T1);
			\draw (H) -- (H');
			\draw (T1) -- (T1');
			\draw (T2) -- (T2');
			\draw (T3) -- (T3');
			\node[left] at (-0.5,1.5) {\small{$G_1,A_1,V_1$}};
			\node[left] at (-0.5,1.2) {\scriptsize{(left face)}};
			\draw[->,gray] (-0.5,1.35) -- (0.2,1.35);
			\node[left] at (-0.9,0.5) {\small{$G_3,A_3,V_3$}};
			\node[left] at (-0.9,0.2) {\scriptsize{(front face)}};
			\draw[->,gray] (-0.9,0.35) to[out=0,in=-150] (-0.05,0.35);
			\node[right] at (3.2,1.7) {\small{$G_1',\heartsuit,V_1$}};
			\node[right] at (3.2,1.35) {\scriptsize{(right face)}};
			\draw[->,gray] (3.2,1.55) -- (2.4,1.55);
			\node[right] at (2.35,3.7) {\small{$G_2',\diamondsuit,V_2$}};
			\node[right] at (2.35,3.35) {\scriptsize{(top face)}};
			\draw[->,gray] (2.35,3.55) to[out=180,in=90] (1.7,2.55);
			\node[right] at (3.6,2.7) {\small{$G_3',\bigstar,V_3$}};
			\node[right] at (3.6,2.35) {\scriptsize{(back face)}};
			\draw[->,gray] (3.6,2.55) to[out=180,in=30] (2.75,2.55);
			\node[left] at (0.8,-0.4) {\small{$G_2,A_2,V_2$}};
			\node[left] at (0.8,-0.7) {\scriptsize{(bottom face)}};
			\draw[->,gray] (0.8,-0.6) to[out=0,in=-90] (1.35,0.35);
		\end{scope}
	\end{tikzpicture}

	\caption{Remark \ref{rem:cube}: the automorphism group of the surface from Example \ref{ex:w=1}\ref{item:w=1_cha=3_not-GK} viewed as the rotation group of a cube.}
	\label{fig:cube}
\end{figure}	

Using this identification, we can describe the action of $\Aut(Y,D_Y)$ on the set of $(-1)$-curves on $Y$, just like we did in Remark \ref{rem:cube} for the surface from Example  \ref{ex:w=1}\ref{item:w=1_cha=3_GK}. The orbit of $A_{1}$ is $\{A_1,A_2,A_3,A_{\heartsuit},A_{\diamondsuit},A_{\bigstar}\}$, where $A_{\bigstar}$ is the proper transform of the line joining $p_1$ and $p_3$, denoted by $L$ in Remark \ref{rem:hexagon}, and $A_{\heartsuit}$, $A_{\diamondsuit}$ are proper transforms of $(-1)$-curves denoted in Remark \ref{rem:hexagon} by the same letters. Each of these $(-1)$-curves meets exactly one $(-2)$-curve, which gives another labeling of the faces of the cube. more explicitly, we add labels $A_i$ to the face $G_i$, and $\heartsuit,\diamondsuit,\bigstar$ to faces $G_1',G_2',G_3'$, respectively. In Figure \ref{fig:cube}, we denote by $\heartsuit,\diamondsuit,\bigstar$ the common points of $D_Y$ with $A_{\heartsuit}$, $A_{\diamondsuit}$ and $A_{\bigstar}$, respectively.

Let $V_i$ be the proper transform of the curve $V_i$ from Remark \ref{rem:hexagon}, $i\in \{1,2\}$. Its orbit is $\{V_1,V_2,V_3\}$, where each $V_i$ passes through $G_j\cap G_{j}'$ for both $j\in \{1,2,3\}\setminus \{i\}$. To see this orbit in the cube, we add a label $V_i$ to both opposite faces $G_i$ and $G_i'$.

Eventually, let $A_{\bullet}$ be the proper transform of the line joining $p_2$ and $p_3$. Its orbit is $\{A_{\bullet},A_{\circ},A_{\spadesuit},A_{\clubsuit}\}$, where we denote the proper transforms of $(-1)$-curves from Remark \ref{rem:hexagon} by the same letters. Each of those $(-1)$-curves meets exactly one $(-3)$-curve, and we add its label to the corresponding diagonal of the cube: to keep Figure \ref{fig:cube} legible, we put those labels at the ends of each diagonal.

Now the action of $S_4$ on this labeled cube is exactly the action of $\Aut(Y,D_Y)$ on the components of $D_Y$ and the thirteen $(-1)$-curves described above. A direct computation in $\NS_{\Q}(Y)$ shows that those are the only $(-1)$-curves on $Y$.
\end{remark}

\clearpage

\section{Case $\width(\bar{X})=3$}\label{sec:w=3}

We begin the proof of Theorem \ref{thm:ht=3}. 
In this section we study the case $\width=3$, that is, we assume that
\begin{equation}\label{eq:assumption_ht=3}
	\parbox{.9\textwidth}{
		$\bar{X}$ is a log terminal del Pezzo surface of rank one, $\height(\bar{X})=3$; $(X,D)$ is the  minimal log resolution of $\bar{X}$, 
		and $p\colon X\to \P^1$ is a $\P^1$-fibration such that $D\hor$ consists of three $1$-sections.
	}
\end{equation}
Here $D=D\hor+D\vert$ is the decomposition into a horizontal and vertical part, see Section \ref{sec:P1-fibrations}.

As we have outlined in the introduction, we first prove the relevant case of Proposition \ref{prop:ht=3_models}, that is, we choose $p$ so that it factors as $p=\pr_1\circ\psi$ for some $\psi\colon (X,D)\to (\P^1\times \P^1,B)$, where $\pr_1 \colon \P^1\times \P^1\to \P^1$ is the first projection, and $B$ is a sum of three vertical and three horizontal lines. Next, we prove an analogous statement of Proposition \ref{prop:ht=3_swaps}, that is, we choose $p$ so that $\psi$ can be factored as $\psi=\phi\circ \phi_{+}$, where $\phi\colon (Y,D_Y)\to (\P^1\times \P^1,B)$ is as in Example \ref{ex:w=3}, and $\phi_{+}\colon (X,D)\sqto (Y,D_Y)$ is a vertical swap.

\subsection{Proof of Proposition \ref{prop:ht=3_models}: constructing the minimalizations \texorpdfstring{$\psi\colon (X,D)\to (\P^1\times \P^1,B)$}{onto P1xP1}} 

The aim of this section is to prove Proposition \ref{prop:ht=3_models} in case $\width(\bar{X})=3$. We keep the assumption \eqref{eq:assumption_ht=3}. 
First, we summarize basic properties of $p$ and we fix some notation for the remaining part of this section.

\begin{lemma}[Degenerate fibers]\label{lem:w=3_basics}
	Assume $(X,D,p)$ is as in \eqref{eq:assumption_ht=3}. Let $F_{1},\dots, F_{\nu}$ be all degenerate fibers of $p$. For $j=1,\dots, \nu$, let $\sigma(F_{j})$ be the number of $(-1)$-curves in $(F_{j})\redd$. Let $H_1,H_2,H_3$ be the components of $D\hor$. 
	\begin{enumerate}
		\item\label{item:w=3_Sigma} 	Put $\nu_{k}=\#\{j:\sigma(F_j)=k\}$ for $k\geq 1$. Then $(\nu_{2},\nu_{3})=(2,0)$ or $(0,1)$, and $\nu_{k}=0$ for $k\geq 4$.
		\item\label{item:ht=3_D'} Contract all vertical $(-1)$-curves on $X$ and its images which are disjoint from the images of $D\hor$, and denote the resulting morphism by $\phi\colon (X,D)\to (\hat{X},\hat{D})$. For a degenerate fiber $F$ of $p$ put $\hat{F}=\phi_{*}F$ and $V_{F}=\phi^{-1}_{*}\hat{F}$. Then one of the following holds.
		\begin{enumerate}
			\item\label{item:sigma=1_rivet} $\sigma(F)=1$, $\hat{F}=[0]$, $V_{F}\subseteq D\vert$, $V_{F}\cdot H_{j}=1$ and $H_{i}\cdot H_{j}=0$ for any $i,j\in \{1,2,3\}$, $i\neq j$.
			\item\label{item:sigma=1} $\sigma(F)=1$, $\hat{F}=[1,1]$, $V_{F}\subseteq D\vert$.
			\item\label{item:sigma=2} $\sigma(F)=2$, $\hat{F}=[1,(2)_{s},1]$ for some $s\geq 0$.
			\item\label{item:sigma=3} $\sigma(F)=3$, $\hat{F}=\langle 3;[1,(2)_{s_{1}}],[1,(2)_{s_{2}}],[1,(2)_{s_{3}}]\rangle$ for some $s_{1},s_{2},s_{3}\geq 0$.
		\end{enumerate}
	\end{enumerate}
\end{lemma}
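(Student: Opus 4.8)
The plan is to read off part \ref{item:w=3_Sigma} from the count of $(-1)$-curves in Lemma \ref{lem:fibrations-Sigma-chi}, and to obtain part \ref{item:ht=3_D'} from the structure theory of degenerate fibers \cite[\S4]{Fujita-noncomplete_surfaces}, combined with the elementary remark that a $1$-section meets a fiber only in a multiplicity-one component.

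\emph{Part \ref{item:w=3_Sigma}.} Since $D\hor=H_1+H_2+H_3$ consists of three $1$-sections, $\#D\hor=3$, so Lemma \ref{lem:fibrations-Sigma-chi}\ref{item:Sigma} gives $\sum_F(\sigma(F)-1)=2$, the sum taken over all degenerate fibers. Every degenerate fiber contains a $(-1)$-curve, so each summand is nonnegative, and a fiber with $\sigma(F)\geq 4$ would by itself contribute at least $3$; hence $\nu_k=0$ for $k\geq 4$. The only ways to write $2$ as a sum of positive integers are $2=2$ and $2=1+1$, which correspond exactly to $(\nu_2,\nu_3)=(0,1)$ and $(\nu_2,\nu_3)=(2,0)$.

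\emph{Part \ref{item:ht=3_D'}.} Fix a degenerate fiber $F$. Two observations drive the analysis. First, since $H_j\cdot F=1$, each $H_j$ meets $F$ transversally in a single point lying on a multiplicity-one component, so the three sections are distributed among the multiplicity-one components of $F$, and the same holds after applying $\phi$. Second, a $(-1)$-curve meeting the rest of its fiber in two points has multiplicity at least $2$, so a $(-1)$-curve of multiplicity one is necessarily a tip. By construction of $\phi$, every vertical $(-1)$-curve on $\hat X$ meets some $H_j$, hence has multiplicity one and is a tip; consequently the $(-1)$-curves of $\hat F$ are multiplicity-one tips, each met by one of the three sections, so their number $\hat\sigma$ satisfies $\hat\sigma\leq 3$, and moreover $\hat\sigma\neq 1$ (a fiber with a unique $(-1)$-curve has it internal, as recalled in Section \ref{sec:P1-fibrations}). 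Thus $\hat\sigma\in\{0,2,3\}$, and I would classify $\hat F$ accordingly using \cite[\S4]{Fujita-noncomplete_surfaces}: if $\hat\sigma=0$ then $\hat F$ is smooth, i.e.\ $\hat F=[0]$, and its unique component $V_F\subseteq D\vert$ carries all three sections; if $\hat\sigma=2$ then, since $\hat F$ must contract to a $0$-curve, it is a chain with two $(-1)$-tips and all inner components equal to $(-2)$-curves, so $\hat F=[1,(2)_s,1]$; if $\hat\sigma=3$ then $\hat F$ is a fork with three $(-1)$-tipped $(-2)$-arms, and propagating multiplicities along each arm shows every component has multiplicity one, whence $F\cdot B=0$ at the branching component $B$ forces $B=[3]$, giving the asserted type.

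It then remains to attach the correct value of $\sigma(F)$ to each shape and to verify $H_i\cdot H_j=0$ in case \ref{item:sigma=1_rivet}. The latter follows from log terminality of $\bar X$: were $H_i$ and $H_j$ to meet, then $V_F,H_i,H_j$ would form a loop in $D$, contradicting that $D$ is a disjoint union of admissible chains and forks. For the matching, a fibre with $\sigma(F)=1$ has exactly two multiplicity-one components (Section \ref{sec:P1-fibrations}), so its three sections split as $(3,0)$ or $(2,1)$ and $\phi$ yields $[0]$ or $[1,1]$, i.e.\ \ref{item:sigma=1_rivet} or \ref{item:sigma=1}; the chain $[1,(2)_s,1]$ and the fork are then to be paired with $\sigma(F)=2$ and $\sigma(F)=3$. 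I expect this last step to be the main obstacle: since $\sigma(F)$ is intrinsic to $F$ while $\hat F$ records only the contracted shape, one must exclude the ``unbalanced'' section distributions that would pair a value of $\sigma(F)$ with the wrong shape — for instance a $\sigma(F)=3$ fibre whose sections meet only two of its $(-1)$-tips, so that $\phi$ collapses it to a chain $[1,(2)_s,1]$, or a $\sigma(F)=2$ fibre collapsing to $[0]$. Ruling these out requires tracking multiplicities through $\phi$ together with the global constraints on $(X,D)$, namely the bounded branching and absence of loops forced by log terminality and the ampleness criterion of Lemma \ref{lem:delPezzo_criterion}.
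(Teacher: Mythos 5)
Part \ref{item:w=3_Sigma} is correct and coincides with the paper's argument. Your preliminary analysis for part \ref{item:ht=3_D'} --- sections meet only multiplicity-one components, every $(-1)$-curve of $\hat{F}$ meets $\hat{D}\hor$ and is therefore a multiplicity-one tip, hence $\hat{\sigma}\in\{0,2,3\}$ (writing $\hat{\sigma}$ for the number of $(-1)$-curves in $\hat{F}$), and the three shapes $[0]$, $[1,(2)_{s},1]$, $\langle 3;[1,(2)_{s_{1}}],[1,(2)_{s_{2}}],[1,(2)_{s_{3}}]\rangle$ --- also agrees with the paper. The proposal breaks down exactly at the two points where these local observations are insufficient.

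First, in the case $\hat{\sigma}=0$ you assert $V_{F}\subseteq D\vert$ with no justification. This is not a formality: by Lemma \ref{lem:fibrations-Sigma-chi}\ref{item:-1_curves}, if $V_{F}\not\subseteq D$ then $V_{F}$ is a $(-1)$-curve of $F$ meeting all three sections, and nothing in the fiberwise multiplicity analysis excludes this configuration. The paper disposes of it by quoting \cite[Lemma 2.8(c)]{PaPe_ht_2}: a component of a degenerate fiber meeting $D\hor$ in $F\cdot D$ points lies in $D$, \emph{and} its fiber then satisfies $\sigma(F)=1$. The second conclusion is precisely what eliminates your worried case of a ``$\sigma(F)=2$ fibre collapsing to $[0]$''; you offer no substitute for it.

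Second, the matching of $\sigma(F)$ with the shape of $\hat{F}$, which you yourself leave open as ``the main obstacle'', is where the paper's proof does its real work, and the missing idea is a count of base points of $\phi^{-1}$. Over every base point the first curve contracted by $\phi$ is a $(-1)$-curve of $F$ itself (a component of $D$ can only become contractible after one of its neighbours has been contracted), distinct base points give distinct such curves, and every $(-1)$-curve of $\hat{F}$ carrying no base point is the isomorphic image of a $(-1)$-curve of $F$; hence if no two $(-1)$-curves of $\hat{F}$ met at a base point, one would get $\sigma(F)\geq\hat{\sigma}$. The proof therefore splits on $\hat{\sigma}>\sigma(F)$ versus $\hat{\sigma}\leq\sigma(F)$, not on how the three sections are distributed. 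If $\hat{\sigma}>\sigma(F)$, two multiplicity-one $(-1)$-curves $\hat{L}_{1},\hat{L}_{2}$ of $\hat{F}$ meet, and then $\hat{F}\cdot\hat{L}_{i}=0$ forces $\hat{F}=\hat{L}_{1}+\hat{L}_{2}=[1,1]$ with both proper transforms in $D$: this is case \ref{item:sigma=1}, and it is also what guarantees that your ``$(2,1)$ split'' produces exactly $[1,1]$ and never $[1,(2)_{s},1]$ with $s>0$ --- a point your phrase ``$\phi$ yields $[0]$ or $[1,1]$'' assumes silently. In the complementary case the paper combines $\hat{\sigma}\leq\sigma(F)\leq 3$ (from part \ref{item:w=3_Sigma}) with the blow-down structure of $\hat{F}$ to land in \ref{item:sigma=2} or \ref{item:sigma=3}. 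Your closing intuition that global constraints on $(X,D)$ must enter is not wrong in spirit, but the actual mechanism is this base-point count together with the quoted external lemma --- not Lemma \ref{lem:delPezzo_criterion} --- and without it the classification does not close.
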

\begin{proof}
	\ref{item:w=3_Sigma} This follows from Lemma \ref{lem:fibrations-Sigma-chi}\ref{item:Sigma}.
	
	\ref{item:ht=3_D'}. Assume $\hat{F}=[0]$. Then $V_{F}$ meets all components of $D\hor$, so $V_{F}\subseteq D$ and $\sigma(F)=1$ by \cite[Lemma 2.8(c)]{PaPe_ht_2}. Since $D$ has no circular subdivisor, the components of $D\hor$ are pairwise disjoint, so \ref{item:sigma=1_rivet} holds. 
	
	Assume $\hat{F}\neq [0]$, and let $\hat{\sigma}$ be the number of $(-1)$-curves in $\hat{F}$. Every such $(-1)$-curve meets $\hat{D}\hor$, so its proper transform in $V_{F}$ meets $D\hor$, in particular has multiplicity one in $\hat{F}$. It follows that $\hat{\sigma}\geq 2$. If $\hat{\sigma}>\sigma(F)$ then two $(-1)$-curves in $\hat{F}$ meet at a base point of $\phi^{-1}$, so \ref{item:sigma=1} holds. In the other case, we have $\hat{\sigma}\leq \sigma(F)\leq 3$ by \ref{item:w=3_Sigma}, and since $\hat{F}$ blows down to a $0$-curve, we get \ref{item:sigma=2} or \ref{item:sigma=3}.
\end{proof}

The next lemma is a key step in this section. 
Once it is proved, to infer the relevant part of Proposition~\ref{prop:ht=3_models} it is enough to show that one can choose $p$ with only three degenerate fibers, which we do in Lemma \ref{lem:nu_1=1} below.

\begin{lemma}
	\label{lem:H_disjoint}
	Let $(X,D,p)$ be as in \eqref{eq:assumption_ht=3}. One can choose the $\P^1$-fibration $p$ so that the components $H_1$, $H_2$, $H_3$ of $D\hor$ are pairwise disjoint. 
	
	Moreover, there is a birational morphism $\psi\colon (X,D)\to(\P^{1}\times \P^{1},B)$ such that  $p=\pr_1\circ\psi$, where $\pr_1\colon \P^1\times \P^1\to\P^1$ is the first projection,  $B\hor$ is a sum of three horizontal lines,  and $\#B\vert\geq 3$.
\end{lemma}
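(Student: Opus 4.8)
The overall plan is to localize the possible intersection points of the three horizontal $1$-sections, remove them by re-choosing the fibration, and only then contract to a minimal model; the bound $\#B\vert\geq 3$ will then come for free from the second ruling. Write $D\hor=H_1+H_2+H_3$. Since each $H_i$ is a $1$-section, two of them can meet only at a point $x$ lying on the fiber $F_x$ through $x$, and there both $H_i$ and $H_j$ must attach to one and the same component $C$ of $F_x$. As $D$ is snc, $x$ cannot be a point of any component of $D$ other than $H_i,H_j$ themselves; hence either $F_x$ is smooth with $C=F_x\notin D$, or $F_x$ is degenerate and $C\notin D$ is one of its $(-1)$-curves by Lemma~\ref{lem:fibrations-Sigma-chi}\ref{item:-1_curves}. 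In either case a crossing forces $F_x$ to be a special (smooth or degenerate) fiber through a node of $D$, so there are only finitely many of them.

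\emph{Disjointness.} To prove the first assertion I would run over the crossings and, for each one, replace $p$ by another $\P^1$-fibration still having three horizontal $1$-sections but with strictly fewer crossings. The idea is that whenever $H_i$ and $H_j$ cross, one can slide the fibration so that one of the two offending sections, say $H_i$, becomes vertical while a suitable component of $D$ lying on $F_x$ takes its place as the new $1$-section; this keeps $\#D\hor=3$, so the width stays maximal, and strictly decreases $\sum_{i<j}H_i\cdot H_j$. Carrying this out requires analysing the degenerate fibers through their classification in Lemma~\ref{lem:w=3_basics}\ref{item:ht=3_D'}: in the rivet case \ref{item:sigma=1_rivet} the conclusion $H_i\cdot H_j=0$ is already part of the statement, so only the types \ref{item:sigma=1}, \ref{item:sigma=2}, \ref{item:sigma=3}, together with the smooth fibers through nodes, can support a crossing, and each must be handled by an explicit re-selection. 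I expect this re-selection step—checking that the modified fibration is again witnessing of height $3$ and width $3$, and that the total crossing number genuinely drops—to be the main obstacle.

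\emph{Reduction to $\P^1\times\P^1$.} Once $H_1,H_2,H_3$ are pairwise disjoint, I would build $\psi$ by contracting vertical $(-1)$-curves one at a time, at each stage choosing a $(-1)$-curve meeting at most one of the three sections. Such a curve always exists: the sections meet the reduced fiber in exactly three points in total, so if a degenerate fiber had two or more $(-1)$-curves, at least one of them would meet at most one section by counting, whereas a fiber with a single $(-1)$-curve $L$ has $L$ in its interior with multiplicity $\geq 2$, so no section attaches to it. Contracting such a curve never brings two sections together, so the images of $H_1,H_2,H_3$ stay pairwise disjoint throughout, and the procedure terminates at a relatively minimal ruled surface $Z=\F_m$ carrying three pairwise disjoint sections. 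But $\F_m$ admits three pairwise disjoint sections only for $m=0$: at most one of them is the negative section $C_0$, and any two sections numerically equal to $C_0+k_iF$ and $C_0+k_jF$ with $k_i,k_j\geq m$ meet in $-m+k_i+k_j\geq m$ points. Hence $Z=\P^1\times\P^1$, its two rulings are $\pr_1\circ\psi=p$ and a second ruling $\pr_2$, and the images of $H_1,H_2,H_3$ are three horizontal lines forming $B\hor$.

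\emph{The bound $\#B\vert\geq 3$.} For the last assertion I would use the second ruling $\pr_2$. A general fiber of $\pr_2$ is a horizontal line $\ell$ on $Z$, disjoint from the three horizontal lines of $B\hor$ and meeting each of the $\#B\vert$ vertical lines of $B$ transversally once; choosing $\ell$ away from the finitely many centers of $\psi$, its proper transform $F'$ on $X$ avoids every exceptional component of $D$ and meets each vertical component of $D$ exactly once, so $F'\cdot D=\#B\vert$. Thus $\pr_2\circ\psi$ is a $\P^1$-fibration of $X$ of height $\#B\vert$ with respect to $D$, and since $\height(\bar X)=3$ we conclude $\#B\vert\geq 3$, as required.
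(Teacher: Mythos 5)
Your preliminary observations, your contraction argument producing a relatively minimal model $\F_m$ with three disjoint sections (hence $m=0$), and your use of the second ruling to get $\#B\vert\geq 3$ are all correct, and the last two steps coincide with the paper's own argument. However, the first assertion of the lemma --- that $p$ can be re-chosen so that $H_1,H_2,H_3$ are pairwise disjoint --- is the actual content of the statement, and your proposal does not prove it: you outline a descent on $\sum_{i<j}H_i\cdot H_j$ via an unspecified ``re-selection'' of the fibration and you yourself flag that step as the main obstacle. Essentially all of the paper's proof lives exactly there, so this is a genuine gap, not a routine verification.

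Moreover, it is doubtful that the descent can be run in the form you describe. Since $D$ contains no circular subdivisor, only two non-disjoint configurations are possible: $D\hor$ a chain of length three, or exactly one crossing $H_1\cdot H_2=1$. The paper handles the chain case by switching to the second projection of $\P^1\times\P^1$ (close in spirit to your ``slide''), but legitimizing this requires first proving that $\eta=\nu=\nu_2=2$ and locating a base point of $\psi^{-1}$ on $\bar H_2\cap B\vert$, so that the new horizontal part $H_2+V_1+V_2$ consists of three $1$-sections and is not again a chain --- none of which your sketch addresses. In the one-crossing case the paper does \emph{not} produce a better fibration at all: it shows this configuration is impossible, by a contradiction that makes essential use of log terminality through the admissibility constraints on forks (Lemma \ref{lem:admissible_forks}, via Claims \ref{claim:eta>=3} and \ref{cl:H1H2=1_structure} and self-intersection bookkeeping on $\F_1$ and $\P^2$). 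Your purely numerical crossing-count descent has no mechanism that exploits log terminality, so it cannot rule this case out; any completed proof must bring that input in, and your proposal currently contains no trace of it.
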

\begin{proof}
	Contract all vertical $(-1)$-curves in $X$ and its images which meet the horizontal part of the boundary at most once. Denote the resulting morphism by $\psi\colon (X,D)\to (Z,B)$. Then $Z$ is smooth, $B$ is snc and $B\hor\cong D\hor$. 
	Suppose that the induced $\P^1$-fibration of $Z$ has a degenerate fiber, say $F$. Since $F\cdot \psi_{*}D\hor=3$, $F$ has exactly one $(-1)$-curve, say $L$. Such $L$ has multiplicity at least $2$ in $F$, so $L\cdot \psi_{*}D\hor \leq 1$, a contradiction. Thus $Z$ is isomorphic to a Hirzebruch surface $\F_{n}$ for some $n\geq 0$. Put $\bar{H}_i=\psi(H_i)$, $\bar{V}_{i}=\psi_{*}F_{i}$ and $V_{i}=\psi^{-1}_{*}\bar{V}_i$. Then $V_i$ is a component of the fiber $F_i$. Put $\eta= \#B\vert$ and order the degenerate fibers of $p$ so that $B\vert=\sum_{i=1}^{\eta}\bar{V}_{i}$. 
	
	If the components of $D\hor$ are pairwise disjoint, then $n=0$ and the other projection of $\F_{0}=\P^{1}\times \P^{1}$ pulls back to a $\P^{1}$-fibration of height $\eta$, so $\eta \geq \height(\bar{X})=3$; as needed. Hence it remains to find $p$ with disjoint $H_{j}$'s.
	
	\begin{claim}\label{cl:H_not_chain}
		We can assume that $D\hor$ is not a chain.
	\end{claim}
	\begin{proof}
		Assume  that $D\hor$ is a chain. Then $B\hor\cong D\hor$ is a chain, too. Say that $\bar{H}_j=B\hor\cp{j}$. Since each $\bar{H}_j$ is a $1$-section, $\bar{H}_i-\bar{H}_j$ is linearly equivalent to a multiple of a fiber. In particular $0=(\bar{H}_1-\bar{H}_2)\cdot (\bar{H}_1-\bar{H}_3)=\bar{H}_{1}^2-\bar{H}_1\cdot (\bar{H}_2+\bar{H}_3)+\bar{H}_2\cdot\bar{H}_3=\bar{H}_1^2$, so $n=0$, i.e.\ $Z\cong \P^1\times \P^1$. It follows that  $B\hor=[0,-2,0]$. 
		Recall that the divisor $B=\psi_{*}D$ is snc, so since $D$ has no circular subdivisor, the fiber $\bar{V}_{j}$ for each $j\in \{1,\dots, \eta\}$ contains at least two base points of $\psi^{-1}$. Its preimage $F_{j}$ has at least two $(-1)$-curves, so $\eta\leq \nu_2\leq 2$ by Lemma \ref{lem:w=3_basics}\ref{item:w=3_Sigma}.
		
		Let $\tilde{p}$ be the pullback to $X$ of the other $\P^{1}$-fibration of $\P^{1}\times \P^{1}$, and let $\tilde{D}\hor$ be the  horizontal part of $D$ with respect to $\tilde{p}$. Then $\tilde{D}\hor=H_{2}+\sum_{i=1}^{\eta}V_{i}$. We have $3\geq \eta+1=\#\tilde{D}\hor\geq \height(X,D)=3$, so $\tilde{D}\hor$ consists of three $1$-sections and $\eta=\nu_2$. It follows that $\eta=\nu$: indeed, by Lemma \ref{lem:w=3_basics}\ref{item:w=3_Sigma} we have $\nu_{k}=0$ for $k\geq 3$, and by Lemma \ref{lem:w=3_basics}\ref{item:ht=3_D'} every degenerate fiber $F$ with $\sigma(F)=1$ meets $D\hor$ in components of $D\vert$, so its image is contained in $B\vert$. In particular, every base point of $\psi^{-1}$ lying on $B\hor$ lies on $B\vert$, too. Since $H_{2}^{2}\leq -2<2=\bar{H}_{2}^{2}$, one of those base points lies in $\bar{H}_{2}\cap B\vert$. Thus $\tilde{D}\hor$ is not a chain, and replacing $p$ with $\tilde{p}$ proves the claim. 
	\end{proof}
	
	Since no subdivisor of $D$ is circular, Claim \ref{cl:H_not_chain} allows us to assume that $H_{1}\cdot H_{2}=1$ and $H_{1}\cdot H_{3}=H_{2}\cdot H_{3}=0$. It follows that $Z\cong \F_1$ and $\bar{H}_3=[1]$ is the negative section.
	
	\begin{claim}\label{claim:eta>=3} We can assume that $\eta\geq 3$; in particular, $\nu\geq 3$.\end{claim}
	\begin{proof}
		Let $\tau\colon X\to \P^{2}$ be the composition of $\psi$ with the contraction of $\bar{H}_{3}$. Then $\ll_{i}\de\tau(H_{i})$ $i\in \{1,2\}$ and $\ll_{j}'\de \tau_{*}F_j$, $j\in \{1,\dots,\eta\}$ are lines, and $\ll_{1}'\dots,\ll_{\eta}'$ meet at a common point $\tau(H_3)\not\in \ll_1\cup\ll_2$. Write $\{p_{ij}\}=\ll_{i}\cap \ll_{j}'$.
		
		Since $D$ has no circular subdivisor, for every $j\in \{1,\dots, \eta\}$ there is an $i\in \{1,2\}$ such that $p_{ij}\in\Bs\tau^{-1}$, so, say, $p_{11}\in \Bs\tau^{-1}$. If $\eta \geq 2$ then since $H_{i}^{2}\leq -2<1=\ell_{i}^{2}$, we can assume that $p_{22}\in \Bs\tau^{-1}$, too. Moreover, interchanging $p_{11}$ with $p_{22}$, if needed, we can assume that $\tau$ factors through at least two blowups over $p_{11}$, so the proper transform of the first exceptional curve over $p_{11}$, call it $E$, is a component of $D$.
		
		The pencil of lines through $p_{11}$ pulls back to a $\P^{1}$-fibration of $X$, such that the horizontal part of $D$ consists of $1$-sections, namely: $E$, $\tau^{-1}_{*}\ll_2$, and $\tau^{-1}_{*}\ll_{2}',\dots,\tau^{-1}_{*}\ll_{\eta}'$. Thus $\eta+1\geq \height(\bar{X})=3$, so $\eta\geq 2$. If the equality holds then, since $p_{22}\in \Bs\tau^{-1}$, the above three $1$-sections are pairwise disjoint, as needed.
	\end{proof}

	For a degenerate fiber $F_i$, let $\hat{F}_i\subseteq \hat{X}$ be as in Lemma \ref{lem:w=3_basics}\ref{item:ht=3_D'}; and for a divisor $\hat{G}$ on $\hat{X}$ let $G$ be its proper transform on $X$. Since $\nu\geq 3$ by Claim \ref{claim:eta>=3}, Lemma \ref{lem:w=3_basics}\ref{item:w=3_Sigma} implies that $\nu_1\geq 1$, i.e.\  $p$ has a degenerate fiber $F_i$ with exactly one $(-1)$-curve. Since $H_1\cdot H_2=1$, this fiber is as in Lemma \ref{lem:w=3_basics}\ref{item:sigma=1}, i.e.\ $\hat{F}_i=[1,1]$, and both tips of $\hat{F}_i$, namely $\hat{V}_i$ and, say, $\hat{G}_i$ meet $\hat{D}\hor$. We have $V_{i},G_{i}\subseteq  D\vert$. Since $(V_i+G_i)\cdot D\hor=F_i\cdot D\hor=3$, we have $V_{i}\cdot D\hor=2$ and $G_i\cdot D\hor=1$. Since $D$ has no circular subdivisor, $V_{i}$ meets $H_3$. It follows that all components of $D\hor$ lie in the same connected component of $D$, call it $D_{0}$.
	
	The fact that no subdivisor of $D$ is circular implies that there is only one fiber of the above type, i.e.\ $\nu_1=1$. By Lemma \ref{lem:w=1_basics}\ref{item:w=3_Sigma} we get $\nu_2=2$ and $\nu_k=0$ for $k\geq 3$, so $3=\nu=\eta$ by  Claim \ref{claim:eta>=3}. 
	We order the degenerate fibers so that $\sigma(F_1)=\sigma(F_2)=2$, $\sigma(F_3)=1$. Write $\{1,2\}=\{j_3,\hat{j}_3\}$, where $V_{3}$ meets $H_{j_3}$ and $G_3$ meets $H_{\hat{j}_3}$.
	
	Using this notation, we conclude that $D$ contains a chain $[H_3,V_3,H_{j_3},H_{\hat{j}_3},G_3]$. Since $D$ has no circular subdivisor, for each $i\in \{1,2\}$ the curve $V_{i}$ meets at most one component of this chain. On the other hand, the image of this chain on $\F_1$ meets $\bar{V}_i$ in three points, so since $\sigma(F_i)=2$, two of those points are base points of $\psi^{-1}$. As a consequence, $F_i$ meets each section $H_j$ in a different component, say $L_{i,j}$. Note that one of the curves $L_{i,j}$ equals $V_i$, and the other two are contained in different connected components of $\Exc\psi$.
	
	\begin{claim}\label{cl:H1H2=1_structure}
		For $i\in \{1,2\}$ we have $F_{i}=[1,(2)_{s_i},1]$ for some $s_i\geq 1$. Moreover, after possibly interchanging $F_1$ with $F_2$ and $H_1$ with $H_2$, we can pick an order on each chain $F_1,F_2$ so that the following hold (cf.\  Figure \ref{fig:H1H2=1_structure}).
		\begin{enumerate}
			\item $H_1$ meets $\ftip{F_1}$ and $\ftip{F_2}$,
			\item $H_2$ meets $\ltip{F_2}$ and some tip of $F_1\wedge D\vert$,
			\item $H_3$ meets $\ltip{F_1}$ and some tip of $F_2\wedge D\vert$.
		\end{enumerate}
	\end{claim}
	\begin{proof}	
		By Lemma \ref{lem:w=3_basics}\ref{item:ht=3_D'} we have $ \hat{F}_{i}=[1,(2)_{s_i},1]$ for some $s_{i}\geq 0$. By the definition of $\hat{F}_i$, each tip of $\hat{F}_i$ equals $\hat{L}_{i,j}$ for some $j\in \{1,2,3\}$. On the other hand, since $\hat{F}_i$ has only two tips, for some $j_{i}\in \{1,2,3\}$ the curve $\hat{L}_{i,j_i}$ is not a tip of $\hat{F}_i$. Now, definition of $\psi$ implies that the image of $\hat{L}_{i,j_i}$ equals $\bar{V}_{i}$, so $L_{i,j_{i}}=V_{i}$.
		
		Suppose $j_1=j_2=3$. Then the contraction of $F_{i}-V_{i}$ is an isomorphism in a neighborhood of $H_3$. The same is true for the contraction of $F_3-V_3$, since the latter is disjoint from $H_3$. Thus $H_3^2=\bar{H}_3^2=-1$; a contradiction.

		Therefore, interchanging $F_1$ with $F_2$ and $H_1$ with $H_2$, if needed, we can assume that $j_2=1$, i.e.\  $V_{1}$ meets $H_2$. Since $\bar{X}$ is log terminal, $D$ is a sum of admissible chains and forks, so $\beta_{D}(H_2)\leq 3$. Since $H_2$ meets both $H_1$ and $V_3+G_3$, $D_0$ is a fork with branching component $H_2$, so $L_{2,2}\not\subseteq D$ and $\beta_{D}(H_j)\leq 2$ for $j=1,3$.
		
		Since $\beta_{D}(H_1)\leq 2$, and $H_1$ meets $H_2$ and $V_3+G_3$, we have $L_{i,1}\not\subseteq D$ for both $i\in \{1,2\}$. Similarly, since $\beta_{D}(H_{3})\leq 2$ and $H_{3}$ meets $V_3+G_3$, we have $L_{i_0,3}\not\subseteq D$ for some $i_0\in \{1,2\}$. It follows that the curves $\hat{L}_{2,2}$, $\hat{L}_{i_{0},3}$ and $\hat{L}_{i,1}$ for $i\in \{1,2\}$ are not contained in $\hat{D}$, so they are tips of $\hat{F}_{2}$, $\hat{F}_{i_0}$ and $\hat{F}_{i}$, respectively. We order the chain $\hat{F}_i$ so that $\hat{L}_{i,1}=\ftip{\hat{F}_i}$. Now since $\hat{L}_{i,j}\neq \hat{L}_{i',j'}$ for $(i,j)\neq (i',j')$, we get $\hat{L}_{1,2}=\ltip{\hat{F}_{1}}$, $i_0=1$ and $\hat{L}_{1,3}=\ltip{\hat{F}_1}$. In particular, for $i\in \{1,2\}$ both tips of $\hat{F}_i$ are not contained in $\hat{D}$, hence $F_i=\hat{F}_i$. 
		
		The remaining curves $L_{i,j}$ are equal to $V_i$, so they lie in $D_0$. Since $D_0$ is a fork with a branching component $H_2$, we get $\beta_{D}(V_i)\leq 2$ and thus $V_i$ is a tip of  $D\vert$, as claimed.
	\end{proof}
\begin{figure}[htbp]
	\begin{tikzpicture}[scale=0.9]
		\path[use as bounding box] (-1.2,-0.8) rectangle (10.2,3);
		\begin{scope}
			\draw[dashed] (0,3)-- (0.2,2);
			\node at (-0.2,2.7) {\small{$-1$}};
			\draw (0.2,2.2) -- (0,1.2);
			\node at (0.1,1.2) {$\vdots$};
			\node at (-0.5,1.1) {\small{$[(2)_{s_1}]$}};
			\draw (0,1) -- (0.2,0);
			\node at (0.3,0.65) {\small{$V_1$}};
			\draw[dashed] (0.2,0.2) -- (0,-0.8);
			\node at (-0.2,-0.3) {\small{$-1$}};
			\draw[dashed] (2.2,3)-- (2.4,2);
			\node at (2,2.5) {\small{$-1$}};
			\draw (2.4,2.2) -- (2.2,1.2);
			\node at (2,1.5) {\small{$V_2$}};
			\node at (2.3,1.2) {$\vdots$};
			\node at (2.9,1.1) {\small{$[(2)_{s_2}]$}};
			\draw (2.2,1) -- (2.4,0);
			\draw[dashed] (2.4,0.2) -- (2.2,-0.8);
			\node at (2,-0.3) {\small{$-1$}};
			\draw (4.2,3)-- (4.4,2);
			\node at (4.5,2.65) {\small{$V_3$}};
			\draw[dashed] (4.4,2.2) -- (4.2,1.2);
			\node at (4,1.7) {\small{$-1$}};
			\draw (4.2,1.4) -- (4.4,0.4);
			\node at (4.3,0.4) {$\vdots$};
			\draw (4.4,0.2) -- (4.2,-0.8);
			\node at (4.55,-0.3) {\small{$G_3$}};
			\draw (-1.2,-0.6) -- (4.6,-0.6);
			\node at (-1,-0.4) {\small{$H_1$}};
			\draw (-0.6,-0.8) -- (-0.6,0) to[out=90,in=180] (-0.2,0.4) -- (0.4,0.4) to[out=0,in=180] (1.6,2.9) -- (4.6,2.9);
			\node at (-0.75,0.4) {\small{$H_2$}};
			\draw (-1.2,2.4)-- (0.9,2.4);
			\draw (1.2,2.4) to[out=0,in=180] (2.2,1.85) -- (2.4,1.85) to[out=0,in=180] (3.3,2.4) -- (4.6,2.4);
			\node at (-1,2.6) {\small{$H_3$}};
			\draw[->] (5,1.1) -- (6,1.1);
			\node at (5.5,1.3) {\small{$\psi$}}; 
		\end{scope}
		\begin{scope}[shift={(7,-0.4)}]
			\begin{scope}
				\draw (0,3) -- (3.2,3);
				\node at (0.9,3.2) {\small{$\bar{H}_3$}};
				\node at (0.9,2.8) {\small{$-1$}};
				\draw (-0.6,0) -- (-0.6,0.4) to[out=90,in=180] (0,1.6) -- (3.2,1.6);
				\node at (0.9,1.8) {\small{$\bar{H}_2$}};
				\node at (0.9,1.4) {\small{$1$}};
				\draw (-0.8,0.2) -- (3.2,0.2);
				\node at (0.9,0.4) {\small{$\bar{H}_1$}};
				\node at (0.9,0) {\small{$1$}};
				\draw (0.2,3.2) -- (0.2,0);
				\node at (0,0.8) {\small{$\bar{V}_{\! 1}$}};
				\draw (1.6,3.2) -- (1.6,0);
				\node at (1.4,0.8) {\small{$\bar{V}_{\! 2}$}};
				\draw (3,3.2) -- (3,0);
				\node at (2.8,0.8) {\small{$\bar{V}_{\! 3}$}};
				\filldraw (0.2,0.2) circle (0.06);
				\filldraw (0.2,3) circle (0.06);
				\filldraw (1.6,0.2) circle (0.06);
				\filldraw (1.6,1.6) circle (0.06);
				\filldraw (3,0.2) circle (0.06);
			\end{scope}
		\end{scope}
	\end{tikzpicture}
	\caption{A possible shape of $D$ in Claim \ref{cl:H1H2=1_structure} (case $j_3=2$, $k_1=2$, $k_2=s_2+1$).}
	\label{fig:H1H2=1_structure}
\end{figure}
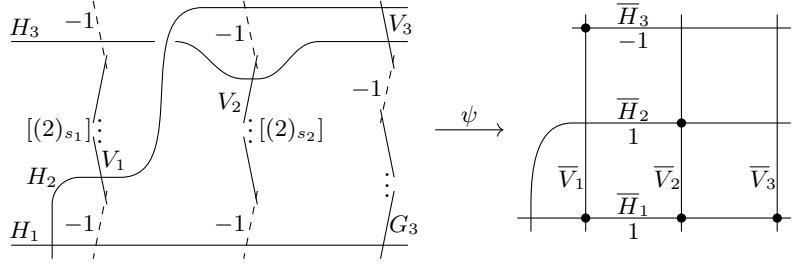
	
	Claim \ref{cl:H1H2=1_structure} implies that $D_0$ is a fork, whose branching component $H_2$ meets $H_1,V_{1}$ and $V_3+G_3$.  Moreover, for $i\in \{1,2\}$ we have $V_{i}=F_{i}\cp{k_i}$ for some $k_i\in \{2,s_i+1\}$. Recall that $V_3$ meets $H_{3}$ and $H_{j_{3}}$ for some $j_{3}\in \{1,2\}$.
	
	We have $\Exc\psi \wedge F_{i}=F_{i}-V_{i}$. Since $F_{1}-V_{1}$ is disjoint from $H_2$, we have $\bar{H}_2^{2}-H_2^{2}=(s_{2}+2-k_{2})+(2-j_3)\leq s_{2}-k_{2}+3$. Since $\bar{H}_2^2-H_2\geq 1-(-2)=3$, we get $k_{2}\leq s_{2}$. It follows that  $k_{2}=2$, $s_{2}\geq 1+j_3$. 
	
	Let $T_{j}$ be the maximal twig of $D_0$ containing $H_{j}$; $j\in \{2,3\}$. If $j_3=2$ then $\#T_{1}\geq \#(H_1+G_3)=2$ and $\#T_{3}\geq \#(F_{2}\wedge D\vert+H_{3}+V_3)=s_{2}+2\geq 5$; a contradiction with Lemma \ref{lem:admissible_forks}\ref{item:long-twig}. Hence $j_3=1$. Now $\#T_{3}=\#(F_{2}\wedge D\vert+H_{3}+V_3+H_1)=s_{2}+3\geq 5$, so by Lemma \ref{lem:admissible_forks}\ref{item:long-twig}, the remaining twigs of $D_{0}$ are $[2]$, $[2]$. In particular, $F_1\wedge D\vert=[2]$, so $k_{1}=2$. We have $3\leq \bar{H}_{1}^{2}-H_1^{2}=(k_{1}-1)+(k_{2}-1)=2$; a contradiction.
\end{proof}

\begin{notation}\label{not:untwisted}
	For the remaining part of Section \ref{sec:w=3}, we fix a birational morphism $\psi\colon (X,D)\to(\P^1\times \P^{1},B)$ as in Lemma \ref{lem:H_disjoint}. We put $\eta\de \#B\vert$, $\bar{V}_{i}= \psi_{*}F_{i}$, $V_i=\psi^{-1}_{*}\bar{V}_{i}$ for $i\in \{1,\dots,\nu\}$ and $\bar{H}_j=\psi(H_j)$ for $j\in \{1,2,3\}$.
\end{notation}

\begin{remark}[Switching projections of $\P^1\times \P^1$]\label{rem:tilde_p}
	 Let $\pr_1,\pr_2\colon \P^1\times \P^1\to \P^1$ be the two projections. Recall that $p=\psi\circ \pr_1$. The horizontal part of $D$ with respect to the $\P^1$-fibration $\tilde{p}\de \pr_2\circ\psi\colon X\to \P^{1}$ consists of $\eta$ disjoint $1$-sections, namely $V_1,\dots, V_{\eta}$. We have 
	\begin{equation*}
		\nu\geq \eta \geq \height(\bar{X})=3.
	\end{equation*}
	In particular, if $\nu=3$ then we can replace $p$ with $\tilde{p}$ whenever needed.
\end{remark}

\begin{lemma}[Case \ref{lem:w=3_basics}\ref{item:sigma=1_rivet} does not occur]\label{lem:no-rivet}
	We can further assume that no component of $D\vert$ meets all components of $D\hor$, so all degenerate fibers of $p$ are as in Lemma \ref{lem:w=3_basics}\ref{item:sigma=1}--\ref{item:sigma=3}.
\end{lemma}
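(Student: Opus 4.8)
The plan is to prove the statement by contradiction, showing that a rivet fiber is simply incompatible with the inequality $\#B\vert\geq 3$ furnished by Lemma \ref{lem:H_disjoint}; no genuine re-choice of $p$ is needed. So suppose some component $V\subseteq D\vert$ meets all three sections $H_1,H_2,H_3$, and let $F$ be the degenerate fiber containing $V$, which is then of type Lemma \ref{lem:w=3_basics}\ref{item:sigma=1_rivet}, so $V\cdot H_j=1$ for each $j$.

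First I would fix the boundary structure near $V$. Since $\bar X$ is log terminal, $D$ is a disjoint union of admissible chains and forks, so the connected component $D_0\ni V$ is one of these. As $V$ meets $H_1,H_2,H_3$ we have $\beta_D(V)\geq 3$, forcing $D_0$ to be an admissible fork with branching component $V$; and because $V\cdot H_j=1$ each section $H_j$ is a tip of $D_0$, i.e.\ a maximal twig of length one. The key consequence I would record is that $\beta_D(H_j)=1$ for each $j$: every section meets $D$ only in $V$.

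Next I would examine the remaining degenerate fibers $F'\neq F$ and show that none of them contributes a component to $B\vert$. Fix $F'$. Since each $H_j$ meets $D$ only in $V$, in $F'$ the sections can meet only the $(-1)$-curves of $F'$ (the non-$D$ components, by Lemma \ref{lem:fibrations-Sigma-chi}\ref{item:-1_curves}), never an interior $(-2)$-curve. This already excludes a second rivet and any fiber of type Lemma \ref{lem:w=3_basics}\ref{item:sigma=1}: for the latter, the distinguished component $V_{F'}=\psi^{-1}_{*}\psi_{*}F'$ would be the proper transform of the $(-1)$-curve of $\hat F'=[1,1]$ that meets two sections, contradicting $\beta_D(H_j)=1$. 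For a fiber with $\sigma(F')=2$, where $\hat F'=[1,(2)_s,1]$, a pigeonhole count of the three sections among the two tip $(-1)$-curves shows that one tip $L$ meets two sections; since $\psi$ only contracts vertical $(-1)$-curves meeting $D\hor$ at most once, $L$ survives while the rest of $F'$ is contracted, so $L$ becomes the vertical line $\psi_{*}F'$. As $L\not\subseteq D$, this line is not in $B=\psi_{*}D$, i.e.\ $F'$ contributes nothing to $B\vert$. Finally I would count using Lemma \ref{lem:fibrations-Sigma-chi}\ref{item:Sigma}: $\sum_{F'}(\sigma(F')-1)=\#D\hor-1=2$, and since $F$ contributes $0$, the remaining fibers are either two fibers with $\sigma=2$ or a single fiber with $\sigma=3$. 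In the first case only $F$ contributes to $B\vert$, so $\#B\vert=1$; in the second case there are only two degenerate fibers in total, so $\#B\vert\leq 2$. Either way $\#B\vert\leq 2<3$, contradicting Lemma \ref{lem:H_disjoint}.

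I expect the only delicate point to be the middle step: correctly identifying, for each degenerate-fiber type, which component is the distinguished one that $\psi$ carries isomorphically onto the vertical line $\psi_{*}F'$, and verifying it lies off $D$. This rests entirely on the two facts established beforehand — that $\psi$ never contracts a $(-1)$-curve meeting $D\hor$ twice, and that the sections, being tips meeting only $V$, avoid all interior $(-2)$-curves of every other fiber — so once these are in place the pigeonhole argument and the count are routine.
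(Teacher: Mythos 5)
There is a genuine gap, and it sits at the very first substantive step, on which everything else depends. From $V\cdot H_j=1$ and the fork structure of $D_0$ you conclude that each $H_j$ is a tip of $D_0$, i.e.\ $\beta_D(H_j)=1$. This is a non sequitur: $V\cdot H_j=1$ only says that $H_j$ meets the branching component once; it does not prevent $H_j$ from also meeting vertical components of the \emph{other} degenerate fibers, in which case the twig of $D_0$ through $H_j$ has length $\geq 2$, with $H_j=\ltip{T_j}$ adjacent to $V$ but not a tip of $D$. Admissibility does not exclude this: forks such as $\langle b;[2],[2,2],[2,2]\rangle$ are admissible by Lemma \ref{lem:admissible_forks}. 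Worse, under the hypotheses you are trying to contradict your claim is necessarily false. Indeed, your own count (which is fine) gives $\nu=3$, $\nu_1=1$, $\nu_2=2$; combined with $\eta=\#B\vert\geq 3$ from Lemma \ref{lem:H_disjoint} this forces $V_i\subseteq D$ for both $\sigma=2$ fibers. Tracing the contraction $\psi$ of Lemma \ref{lem:H_disjoint} through $\hat{F}_i=[1,(2)_s,1]$, the surviving component $V_i$ lies in $D$ exactly when some section meets a middle $(-2)$-component of $\hat{F}_i$ (if the three section-intersections sit on the two tips, a tip survives and $V_i\not\subseteq D$). So in the rivet situation with $\#B\vert\geq 3$, at least two sections satisfy $\beta_D(H_j)=2$, meeting $R$ and a vertical $D$-component of another fiber. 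Consequently your middle step (other fibers contribute nothing to $B\vert$) and the final count $\#B\vert\leq 2$ collapse: the ``contradiction'' is manufactured by the false intermediate claim, not derived from the rivet hypothesis.

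This is also why the strengthening you announce --- that no re-choice of $p$ is needed --- is not available. The paper establishes $\nu_1=1$, $\nu=3$, $\nu_2=2$ just as you do (via the observation that a second fiber with $\sigma=1$ would force a component of $D\vert$ to meet two sections, creating a circular subdivisor, which needs no tip claim), but then it must split into two cases: if the curves $V_1,V_2,V_3$ do \emph{not} all meet one common component of $D-\tilde{D}\hor$, it genuinely replaces $p$ by the second ruling $\tilde{p}$ of Remark \ref{rem:tilde_p}, which then has no rivet; only when they do all meet a common component $\tilde{R}$ does a contradiction arise, and it is of a different nature altogether: $\tilde{R}$ must be one of the sections, say $H_1$, and since $\Bs\psi^{-1}\subseteq B\vert$ while $\bar{H}_1\cap B\vert$ is disjoint from $\Bs\psi^{-1}$, one gets $H_1^2=\bar{H}_1^{2}=0$, contradicting $H_1^2\leq -2$. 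If you want to keep your counting framework, the repair is essentially to reinstate this dichotomy and the fibration switch --- i.e.\ the paper's proof.
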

\begin{proof}
	Suppose some component $R$ of $D\vert$ meets all components of $D\hor$. Let $F_{R}, D_R$ be the fiber and the connected component of $D$ containing $R$. Then $D_R=\langle R;T_{1},T_{2},T_{3}\rangle$, where $\ltip{T_{j}}=H_{j}$. 
	
	Suppose there is another degenerate fiber $F\neq F_{R}$ with $\sigma(F)=1$. Then by Lemma \ref{lem:w=3_basics}\ref{item:ht=3_D'} some component of $F\redd \wedge D\vert$ meets two components of $D\hor$. This is impossible, since $D_R$ is a tree. Thus $\nu_{1}=1$. Since $\nu\geq 3$ by Lemma \ref{lem:H_disjoint}, Lemma \ref{lem:w=3_basics}\ref{item:w=3_Sigma} implies that $\nu=3$, $\nu_1=1$ and $\nu_2=2$. 
	
	We can assume that all curves $V_{j}$ meet the same component of $D-\tilde{D}\hor$, say $\tilde{R}$: indeed, otherwise replacing $p$ with $\tilde{p}$ as in Remark \ref{rem:tilde_p} settles the claim. Now $\psi(\tilde{R})$ meets all $V_{j}$'s, so, say, $\psi(\tilde{R})=\bar{H}_{1}$; and $\bar{H}_1\cap B\vert$ is disjoint from $\Bs\psi^{-1}$. Since $\Bs\psi^{-1}\subseteq B\vert$, we get $\bar{H}_1\cap \Bs\psi^{-1}=\emptyset$, so $\tilde{R}^{2}=\bar{H}_{1}^{2}=0$; a contradiction.
\end{proof}

\begin{lemma}[Proposition \ref{prop:ht=3_models}, case $\width(\bar{X})=3$]\label{lem:nu_1=1}
	We have $\nu=3$, so the morphism $\psi$ is as in Proposition \ref{prop:ht=3_models}\ref{item:w=3_models}.
\end{lemma}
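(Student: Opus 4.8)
The plan is to complement the bound $\nu\geq 3$, already available from Remark \ref{rem:tilde_p} (since $\nu\geq \eta\geq\height(\bar{X})=3$), with the reverse inequality $\nu\leq 3$. By Lemma \ref{lem:no-rivet} I may assume that every degenerate fiber is of one of the types \ref{lem:w=3_basics}\ref{item:sigma=1}, \ref{item:sigma=2} or \ref{item:sigma=3}, and by Lemma \ref{lem:w=3_basics}\ref{item:w=3_Sigma} the ``complicated'' fibers satisfy $(\nu_2,\nu_3)\in\{(2,0),(0,1)\}$ with $\nu_k=0$ for $k\geq 4$. Since $\nu=\nu_1+\nu_2+\nu_3$, it therefore suffices to bound $\nu_1$: concretely, to prove $\nu_1\leq 1$ when $(\nu_2,\nu_3)=(2,0)$ and $\nu_1\leq 2$ when $(\nu_2,\nu_3)=(0,1)$. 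Both inequalities yield $\nu\leq 3$, and together with $\nu\geq 3$ and the forced values $\nu_1\geq 1$ (resp.\ $\nu_1\geq 2$) coming from the lower bound, they pin down the two admissible profiles $(\nu_1,\nu_2,\nu_3)=(1,2,0)$ and $(2,0,1)$, matching Examples \ref{ex:w=3}\ref{item:ht=3_A1+A2+A5} and \ref{item:ht=3_2A4}.

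The mechanism for the upper bound is the tree structure of $D$. Since $\bar{X}$ is log terminal, $D$ is a disjoint union of admissible chains and forks, in particular a forest with no circular subdivisor, and the sections $H_1,H_2,H_3$ are pairwise disjoint vertices of it by Lemma \ref{lem:H_disjoint}. I would analyze, fiber by fiber, how the vertical boundary components in a degenerate fiber connect the sections inside $D$. For a fiber $F$ of type \ref{lem:w=3_basics}\ref{item:sigma=1} the single $(-1)$-curve separates $F\redd$ into two $D$-parts; exactly as in the multiplicity-one computation in the proof of Lemma \ref{lem:H_disjoint}, one of its tips $V_F\subseteq D\vert$ satisfies $V_F\cdot D\hor=2$, so it joins two sections into a single path $H_a-V_F-H_b$ in $D$, while the other tip attaches the remaining section to a disjoint twig. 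Thus each type-\ref{item:sigma=1} fiber forces a definite pair of sections to lie in one connected component of $D$.

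With this in hand the argument becomes a counting one: recording for each degenerate fiber which pairs of sections it merges, any independent merge beyond the two needed to connect $H_1,H_2,H_3$ into a single tree would close a circuit, producing a circular subdivisor of $D$, which is impossible. The merges contributed by the ``complicated'' fibers of types \ref{item:sigma=2} and \ref{item:sigma=3} must also be accounted for; here I would invoke the admissibility constraints of Lemma \ref{lem:admissible_forks}, especially the twig-length bound \ref{item:long-twig} and the branching-number bound $\beta_{D}(H_j)\leq 3$, to show that an excess type-\ref{item:sigma=1} fiber either over-branches the component carrying the sections or forces an inadmissibly long $(-2)$-twig. Switching between $p$ and $\tilde{p}$ via Remark \ref{rem:tilde_p}, whenever the roles of the $V_i$ and the $H_j$ are more conveniently reversed, will be used to normalise the configuration.

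I expect the main obstacle to be precisely this bookkeeping of fiber--section incidences: determining, for the $\sigma=2$ and $\sigma=3$ fibers, which sections their $D$-parts actually meet and hence how many independent merges each contributes, so that the ``merge budget'' of two is exhausted in a controlled way. The delicate point is excluding configurations in which an additional $\sigma=1$ fiber attaches a new section-path without immediately closing a cycle; ruling these out will combine the multiplicity analysis of Lemma \ref{lem:w=3_basics}\ref{item:ht=3_D'} with the numerical identity $\sum_{F}(\sigma(F)-1)=2$ from Lemma \ref{lem:fibrations-Sigma-chi}\ref{item:Sigma} and the admissible-fork dichotomy of Lemma \ref{lem:admissible_forks}. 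Once $\nu=3$ is established, $\psi$ contracts exactly three degenerate fibers, so $B\vert$ consists of three vertical lines and $\psi\colon(X,D)\to(\P^1\times\P^1,B)$ is of the form required by Proposition \ref{prop:ht=3_models}\ref{item:w=3_models}.
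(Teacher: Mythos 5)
Your overall frame is the right one and, at the level of strategy, it coincides with the paper's: establish $\nu\geq 3$ via Lemma \ref{lem:H_disjoint} and Remark \ref{rem:tilde_p}; observe that by Lemma \ref{lem:w=3_basics}\ref{item:w=3_Sigma} and Lemma \ref{lem:no-rivet} every fiber beyond the two ``complicated'' ones must be of type \ref{lem:w=3_basics}\ref{item:sigma=1}; and use the absence of circular subdivisors to bound their number: each such fiber contains a component $V_j\subseteq D\vert$ with $V_j\cdot D\hor=2$, i.e.\ an edge joining two of the three disjoint sections, and three such edges (or two joining the same pair) would close a cycle. This correctly gives $\nu_1\leq 2$ and disposes of the profile $(\nu_2,\nu_3)=(0,1)$, exactly as the paper's argument does implicitly.

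The gap is the remaining case $(\nu_1,\nu_2,\nu_3)=(2,2,0)$, i.e.\ $\nu=4$, which you only flag as ``the main obstacle'' and propose to handle with the admissibility constraints of Lemma \ref{lem:admissible_forks} together with the identity of Lemma \ref{lem:fibrations-Sigma-chi}\ref{item:Sigma}. These tools do not suffice. In the paper's proof one first needs the inequality $\eta\geq 3$ of Remark \ref{rem:tilde_p} to force the middle component $V_3$ of one of the $\sigma=2$ fibers to lie in $D$; this makes some section $H_i$ a branching component of an admissible fork $D_0$, and the twig-length bound \ref{lem:admissible_forks}\ref{item:long-twig} rules out $i=3$ and forces the other two twigs of $D_0$ to be $[2]$, so that $F_3=[1,2,1]$ and $F_4\wedge D\vert$ is disjoint from $D_0$. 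But the configuration that survives this analysis --- a fork $\langle H_1;[2],[2],T_3\rangle$ together with a separate $(-2)$-chain inside $F_4$ --- is a perfectly admissible weighted graph, so no over-branching or long-twig contradiction ever appears: your ``merge budget plus admissibility'' bookkeeping cannot kill it. The contradiction in the paper comes from a tool you never invoke, namely the self-intersection bookkeeping of the minimalization: $\bar{H}_j^2=0$ while $H_j^2\leq -2$ (the $H_j$ are components of the exceptional divisor of a minimal resolution), so each $\bar{H}_j$ must carry at least two base points of $\psi^{-1}$; in the surviving configuration one finds $H_1^2=\bar{H}_1^2-1=-1$, and then $H_3^2=\bar{H}_3^2-1=-1$, which is impossible. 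Without this step, or some substitute for it, your proof does not close.
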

\begin{proof}
	By Lemma \ref{lem:H_disjoint} we have $\nu\geq 3$. Suppose $\nu\geq 4$. Then $\nu_1\geq 2$ by Lemma \ref{lem:w=3_basics}\ref{item:w=3_Sigma}. We order the degenerate fibers $F_{1},\dots, F_{\nu}$ so that  $\sigma(F_{j})=1$ for $j\leq \nu_{1}$. By Lemma \ref{lem:no-rivet}, every such $F_j$ is as in \ref{lem:w=3_basics}\ref{item:sigma=1}, so it contains two components of $D\vert$, namely $V_j$ with $V_j\cdot D\hor$ and, say, $G_j$ with $G_{j}\cdot D\hor =1$. Since $D$ has no circular subdivisor, we get $\nu_{1}=2$, and we can assume that $V_j$ meets $H_3$ and $H_j$ for $j\in \{1,2\}$, see Figure \ref{fig:nu_1=1_sigma=1}. Lemma \ref{lem:w=3_basics}\ref{item:w=3_Sigma} implies that $\nu=4$ and $\nu_{2}=2$, so $\sigma(F_{i})=2$ for $i\in \{3,4\}$. 
	\begin{figure}[htbp]
		\subcaptionbox{The fibers with $\sigma=1$ \label{fig:nu_1=1_sigma=1}}[.3\textwidth]{
		\begin{tikzpicture}
			\path[use as bounding box] (-0.6,0) rectangle (3.2,2.6);
			\draw (0.2,1.4) -- (0,2.6);
			\node at (0.35,2) {\small{$V_{1}$}};
			\node at (0.1,1.4) {\small{$\vdots$}};
			\draw (0,0) -- (0.2,1.2);
			\node at (0.35,0.6) {\small{$G_1$}};
			\draw (1.4,1.4) -- (1.2,2.6);
			\node at (1.55,2) {\small{$G_{2}$}};
			\node at (1.3,1.4) {\small{$\vdots$}};
			\draw (1.2,0) -- (1.4,1.2);
			\node at (1.55,0.6) {\small{$V_2$}};
			\draw (-0.6,2.4) -- (2.4,2.4);
			\node at (-0.4,2.2) {\small{$H_2$}};
			\draw (-0.6,1.6) -- (0.2,1.6) to[out=0,in=180] (1.2,1) -- (2.4,1);
			\node at (-0.4,1.4) {\small{$H_3$}};
			\draw (-0.6,0.2) -- (2.4,0.2);
			\node at (-0.4,0.4) {\small{$H_1$}};
			\draw (2.5,1.3) [partial ellipse=110:-110:0.7 and 1.3];
			\draw (2.5,1.3) [partial ellipse=-130:-160:0.7 and 1.3];
			\draw (2.5,1.3) [partial ellipse=130:185:0.7 and 1.3];
			\node at (2.5,1.5) {\small{$F_3+F_4$}};
		\end{tikzpicture}	
		}
		\subcaptionbox{A possible shape of $D$ \label{fig:nu_1=1_all}}[.55\textwidth]{
		\begin{tikzpicture}
			\path[use as bounding box] (-0.6,0) rectangle (5,2.6);
			\draw (0.2,1.4) -- (0,2.6);
			\node at (0.35,2) {\small{$V_1$}};
			\node at (0.1,1.4) {\small{$\vdots$}};
			\draw (0,0) -- (0.2,1.2);
			\node at (0.35,0.6) {\small{$G_1$}};
			\draw (1.4,1.4) -- (1.2,2.6);
			\node at (1.55,2) {\small{$G_{2}$}};
			\node at (1.3,1.4) {\small{$\vdots$}};
			\draw (1.2,0) -- (1.4,1.2);
			\node at (1.55,0.6) {\small{$V_2$}};
			\draw (-0.6,2.4) -- (4.8,2.4);
			\node at (-0.4,2.2) {\small{$H_2$}};
			\draw (-0.6,1.6) -- (0.2,1.6) to[out=0,in=180] (1.2,0.9) -- (1.6,0.9) to[out=0,in=180] (2.6,1.6) -- (4.2,1.6);
			\node at (-0.4,1.4) {\small{$H_3$}};
			\draw (-0.6,0.2) -- (4.8,0.2);
			\node at (-0.4,0.4) {\small{$H_1$}};
			\draw[dashed]  (2.8,0) -- (3,1.4);
			\node at (2.65,0.6) {\small{$-1$}};
			\draw[dashed] (3,1.2) -- (2.8,2.6);
			\node at (2.65,2) {\small{$-1$}};
			%
			\draw  (4.6,0) -- (4.8,1.4);
			\node at (4.4,0.6) {\small{$-2$}};
			\node at (4.9,0.6) {\small{$V_3$}};
			\draw[dashed] (4.8,1.2) -- (4.6,2.6);
			\node at (4.4,2) {\small{$-1$}};
			\draw[dashed] (4,1.8) to[out=-80, in=180] (4.4,1) -- (4.8,1);
			\node at (3.8,1.2) {\small{$-1$}};
	\end{tikzpicture}
		}
		\caption{Proof of Lemma \ref{lem:nu_1=1}.}
		\label{fig:nu_1=1}
	\end{figure}
	
	The inequality $\eta\geq 3$ in Remark \ref{rem:tilde_p} implies that, say, $V_{3}\subseteq D\vert$. Since $\sigma(F_3)=2$, $V_3$ meets $H_i$ for some $i\in \{1,2,3\}$. Thus $\beta_{D}(H_i)=3$, so the connected component $D_0$ of $D$ containing $H_i$ is an admissible fork, with branching component $H_i$. For $j\neq i$ let $T_{j}$ be the twig of $D_0$ containing $H_j$. If $i=3$ then $T_{1}\neq T_{2}$ and $\#T_{j}\geq \#(G_{j}+H_j+V_{3-j})=3$ for $j\in \{1,2\}$, which is impossible by Lemma \ref{lem:admissible_forks}\ref{item:long-twig}. Thus, say, $i=1$. Since $\#T_{3}\geq \#(G_{2}+H_2+V_1+H_3+V_2)=5$, Lemma \ref{lem:admissible_forks} implies that the remaining twigs of $D$ meeting $H_1$ are of type $[2]$. In particular, $V_3=[2]$ and $F_{3}=[1,2,1]$. Moreover, $F_4\wedge D\vert$ is disjoint from $D_0$, so $F_4=[1,2,\dots,2,1]$ meets $D\hor$ in tips. Say that $H_1$ meets $\ftip{F_4}$. Then $V_4=\ltip{F_4}$ meets $H_2$ and $H_3$, since otherwise $H_1^2=\bar{H}_1^2-1=-1$, which is impossible. Now $H_3^2=\bar{H}_3^2-1=-1$, a contradiction. 
\end{proof}

\subsection{Proof of Proposition \ref{prop:ht=3_swaps}: vertical swaps to canonical surfaces}

Let $(X,D)$ be as in \eqref{eq:assumption_ht=3}, that is, the minimal log resolution of a log terminal del Pezzo surface of rank one, height $3$ and width $3$. In Lemma \ref{lem:nu_1=1} we have shown that $(X,D)$ admits a minimalization $\psi\colon (X,D)\to (\P^1\times \P^1,B)$ as in Proposition \ref{prop:ht=3_models}\ref{item:w=1_models}, i.e.\ with $B$ being a sum of $3$ vertical and $3$ horizontal lines. We will now show that $\psi$ factors through one of the morphisms $\phi$ used in  Example \ref{ex:w=3} to construct a vertically primitive log surface $(Y,D_Y)$, thus proving Proposition \ref{prop:ht=3_swaps} in case $\width=3$. This is done in part \ref{item:w=3_swaps} of Lemma \ref{lem:w=3_swaps}. Part \ref{item:w=3_additional-base-point} gives a more detailed description of the minimalization $\psi$, which will be useful later.

We keep Notation \ref{not:untwisted}. That is, we denote the horizontal and vertical lines in $B$ by $\bar{H}_i$ and  $\bar{V}_i$, respectively, $i\in \{1,2,3\}$. By Lemma \ref{lem:nu_1=1} the degenerate fibers of our fixed $\P^1$-fibration are $F_i=\psi^{*}V_i$, $i=1,2,3$. We put $V_{j}=\psi^{-1}_{*}\bar{V}_j$, $H_j=\psi^{-1}_{*}\bar{H}_j$. As in Example \ref{ex:w=3} we write $\{p_{ij}\}=\bar{V}_{i}\cap \bar{H}_{j}$ and we denote by $v_{ij}$, $h_{ij}$ the point infinitely near to $p_{ij}$ lying on the proper transform of $V_i$ and $H_j$, respectively. The underlining of $v_{ij}$ of $h_{ij}$ in Lemma \ref{lem:w=3_swaps}\ref{item:w=3_additional-base-point} bears no meaning other than a reference to be used in \ref{lem:w=3_swaps}\ref{item:w=3_swaps} and later in the proof. 

\begin{lemma}[Proposition \ref{prop:ht=3_swaps}, case $\width(\bar{X})=3$]\label{lem:w=3_swaps}
	We can further assume that the following hold.
	\begin{enumerate}
		\item\label{item:w=3_nu} $\sigma(F_{1})=1$, $\sigma(F_{2})=2$, $\sigma(F_{3})=2$, and $V_{1}$ meets both $H_{1}$ and $H_{2}$. 
		\item\label{item:w=3_additional-base-point} 
		One of the following holds, see Figure \ref{fig:w=3_swaps}.
		\begin{enumerate}
			\item\label{item:swap-to_A1+A2+A5_rivet} $\{p_{13},v_{13},p_{21},h_{21},p_{32},h_{32},p_{33},\uline{v_{21}}\}\subseteq \Bs\psi^{-1}$.
			\item\label{item:swap-to_A1+A2+A5_no-rivet} $\{p_{13},v_{13},p_{21},h_{21},p_{32},h_{32},p_{33},\uline{p_{23}}\}\subseteq \Bs\psi^{-1}$.
			\item\label{item:swap-to_2A4-v} $\{p_{13},v_{13},p_{22},p_{23},p_{31},h_{31},p_{32},\uline{v_{23}}\}\subseteq \Bs\psi^{-1}$.
			\item\label{item:swap-to_2A4-b} $\{p_{13},v_{13},p_{22},p_{23},p_{31},h_{31},p_{32},\uline{h_{23}}\}\subseteq \Bs\psi^{-1}$.
		\end{enumerate}
		\item\label{item:w=3_swaps} The surface $\bar{X}$ swaps vertically to a surface $\bar{Y}$ from Example \ref{ex:w=3}. More precisely, write $\psi=\phi\circ \phi_{+}$, where $\phi$ is a blowup at the points listed in \ref{item:w=3_additional-base-point} except the underlined one. Then $\phi_{+}\colon (X,D)\sqto (Y,D_{Y})$ is a vertical swap onto $(Y,D_{Y})$ as in Example \ref{ex:w=3}\ref{item:ht=3_A1+A2+A5} in cases \ref{item:swap-to_A1+A2+A5_rivet}, \ref{item:swap-to_A1+A2+A5_no-rivet} and as in \ref{ex:w=3}\ref{item:ht=3_2A4} in cases \ref{item:swap-to_2A4-v}, \ref{item:swap-to_2A4-b}.
	\end{enumerate}
\end{lemma}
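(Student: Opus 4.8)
The plan is to take the minimalization $\psi\colon (X,D)\to(\P^1\times\P^1,B)$ provided by Lemma~\ref{lem:nu_1=1}, and refine the choice of fibration and the labelling of the lines $\bar{H}_j,\bar{V}_i$ so that the base locus $\Bs\psi^{-1}$ acquires exactly the shape described in Example~\ref{ex:w=3}. Throughout, the main tool is the fact that $\bar{X}$ is log terminal, so each connected component of $D$ is an admissible chain or fork; in particular branching numbers are at most $3$, and the constraints of Lemma~\ref{lem:admissible_forks} (especially \ref{item:long-twig}) sharply limit how long the twigs can be. The self-intersection bookkeeping $\bar{H}_j^2-H_j^2=(\text{number of base points of }\psi^{-1}\text{ on }\bar{H}_j)$ and $\bar{V}_i^2-V_i^2=(\text{number on }\bar{V}_i)$, together with $\bar{H}_j^2=\bar{V}_i^2=0$ on $\P^1\times\P^1$, will be used repeatedly to count and locate these base points.

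First I would prove \ref{item:w=3_nu}. By Lemma~\ref{lem:nu_1=1} there are exactly three degenerate fibers, and by Lemma~\ref{lem:w=3_basics}\ref{item:w=3_Sigma} the multiset $\{\sigma(F_i)\}$ is either $(1,2,2)$ or $(1,1,3)$; but a fiber with $\sigma=3$ would force a branching component of $D\vert$ meeting all three sections, which Lemma~\ref{lem:no-rivet} has excluded, and $\sigma=(1,1,\ast)$ is incompatible with $\nu_1\le 1$ coming from the tree structure of $D$ (no circular subdivisor) once the sections are disjoint. So $(\sigma(F_1),\sigma(F_2),\sigma(F_3))=(1,2,2)$ after reordering. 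The unique fiber with $\sigma=1$ is of type \ref{lem:w=3_basics}\ref{item:sigma=1}, i.e.\ $\hat F_1=[1,1]$ with both tips meeting $D\hor$; its component $V_1$ with $V_1\cdot D\hor=2$ meets two of the three sections, which after relabelling we take to be $H_1,H_2$. Here I may invoke Remark~\ref{rem:tilde_p} to switch to the other projection $\tilde p$ if the first choice does not place $V_1$ conveniently, since switching preserves the hypothesis that $D\hor$ consists of three disjoint $1$-sections.

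Next, for \ref{item:w=3_additional-base-point}, I would analyse how $\psi$ contracts the two remaining degenerate fibers $F_2,F_3$ (each with $\sigma=2$, hence of type $[1,(2)_s,1]$ after the partial contraction $\phi$ of Lemma~\ref{lem:w=3_basics}\ref{item:ht=3_D'}). The key point is that on $\P^1\times\P^1$ each $\bar V_i$ must carry enough base points of $\psi^{-1}$ to account for $V_i^2\le -1$ and for the $(-2)$-curves of $D\vert$ in $F_2,F_3$, while the three disjoint sections $\bar H_j$ each carry base points summing to $-H_j^2$. Tracking which section each vertical $(-2)$-chain attaches to, and using that $D$ has no branching component of high valence (Lemma~\ref{lem:admissible_forks}), pins the configuration down to exactly the seven forced points $p_{13},v_{13},\dots$ of Example~\ref{ex:w=3}, together with one further base point — the underlined one — whose position (on the proper transform of a $V$ or an $H$, i.e.\ $v_{21}$ versus $p_{23}$, or $v_{23}$ versus $h_{23}$) distinguishes the four cases. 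Separating $\rA_1+\rA_2+\rA_5$ from $2\rA_4$ amounts to deciding whether the outermost blowup sits over $p_{32}$ (so that the long $\rA_5$-chain forms) or over $p_{22}$ (so that two $\rA_4$-chains form), which is read off from the incidence of $V_1$ and the tangency/attachment pattern established in \ref{item:w=3_nu}.

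Finally, \ref{item:w=3_swaps} is the formal consequence: writing $\psi=\phi\circ\phi_+$ with $\phi$ the blowup at the seven non-underlined points, $\phi$ is by construction the morphism of Example~\ref{ex:w=3}\ref{item:ht=3_A1+A2+A5} or \ref{ex:w=3}\ref{item:ht=3_2A4}, so its target $(Y,D_Y)$ is the asserted vertically primitive model. It remains to check that the single extra blowup recorded by the underlined point, reversed, is an elementary vertical swap in the sense of the definition recalled before Proposition~\ref{prop:ht=3_swaps}: the curve $L$ contracted by $\phi_+$ is a vertical $(-1)$-curve meeting $D$ at most twice and meeting exactly one $(-2)$-curve of $D$, so that $\phi_+(C)$ becomes the new vertical $(-1)$-curve. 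I would verify directly from Figure~\ref{fig:w=3_swaps} that in each of the four cases this local picture holds, and that iterating exhausts $\Bs\psi^{-1}$ so $(Y,D_Y,p_Y)$ is indeed primitive. The main obstacle I anticipate is case \ref{item:w=3_additional-base-point} itself: ruling out the spurious configurations and showing the underlined point must lie in one of the four listed positions requires careful, simultaneous control of the three sections' self-intersections and the twig-length bounds, and it is here that Lemma~\ref{lem:admissible_forks}\ref{item:long-twig} and the no-circular-subdivisor condition must be applied most delicately.
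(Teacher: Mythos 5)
Your overall strategy (start from the minimalization of Lemma \ref{lem:nu_1=1}, count base points of $\psi^{-1}$ via self-intersection drops, and use admissibility/no-circular-subdivisor constraints to pin down the configuration) is the same as the paper's, but two steps in your outline have genuine gaps.

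First, your exclusion of the case $(\sigma(F_1),\sigma(F_2),\sigma(F_3))=(1,1,3)$ in part \ref{item:w=3_nu} does not work. A fiber with $\sigma(F)=3$ is of type \ref{lem:w=3_basics}\ref{item:sigma=3}: its three $(-1)$-curves (which are \emph{not} in $D$) each meet one section, while the branching component of $\hat F\wedge \hat D$ meets only the three vertical twigs. So no component of $D\vert$ meets all three sections, and Lemma \ref{lem:no-rivet} says nothing about this case. Likewise, your parenthetical claim that $\nu_1\le 1$ follows from the absence of circular subdivisors fails once the sections are disjoint: if $V_1$ meets $H_1,H_2$ and $V_2$ meets $H_1,H_3$, the incidence graph of $H_1,H_2,H_3,V_1,V_2,G_1,G_2$ is a tree, so two fibers of type \ref{lem:w=3_basics}\ref{item:sigma=1} can coexist combinatorially. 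The paper's actual argument is different: $\psi^{-1}$ has at most $\sum_k k\nu_k=5$ base points on $\P^1\times\P^1$; if $\sigma(F_3)=3$ then all of $p_{31},p_{32},p_{33}$ are base points, and after switching to the other projection $\tilde p$ (Remark \ref{rem:tilde_p}) one forces $p_{13},p_{23}$ to be base points as well; these five exhaust $\Bs\psi^{-1}$, so $V_1+H_1+V_2+H_2$ survives as a circular subdivisor of $D$ — contradiction.

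Second, part \ref{item:w=3_additional-base-point} — which you yourself flag as the main obstacle — is not proved but only asserted to follow from "careful control." The substance of the lemma is precisely the existence and location of the \emph{eighth}, underlined base point, and this requires two arguments absent from your proposal: (i) in case \ref{item:swap-to_A1+A2+A5_rivet}, the image $V_2^{\phi}$ of $V_2$ is a $(-1)$-curve not contained in $D_Y$, while $V_2\subseteq D$, so $\phi_+^{-1}$ must have a base point on $V_2^{\phi}$, and Lemma \ref{lem:admissible_forks}\ref{item:long-twig} forces that point to be $v_{21}$ (otherwise $V_2$ would meet twigs of lengths $\ge 5$ and $\ge 2$); (ii) in cases \ref{item:swap-to_2A4-v}/\ref{item:swap-to_2A4-b}, if neither $v_{23}$ nor $h_{23}$ were a base point, then after excluding further base points over $p_{23}$ and (via $\tilde p$) over $p_{32}$, the pencil of $(1,1)$-curves through $p_{23}$ and $p_{32}$ would pull back to a $\P^1$-fibration of height $2$, contradicting $\height(\bar X)=3$. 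Relatedly, in part \ref{item:w=3_swaps} you treat $\phi_+$ as a single blowup at the underlined point reversed; in general $\phi_+$ is a composition of many blowups (all of $\Bs\psi^{-1}$ beyond the seven points of $\phi$), and its being a vertical swap is part of what the case analysis establishes, not a one-step check on Figure \ref{fig:w=3_swaps}.
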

\begin{proof}
	\ref{item:w=3_nu} We use the notation from Lemma \ref{lem:w=3_basics}\ref{item:w=3_Sigma}. The preimage of each base point of $\psi^{-1}$ contains at least one vertical $(-1)$-curve. Thus  $\psi^{-1}$ has at most $\sum_{j\geq 1}j\nu_{j}=5$ base points (not counting the infinitely near ones). Suppose $\sigma(F_3)=3$. Then by Lemma \ref{lem:w=3_basics}\ref{item:sigma=3}, $p_{3j}\in \Bs\psi^{-1}$ for all $j\in \{1,2,3\}$. Replacing $p$ with $\tilde{p}$, if needed, see Remark \ref{rem:tilde_p}, we can assume $p_{j3}\in \Bs\psi^{-1}$, too. Hence $\Bs\psi^{-1}\subseteq \bar{V}_{3}+\bar{B}_{3}$, so $V_{1}+V_{2}+H_1+H_2$ is a circular subdivisor of $D$; a contradiction. Thus $\nu_{3}=0$, so by Lemma \ref{lem:w=3_basics}\ref{item:w=3_Sigma} $\nu_{1}=1$, $\nu_{2}=2$, i.e.\ we can order $F_{1},F_{2},F_{3}$ so that $\sigma(F_1)=1$, $\sigma(F_j)=2$ for $j\in \{1,2\}$. By Lemma \ref{lem:w=3_basics}\ref{item:ht=3_D'}, $V_{1}$ meets two components of $D\hor$, as needed.
		
	 \ref{item:w=3_additional-base-point}, \ref{item:w=3_swaps} By \ref{item:w=3_nu}, we have $p_{11},p_{12}\not\in \Bs\psi^{-1}$. Since $V_1\subseteq D$, we have $\bar{V}_1^2-V_1^2\geq 2$, so $p_{13},v_{13}\in \Bs\psi^{-1}$. Since $\sigma(F_1)=1$, the condition $v_{13}\in \Bs\psi^{-1}$ implies that $h_{13}\not\in \Bs\psi^{-1}$. Because $\bar{H}_{3}^2-H_3^2\geq 2$, we infer that $p_{23}\in \Bs\psi^{-1}$ or $p_{33}\in \Bs\psi^{-1}$. Interchanging  $F_2$ with $F_3$, if needed, we can assume that $p_{33}\in \Bs\psi^{-1}$.
	
	\begin{casesp}
	\litem{$V_i\cdot D\hor\geq 2$ for some $i\in \{2,3\}$} If $V_i$ meets $H_1$ and $H_2$ then by \ref{item:w=3_nu} the subdivisor $V_1+V_i+H_1+H_2$ of $D$ is circular, which is impossible. Hence, say, $V_i\cdot H_1=0$, so $p_{i1}\in \Bs\psi^{-1}$ and $p_{i2},p_{i3}\not\in \Bs\psi^{-1}$. By assumption, $p_{33}\in \Bs\psi^{-1}$, so $i=2$. Since $\bar{H}_{2}^{2}-H_{2}^{2}\geq 2$ and $p_{12},p_{22}\not\in \Bs\psi^{-1}$, we infer that $p_{32},h_{32}\in \Bs\psi^{-1}$. Since $p_{32},p_{33}\in \Bs\psi^{-1}$ and $\sigma(F_{3})=2$, we have $p_{31}\not\in \Bs\psi^{-1}$. Recall that \ref{item:w=3_nu} gives $p_{11}\not\in \Bs\psi^{-1}$, too, so the inequality $\bar{H}_{1}^{2}-H_{1}^{2}\geq 2$ implies $h_{21}\in \Bs\psi^{-1}$. Thus $\psi=\phi\circ\phi_{+}$, where $\phi$ is a blowup at $p_{13},v_{13},p_{21},p_{31},p_{32},h_{32},p_{32},h_{32},p_{33}$; and  $\phi_{+}$ defines a vertical swap $(X,D)\sqto (Y,D_{Y})$ such that $(Y,D_Y)$ is as in Example  
	\ref{ex:w=3}\ref{item:ht=3_A1+A2+A5}. This proves part  \ref{item:w=3_swaps} in this case. To prove \ref{item:w=3_additional-base-point}, we will show that $v_{21}\in \Bs\psi^{-1}$.
	
	Put $V_{2}^{\phi}=\phi^{-1}_{*}\bar{V}_{2}$. Then $V_{2}^{\phi}=[1]$, so $V_{2}^{\phi}\not\in D_{Y}$, see Figure \ref{fig:w=3_A1+A2+A5}. But the proper transform $V_{2}$ of $V_{2}^{\phi}$ is a component of $D$, so $V_{2}^{\phi}$ contains a base point of $\phi_{+}^{-1}$, say $q$. Suppose $q\neq v_{21}$. Since by assumption $V_2$ meets $H_{2}$ and $H_{3}$, we get $q\not\in D_{Y}$. Then $V_2$ meets three twigs of $D$, namely $T_{j}$ with $\ltip{T_j}=H_j$, $j=2,3$, and one contained in $\psi^{-1}(p_{21})$. We have $\#T_{3}\geq 2$, since $v_{13}\in\Bs\psi^{-1}$; and $\#T_{2}\geq \#(H_{2}+V_{1}+H_1+V_{3})+1\geq 5$, since $b_{23}\in \Bs\psi^{-1}$, see Figure \ref{fig:w=3_A1+A2+A5}. This is  contradiction with Lemma \ref{lem:admissible_forks}\ref{item:long-twig}. Thus $q=v_{21}$ and \ref{item:swap-to_A1+A2+A5_rivet} holds.
	
	\litem{$V_i\cdot D\hor\leq 1$ for both $i\in \{2,3\}$} We interchange $F_2$ with $F_3$ so the assumption $p_{33}\in \Bs\psi^{-1}$ turns into $p_{23}\in \Bs\psi^{-1}$. Since $\sigma(F_i)=2$, we have $V_{i}\cdot D\hor=1$. Interchanging $H_1$ with $H_2$, if needed, we can assume $p_{22}\in \Bs\psi^{-1}$ and $p_{21}\not\in \Bs\psi^{-1}$. Since $\bar{H}_{1}^{2}-H_{1}^{2}\geq 2$ and $p_{11},p_{21}\not\in \Bs\psi^{-1}$, we have $p_{31},h_{31}\in\Bs\psi^{-1}$.
	
	Assume $V_3$ meets $H_2$, so $p_{32}\not\in \Bs\psi^{-1}$. Since $V_3\cdot D\hor=1$, we get $p_{33}\in \Bs\psi^{-1}$. Since $\bar{H}_{2}^{2}-H_{2}^{2}\geq 2$ and $p_{12},p_{32}\not\in \Bs\psi^{-1}$, we have $h_{22}\in \Bs\psi^{-1}$. After interchanging $H_{1}$ with $H_{2}$, we get \ref{item:swap-to_A1+A2+A5_no-rivet} and hence \ref{item:w=3_swaps} holds.
	
	Assume $V_3$ meets $H_3$, so $p_{32}\in \Bs\psi^{-1}$. Blowing up at the known base points of $\psi^{-1}$, namely at $p_{13}$, $v_{13}$, $p_{22}$, $p_{23}$, $p_{31}$, $h_{31}$, $p_{32}$, we get a morphism $\phi$ from Example \ref{ex:w=3}\ref{item:ht=3_2A4}, which proves part \ref{item:w=3_swaps} in this case. 
	
	Suppose \ref{item:swap-to_2A4-v} and \ref{item:swap-to_2A4-b} fail, so $v_{23},h_{23}\not\in \Bs\psi^{-1}$. Suppose $\psi^{-1}$ has a base point $q$ over $p_{23}$. Then $q\not\in D_{Y}$. Now $H_3$ meets three twigs of $D$, namely: one containing an exceptional curve over $p_{13}$, one containing $V_3$, which has length at least two, and one containing $H_2+V_{1}+H_{1}+V_{2}$, which has length at least $5$, see Figure \ref{fig:w=3_2A4}. This is a contradiction with Lemma \ref{lem:admissible_forks}\ref{item:long-twig}. Thus $\psi^{-1}$ has no further base points over $p_{23}$. Replacing $p$ with $\tilde{p}$, see Remark \ref{rem:tilde_p}, we can assume that $p$ has no further base points over $p_{32}$, either. Now the pencil of curves of type $(1,1)$, passing through $p_{23}$ and $p_{32}$, pulls back to a $\P^{1}$-fibration of height two on $(X,D)$; a contradiction.\qedhere
	\end{casesp}
\end{proof}

\begin{notation}[Reconstructing the minimalization $\psi$]\label{not:phi_+_H}
	In each of case of Lemma \ref{lem:w=3_swaps}\ref{item:w=3_additional-base-point}, let $\tau$ be the blowup at all points listed there, see Figure \ref{fig:w=3_swaps}, and let $\psi$ be as in Proposition \ref{prop:ht=3_models}\ref{item:w=3_models}, see Lemma \ref{lem:nu_1=1}. We study factorizations
	\begin{equation*}
	\begin{tikzcd}
		\psi\colon (X,D)
		\ar[r, "\gamma_{+}", squiggly]
		& (X_{\gamma},D_{\gamma})
		\ar[r, "\gamma"] 
		& (\P^1\times \P^1,B)
	\end{tikzcd}	
	\end{equation*}
	 where $\gamma$ is a composition of $\tau$ with some blowups over the $(-1)$-curves in $\Exc\tau$. We put $\check{D}_{\gamma}=(\gamma^{*}B)\redd$, let $D_{\gamma}$ be $\check{D}_{\gamma}$ minus all $(-1)$-curves, so $\gamma_{+}\colon (X,D)\sqto (X_{\gamma},D_{\gamma})$ is a vertical swap and $\Bs\gamma^{-1}_{+}\subseteq \check{D}_{\gamma}-D_{\gamma}$. By Lemma \ref{lem:w=3_swaps}\ref{item:w=3_additional-base-point}, one such factorization is given by $\psi=\tau\circ \tau_{+}$. The 
	 log surfaces $(X_{\tau},D_{\tau})$ are shown in Figure \ref{fig:w=3_swaps}.
	
	For a curve $G$ on $X$, or on some $X_{\eta}$, we denote its proper transform on $X_{\gamma}$ by $G^{\gamma}$. 
	Lemma \ref{lem:w=3_swaps} implies that $\gamma^{-1}(p_{ij})$ has at most one $(-1)$-curve, which we call $A_{ij}^{\gamma}$, unless  $(i,j)=(2,1)$ in case  \ref{item:swap-to_A1+A2+A5_rivet}: we then write $A_{v}^{\gamma}$, $A_{h}^{\gamma}$ for the $(-1)$-curve over $v_{21}$, $h_{21}$, respectively. 
	We skip the superscript whenever $\gamma$ is clear from the context.
\end{notation}
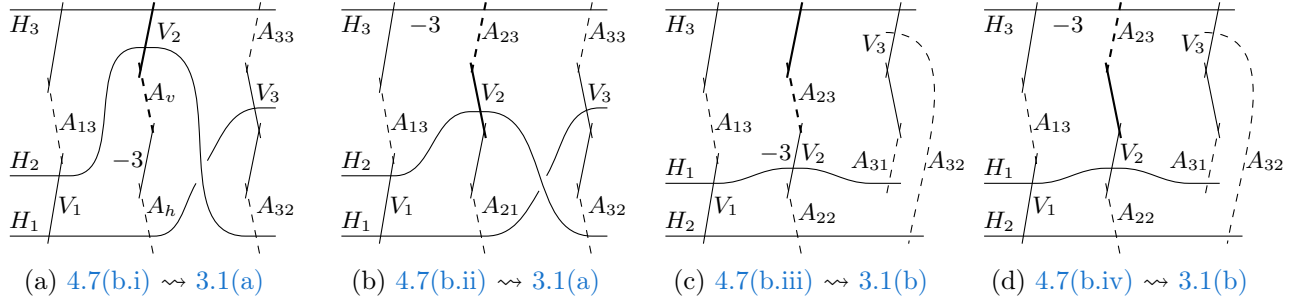
\begin{figure}[htbp]\vspace{-0.5em}
	\subcaptionbox{\ref{lem:w=3_swaps}\ref{item:swap-to_A1+A2+A5_rivet} $\sqto$ \ref{ex:w=3}\ref{item:ht=3_A1+A2+A5} \label{fig:swaps_to_A1_A2_A5_rivet}}[.25\linewidth]{
		\begin{tikzpicture}
			\path[use as bounding box] (-0.8,-0.1) rectangle (3.5,3.2);
			\draw (-0.5,3.1) -- (3,3.1);
			\node at (-0.3,2.9) {\small{$H_3$}};
			\draw (0.2,3.2) -- (0,2);
			\draw[dashed] (0,2.2) -- (0.2,1);
			\node at (0.4,1.6) {\small{$A_{13}$}};
			\draw (0.2,1.2) -- (0,0);
			\node at (0.3,0.5) {\small{$V_1$}};
			\draw[thick] (1.4,3.2) -- (1.2,2.2);
			\node at (1.6,2.8) {\small{$V_2$}}; 
			\draw[thick, dashed]  (1.2,2.4) -- (1.4,1.4);
			\node at (1.5,2) {\small{$A_{v}$}}; 
			\draw (1.4,1.6) -- (1.2,0.6);
			\node at (1.05,1.1) {\small{$-3$}};
			\draw[dashed] (1.2,0.8) -- (1.4,-0.2);
			\node at (1.5,0.5) {\small{$A_h$}}; 
			\draw[dashed] (2.8,3.2) -- (2.6,2.2);
			\node at (3,2.8) {\small{$A_{33}$}}; 
			\draw (2.6,2.4) -- (2.8,1.4);
			\node at (2.9,2) {\small{$V_3$}}; 
			\draw (2.8,1.6) -- (2.6,0.6);
			\draw[dashed] (2.6,0.8) -- (2.8,-0.2);
			\node at (3,0.5) {\small{$A_{32}$}};
			\draw (-0.5,0.9) -- (0.3,0.9) to[out=0,in=180] (1.2,2.6) -- (1.4,2.6) to[out=0,in=180] (2.6,0.1) -- (3,0.1);
			\node at (-0.3,1.1) {\small{$H_2$}};
			\draw (-0.5,0.1) -- (1.4,0.1) to[out=0,in=-120] (1.95,0.8);   
			\draw (2.1,1.1) to[out=60,in=180] (2.8,1.8) -- (3,1.8);
			\node at (-0.3,0.3) {\small{$H_1$}};
		\end{tikzpicture}
	}	
	\subcaptionbox{\ref{lem:w=3_swaps}\ref{item:swap-to_A1+A2+A5_no-rivet} $\sqto$ \ref{ex:w=3}\ref{item:ht=3_A1+A2+A5} \label{fig:swaps_to_A1_A2_A5_no-rivet}}[.25\linewidth]{
		\begin{tikzpicture}
			\path[use as bounding box] (-0.8,-0.1) rectangle (3.5,3.2);
			\draw (-0.5,3.1) -- (3,3.1);
			\node at (-0.3,2.9) {\small{$H_3$}};
			\node at (0.6,2.9) {\small{$-3$}};
			\draw (0.2,3.2) -- (0,2);
			\draw[dashed] (0,2.2) -- (0.2,1);
			\node at (0.4,1.6) {\small{$A_{13}$}};
			\draw (0.2,1.2) -- (0,0);
			\node at (0.3,0.5) {\small{$V_1$}};
			\draw[thick, dashed] (1.4,3.2) -- (1.2,2.2);
			\node at (1.6,2.8) {\small{$A_{23}$}}; 
			\draw[thick]  (1.2,2.4) -- (1.4,1.4);
			\node at (1.5,2) {\small{$V_2$}};
			\draw (1.4,1.6) -- (1.2,0.6);
			\draw[dashed] (1.2,0.8) -- (1.4,-0.2);
			\node at (1.6,0.5) {\small{$A_{21}$}}; 
			\draw[dashed] (2.8,3.2) -- (2.6,2.2);
			\node at (3,2.8) {\small{$A_{33}$}}; 
			\draw (2.6,2.4) -- (2.8,1.4);
			\node at (2.9,2) {\small{$V_3$}}; 
			\draw (2.8,1.6) -- (2.6,0.6);
			\draw[dashed] (2.6,0.8) -- (2.8,-0.2);
			\node at (3,0.5) {\small{$A_{32}$}};
			\draw (-0.5,0.9) -- (0.2,0.9) to[out=0,in=180] (1.2,1.75) -- (1.4,1.75) to[out=0,in=180] (2.8,0.1) -- (3,0.1);
			\node at (-0.3,1.1) {\small{$H_2$}};
			\draw (-0.5,0.1) -- (1.4,0.1) to[out=0,in=-120] (2.1,0.7);   
			\draw (2.2,0.9) to[out=60,in=180] (2.9,1.8) -- (3,1.8);
			\node at (-0.3,0.3) {\small{$H_1$}};
		\end{tikzpicture} 
	}
	\subcaptionbox{\ref{lem:w=3_swaps}\ref{item:swap-to_2A4-v} $\sqto$ \ref{ex:w=3}\ref{item:ht=3_2A4} \label{fig:swap-to_2A4-v}}[.24\linewidth]{
		\begin{tikzpicture}
			\path[use as bounding box] (-0.7,-0.1) rectangle (3.5,3.2);
			\draw (-0.5,3.1) -- (2.8,3.1);
			\node at (-0.3,2.9) {\small{$H_3$}};
			\draw (0.2,3.2) -- (0,2);
			\draw[dashed] (0,2.2) -- (0.2,1);
			\node at (0.4,1.6) {\small{$A_{13}$}};
			\draw (0.2,1.2) -- (0,0);
			\node at (0.3,0.5) {\small{$V_1$}};
			\draw[thick] (1.3,3.2) -- (1.1,2.2);
			\draw[thick, dashed]  (1.1,2.4) -- (1.3,1.4);
			\node at (1.5,2) {\small{$A_{23}$}}; 
			\draw (1.3,1.6) -- (1.1,0.6);
			\node at (0.95,1.2) {\small{$-3$}};
			\node at (1.45,1.2) {\small{$V_2$}};
			\draw[dashed] (1.1,0.8) -- (1.3,-0.2);
			\node at (1.5,0.4) {\small{$A_{22}$}}; 
			\draw (2.6,3.2) -- (2.4,2.2);
			\node at (2.25,2.65) {\small{$V_3$}};
			\draw (2.4,2.4) -- (2.6,1.4);
			\draw[dashed] (2.6,1.6) -- (2.4,0.6);
			\node at (2.2,1.1) {\small{$A_{31}$}};
			\draw[dashed] (2.4,2.8) to[out=0,in=80] (2.7,0);
			\node at (3.2,1.1) {\small{$A_{32}$}};
			\draw (-0.5,0.8) -- (0.2,0.8) to[out=0,in=180] (1.1,1) -- (1.3,1) to[out=0,in=180] (2.2,0.8)-- (2.6,0.8);
			\node at (-0.3,1) {\small{$H_1$}};
			\draw (-0.5,0.1) -- (2.9,0.1);
			\node at (-0.3,0.3) {\small{$H_2$}};
		\end{tikzpicture}  
	}		
	\subcaptionbox{\ref{lem:w=3_swaps}\ref{item:swap-to_2A4-b} $\sqto$ \ref{ex:w=3}\ref{item:ht=3_2A4} \label{fig:swap-to_2A4-b}}[.24\linewidth]{
		\begin{tikzpicture}
			\path[use as bounding box] (-0.7,-0.1) rectangle (3.5,3.2);
			\draw (-0.5,3.1) -- (2.8,3.1);
			\node at (-0.3,2.9) {\small{$H_3$}};
			\node at (0.6,2.9) {\small{$-3$}};
			\draw (0.2,3.2) -- (0,2);
			\draw[dashed] (0,2.2) -- (0.2,1);
			\node at (0.4,1.6) {\small{$A_{13}$}};
			\draw (0.2,1.2) -- (0,0);
			\node at (0.3,0.5) {\small{$V_1$}};
			\draw[thick, dashed] (1.3,3.2) -- (1.1,2.2);
			\node at (1.5,2.8) {\small{$A_{23}$}}; 
			\draw[thick]  (1.1,2.4) -- (1.3,1.4);
			\node at (1.45,1.2) {\small{$V_2$}};
			\draw (1.3,1.6) -- (1.1,0.6);
			\draw[dashed] (1.1,0.8) -- (1.3,-0.2);
			\node at (1.5,0.4) {\small{$A_{22}$}}; 
			\draw (2.6,3.2) -- (2.4,2.2);
			\node at (2.25,2.65) {\small{$V_3$}};
			\draw (2.4,2.4) -- (2.6,1.4);
			\draw[dashed] (2.6,1.6) -- (2.4,0.6);
			\node at (2.2,1.1) {\small{$A_{31}$}};
			\draw[dashed] (2.4,2.8) to[out=0,in=80] (2.7,0);
			\node at (3.2,1.1) {\small{$A_{32}$}};
			\draw (-0.5,0.8) -- (0.2,0.8) to[out=0,in=180] (1.1,1) -- (1.3,1) to[out=0,in=180] (2.2,0.8)-- (2.6,0.8);
			\node at (-0.3,1) {\small{$H_1$}};
			\draw (-0.5,0.1) -- (2.9,0.1);
			\node at (-0.3,0.3) {\small{$H_2$}};
		\end{tikzpicture}  
	}
	\caption{Log surfaces $(X_{\tau},D_{\tau})$ of height $3$ and width $3$ obtained by blowing up all points listed in Lemma \ref{lem:w=3_swaps}\ref{item:w=3_additional-base-point}, see Notation \ref{not:phi_+_H}. Swapping the thick curves (i.e.\ contracting the thick $(-1)$-curve (dashed) and removing the image of the thick $(-2)$-curve (solid) from the boundary) gives a log resolution of a canonical surface from Example \ref{ex:w=3}, see Lemma \ref{lem:w=3_swaps}\ref{item:w=3_swaps}.}\vspace{-1em}
	\label{fig:w=3_swaps}
\end{figure}

\subsection{An exotic pair of del Pezzo surfaces of rank 1 and height 3: Theorem \ref{thm:ht=3}\ref{item:uniq_exotic}}

We now construct the two non-isomorphic del Pezzo surfaces from Theorem \ref{thm:ht=3}\ref{item:uniq_exotic}, i.e.\ the ones of type 
\begin{equation}\tag{$\star$}\label{eq:25}
	[2,2,3,2,2,2,2,2]+[2,3].
\end{equation}
To do this, we define $\tau_{+}$ as in Notation \ref{not:phi_+_H} in such a way that the resulting log surfaces $(X,D)$ happen to have the same weighted graph of $D$, but different structure of the induced $\P^1$-fibrations, see Figure \ref{fig:ht=3_pair}.

\begin{example}[Construction]\label{ex:ht=3_pair}
For $\bar{X}_1$, take $\tau$ as in Lemma \ref{lem:w=3_swaps}\ref{item:swap-to_A1+A2+A5_rivet}, see Figure \ref{fig:swaps_to_A1_A2_A5_rivet}, and let $\tau_{+}$ be a blowup at $A_{v}^{\tau}\cap V_{2}^{\tau}$. For $\bar{X}_2$, take $\tau$ as in Lemma \ref{lem:w=3_swaps}\ref{item:swap-to_A1+A2+A5_no-rivet}, see Figure \ref{fig:swaps_to_A1_A2_A5_no-rivet}, and let $\tau_{+}$ be a blowup at $A_{32}^{\tau}\cap H_{2}^{\tau}$. This way, we get minimal log resolutions $(X_1,D_1)$ and $(X_2,D_2)$ of surfaces $\bar{X}_1$ and  $\bar{X}_2$, both of rank one and type \eqref{eq:25}. 
	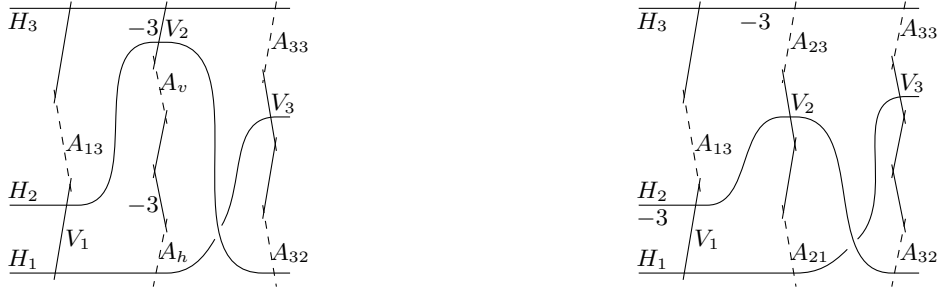
\begin{figure}[htbp]
		\subcaptionbox{$(X_1,D_1)$, see Lemma \ref{lem:w=3}\ref{item:rivet_A}, $k=3$. \label{fig:pair_2}}[.48\linewidth]{
			\begin{tikzpicture}[scale=0.9]
				\path[use as bounding box] (-1.5,-1) rectangle (4,3.2);
				\draw (-0.7,3.1) -- (3.4,3.1);
				\node at (-0.5,2.9) {\small{$H_3$}};
				\draw (0.2,3.2) -- (-0.05,1.7);
				\draw[dashed] (-0.05,1.9) -- (0.2,0.4);
				\node at (0.4,1.1) {\small{$A_{13}$}};
				\draw (0.2,0.6) -- (-0.05,-0.9);
				\node at (0.3,-0.3) {\small{$V_1$}};
				\draw (1.6,3.2) -- (1.4,2.2);
				\node at (1.25,2.8) {\small{$-3$}}; 
				\node at (1.75,2.8) {\small{$V_2$}}; 
				\draw[dashed]  (1.4,2.4) -- (1.6,1.4);
				\node at (1.7,2) {\small{$A_{v}$}}; 
				\draw (1.6,1.6) -- (1.4,0.6);
				\node at (1.25,0.2) {\small{$-3$}};
				\draw (1.4,0.8) -- (1.6,-0.2);
				\draw[dashed] (1.6,0) -- (1.4,-1);
				\node at (1.7,-0.5) {\small{$A_h$}}; 
				\draw[dashed] (3.2,3.2) -- (3,2);
				\node at (3.4,2.6) {\small{$A_{33}$}}; 
				\draw (3,2.2) -- (3.2,1);
				\node at (3.3,1.7) {\small{$V_3$}}; 
				\draw (3.2,1.2) -- (3,0);
				\draw[dashed] (3,0.2) -- (3.2,-1);
				\node at (3.4,-0.5) {\small{$A_{32}$}};
				\draw (-0.7,0.2) -- (0.3,0.2) to[out=0,in=180] (1.4,2.6) -- (1.6,2.6) to[out=0,in=180] (3,-0.8) -- (3.4,-0.8);
				\node at (-0.5,0.4) {\small{$H_2$}};
				\draw (-0.7,-0.8) -- (1.6,-0.8) to[out=0,in=-120] (2.3,-0.3);   
				\draw (2.4,-0.1) to[out=60,in=180] (3.2,1.5) -- (3.4,1.5);
				\node at (-0.5,-0.6) {\small{$H_1$}};
			\end{tikzpicture}
		}	
		\subcaptionbox{$(X_2,D_2)$, see Lemma \ref{lem:w=3}\ref{item:nu_3=1_c2}, $k=3$.  \label{fig:pair_5}}[.48\linewidth]{
			\begin{tikzpicture}[scale=0.9]
				\path[use as bounding box] (-1.5,-1) rectangle (4,3.2);			
				\draw (-0.7,3.1) -- (3.4,3.1);
				\node at (-0.5,2.9) {\small{$H_3$}};
				\node at (1,2.9) {\small{$-3$}};
				\draw (0.2,3.2) -- (-0.05,1.7);
				\draw[dashed] (-0.05,1.9) -- (0.2,0.4);
				\node at (0.4,1.1) {\small{$A_{13}$}};
				\draw (0.2,0.6) -- (-0.05,-0.9);
				\node at (0.3,-0.3) {\small{$V_1$}};
				\draw[dashed] (1.6,3.2) -- (1.4,2);
				\node at (1.8,2.6) {\small{$A_{23}$}}; 
				\draw (1.4,2.2) -- (1.6,1);
				\node at (1.7,1.7) {\small{$V_2$}}; 
				\draw (1.6,1.2) -- (1.4,0);
				\draw[dashed] (1.4,0.2) -- (1.6,-1);
				\node at (1.8,-0.5) {\small{$A_{21}$}};
				\draw[dashed] (3.2,3.2) -- (3,2.2);
				\node at (3.4,2.8) {\small{$A_{33}$}}; 
				\draw  (3,2.4) -- (3.2,1.4);
				\node at (3.3,2) {\small{$V_3$}}; 
				\draw (3.2,1.6) -- (3,0.6);
				\draw (3,0.8) -- (3.2,-0.2); 
				\draw[dashed] (3.2,0) -- (3,-1);
				\node at (3.4,-0.5) {\small{$A_{32}$}};
				\draw (-0.7,0.2) -- (0.3,0.2) to[out=0,in=180] (1.4,1.5) -- (1.6,1.5) to[out=0,in=180] (3,-0.8) -- (3.4,-0.8);
				\node at (-0.5,0.4) {\small{$H_2$}};
				\node at (-0.5,0) {\small{$-3$}};
				\draw (-0.7,-0.8) -- (1.6,-0.8) to[out=0,in=-135] (2.35,-0.45);   
				\draw (2.5,-0.3) to[out=45,in=180] (3.2,1.8) -- (3.4,1.8);
				\node at (-0.5,-0.6) {\small{$H_1$}};
			\end{tikzpicture}
		}			\vspace{-0.5em}
		\caption{Example \ref{ex:ht=3_pair}: two del Pezzo surfaces of type \eqref{eq:25} from Theorem \ref{thm:ht=3}\ref{item:uniq_exotic}.}\vspace{-1em}
		\label{fig:ht=3_pair}
	\end{figure}	
\end{example}
\begin{proposition}[Properties]\label{prop:exception}
	Let $\bar{X}_1$ and $\bar{X}_2$ be the surfaces constructed in Example \ref{ex:ht=3_pair}. 
	\begin{enumerate}
		\item\label{item:exception_del-Pezzo} $\bar{X}_1$ and $\bar{X}_2$ are del Pezzo.
		\item\label{item:exception_non-iso} $\bar{X}_1$ is not isomorphic to $\bar{X}_2$. 
		\item\label{item:exception_H1-etale} Assume $\cha\kk\neq 3$. Then 
			 $H^{1}_{\textnormal{\'et}}(\bar{X}_1\reg;\Z/3)\cong \Z/3$ and $H^1_{\textnormal{\'et}}(\bar{X}_2\reg;\Z/3)=0$.
			 		\item\label{item:exception_H1} Assume $\kk=\C$. Then $H_1(\bar{X}_1\reg;\Z)\cong \Z/3$ and $H_1(\bar{X}_2\reg;\Z)=0$. 
	\end{enumerate}
\end{proposition}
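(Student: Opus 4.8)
\textbf{Part \ref{item:exception_del-Pezzo} (del Pezzo property).} Both surfaces carry, by construction, the height-$3$, width-$3$ $\P^1$-fibration $p$ coming from $\psi\colon X_i\to\P^1\times\P^1$ in Example \ref{ex:ht=3_pair}, whose fiber $F$ meets $D_i$ exactly in the three $1$-sections $H_1,H_2,H_3$, so $H_j\cdot F=1$ and $D_i\cdot F=3$. By Lemma \ref{lem:delPezzo_criterion} the surface $\bar{X}_i$ is del Pezzo if and only if
\begin{equation*}
	\ld(H_1)+\ld(H_2)+\ld(H_3)>1.
\end{equation*}
Since $\bar{X}_1$ and $\bar{X}_2$ share the singularity type $\cS=[2,2,3,(2)_{5}]+[3,2]$, each $\ld(H_j)=\ld_{D}(H_j)$ is computed from $\cS$ by Lemma \ref{lem:ld_formulas} together with the recursion of Lemma \ref{lem:discriminants}; the only datum differing between the two surfaces is the position of $H_1,H_2,H_3$ inside $\cS$, which I would read off from Figure \ref{fig:ht=3_pair}. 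Evaluating the resulting rational numbers in each case and checking the single inequality is then a routine computation.

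\textbf{Parts \ref{item:exception_non-iso}--\ref{item:exception_H1}: the governing invariant.} All three remaining parts are controlled by the torsion of the divisor class group $\operatorname{Cl}(\bar{X}_i)\de\Pic(X_i)/\langle D_i^{(1)},\dots,D_i^{(10)}\rangle$, where $D_i^{(1)},\dots,D_i^{(10)}$ are the components of $D_i$; concretely I claim $\operatorname{Cl}(\bar{X}_1)_{\operatorname{tors}}\cong\Z/3$ and $\operatorname{Cl}(\bar{X}_2)_{\operatorname{tors}}=0$. The lattice $\Pic(X_i)$ is unimodular, with an explicit basis given by the exceptional divisors of the blowup sequence defining $X_i$ together with the two rulings of $\P^1\times\P^1$, and each $D_i^{(k)}$ is an explicit integral combination of this basis. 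Hence $\operatorname{Cl}(\bar{X}_i)_{\operatorname{tors}}=\operatorname{tors}(\Pic(X_i)/M_i)$, with $M_i\de\langle D_i^{(k)}\rangle$, is the cokernel of $\Pic(X_i)\to\Z^{10}$, $L\mapsto(L\cdot D_i^{(k)})_k$, which I would evaluate by Smith normal form. As a consistency framework, the local-to-global sequence embeds $\operatorname{Cl}(\bar{X}_i)_{\operatorname{tors}}$ into the sum of local class groups $\bigoplus_p G_p\cong\Z/27\oplus\Z/5$, where $27=d([2,2,3,(2)_{5}])$ and $5=d([3,2])$; equivalently, writing $A_i\in\Pic(X_i)$ for the primitive class orthogonal to every $D_i^{(k)}$, unimodularity gives $|\operatorname{Cl}(\bar{X}_i)_{\operatorname{tors}}|^2=135/|A_i^2|$, so the decisive numbers are $A_1^2=\pm15$ versus $A_2^2=\pm135$. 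Because $\operatorname{Cl}$ is an isomorphism invariant of normal surfaces and this computation is purely combinatorial, hence independent of $\cha\kk$, the inequality $\operatorname{Cl}(\bar{X}_1)_{\operatorname{tors}}\not\cong\operatorname{Cl}(\bar{X}_2)_{\operatorname{tors}}$ proves Part \ref{item:exception_non-iso} in every characteristic.

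\textbf{Part \ref{item:exception_H1} (topology over $\C$).} Here $\bar{X}_i\reg=X_i\setminus D_i$ with $X_i$ a smooth rational, hence simply connected, surface. From the long exact sequence of the pair $(X_i,\bar{X}_i\reg)$ and the identification $H^2_{D_i}(X_i)\cong\Z^{10}$ by Thom classes I obtain
\begin{equation*}
	H_1(\bar{X}_i\reg;\Z)\cong\operatorname{coker}\bigl(\Pic(X_i)\to\Z^{10},\ L\mapsto(L\cdot D_i^{(k)})_k\bigr)=\operatorname{Cl}(\bar{X}_i)_{\operatorname{tors}},
\end{equation*}
which is the group already computed; thus $H_1(\bar{X}_1\reg;\Z)\cong\Z/3$ and $H_1(\bar{X}_2\reg;\Z)=0$.

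\textbf{Part \ref{item:exception_H1-etale} (\'etale, $\cha\kk\neq3$) and the main obstacle.} Writing $U=\bar{X}_i\reg$, the hypothesis $\cha\kk\neq3$ makes degree-$3$ covers tame, so by Zariski--Nagata purity every connected $\Z/3$-cover of $U$ extends to a cover of $\bar{X}_i$ branched only over the singular points, and such covers are classified by the $3$-torsion of $\operatorname{Cl}(\bar{X}_i)$. This yields $H^1_{\textnormal{\'et}}(U;\Z/3)\cong\Hom(\operatorname{Cl}(\bar{X}_i)_{\operatorname{tors}},\Z/3)$, equal to $\Z/3$ for $i=1$ and to $0$ for $i=2$. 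The main obstacle of the whole proposition lies precisely in these last two passages: making rigorous the transition from the combinatorial group $\operatorname{Cl}(\bar{X}_i)_{\operatorname{tors}}$ to the (co)homology of the smooth locus — the purity and tameness input in positive characteristic, and the Thom-class identification over $\C$ — whereas the underlying lattice computation, though decisive, is mechanical.
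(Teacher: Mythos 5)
Your framework is sound, and for parts \ref{item:exception_H1-etale} and \ref{item:exception_H1} it is in substance the same as the paper's: the paper also reduces everything to the cokernel of one intersection map, using Poincar\'e/Lefschetz duality and the compactly supported cohomology sequence to identify $H^{1}_{\textnormal{\'et}}(\bar{X}_i\reg;\Z/3)$, resp.\ $H_1(\bar{X}_i\reg;\Z)$, with $\operatorname{coker}\bigl(H^2(X_i)\to H^2(D_i)\bigr)$ — which is exactly your $\operatorname{coker}\bigl(\Pic(X_i)\to\Z^{10}\bigr)\cong\operatorname{Cl}(\bar{X}_i)_{\mathrm{tors}}$. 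Your Kummer/tameness reduction for \ref{item:exception_H1-etale} and duality reduction for \ref{item:exception_H1} are legitimate substitutes (though purity is not really the point in \ref{item:exception_H1-etale}: since $\Gamma(\cO_U^*)=\kk^*$ is divisible, the Kummer sequence gives $H^1(U;\mu_3)\cong\operatorname{Cl}(\bar{X}_i)[3]$ directly). Where you genuinely diverge is \ref{item:exception_non-iso}: the paper never invokes the class group there, but instead plays the two $\P^1$-fibrations on $X_1$ against each other, showing $A_{13}$ must be horizontal for the second fibration and then deriving $A_{13}\equiv A_{21}'$ in $\NS_{\Q}(X_1)$, contradicting $A_{13}\cdot A_{21}'=A_{13}^2=-1$. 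Your route — $\operatorname{Cl}_{\mathrm{tors}}$ as an isomorphism invariant computed from the lattice embedding, hence characteristic-independent — is cleaner, makes \ref{item:exception_non-iso} a corollary of the computation behind \ref{item:exception_H1-etale}--\ref{item:exception_H1}, and works uniformly in all characteristics.

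The genuine gap is that you never perform the one computation on which everything rests. The surfaces $\bar{X}_1$ and $\bar{X}_2$ have the same singularity type and the same weighted graph of $D_i$ plus vertical $(-1)$-curves, so every line of your proposal, up to and including the formula $|\operatorname{Cl}(\bar{X}_i)_{\mathrm{tors}}|^2=135/|A_i^2|$, applies verbatim to both surfaces; what distinguishes them is solely how the ten components of $D_i$ sit inside the unimodular lattice $\Pic(X_i)\cong\Z^{11}$, i.e.\ the explicit classes determined by the two different blowup sequences of Example \ref{ex:ht=3_pair}. You never write these classes (or the resulting $10\times 11$ matrices) down, and the decisive values $A_1^2=\pm 15$, $A_2^2=\pm 135$ — equivalently, cokernels of order $3$ and $1$ — are bare assertions. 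As written, the argument would apply equally well to an isomorphic pair, which is absurd; so the "mechanical" step you defer is not an afterthought but the entire content of parts \ref{item:exception_non-iso}--\ref{item:exception_H1}. The paper discharges exactly this step by displaying both intersection matrices explicitly and computing their cokernels. The same criticism applies, less severely, to \ref{item:exception_del-Pezzo}: the substance of the criterion of Lemma \ref{lem:delPezzo_criterion} is the numerical inequality, which the paper verifies ($\sum_j\ld(H_j)=\tfrac{5}{3}$ for $\bar{X}_1$ and $\tfrac{67}{45}$ for $\bar{X}_2$) and you leave unevaluated.
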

\begin{proof}
	\ref{item:exception_del-Pezzo} By Lemma \ref{lem:delPezzo_criterion} it is enough to show that $\sum_{i=1}^{3}\ld(H_i)>1$. We compute log discrepancies using Lemmas \ref{lem:ld_formulas} and \ref{lem:discriminants}. For $\bar{X}_1$ we have $\ld(H_1)=\frac{2}{3}$, $\ld(H_2)=\frac{4}{9}$ and $\ld(H_3)=\frac{5}{9}$, so $\sum_{i=1}^{3}\ld(H_i)=\frac{5}{3}>1$, as needed. For $\bar{X}_2$ we have $\ld(H_1)=\frac{5}{9}$, $\ld(H_2)=\frac{1}{3}$ and $\ld(H_3)=\frac{3}{5}$, so $\sum_{i=1}^{3}\ld(H_i)=\frac{67}{45}>1$, as claimed. 
	
	\ref{item:exception_non-iso}
	Suppose $\bar{X}_1$ is isomorphic to $\bar{X}_2$, so $(X_1,D_1)\cong (X_2,D_2)$. Then $X_1$ admits two $\P^1$-fibrations $p$, $p'$ coming from the construction of $\bar{X}_1$ and $\bar{X}_2$. Let $F'$ be a general fiber of $p'$, and let $F_i$ be the fiber of $p$ containing $V_i$, see Notation \ref{not:phi_+_H} or Figure \ref{fig:pair_2}. Then $F_2$ contains two $(-3)$-curves, which are horizontal for $p'$, see Figure \ref{fig:ht=3_pair}. Hence $2\leq F_2\cdot F'=F_1\cdot F'=1+2A_{13}\cdot F'$, so  $A_{13}$ is horizontal for $p'$. 
	
	Let $A_{21}'$  be the image on $X_1$ of the curve  $A_{21}\subseteq X_2$, see Figure \ref{fig:ht=3_pair}. Since $A_{21}'$ is vertical for $p'$, we have $A_{13}\neq A_{21}'$, so $A_{13}\cdot A_{21}'\geq 0$. We check that $A_{13}\cdot G=A_{21}'\cdot G$ for every component $G$ of $D_1$. Since $\NS_{\Q}(X_1)$ is generated by $K_{X_1}$ and the components of $D_1$, we get $A_{13}\equiv A_{21}'$, so $A_{13}\cdot A_{21}'=A_{13}^2=-1$; a contradiction.
	
	\ref{item:exception_H1-etale},\ref{item:exception_H1} By the Poincar\'e duality \cite[Theorem 24.1(b)]{Milne_etale} we have $H^{1}_{\textnormal{\'et}}(\bar{X}_i\reg;\Z/3)\cong H^{3}_{c}(\bar{X}_i\reg;\Z/3)$, 
	where $H^*_c$ denotes \'etale cohomology with compact supports. Similarly, if $\kk=\C$ then the Lefschetz duality gives $H_1(\bar{X}_i\reg;\Z)\cong H^3_c(\bar{X}_i\reg;\Z)$. Thus it is enough to compute $H^3_{c}(\bar{X}_i\reg)$. We use the same notation for \'etale cohomology with coefficients in $\Z/3$ and, if $\kk=\C$, for singular cohomology with coefficients in $\Z$. 
	
	We have $\bar{X}_i\reg\cong X_i\setminus D_i$. The long exact sequence in cohomology with compact supports  \cite[Theorem  I.8.7(3)]{FK_etale} gives an exact sequence $H^2(X_i)\to H^2(D_i)\to H^3_{c}(X_i\setminus D_i)\to H^3(X_i)$.
		
	Using the computation of $H^{*}(\P^1)$ and the K\"unneth formula \cite[Proposition 4.12 and Corollary 22.2]{Milne_etale}, we get $H^{3}(\P^1\times \P^1)=0$ and that $H^{2}(\P^1\times \P^1)$ is freely generated by a class of a general vertical line $L_v$ and a general horizontal line $L_h$ (more precisely: by the images of the Gysin maps $H^0(L_{\star})\to H^2(\P^1\times \P^1)$, see \cite[\sec 23]{Milne_etale}). Applying \citestacks{0EW3} to each blowup in the decomposition of $\psi\colon X_{i}\to \P^1\times \P^1$, we infer that $H^3(X_i)=0$ and $H^2(X_i)$ is freely generated by the proper transforms of $L_v$, $L_h$, all vertical $(-1)$-curves, and all components of $\Exc\psi \wedge D$. In turn, by the Mayer--Vietoris sequence, see \cite[Theorem 10.8]{Milne_etale}, $H^{2}(D_i)$ is freely generated by $H_{1},H_{2},H_3$, $V_1,V_2,V_3$, and all components of $\Exc\psi\wedge D$. We order these curves as in Figure \ref{fig:ht=3_pair}, from bottom to top, e.g.\ for $i=1$ we order the $(-1)$-curves as $A_{13},A_h,A_v,A_{32},A_{33}$. Thus $H^{3}_{c}(X_i\setminus D_i)$ is the cokernel of the restriction map $r_i\colon H^2(X_i)\to H^2(D_i)$, which in the above bases is given by a matrix
	\begin{equation*}
			\left[
			\begin{array}{@{}*{11}{r}@{}}
				1 & 0 & 0 & 1 & 0 & 0 & 0 & 0 & 0 & 0 & 0 \\
				1 & 0 & 0 & 0 & 0 & 1 & 0 & 0 & 0 & 0 & 0 \\	
				1 & 0 & 0 & 0 & 0 & 0 & 1 & 1 & 0 & 0 & 0 \\
				0 & 1 & 1 & 0 & 0 & 0 & 0 & 0 & 0 & 0 & 0 \\
				0 & 1 & 0 & 0 & 1 & 0 & 0 & 0 & 0 & 0 & 0 \\
				0 & 1 & 0 & 0 & 0 & 0 & 1 & 0 & 0 & 0 & 1 \\
				0 & 0 & 1 & 0 & 0 & 0 & 0 & -2 & 0 & 0 & 0 \\
				0 & 0 & 0 & 1 & 0 & 0 & 0 & 0 & -2 & 1 & 0 \\
				0 & 0 & 0 & 0 & 1 & 0 & 0 & 0 & 1 & -3 & 0 \\
				0 & 0 & 0 & 0 & 0 & 1 & 0 & 0 & 0 & 0 & -2 
			\end{array}
			\right]
			\quad\mbox{and}\quad
			\left[
\begin{array}{@{}*{11}{r}@{}}
	1 & 0 & 0 & 1 & 0 & 0 & 0 & 0 & 0 & 0 & 0 \\
	1 & 0 & 0 & 0 & 0 & 1 & 0 & 0 & 0 & 0 & 0 \\	
	1 & 0 & 0 & 0 & 1 & 0 & 1 & 1 & 0 & 0 & 0 \\
	0 & 1 & 1 & 0 & 0 & 0 & 0 & 0 & 0 & 0 & 0 \\
	0 & 1 & 0 & 0 & 1 & 0 & 0 & 0 & 0 & 0 & 0 \\
	0 & 1 & 0 & 0 & 0 & 0 & 1 & 0 & 0 & 0 & 1 \\
	0 & 0 & 1 & 0 & 0 & 0 & 0 & -2 & 0 & 0 & 0 \\
	0 & 0 & 0 & 1 & 0 & 0 & 0 & 0 & -2 & 0 & 0 \\
	0 & 0 & 0 & 0 & 0 & 1 & 0 & 0 & 0 & -2 & 1 \\
	0 & 0 & 0 & 0 & 0 & 0 & 0 & 0 & 0 & 1 & -2 
\end{array}
\right], 
	\end{equation*}
for $i=1$ and $2$, respectively. A direct computation shows that its cokernel has order $3$ and $1$, respectively.
\end{proof}

\subsection{The list of singularity types}\label{sec:w=3_list}

In this section we complete the proof of Theorem \ref{thm:ht=3} in case $\width(\bar{X})=3$. 
We keep the assumption \eqref{eq:assumption_ht=3} and Notation \ref{not:phi_+_H}. Our aim is to get the minimal log resolution $(X,D)$ of $\bar{X}$ by reconstructing a sequence of elementary vertical swaps $\tau_{+}\colon (X,D)\sqto (X_{\tau},D_{\tau})$, where $(X_{\tau},D_{\tau})$ is as in Lemma \ref{lem:w=3_swaps}\ref{item:w=3_additional-base-point}, see Figure \ref{fig:w=3_swaps}. 

Lemma \ref{lem:w=3_uniqueness}\ref{item:w=3_no_C1} below, which we will often use without further  comment, asserts that the center of each elementary swap is one of the finitely many common points of the boundary and vertical $(-1)$-curves. To reconstruct $(X,D)$ we blow up over each of these points and repeat this as long as $D$ is a sum of admissible chains and forks and the inequality \eqref{eq:ld-bound_ht=3} holds. The resulting list is given in Lemma \ref{lem:w=3}.

\begin{lemma}[Reconstructing the minimalization $\psi$ step by step]\label{lem:w=3_uniqueness}
	Let $\bar{X}$ be a del Pezzo surface of rank one, height $3$ and width $3$. Let $\psi=\gamma\circ \gamma_{+}$ be as in Notation \ref{not:phi_+_H}. Then the following hold.
	\begin{enumerate}
		\item\label{item:ld-bound_ht=3}The log surface $(X_{\gamma},D_{\gamma})$ is a minimal resolution of an lt del Pezzo surface $\bar{X}_{\gamma}$ of rank one. In particular, 
				\begin{equation}\label{eq:ld-bound_ht=3}
					\ld(H_{1}^{\gamma})+\ld(H_{2}^{\gamma})+\ld(H_{3}^{\gamma})>1.
				\end{equation}
		\item \label{item:w=3_no_C1} 
		Every base point of $\gamma_{+}^{-1}$ is a common point of $D_{\gamma}\reg$ and some vertical $(-1)$-curve.
		\item \label{item:w=3_AD=2} Every vertical $(-1)$-curve meets $D_{\gamma}$  in two points, normally.
		\item \label{item:w=3_Sing=2} The divisor $D_{\gamma}$ has exactly two connected components (equivalently, $\#\Sing \bar{X}_{\gamma}=2$). 
		\item \label{item:w=3_no-loops} If a vertical $(-1)$-curve contains a base point of $\gamma_{+}^{-1}$ then it meets both connected components of $D_{\gamma}$.  
		\item \label{item:ht=3_uniqueness} The isomorphism class of the log surface $(X_{\gamma},\check{D}_{\gamma})$ is uniquely determined by the weighted graph of $\check{D}_{\gamma}$.
		\item \label{item:ht=3_h1} We have $h^{i}(\lts{X_{\gamma}}{D_{\gamma}})=0$ for all $i\geq 0$.
	\end{enumerate}
\end{lemma}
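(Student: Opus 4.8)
The plan is to prove the seven items roughly in order, the first being a direct application of the cascade machinery, the structural items \ref{item:w=3_no_C1}--\ref{item:w=3_no-loops} following from the degenerate-fiber analysis together with an induction on the length of $\gamma_+$, and the last two items being consequences of these. For \ref{item:ld-bound_ht=3} I would observe that $\gamma_+$ is a composition of elementary vertical swaps issuing from the minimal log resolution $(X,D)$ of a log terminal del Pezzo surface of rank one, so iterating Lemma \ref{lem:cascades}\ref{item:cascades-still-dP} shows that $(X_\gamma,D_\gamma)$ is again the minimal log resolution of a log terminal del Pezzo surface $\bar{X}_\gamma$ of rank one. Vertical swaps do not touch $D\hor$, so $(D_\gamma)\hor=H_1^\gamma+H_2^\gamma+H_3^\gamma$ still consists of three $1$-sections and a general fiber satisfies $D_\gamma\cdot F=3$. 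Feeding $h=3$, $H_j^\gamma\cdot F=1$, $D_\gamma\cdot F=3$ into the del Pezzo criterion Lemma \ref{lem:delPezzo_criterion} turns ampleness of $-K_{\bar{X}_\gamma}$ into exactly the displayed inequality.

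The structural statements I would extract from the degenerate-fiber structure of the height-$3$ fibration $p_\gamma$ on $(X_\gamma,D_\gamma)$, whose horizontal part is three $1$-sections. Applying Lemma \ref{lem:fibrations-Sigma-chi} and the classification of degenerate fibers in Lemma \ref{lem:w=3_basics}, together with Lemma \ref{lem:no-rivet} which rules out the rivet fiber, one sees that each vertical $(-1)$-curve $A$ is non-branching in its fiber and meets $D_\gamma$ in exactly two transverse points, which is \ref{item:w=3_AD=2}. Since $\Bs\gamma_+^{-1}\subseteq\check{D}_\gamma-D_\gamma$ by Notation \ref{not:phi_+_H}, every base point of $\gamma_+^{-1}$ lies on such an $A$, hence at one of these two intersection points; the snc property of $\check{D}_\gamma$ forbids a triple point $A\cap C'\cap C''$, so the base point is a smooth point of $D_\gamma$, giving \ref{item:w=3_no_C1}. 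For \ref{item:w=3_Sing=2} and \ref{item:w=3_no-loops} I would induct on $\rho(\gamma)-\rho(\tau)$: the base case $\gamma=\tau$ is read off the four configurations in Figure \ref{fig:w=3_swaps}, where $D_\tau$ has two connected components, and each reverse swap preserves the component count while turning $A$ into a boundary $(-2)$-curve. That every swap center sits on a $(-1)$-curve bridging the two components is forced by the requirement that the boundary remain a disjoint union of admissible chains and forks: a $(-1)$-curve joining a single component to itself would, upon re-entering the boundary, produce a circuit or an over-long twig, contradicting the no-circular-subdivisor property and Lemma \ref{lem:admissible_forks}.

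The uniqueness statement \ref{item:ht=3_uniqueness} is the conceptual heart and, I expect, the main obstacle. By \ref{item:w=3_no_C1}--\ref{item:w=3_no-loops} every center of a reverse swap is one of the finitely many transverse points where a vertical $(-1)$-curve meets $D_\gamma$, and at each stage the weighted graph of $\check{D}_\gamma$ — which records both the boundary and the vertical $(-1)$-curves — must pin down which of these points is blown up. Starting from the primitive model, which is unique in the width $3$ cases by Proposition \ref{prop:primitive}\ref{item:primitive-uniqueness-Y}, the universal property of blowing up then propagates uniqueness up the whole cascade. I would stress that it is genuinely the graph of $\check{D}_\gamma$, not of $D_\gamma$, that is required: Example \ref{ex:ht=3_pair} exhibits two non-isomorphic del Pezzo surfaces with identical weighted graphs of $D$ but different fibration combinatorics, so the decoration by the vertical $(-1)$-curves is precisely the extra data that rigidifies the reconstruction, and the delicate point is to verify that this data always determines the center uniquely.

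For the vanishing \ref{item:ht=3_h1} I would run the deformation-theoretic computation of Proposition \ref{prop:primitive}\ref{item:primitive-hi} along the cascade. The width $3$ primitive models of Example \ref{ex:w=3} have $\height=2$ by Proposition \ref{prop:primitive}\ref{item:primitive-ht}, hence $h^i(\lts{Y}{D_Y})=0$ for all $i$ by Proposition \ref{prop:primitive}\ref{item:primitive-hi}. For the inductive step, reversing an elementary swap amounts to blowing up the node $q=A\cap C'$ of the boundary $D_\gamma+A$, an inner blowup in the sense of \cite[Lemma 2.11]{PaPe_ht_2}; since both the old $(-1)$-curve $A$ and the new exceptional $(-1)$-curve $E$ have normal bundle $\cO_{\P^1}(-1)$ and are therefore acyclic, the normal-bundle exact sequences of \cite[Lemma 2.12]{PaPe_ht_2} relating $\lts{}{D}$ to $\lts{}{D+A}$ and $\lts{X_{\gamma'}}{D_{\gamma'}}$ to $\lts{X_{\gamma'}}{D_{\gamma'}+E}$ give $h^i(\lts{X_{\gamma'}}{D_{\gamma'}})=h^i(\lts{X_\gamma}{D_\gamma})$ for all $i$. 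Thus the vanishing is preserved at each step, and $h^i(\lts{X_\gamma}{D_\gamma})=0$ for all $i\geq 0$.
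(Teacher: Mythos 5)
Your treatment of items \ref{item:ld-bound_ht=3}, \ref{item:w=3_Sing=2}, \ref{item:w=3_no-loops}, \ref{item:ht=3_uniqueness} and \ref{item:ht=3_h1} matches the paper's strategy (cascade lemma plus del Pezzo criterion; induction from the base case $\gamma=\tau$; inner blowups, universal property, and uniqueness of the primitive model; invariance of $h^i(\lts{}{})$ under inner blowups). But there is a genuine gap at the crucial item \ref{item:w=3_no_C1}, and it propagates to everything that you make depend on it. Notation \ref{not:phi_+_H} only gives $\Bs\gamma_{+}^{-1}\subseteq \check{D}_{\gamma}-D_{\gamma}$, i.e.\ base points lie on the vertical $(-1)$-curves $A$; your ``hence at one of these two intersection points'' silently upgrades this to ``base points lie on $A\cap D_{\gamma}$,'' which is exactly the statement to be proved. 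A reverse elementary swap may a priori be centered at a point of $A\setminus D_{\gamma}$ (this happens precisely when the contracted $(-1)$-curve $L$ satisfies $L\cdot D=1$, which the definition of a vertical swap allows), and ruling this out is the hard content of \ref{item:w=3_no_C1}. The paper spends most of its proof on this: it supposes $r\in A\setminus D_{\gamma}$, reduces to the case $\gamma=\tau$ via the log-discrepancy monotonicity of Lemma \ref{lem:Alexeev}, and then eliminates each of the four configurations of Figure \ref{fig:w=3_swaps} using the no-circular-subdivisor property and the constraints on admissible forks from Lemma \ref{lem:admissible_forks}. Your proposal contains no substitute for this argument; the snc remark only excludes base points at nodes of $D_{\gamma}$, which is the easy part.

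A secondary problem is your derivation of \ref{item:w=3_AD=2} from Lemmas \ref{lem:fibrations-Sigma-chi}, \ref{lem:w=3_basics} and \ref{lem:no-rivet}: the latter two are proved under assumption \eqref{eq:assumption_ht=3}, which includes $\height(\bar{X})=3$, whereas the intermediate surfaces $\bar{X}_{\gamma}$ in the cascade can have height $2$ (the primitive models of Example \ref{ex:w=3} do, by Proposition \ref{prop:primitive}\ref{item:primitive-ht}), so these lemmas do not apply to $(X_{\gamma},D_{\gamma},p_{\gamma})$ as stated. The paper avoids this by proving \ref{item:w=3_AD=2}--\ref{item:w=3_no-loops} in the opposite logical order: first \ref{item:w=3_no_C1} (by the contradiction argument above), then \ref{item:w=3_AD=2}--\ref{item:w=3_no-loops} by induction on $\rho(\gamma)-\rho(\tau)$ from the base case read off Figure \ref{fig:w=3_swaps}. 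To repair your proposal you would need to restore that order and supply the missing exclusion of outer blowups.
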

\begin{proof}
	\ref{item:ld-bound_ht=3} The first statement follows from Lemma \ref{lem:cascades}\ref{item:cascades-still-dP}, the inequality \eqref{eq:ld-bound_ht=3} follows from  Lemma \ref{lem:delPezzo_criterion}.
	
	\ref{item:w=3_no_C1} Let $r\in \Bs\gamma_{+}^{-1}$. Since $\gamma_{+}$ is a vertical swap, $r$ lies on some vertical $(-1)$-curve $A=A_{\star}^{\gamma}$, see Notation \ref{not:phi_+_H}. In particular, $r\not\in \Sing D_{\gamma}$. Suppose $r\in A\setminus D_{\gamma}$. Let $\eta$ be a composition of $\gamma$ with a blowup at $r$. We will get a contradiction with the fact that $D_{\eta}$ is a sum of admissible chains and forks.
	
	If $\tilde{\eta}$ is a composition of $\tau$ with a blowup at $A_{\star}^{\tau}\setminus D_{\tau}$, then the weighted graph of  $D_{\tilde{\eta}}$ is a weighted subgraph of the one of $D_{\eta}$, so by Lemma \ref{lem:Alexeev} we can assume $\gamma=\tau$. We have 
	$A\cdot D_{\tau}=2$, see Figure \ref{fig:w=3_swaps}. Since $D_{\eta}$ has no circular subdivisor, $A$ meets both connected components of $D_{\tau}$; call them $U_{1}$, $U_{2}$. Let $U_{j}'$ be the component of $U_{j}$ meeting $A$, $j=1,2$. We have $D_{\eta}\cong D_{\tau}+A$, so $D_{\eta}$ is connected. Hence $D_{\eta}$ has at most one branching component. It follows that, say, $U_{1}$ is a chain and $U_{1}'$ is its tip. Suppose $U_{2}$ is a fork, i.e.\ case \ref{lem:w=3_swaps}\ref{item:swap-to_2A4-v} holds, see Figure \ref{fig:swap-to_2A4-v}. Then $U_{2}'$ is a tip of $U_{2}$, too, so $A=A_{23}^{\tau}$. But then the twigs of $D_{\eta}$ containing $V_{2}^{\eta}$ and $V_{3}^{\eta}$ have length $6$ and $2$, respectively, contrary to Lemma \ref{lem:admissible_forks}\ref{item:long-twig}. Thus $U_{2}$ is a chain. We check directly, see Figure \ref{fig:w=3_swaps}, that $U_{2}'$ is not a tip of $U_2$. It follows that $D_{\eta}$ is a fork with twigs of lengths $(\#U_{1}+1,\#U_{2}-k,k-1)$ for some $2\leq k\leq \frac{1}{2}\#U_2$. By Lemma \ref{lem:admissible_forks}\ref{item:has_-2} one of these twigs is $[2]$, so $k=2$. The remaining twigs have lengths $(2,6)$, $(3,5)$, and $(5,2)$ in cases \ref{item:swap-to_A1+A2+A5_rivet}, \ref{item:swap-to_A1+A2+A5_no-rivet}, and \ref{item:swap-to_2A4-b}, respectively, see Figure \ref{fig:w=3_swaps}. This contradicts Lemma \ref{lem:admissible_forks}\ref{item:long-twig}.
	
	\ref{item:w=3_AD=2}, \ref{item:w=3_Sing=2}, \ref{item:w=3_no-loops} For $\gamma=\tau$ parts \ref{item:w=3_AD=2}, \ref{item:w=3_Sing=2} are clear, see Figure \ref{fig:w=3_swaps}, and \ref{item:w=3_no-loops} holds because otherwise $D$ would have a circular subdivisor. In general, \ref{item:w=3_AD=2}--\ref{item:w=3_no-loops} follow from \ref{item:w=3_no_C1} by induction.

	\ref{item:ht=3_uniqueness} This follows from the uniqueness of $(\P^1\times \P^1,B)$ and from \ref{item:w=3_no_C1}. 
	More precisely, let $\check{\cS}_{\gamma}$ be the combinatorial type of $(X_{\gamma},\check{D}_{\gamma})$. Proposition \ref{prop:primitive}\ref{item:primitive-uniqueness} gives $\#\cP_{+}(\check{\cS}_{\phi})=1$. 
	By \ref{item:w=3_no_C1} the morphism $(X_{\gamma},\check{D}_{\gamma})\to (X_{\phi},\check{D}_{\phi})$ is inner, so $\#\cP_{+}(\check{\cS}_{\gamma})=1$ by the universal property of blowing up, 
	cf.\ \cite[Lemma 2.18]{PaPe_ht_2}, as needed.
	
	\ref{item:ht=3_h1} Since $\psi\colon (X_{\gamma},\check{D}_{\gamma})\to (\P^1\times \P^1,B)$ is inner, by \cite[Lemma 1.5(4)]{FZ-deformations} we have $h^{i}(\lts{X_{\gamma}}{D_{\gamma}})=h^{i}(\lts{\P^1\times \P^1}{B})=0$ for all $i$, see \cite[Lemmas 2.11(a),(b) and 2.12(d.iii)]{PaPe_ht_2}.
\end{proof}

We are now ready to list all singularity types $\cS$ of del Pezzo surfaces of rank one, height 3 and width 3 having no descendant with elliptic boundary, together with the structure of some witnessing $\P^1$-fibration $p\colon X\to \P^1$. 

We use notation summarized in Section \ref{sec:notation}. That is, we write each $\cS$ as a sum of rational chains $[a_1,\dots,a_n]$ and forks $\langle b;T_1,T_2,T_3\rangle$ which are connected components of $D$. To describe $p$ we put in boldface the numbers corresponding to horizontal components of $D$, and we add a superscript $\dec{j}$ to components meeting the $j$-th vertical $(-1)$-curve. The list of singularity types $\cS$ without decorations is given in Table \ref{table:ht=3_char=0} (top part). 

Recall that $\Pht^{\width=3}(\cS)$ is the set of isomorphism classes of del Pezzo surfaces of rank 1, height 3, and width 3.

\setcounter{claim}{0}
\begin{lemma}[Classification, case $\width=3$, see Table \ref{table:ht=3_char=0}]\label{lem:w=3}
	Let $\cS$ be a singularity type of a log terminal surface. Let $\bar{X}$ be a del Pezzo surface of rank $1$, height $3$, width $3$ and type $\cS$, i.e.\ $\bar{X}\in \Pht^{\width=3}(\cS)$. 
Assume that $\bar{X}$ has no descendant with elliptic boundary. 
	Then $\cS$ is listed below, and the following hold.
	\begin{parts}
		\item\label{item:w=3-uniqueness} Either $\#\Pht^{\width=3}(\cS)=1$, i.e.\ $\bar{X}$ is unique up to an isomorphism, or $\cS$ is as in Example \ref{ex:ht=3_pair} and ${\#\Pht^{\width=3}(\cS)=2}$.
		\item\label{item:w=3-classification} The minimal log resolution $(X,D)$ of $\bar{X}$ admits a $\P^1$-fibration $p$ such that $\bar{X}$ swaps vertically to one of the canonical surfaces $\bar{Y}$ from Example \ref{ex:w=3}, and the combinatorial type of $(X,D,p)$ is one of the following. 
	\end{parts} 
	\begin{enumerate}[itemsep=0.6em]
		\item\label{item:swap-to_A1+A2+A5_rivet-list} $\bar{Y}$ is of type $\rA_{1}+\rA_{2}+\rA_{5}$, see Example \ref{ex:w=3}\ref{item:ht=3_A1+A2+A5}; case \ref{lem:w=3_swaps}\ref{item:swap-to_A1+A2+A5_rivet} holds, and $(X,D,p)$ is one of the following.
	\begin{longlist}
		\item\label{item:rivet_A} $[2\dec{1},\bs{2}\dec{5},k\dec{2},\bs{2}\dec{4},2\dec{1},\bs{2}\dec{3},2\dec{5},2\dec{4}]+\ldec{2}[(2)_{k-2},3]\dec{3}$, $k\geq 3$ (for $k=3$ we get $\bar{X}_1$ from Example \ref{ex:ht=3_pair}),
		\item\label{item:rivet_AC} $[2\dec{1},\bs{2}\dec{5},3\dec{2},\bs{3}\dec{4},2\dec{1},\bs{2}\dec{3},2\dec{5},2,2\dec{4}]+\ldec{2}[2,3]\dec{3}$,	
		\item\label{item:rivet_0} $\ldec{1}[(2)_{k-1},\bs{2}\dec{5},2\dec{2},\bs{2}\dec{4},k\dec{1},\bs{2}\dec{3},2\dec{5},2\dec{4}]+[3]\dec{2,3}$, $k\in \{3,4\}$, 
	\end{longlist}\setcounter{foo}{\value{longlisti}}
	\item\label{item:swap-to_A1+A2+A5_no-rivet-list} $\bar{Y}$ is of type $\rA_{1}+\rA_{2}+\rA_{5}$, see Example \ref{ex:w=3}\ref{item:ht=3_A1+A2+A5}; case \ref{lem:w=3_swaps}\ref{item:swap-to_A1+A2+A5_no-rivet} holds, and $(X,D,p)$ is one of the following:
	\begin{longlist}\setcounter{longlisti}{\value{foo}}	
		\item\label{item:nu_3=1_c2} $
		[2\dec{4},2\dec{5},\bs{k}\dec{2},2\dec{1},\bs{2}\dec{4},2\dec{3},(2)_{k-1}]\dec{2}
		+[2\ldec{1},\bs{3}\dec{3,5}]$, $k\in \{3,4,5\}$ (for $k=3$ we get $\bar{X}_2$ from Example~\ref{ex:ht=3_pair}),
		\item\label{item:nu_3=1_e1} $
		[2\dec{4},2\dec{5},\bs{3}\dec{2},k\dec{1},\bs{2}\dec{4},2\dec{3},2,2\dec{2}]
		+\ldec{1}[(2)_{k-1},\bs{3}\dec{3,5}]$, $k\in \{3,4\}$,
		\item\label{item:nu_3=1_e2} $
		[2\dec{4},k\dec{5},\bs{3}\dec{2},2\dec{1},\bs{2}\dec{4},2\dec{3},2,2\dec{2}]
		+\ldec{1}[2,\bs{3}\dec{3},(2)_{k-2}]\dec{5}$, $k\in \{3,4\}$,
		\item\label{item:nu_3=1_s1} $
		[2\dec{4},k\dec{5},\bs{2}\dec{2},2\dec{1},\bs{2}\dec{4},2\dec{3},2\dec{2}]
		+\ldec{1}[2,\bs{3}\dec{3},(2)_{k-2}]\dec{5}$, $k\geq 3$,
		\item\label{item:nu_3=1_s3} $
		[2\dec{4},2\dec{5},\bs{2}\dec{2},k\dec{1},\bs{2}\dec{4},2\dec{3},2\dec{2}]
		+\ldec{1}[(2)_{k-1},\bs{3}\dec{3,5}]$, $k\geq 3$,
		\item\label{item:nu=3_C} $
		[2\dec{4},3\dec{5},\bs{2}\dec{2},3\dec{1},\bs{2}\dec{4},2\dec{3},2\dec{2}]
		+\ldec{1}[2,2,\bs{3}\dec{3},2]\dec{5}$,
		\item\label{item:nu=3_fork_s1=1} $\langle k,[2]\dec{4},[2]\dec{5},[2\dec{2},2\dec{3},\bs{2}\dec{4},2\dec{1},\bs{2}\dec{2}]\rangle+\ldec{1}[2,\bs{3}\dec{3},(2)_{k-3},3\dec{5}]$, $k\geq 3$,
		\setcounter{foo}{\value{longlisti}}
	\end{longlist}
	\item\label{item:swap-to_2A4-v_list} $\bar{Y}$ is of type $2\rA_4$, see Example \ref{ex:w=3}\ref{item:ht=3_2A4}; case \ref{lem:w=3_swaps}\ref{item:swap-to_2A4-v} holds, and $(X,D,p)$ is one of the following: 
	\begin{longlist}\setcounter{longlisti}{\value{foo}}	
		\item\label{item:both_B} $[2\dec{5},\bs{2}\dec{3},2\dec{1},\bs{3}\dec{4},3\dec{2,3}]+\langle \bs{2},[2]\dec{1},[2]\dec{2},\ldec{4}[2,2,3]\dec{5}\rangle$,
		\item\label{item:ht=3_YG} $[\bs{2}\dec{3,5},2\dec{1},\bs{3}\dec{4},3\dec{3},2\dec{2}]+
		\langle \bs{2};[2]\dec{1},[3]\dec{2},\ldec{4}[2,2,2]\dec{5}\rangle$,
		\item\label{item:ht=3_YDYD} $\ldec{3}[(2)_{k-3},\bs{2}\dec{5},2\dec{1},\bs{2}\dec{4},k\dec{3},2\dec{2}] + \langle \bs{2};[2]\dec{1},[3]\dec{2},\ldec{4}[2,2]\dec{5}\rangle$, $k\in \{4,5,6\}$,
		\item\label{item:ht=3_XY_b=3_v} $[2\dec{3},\bs{2}\dec{5},2\dec{1},\bs{2}\dec{4},3\dec{3}]*(T^{*})\dec{2}+
		\langle \bs{2};[2]\dec{1},\ldec{2}T,\ldec{4}[2,2]\dec{5}\rangle$, $T\in \{[3,2],[2,3],[4],[5]\}$,
		\setcounter{foo}{\value{longlisti}}
		\end{longlist}
		\item\label{item:swap-to_2A4-b_list} $\bar{Y}$ is of type $2\rA_4$, see Example \ref{ex:w=3}\ref{item:ht=3_2A4}; case \ref{lem:w=3_swaps}\ref{item:swap-to_2A4-b} holds, and $(X,D,p)$ is one of the following. 
		\begin{longlist}\setcounter{longlisti}{\value{foo}}		
		\item\label{item:ht=3_XE} $\langle 2;[2]\dec{3},[2]\dec{2},[\bs{3}\dec{2,5},2\dec{1},\bs{2}\dec{4}]\rangle+
		[2\dec{1},\bs{3}\dec{3},2\dec{5},2\dec{4}]$	,
		\item\label{item:ht=3_XA_a=3_c=3} $\langle 3;[2]\dec{1},[\bs{2}]\dec{2,5},[2\dec{3},2,2\dec{2},\bs{2}\dec{4}]\rangle+
		[3\dec{1},2,\bs{4}\dec{3},2\dec{5},2\dec{4}]$,
		\item\label{item:ht=3_XA_c=2} $\langle k;[2]\dec{1},[\bs{2}]\dec{2,5},[2\dec{3},2\dec{2},\bs{2}\dec{4}]\rangle+
		[3\dec{1},(2)_{k-2},\bs{3}\dec{3},2\dec{5},2\dec{4}]$, $k\geq 3$,
		\item\label{item:ht=3_XAA_T=[2,2]} $\langle k;[2\dec{1},2],[\bs{2}]\dec{2,5},[2\dec{3},2\dec{2},\bs{2}\dec{4}]\rangle+
		[4\dec{1},(2)_{k-2},\bs{3}\dec{3},2\dec{5},2\dec{4}]$, $k\geq 3$,
		\item\label{item:ht=3_XAA_T=[3]} $\langle k;[3]\dec{1},[\bs{2}]\dec{2,5},[2\dec{3},2\dec{2},\bs{2}\dec{4}]\rangle+
		[2\dec{1},3,(2)_{k-2},\bs{3}\dec{3},2\dec{5},2\dec{4}]$, $k\geq 3$,
		\item\label{item:ht=3_XY_a=3} $\ldec{3}[3,(2)_{k-3},2\dec{2},\bs{2}\dec{4},3\dec{1},\bs{2}\dec{2,5}]+
		\langle \bs{k};\ldec{3}[2],\ldec{1}[2,2],\ldec{4}[2,2]\dec{5}\rangle$, $k\geq 3$,
		\item\label{item:ht=3_XY_b=3} $\ldec{3}T^{*}*[(2)_{k-2},3\dec{2},\bs{2}\dec{4},2\dec{1},\bs{2}\dec{5},2\dec{2}]+
		\langle \bs{k};\ldec{3}T\trp,\ldec{1}[2],\ldec{4}[2,2]\dec{5}\rangle$, $k\geq 3$, $d(T)\leq 5$,
		\item\label{item:ht=3_XY_b=4_T-2} $\ldec{3}[3,4\dec{2},\bs{2}\dec{4},2\dec{1},\bs{2}\dec{5},2,2\dec{2}]+
		\langle \bs{3};\ldec{3}[2],\ldec{1}[2],\ldec{4}[2,2]\dec{5}\rangle$,
		\item\label{item:ht=3_chains} $\ldec{3}[(2)_{c-1},b\dec{2},\bs{d}\dec{4},a\dec{1},\bs{2}\dec{5},(2)_{b-2}]\dec{2}+\ldec{1}[(2)_{a-1},\bs{c+1}\dec{3},2\dec{5},(2)_{d-1}]\dec{4}$, with $(a,b,c,d)$ as in Table \ref{table:abcd}.
		\end{longlist}
\end{enumerate}
\begin{small}	
\begin{table}[htbp]	\vspace{-0.5em}
	\begin{tabular}{r||c|c|c|c|c|c|c|c|c|c|c|c|c|c|c|c|c}
		$a$ & $[3,\infty)$ & $[6,8]$ & $[3,7]$ & $5$ & $4$ & $4$ & $[3,4]$ & $3$ & $3$ & $3$ & $2$ & $2$ & $2$ & $2$ & $2$ & $2$ & $2$ \\ \hline 
		$b$ & $2$ & $2$ & $2$ & $2$ & $2$ & $2$ & $3$ & $2$ &  $2$ &  $2$ &  $[4,7]$ & $5$ & $4$ & $4$ & $3$ & $3$ & $3$ \\ \hline 
		$c$ & $2$ & $3$ & $2$ & $[3,4]$ & $[3,7]$ & $2$ & $2$ & $[3,\infty)$ & $2$ & $4$ & $2$ & $3$ & $[3,5]$ & $3$ & $[3,\infty)$ & $3$ & $2$ \\ \hline 
		$d$ & $2$ & $2$  & $3$ & $2$ & $2$  & $4$ & $2$ & $2$ & $[4,7]$ & $3$ & $2$ & $2$ & $2$ & $3$ & $2$ & $3$ & $[2,6]$       
	\end{tabular}\vspace{-0.5em}
	\caption{Values of $a,b,c,d$ in Lemma \ref{lem:w=3}\ref{item:ht=3_chains}: they are solutions to \eqref{eq:ld-bound_ht=3} with $(a,b)\neq (2,2)$.}
		\label{table:abcd} \vspace{-1em}
\end{table}
\end{small}
\end{lemma}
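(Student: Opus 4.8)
The plan is to reconstruct the minimal log resolution $(X,D)$ by reversing the vertical swaps furnished by Lemma \ref{lem:w=3_swaps}. By that lemma we may assume the minimalization factors as $\psi=\tau\circ\tau_{+}$, where $\tau$ is the blowup at the points of Lemma \ref{lem:w=3_swaps}\ref{item:w=3_additional-base-point} and $(X_{\tau},D_{\tau})$ is one of the four configurations of Figure \ref{fig:w=3_swaps}, which swap down to the canonical surfaces $\bar{Y}$ of Example \ref{ex:w=3}. Following Notation \ref{not:phi_+_H}, I would then write $\psi=\gamma\circ\gamma_{+}$ and regard $(X,D)$ as obtained from $(X_{\tau},D_{\tau})$ by a chain of blowups over the vertical $(-1)$-curves $A^{\gamma}_{\star}$. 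Lemma \ref{lem:w=3_uniqueness}\ref{item:w=3_no_C1} guarantees that every such blowup is centered at a common point of $D_{\gamma}\reg$ and one of these finitely many curves, while parts \ref{item:w=3_AD=2}--\ref{item:w=3_no-loops} pin down the local geometry (each $A^{\gamma}_{\star}$ meets $D_{\gamma}$ in exactly two points, normally, $D_{\gamma}$ has exactly two connected components, and $A^{\gamma}_{\star}$ meets both of them once it carries a further center). Thus the reconstruction becomes a finite tree search starting from the four base configurations.

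The search is pruned by two constraints applied at every stage. First, log terminality forces $D$ to remain a disjoint union of admissible chains and forks, so Lemma \ref{lem:admissible_forks} bounds the twig lengths of each fork that appears; this is what collapses the a priori infinite search into finitely many families and is responsible for the bounded ranges such as $k\in\{3,4\}$. Second, by Lemma \ref{lem:w=3_uniqueness}\ref{item:ld-bound_ht=3} the ampleness inequality \eqref{eq:ld-bound_ht=3} must hold, and Lemma \ref{lem:cascades} shows that once it fails no further blowup yields a del Pezzo surface; this is the stopping criterion. I would compute $\ld(H_1)$, $\ld(H_2)$, $\ld(H_3)$ as explicit functions of the cascade parameters using Lemmas \ref{lem:discriminants} and \ref{lem:ld_formulas}, and read off the admissible ranges branch by branch; in the purely ``chain'' branch of case \ref{lem:w=3_swaps}\ref{item:swap-to_2A4-b} the solutions of \eqref{eq:ld-bound_ht=3} are exactly the quadruples $(a,b,c,d)$ recorded in Table \ref{table:abcd}. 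Finally, since every surface on the resulting list has all of its singularities non-canonical (each connected component of $D$ contains a curve of self-intersection at most $-3$), Lemma \ref{lem:no-deb} confirms that none has a descendant with elliptic boundary; the hypothesis therefore removes precisely the shorter cascades in which a canonical piece survives as an elliptic tie, which one detects via Lemma \ref{lem:tie}.

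For the uniqueness assertion \ref{item:w=3-uniqueness}, Lemma \ref{lem:w=3_uniqueness}\ref{item:ht=3_uniqueness} shows that each combinatorial type on the list determines $(X_{\gamma},\check{D}_{\gamma})$, and hence $\bar{X}$, up to isomorphism. It then remains to compare the underlying singularity types $\cS$ attached to distinct combinatorial types, and to check that this assignment is injective on isomorphism classes with a single exception. The exception is $\cS=[2,2,3,(2)_{5}]+[3,2]$, which arises both from item \ref{item:rivet_A} with $k=3$ and from item \ref{item:nu_3=1_c2} with $k=3$; these are the surfaces $\bar{X}_1,\bar{X}_2$ of Example \ref{ex:ht=3_pair}, which are non-isomorphic by Proposition \ref{prop:exception}\ref{item:exception_non-iso}, so $\#\Pht^{\width=3}(\cS)=2$. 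For every other type the combinatorial type is unique (or its several descriptions come from different witnessing fibrations of one and the same surface), giving $\#\Pht^{\width=3}(\cS)=1$.

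The main obstacle is the size and delicacy of the case analysis rather than any single conceptual step. One must organize the four cascades so that no admissible branch is missed and none is counted twice, propagate the decorations (which components are horizontal, and which meet which $A_j$) correctly through each blowup, and solve \eqref{eq:ld-bound_ht=3} exactly in every branch. The most error-prone point is separating genuinely new combinatorial types from re-descriptions of one surface under a different height-$3$ width-$3$ fibration, since it is exactly this subtlety that produces the anomalous count $\#\Pht^{\width=3}([2,2,3,(2)_{5}]+[3,2])=2$.
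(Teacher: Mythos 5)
Your overall architecture is the same as the paper's: start from the four configurations of Lemma \ref{lem:w=3_swaps}\ref{item:w=3_additional-base-point}, reconstruct $(X,D)$ by reversing elementary swaps whose centers are constrained by Lemma \ref{lem:w=3_uniqueness}\ref{item:w=3_no_C1}--\ref{item:w=3_no-loops}, prune by admissibility and by the ampleness inequality \eqref{eq:ld-bound_ht=3}, detect elliptic ties via Lemma \ref{lem:tie}, and settle uniqueness with Lemma \ref{lem:w=3_uniqueness}\ref{item:ht=3_uniqueness} together with Proposition \ref{prop:exception}\ref{item:exception_non-iso} for the exceptional type \eqref{eq:25}. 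Your reduction of part \ref{item:w=3-uniqueness} to part \ref{item:w=3-classification} (non-canonicity of all listed types plus Lemma \ref{lem:no-deb}, so that \ref{item:w=3-classification} applies to \emph{every} member of $\Pht^{\width=3}(\cS)$) is also the paper's argument.

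However, there is a genuine gap in your list of pruning constraints: admissibility, ampleness and the absence of elliptic ties are \emph{not} sufficient to cut the search down to the stated list. The paper also uses the hypothesis $\height(\bar{X})=3$ as an active constraint, by exhibiting explicit divisors whose pullbacks would support fibers of a $\P^1$-fibration of height $\leq 2$; this is what forces $\tau_{+}^{-1}$ to have base points in prescribed positions (conditions \eqref{eq:witnesses_iii} and \eqref{eq:witnesses_iv} in cases \ref{lem:w=3_swaps}\ref{item:swap-to_2A4-v}, \ref{item:swap-to_2A4-b}, and similar arguments elsewhere, e.g.\ the exclusion of $v_{23},h_{23}\notin\Bs\psi^{-1}$ at the end of the proof of Lemma \ref{lem:w=3_swaps}). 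Concretely, in case \ref{lem:w=3_swaps}\ref{item:swap-to_2A4-b} the configuration $(X_{\tau},D_{\tau})$ itself, i.e.\ the branch $\psi=\tau$, has $D_{\tau}=[(2)_5]+[2,3,2,2]$ with both $H_1,H_2$ lying in the canonical $\rA_5$-component: it is log terminal, the log discrepancies of $H_1,H_2$ equal $1$, so \eqref{eq:ld-bound_ht=3} holds trivially, and there is no elliptic tie obstruction — yet this surface has height $2$ (the pullback of $A_{23}^{\tau}+U_{\tau}+A_{32}^{\tau}=[1,(2)_5,1]$ is a fiber meeting $D$ twice) and is correctly absent from the list. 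Your tree search as described would retain it, and many similar intermediate configurations, as spurious entries; equivalently, your claim that the no-descendant hypothesis ``removes precisely the shorter cascades'' is inaccurate, since in cases \ref{lem:w=3_swaps}\ref{item:swap-to_2A4-v}, \ref{item:swap-to_2A4-b} the short cascades are removed by the height hypothesis, not by elliptic ties. A secondary, more organizational omission: the paper repeatedly normalizes the situation using lifted involutions of $(\P^1\times\P^1,B)$ and, in case \ref{lem:w=3_swaps}\ref{item:swap-to_2A4-b} (Claim \ref{cl:flipping}), by replacing $\psi$ with a genuinely different minimalization; without some such normalization your search produces the same surface under several decorations, and your final ``injectivity of $\cS$'' check is not a mere bookkeeping step but requires exactly these identifications.
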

\begin{proof} 
First, 
	we show that \ref{item:w=3-classification} implies \ref{item:w=3-uniqueness}, i.e.\ we compute the number $\#\Pht^{\width=3}(\cS)$ given the above list of combinatorial types of $(X,D,p)$. Fix a singularity type $\cS$ as in the above list. Since no singularity in $\cS$ is canonical, Lemma \ref{lem:no-deb} implies that no surface $\bar{X}\in \Pht^{\width=3}(\cS)$ has a descendant with elliptic boundary. Hence by \ref{item:w=3-classification}, the minimal log resolution $(X,D)$ of each $\bar{X}\in \Pht^{\width=3}(\cS)$ admits a $\P^1$-fibration $p$ as in the above list. We check directly that if $\cS\neq \mbox{\eqref{eq:25}}$ then $\cS$ appears exactly once on that list, so $\cS$ uniquely determines the combinatorial type $\check{\cS}$ of $(X,D,p)$. The exceptional type $\mbox{\eqref{eq:25}}$ from Example \ref{ex:ht=3_pair}  appears twice: in \ref{item:rivet_A} and in \ref{item:nu_3=1_c2} for $k=3$, so we get two combinatorial types $\check{\cS}$. In any case, by Lemma \ref{lem:w=3_uniqueness}\ref{item:ht=3_uniqueness} a type $\check{\cS}$ uniquely determines the isomorphism class of $\bar{X}$, so $\#\Pht^{\width=3}(\cS)\leq 1$ for $\cS\neq \mbox{\eqref{eq:25}}$ and $\#\Pht^{\width=3}(\mbox{\ref{eq:25}})\leq 2$. To see that the equalities hold, note that each $(X,D,p)$ in the above list exists and satisfies the inequality \eqref{eq:ld_phi_H}, so $(X,D)$ is a minimal resolution of a del Pezzo surface $\bar{X}$ of rank one (for $\cS=\mbox{\eqref{eq:25}}$ this was proved in Proposition \ref{prop:exception}\ref{item:exception_del-Pezzo}, the computation in other cases is analogous). Since type $\cS$ does not appear in the classification \cite{PaPe_ht_2} of del Pezzo surfaces of rank one and height at most $2$, we have 
	$\height(\bar{X})=3$ and $\width(\bar{X})=3$, i.e.\ $\bar{X}\in \Pht^{\width=3}(\cS)$. Thus $\#\Pht^{\width=3}(\cS)\geq 1$, and $\#\Pht^{\width=3}(\mbox{\ref{eq:25}})\geq 2$ by Proposition \ref{prop:exception}\ref{item:exception_non-iso}, as needed. 
	\smallskip
	
	Therefore, it remains to prove \ref{item:w=3-classification}, i.e.\ to verify the completeness of the above list of combinatorial types. We choose a witnessing $\P^1$-fibration $p$ as in Lemma \ref{lem:w=3_swaps} and keep Notation  \ref{not:phi_+_H}.	We consider each of the four cases of Lemma \ref{lem:w=3_swaps}\ref{item:w=3_additional-base-point} separately.
	
	\paragraph{Case \ref{lem:w=3_swaps}\ref{item:swap-to_A1+A2+A5_rivet}, see Figure \ref{fig:swaps_to_A1_A2_A5_rivet}.} We claim that for any decomposition $\psi= \gamma\circ \gamma_{+}$ as in Notation \ref{not:phi_+_H} we have
		\begin{equation}\label{eq:forks_i}
		A_h^{\gamma}\cap \Bs\gamma_{+}^{-1}\subseteq H_1^{\gamma},\quad
		A_{13}^{\gamma}\cap \Bs\gamma_{+}^{-1}\subseteq V_{1}^{\gamma},\quad
		A_{v}^{\gamma}\cap \Bs\gamma_{+}^{-1}\subseteq V_{2}^{\gamma},\quad 
		A_{32}^{\gamma}\cap \Bs\gamma_{+}^{-1}\subseteq H_{2}^{\gamma}.
	\end{equation}

	Suppose the first inclusion fails. Then by Lemma \ref{lem:w=3_uniqueness}\ref{item:w=3_no_C1}, the point $A_h^{\gamma}\cap (D_{\gamma}-H_{1}^{\gamma})$ is a base point of~$\gamma_{+}^{-1}$. Let $\eta$ be a composition of $\gamma$ with a blowup at this point. Then we can see from Figure \ref{fig:swaps_to_A1_A2_A5_rivet} that $H_{1}^{\eta}$ is branching in $D_{\eta}$, and the  twigs of $D_{\eta}$ meeting $H_{1}^{\eta}$ in $V_{1}^{\eta}$, $V_{3}^{\eta}$ have lengths at least $5$ and $2$, respectively. This is a contradiction with Lemma \ref{lem:admissible_forks}\ref{item:long-twig}. We leave an analogous verification of the remaining parts of \eqref{eq:forks_i} to the reader.
	\smallskip

	Suppose $\psi=\tau$, so $D$ is as in Figure \ref{fig:swaps_to_A1_A2_A5_rivet}. 
	Let $\delta\colon \P^{1}\times \P^{1}\map \P^2$ be a blowup at $p_{21}$ followed by the contraction of the proper transforms of $H_{1}$ and $V_{2}$. Let $L\subseteq X$ be the proper transform of the conic tangent to $\delta(V_{1})$, $\delta(H_{2})$ at $\delta(p_{13})$, $\delta(p_{32})$ and passing through $\delta(p_{33})$. Then $L$ meets $D$ only in the $(-3)$-curve, twice, so $L$ is an elliptic tie and $\bar{X}$ has a descendant with elliptic boundary, a contradiction. Thus $\psi=\tau\circ \tau_{+}$ for some $\tau_{+}\neq \id$. 
	\smallskip
	 	
	We will use an involution of $(\P^1\times \P^1,B)$ which maps $(V_1,V_2,V_3)$ to $(H_2,H_1,H_3)$. In the notation of Lemma \ref{lem:w=3_swaps}, it maps $(p_{13},v_{13},p_{21},h_{21},p_{32},h_{32},p_{33},v_{21})$ to $(p_{32},h_{32},p_{21},v_{21},p_{13},v_{13},p_{33},h_{21})$. Therefore, it lifts to an involution $\iota\in \Aut(X_{\tau},D_{\tau})$ such that:
	\begin{equation*}
		\iota(A_{33})=A_{33},\ \iota(V_3)=H_3,\quad
		\iota(A_h)=A_v,\quad
		\iota(A_{13})=A_{32}.
	\end{equation*}
	
	Fix $q\in \Bs\tau_{+}^{-1}$. By Lemma \ref{lem:w=3_uniqueness}\ref{item:w=3_no_C1}, $q$ is a common point of $D_{\tau}$ and some vertical $(-1)$-curve. Suppose $q\in A_{33}$. Applying $\iota$ if necessary, we can assume $q\in H_{3}$. Let $\gamma$ be a composition of $\tau$ with a blowup  at $q$. Then $D_{\gamma}=\langle 2;[2],[2],[2,\bs{3},2,\bs{2},2,\bs{2}]\rangle+[3]$. Here, the branching component is $V_3^{\gamma}$, and the subsequent bold numbers correspond to $H_3^{\gamma}$, $H_{2}^{\gamma}$ and $H_{1}^{\gamma}$ (recall from Section \ref{sec:notation} that the bold numbers always correspond to the horizontal components). Using Lemma \ref{lem:ld_formulas}, we compute $\ld(H_3^{\gamma})=\frac{1}{17}(\frac{1}{3}\cdot 2+5)=\frac{1}{3}$, $\ld(H_{2}^{\gamma})=\frac{1}{17}(\frac{1}{3}\cdot 8+3)=\frac{1}{3}$, $\ld(H_{1}^{\gamma})=\frac{1}{17}(\frac{1}{3}\cdot 14+1)=\frac{1}{3}$, so $\sum_{i=1}^{3}\ld(H_{i}^{\gamma})=1$,  a contradiction with inequality  \eqref{eq:ld-bound_ht=3}.
	\smallskip
	
	Consider the case $\psi(q)=p_{21}$, i.e.\ $q\in A_{v}\cup A_h$. Applying $\iota$ if necessary, we can assume $q\in A_{v}$. Write $\tau_{+}=\check{\gamma}\circ\gamma_{+}$, $\gamma=\tau\circ\check{\gamma}$, where $\Bs\check{\gamma}^{-1}\subseteq A_{v}$ and $\Bs\gamma_{+}^{-1}\cap A_{v}^{\gamma}=\emptyset$. Put $k=\rho(\check{\gamma})+2\geq 3$. By the third assertion in \eqref{eq:forks_i}, 
	$\check{\gamma}$ is a composition of $k-2$ blowups centered on the proper transforms of $V_{2}$. Now $D_{\gamma}=[2,\bs{2},k,\bs{2},2,\bs{2},2,2]+[(2)_{k-2},3]$. If $\gamma_{+}=\id$ then $D$ is as in \ref{item:rivet_A}.
	Assume $\gamma_{+}\neq \id$, and take $r\in \Bs\gamma^{-1}_{+}$. We claim that $r\in A_{32}$. Suppose the contrary. We have shown that $r\not \in A_{33}$, and $r\not\in A_{v}$ by the definition of $\gamma_{+}$, so $r\in A_{13}\cup A_h$. By \eqref{eq:forks_i}, we have $\{r\}=A_{13}\cap V_{1}$ or $\{r\}=A_h\cap H_{1}$. Let $\eta$ be a composition of $\gamma$ with a blowup at $r$.
	To get a contradiction with the inequality \eqref{eq:ld-bound_ht=3} we can assume $k=3$: indeed, we first  preform $k-3$ elementary vertical swaps over $v_{13}$ (beginning with a swap of $A_{v}^{\eta}$), and only then we apply \eqref{eq:ld-bound_ht=3}. Now for $k=3$ we have   $D_{\eta}=[2,2,\bs{2},3,\bs{2},3,\bs{2},2,2]+[2,3]$ or $[2,\bs{2},3,\bs{2},2,\bs{3},2,2]+[2,3,2]$, so log discrepancies of its horizontal components (referred to by bold numbers) add up to one. This gives the required contradiction with  \eqref{eq:ld-bound_ht=3}.
	
	Thus $\gamma_{+}$ is a sequence of $l-2\geq 1$ blowups over $A_{32}$. By \eqref{eq:forks_i} all these blowups are centered on the proper transforms of $H_2$. Thus $D=[2,\bs{2},k,\bs{l},2,\bs{2},(2)_{l}]+[(2)_{k-2},3]$. Now \eqref{eq:ld-bound_ht=3} gives $k=l=3$, so $D$ is as in \ref{item:rivet_AC}.
	\smallskip
	
	It remains to consider the case when $\tau_{+}^{-1}$ has no base point over $p_{21}$, so $q\in\Bs\tau_{+}^{-1}\subseteq A_{13}\cup A_{32}$. Applying~$\iota$ we can take $q\in A_{13}$. As before, write $\tau_{+}=\check{\gamma} \circ \gamma_{+}$, $\gamma=\tau\circ \check{\gamma}$, where $\Bs\check{\gamma}^{-1}\subseteq A_{13}$ and $A_{13}^{\gamma}\cap \Bs\gamma^{-1}_{+}=\emptyset$. Put $k=\rho(\check{\gamma})+2\geq 3$. By \eqref{eq:forks_i}, each blowup in the decomposition of $\check{\gamma}$ is centered on a proper transform of $V_{1}$. Hence $D_{\gamma}=[(2)_{k-1},\bs{2},2,\bs{2},k,\bs{2},2,2]+[3]$. Inequality \eqref{eq:ld-bound_ht=3} gives  $k\leq 4$, so if $\gamma_{+}=\id$ then $D$ is as in \ref{item:rivet_0}. Suppose $\gamma_{+}\neq \id$. Condition \eqref{eq:forks_i} and definition of $\gamma$ imply that $\gamma_{+}$ is a sequence of blowups over $A_{32}\cap H_{2}$. Now it suffices to show that \eqref{eq:ld-bound_ht=3} fails for $k=3$, $\rho(\gamma_{+})=1$. In this case, we have $D=[2,2,\bs{2},2,\bs{3},3,\bs{2},2,2,2]+[3]$; so $\sum_{i=1}^{3}\ld(H_{i})=1$; a contradiction, as needed.
	
	
	\paragraph{Case \ref{lem:w=3_swaps}\ref{item:swap-to_A1+A2+A5_no-rivet}, see Figure \ref{fig:swaps_to_A1_A2_A5_no-rivet}.} As before, the fact that each connected component of $D$ is an admissible chain or an admissible fork implies that for any decomposition $\psi=\gamma\circ \gamma_{+}$ as in Notation \ref{not:phi_+_H} we have:
	\begin{equation}\label{eq:forks_ii}
		A_{13}^{\gamma}\cap \Bs \gamma_{+}^{-1}\subseteq V_{1}^{\gamma},\quad
		A_{21}^{\gamma}\cap \Bs\gamma_{+}^{-1}\subseteq H_{1}^{\gamma},\quad
		A_{32}^{\gamma}\cap \Bs\gamma_{+}^{-1}\subseteq H_{2}^{\gamma}.
	\end{equation}
	If $\psi=\tau$ then as in Case \ref{lem:w=3_swaps}\ref{item:swap-to_A1+A2+A5_rivet} we conclude that $\bar{X}$ has a descendant with elliptic boundary. Indeed, let $\delta\colon \P^1\times \P^1\map \P^2$ be a blowup at $p_{13}$, followed by the contraction of the proper transforms of $V_{1}$, $H_{3}$. Now the required elliptic tie is a proper transform of the conic tangent to $\delta(H_{1})$, $\delta(H_{2})$ at $\delta(p_{21})$, $\delta(p_{32})$ and passing through $\delta(H_3)$. 
	Thus $\psi=\tau\circ\tau_{+}$ for some $\tau_{+}\neq \id$. Let $r$ be a base point of $\tau_{+}^{-1}$. 

	\begin{casesp}
	\litem{$r\in A_{21}$}\label{item:A21} Write $\tau_{+}=\check{\upsilon}\circ\upsilon_{+}$, $\upsilon=\tau\circ\check{\upsilon}$,  where $\Bs\check{\upsilon}\subseteq A_{21}$ and $A_{21}^{\upsilon}\cap \Bs\upsilon_{+}^{-1}=\emptyset$. Put $k=\rho(\check{\upsilon})+2\geq 3$. Condition \eqref{eq:forks_ii} implies that all blowups in the decomposition of $\check{\upsilon}$ are centered on the proper transforms of $H_1$. Now $D_{\upsilon}=[2,2,\bs{k},2,\bs{2},(2)_{k}]+[2,\bs{3}]$, so inequality \eqref{eq:ld-bound_ht=3} gives $k\leq 5$. If $\psi=\upsilon$ then  $D$ is as in \ref{item:nu_3=1_c2}. Assume $\psi\neq \upsilon$ and let $\gamma$ be a composition of $\upsilon$ with a blowup at some $q\in \Bs\upsilon_{+}^{-1}$. We claim that
	\begin{equation}\label{eq:forks_ii_bis}
		A_{23}^{\upsilon}\cap \Bs\upsilon_{+}^{-1}\subseteq H_{3}^{\upsilon},\quad
		A_{32}^{\upsilon}\cap \Bs\upsilon_{+}^{-1}=\emptyset, \quad 
		A_{33}^{\upsilon}\cap \Bs\upsilon_{+}^{-1}\subseteq V_{3}^{\upsilon}. 
	\end{equation}
	Suppose one of the above conditions fails. Using Lemma \ref{lem:w=3_uniqueness}\ref{item:w=3_no_C1} and condition \eqref{eq:forks_ii}, we can take the base point $q\in \Bs\upsilon_{+}^{-1}$ to be one of the points $A_{23}\cap V_{2}$, $A_{32}\cap H_{2}$ or  $A_{33}\cap H_{3}$. Then for $k=3$ we get $D_{\gamma}=[2,2,\bs{3},2,\bs{2},3,2,2]+[2,\bs{3},2]$,  
	$[2,2,2,\bs{3},2,\bs{3},2,2,2]+[2,\bs{3}]$ or 
	$\langle 2;[2],[2],[2,2,2,\bs{2},2,\bs{3}]\rangle+[2,\bs{4}]$; respectively. In any case, $\sum_{i=1}^{3}\ld(H_{i}^{\gamma})=1$, so inequality  \eqref{eq:ld-bound_ht=3} fails for $k=3$. Since we can reduce any $\gamma$ with $k\geq 3$ to one with $k=3$ by a vertical swap, we get a contradiction with Lemma \ref{lem:w=3_uniqueness}\ref{item:ld-bound_ht=3}. This proves \eqref{eq:forks_ii_bis}.
	
	By the definition of $\upsilon$, we have $q\not\in A_{21}$. If $q\in A_{23}$ then $\{q\}=A_{23}\cap H_{3}$ by \eqref{eq:forks_ii_bis}, so $V_{2}^{\gamma}$ is branching in a non-admissible fork in $D_{\gamma}$, which is impossible. Thus $q\not\in A_{21}\cup A_{23}$. Combining this with \eqref{eq:forks_ii} and \eqref{eq:forks_ii_bis} we infer that $\Bs\upsilon_{+}^{-1}\subseteq (A_{13}\cap V_{1})\cup (A_{33}\cap V_{3})$. Thus $\{q\}=A_{j3}\cap V_{j}$ for some $j\in \{1,3\}$. 
	
	If $k=4$ then $D_{\gamma}=[(2)_{4},\bs{2},3,\bs{4},2,2]+[2,2,\bs{3}]$ if $j=1$ and   $D_{\gamma}=[(2)_{4},\bs{2},2,\bs{4},3,2]+[2,\bs{3},2]$ if $j=3$. Again, $\sum_{i=1}^{3}\ld(H_{i}^{\gamma})=1$, so \eqref{eq:ld-bound_ht=3} fails for $k=4$. As before, reducing any case with $k\geq 4$ to one with $k=4$ via a vertical swap, we infer that $k=3$.
	
	Write $\gamma_{+}=\check{\eta}\circ\eta_{+}$, $\eta=\check{\eta}\circ\gamma$, where $\check{\eta}$ is a composition of $l+2\geq 3$ blowups at $q$ and its infinitely near points on the proper transforms of $V_{j}$, such that $A_{j3}^{\eta}\cap V_{j}^{\eta}\not\subseteq \Bs\eta_{+}^{-1}$. Then $D_{\eta}=[2,2,\bs{3},l,\bs{2},2,2,2]+[(2)_{l-1},\bs{3}]$ if $j=1$ and $[2,l,\bs{3},2,\bs{2},2,2,2]+[2,\bs{3},(2)_{l-2}]$ if $j=3$. By \eqref{eq:ld-bound_ht=3}, $l\in \{3,4\}$. If $\eta_{+}=\id$ then $D$ is as in \ref{item:nu_3=1_e1} or \ref{item:nu_3=1_e2}. 
	
	Suppose $\eta_{+}\neq \id$, and let $\theta$ be a composition of $\eta$ with a blowup at some $s\in\Bs\eta_{+}^{-1}$. We have $s\in A_{13}^{\eta}\cup A_{33}^{\eta}$ because $\Bs\upsilon_{+}^{-1}\subseteq A_{13}^{\upsilon}\cup A_{33}^{\upsilon}$; and by the definition of $\eta$ we have $s\not\in V_j$. Thus either $\{s\}=A_{i3}\cap V_{i}$ for $\{i,j\}=\{1,3\}$; or $\{s\}=A_{j3}\cap(D_{\eta}-V_{j})$. Suppose $\{s\}=A_{i3}\cap V_{i}$, so $s$ is not infinitely near to $q$. To get a contradiction with inequality \eqref{eq:ld-bound_ht=3} we can assume $l=3$. Now $D_{\theta}=[2,3,\bs{3},3,\bs{2},2,2,2]+[2,2,\bs{3},2]$, so $\sum_{i}\ld(H_i)=\frac{592}{649}<1$; a contradiction with \eqref{eq:ld-bound_ht=3}. Thus $\{s\}=A_{j3}\cap(D_{\eta}-V_{j})$. Condition \eqref{eq:forks_ii} implies that $j\neq 1$, so $j=3$. Now $D_{\theta}=\langle l;[2],[2];[2,2,2,\bs{2},2,\bs{3}]\rangle+[2,\bs{3},(2)_{l-3},3]$; again contrary to \eqref{eq:ld-bound_ht=3}.
	
	\litem{$A_{21}\cap \Bs\tau_{+}^{-1}=\emptyset$} We will use an involution of $(\P^1\times \P^1,B)$ which maps  $(p_{13},p_{21},p_{32})$ to $(p_{13},p_{32},p_{21})$ and preserves the projections. Like before, it lifts to an involution $\iota\in \Aut(X_{\tau},D_{\tau})$ such that 
	\begin{equation*}
		\iota(A_{21})=A_{32},\quad  \iota(A_{23})=A_{33}. 
	\end{equation*}
	If $A_{32}\cap \Bs\tau_{+}^{-1}\neq \emptyset$ then applying $\iota$ we are back in Case \ref{item:A21} above, so we can assume $A_{32}\cap \Bs\tau_{+}^{-1}=\emptyset$.
	\smallskip
	
	Let again $r$ be a base point of $\tau_{+}^{-1}$. Suppose $\{r\}=A_{33}\cap H_{3}$. Let $\upsilon$ be a composition of $\tau$ with a blowup at $r$. If $\psi=\upsilon$ then $\bar{X}$ again has a descendant with elliptic boundary, with the same elliptic tie as in case $\psi=\tau$; a contradiction. Thus $\psi=\upsilon\circ \upsilon_{+}$ for some $\upsilon_{+}\neq \id$. Let $\gamma$ be a composition of $\upsilon$ with a blowup at $q\in \Bs\upsilon_{+}^{-1}$. We have $\{q\}\neq A_{23}\cap H_{3}$, since otherwise both $V_{2}^{\gamma}$ and $V_{3}^{\gamma}$ would be branching in the same connected component of $D_{\gamma}$, which is impossible. Similarly, $q\not \in A_{33}$, because otherwise $V_{3}^{\gamma}$ would be branching in a non-admissible fork. By assumption, $q\not\in A_{21}\cup A_{32}$, so condition \eqref{eq:forks_ii} shows that $\{q\}=A_{13}\cap V_{1}$ or  $A_{23}\cap V_{2}$. Now $D_{\gamma}=\langle 2;[2],[2],[2,2,\bs{2},3,\bs{2}]\rangle+[2,2,\bs{4}]$ or 
	$\langle 2;[2],[2],[2,3,\bs{2},2,\bs{2}]\rangle+[2,\bs{4},2]$ respectively, where, as usual, the bold numbers correspond to sections in $D_{\gamma}$. Their log discrepancies add up to one, a contradiction with \eqref{eq:ld-bound_ht=3}.  
	Thus $A_{33}\cap H_{3}\not\subseteq \Bs\tau_{+}^{-1}$, so $A_{33}\cap \Bs\tau_{+}^{-1}\subseteq V_{3}$. Applying $\iota$, we get $A_{j3}\cap \Bs\tau_{+}^{-1}\subseteq V_{j}$ for both $j\in \{2,3\}$. 
	\smallskip
	
	Consider the case $A_{33}\cap \Bs\tau_{+}^{-1}\neq \emptyset$. Take $r\in A_{33}\cap \Bs\tau_{+}^{-1}$, so $\{r\}=A_{33}\cap V_{3}$. Write $\psi=\upsilon\circ \upsilon_{+}$, where $\upsilon$ is a composition of $\tau$ with $k-2\geq 1$ blowups over $r$; and $V_{3}^{\upsilon}\cap \Bs\upsilon_{+}^{-1}=\emptyset$. If $\upsilon_{+}=\id$ then $D$ is as in \ref{item:nu_3=1_s1}. 
	
	Assume $\upsilon_{+}\neq \id$, fix $q\in \Bs\upsilon^{-1}$ and let $\gamma$ be a composition of $\upsilon$ with a blowup at $q$. Suppose $\{q\}=A_{23}\cap V_{2}$. Then for $k=3$ we get $D_{\gamma}=[2,3,\bs{2},2,\bs{2},3,2]+\langle \bs{3};[2],[2],[2]\rangle$, hence $\sum_{i}\ld(H_{i}^{\gamma})=1$. As before, we conclude that the inequality  \eqref{eq:ld-bound_ht=3} fails for any $k\geq 3$; a contradiction. 
	
	Hence $\Bs\upsilon_{+}^{-1}\subseteq A_{13}\cup A_{33}$. Assume $q\in A_{33}$, so $\{q\}=A_{33}\cap (D_{\upsilon}-V_{3})$ by the definition of $\upsilon$. If $\psi=\gamma$ then $D$ is as in \ref{item:nu=3_fork_s1=1}. Suppose $\gamma_{+}\neq \id$, and let $\eta_{+}$ be a composition of $\gamma_{+}$ with a blowup at some $s\in \Bs\gamma_{+}^{-1}$. We have $s\not\in A_{33}$, since otherwise $V_{3}^{\gamma}$ would meet a twig of $D_{\gamma}$ of length $5$ (the one containing $H_{1}$), and only one twig $[2]$; contrary to Lemma \ref{lem:admissible_forks}\ref{item:long-twig}. Thus $s\in A_{13}$, so condition \eqref{eq:forks_ii} gives $\{s\}=A_{13}\cap V_{1}$. Now $D_{\eta}=\langle k;[2],[2],[2,2,\bs{2},3,\bs{2}]\rangle+[2,2,\bs{3},(2)_{k-3},3]$; a contradiction with \eqref{eq:ld-bound_ht=3}.
	
	Thus we can assume $\Bs\upsilon_{+}^{-1}\subseteq A_{13}$. Condition \eqref{eq:forks_ii} implies that $\upsilon_{+}$ is a composition of $l-2\geq 1$ blowups on the proper transforms of $V_{1}$, so $D=[2,k,\bs{2},l,\bs{2},2,2]+[(2)_{l-1},\bs{3},(2)_{k-2}]$. By \eqref{eq:ld-bound_ht=3}, $k=l=3$, so $D$ is as in \ref{item:nu=3_C}. 
	\smallskip
	
	Eventually, assume $A_{33}\cap \Bs\tau_{+}^{-1}=\emptyset$. Applying $\iota$, we can assume $A_{23}\cap\Bs\tau_{+}^{-1}=\emptyset$, too, so $\Bs\tau_{+}^{-1}\subseteq A_{13}$. By \eqref{eq:forks_ii}, $\tau_{+}$ is a composition of some $k-2\geq 1$ blowups on the proper transforms of $V_{1}$. Thus $D$ is as in \ref{item:nu_3=1_s3}.
\end{casesp}
	

	\paragraph{Case \ref{lem:w=3_swaps}\ref{item:swap-to_2A4-v}, see Figure \ref{fig:swap-to_2A4-v}.} For any decomposition $\psi=\gamma\circ \gamma_{+}$ as in Notation \ref{not:phi_+_H}, let $U_{\gamma}$ and  $W_{\gamma}$ be the connected component of $D_{\gamma}$ containing $H_{1}^{\gamma}+H_{2}^{\gamma}$ and $H_{3}^{\gamma}$, respectively, cf.\ Lemma \ref{lem:w=3_uniqueness}\ref{item:w=3_Sing=2}. 
	We have 
	\begin{equation}\label{eq:witnesses_iii}
		\Bs\tau_{+}^{-1}\cap (A_{13}^{\tau}\cup A_{23}^{\tau}\cup A_{32}^{\tau})\neq \emptyset
		\quad\mbox{and}\quad
		\Bs\tau_{+}^{-1}\cap (A_{13}^{\tau}\cup A_{22}^{\tau}\cup A_{31}^{\tau})\neq \emptyset.
	\end{equation}
	Indeed, if the first condition fails then the pullback of $W_{\tau}+A_{31}^{\tau}=\langle 2;[2],[2],[1,2,2]\rangle$ to $X$ supports a fiber of a $\P^1$-fibration of height $2$ with respect to $D$; and if the second one fails then so does $T+A_{23}^{\tau}+U_{\tau}+A_{32}^{\tau}=[2,1,3,2,2,2,1]$, where $T$ is the $(-2)$-tip of $W_{\tau}$ meeting $A_{23}^{\tau}$. This proves \ref{eq:witnesses_iii}.
	\smallskip
	
	Assume that $\tau_{+}^{-1}$ has a base point $q\in A_{32}$. Then $q\in V_{3}$, because otherwise both $V_{3}^{\psi}$ and $H_{3}^{\psi}$ would be branching in $W^{\psi}$, which is impossible. Condition \eqref{eq:witnesses_iii} implies that $\tau_{+}^{-1}$ has another base point $q'\in A_{13}\cup A_{22}\cup A_{31}$. Let $\upsilon$ be a composition of $\tau$ with a blowup at $q$ and $q'$. 
	
	Suppose $q'\in A_{13}\cup A_{22}$. Then $D_{\upsilon}$ equals: 
	$\langle \bs{2};[2],[2],[4,\bs{2},2]\rangle +\langle \bs{2};[2],[2],[2,3]\rangle$ if $\{q'\}=A_{22}\cap V_{2}$; 
	$[2,\bs{3},2,\bs{2},3,2]+\langle \bs{2};[2],[2],[2,3]\rangle$ if $\{q'\}=A_{22}\cap H_{2}$;  
	$[2,\bs{2},3,\bs{2},3]+\langle \bs{2};[2],[2,2],[2,3]\rangle$ if $\{q'\}=A_{13}\cap V_1$; and 
	$\langle 2;[2],[2,\bs{2}],[3,\bs{2}]\rangle+\langle \bs{2};[2],[3],[2,3]\rangle$ if $\{q'\}=A_{13}\cap W_{\tau}$. As usual, bold numbers correspond to sections in $D_{\upsilon}$. Their log discrepancies add up to one or, in the latter case, to $\tfrac{122}{161}<1$; a contradiction with inequality \eqref{eq:ld-bound_ht=3}. 
	
	Thus $q'\in A_{31}$. If $q'\in W_{\tau}$ then $D_{\upsilon}=\langle \bs{2};[2],[2,\bs{2},2],[3]\rangle+\langle \bs{2};[2],[2],[3,3]\rangle$, so once again we get $\sum_{i}\ld(H_i^{\upsilon})=1$, contrary to \eqref{eq:ld_phi_H}. We conclude that $\{q'\}=A_{31}\cap U_{\tau}$, so $D_{\upsilon}=[2,\bs{2},2,\bs{3},3]+\langle \bs{2};[2],[2],[2,2,3]\rangle$. 
	
	If $\psi=\upsilon$ then $D$ is as in \ref{item:both_B}. Suppose $\psi=\upsilon\circ\upsilon_{+}$,  $\upsilon_{+}\neq \id$. Fix $r\in \Bs\upsilon_{+}^{-1}$ and let $\gamma$ be a composition of $\upsilon$ with a blowup at $r$. We have shown that $r\not\in A_{13}\cup A_{22}$, so $r\in A_{23}\cup A_{31}\cup A_{32}$. If $r\in A_{23}$ then $W_{\gamma}$ is a non-admissible fork, which is impossible. If $r\in A_{32}$ then since $V_3^{\gamma}$ cannot become branching in $W_{\gamma}$, we have $\{r\}=A_{32}\cap V_{3}$, so  $D_{\gamma}=[2,2,\bs{2},2,\bs{3},3]+\langle \bs{2};[2],[2],[2,2,4]\rangle$ and $\sum_{i}\ld(H_{i}^{\gamma})=1$, contrary to \eqref{eq:ld-bound_ht=3}. Thus $r\in A_{31}$. We have $D_{\gamma}=[2,\bs{2},2,\bs{4},3]+\langle \bs{2};[2],[2],[2,2,2,3]\rangle$ if $r\in H_{1}$ and $D_{\gamma}=\langle \bs{3};[2],[3],[2,\bs{2},2]\rangle+\langle \bs{2};[2],[2],[3,2,3]\rangle$ otherwise; so $\sum_{i}\ld(H_{i}^{\gamma})=1$ or $\tfrac{87}{119}<1$, respectively; a contradiction with \eqref{eq:ld-bound_ht=3}.
	\smallskip
	
	Thus we can assume $\Bs\tau_{+}^{-1}\cap A_{32}=\emptyset$. By condition \eqref{eq:witnesses_iii}, $\tau_{+}^{-1}$ has a base point $q\in A_{j3}$ for some $j\in \{1,2\}$. 
	The twigs of $W_{\psi}$ meeting $A_{j3}^{\psi}$ and $A_{31}^{\psi}$ are not of type $[2]$, so the remaining one is, i.e.\   $\Bs\tau_{+}^{-1}\cap A_{i3}=\emptyset$ for $\{i,j\}=\{1,2\}$. Moreover, $W_{\psi}$ is not a $(-2)$-fork, as otherwise $A_{22}^{\psi}$ would be an elliptic tie.
	
	\begin{casesp}
	\litem{$q\in U_{\tau}$} After a blowup at $q$ we have $W=\langle \bs{2},[2],[2,2],[2,2]\rangle$, so since $W_{\psi}$ is not a $(-2)$-fork, $\tau_{+}^{-1}$ has a base point $q'$ which  either is $A_{31}^{\tau}\cap W_{\tau}$, or is infinitely near to $q$, on the proper transform of $W_{\tau}$. Write $\psi=\gamma\circ \gamma_{+}$, where $\gamma$ is a composition of $\tau$ with a blowup at $q$ and $q'$. Then $W^{\gamma}=\langle \bs{2},[2],[2,2],[3,2]\rangle$, and  since $W^{\psi}$ is an admissible fork, $\gamma_{+}$ is an isomorphism near $W^{\psi}$, i.e.\ $\Bs\gamma_{+}\subseteq A_{22}^{\gamma}$. The inequality \eqref{eq:ld-bound_ht=3}  gives 
	\begin{equation}\label{eq:67}\ld(H_{1})+\ld(H_2)>1-\ld(H_3)=\tfrac{6}{7}.\end{equation}  
	
	Suppose $q\in A_{13}$. Then $U_{\gamma}=\langle \bs{2};[2],[3],[\bs{2},3]\rangle$ if $q'\in A_{31}$ and $\langle 3;[\bs{2}],[2],[3,\bs{2}]\rangle$ otherwise, so $\ld(H_1^{\gamma})+\ld(H_{2}^{\gamma})$ equals $\frac{15}{23}<\frac{6}{7}$ or $\tfrac{6}{7}$, contrary to \eqref{eq:67}. Hence $q\in A_{23}$. Like before, if $q'\in A_{31}$ then $U_{\gamma}=\langle \bs{2};[2],[4],[\bs{2},2]\rangle$, so $\ld(H_{1}^{\gamma})+\ld(H_{2}^{\gamma})=\tfrac{6}{7}$; a contradiction with \eqref{eq:67}. Thus $q'$ is infinitely near to $q$, and $U_{\gamma}=[2,4,\bs{2},2,\bs{2}]$. By \eqref{eq:witnesses_iii}, $\gamma_{+}^{-1}$ has a base point $s\in A_{22}^{\gamma}$. If $s\in H_{2}$ then after a blowup at $s$ we get $U=\langle 4;[2],[2],[\bs{3},2,\bs{2}]\rangle$, so $\ld(H_1)+\ld(H_2)=\frac{5}{8}<\frac{6}{7}$, contrary to \eqref{eq:67}. Hence $s\in V_{2}$. Write $\psi=\eta\circ\eta_{+}$, where $\eta$ is a composition of $\gamma$ with $k-4\geq 1$ blowups at $s$ and its infinitely near points on the proper transform of $V_2$ and $\Bs\eta_{+}^{-1}\cap V_{2}^{\eta}=\emptyset$. Then $U_{\eta}=[2,k,\bs{2},2,\bs{2},(2)_{k-4}]$, and \eqref{eq:67} implies that $k=5$. If $\eta_{+}=\id$ then $D$ is as in \ref{item:ht=3_XY_b=3_v} for $T=[3,2]$. Suppose  $\eta_{+}\neq \id$. Then the unique base point of $\eta_{+}^{-1}$ is $A_{22}^{\eta}\cap (U_{\eta}-V_{2}^{\eta})$. After a blowup at this point we get $U=\langle 5;[2],[2],[3,\bs{2},2,\bs{2}]\rangle$, so $\ld(H_{1})+\ld(H_{2})=\tfrac{14}{29}<\tfrac{6}{7}$; a contradiction with \eqref{eq:67}.
	
	\litem{$\Bs\tau_{+}^{-1}\cap U_{\tau}\subseteq A_{22}^{\tau}\cup A_{31}^{\tau}$} Write $\psi=\upsilon\circ \upsilon_{+}$, where $\upsilon$ is a composition of $\tau$ with a blowup at $q\in W_{\tau}$. 
	
	Suppose $q\in A_{13}$, so $\Bs\tau_{+}^{-1}\cap A_{23}=\emptyset$. If $\upsilon_{+}=\id$ then denoting by $T$ the component of $W_{\psi}$ meeting $A_{23}$ we see that the pullback of $U_{\psi}+A_{23}+T=\langle 2;[2],[2],[2,1,3,2]\rangle$ to $X$ supports a fiber of a $\P^1$-fibration of height $2$ with respect to $D$, which is impossible. Thus $\upsilon_{+}^{-1}\neq \id$. Let $\gamma$ be a composition of $\upsilon$ with a blowup at $q'\in \Bs\upsilon_{+}^{-1}$. We have $q'\not\in A_{22}$ by the admissibility of $U_{\gamma}$. 
	Suppose $q'\in A_{31}$. Then $q'\in H_1$, as otherwise both $V_{1}^{\gamma}$ and $H_{1}^{\gamma}$ would be branching in $U_{\gamma}$. Now $D_{\gamma}=\langle 2;[2],[3,\bs{3}],[\bs{2}]\rangle + \langle \bs{2};[2],[3],[2,2,2]\rangle$, so $\sum_{i}\ld(H_{i}^{\gamma})=1$; contrary to \eqref{eq:ld-bound_ht=3}.  
	Thus $q'\in A_{13}$, so $D_{\eta}$ equals  
	$\langle 2;[3],[3,\bs{2}],[\bs{2}]\rangle+\langle \bs{2};[2],[2,3],[2,2]\rangle$ if $q'\in U_{\upsilon}$, or
	$\langle 2;[2,2],[3,\bs{2}],[\bs{2}]\rangle+\langle \bs{2};[2],[4],[2,2]\rangle$ if $q'\in W_{\upsilon}$. Now $\sum_{i}\ld(H_{i}^{\gamma})=\tfrac{186}{221}$ or $1$, respectively, contrary to \eqref{eq:ld-bound_ht=3}.
	\smallskip
	
	We conclude that $\{q\}=A_{23}\cap W_{\tau}$; hence $A_{13}\cap\Bs\tau_{+}^{-1}=\emptyset$. Condition \eqref{eq:witnesses_iii} implies that $\tau_{+}$ has another base point $q'\in A_{22}\cup A_{31}$. Let $\gamma$ be a composition of $\tau$ with a  blowup at $q,q'$. If $\{q'\}=A_{31}\cap W_{\tau}$ then $D_{\gamma}=\langle \bs{2};[2],[2,3],[\bs{2},2]\rangle+\langle \bs{2};[2],[3],[3,2]\rangle$, so $\sum_{i}\ld(H_{i}^{\gamma})=\tfrac{183}{221}<1$, contrary to \eqref{eq:ld-bound_ht=3}. Similarly, if $\{q'\}=A_{22}\cap H_{2}$ then $D_{\gamma}=\langle 3;[2],[2],[\bs{3},2,\bs{2}]\rangle+\langle \bs{2};[2],[3],[2,2]\rangle$ and $\sum_{i}\ld(H_{i}^{\gamma})=1$, again contrary to \eqref{eq:ld-bound_ht=3}.

	Thus $q'$ equals $A_{31}\cap H_{1}$ or $A_{22}\cap V_{2}$. Consider the first case. Then $W_{\gamma}=\langle \bs{2},[2],[3],[2,2,2]\rangle$. If $\psi=\gamma$ then $D$ is as in \ref{item:ht=3_YG}. Suppose $\psi\neq \gamma$, and let $\eta$ be a composition of $\gamma$ with a blowup at some $s\in \Bs\gamma_{+}^{-1}$. Since $W_{\eta}$ is an admissible fork, we have $\{s\}=A_{31}\cap H_1$ or $s\in A_{22}$, and in the latter case we have shown that in fact $\{s\}= A_{22}\cap V_{2}$. Now $D_{\eta}=[2,3,\bs{4},2,\bs{2}]+\langle \bs{2},[2],[3],[2,2,2,2]\rangle$ or $[2,4,\bs{3},2,\bs{2},2]+\langle\bs{2},[2],[3],[2,2,2]\rangle$, respectively. In any case, $\sum_{i}\ld(H_{i}^{\eta})=1$; contrary to \eqref{eq:ld-bound_ht=3}. 
	
	It remains to consider the case $\{q'\}=A_{22}\cap V_{2}$, $\Bs\gamma_{+}^{-1}\subseteq A_{22}\cup A_{23}$. Write $\psi=\eta\circ\eta_{+}$, where $\eta$ is a composition of $\gamma$ with $k-3\geq 1$ blowups at $q'$ and its infinitely near points on the proper transforms of $V_{2}$, and $\Bs\eta_{+}^{-1}\cap V_{2}^{\eta}\cap A_{22}^{\eta}=\emptyset$. Now $D_{\eta}=[2,k,\bs{2},2,\bs{2},(2)_{k-3}]+\langle\bs{2};[2],[3],[2,2]\rangle$. Inequality \eqref{eq:ld-bound_ht=3} gives $k\leq 6$. 
	
	If $\psi=\eta$ then $D$ is as in \ref{item:ht=3_YDYD}. Assume $\psi\neq \eta$. Take $s\in \Bs\eta^{-1}$ and let $\theta$ be a composition of $\eta$ with a blowup at $s$. Recall that $s\in A_{22}\cup A_{23}$. If $s\in A_{22}$ then $s\not\in V_{2}^{\eta}$ by the definition of $\eta$, so $D_{\theta}=\langle k;[2],[2],[3,(2)_{k-4},\bs{2},2,\bs{2}]\rangle+\langle\bs{2};[2],[3],[2,2]\rangle$ and $\sum_{i}\ld(H_{i}^{\theta})=\tfrac{2}{k}+\tfrac{1}{3}<1$; contrary to \eqref{eq:ld-bound_ht=3}. Hence $\Bs\eta_{+}^{-1}\subseteq A_{23}$. If $s\in U_{\eta}$ then $D_{\theta}=[3,k,\bs{2},2,\bs{2},(2)_{k-3}]+\langle\bs{2};[2],[2,3],[2,2]\rangle$, so $k=4$ by \eqref{eq:ld-bound_ht=3}, and $\psi=\theta$ since $W_{\psi}$ is admissible. Hence $D$ is as in \ref{item:ht=3_XY_b=3_v} for $T=[2,3]$. Similarly, if $s\in W_{\eta}$ then by the admissibility of $W$ we get $D=[(2)_{l-2},k,\bs{2},2,\bs{2},(2)_{k-3}]+\langle\bs{2};[2],[l],[2,2]\rangle$, where $l=\rho(\eta_{+})+3\in \{4,5\}$. By \eqref{eq:ld-bound_ht=3} we get $k=4$ again, so $D$ is as in \ref{item:ht=3_XY_b=3_v} for $T=[l]$.
	\end{casesp}

	\paragraph{Case \ref{lem:w=3_swaps}\ref{item:swap-to_2A4-b}, see Figure \ref{fig:swap-to_2A4-b}.}
	Like before, for any decomposition $\psi=\gamma\circ\gamma_{+}$ as in Notation \ref{not:phi_+_H} we denote by $U_{\gamma}$, $W_{\gamma}$ the connected components of $D_{\gamma}$ containing $H_{1}^{\gamma}+H_{2}^{\gamma}$ and $H_{3}^{\gamma}$, respectively. We have 
	\begin{equation}\label{eq:witnesses_iv}
		\Bs\tau_{+}^{-1}\cap (A_{13}^{\tau}\cup A_{22}^{\tau}\cup A_{31}^{\tau})\neq \emptyset
	\end{equation}
	since otherwise the pullback of $A_{23}^{\tau}+U_{\tau}+A_{32}^{\tau}=[1,(2)_{5},1]$ to $X$ would induce a $\P^1$-fibration of height $2$ with respect to $D$, which is impossible. Having considered cases \ref{lem:w=3_swaps}\ref{item:swap-to_A1+A2+A5_rivet}--\ref{item:swap-to_2A4-v} above, we now can and will assume that any morphism $\psi\colon X\to \P^1\times \P^1$ as in Lemma \ref{lem:w=3_swaps}\ref{item:w=3_additional-base-point} falls into the current case \ref{lem:w=3_swaps}\ref{item:swap-to_2A4-b}.
	
	Denote by $C$ the proper transform of the curve of type $(1,1)$ on $\P^1\times \P^1$ passing through $p_{13},p_{22},$ and $p_{31}$.

\begin{claim}\label{cl:A32_a}
	We have $\Bs\tau_{+}^{-1}\cap A_{32}^{\tau}= \emptyset$, or $\Bs\tau_{+}^{-1}\cap (A_{22}^{\tau}\cup A_{31}^{\tau})=\emptyset$, or $\Bs\tau_{+}^{-1}\cap (A_{22}^{\tau}\cup A_{13}^{\tau})=\emptyset$. 
\end{claim}
\begin{proof}
	Assume that $A_{32}$ contains a base point of $\tau_{+}^{-1}$, say $q$. If $q\in H_2$ then the composition of $\psi$ with the involution switching the factors of $\P^1\times \P^1$ (i.e.\ mapping $\bar{V}_i$ to $\bar{H}_i$ for $i=1,2,3$) is as in case \ref{lem:w=3_swaps}\ref{item:swap-to_2A4-v}, contrary to our assumption. Hence $\{q\}=V_{3}\cap A_{32}$. Condition \eqref{eq:witnesses_iv} shows that $A_{13} \cup A_{22}\cup A_{31}$ contains a base point of $\tau_{+}^{-1}$, say $r$. Let $\gamma$ be a composition of $\tau$ with a blowup at $q$ and $r$.
	
	Suppose $r\in A_{22}^{\tau}$. Switching the factors of $\P^1\times \P^1$, if needed, we can assume that $\{r\}=A_{22}\cap V_{2}$. Now $D_{\gamma}=\langle \bs{2},[2],[2],[2,3,\bs{2},2]\rangle+[2,\bs{3},3,2]$, so   $\sum_{i}\ld(H_{i}^{\gamma})=1$, contrary to \eqref{eq:ld-bound_ht=3}. 
	
	Thus $A_{22}\cap \Bs\tau_{+}^{-1}=\emptyset$, so $r\in A_{13}\cup A_{31}$. As before, switching the factors of $\P^1\times \P^1$, if needed, we can assume that $r\in A_{13}$. 
	Suppose $r\in W_{\tau}$. Then $U_{\gamma}=\langle 2;[2],[2,\bs{2}],[2,2,\bs{2}]\rangle$.  	 The admissibility of $U_{\psi}$ implies that either $\gamma_{+}=\id$ or $\gamma_{+}$ is a single blowup at $A_{23}\cap H_{3}$. Thus $W_{\psi}=[3,\bs{k},3,2]$ for some $k\in\{3,4\}$; $C\cdot D=2$ and $C$ meets both tips of $W_{\psi}$. Thus $C$ is an elliptic tie, so $\bar{X}$ has a descendant with elliptic boundary; a contradiction.  
	
	Hence $\{r\}=A_{13}\cap V_1$. Suppose the claim fails, so $\gamma_{+}^{-1}$ has a base point $s\in A_{31}^{\gamma}$. Let $\eta$ be a composition of $\gamma$ with a blowup at $s$. If $s\in W_{\gamma}$ then $U_{\eta}$ is a non-admissible fork $\langle \bs{2},[2],[2,2],[2,\bs{2},3]\rangle$, which is false. Hence $s\in U_{\gamma}$ and $D_{\eta}=[2,2,\bs{3},3,\bs{2},2]+[2,2,\bs{3},3,2,2]$, so  $\sum_{i}\ld(H_{i}^{\eta})=1$, a contradiction with the inequality \eqref{eq:ld-bound_ht=3}.
\end{proof}

\begin{claim}\label{cl:flipping}
	We can further assume that the following two conditions hold:
	\begin{enumerate}
		\item\label{item:A32} $\Bs\tau_{+}^{-1}\cap A_{32}^{\tau}=\emptyset$,
		\item\label{item:U} $\Bs\tau_{+}^{-1}\cap A_{22}^{\tau}\neq \emptyset$ or $A_{13}^{\tau}\cap V_1^{\tau}\subseteq \Bs\tau_{+}^{-1}$.
	\end{enumerate}
\end{claim}
\begin{proof}
	If $\Bs\tau_{+}^{-1}\cap A_{22}^{\tau} \neq \emptyset$ then part \ref{item:A32} holds by Claim \ref{cl:A32_a}, and part \ref{item:U} holds automatically. So we can assume $A_{22}^{\tau}\cap \Bs\tau_{+}^{-1}=\emptyset$. If $p_{ij}\in \Bs\psi^{-1}$, denote by $T_{ij}$ the proper transform of  the first exceptional curve over $p_{ij}$. 
	
	Consider the $\P^1$-fibration induced by the pencil of curves of type $(1,1)$ passing through $p_{22}$ and $p_{31}$. With respect to this new $\P^{1}$-fibration, we have 
	\begin{equation*}
		D_{\tau}=[2\dec{1},2,2\dec{2},\bs{2}\dec{3},2\dec{4}]+[2\dec{3,5},\bs{3}\dec{1},2\dec{4},\bs{2}\dec{2,5}].
	\end{equation*} 
		The bold numbers refer to the curves $V_1$, $H_3$ and $T_{31}$, which are now horizontal. Decorations $\dec{1},\dots,\dec{5}$ indicate the common points of $D_{\tau}$ with the $(-1)$-curves $A_{23}$, $A_{31}$, $A_{13}$,  $A_{32}$, and $C$: those curves are now vertical. Let $\upsilon\colon X_{\tau}\to \P^1\times\P^1$ be the contraction  of $(A_{23}+T_{23}+V_2)+A_{31}+A_{13}+(A_{32}+H_{2})+C$. Choose coordinates on the target $\P^1\times \P^1$ so that $\bar{H}_1=\upsilon(H_3)$, $\bar{H}_{2}=\upsilon(T_{31})$, $\bar{H}_{3}=\upsilon(V_1)$ and $\bar{V}_1=\upsilon(V_3)$, $\bar{V}_2=\upsilon(T_{13})$, $\bar{V}_{3}=\upsilon(H_1)$. Write $\upsilon=\hat{\phi}\circ\check{\upsilon}$, where $\check{\upsilon}\colon X_{\tau}\to X_{\hat{\phi}}$ is the contraction of $A_{23}$. Then $\hat{\phi}\colon X_{\hat{\phi}}\to \P^1\times \P^1$ is as in Example  \ref{ex:w=3}\ref{item:ht=3_2A4}, with the images of $T_{23}$, $A_{31}$, $A_{13}$, $A_{32}$ and $C$  playing the role of $A_{31}$, $A_{32}$, $A_{23}$, $A_{13}$ and $A_{22}$, respectively. 
		We note that $\iota\circ\hat{\phi}$ is as in  Example  \ref{ex:w=3}\ref{item:ht=3_2A4}, too, where $\iota$ is the involution mapping $\bar{V}_i$ to $\bar{H}_i$ for $i=1,2,3$. 
		
		Put  $\hat{\psi}=\upsilon \circ\tau_{+}$. By assumption $\Bs\tau_{+}^{-1}\cap A_{22}^{\tau} =\emptyset$, so \eqref{eq:witnesses_iv} shows that  $A_{13}^{\tau}\cup A_{31}^{\tau}$ contains a base point of $\tau_{+}^{-1}$, call it $q$. Put $\tilde{\psi}=\hat{\psi}$ if $q\in A_{13}^{\tau}$ and $\tilde{\psi}=\iota\circ\hat{\psi}$ if $q\in A_{31}^{\tau}$.  If $q\in W_{\tau}$ then $\tilde{\psi}$ is as in case \ref{lem:w=3_swaps}\ref{item:swap-to_2A4-v}, contrary to our assumption. Hence $q\in U_{\tau}$, so $\tilde{\psi}$ is again in the current case \ref{lem:w=3_swaps}\ref{item:swap-to_2A4-b}. If $q\in A_{31}^{\tau}$ then $\tilde{\psi}^{-1}(p_{13})$ has at least $3$ components, namely $A_{23}$, $T_{23}$, and $V_2$, so replacing $\psi$ by $\tilde{\psi}$, if needed, we can assume $q\in A_{13}^{\tau}$ (so $\tilde{\psi}=\hat{\psi}$). 
		
		If $\Bs\tau_{+}^{-1}\cap A_{32}^{\tau}\neq \emptyset$ then Claim \ref{cl:A32_a} implies that $\Bs\tau_{+}^{-1}\cap A_{31}^{\tau}=\emptyset$, so $\hat{\psi}^{-1}$ has no base points infinitely near to $p_{32}$. Thus replacing $\psi$ by $\hat{\psi}$, if needed, we can assume that $\Bs\tau_{+}^{-1}\cap A_{32}^{\tau}= \emptyset$, i.e.\ \ref{item:A32}  holds. Since $\{q\}=A_{13}^{\tau}\cap U_{\tau}=A_{13}^{\tau}\cap V_{1}^{\tau}$ we get that \ref{item:U} holds, too, as needed.
	\end{proof}
	
	Write $\psi=\gamma\circ\gamma_{+}$ in such a way that $U_{\gamma}$, $W_{\gamma}$ are chains and $\rho(\gamma)$ is maximal possible. Since $\Bs\tau_{+}^{-1}\cap A_{32}^{\tau}=\emptyset$ by Claim \ref{cl:flipping}\ref{item:A32}, the morphism $\gamma$ is a composition of one blowup at $p_{32}$, and,  for some $a,b,c,d\geq 2$, exactly  $a,b-1,c,d$ blowups at $p_{13}$, $p_{22}$, $p_{23}$, $p_{31}$ and their infinitely near points  on the proper transforms of $\bar{V}_{1}$, $\bar{V}_{2}$, $\bar{H}_{3}$, $\bar{H}_{1}$; respectively. Moreover, those proper transforms contain no base points of $\gamma_{+}^{-1}$. Now 
	\begin{equation*}
	D_{\gamma}=\ldec{3}[(2)_{c-1},b\dec{2},\bs{d}\dec{4},a\dec{1},\bs{2}\dec{5},(2)_{b-2}]\dec{2}+\ldec{1}[(2)_{a-1},\bs{c+1}\dec{3},2\dec{5},(2)_{d-1}]\dec{4}. 
	\end{equation*}
	where the bold numbers correspond to $H_1^{\gamma},H_2^{\gamma},H_3^{\gamma}$; and decorations $\dec{1},\dots,\dec{5}$ indicate the common points of $D_{\gamma}$ with $A_{13}^{\gamma}$, $A_{22}^{\gamma}$, $A_{23}^{\gamma}$, $A_{31}^{\gamma}$ and $A_{32}^{\gamma}$, respectively. For this decomposition $\psi=\gamma\circ\gamma_{+}$, the values of $(a,b,c,d)$ satisfying inequality \eqref{eq:ld-bound_ht=3} are listed in Table \ref{table:abcd}. If $\psi=\gamma$ then $(a,b)\neq (2,2)$ by Claim \ref{cl:flipping}\ref{item:U}, so $D$ is as in \ref{item:ht=3_chains}. 
	
	Thus we can assume that $\gamma_{+}^{-1}$ has a base point, say $r$. Let $\eta$ be a composition of $\gamma$ with a blowup at $r$.
	
	\begin{casesp}
	\litem{$r\in A_{22}^{\gamma}$} By the definition of $\gamma$ and by Lemma \ref{lem:w=3_uniqueness}\ref{item:w=3_no_C1}, we have $\{r\}=A_{22}^{\gamma}\cap (U_{\gamma}-V_2^{\gamma})$. Hence $U_{\eta}$ is a fork, with a branching component $V_{2}$. The twig $[3,(2)_{b-2},a,d]$ of $U_{\eta}$ containing $H_{1}$ has length at least $3$ and is not a $(-2)$-twig, so by Lemma \ref{lem:admissible_forks}, the remaining twigs are of type $[2]$. It follows that $c=2$ and $\Bs\eta_{+}^{-1}\cap (A_{22}\cup A_{23})=\emptyset$. We have $\Bs\eta_{+}^{-1}\cap A_{32}=\emptyset$ by Claim \ref{cl:flipping}\ref{item:A32}; and $\Bs\eta^{-1}_{+}\cap (A_{31}\cup A_{13})=\emptyset$ because otherwise $U$ would have two branching components. Thus $\eta_{+}=\id$. Suppose $b\geq 3$.  To get a contradiction with \eqref{eq:ld-bound_ht=3}, we can apply vertical swaps over $p_{13},p_{23}$ and assume $a=d=2$. Now $D=\langle b;[2],[2],[3,(2)_{b-3},\bs{2},2,\bs{2}]\rangle+[2,\bs{3},2,2]$, so $\sum_{i=1}^{3}\ld(H_{i})=
	\tfrac{4b-6}{2b^2 - b - 4}+\tfrac{5}{11}\leq 1$; a contradiction with \eqref{eq:ld-bound_ht=3}. Thus $b=2$, so $D=\langle 2;[2],[2],[\bs{3},a,\bs{d}]\rangle+[(2)_{a-1},\bs{3},(2)_{d}]$. By \eqref{eq:ld-bound_ht=3} we get $a=d=2$, so $D$ is as in \ref{item:ht=3_XE}.
	
	\litem{$A_{22}\cap \Bs\gamma_{+}^{-1}=\emptyset$} Now Claim \ref{cl:flipping}\ref{item:A32} implies that $\Bs\gamma_{+}^{-1}\subseteq A_{13}\cup A_{23}\cup A_{31}$, so by the definition of $\gamma$ we have either 
		$\{r\}=A_{31}\cap W$, or  
		$\{r\}=A_{13}\cap W$, or 
		$\{r\}=A_{23}\cap U$. 
	Moreover, Claim \ref{cl:flipping}\ref{item:U} implies that 
	\begin{equation}\label{eq:ab}
	(a,b)\neq (2,2).
	\end{equation}
	
	\begin{casesp}
	\litem{$\{r\}=A_{31}\cap W$} Then $H_1$ is branching in the fork $U_{\eta}$, with twigs $T_{H_2}=[(2)_{b-2},\bs{2},a]$, $T_{V_2}=[(2)_{c-1},b]$ containing $H_2$, $V_2$; respectively. We have  $\#T_{H_2},\#T_{V_2}\geq 2$, so by Lemma \ref{lem:admissible_forks} one of these twigs is $[2,2]$. Since $(a,b)\neq (2,2)$, we have $T_{V_2}=[2,2]$, so $(b,c)=(2,2)$ and $a\geq 3$. Lemma \ref{lem:admissible_forks} gives $T_{H_2}=[2,3]$ and $a=3$. For $d=2$ we have $D_{\eta}=\langle \bs{2};[2],[2,2],[\bs{2},3]\rangle+[3,2,\bs{3},2,2]$, so $\sum_{i}\ld(H_i^{\eta})=1$ and inequality  \eqref{eq:ld-bound_ht=3} fails. The case $d\geq 2$ reduces to $d=2$ via vertical swaps, so in any case we get a contradiction with Lemma \ref{lem:w=3_uniqueness}\ref{item:ld-bound_ht=3}. 
	
	\litem{$\{r\}=A_{13}\cap W$, $\Bs\gamma_{+}^{-1}\subseteq A_{13}\cup A_{23}$} Then $U_{\eta}=\langle a;[2],[(2)_{b-2},\bs{2}],[(2)_{c-1},b,\bs{d}]\rangle$.  The admissibility of $U_{\eta}$ gives  $b=2$, so $a\geq 3$ by condition \eqref{eq:ab}, and $D_{\eta}=\langle a;[2],[\bs{2}],[(2)_{c},\bs{d}]\rangle+[3,(2)_{a-2},\bs{c+1},(2)_{d}]$.  Inequality \eqref{eq:ld-bound_ht=3} gives $d=2$ and either $(a,c)=(3,3)$ or $c=2$. If $\psi=\eta$ then $D$ is as in  \ref{item:ht=3_XA_a=3_c=3} or \ref{item:ht=3_XA_c=2}, respectively. 
	
	Assume $\psi\neq \eta$. Let $\theta$ be a composition of $\eta$ with a blowup at some $s\in \Bs\eta_{+}^{-1}\subseteq A_{13}\cup A_{23}$. Suppose $s\in A_{23}$, so $\{s\}=A_{23}\cap U$ by the definition of $\gamma$. 
	Then $W_{\theta}=\langle \bs{c+1},[2],[3,(2)_{a-2}],[2,2]\rangle$. Since $a\geq 3$ and the fork $W_{\theta}$ is admissible, we have $a=3$. Noe $D_{\theta}=\langle 3;[2],[\bs{2}],[3,(2)_{c-1},\bs{2}]\rangle+\langle \bs{c+1},[2],[3,2],[2,2]\rangle$, contrary to \eqref{eq:ld-bound_ht=3}. 
	
	Therefore, $\Bs\eta_{+}^{-1}\subseteq A_{13}$. Recall that $b=d=2$, so $D=\langle a;[\bs{2}],[(2)_{c},\bs{2}],T\trp\rangle+T^{*}*[(2)_{a-1},\bs{c+1},2,2]$ for some admissible chain $T$. Since $\psi\neq \eta$, we have $T\neq [2]$. Because the fork $U$ is admissible and $c\in \{2,3\}$, we have $d(T)\leq 3$, so $T=[2,2]$ or $[3]$. Recall that $a\geq 3$. The inequality \eqref{eq:ld-bound_ht=3} gives $c=2$, so $D$ is as in \ref{item:ht=3_XAA_T=[2,2]} if $T=[2,2]$ and as in \ref{item:ht=3_XAA_T=[3]} if $T=[3]$. 
	
	\litem{$\{r\}=A_{23}\cap U$, $\Bs\gamma_{+}^{-1}\subseteq A_{23}$} Then for some admissible chain $T$, we have 
	\begin{equation*}
		D=T^{*}*[(2)_{c-1},b,\bs{d},a,\bs{2},(2)_{b-2}]+\langle \bs{c+1};T\trp,[(2)_{a-1}],[(2)_{d}]\rangle  
	\end{equation*}
	Consider the case $a\geq 3$. Since $d\geq 2$, the admissibility of the fork $W$ implies that $T=[2]$ and either $a=3$, $d\in \{2,3,4\}$ or $a\in \{4,5\}$, $d=2$. 
	 Inequality \eqref{eq:ld-bound_ht=3} gives $(a,b,d)=(3,2,2)$, so $D$ is as in \ref{item:ht=3_XY_a=3}.

	Consider the case $a=2$. Then $b\geq 3$ by condition \eqref{eq:ab}. Substituting $T=[2]$ above, we get
	\begin{equation*}
		D_{\eta}=[3,(2)_{c-2},b,\bs{d},2,\bs{2},(2)_{b-2}]+\langle \bs{c+1};[2],[2],[(2)_{d}]\rangle,
	\end{equation*}
	so inequality \eqref{eq:ld-bound_ht=3} gives $d=2$, and either $c=3$ or $(b,c)=(4,2)$. In the first case, $D$ is as in \ref{item:ht=3_XY_b=3}. Assume the second case holds. If $\psi=\eta$ then $D$ is as in 	\ref{item:ht=3_XY_b=4_T-2}. Suppose $\psi\neq \eta$, and let $\theta$ be the composition of $\eta$ with a blowup at some $s\in \Bs\eta_{+}^{-1}$. We have $\{s\}=A_{23}\cap U$ or $\{s\}=A_{23}\cap W$, so 
	$D_{\theta}=[4,4,\bs{2},2,\bs{2},2,2]+\langle \bs{3},[2,2],[2],[2,2]\rangle$ or $[2,3,4,\bs{2},2,\bs{2},2,2]+\langle \bs{3},[3],[2],[2,2]\rangle$, respectively. Thus $\sum_{i}\ld(H_{i}^{\theta})=1$ or $\frac{713}{747}$, a contradiction with \eqref{eq:ld-bound_ht=3}.\qedhere
	\end{casesp}
\end{casesp}
\end{proof}
\clearpage

\section{Case $\width(\bar{X})=2$}\label{sec:w=2}

In this section we settle the case $\width=2$. 
We keep the following notation and assumptions.
\begin{equation}\label{eq:assumption_w=2}
	\parbox{.9\textwidth}{
		$\bar{X}$ is a log terminal del Pezzo surface of rank one such that $\height(\bar{X})=3$ and $\width(\bar{X})=2$; \\ $(X,D)$ is the minimal log resolution of $\bar{X}$, and $p\colon X\to \P^1$ is a $\P^1$-fibration such that \\ $D\hor=H_1+H_2$, where $H_1$ is a $1$-section and $H_2$ is a $2$-section.
	}
\end{equation}

The above conditions imply that a fiber $F$ of any $\P^1$-fibration of $X$ satisfies $F\cdot D\geq 3$, and if the equality holds then $D\hor$ has at most two components, see Section \ref{sec:P1-fibrations}.

We follow the same strategy as in case $\width=3$, settled in Section \ref{sec:w=3} above. First, in Lemma \ref{lem:w=2_psi}  we choose $p$ so that there is a minimalization $\psi\colon (X,D)\to (\P^2,B)$ as in Proposition \ref{prop:ht=3_models}\ref{item:w=2_models}, i.e.\ mapping $H_2$ to a conic and the fibers to lines passing through the point $\psi(H_1)$. Next, in Lemma \ref{lem:w=2_swaps} we prove the relevant part of Proposition~\ref{prop:ht=3_swaps}. That is, we adjust $p$ so that the above minimalization  $\psi$ factors as $\psi=\phi\circ \phi_{+}$, where $\phi_{+}\colon (X,D)\sqto(Y,D_Y)$ is a vertical swap, and $\phi\colon (Y,D_Y)\to (\P^2,B)$ is the construction of a vertically primitive log surface $(Y,D_Y)$ from Example \ref{ex:w=2_cha_neq_2} or \ref{ex:w=2_cha=2}. Eventually, we classify the surfaces $\bar{X}$ by describing all possible vertical swaps $\phi_{+}$.

In each step we need to consider two cases, depending on whether the morphism $p|_{H_{2}}\colon H_2\to \P^1$ of degree $2$ is separable or not. In the second case, which can only hold if $\cha\kk=2$, we get significantly more surfaces $\bar{X}$, see Lemma \ref{lem:w=2_cha=2}. Moreover, in this case we have moduli of vertically primitive log surfaces $(Y,D_Y)$, see Proposition \ref{prop:primitive}\ref{item:primitive-uniqueness}, which yield moduli of del Pezzo surfaces $\bar{X}$ of the same singularity type, see Theorem~\ref{thm:ht=3}\ref{item:uniq_cha=2}.

\subsection{Proof of Proposition \ref{prop:ht=3_models}: constructing the minimalizations \texorpdfstring{$\psi\colon (X,D)\to (\P^2,B)$}{onto P2}} 

To begin with, we give a preliminary description of degenerate fibers of $p$, and we fix the notation $\nu,k_i,l_i$ for the remaining part of Section \ref{sec:w=2}. Recall that for a degenerate fiber $F$ we denote by $\sigma(F)$ the number of components of $F\redd$ not contained in $D$, i.e.\ of $(-1)$-curves in $F\redd$, see Lemma \ref{lem:fibrations-Sigma-chi}.

\begin{lemma}[Degenerate fibers]\label{lem:w=2_fibers}
	Let $(X,D)$  be as in \eqref{eq:assumption_w=2}, and let $F_1,\dots, F_{\nu}$ be all degenerate fibers of $p$. After reordering these fibers, if needed, we have
	\begin{equation}\label{eq:w=2_Sigma}
		\sigma(F_1)=2\quad\mbox{and}\quad \sigma(F_i)=1\quad \mbox{for } i\in \{2,\dots,\nu\}.
	\end{equation}
	Let $\hat{\phi}$ (respectively, $\bar{\phi}$) be a contraction of all vertical $(-1)$-curves on $X$ and its images which do not meet the image of $D\hor$ (respectively, meet the image of $D\hor$ at most once, normally). 
	For $i= 1,\dots, \nu$ let $k_{i}$ (respectively, $l_{i}$) be the number of blowups in the decomposition of $\bar{\phi}$ which are not isomorphisms in any neighborhood of the image of $F_{i}$ or of $H_2$ (respectively, of $H_1$). Then one of the following holds.
		\begin{enumerate}
		\item\label{item:FH=2} $\hat{F}_1=[1\adec{2},(2)_{k_{1}-1},\uline{2}\adec{2},(2)_{l_1-1},1\adec{1}],\, k_{1},l_1\geq 1$; or $[\uline{1}\adec{2,2},(2)_{l_1-1},1\adec{1}],$ $k_{1}=0$, $l_{1}\geq 1$. 
		\item\label{item:FH=1} $\hat{F}_1=\langle 2;[1\adec{1},(2)_{l_1-1},\uline{3}],[1\adec{2},(2)_{k_1-3}],[2]\rangle$, $k_1\geq 3$, $l\geq 1$; or $[1\adec{1},(2)_{l_1-1},\uline{3},1\adec{2},2]$, $k_{1}=2$, $l_1\geq 1$.
		\item\label{item:dumb} $\hat{F}_1=[1\adec{2},(2)_{a},\uline{2}\adec{1},(2)_{k_{1}-a-2},1\adec{2}]$ for some $a\geq 0$, $k_{1}\geq a+2$, and $l_1=0$.
		\setcounter{foo}{\value{enumi}}
	\end{enumerate}
	Moreover, for each $i\in \{2,\dots,\nu\}$ we have $l_{i}=0$, and one of the following holds.
	\begin{enumerate}
		\setcounter{enumi}{\value{foo}}	
		\item\label{item:F_columnar} $\hat{F}_i=[\uline{1}\adec{1,2},1\adec{2}]$, so $(F_i)\redd=[T,1,T^{*}]$, $F_i$ meets $H_2$ in both tips and $H_1$ in one tip, and $k_i=1$.
		\item\label{item:F_fork} $\hat{F}_i=\langle 2;[1\adec{2},(2)_{k_i-3}];[2],[\uline{2}\adec{1}]\rangle$, $k_i\geq 3$; or $[2,1\adec{2},\uline{2}\adec{1}]$, $k_i=2$.
	\end{enumerate}
	Where we use the following notation:
	\begin{itemize}
		\item  The numbers decorated by $\adec{1}$ and $\adec{2}$ refer to components meeting  $\hat{\phi}(H_1)$ and $\hat{\phi}(H_2)$, respectively.
		\item The underlined number refers to the proper transform of $\bar{\phi}_{*}F_i$ (note that $\bar{\phi}_{*}F_{i}=[0]$).
	\end{itemize}

\end{lemma}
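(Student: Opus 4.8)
The plan is to analyze degenerate fibers of $p$ one at a time, using the structure theory of degenerate fibers of $\P^1$-fibrations recalled in Section~\ref{sec:P1-fibrations} together with Lemma~\ref{lem:fibrations-Sigma-chi}. First I would establish the count \eqref{eq:w=2_Sigma}. By Lemma~\ref{lem:fibrations-Sigma-chi}\ref{item:Sigma} we have $\#D\hor-1=\sum_{F}(\sigma(F)-1)$; since $\width(\bar{X})=2$ means $\#D\hor=2$, the right-hand side equals $1$, forcing exactly one degenerate fiber with $\sigma=2$ and all others with $\sigma=1$. Reordering so that this distinguished fiber is $F_1$ gives \eqref{eq:w=2_Sigma}.

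Next I would study the fiber $F_1$ with $\sigma(F_1)=2$. The point of the morphisms $\hat\phi$ and $\bar\phi$ is to contract the "invisible" part of each degenerate fiber so that what remains is a short, explicit configuration. After applying $\hat\phi$, the image $\hat{F}_1$ blows down to a $0$-curve and its $(-1)$-curves are precisely those meeting $\hat\phi(D\hor)$; since each such $(-1)$-curve meets the horizontal boundary, it has multiplicity one in $\hat{F}_1$ and hence is a tip of $\hat{F}_1\redd$ (using the structure of degenerate fibers: if $F\redd$ has a unique $(-1)$-curve it has exactly two multiplicity-one components, both tips, and is a chain $[T,1,T^*]$). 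The case analysis \ref{item:FH=2}--\ref{item:dumb} then records which components of $\hat{F}_1$ meet $\hat\phi(H_1)$ and $\hat\phi(H_2)$: since $H_1$ is a $1$-section and $H_2$ a $2$-section, I would track how many of the two points $\hat{F}_1\cap \hat\phi(H_2)$ coincide with a $(-1)$-curve (giving the position of the underlined $[0]$-image of $\bar\phi_* F_1$) and whether $H_1$ meets a tip. The three cases correspond to the three combinatorial possibilities for how the $1$-section and the two branches of the $2$-section attach to the chain or fork obtained after contraction; the counts $k_1,l_1$ bookkeep the number of blowups over each horizontal curve, and the admissibility constraints (each component of $D$ has self-intersection $\leq -2$) plus the requirement that $\hat{F}_1$ blow down to $[0]$ pin down the multiplicities $(2)_m$.

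Then I would treat each fiber $F_i$ with $\sigma(F_i)=1$ for $i\geq 2$. Here $\hat{F}_i$ has a unique $(-1)$-curve and so is of type $[T,1,T^*]$ with exactly two multiplicity-one tips, both of which must meet $\hat\phi(D\hor)$. I would first argue $l_i=0$: since $H_1$ is a $1$-section and already meets $F_1$, no further blowup over $H_1$ can occur in a fiber with only one $(-1)$-curve without either disconnecting or creating a second $(-1)$-curve, contradicting $\sigma(F_i)=1$ or the snc/tree structure of $D$. The two surviving possibilities \ref{item:F_columnar} and \ref{item:F_fork} distinguish whether both tips meet $\hat\phi(H_2)$ (the "columnar" case, $k_i=1$) or one tip meets $H_1$ and the configuration acquires a branching point producing the fork, and again the admissibility condition forces the intermediate components to be $(-2)$-curves.

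The main obstacle I expect is not any single computation but the bookkeeping: correctly identifying, in each case, which multiplicity-one components of the contracted fiber are hit by the $1$-section versus each of the two local branches of the $2$-section, and verifying that the resulting chain or fork is admissible with the claimed self-intersections. In particular the distinction between cases \ref{item:FH=2}, \ref{item:FH=1} and \ref{item:dumb} for $F_1$ hinges on whether the two preimages of a point under $p|_{H_2}$ lie in the same degenerate fiber and how they interact with the single $(-1)$-curve surviving after $\bar\phi$; ruling out spurious configurations (e.g.\ those that would make $D$ non-snc, introduce a circular subdivisor, or violate the height-$3$ hypothesis by exhibiting a lower-height $\P^1$-fibration) is where the real care is needed. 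I would organize the argument as an exhaustive enumeration of the positions of the underlined $[0]$-curve and the attachment points of $H_1,H_2$ on the chain $[T,1,T^*]$, discarding each inadmissible branch by the criteria above, and matching the surviving ones to the list \ref{item:FH=2}--\ref{item:F_fork}.
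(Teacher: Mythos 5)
Your opening step coincides with the paper's: \eqref{eq:w=2_Sigma} is deduced from Lemma \ref{lem:fibrations-Sigma-chi}\ref{item:Sigma} exactly as you say, and the paper likewise excludes $\hat{F}=[0]$ using the no-circular-subdivisor property of $D$ and then analyzes each contracted fiber. The gap is in the structural claim on which you base the whole enumeration. You assert that every $(-1)$-curve of $\hat{F}_1$ has multiplicity one ``since it meets the horizontal boundary,'' hence is a tip, and later that each $\hat{F}_i$ for $i\geq 2$ is a chain $[T,1,T^{*}]$ whose two multiplicity-one tips both meet $\hat{\phi}(D\hor)$. Meeting the horizontal boundary forces multiplicity one only for a curve meeting the $1$-section $H_1$ (because $H_1\cdot F=1$); since $H_2\cdot F=2$, a $(-1)$-curve meeting only $\hat{H}_2$ can have multiplicity $2$, meeting $\hat{H}_2$ once. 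This is exactly what happens in the two cases your scheme cannot produce. In case \ref{item:FH=1} the $(-1)$-curve attached to $\hat{H}_2$ has multiplicity $2$, and for $k_1=2$ it is an \emph{interior} component of the chain $[1\adec{1},(2)_{l_1-1},\uline{3},1\adec{2},2]$, not a tip. In case \ref{item:F_fork} the contracted fiber is a \emph{fork} $\langle 2;[1\adec{2},(2)_{k_i-3}],[2],[\uline{2}\adec{1}]\rangle$: its unique $(-1)$-curve is a tip of multiplicity $2$, and one of the two multiplicity-one tips (the twig $[2]$) meets $D\hor$ not at all. Note also that the recalled structure fact gives the chain shape $[T,1,T^{*}]$ only when the two multiplicity-one tips lie in different connected components of $F\redd-L$; in case \ref{item:F_fork} they lie in the same component, so that fact does not apply. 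These are not marginal configurations: \ref{item:F_fork} is the generic tangency picture occurring in essentially every surface of Section \ref{sec:w=2}, and \ref{item:FH=1} is realized even for $\cha\kk\neq 2$, e.g.\ by the surface of type $\rA_1+\rA_7+[3]$ from Example \ref{ex:w=2_cha_neq_2}\ref{item:ht=3_exception}. An enumeration organized by attachment points on a chain $[T,1,T^{*}]$ would silently lose both.

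What your proposal is missing, concretely, is the paper's pivot. One splits according to whether some $(-1)$-curve $\hat{L}$ of $\hat{F}$ meets $\hat{H}_1$; such a curve genuinely has multiplicity one, which (since the fiber is degenerate) forces a second $(-1)$-curve $\hat{L}'$, necessarily meeting $\hat{H}_2$ with multiplicity $\mu\leq 2$. When $\mu=2$ one performs an auxiliary contraction $\tau$ of $\hat{L}'$ and the subsequent $(-1)$-curves in the images of $\hat{F}$ until the image of $\hat{H}_2$ meets the image of the fiber in a node; since every contracted component has multiplicity $2$, $\tau$ is an isomorphism near $\hat{H}_1$, and one reads off $\tau_{*}\hat{F}=[1,(2)_{l},1]$ (giving \ref{item:FH=1}) or $[1,1]$ meeting $\tau_{*}\hat{H}_2$ in a node (giving \ref{item:F_fork}). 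With that step added, the four cases \ref{item:FH=2}, \ref{item:FH=1}, \ref{item:F_columnar}, \ref{item:F_fork} fall out of the dichotomy ($\hat{L}$ meets $\hat{H}_1$ or not) crossed with $\sigma(F)\in\{1,2\}$, case \ref{item:dumb} is the remaining branch, and the vanishing $l_i=0$ for $i\geq 2$ — which you only gesture at — comes out of the derived shapes for free rather than from a separate disconnection argument.
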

\begin{proof}
	The assertion \eqref{eq:w=2_Sigma} follows from Lemma \ref{lem:fibrations-Sigma-chi}\ref{item:Sigma}. 
	Fix a degenerate fiber $F$. Put $\hat{F}=\hat{\phi}_{*}F\redd$,  $\hat{H}_{i}=\hat{\phi}(H_{i})$.
	
	Suppose $\hat{F}=[0]$. Then $V\de \hat{\phi}^{-1}_{*}\hat{F}$ is a component of $F$ satisfying $V\cdot D\hor=F\cdot D\hor$, so $V\subseteq D$ by \cite[Lemma 2.8(c)]{PaPe_ht_2}. Since $D$ is snc and has no circular subdivisor, we get $1\geq H_{2}\cdot V=H_{2}\cdot F=2$, a contradiction. 
	
	Thus $\hat{F}$ contains a $(-1)$-curve, say $\hat{L}$. Assume that $\hat{L}$ meets $\hat{H}_1$.  Then $\hat{L}$ has multiplicity $1$ in $\hat{F}$, so $\hat{F}$ has another $(-1)$-curve, say $\hat{L}'$. Then $\hat{L}'\cdot \hat{H}_1=0$, so $\hat{L}'$ meets $\hat{H}_2$ by the definition of $\hat{\phi}$. Put $L=\hat{\phi}^{-1}_{*}\hat{L}$, $L'=\hat{\phi}^{-1}_{*}\hat{L}'$. 
	
	Consider the case $\sigma(F)=1$. 
	Then $\hat{F}\cap \Bs\hat{\phi}^{-1}=\hat{L}\cap \hat{L}'$. It follows that $L$ and $L'$ are not $(-1)$-curves, so they are components of $D$. Since $D$ is snc and contains no circular subdivisor, 
	both $L$ and $L'$ meet $H_2$, they lie in different connected components of $D\vert$, and have multiplicity $1$ in $F$, which implies that 
	$F$ is as in \ref{item:F_columnar}.
	
	Consider the case $\sigma(F)=2$. Since the curve $\hat{L}'$ meets $\hat{H}_{2}$, it has multiplicity $\mu\leq 2$ in $\hat{F}$. If $\mu=1$ then both $\hat{L}$ and $\hat{L}'$ are tips of $\hat{F}$, so \ref{item:FH=2} holds. Assume $\mu=2$. Contract $\hat{L}'$ and subsequent $(-1)$-curves in the images of $\hat{F}$, until the image of $\hat{H}_{2}$ meets the image of $\hat{F}$ in a node; and denote this contraction by $\tau$. Each component of $\Exc\tau$ has multiplicity $2$ in $\hat{F}$, so $\tau$ is an isomorphism 
	near $\hat{H}_1$, and $\tau_{*}\hat{F}$ has a $(-1)$-curve $C\neq\tau(\hat{L})$. By the definition of $\hat{\phi}$, both $\tau(\hat{L})$ and $C$ meet $\tau(\hat{H}_2)$, so their proper transforms have multiplicity $1$ in $F$. 
	We conclude that  $\tau_{*}\hat{F}=[1,(2)_{l},1]$, and $l\geq 1$ since 
	the component of $\hat{F}$ meeting $\hat{H}_1$ is 
	a $(-1)$-curve. Thus \ref{item:FH=1} holds.
	
	It remains to consider the case when no $(-1)$-curve in $\hat{F}$ meets $\hat{H}_{1}$. Assume $\sigma(F)=1$. Defining $\tau$ as before, we get that $\tau_{*}\hat{F}$ is a chain $[1,1]$ meeting $\tau_{*}\hat{H}_{1}$ in a tip and $\tau_{*}\hat{H}_{2}$ in a node, so \ref{item:F_fork} holds. Assume $\sigma(F)=2$. Then both $(-1)$-curves in $\hat{F}$ meet $\hat{H}_2$, so they have multiplicity one and \ref{item:dumb} holds.
\end{proof}

\begin{lemma}[The structure of $p$]\label{lem:w=2_basic}
	We keep the notation and assumptions of Lemma \ref{lem:w=2_fibers}. For $i\in \{1,\dots,\nu\}$ let $V_i=\bar{\phi}^{-1}_{*}(\bar{\phi}_{*}F_i)$, i.e.\ $V_{i}$ is the proper transform on $X$ of the component of $\hat{F}_i$ corresponding to the underlined number. 
	For $j\in \{1,2\}$ put $\bar{H}_j=\bar{\phi}(H_j)$. Then the following hold. 
	\begin{enumerate}
		\item \label{item:H1_meets} For all $i\in \{2,\dots,\nu\}$, the divisor $(F_{i})\redd \wedge D\vert$ meets the $1$-section $H_1$. In particular, $\nu\leq \beta_{D}(H_1)+1\leq 4$.
		\item\label{item:psi} The image $\bar{\phi}(X)$ is isomorphic to the Hirzebruch surface $\F_{m}$ for $m=1-H_{1}\cdot H_{2}\in \{0,1\}$.
		\item\label{item:psi_properties} We have $\bar{H}_1=[m]$, i.e.\ $\bar{H}_1$ is the negative section of $\F_m$. In turn, $\bar{H}_2^2=4$, $\bar{H}_2\cong \P^1$ and $\bar{H}_1\cdot\bar{H}_2=H_1\cdot H_2$.
		\item\label{item:H1} We have $H_{1}^{2}=H_{1}\cdot H_{2}-1-l_{1}$. In particular, $l_{1}\geq 1$, so case \ref{lem:w=2_fibers}\ref{item:dumb} does not occur.
		\item\label{item:H2} We have 
			$H_{2}^{2}=4-\sum_{i=1}^{\nu}k_{i}$. In particular, $\sum_{i=1}^{\nu}k_{i}\geq 6$.
		\item\label{item:eta} Put  $\eta=\#\{i:V_{i}\subseteq D\}=\#\bar{\phi}_{*}D\vert$. Then $\nu\geq\eta\geq 3$. \item\label{item:eta-not-nu} If $\nu\neq \eta$ then $(\nu,\eta)=(4,3)$ and $F_1$ is as in Lemma \ref{lem:w=2_fibers}\ref{item:FH=2} with $k_1=0$.
	\end{enumerate}
\end{lemma}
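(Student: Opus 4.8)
The plan is to prove parts \ref{item:H1_meets}--\ref{item:eta-not-nu} essentially by bookkeeping the birational morphism $\bar\phi\colon X\to \F_m$ against the numerical data collected in Lemma \ref{lem:w=2_fibers}, exploiting two global constraints: the absence of circular subdivisors in $D$ (since $\bar X$ is log terminal, $D$ is a disjoint union of admissible chains and forks) and the self-intersection formula for $H_1,H_2$ under the blowups composing $\bar\phi$.

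First I would prove \ref{item:psi}--\ref{item:psi_properties}. As in the proof of Lemma \ref{lem:H_disjoint}, the induced $\P^1$-fibration on $\bar\phi(X)$ has no degenerate fiber: a degenerate fiber would contain a $(-1)$-curve $L$ of multiplicity $\geq 2$, hence $L\cdot\bar\phi_*D\hor\leq 1$, but $\bar\phi$ was defined to contract exactly the vertical $(-1)$-curves meeting the image of $D\hor$ at most once, a contradiction. So $\bar\phi(X)\cong\F_m$. Since $\bar H_1=\bar\phi(H_1)$ is a section of multiplicity one and $\bar H_2=\bar\phi(H_2)$ is a $2$-section, and $\bar\phi$ contracts nothing touching $\bar H_1$ more than once, a standard computation on $\F_m$ (using that $H_1$ is a $1$-section, $H_2$ a $2$-section, and reading off $\bar H_1\cdot\bar H_2$) forces $\bar H_1$ to be the negative section, giving $m=1-H_1\cdot H_2\in\{0,1\}$ and $\bar H_2^2=4$. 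For \ref{item:H1}--\ref{item:H2} I would track self-intersections: each of the $l_1,\dots,l_i$ (resp.\ $k_1,\dots,k_i$) blowups contributing to $H_1$ (resp.\ $H_2$) drops the self-intersection by one, so $H_1^2=\bar H_1^2-l_1=(m-1)-l_1+\cdots$, which after substituting $m=1-H_1\cdot H_2$ yields $H_1^2=H_1\cdot H_2-1-l_1$; comparing with $H_1^2\leq -2$ (as $H_1$ is a boundary component of an admissible chain/fork) gives $l_1\geq 1$, and in particular case \ref{lem:w=2_fibers}\ref{item:dumb}, where $l_1=0$, cannot occur. Likewise $H_2^2=\bar H_2^2-\sum_i k_i=4-\sum_i k_i$, and $H_2^2\leq -2$ forces $\sum_i k_i\geq 6$.

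Next, \ref{item:H1_meets}: for $i\geq 2$ we have $\sigma(F_i)=1$, so $F_i$ falls under \ref{lem:w=2_fibers}\ref{item:F_columnar} or \ref{item:F_fork}; in both descriptions the underlined/decorated data show a component of $(F_i)\redd\wedge D\vert$ carrying the decoration $\adec 1$, i.e.\ meeting $H_1$. Thus each such fiber contributes a distinct intersection point of $H_1$ with $D\vert$, whence $\nu-1\leq\beta_D(H_1)$. Since $H_1$ sits in an admissible chain or fork, $\beta_D(H_1)\leq 3$, giving $\nu\leq 4$. For \ref{item:eta}, the lower bound $\eta\geq 3$ comes from switching to a competing $\P^1$-fibration: as in Remark \ref{rem:tilde_p} (adapted to the $\F_m$-picture), the components $V_i\subseteq D$ become $1$-sections of a second fibration, which must have height $\geq\height(\bar X)=3$, forcing $\eta\geq 3$; the inequality $\nu\geq\eta$ is immediate since the $V_i$ range over the $\nu$ degenerate fibers. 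Finally \ref{item:eta-not-nu}: if some $V_i\not\subseteq D$ then $\nu>\eta$, and combined with $\nu\leq 4$ and $\eta\geq 3$ this pins down $(\nu,\eta)=(4,3)$; I would then argue that the unique fiber with $V_i\not\subseteq D$ must be $F_1$ (the only one with $\sigma=2$, since for $i\geq 2$ the columnar/fork descriptions have $V_i$ underlined and meeting $H_2$, hence forced into $D$), and inspecting \ref{lem:w=2_fibers}\ref{item:FH=2} shows $V_1\not\subseteq D$ only in the subcase $k_1=0$.

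I expect the main obstacle to be \ref{item:eta-not-nu} together with the precise matching in \ref{item:H1_meets}: one must be careful that for $i\geq 2$ the component recorded as $V_i$ (the proper transform of $\bar\phi_*F_i$) genuinely lies in $D$ except in the single exceptional configuration, and this requires reading the decorations in Lemma \ref{lem:w=2_fibers}\ref{item:F_columnar},\ref{item:F_fork} against the definition of $\bar\phi$ rather than $\hat\phi$, i.e.\ keeping straight which $(-1)$-curves meeting $D\hor$ exactly once have already been contracted. The self-intersection accounting in \ref{item:H1}--\ref{item:H2} is routine once the convention for $k_i,l_i$ is fixed, so the real care goes into the combinatorial ruling-out of circular subdivisors and the bound $\beta_D(H_1)\leq 3$ feeding into $\nu\leq 4$.
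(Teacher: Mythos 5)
Most of your proposal tracks the paper's own proof: parts \ref{item:H1_meets}, \ref{item:psi}--\ref{item:H2} and \ref{item:eta-not-nu} use essentially the same bookkeeping (for \ref{item:psi} the paper simply cites the irreducibility of the curves $\bar\phi_*F_i$ recorded in Lemma \ref{lem:w=2_fibers}, whereas you rerun the Lemma \ref{lem:H_disjoint} argument, which also works). Two small blemishes: your intermediate expression ``$(m-1)-l_1$'' should be $\bar H_1^2-l_1=-m-l_1$ (the final formula you state is nevertheless correct), and in \ref{item:eta-not-nu} the reason $V_i\subseteq D$ for $i\geq 2$ is not that $V_i$ ``meets $H_2$'' but that $V_i$ is not the unique $(-1)$-curve of $F_i$: in case \ref{lem:w=2_fibers}\ref{item:F_columnar} it is a tip of the chain $[T,1,T^*]$ while the $(-1)$-curve is the middle component, and in case \ref{lem:w=2_fibers}\ref{item:F_fork} it is a $(-2)$-curve.

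The genuine gap is in part \ref{item:eta}, and it propagates to \ref{item:eta-not-nu} and to the later Lemma \ref{lem:w=2_psi}. First, even when $m=0$ your count is off: with respect to the second projection of $\P^1\times\P^1$ the horizontal part of $D$ is $H_2+\sum_{V_i\subseteq D}V_i$ (the $2$-section $H_2$ becomes a $1$-section of the new fibration), so the new height is $1+\eta$, and $\height(\bar X)=3$ yields only $\eta\geq 2$; excluding $\eta=2$ requires $\width(\bar X)=2$, since $1+\eta=3$ would make the new fibration a witnessing one with three horizontal $1$-sections, forcing $\width(\bar X)=3$. You omit this step. Second, and more seriously, when $H_1\cdot H_2=0$ we have $m=1$, and $\F_1$ carries a unique $\P^1$-fibration, so there is no ``second fibration'' to switch to; ``adapting Remark \ref{rem:tilde_p} to the $\F_m$-picture'' is exactly the nontrivial content here. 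The paper handles this case by contracting the negative section $\bar H_1$ to obtain $\psi\colon X\to\P^2$, choosing a base point $p$ of $\psi^{-1}$ on the conic $\psi(H_2)$ --- and on $\psi(D\vert)$ whenever $\eta>0$, which is possible because otherwise $V_i\cdot H_2=2$ would produce a circular subdivisor of $D$ --- and pulling back the pencil of lines through $p$. The horizontal part of $D$ is then contained in $H_2$, the first exceptional curve over $p$, and those $V_i\subseteq D$ with $p\notin\psi(F_i)$, i.e.\ at most $\eta+1+\epsilon$ sections with $\epsilon\in\{0,1\}$; combining $\height(\bar X)=3$, $\width(\bar X)=2$ and the choice of $p$ gives $\eta+1+\epsilon\geq 4$, then $\eta>0$, then $\epsilon=0$, hence $\eta\geq 3$. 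Without this (or an equivalent) construction, part \ref{item:eta} is unproved precisely in the case $H_1\cdot H_2=0$, which is the case needed in the proof of Lemma \ref{lem:w=2_psi}.
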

\begin{proof}
	\ref{item:H1_meets} Condition \eqref{eq:w=2_Sigma} implies that $A_i\de (F_i)\redd-(F_{i})\redd\wedge D\vert$ is the unique $(-1)$-curve in $F_i$. Hence $A_i$ has multiplicity at least $2$ in $F_i$, in particular, $A_i$ does not meet the $1$-section $H_1$, as claimed. It follows that $\nu-1\leq \beta_{D}(H_1)$. Eventually, $\beta_{D}(H_1)\leq 3$ as  $D$ is a disjoint union of chains and forks.
	
	\ref{item:psi}, \ref{item:psi_properties} By the definition of $\bar{\phi}$ we have $\bar{H}_1\cdot \bar{H}_2=H_1\cdot H_2$ and $\bar{H}_i\cong H_i\cong \P^1$ for $i\in \{1,2\}$. By Lemma \ref{lem:w=2_fibers} each $\bar{\phi}_{*}F_i$ is irreducible, so $\bar{\phi}(X)\cong \F_{m}$ for some $m\geq 0$. 
	Let $F$ be a fiber of $\F_m$. Then $(2\bar{H}_1-\bar{H}_2)\cdot F=0=(\bar{H}_2+K_{\F_{m}})\cdot F$, so  $2\bar{H}_1-\bar{H}_2$ and $\bar{H}_2+K_{\F_{m}}$ are linearly equivalent to a multiple of $F$. Thus $0=(2\bar{H}_1-\bar{H}_2)\cdot(\bar{H}_2+K_{\F_m})=2\bar{H}_1\cdot \bar{H}_2-2\bar{H}_1^{2}-4+2$. We get $\bar{H}_1^{2}=H_{1}\cdot H_{2}-1\leq 0$, so $\bar{H}_1$ is the negative section, and $m=-\bar{H}_1^2=1-H_{1}\cdot H_{2}$. Moreover, $0=(2\bar{H}_1-\bar{H}_2)^{2}=4\bar{H}_1^2-4H_{1}\cdot H_{2}+\bar{H}_2^{2}=\bar{H}_2^{2}-4$, so $\bar{H}_2^{2}=4$.
	
	\ref{item:H1}, \ref{item:H2} By the definition of $k_{i},l_{i}$ we have $\bar{H}_{2}^2=H_{2}^{2}+\sum_{i} k_{i}$, $\bar{H}_{1}^2=H_{1}^2+\sum_{i}l_{i}=H_{1}^2+l_{1}$, as $l_i=0$ for $i>1$ by Lemma \ref{lem:w=2_fibers}. Using \ref{item:psi} and \ref{item:psi_properties} we get the formulas for $H_{i}^2$. The inequalities follow because $H_{i}^2\leq -2$.

	\ref{item:eta} Clearly, $\nu\geq \eta$. Assume $H_1\cdot H_2=1$. Then $m=0$ by \ref{item:psi}. The other projection of $\P^1\times \P^1$ pulls back to a $\P^{1}$-fibration of $X$ such that the horizontal part of $D$ equals $H_2+\sum_{i}V_i\wedge D$, so it is a sum of $1+\eta$ sections. Thus $1+\eta\geq \height(\bar{X})=3$, and the inequality is strict, because $\width(\bar{X})<3$ by assumption \eqref{eq:assumption_w=2}.
	
	Assume now that $H_1\cdot H_2=0$, so $m=1$ by \ref{item:psi}. Let $\psi\colon X\to \P^2$ be the composition of $\bar{\phi}$ with the contraction of $\bar{H}_1$, and let $p$ be a base point of $\psi^{-1}$ lying on $\psi(H_2)$. If $\eta>0$ we can and will choose $p\in\psi(D\vert)$. Put $\epsilon=0$ if $p\in \psi(D\vert)$ and $\epsilon=1$ otherwise. 
	The pencil of lines passing through $p$ pulls back to a $\P^1$-fibration of $X$ such that the horizontal part of $D$ is a sum of at most $\eta+1+\epsilon$ sections, namely $H_2$, the first exceptional curve over $p$, and $V_i$ for all $i\in \{1,\dots,\nu\}$ such that $p\not\in \psi(F_i)$ and $V_i\subseteq D$. As before, the conditions $\height(\bar{X})=3$ and $\width(\bar{X})<3$ give $\eta+1+\epsilon\geq 4$. In particular, $\eta>0$, so $\epsilon=0$ by our choice of $p$. Thus $\eta\geq 3$, as claimed. 
	
	\ref{item:eta-not-nu} Assume $\nu\neq \eta$, so $V_i\not\subseteq D$ for some $i$. Then $V_i=[1]$, 
	so $\hat{\phi}(V_i)=[1]$ or $[0]$. Lemma~\ref{lem:w=2_fibers} shows that $i=1$ and $F_1$ is as in \ref{lem:w=2_fibers}\ref{item:FH=2}, with $k_1=0$. Moreover, $\nu\leq 4$ by \ref{item:H1_meets} and $\nu\geq \eta\geq 3$ by \ref{item:eta}, so $(\nu,\eta)=(4,3)$.
\end{proof}

As in the previous section, the key step of the proof of Proposition \ref{prop:ht=3_models} is the following. 

\begin{lemma}
	\label{lem:w=2_disjoint}
	Let $(X,D)$ be as in \eqref{eq:assumption_w=2}. One can choose $p$ so that $H_{1}\cdot H_{2}=0$.
\end{lemma}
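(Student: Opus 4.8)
The plan is to dispose of the case $H_1\cdot H_2=1$ by exhibiting a second witnessing $\P^1$-fibration whose $1$- and $2$-sections are disjoint. By Lemma \ref{lem:w=2_basic}\ref{item:psi},\ref{item:psi_properties} the only possibilities are $H_1\cdot H_2\in\{0,1\}$, and if $H_1\cdot H_2=0$ there is nothing to prove, so I assume $H_1\cdot H_2=1$. Then $\bar\phi(X)\cong\F_0=\P^1\times\P^1$, the section $\bar H_1$ has self-intersection $0$ and is therefore a fibre of the second projection $\pr_2$, while $\bar H_2$ has bidegree $(1,2)$ and meets $\bar H_1$ in a single point which, since $H_1\cdot H_2=1=\bar H_1\cdot\bar H_2$, is not blown up by $\bar\phi$.

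First I would locate the centre of an elementary transformation inside $X$. By Lemma \ref{lem:w=2_basic}\ref{item:H1} we have $l_1\geq 1$, so the blowups of $\bar\phi$ lowering $H_1^2$ all occur over the point $x\de\bar H_1\cap\bar V_1$, where $\bar V_1\de\bar\phi_*F_1$ is the fibre of $\pr_1$ carrying the degenerate fibre $F_1$; in particular $x\in\Bs\bar\phi^{-1}$ and these blowups run along the proper transform of $H_1$. I would then take $q$ to be the $\P^1$-fibration of $X$ obtained by pulling back the pencil $\Lambda$ of curves of bidegree $(1,1)$ through $x$ that are tangent to $\bar H_1$ there. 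Equivalently, performing the elementary transformation $\F_0\dashrightarrow\F_1$ centred at $x$ (blowing up $x$, already done inside $\bar\phi$, and contracting the proper transform of $\bar V_1$) produces a new minimalization $\psi'\colon X\to\F_1\to\P^2$, and $q$ is the pullback of the pencil of lines through $p_0\de\psi'(H_1)$.

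A bidegree computation on $\F_1$ is the numerical crux: there $\bar H_1$ becomes the negative section $C_0$ and $\bar H_2$ acquires class $2C_0+2f$, so $\bar H_1\cdot\bar H_2=0$, while $H_2$ is a $2$-section and $H_1$ a $1$-section for $q$. Contracting $C_0=\bar H_1$ sends $H_2$ to a conic and $H_1$ to the point $p_0$ off it, matching Proposition \ref{prop:ht=3_models}\ref{item:w=2_models}. It then remains to check that $q$ is a genuine witnessing fibration of width $2$ on $X$: that the pulled-back pencil is base-point free, and that $D\hor$ for $q$ is exactly $H_1+H_2$, with no further component of $D$ turning horizontal.

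The hard part will be exactly this last verification. Base-point freeness of $q$ amounts to matching the unique infinitely near base point of $\Lambda$ with the $\bar\phi$-blowups along $H_1$, which is where the tangency at $x$ and the inequality $l_1\geq 1$ enter. To see that no $V_i$ or fibre component becomes horizontal for $q$ — which would force $F\cdot D\geq 4$ and spoil $\height(\bar X)=3$ — I would run through the degenerate-fibre types of Lemma \ref{lem:w=2_fibers} (case \ref{lem:w=2_fibers}\ref{item:dumb} being already excluded by $l_1\geq 1$), tracking each $\hat F_i$ and each section $V_i$ through the elementary transformation, and invoke the admissibility of the chains and forks of $D$ (Lemma \ref{lem:admissible_forks}) together with the constraints $\eta\geq 3$ and $\nu\leq 4$ from Lemma \ref{lem:w=2_basic}\ref{item:H1_meets},\ref{item:eta} to rule out the configurations in which an extra boundary component would acquire positive intersection with the general member of $q$.
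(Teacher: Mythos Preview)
Your proposal has a fatal numerical error. After the elementary transformation at $x=\bar H_1\cap\bar V_1$, the image of $\bar H_2$ on $\F_1$ has class $2C_0+3f$, not $2C_0+2f$: since $x\notin\bar H_2$ (the unique point of $\bar H_1\cap\bar H_2$ is not blown up, while $x$ is), the proper transform of $\bar H_2$ still meets the contracted fibre $\tilde{\bar V}_1$ twice, so $\bar H_2^2$ jumps from $4$ to $8$. Hence $\bar H_1\cdot\bar H_2=(2C_0+3f)\cdot C_0=1$, unchanged. More conceptually, the fibration you call $q$ is literally $p$: the elementary transformation at a point of a fibre does not alter the $\P^1$-fibration, it only changes the Hirzebruch model, and the pencil of lines through $p_0$ on $\P^2$ pulls back to the original fibre pencil. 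Your ``pencil of $(1,1)$-curves tangent to $\bar H_1$ at $x$'' is not well-defined either, since a smooth $(1,1)$-curve meets a $(0,1)$-curve once transversally.

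The paper's argument is entirely different in structure. Under the assumption $H_1\cdot H_2=1$, it first observes that $H_1$ meets $H_2$ and a vertical component in each $F_i$ for $i\geq 2$, so $H_1$ is the branching component of an admissible fork; this forces $\nu=3$ and pins down the types of $F_2,F_3$ via Lemma \ref{lem:w=2_fibers}\ref{item:F_fork} with $k_i\in\{2,3\}$. If $F_1$ is of type \ref{lem:w=2_fibers}\ref{item:FH=2}, admissibility of the fork (Lemma \ref{lem:admissible_forks}) together with the bound on $\sum k_i$ from Lemma \ref{lem:w=2_basic}\ref{item:H2} yields a contradiction. If $F_1$ is of type \ref{lem:w=2_fibers}\ref{item:FH=1} (possible only when $\cha\kk=2$, since $p|_{H_2}$ is then totally ramified), the subcase $k_3=3$ is excluded by the log-discrepancy inequality \eqref{eq:ld_phi_H}, and in the remaining subcase $k_2=k_3=2$ one constructs a genuinely new fibration $\tilde p$ whose $2$-section is $V_1$ (the old vertical component of $F_1$), not $H_2$; the key point is that $V_1$ and $H_1$ are disjoint because they lie in the same fibre. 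No elementary transformation of the base suffices here; the new $2$-section has to come from inside a degenerate fibre.
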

\begin{proof}
	Assume $H_{1}\cdot H_{2}\neq 0$, so $H_1\cdot H_2=1$ by Lemma \ref{lem:w=2_basic}\ref{item:psi}.  We keep notation from Lemma \ref{lem:w=2_basic}. 
	
	By Lemma \ref{lem:w=2_basic}\ref{item:eta} we have $\nu\geq 3$. Lemma \ref{lem:w=2_basic}\ref{item:H1_meets} implies that $H_1$ meets some component of $(F_{i})\redd\wedge D\vert$ for $i\in \{2,\dots,\nu\}$. Thus the connected component of $D$ containing $H_1$, call it $U$, is an admissible fork with maximal twigs $T,T_2,T_3$ such that  $H_2=\ltip{T}$ and $\ltip{T_i}\subseteq F_i$ for $i\in \{2,3\}$, see Figure \ref{fig:w=2_disjoint}. It follows that $\nu=3$, and for $i\in \{2,3\}$ the chain $T_i$ is the connected component of $(F_{i})\redd\wedge D\vert$ meeting $H_1$. Since $T_i$ does not meet $H_2$, the fiber $F_i$ is as in Lemma \ref{lem:w=2_fibers}\ref{item:F_fork} for some $k_i\in \{2,3\}$. Since $\nu=3$, Lemma \ref{lem:w=2_basic}\ref{item:eta} implies that $\eta=\nu$, i.e.\ for each $i$, the component $V_i$ of $F_i$ corresponding to the underlined number in Lemma \ref{lem:w=2_fibers} lies in $D$.
	
	Suppose $F_{1}$ is as in \ref{lem:w=2_fibers}\ref{item:FH=2}. Then $V_1$ meets $H_2$, so since $V_1$ lies in $D$, it is the second-to-last component of the twig $T$. Hence $\beta_{D}(V_1)\leq 2$, so $k_1=1$ or $l_1=1$, and $\#T\geq k_{1}+l_1$. By Lemma \ref{lem:w=2_basic}\ref{item:H1} we have $l_1=-H_{1}^{2}\geq 2$, so $k_{1}=1$ and $\#T\geq 3$. The inequality in Lemma \ref{lem:w=2_basic}\ref{item:H2} yields $k_2+k_3\geq 5$, so since $k_i\in \{2,3\}$, we have, say, $k_2=2$, $k_3=3$. It follows that $\#T_3\geq 3$, too, which by Lemma \ref{lem:admissible_forks} contradicts the admissibility of the fork $U$.
	
	Thus $F_1$ is as in \ref{lem:w=2_fibers}\ref{item:FH=1}, as case \ref{lem:w=2_fibers}\ref{item:dumb} does not occur by Lemma \ref{lem:w=2_basic}\ref{item:H1}. We remark that this can happen only if $\cha\kk=2$: indeed, the restriction $p|_{H_2}$ is ramified at three points $H_2\cap F_i$, $i\in \{1,2,3\}$, which contradicts the Hurwitz formula \cite[Corollary IV.2.4]{Hartshorne_AG} unless $\cha\kk=2$ and $p|_{H_{2}}$ is totally ramified.
	
	Suppose $k_{3}=3$, so $\#T_{3}\geq 3$, see Figure \ref{fig:w=2_disjoint_k3=3}. Since $k_1,k_2\geq 2$, Lemma \ref{lem:w=2_basic}\ref{item:H2} implies that $H_{2}^{2}\leq -3$.  The weighted graph of $U$ is a weighted subgraph of a fork $\langle \bs{2},[2],[2,2,2],[\ub{3}]\rangle$, see Lemma \ref{lem:Alexeev}, where the bold numbers correspond to the horizontal components of $D$, and the underlined one corresponds to $H_2$. Using Lemmas \ref{lem:ld_formulas} and \ref{lem:Alexeev}, we get $\ld(H_1)\leq \frac{1}{5}$, $\ld(H_2)\leq \frac{2}{5}$, so $\ld(H_1)+2\ld(H_2)\leq 1$, a contradiction with Lemma \ref{lem:delPezzo_criterion}.
	\begin{figure}[ht]
		\subcaptionbox{Case $(k_2,k_3)=(2,3)$: image of $(X,D)$ after a vertical swap $\hat{\phi}$ from Lemma \ref{lem:w=2_fibers}. \label{fig:w=2_disjoint_k3=3}}[.45\linewidth]{
			\begin{tikzpicture}
				\path[use as bounding box] (-1.3,0) rectangle (3.5,3.2);
				\draw (-0.2,3) -- (3.4,3);
				\node at (1.7,2.8) {\small{$H_{1}$}};
				\draw (0,3.2) -- (0,1.6) to[out=-90,in=180] (0.4,1.4) -- (2.2,1.4);
				\node at (1.7,1.2) {\small{$H_{2}$}};
				\draw (0.2,2.4) to[out=-180,in=60] (-0.8,1.8);
				\draw (-0.8,1.4) [partial ellipse= 70 : 400 : 0.4 and 0.8];
				\node at (-0.3,0.7) {\small{$F_1$}};
				\draw (1.2,3.2) -- (1,2);
				\node at (0.8,2.5) {\small{$-2$}};
				\draw[dashed] (1,2.2) -- (1.2,1);
				\node at (0.8,1.65) {\small{$-1$}};
				\draw (1.2,1.2) -- (1,0); 
				\node at (0.8, 0.5) {\small{$-2$}};
				\node at (1.3,0.2) {\small{$F_2$}};
				\draw[dashed] (2.1,1.3) -- (2.1,1.5) to[out=90,in=180] (2.3,1.7) -- (3.3,1.7);
				\node at (2.5,1.9) {\small{$-1$}};
				\draw (3.2,3.2) -- (3,2);
				\node at (2.8,2.5) {\small{$-2$}};
				\draw (3,2.2) -- (3.2,1);
				\node at (2.85,1.4) {\small{$-2$}};
				\draw (3.2,1.2) -- (3,0); 
				\node at (2.8, 0.5) {\small{$-2$}};
				\node at (3.3,0.2) {\small{$F_3$}};
			\end{tikzpicture}
		}
	\hfill
		\subcaptionbox{Case $(k_2,k_3)=(2,2)$: the new $\P^1$-fibration. The marked point is a base point of $\tau^{-1}$, so $\tau^{-1}_{*}V\cap H_1=\emptyset$. \label{fig:w=2_disjoint_k3=2}}[.5\linewidth]{
			\begin{tikzpicture}
				\path[use as bounding box] (-0.2,0) rectangle (4.5,3.2);
				\draw (-0.2,3) -- (4.4,3);
				\node at (4.2,2.8) {\small{$H_{1}^{\tau}$}};
				\draw[thick] (0,3.2) -- (0,1.6) to[out=-90,in=180] (0.4,1.45) -- (4.4,1.45);
				\node at (4.2,1.25) {\small{$H_{2}^{\tau}$}};
				\node at (4.2,1.65) {\small{$-2$}};
				\draw (1.2,3.2) -- (1,2);
				\node at (0.8,2.5) {\small{$-2$}};
				\node at (1.3,2.5) {\small{$V$}};
				\filldraw (1.17,3) circle (0.08);
				\draw[thick] (1,2.2) -- (1.2,1);
				\node at (0.8,1.65) {\small{$-1$}};
				\draw[thick] (1.2,1.2) -- (1,0); 
				\node at (0.8, 0.5) {\small{$-2$}};
				\node at (1.25,0.2) {\small{$F$}};
				\draw (2.4,3.2) -- (2.2,2);
				\node at (2,2.5) {\small{$-2$}};
				\draw[dashed] (2.2,2.2) -- (2.4,1);
				\node at (2,1.65) {\small{$-1$}};
				\draw (2.4,1.2) -- (2.2,0); 
				\node at (2, 0.5) {\small{$-2$}};
				\node at (2.5,0.2) {\small{$F_2$}};
				\draw (3.6,3.2) -- (3.4,2);
				\node at (3.2,2.5) {\small{$-2$}};
				\draw[dashed] (3.4,2.2) -- (3.6,1);
				\node at (3.2,1.65) {\small{$-1$}};
				\draw (3.6,1.2) -- (3.4,0); 
				\node at (3.2, 0.5) {\small{$-2$}};
				\node at (3.7,0.2) {\small{$F_3$}};
			\end{tikzpicture}
		}\vspace{-1em}
		\caption{Proof of Lemma  \ref{lem:w=2_disjoint}.}\vspace{-0.5em}
		\label{fig:w=2_disjoint}
	\end{figure}
	
	Thus $k_{2}=k_{3}=2$. Lemma \ref{lem:w=2_basic}\ref{item:H2} gives $k_{1}=-H_{2}^{2}$. Let $\tau$ be the contraction of $(-1)$-curves in $F_1$ and its images so that $F\de\tau_{*}F_1$ satisfies $F\redd=[2,1,2]$. Put $H_j^{\tau}=\tau(H_j)$ for $j\in \{1,2\}$, and let $V\de \tau(V_1)$, so $V$ is the tip of $F\redd$ meeting $H_1^{\tau}$. We have $(H_2^{\tau})^2=H_2^2+k_1-2=-2$. Put $F'=H_{2}^{\tau}+F-V$. Then $F'\redd=[2,1,2]$ meets the remaining part of $\tau_{*}D$ normally, twice: at $H_{2}^{\tau}\cap H_{1}^{\tau}$ and at $(F\redd-V)\cap V$, see Figure \ref{fig:w=2_disjoint_k3=2}, where $F'$ is the thickened chain. Thus $|\tau^{*}F'|$ induces a $\P^1$-fibration $\tilde{p}$ of $X$  such that the horizontal part of $D$ consists of a $1$-section $H_{1}$ and a $2$-section $V_1=\tau^{-1}_{*}V$. Since $V$ is the image of the $(-3)$-curve in $\hat{F}_1$ (the one underlined in Lemma \ref{lem:w=2_fibers}\ref{item:FH=1}), its proper transform $V_1$ is disjoint from $H_1$. Thus $\tilde{p}$ has 
	the required property.
\end{proof}

\begin{lemma}[Proposition \ref{prop:ht=3_models}, case $\width(\bar{X})=2$]\label{lem:w=2_psi} 
	Let $(X,D)$ be as in \eqref{eq:assumption_w=2}, that is, the minimal log resolution of a log terminal del Pezzo surface of height $3$ and width $2$. Then one can choose a witnessing $\P^1$-fibration $p$ so that there exists a morphism $\psi\colon X\to \P^2$ with the following properties. 
	\begin{enumerate}
		\item\label{item:psi_H1} The image of the $1$-section in $D$ is a point, say $p_0$.
		\item\label{item:psi_H2} The image of the $2$-section in $D$ is a conic, say $\cc$, not passing through $p_0$.
		\item\label{item:psi_l} Let $F_{1},\dots,F_{\nu}$ be the degenerate fibers of $p$, and let $\ll_{i}=\psi_{*}F_i$. Then for all $i\in \{1,\dots,\nu\}$, $\ll_i$ is a line contained in $\psi_{*}D$, and the set $\ll_i\cap \cc$ contains exactly one base point of $\psi^{-1}$, say $p_i$.
		\item\label{item:psi_p_i} The base points of $\psi^{-1}$ on $\P^2$ are $p_0,\dots,p_{\nu}$, and the preimage of each $p_i$ contains exactly one $(-1)$-curve.
		\item\label{item:psi_cha-neq-2} If $\cha\kk\neq 2$ then $\nu=3$ and exactly two lines in $\psi_{*}D$ are tangent to $\cc$.
		\item\label{item:psi_cha=2} If $\cha\kk=2$ then $\nu\in \{3,4\}$ and all lines passing through $p_0$ are tangent to $\cc$.
	\end{enumerate}
\end{lemma}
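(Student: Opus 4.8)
The strategy is to combine the structural results of Lemmas~\ref{lem:w=2_fibers} and~\ref{lem:w=2_basic} with the choice of fibration produced by Lemma~\ref{lem:w=2_disjoint}, and then to push $(X,D)$ down to $\P^2$ via the composite $\psi\colon X\to \P^2$ of $\bar\phi$ with the contraction of the negative section. By Lemma~\ref{lem:w=2_disjoint} we may fix a witnessing $\P^1$-fibration $p$ with $H_1\cdot H_2=0$, so by Lemma~\ref{lem:w=2_basic}\ref{item:psi} the image $\bar\phi(X)$ is $\F_1$ and $\bar H_1$ is its negative section (Lemma~\ref{lem:w=2_basic}\ref{item:psi_properties}). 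Contracting $\bar H_1$ then yields $\psi\colon X\to \P^2$. First I would establish~\ref{item:psi_H1} and~\ref{item:psi_H2}: the $1$-section $H_1$ maps to $\bar H_1$ which is contracted, giving the point $p_0$; the $2$-section $H_2$ maps to $\bar H_2$, a smooth rational curve with $\bar H_2^2=4$ and $\bar H_1\cdot \bar H_2=H_1\cdot H_2=0$ (again Lemma~\ref{lem:w=2_basic}\ref{item:psi_properties}), so its image $\cc$ is a conic disjoint from $p_0$. Since each $\bar\phi_*F_i$ is a fiber of $\F_1$ and the contraction of $\bar H_1$ sends fibers to lines through $p_0$, the images $\ll_i=\psi_*F_i$ are lines through $p_0$, which gives the framework for~\ref{item:psi_l}.

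Next I would verify~\ref{item:psi_l} and~\ref{item:psi_p_i}. Each $\ll_i$ lies in $\psi_*D$ because $V_i\subseteq D$ whenever $V_i$ is not a $(-1)$-curve, and Lemma~\ref{lem:w=2_basic}\ref{item:eta},\ref{item:eta-not-nu} controls the one possible exception $(\nu,\eta)=(4,3)$; in that case one checks that the remaining fiber still maps into $\psi_*D$ after reindexing. The base points of $\psi^{-1}$ are exactly the centers of the blowups, namely $p_0$ together with the points where each fiber $F_i$ meets the boundary off $\bar H_1$; by the fiber classification in Lemma~\ref{lem:w=2_fibers} each degenerate fiber contains exactly one $(-1)$-curve over a single point $p_i\in \ll_i\cap\cc$, which gives~\ref{item:psi_p_i} and the ``exactly one base point'' clause of~\ref{item:psi_l}. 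Here I would use that $H_2$ is a $2$-section, so $\ll_i\cdot\cc=2$, and that the unique $(-1)$-curve in $F_i$ meets $\hat\phi(H_2)$, forcing the blown-up point to lie on $\cc$.

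The substantive part is~\ref{item:psi_cha-neq-2} and~\ref{item:psi_cha=2}, distinguishing the separable and inseparable cases of $p|_{H_2}\colon H_2\to\P^1$. The tangency of $\ll_i$ to $\cc$ at $p_i$ is equivalent, via $\psi$, to $F_i$ meeting the $2$-section $H_2$ in a single point (i.e. $(\ll_i\cdot\cc)_{p_i}=2$), which is precisely the ramification of $p|_{H_2}$ over $p(F_i)$. Thus the tangent lines correspond to ramification points of the degree-$2$ map $p|_{H_2}$. If $\cha\kk\neq 2$ the map is separable, so Hurwitz gives exactly $2$ ramification points (as $H_2\cong\P^1$ and the base is $\P^1$), forcing exactly two tangent lines; combined with Lemma~\ref{lem:w=2_basic}\ref{item:H1_meets} bounding $\nu\leq 4$ and the degree count $\sum k_i\geq 6$ from Lemma~\ref{lem:w=2_basic}\ref{item:H2}, I expect this to pin down $\nu=3$. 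If $\cha\kk=2$ the map can be inseparable (totally ramified everywhere, as already noted in the proof of Lemma~\ref{lem:w=2_disjoint}), so every fiber is tangent; that $\ll_1,\dots,\ll_\nu$ then pass through a common point $p_0$ off $\cc$ uses that in characteristic $2$ the tangent lines to a conic are concurrent at its strange point. The main obstacle I anticipate is the bookkeeping in~\ref{item:psi_cha=2}: ruling out $\nu\geq 5$ and $\nu\leq 2$ and showing all lines through $p_0$ are tangent requires carefully matching the $k_i$-counts of Lemma~\ref{lem:w=2_fibers} against the bound $\nu\leq 4$ (Lemma~\ref{lem:w=2_basic}\ref{item:H1_meets}) and the delPezzo inequality of Lemma~\ref{lem:delPezzo_criterion}, together with excluding, via Lemma~\ref{lem:admissible_forks}, the fiber configurations that would violate admissibility of the fork containing $H_1$.
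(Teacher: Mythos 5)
Your overall skeleton matches the paper's proof: take $p$ with $H_1\cdot H_2=0$ from Lemma~\ref{lem:w=2_disjoint}, contract the negative section of $\F_1$ to get $\psi\colon X\to\P^2$ (giving \ref{item:psi_H1}, \ref{item:psi_H2}), and translate tangency of $\ll_i$ to $\cc$ into ramification of the degree-$2$ map $p|_{H_2}$, with Hurwitz separating $\cha\kk\neq 2$ from $\cha\kk=2$. However, there are two genuine gaps.

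First, your treatment of the exceptional case $(\nu,\eta)=(4,3)$ is wrong: you claim that "one checks that the remaining fiber still maps into $\psi_*D$ after reindexing," but no reindexing can help. In that case the fiber $F_1$ (the one with two $(-1)$-curves, as in Lemma~\ref{lem:w=2_fibers}\ref{item:FH=2} with $k_1=0$) has $V_1\not\subseteq D$, so the line $\ll_1=\psi_*F_1$ is genuinely not contained in $\psi_*D$, and \ref{item:psi_l} fails for this fibration no matter how you label the fibers. The statement only asserts the existence of \emph{some} witnessing fibration with the listed properties, and the fix is to \emph{replace} $p$: the paper observes that two of the remaining fibers are as in Lemma~\ref{lem:w=2_fibers}\ref{item:F_fork}, takes the pencil of conics tangent to $\ll_3,\ll_4$ at $p_3,p_4$, pulls it back to a new $\P^1$-fibration $\tilde p$ (whose $1$-section is the second exceptional curve over $p_3$, which lies in $D$ because $\height(\bar X)=3$), verifies that $\tilde p$ again satisfies the hypotheses of Lemma~\ref{lem:w=2_disjoint}, and shows that for $\tilde p$ one has $\eta=\nu$. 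This construction of a new fibration is a substantive step your proposal omits entirely.

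Second, the heart of parts \ref{item:psi_cha-neq-2} and \ref{item:psi_cha=2} is the claim that \emph{at most one} line $\ll_i$ fails to be tangent to $\cc$, and you never prove it. Hurwitz alone does not suffice: in $\cha\kk\neq 2$ it only bounds the number of tangent lines above by $2$, so it is perfectly consistent with $\nu=4$ and two non-tangent lines; and in $\cha\kk=2$ it does not by itself exclude a separable $p|_{H_2}$ with a single wildly ramified point (one tangent line and $\nu-1\geq 2$ non-tangent ones). Both scenarios are killed only by the combinatorial argument you defer as "bookkeeping I anticipate": assuming $\ll_1,\ll_2$ are both non-tangent, one gets $\beta_D(H_2)=3$, so the connected component $U$ of $D$ containing $H_2$ is an admissible fork; the twig containing $H_1$ forces $\beta_D(H_1)\leq 2$, whence $\nu=3$ by Lemma~\ref{lem:w=2_basic}\ref{item:H1_meets}; admissibility (Lemma~\ref{lem:admissible_forks}) then caps the twig lengths, forcing $k_1\leq 2$, $k_2=1$, $k_3=3$ and hence $\sum_i k_i=5$, contradicting $\sum_i k_i\geq 6$ from Lemma~\ref{lem:w=2_basic}\ref{item:H2}. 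Without this argument the case distinction between the two characteristics, and in particular the equality $\nu=3$ when $\cha\kk\neq 2$, remains unproven.
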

\begin{proof}
	Let $\bar{\phi}$ be as in  Lemma \ref{lem:w=2_fibers}. By Lemma \ref{lem:w=2_disjoint}, we can assume $H_1\cdot H_2=0$, so  by Lemma \ref{lem:w=2_basic}\ref{item:psi} $\bar{\phi}\colon X\to \F_1$ maps $H_1$ to the negative section. Let $\psi\colon X\to \P^2$ be the composition of $\bar{\phi}$ with the contraction $\bar{\phi}(H_1)$. Then $\psi$ maps the fibers of $p$ to  lines meeting at $\{p_0\}= \psi(H_1)$; and we conclude that 
	\ref{item:psi_H1} and \ref{item:psi_H2} hold. 
	
	We claim that we can choose $p$ such that $\eta=\nu$, see Lemma \ref{lem:w=2_basic}\ref{item:eta}. Assume $\nu\neq \eta$. By Lemma \ref{lem:w=2_basic}\ref{item:eta-not-nu} $\nu=4$, so by Lemma \ref{lem:w=2_basic}\ref{item:H1_meets} $H_1$ meets twigs $T_2,T_3,T_4$ of $D$ such that $\ltip{T_i}\subseteq F_i$. By Lemma \ref{lem:admissible_forks} we have, say, $T_2=[2]$, so $(F_2)\redd=[2,1,2]$.  In particular, $H_{2}\not\subseteq T_2$. Interchanging $T_3$ with $T_4$, if needed, we can assume that $H_{2}\not\subseteq T_3$, either, so $F_3$ is as in Lemma \ref{lem:w=2_fibers}\ref{item:F_fork}. Thus $(\ll_i\cdot \cc)_{p_i}=2$ for $i\in \{2,3\}$. The pencil of conics tangent to $\ll_{i}$ at $p_{i}$ for $i=3,4$ pulls back to a $\P^{1}$-fibration $\tilde{p}$ of $X$ such that the horizontal part of $D$ consists of a $2$-section $V_{4}=\psi^{-1}_{*}\ll_4$ and a $1$-section $H$, where $H$ is the second exceptional curve over $p_3$ (note that $H\subseteq D$ because $\height(\bar{X})=3$). Since $H\subseteq F_3$ and $V_4\subseteq F_4$, we have $V_4\cdot H=0$. Hence $\tilde{p}$ still satisfies the condition of Lemma  \ref{lem:w=2_disjoint}, which implies \ref{item:psi_H1} and \ref{item:psi_H2}. For $i\in \{3,4\}$, the unique $(-1)$-curve $A_i$ in $\psi^{-1}(p_i)$ belongs to the fiber $F$ of $\tilde{p}$ containing $H_2$. In particular, $F$ contains two $(-1)$-curves, so it plays the role of $F_1$ in  \eqref{eq:w=2_Sigma}. For $i\in \{3,4\}$ the fiber $F_i$ is as Lemma \ref{lem:w=2_fibers}\ref{item:F_columnar} or \ref{item:F_fork}, so $A_{i}\cdot G\leq 1$ for every component $G$ of $D$. It follows that $F$ is \emph{not} as in Lemma \ref{lem:w=2_fibers}\ref{item:FH=2} with $k_1=0$, so by Lemma \ref{lem:w=2_basic}\ref{item:eta-not-nu} the new $\P^1$-fibration $\tilde{p}$ has $\eta=\nu$, as needed.
	\smallskip
	
	The equality $\eta=\nu$ means that $\ll_i\subseteq\psi_{*}D$ for all $i$. Since $\ll_i\cdot \cc=2$, the set $\ll_i\cap \cc$ contains a base point of $\psi^{-1}$. Thus $\psi^{-1}$ has at least $\nu+1$ base points on $\P^2$ (not counting the infinitely near ones). On the other hand, all $(-1)$-curves in $\Exc\psi$ are vertical and by \eqref{eq:w=2_Sigma} there is at most $\nu+1$ of them, so $\psi^{-1}$ has exactly $\nu+1$ base points, and the preimage of each of them contains exactly one $(-1)$-curve. Thus \ref{item:psi_l} and \ref{item:psi_p_i} hold.
	
	By Lemma \ref{lem:w=2_basic}\ref{item:H1_meets},\ref{item:eta} we have $\nu\in \{3,4\}$. To prove \ref{item:psi_cha-neq-2} and \ref{item:psi_cha=2}, it remains to prove that at most one line $\ll_i$ is not tangent to $\cc$. If $\#\ll_{i}\cap \cc=2$ for some $i>1$ then $F_i$ is as in Lemma \ref{lem:w=2_fibers}\ref{item:F_columnar}, so since $\beta_{D}(H_2)\leq 3$, there can be at most one such $i>1$. Thus we can assume that $\#\ll_{i}\cap \cc=2$ for $i\in \{1,2\}$ and $\#\ll_{j}\cap \cc=1$ for $j\geq 3$. It follows that $F_2$ is as in  \ref{lem:w=2_fibers}\ref{item:F_columnar}, so $k_2=1$; $F_j$ for $j\geq 3$ are as in \ref{lem:w=2_fibers}\ref{item:F_fork}; and  $F_1$ is as in \ref{lem:w=2_fibers}\ref{item:FH=2}: recall that case \ref{lem:w=2_fibers}\ref{item:dumb} does not occur by Lemma \ref{lem:w=2_basic}\ref{item:H1}. Since each $\ll_{i}\cap \cc$ contains a base point of $\psi^{-1}$, we have $k_i\geq 1$ for all $i$. 
	
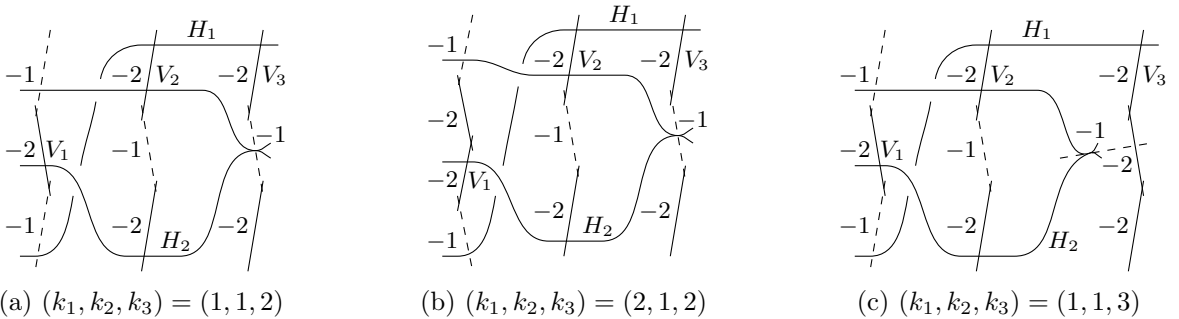
\begin{figure}[ht]
	\subcaptionbox{$(k_1,k_2,k_3)=(1,1,2)$ \label{fig:w=2_models_k2=2}}[.32\textwidth]{
		\begin{tikzpicture}
			\draw[dashed] (0.2,3.2) -- (0,2);
			\node at (-0.2,2.6) {\small{$-1$}};
			\draw (0,2.2) -- (0.2,1);
			\node at (-0.2,1.6) {\small{$-2$}};
			\node at (0.3,1.6) {\small{$V_1$}};
			\draw[dashed] (0.2,1.2) -- (0,0);
			\node at (-0.2,0.6) {\small{$-1$}};
			\draw (1.6,3.2) -- (1.4,2);
			\node at (1.2,2.6) {\small{$-2$}};
			\node at (1.75,2.6) {\small{$V_2$}};
			\draw[dashed] (1.4,2.2) -- (1.6,1);
			\node at (1.2,1.6) {\small{$-1$}};
			\draw (1.6,1.2) -- (1.4,0);
			\node at (1.2,0.6) {\small{$-2$}};
			\draw (3,3.2) -- (2.8,2);
			\node at (2.6,2.6) {\small{$-2$}};
			\node at (3.15,2.6) {\small{$V_3$}};
			\draw[dashed] (2.8,2.2) -- (3,1);
			\node at (3.1,1.8) {\small{$-1$}};
			\draw (3,1.2) -- (2.8,0);
			\node at (2.6,0.6) {\small{$-2$}};
			\draw (-0.2,0.2) -- (0,0.2) to[out=0,in=-100] (0.5,1);
			\draw (0.6,1.4) to[out=80,in=-100] (0.8,2.25);
			\draw (0.86,2.55) to[out=80,in=180] (1.4,3) -- (3.2,3);
			\node at (2.2,3.2) {\small{$H_1$}};
			\draw (-0.2,2.4) -- (2.2,2.4) to[out=0,in=180] (2.9,1.6) to[out=0,in=-150] (3.1,1.7);
			\draw (-0.2,1.4) -- (0.2,1.4) to[out=0,in=180] (1.2,0.2) -- (1.9,0.2) to[out=0,in=180] (2.9,1.6) to[out=0,in=150] (3.1,1.5);
			\node at (1.85,0.4) {\small{$H_2$}};
		\end{tikzpicture}		
	}
	\subcaptionbox{$(k_1,k_2,k_3)=(2,1,2)$ \label{fig:w=2_models_k2=2b}}[.32\textwidth]{
	\begin{tikzpicture}
		\draw[dashed] (0.2,3.2) -- (0,2.2);
		\node at (-0.2,2.8) {\small{$-1$}};
		\draw (0,2.4) -- (0.2,1.4);
		\node at (-0.2,1.8) {\small{$-2$}};
		\draw (0.2,1.6) -- (0,0.6);
		\node at (-0.2,1) {\small{$-2$}};
		\node at (0.3,1) {\small{$V_1$}};
		\draw[dashed] (0,0.8) -- (0.2,-0.2);
		\node at (-0.2,0.2) {\small{$-1$}};
		\draw (1.6,3.2) -- (1.4,2);
		\node at (1.2,2.6) {\small{$-2$}};
		\node at (1.75,2.6) {\small{$V_2$}};
		\draw[dashed] (1.4,2.2) -- (1.6,1);
		\node at (1.2,1.6) {\small{$-1$}};
		\draw (1.6,1.2) -- (1.4,0);
		\node at (1.2,0.6) {\small{$-2$}};
		\draw (3,3.2) -- (2.8,2);
		\node at (2.6,2.6) {\small{$-2$}};
		\node at (3.15,2.6) {\small{$V_3$}};
		\draw[dashed] (2.8,2.2) -- (3,1);
		\node at (3.1,1.8) {\small{$-1$}};
		\draw (3,1.2) -- (2.8,0);
		\node at (2.6,0.6) {\small{$-2$}};
		\draw (-0.2,0) -- (0,0) to[out=0,in=-100] (0.5,0.8);
		\draw (0.6,1.2) to[out=80,in=-100] (0.8,2.25);
		\draw (0.86,2.55) to[out=80,in=180] (1.4,3) -- (3.2,3);
		\node at (2.2,3.2) {\small{$H_1$}};
		\draw (-0.2,2.6) --  (0.2,2.6) to[out=0,in=180] (1,2.4) -- (2.2,2.4) to[out=0,in=180] (2.9,1.6) to[out=0,in=-150] (3.1,1.7);
		\draw (-0.2,1.25) -- (0.2,1.25) to[out=0,in=180] (1.2,0.2) -- (1.9,0.2) to[out=0,in=180] (2.9,1.6) to[out=0,in=150] (3.1,1.5);
		\node at (1.85,0.4) {\small{$H_2$}};
	\end{tikzpicture}		
}
\subcaptionbox{$(k_1,k_2,k_3)=(1,1,3)$ \label{fig:w=2_models_k3=3}}[.34\textwidth]{
	\begin{tikzpicture}
		\draw[dashed] (0.2,3.2) -- (0,2);
		\node at (-0.2,2.6) {\small{$-1$}};
		\draw (0,2.2) -- (0.2,1);
		\node at (-0.2,1.6) {\small{$-2$}};
		\node at (0.3,1.6) {\small{$V_1$}};
		\draw[dashed] (0.2,1.2) -- (0,0);
		\node at (-0.2,0.6) {\small{$-1$}};
		\draw (1.6,3.2) -- (1.4,2);
		\node at (1.2,2.6) {\small{$-2$}};
		\node at (1.75,2.6) {\small{$V_2$}};
		\draw[dashed] (1.4,2.2) -- (1.6,1);
		\node at (1.2,1.6) {\small{$-1$}};
		\draw (1.6,1.2) -- (1.4,0);
		\node at (1.2,0.6) {\small{$-2$}};
		\node at (2.9,1.85) {\small{$-1$}};
		\draw[dashed] (2.5,1.5) -- (3.7,1.7);
		\draw (3.6,3.2) -- (3.4,2);
		\node at (3.2,2.6) {\small{$-2$}};
		\node at (3.75,2.6) {\small{$V_3$}};
		\draw (3.4,2.2) -- (3.6,1);
		\node at (3.25,1.4) {\small{$-2$}};
		\draw (3.6,1.2) -- (3.4,0);
		\node at (3.2,0.6) {\small{$-2$}};
		\draw (-0.2,0.2) -- (0,0.2) to[out=0,in=-100] (0.5,1);
		\draw (0.6,1.4) to[out=80,in=-100] (0.8,2.25);
		\draw (0.86,2.55) to[out=80,in=180] (1.4,3) -- (3.8,3);
		\node at (2.2,3.2) {\small{$H_1$}};
		\draw (-0.2,2.4) -- (2.2,2.4) to[out=0,in=-160] (2.9,1.57) to[out=20,in=-120] (3,1.7);
		\draw (-0.2,1.4) -- (0.2,1.4) to[out=0,in=180] (1.2,0.2) -- (1.9,0.2) to[out=0,in=-160] (2.9,1.57) to[out=20,in=150] (3.05,1.5);
		\node at (2.55,0.4) {\small{$H_2$}};
	\end{tikzpicture}		
}
	\caption{Proof of Lemma \ref{lem:w=2_psi}: examples of $D$'s excluded using Lemma \ref{lem:w=2_basic}\ref{item:H2}.}
	\label{fig:w=2_models}
\end{figure}	
	We have $\beta_{D}(H_2)=3$: indeed, $H_{2}$ meets both tips of $F_{2}$, and a component $V_1$ of $F_1$. Hence the connected component of $D$ containing $H_2$, call it $U$, is an admissible fork. 
	Let $T_{1}$ be the  maximal twig of $D$ containing $V_1$. Since $F_1$ is as in Lemma  \ref{lem:w=2_fibers}\ref{item:FH=2}, we have $\#T_{1}\geq k_{1}$. Recall that the $1$-section $H_1$ meets a tip of $F_2$, too, so $H_{1}$ lies in another twig of $U$, call it $T_{H}$. In particular $\beta_{D}(H_1)\leq 2$. Since $\nu\geq 3$, the inequality in Lemma \ref{lem:w=2_basic}\ref{item:H1_meets} yields $\nu=3$ and $\beta_{D}(H_1)=2$, so $\#T_{H}\geq 3$, see Figure \ref{fig:w=2_models}. Since the fork $U$ is admissible, we have $\#T_{1}\leq 2$ by Lemma \ref{lem:admissible_forks}. Thus $k_{1}\leq 2$. Lemma \ref{lem:w=2_basic}\ref{item:H2} gives $k_{3}\geq 3$, cf.\ Figure \ref{fig:w=2_models_k2=2b} or \ref{fig:w=2_models_k3=3}. Since $F_3$ is as in \ref{lem:w=2_fibers}\ref{item:F_fork} and the connected component of $(F_3)\redd \wedge D\vert$ is a chain, we get $k_3=3$ and $\#T_{H}\geq 5$, so $\#T_{1}=1$ by the admissibility of $U$, see Figure \ref{fig:w=2_models_k3=3}. Thus $\sum_{i} k_{i}=5$, a contradiction with Lemma \ref{lem:w=2_basic}\ref{item:H2}.
\end{proof}

\subsection{Proof of Proposition \ref{prop:ht=3_swaps}: vertical swaps \texorpdfstring{$(X,D)\sqto (Y,D_Y)$}{}}

We now prove the case $\width=2$ of Proposition \ref{prop:ht=3_swaps}, that is, we show that any del Pezzo surface of rank $1$, height $3$ and width $2$ swaps vertically to a del Pezzo surface as in Example \ref{ex:w=2_cha_neq_2} or \ref{ex:w=2_cha=2}. More precisely, we choose a witnessing $\P^1$-fibration $p$ so that the minimalization $\psi\colon X\to \P^2$ constructed in Lemma \ref{lem:w=2_psi} factors as $\psi=\phi\circ\phi_{+}$, where $\phi$ is as in Example \ref{ex:w=2_cha_neq_2} or \ref{ex:w=2_cha=2}, and $\phi_{+}$ is the required vertical swap. 

\begin{notation}\label{not:w=2}
	We fix a birational morphism $\psi\colon X\to \P^2$ as in Lemma \ref{lem:w=2_psi}, and keep the notation introduced there. In particular, $\{p_{0}\}=\psi(H_1)$, and $p_i$ for $i\in \{1,\dots,\nu\}$ is the base point of $\psi^{-1}$ at the intersection of the line $\ll_i=\psi(F_i)$ and the conic $\cc=\psi(H_2)$. As in  Example \ref{ex:w=2_cha_neq_2}, we denote by $p_{ij}$ and  $p_{i}',p_{i}'',\dots$ the points infinitely near to $p_{i}$ lying on the proper transforms of $\ll_{j}$ and $\cc$, respectively. As in Lemma \ref{lem:w=3_swaps}, we sometimes underline these symbols without changing their meaning, just so that we can easily refer to them later: for instance, the symbol $\uline{p_{1}''}$ in Lemma \ref{lem:w=2_swaps}\ref{item:ht=3_exception_swap} still refers to the point $p_{1}''$.
	
	We order the degenerate fibers $F_i$ as in Lemma \ref{lem:w=2_fibers} (so $F_1$ has two $(-1)$-curves, and $F_i$ for $i\in \{2,\dots,\nu\}$ has one), and use the numbers $k_{i}$ introduced there. Note that the resulting order of the lines $\ll_i$ slightly differs from the one in Example \ref{ex:w=2_cha_neq_2}: namely, the line $\ll_1$ is now the image of the fiber of $p$ containing two $(-1)$-curves, which in Example \ref{ex:w=2_cha_neq_2}\ref{item:ht=3_exception} is called $\ll_2$, see Figure \ref{fig:ht=3_exception}. Hence in case \ref{ex:w=2_cha_neq_2}\ref{item:ht=3_exception}, we interchange the lines $\ll_1$ and $\ll_2$. In all the remaining cases the notation in Examples \ref{ex:w=2_cha_neq_2}, \ref{ex:w=2_cha=2} is compatible with the current one.
\end{notation}

\begin{lemma}[Proposition \ref{prop:ht=3_swaps}, case $\width(\bar{X})=2$]\label{lem:w=2_swaps}
	Let $\bar{X}$ be as in \eqref{eq:assumption_w=2}, i.e.\ a log terminal del Pezzo surface of rank one, height $3$ and width $2$. Then $\bar{X}$ swaps vertically to one of the surfaces $\bar{Y}$ from Example \ref{ex:w=2_cha_neq_2} or \ref{ex:w=2_cha=2}.
	
	More precisely, one can choose a witnessing $\P^1$-fibration of the minimal resolution $X$ of $\bar{X}$ such that for the birational morphism $\psi\colon X\to \P^2$ from  Lemma \ref{lem:w=2_psi}, one of the following holds, see Notation \ref{not:w=2}.
		\begin{enumerate}
			\item\label{item:ht=3_exception_swap} $\cha\kk\neq 2$ and  $\{p_0,p_{01},p_{1},p_{1}',\uline{p_{1}''},p_{2},p_{22},p_{3},p_{3}',p_{3}''\}\subseteq \Bs\psi^{-1}$, see Figure \ref{fig:ht=3_exception_swap}, 
			\item \label{item:w=2_A1+A2+A5_swap} $\cha\kk\neq 2$ and  $\{p_0,p_{01},p_{1},\uline{p_{11}},p_{2},p_{2}',p_{2}'',p_{3},p_{3}',\uline{q}\}\subseteq \Bs\psi^{-1}$ where $q$ is a point infinitely near to $p_3$ which lies on the proper transform of $\cc$ or of $\ll_3$, see Figure \ref{fig:w=2_A1+A2+A5_swap} or \ref{fig:w=2_A1+A2+A5_swap_b},
			\item\label{item:w=2_2A1+2A3_swap} $\cha\kk\neq 2$ and 
			$\{p_0,p_{01},p_{1},p_{1}',p_{2},p_{2}',p_{3},p_{3}',\uline{q}\}\subseteq \Bs\psi^{-1}$, where $q$ is a point infinitely near to $p_3$  which lies on the proper transform of $\cc$, or of $\ll_3$, or of the first exceptional curve over $p_3$, see Figures \ref{fig:w=2_2A1+2A3_swap_H}--\ref{fig:w=2_2A1+2A3_swap_T},
			\item\label{item:w=2_cha=2_nu=4} $\cha\kk=2$, $\nu=4$ and $\{p_{0},p_{01},p_{1},p_{2},p_{2}',p_{3},p_{3}',p_{4},p_{4}'\}\subseteq \Bs \psi^{-1}$, see Figure \ref{fig:swap-to-nu=4_small} or \ref{fig:swap-to-nu=4_large},
			\item\label{item:w=2_cha=2_nu=3} $\cha\kk=2$, $\nu=3$ and $\{p_{0},p_{01},p_{1},p_{1}',p_{2},p_{2}', p_{3},p_{3}'\}\subseteq \Bs \psi^{-1}$, see Figure \ref{fig:swap-to-nu=3}.
		\end{enumerate}
	Write $\psi=\phi\circ\phi_{+}$, where $\phi$ is a blowup at all the above points except for the underlined ones and, if $p_{1}'\in \Bs\psi^{-1}$ and case \ref{item:w=2_cha=2_nu=4} holds, at $p_{1}'$. Then $\phi_{+}\colon (X,D)\sqto (Y,D_Y)$ is a vertical swap onto a minimal resolution of $\bar{Y}$.
\end{lemma}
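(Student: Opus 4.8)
The plan is to convert the projective geometry furnished by Lemma~\ref{lem:w=2_psi}---the conic $\cc=\psi(H_2)$, the lines $\ll_1,\dots,\ll_\nu$ through $p_0=\psi(H_1)$, and the base points $p_i\in\ll_i\cap\cc$---into the combinatorics of the blowup tower of $\psi$, and then to recognise that tower as one of the morphisms $\phi$ of Example~\ref{ex:w=2_cha_neq_2} or~\ref{ex:w=2_cha=2} followed by a reversal of vertical swaps. First I would record, over each $p_i$, the chain of infinitely near points, which by Lemma~\ref{lem:w=2_fibers} has length $k_i$ on the conic side, with an additional length-$l_1$ chain over $p_0$ coming from the distinguished fibre $F_1$ (the one with two $(-1)$-curves). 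The governing numerical constraints are $\sum_i k_i=4-H_2^{2}\geq 6$ and $l_1\geq 1$, coming from $H_2^2\leq-2$ and $H_1^2\leq-2$ via Lemma~\ref{lem:w=2_basic}\ref{item:H2},\ref{item:H1}, together with $\nu\in\{3,4\}$ from Lemma~\ref{lem:w=2_basic}\ref{item:H1_meets},\ref{item:eta}. The tangency pattern is dictated by the ramification of the degree-two map $p|_{H_2}\colon H_2\to\P^1$: a transverse meeting $(\ll_i\cdot\cc)_{p_i}=1$ forces $k_i=1$ and a columnar fibre of type~\ref{lem:w=2_fibers}\ref{item:F_columnar}, whereas a tangency forces $k_i\geq 2$ and a fibre of type~\ref{lem:w=2_fibers}\ref{item:F_fork}; separability and the Hurwitz formula then impose exactly two tangent lines when $\cha\kk\neq 2$, and total inseparability imposes all lines tangent when $\cha\kk=2$, matching Lemma~\ref{lem:w=2_psi}\ref{item:psi_cha-neq-2},\ref{item:psi_cha=2}.

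Next I would run the case analysis. When $\cha\kk\neq 2$ we have $\nu=3$ and two tangent lines; since by Lemma~\ref{lem:w=2_psi}\ref{item:psi_p_i} each of the four base points $p_0,p_1,p_2,p_3$ carries a single $(-1)$-curve while there are $\nu+1=4$ vertical $(-1)$-curves by~\eqref{eq:w=2_Sigma}, the two $(-1)$-curves of $F_1$ must sit over $p_0$ and over one $p_i$, which after the interchange of Notation~\ref{not:w=2} we call $p_1$. Thus $\ll_1$ is the line carrying the $H_1$-tower of length $l_1$ at $p_0$ and a conic-tower of length $k_1$ at $p_1$, and I would read off which of the types~\ref{lem:w=2_fibers}\ref{item:FH=2},\ref{item:FH=1} occurs. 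The admissibility of the connected component of $D$ containing $H_2$ (a fork branching at $H_2$ or $H_1$), combined with Lemma~\ref{lem:admissible_forks} and $\sum_i k_i\geq 6$, bounds the $k_i$ and the tower lengths and leaves finitely many admissible profiles; I expect exactly three, matching $\cS(\bar{Y})=\rA_1+\rA_7+[3]$, $\rA_1+\rA_2+\rA_5$, $2\rA_1+2\rA_3$, i.e.\ cases~\ref{item:ht=3_exception_swap}--\ref{item:w=2_2A1+2A3_swap}. The position of the last, underlined, base point $q$ over $p_3$---on the proper transform of $\cc$, of $\ll_3$, or of the first exceptional curve---produces the sub-cases drawn in Figures~\ref{fig:w=2_A1+A2+A5_swap}--\ref{fig:w=2_2A1+2A3_swap_T}. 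When $\cha\kk=2$ all $\nu\in\{3,4\}$ lines are tangent, and a parallel but shorter count, again using $\sum_i k_i\geq 6$ and admissibility, produces the configurations of cases~\ref{item:w=2_cha=2_nu=4} and~\ref{item:w=2_cha=2_nu=3}.

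Having pinned down the base-point data, I would define $\phi$ to be the blowup at the non-underlined points (with the extra omission of $p_1'$ flagged in case~\ref{item:w=2_cha=2_nu=4}); by construction this is precisely the morphism $\phi\colon Y\to\P^2$ of the corresponding Example, so $(Y,D_Y)$ is the minimal log resolution of the vertically primitive surface $\bar{Y}$ there. It then remains to verify that the induced birational morphism $\phi_+\colon X\to Y$, contracting the curves over the underlined (and any further) base points, is a \emph{vertical} swap. Here I would check that each such point is the intersection of a $(-2)$-curve of $D_Y$ with one of the vertical $(-1)$-curves $A_j$ of the Example, so that blowing it up is exactly the reverse of an elementary vertical swap in the sense of Section~\ref{sec:intro-structure}; composing these reversals along each tower exhibits $\phi_+$ as a vertical swap $(X,D)\sqto(Y,D_Y)$ with respect to the induced $\P^1$-fibrations.

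The main obstacle will be the combinatorial bookkeeping in the characteristic-not-two case: separating the admissible profiles $(k_1,k_2,k_3,l_1)$ and, above all, distinguishing the position of the final base point $q$, since the three positions lead to genuinely different singularity types and must each be shown to arise from an admissible, log terminal, ample configuration. This forces repeated appeals to Lemma~\ref{lem:admissible_forks} to exclude non-admissible forks and, where admissibility alone does not decide, to the ampleness inequality of Lemma~\ref{lem:delPezzo_criterion}. Keeping track of which tangency is absorbed by $F_1$ versus by a columnar fibre---so that the interchange of $\ll_1$ and $\ll_2$ from Notation~\ref{not:w=2} is applied consistently, and so that the two $(-1)$-curves of $F_1$ are correctly placed over $p_0$ and $p_1$---is the delicate part of the argument.
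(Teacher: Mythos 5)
Your skeleton (read off the base-point towers from Lemma \ref{lem:w=2_psi}, split on $\cha\kk$ and on the tangency pattern, then bound the profile $(k_1,k_2,k_3,l_1)$ by admissibility and $\sum_i k_i\geq 6$) reproduces the easy parts of the argument, but it has a genuine gap at its core: the step ``leaves finitely many admissible profiles; I expect exactly three'' is false for a \emph{fixed} witnessing $\P^1$-fibration, and no amount of appeal to Lemma \ref{lem:admissible_forks} or Lemma \ref{lem:delPezzo_criterion} will make it true. The lemma asserts that \emph{some} witnessing fibration matches the list, not that every one does. Concretely, in the case $\cha\kk\neq 2$ with $\ll_1$ not tangent to $\cc$ and $k_1=1$, it can happen that the $(-1)$-curve $A_2$ over $p_2$ contains no base point of $\psi^{-1}$ at all; this configuration is admissible, log terminal and ample (such surfaces exist -- they are exactly the surfaces of case \ref{item:w=2_2A1+2A3_swap} seen through the ``wrong'' fibration), yet its base-point data matches none of \ref{item:ht=3_exception_swap}--\ref{item:w=2_cha=2_nu=3}. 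The same happens in the tangent case when $k_a=2$ for some $a\in\{1,3\}$: case \ref{item:ht=3_exception_swap} requires $p_1'',p_3''\in\Bs\psi^{-1}$, which simply fails there. So an enumeration of profiles for the given fibration either produces extra cases you cannot eliminate, or silently (and wrongly) discards configurations that do occur.

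What is missing is the paper's key maneuver: when the fixed fibration does not fit the list, one \emph{changes the witnessing fibration}. In the problematic subcases the paper takes the pencil of conics tangent to $\cc$ at two of the points $p_i$, checks that its pullback is again a $\P^1$-fibration of height $3$ and width $2$ (this uses the hypothesis that the offending $(-1)$-curve carries no base point, so its proper transform is not in $D$ and becomes the second $(-1)$-curve of the new distinguished fiber), builds the corresponding new minimalization $\tilde\psi$ explicitly, and verifies that for the new fibration one has $k_1\geq 2$ (resp.\ that one is back in the non-tangent case), thereby reducing to a subcase already handled. Your proposal never re-chooses the fibration, so it cannot close these cases; relatedly, your claim that a transverse intersection $(\ll_i\cdot\cc)_{p_i}=1$ forces $k_i=1$ is only correct for the columnar fibers $i\geq 2$, not for the distinguished fiber $F_1$ (type \ref{lem:w=2_fibers}\ref{item:FH=2} allows any $k_1\geq 1$), which is precisely why the subcase $k_1\geq 2$, $\ll_1$ transverse, must be treated separately and is where case \ref{item:w=2_2A1+2A3_swap} actually comes from.
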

\begin{figure}[ht]
	\subcaptionbox{\ref{lem:w=2_swaps}\ref{item:ht=3_exception_swap} $\sqto$ \ref{ex:w=2_cha_neq_2}\ref{item:ht=3_exception} \label{fig:ht=3_exception_swap}}[.3\linewidth]{
		\begin{tikzpicture}
			\node at (1.9,3.2) {\small{$H_1$}};
			\draw (-0.6,3) -- (3.1,3);
			\draw[dashed] (-0.4,3.2) -- (-0.6,2.2);
			\node at (-0.3,2.6) {\small{$A_0$}};
			\draw (-0.6,2.4) -- (-0.4,1.4);
			\node at (-0.8,2) {\small{$-3$}};
			\node at (-0.3,2) {\small{$L_1$}};
			\draw[thick] (-0.4,1.6) -- (-0.6,0.6); 
			\draw[dashed, thick] (-0.6,1.2) -- (0.4,1);
			\node at (0,1.35) {\small{$A_1$}};
			\draw (-0.6,0.8) -- (-0.4,-0.2);
			\draw (1.3,3.2) -- (1.1,2);
			\node at (1.4,2.6) {\small{$L_2$}};
			\draw[dashed] (1.1,2.2) -- (1.3,1);
			\node at (1.4,1.7) {\small{$A_2$}};
			\draw (1.3,1.2) -- (1.1,0); 
			\draw (3,3.2) -- (2.8,2);
			\node at (3.1,2.6) {\small{$L_3$}};
			\draw (2.8,2.2) -- (3,1);
			\node at (2.4,1.75) {\small{$A_3$}};
			\draw[dashed] (3,1.6) -- (1.8,1.4);
			\draw (3,1.2) -- (2.8,0); 
			\draw (-0.1,1.2) to[out=-30,in=170] (0.1,1.05) to[out=-10,in=180] (1.2,2.4) -- (1.4,2.4) to[out=0,in=-170] (2.4,1.5) to[out=10,in=-150] (2.7,1.65);
			\draw (-0.1,0.95) to[out=10,in=170] (0.1,1.05) to[out=-10,in=180] (1.2,0.4) -- (1.4,0.4) to[out=0,in=-170] (2.4,1.5) to[out=10,in=150] (2.7,1.45);
			\node at (2.1,0.8) {\small{$H_2$}};
			\node at (1.6,0.85) {\small{$-3$}};
		\end{tikzpicture}
	}	
	\subcaptionbox{\ref{lem:w=2_swaps}\ref{item:w=2_A1+A2+A5_swap}, $q\in H_2$ $\sqto$ \ref{ex:w=2_cha_neq_2}\ref{item:w=2_A1+A2+A5}
		\label{fig:w=2_A1+A2+A5_swap}}[.3\linewidth]{
		\begin{tikzpicture}
			\draw (-0.4,3) -- (3.1,3);
			\node at (1.9,3.2) {\small{$H_1$}};
			\draw (-0.2,3.2) -- (-0.4,2);
			\node at (-0.1,2.4) {\small{$L_2$}};	
			\draw (-0.4,2.2) -- (-0.2,1);			
			\draw[dashed] (-0.4,1.4) -- (0.8,1.6);
			\node at (0,1.7) {\small{$A_2$}};
			\draw (-0.2,1.2) -- (-0.4,0);
			\draw[dashed] (1.3,3.2) -- (1.1,2.2);
			\node at (1.4,2.6) {\small{$A_0$}};
			\draw (1.1,2.4) -- (1.3,1.4);
			\node at (0.9,2) {\small{$-3$}};
			\node at (1.4,2) {\small{$L_1$}};				
			\draw[dashed, thick] (1.3,1.6) -- (1.1,0.6);
			\node at (1.4,1.1) {\small{$A_1$}};
			\draw[thick] (1.1,0.8) --  (1.3,-0.2);
			\draw (3,3.2) -- (2.8,2);
			\node at (3.1,2.4) {\small{$L_3$}};
			\draw[thick] (2.8,2.2) -- (3,1);
			\node at (2.4,1.75) {\small{$A_3$}};
			\draw[thick, dashed] (3,1.6) -- (1.8,1.4);
			\draw (3,1.2) -- (2.8,0); 
			\draw (0.3,1.6) to[out=-10,in=-170] (0.5,1.55) to[out=10,in=180] (1.1,1.8) -- (1.4,1.8) 
			to[out=0,in=-170] (2.4,1.5) to[out=10,in=-150] (2.7,1.65);
			\draw (0.3,1.4) to[out=20,in=-170] (0.5,1.55) to[out=10,in=180] (1.2,0.1) -- (1.4,0.1) to[out=0,in=-170] (2.4,1.5) to[out=10,in=150] (2.7,1.45);
			\node at (2.1,0.6) {\small{$H_2$}};
			\node at (1.6,0.6) {\small{$-3$}};
		\end{tikzpicture}
	}
	\subcaptionbox{\ref{lem:w=2_swaps}\ref{item:w=2_A1+A2+A5_swap}, $q\in L_3$ $\sqto$ \ref{ex:w=2_cha_neq_2}\ref{item:w=2_A1+A2+A5}
		\label{fig:w=2_A1+A2+A5_swap_b}}[.3\linewidth]{
		\begin{tikzpicture}
			\draw (-0.4,3) -- (2.7,3);
			\node at (1.9,3.2) {\small{$H_1$}};
			\draw (-0.2,3.2) -- (-0.4,2);
			\node at (-0.1,2.4) {\small{$L_2$}};	
			\draw (-0.4,2.2) -- (-0.2,1);			
			\draw[dashed] (-0.4,1.4) -- (0.8,1.6);
			\node at (0,1.7) {\small{$A_2$}};
			\draw (-0.2,1.2) -- (-0.4,0);
			\draw[dashed] (1.3,3.2) -- (1.1,2.2);
			\node at (1.4,2.6) {\small{$A_0$}};
			\draw (1.1,2.4) -- (1.3,1.4);
			\node at (0.9,2) {\small{$-3$}};
			\node at (1.4,2) {\small{$L_1$}};			
			\draw[dashed, thick] (1.3,1.6) -- (1.1,0.6);
			\node at (1.4,1) {\small{$A_1$}};
			\draw[thick] (1.1,0.8) --  (1.3,-0.2);
			\draw (2.6,3.2) -- (2.4,2.2);
			\node at (2.7,2.6) {\small{$L_3$}};
			\node at (2.2,2.6) {\small{$-3$}};
			\draw[thick, dashed] (2.4,2.4) -- (2.6,1.4);
			\node at (2.7,2) {\small{$A_3$}};
			\draw[thick] (2.6,1.6) -- (2.4,0.6); 
			\draw (2.4,0.8) -- (2.6,-0.2);
			\draw (0.3,1.6) to[out=-10,in=-170] (0.5,1.55) to[out=10,in=180] (1,1.8) -- (1.4,1.8)   to[out=0,in=180] (2.5,1.1) to[out=0,in=180] (2.8,1.15);
			\draw (0.3,1.4) to[out=20,in=-170] (0.5,1.55) to[out=10,in=180] (1.2,0.1) -- (1.4,0.1) to[out=0,in=180] (2.5,1.1) to[out=0,in=180] (2.8,1.05);
			\node at (2.1,0.3) {\small{$H_2$}};
		\end{tikzpicture}	
	}	
	\smallskip
		
	\subcaptionbox{\ref{lem:w=2_swaps}\ref{item:w=2_2A1+2A3_swap}, $q\in H_2$ $\sqto$ \ref{ex:w=2_cha_neq_2}\ref{item:w=2_2A1+2A3}
		\label{fig:w=2_2A1+2A3_swap_H}}[.3\linewidth]{
		\begin{tikzpicture}
			\draw (0,3) -- (3.1,3);
			\node at (2.1,3.2) {\small{$H_1$}};
			\draw (0.2,3.2) -- (0,2);
			\node at (0.3,2.4) {\small{$L_2$}};	
			\draw[dashed] (0,2.2) -- (0.2,1);
			\node at (0.3,1.8) {\small{$A_2$}};			
			\draw (0.2,1.2) -- (0,0);
			\draw[dashed] (1.3,3.2) -- (1.1,2.2);
			\node at (1.4,2.6) {\small{$A_0$}};
			\draw (1.1,2.4) -- (1.3,1.4);
			\node at (1.4,2) {\small{$L_1$}};				
			\draw (1.3,1.6) -- (1.1,0.6);
			\node at (1.4,0.35) {\small{$A_1$}};
			\draw[dashed] (1.1,0.8) --  (1.3,-0.2);
			\draw (3,3.2) -- (2.8,2);
			\node at (3.1,2.4) {\small{$L_3$}};
			\draw[thick] (2.8,2.2) -- (3,1);
			\draw[dashed,thick] (3,1.6) -- (1.8,1.4);
			\node at (2.4,1.75) {\small{$A_3$}};
			\draw (3,1.2) -- (2.8,0); 
			\draw (-0.2,1.55) to[out=0,in=180] (0.1,1.5) to[out=0,in=180] (1,1.8) -- (1.4,1.8) 
			to[out=0,in=-170] (2.4,1.5) to[out=10,in=-150] (2.7,1.65);
			\draw (-0.2,1.45) to[out=0,in=180] (0.1,1.5) to[out=0,in=180] (1.2,0) -- (1.4,0) to[out=0,in=-170] (2.4,1.5) to[out=10,in=150] (2.7,1.45);
			\node at (2.15,0.8) {\small{$H_2$}};
			\node at (1.65,0.8) {\small{$-3$}};
		\end{tikzpicture}
	}		
	\subcaptionbox{\ref{lem:w=2_swaps}\ref{item:w=2_2A1+2A3_swap}, $q\in L_3$ $\sqto$ \ref{ex:w=2_cha_neq_2}\ref{item:w=2_2A1+2A3}
		\label{fig:w=2_2A1+2A3_swap_L}}[.3\linewidth]{
		\begin{tikzpicture}
			\draw (0,3) -- (2.7,3);
			\node at (2.1,3.2) {\small{$H_1$}};
			\draw (0.2,3.2) -- (0,2);
			\node at (0.35,2.6) {\small{$L_2$}};	
			\draw[dashed] (0,2.2) -- (0.2,1);
			\node at (0.3,1.8) {\small{$A_2$}};				
			\draw (0.2,1.2) -- (0,0);
			\draw[dashed] (1.3,3.2) -- (1.1,2.2);
			\node at (1.4,2.6) {\small{$A_0$}};
			\draw (1.1,2.4) -- (1.3,1.4);
			\node at (1.4,2) {\small{$L_1$}};				
			\draw (1.3,1.6) -- (1.1,0.6);
			\node at (1.4,0.35) {\small{$A_1$}};
			\draw[dashed] (1.1,0.8) --  (1.3,-0.2);
			\draw (2.6,3.2) -- (2.4,2.2);
			\node at (2.7,2.6) {\small{$L_3$}};
			\node at (2.2,2.6) {\small{$-3$}};
			\draw[thick, dashed] (2.4,2.4) -- (2.6,1.4);
			\node at (2.7,2) {\small{$A_3$}};
			\draw[thick] (2.6,1.6) -- (2.4,0.6); 
			\draw (2.4,0.8) -- (2.6,-0.2);
			\draw (-0.2,1.55) to[out=0,in=180] (0.1,1.5) to[out=0,in=180] (1,1.8) -- (1.6,1.8)  to[out=0,in=180] (2.5,1.1) to[out=0,in=180] (2.8,1.15);
			\draw (-0.2,1.45) to[out=0,in=180] (0.1,1.5) to[out=0,in=180] (1.2,0) -- (1.4,0) to[out=0,in=180] (2.5,1.1) to[out=0,in=180] (2.8,1.05);
			\node at (2.1,0.3) {\small{$H_2$}};
		\end{tikzpicture}	
	}			
	\subcaptionbox{\ref{lem:w=2_swaps}\ref{item:w=2_2A1+2A3_swap}, $q\in G_3$ $\sqto$ \ref{ex:w=2_cha_neq_2}\ref{item:w=2_2A1+2A3} \label{fig:w=2_2A1+2A3_swap_T}}[.3\linewidth]{
		\begin{tikzpicture}
			\draw (0,3) -- (2.7,3);
			\node at (2.05,3.2) {\small{$H_1$}};
			\draw (0.2,3.2) -- (0,2);
			\node at (0.35,2.6) {\small{$L_2$}};	
			\draw[dashed] (0,2.2) -- (0.2,1);	
			\node at (0.3,1.8) {\small{$A_2$}};			
			\draw (0.2,1.2) -- (0,0);
			\draw[dashed] (1.3,3.2) -- (1.1,2.2);
			\node at (1.4,2.6) {\small{$A_0$}};
			\draw (1.1,2.4) -- (1.3,1.4);
			\node at (1.4,2) {\small{$L_1$}};				
			\draw (1.3,1.6) -- (1.1,0.6);
			\node at (1.4,0.35) {\small{$A_1$}};
			\draw[dashed] (1.1,0.8) --  (1.3,-0.2);
			\draw (2.6,3.2) -- (2.4,2.2);
			\node at (2.7,2.6) {\small{$L_3$}};
			\draw[thick] (2.4,2.4) -- (2.6,1.4);
			\node at (2.7,1) {\small{$A_3$}};
			\draw[thick, dashed] (2.6,1.6) -- (2.4,0.6); 
			\draw (2.4,0.8) -- (2.6,-0.2);
			\node at (2.3,0) {\small{$-3$}};
			\draw (-0.2,1.55) to[out=0,in=180] (0.1,1.5) to[out=0,in=180] (1,1.8) -- (2.5,1.8) to[out=0,in=180] (2.8,1.85);
			\draw (-0.2,1.45) to[out=0,in=180] (0.1,1.5) to[out=0,in=180] (1.1,0) -- (1.3,0) to[out=0,in=180] (2.5,1.8) to[out=0,in=180] (2.8,1.75);
			\node at (2.05,0.6) {\small{$H_2$}};
		\end{tikzpicture}
	}
	\caption{Log surfaces $(X_{\tau},D_{\tau})$ of height $3$ and width $2$, obtained in case $\cha\kk\neq 2$ by blowing up all points listed in Lemma \ref{lem:w=2_swaps}, see Notation \ref{not:phi_+_h=2}. Swapping bold curves gives a log resolution of a surface from Example \ref{ex:w=2_cha_neq_2}, see Lemma \ref{lem:w=2_swaps}.}
	\label{fig:w=2_swaps}
\end{figure}
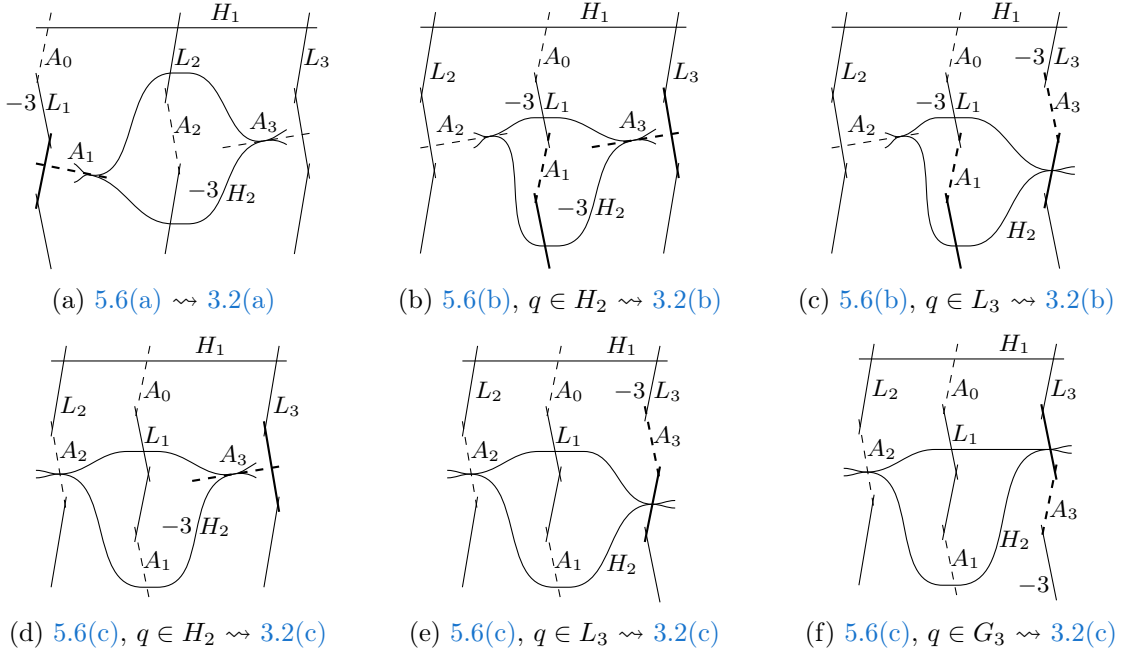
\begin{proof}
	By Lemma \ref{lem:w=2_psi}\ref{item:psi_p_i}, $\Bs\psi^{-1}$ consists of $p_0,\dots,p_{\nu}$ and some infinitely near points. Recall from \eqref{eq:w=2_Sigma} that the fiber $F_1$ has two $(-1)$-curves, so by Lemma \ref{lem:w=2_psi}\ref{item:psi_p_i}, one of them lies in $\psi^{-1}(p_1)$, and the other in $\psi^{-1}(p_0)$. It follows that $p_{01}\in \Bs\psi^{-1}$. 
	Moreover, for $i\in \{2,\dots,\nu\}$ we have $p_{ii}\in \Bs\psi^{-1}$ because otherwise $\psi^{-1}_{*}\ll_{i}$ would be a $(-1)$-curve in $D$, which is impossible. Note that if $\ll_i$ is tangent to $\cc$ then $p_{ii}=p_{i}'$.
	\smallskip
	
	Assume first that $\cha\kk=2$. By Lemma \ref{lem:w=2_psi}\ref{item:psi_cha=2} we have $\nu\in \{3,4\}$. If $\nu=4$ then \ref{item:w=2_cha=2_nu=4} holds. Assume $\nu=3$. If $p_{1}'\not\in \Bs\psi^{-1}$ then the pencil of lines passing through $p_1$ pulls back to a $\P^1$-fibration of $X$ such that $D\hor$ consists of three $1$-sections, namely $\psi^{-1}_{*}\cc$, $\psi^{-1}_{*}\ll_{2}$, $\psi^{-1}_{*}\ll_{3}$, contrary to the assumption $\width(\bar{X})=2$. Thus \ref{item:w=2_cha=2_nu=3} holds.
	\smallskip
	
	Assume now that $\cha\kk\neq 2$. By Lemma \ref{lem:w=2_psi}\ref{item:psi_cha-neq-2}, $\nu=3$ and exactly one of the lines $\ll_i$ is not tangent to $\cc$.
	
	\begin{casesp}
	\litem{$\ll_1$ is not tangent to $\cc$}\label{case:non-tangent} Then $F_{1}$ and $F_{j}$, $j\in \{2,3\}$ are as in Lemma \ref{lem:w=2_fibers}\ref{item:FH=2} and \ref{item:F_fork}, respectively (recall that case \ref{lem:w=2_fibers}\ref{item:dumb} does not occur by Lemma \ref{lem:w=2_basic}\ref{item:H1}). It follows that $p_{01},p_{2}',p_{3}'\in \Bs\psi^{-1}$. We have $k_1\geq 1$ since $p_{1}\in \Bs\psi^{-1}$. Assume that $k_{1}\geq 2$, so $p_{1}'\in \Bs\psi^{-1}$. If $\psi^{-1}$ has no base points infinitely near to $p_{2}'$, $p_{3}'$ then the pencil of conics tangent to $\ll_{j}$ at $p_{j}$, $j\in \{2,3\}$ pulls back to a $\P^{1}$-fibration of $X$ of height~2 with respect to $D$, which is impossible. Thus after possibly reversing the order of $\{F_2,F_3\}$, we get that $\psi^{-1}$ has a base point $q$ infinitely near to $p_{3}'$. Let $\eta$ be a blowup at $p_{0},p_{01},p_1,p_{1}',p_{2},p_{2}',p_{3},p_{3}'$, so $\psi=\eta\circ\eta_{+}$ for a vertical swap $\eta_{+}\colon (X,D)\sqto (X_{\eta},D_{\eta})$, and $q\in \Bs\eta_{+}^{-1}$. To prove that  \ref{item:w=2_2A1+2A3_swap} holds, it remains to show that $q\in D_{\eta}$. Suppose the contrary, and let $(X',D')\sqto (X_{\eta},D_{\eta})$ be a vertical swap given by a blowup at $q$. Then $D'=[2]+\langle 2;[2],[2,2,2],[2,2,2]\rangle$ contains a non-admissible fork, a contradiction with Lemma \ref{lem:cascades}\ref{item:cascades-still-dP}. 
	
	Assume now that $k_{1}=1$, so $p_{1}'\not\in \Bs\psi^{-1}$. If $\psi^{-1}$ has no base points infinitely near to $p_{1}$, then the pencil of lines through $p_{1}$ pulls back to a $\P^{1}$-fibration of $X$ such that $D\hor$ consists of three $1$-sections, namely $\psi^{-1}_{*}\cc$, $\psi^{-1}_{*}\ll_{2}$ and $\psi^{-1}_{*}\ll_{3}$: this is impossible by assumption $\width(\bar{X})=2$. Since $D$ has no circular subdivisor, we get $p_{11}\in \Bs\psi^{-1}$. Since $k_1=1$, Lemma \ref{lem:w=2_basic}\ref{item:H2} gives $k_{2}+k_{3}\geq 5$, so, say, $k_{3}\geq 3$, i.e.\ $p_{3}''\in \Bs\psi^{-1}$. Let $\eta$ be a blowup at those base points of $\psi^{-1}$ which we have already found, that is, at $p_0$, $p_{01}$, $p_{1}$, $p_{11}$, $p_{2}$, $p_{2}'$, $p_{3}$, $p_{3}'$ and $p_3''$, see Figure \ref{fig:w=2_swaps_k1=1}. As before, write $\psi=\eta\circ \eta_{+}$ for a vertical swap $\eta_{+}\colon (X,D)\sqto (X_{\eta},D_{\eta})$. Put $H_{j}^{\eta}=\eta_{+}(H_{j})$, $L_{j}=\eta^{-1}_{*}\ll_{j}$, and let $A_{j}\subseteq X_{\eta}$ be the $(-1)$-curve in $\eta^{-1}(p_j)$, cf.\ Notation \ref{not:phi_+_h=2}. 
	
	Assume that $A_{2}$ contains a base point of $\psi^{-1}$, say $q$. If $q\in H_{2}^{\eta}\cup L_2$ then after interchanging $F_2$ with $F_3$ we get \ref{item:w=2_A1+A2+A5_swap}. Assume $q\not\in H_{2}^{\eta}\cup L_2$. Let $(X',D')\sqto (X_{\eta},D_{\eta})$ be the vertical swap given by the blowup at $q$. Let $U'$ be the connected component of $D'$ containing the proper transform of $A_2$. If $q\not\in D_{\eta}$ then the proper transforms of both $H_2$ and $A_2$ are branching in $U'$; and if $q\in D_{\eta}$ then $U'$ is a non-admissible fork $\langle 2;[2],[3],[(2)_{7}]\rangle$. In either case, $D'$ does not contract to log terminal singularities, a contradiction with Lemma  \ref{lem:cascades}\ref{item:cascades-still-dP}.
	
	Thus we can assume $A_2\cap \Bs\eta_{+}^{-1}=\emptyset$. Put $L=\eta^{-1}_{*}\ll$, where $\ll$ is the line joining $p_{2}$ with $p_{3}$. For $j\in \{1,2,3\}$ let $G_j\subseteq D_{\eta}$ be the first exceptional curve over $p_j$, and let $T\subseteq D_{\eta}$ be the second exceptional curve over $p_{3}$. Let $\tilde{p}$ be the $\P^{1}$-fibration of $X_{\eta}$ induced by the pencil of conics tangent to $\cc$ at $p_{2}$, $p_3$. The horizontal part of $D_{\eta}+\sum_{j}A_{j}$ consists of a $2$-section $L_{1}$ and $1$-sections $T$, $A_{2}$. Since $\Bs\eta_{+}^{-1}\cap A_2=\emptyset$, we have $(\eta_{+}^{-1})_{*}A_{2}\not\subseteq D$. It follows that $\tilde{p}\circ\eta_{+}$ is a $\P^{1}$-fibration of $X$ of height $3$ with respect to $D$, such that $\#D\hor=2$.
\begin{figure}[htbp]
		\begin{tikzpicture}
		\begin{scope}
			\draw (-0.5,3) -- (3.1,3);
			\node at (-0.3,3.2) {\small{$H_1^{\eta}$}};
			\draw (0.2,3.2) -- (0,2);
			\node at (0.3,2.4) {\small{$L_2$}};	
			\draw[dashed, thick] (0,2.2) -- (0.2,1);
			\node at (0.3,1.85) {\small{$A_2$}};			
			\draw (0.2,1.2) -- (0,0);
			\node at (0.3,0.2) {\small{$G_2$}};
			\draw[dashed] (1.4,3.2) -- (1.2,2.2);
			\node at (1.55,2.6) {\small{$A_0$}};
			\draw[very thick] (1.2,2.4) -- (1.4,1.4);
			\node at (1.5,2.1) {\small{$L_1$}};
			\node at (0.95,2.1) {\small{$-3$}};
			\draw[dashed] (1.4,1.6) -- (1.2,0.6);
			\node at (1.5,1) {\small{$A_1$}};
			\draw (1.2,0.8) -- (1.4,-0.2); 
			\node at (1.65,0) {\small{$G_1$}};
			\draw (3,3.2) -- (2.8,2);
			\node at (3.1,2.4) {\small{$L_3$}};
			\draw[thick] (2.8,2.2) -- (3,1);
			\node at (3.05,1.75) {\small{$T$}};
			\draw[dashed] (3,1.5) -- (1.8,1.7);
			\node at (2.5,1.85) {\small{$A_3$}};
			\draw (3,1.2) -- (2.8,0); 
			\node at (3.1,0.2) {\small{$G_3$}};
			\draw (-0.6,1.55) -- (-0.2,1.55) to[out=0,in=180] (0.1,1.5) to[out=0,in=180] (1,1.9) -- (1.8,1.9)  to[out=0,in=170] (2.4,1.6) to[out=-10,in=180] (2.6,1.65);
			\draw (-0.6,1.45) -- (-0.2,1.45) to[out=0,in=180] (0.1,1.5) to[out=0,in=180] (1.2,0.4) -- (1.4,0.4) to[out=0,in=170] (2.4,1.6) to[out=-10,in=180] (2.6,1.45);
			\node at (-0.4,1.8) {\small{$H_2^{\eta}$}};
			\draw[->] (-1,1.5) -- (-2,1.5);
			\node at (-1.5,1.7) {\small{$\eta$}};
			\draw[->] (3.6,1.5) -- (4.6,1.5);
			\node at (4.1,1.7) {\small{$\tilde{\eta}$}};
			\draw[densely dashed] (-0.6,0.5) -- (0.2,0.5) to[out=0,in=-120] (0.65,0.8);
			\draw[densely dashed] (0.75,1) to[out=60,in=180] (1.3,1.75) to[out=0,in=120] (1.85,1.1);  
			\draw[densely dashed] (1.95,0.9) to[out=-60,in=180] (2.9,0.5) -- (3.1,0.5);
			\node at (-0.5,0.7) {\small{$L$}};
		\end{scope}
		\begin{scope}[shift={(-4,1)}]
			\draw (0,0) circle (1);
			\node at (0.9,-0.7) {\small{$\cc$}};
			\draw[add= 0.1 and 1] (0,2) to (-0.866,0.5);
			\node at (-1.3,-0.7) {\small{$\ll_2$}};
			\draw[add= 0.1 and 1] (0,2) to (0.866,0.5);
			\node at (1.8,-0.7) {\small{$\ll_3$}};
			\draw (0,2.2) -- (0,-1.2);
			\node at (0.2,0) {\small{$\ll_1$}};
			\filldraw (0,2) circle (0.06);
			\node at (0.3,2) {\small{$p_0$}};
			\draw[-stealth] (-0.1,1.7) -- (-0.1,1.3);
			\filldraw (-0.866,0.5) circle (0.06);
			\node at (-1.1,0.6) {\small{$p_2$}};
			\draw (0,0) [-stealth, partial ellipse=150:180:0.9 and 0.9];
			\filldraw (0.866,0.5) circle (0.06);
			\node at (1.1,0.6) {\small{$p_3$}};
			\draw (0,0) [-stealth, partial ellipse=30:60:0.9 and 0.9];
			\draw (0,0) [-stealth, partial ellipse=35:65:0.8 and 0.8];
			\draw (0,0) [-stealth] (-0.1,-0.9) -- (-0.1,-0.5);
			\filldraw (0,-1) circle (0.06);
			\node at (0.2,-0.8) {\small{$p_1$}};
			\draw[densely dashed] (-1.7,0.5) -- (1.5,0.5);
			\node at (-1.6,0.7) {\small{$\ll$}}; 
		\end{scope}
		\begin{scope}[shift={(6.4,1)}]
			\draw (0,0) circle (1);
			\node at (1,-1) {\small{$\tilde{\eta}(L_1)$}};
			\draw[add= 0.1 and 1] (0,2) to (-0.866,0.5);
			\node at (-1.2,-1) {\small{$\tilde{\eta}(G_3)$}};
			\draw[add= 0.1 and 1] (0,2) to (0.866,0.5);
			\node at (2.1,-0.7) {\small{$\tilde{\eta}(L_3)$}};
			\draw (0,2.2) -- (0,-1.2);
			\node at (0.5,0) {\small{$\tilde{\eta}(H_2^{\eta})$}};
			\filldraw (0,2) circle (0.06);
			\node at (0.5,2) {\small{$\tilde{\eta}(T)$}};
			\draw[-stealth] (-0.1,1.7) -- (-0.1,1.3);
			\filldraw (-0.866,0.5) circle (0.06);
			\draw (0,0) [-stealth, partial ellipse=150:180:0.9 and 0.9];
			\filldraw (0.866,0.5) circle (0.06);
			\draw (0,0) [-stealth, partial ellipse=30:60:0.9 and 0.9];
			\draw (0,0) [-stealth, partial ellipse=35:65:0.8 and 0.8];
			\filldraw (0,-1) circle (0.06);
			\draw (0,0) [-stealth, partial ellipse=275:305:0.9 and 0.9];
			\draw[densely dashed] (-1.5,0.5) -- (1.7,0.5);
			\node at (1.6,0.7) {\small{$\tilde{\eta}(A_2)$}}; 
		\end{scope}
	\end{tikzpicture}
	\caption{Proof of Lemma \ref{lem:w=2_swaps}, Case \ref{case:non-tangent}: new $\P^1$-fibration with $k_1\geq 2$.}
	\label{fig:w=2_swaps_k1=1}
\end{figure}
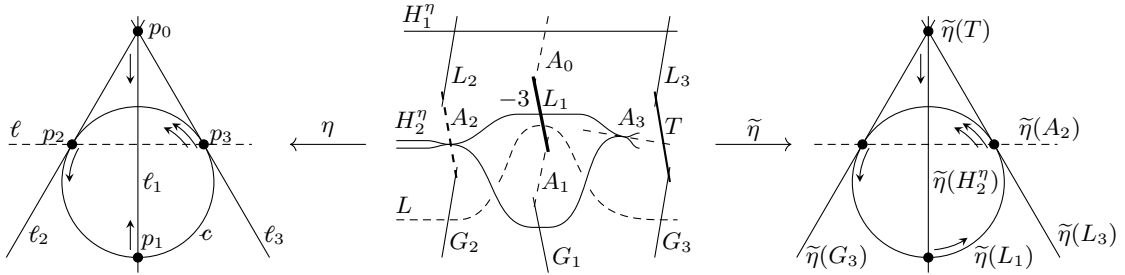

	 Degenerate fibers of $\tilde{p}$ are supported on 
	$\tilde{F}_{1}\de A_{3}+H_{2}^{\eta}+G_1+A_1=[1\aadec{1},2\aadec{2},2,1\aadec{2}]$; 
	$\tilde{F}_{2}\de G_2+L+G_3=[2,1\aadec{2},2\aadec{1}]$ and 
	$\tilde{F}_{3}\de H_{1}^{\eta}+L_{2}+A_0+L_3=\langle 2;[2],[1\aadec{2}],[2\aadec{1}]\rangle$, where the numbers decorated by $\aadec{1}$ and $\aadec{2}$ refer to the components meeting the $1$-section $T$ and the $2$-section $L_1$, respectively. Define $\tilde{\eta}\colon X_{\eta} \to \P^{2}$ as the contraction of $(\tilde{F}_{1}-H_{2}^{\eta})+(\tilde{F}_{2}-G_{3})+(\tilde{F}_{3}-L_3)+T$, and choose coordinates on the target $\P^2$ so that $\ll_{j}=\tilde{\eta}(\tilde{F}_{j})$, $\cc=\tilde{\eta}(L_1)$. Replacing $p$ with $\tilde{p}\circ\eta_{+}$ and $\psi$ by $\tilde{\eta}\circ\eta_{+}$, we are again in the current Case \ref{case:non-tangent} when $\ll_1$ is not tangent to $\cc$. However, for this new $\P^1$-fibration we have $k_1\geq 2$: indeed, the fiber $\tilde{F}_1$ is as in Lemma \ref{lem:w=2_fibers}\ref{item:FH=2} with $k_1=2$, and further blowups within $\eta_{+}$ can only increase $k_1$. So, we already know that \ref{item:w=2_2A1+2A3_swap} holds for these new $p$ and $\psi$. In fact, $\tilde{\eta}$ is exactly a blowup at the points listed in \ref{item:w=2_2A1+2A3_swap}, with $q=p_{3}''$.

	\litem{$\ll_1$ is tangent to $\cc$} Interchanging $F_2$ with $F_3$, if needed, we can assume that $\ll_2$ is not tangent to $\cc$. Now the fibers $F_{1}$, $F_{2}$, $F_{3}$ are as in Lemma \ref{lem:w=2_fibers}\ref{item:FH=1}, \ref{item:F_columnar}, \ref{item:F_fork}, respectively, so   $p_{1}',p_{22}\in \Bs\psi^{-1}$ and $k_2=1$, $k_1,k_3\geq 2$. If $k_{1},k_{3}\geq 3$ then $p_{1}'',p_{3}''\in \Bs\psi^{-1}$; so \ref{item:ht=3_exception_swap} holds. Thus we can assume $k_{a}=2$ for some $a\in \{1,3\}$.
	
	Write $\{1,3\}=\{a,b\}$. Since $k_{2}=1$, $k_a=2$, Lemma \ref{lem:w=2_basic}\ref{item:H2} implies that $k_{b}\geq 3$, i.e.\ $p_{b}''\in \Bs\psi^{-1}$. Let $\eta$ be a blowup at $p_{0}$, $p_{01}$, $p_{1}$, $p_{1}'$, $p_2$, $p_{22}$, $p_3$, $p_{3}'$, $p_{b}''\in \Bs\psi^{-1}$, so $\psi=\eta\circ\eta_{+}$ for some vertical swap $\eta_{+}\colon  (X,D)\sqto (X_{\eta},D_{\eta})$. We keep the notation as in Case \ref{case:non-tangent}, that is, $H_{j}^{\eta}=\eta_{+}(H_j)$, $L_j=\eta^{-1}_{*}\ll_j$, $A_j$, $G_j$ are the $(-1)$-curve and the first exceptional curve over $p_j$, and $T$ is the second exceptional curve over $p_{b}$. Let $\tilde{p}$ be the $\P^1$-fibration of $X_{\eta}$ induced by the pencil of conics tangent to $\cc$ at $p_{1}$, $p_{3}$. As before, the horizontal part of $D_{\eta}+\sum_{j}A_j$ consists of a $2$-section $L_2$ and $1$-sections $A_{a}$ and $T$.
	
	Suppose that $A_{a}$ contains a base point of $\eta_{+}^{-1}$, call it $q$. Let $\theta$ be the composition of $\eta$ with the blowup at $q$, so $\psi=\theta\circ \theta_{+}$ for some vertical swap $(X,D)\sqto (X_{\theta},D_{\theta})$, cf.\ Notation \ref{not:phi_+_h=2}. If $q\not \in D_{\eta}$ then the proper transforms of both $H_2$ and $A_{a}$ are branching in the same connected component of $D_{\theta}$, which is impossible. Since $k_{a}=2$, we have $q\not \in H_{2}^{\eta}$, so $q\in G_{a}\cup L_{a}$. Note that $q\not \in G_3$ because $D_{\theta}$ has no circular subdivisor. The curve $\theta_{+}(H_2)$ is branching in $D_{\theta}$, and meets a maximal twig of $D_{\theta}$ containing $\theta_{+}(H_1)$. The latter twig is of type $[(2)_{5}]$ if $a=1$, or of type $[3,2,2]$ if $a=3$. The twig meeting $\theta_{+}(H_2)$ in the proper transform of $A_{a}$ has length $2$. Hence the connected component of $D_{\theta}$ containing it is a non-admissible fork, a contradiction.
	
	Thus $A_{a}\cap \Bs\eta_{+}^{-1}=\emptyset$, so $(\eta_{+}^{-1})_{*}A_{a}\not\subseteq D$. As in Case \ref{case:non-tangent}, it follows that $\tilde{p}\circ \eta_{+}$ is a $\P^1$-fibration of height $3$ with $\#D\hor=2$. Degenerate fibers of $\tilde{p}$ are supported on 
	$\tilde{F}_{1}\de A_{b}+H_{2}^{\eta}+G_2+A_2=[1\aadec{1},2\aadec{2},2,1\aadec{2}]$;
	$\tilde{F}_{2}\de G_1+L+G_3=[2,1\aadec{2},2\aadec{1}]$, where $L$ is the proper transform of the line joining $p_1$ with $p_3$; and 
	$\tilde{F}_{3}\de L_{3}+H_{1}^{\eta}+A_{0}+L_{1}=[2\aadec{1},2\aadec{2},1,3]$ if $a=1$ or $[2,2\aadec{2},1,3\aadec{1}]$ if $a=3$. As before, decorations $\aadec{1}$, $\aadec{2}$ indicate the components meeting the $1$-section $T$ and the $2$-section $L_2$, respectively. Let $\tilde{\eta}\colon X_{\eta}\to \P^2$ be the contraction of $(\tilde{F}_{1}-H_{2}^{\eta})+(\tilde{F}_{2}-G_{a})+(\tilde{F}_{3}-L_{a})+T$. Replacing $p$ and $\psi$ with  $\tilde{p}\circ \eta_{+}$ and $\tilde{\eta}\circ\eta_{+}$ leads back to Case \ref{case:non-tangent}, which ends the proof.\qedhere
	\end{casesp}
\end{proof}

\subsection{Further restrictions and notation}

We proceed with the proof of Theorem \ref{thm:ht=3} under the assumption $\width(\bar{X})=2$. 
Cases $\cha\kk\neq 2$ and $\cha\kk=2$ are settled in Lemmas  \ref{lem:w=2_cha_neq_2} and \ref{lem:w=2_cha=2}, respectively. We follow the same strategy as in Section \ref{sec:w=3_list}. In particular, we use the following notation, analogous to \ref{not:phi_+_H}.

\begin{notation}[Reconstructing the minimalization $\psi$, cf.\ Notation \ref{not:phi_+_H}]\label{not:phi_+_h=2}
	We keep Notation \ref{not:w=2}. By Lemma \ref{lem:w=2_swaps} we have a factorization  $\psi=\phi\circ\phi_{+}\colon X\to \P^2$, where $\phi$ is as in Example \ref{ex:w=2_cha_neq_2} or \ref{ex:w=2_cha=2}. We study factorizations $\psi=\gamma\circ\gamma_{+}$, where $\gamma$ is a composition of $\phi$ with some further blowups over $p_0,\dots,p_{\nu}$; and $\gamma_{+}\colon (X,D)\sqto (X_{\gamma},D_{\gamma})$ is a vertical swap. One such factorization is given by $\psi=\tau\circ \tau_{+}$, where $\tau\colon X_{\tau}\to \P^2$ is the blowup at \emph{all} points listed there (including the underlined ones), see Figures \ref{fig:w=2_swaps} (case $\cha\kk\neq 2$) and \ref{fig:w=2_cha=2}  (case $\cha\kk=2$).
	
	For a given factorization $\psi=\gamma\circ\gamma_{+}$ as above, we put $H_{j}^{\gamma}=\gamma_{+}(H_{j})$, $j\in \{1,2\}$, and denote by $U_{\gamma}$, $W_{\gamma}$ the connected component of $D_{\gamma}$ containing $H_{1}^{\gamma}$ and $H_{2}^{\gamma}$, respectively. For $i\in \{1,\dots,\nu\}$ we put $L_{i}^{\gamma}=\gamma^{-1}_{*}\ll_{i}$, and for $j\in \{0,\dots,\nu\}$ we denote by $A_{j}^{\gamma}$ the $(-1)$-curve in $\gamma^{-1}(p_{j})$: it is unique by Lemma \ref{lem:w=2_psi}\ref{item:psi_p_i}. For $i\in \{1,\dots,\nu\}$ we denote by $G_{i}^{\gamma}$ the first exceptional curve over $p_i$. We skip the superscript if $\gamma$ is clear from the context. 
	
	We put $\check{D}\de D+\sum_{j}A_{j}$, where the sum runs over all $j$ such that $A_j$ meets $D$ normally. That is, $j\in \{0,\dots,\nu\}$ in all cases except \ref{lem:w=2_swaps}\ref{item:w=2_cha=2_nu=4} with $p_{1}'\not\in \Bs\psi^{-1}$, in which case $j\in \{0,2,3,4\}$ because $A_1$ meets $D$ in a node, see Figure \ref{fig:swap-to-nu=4_small}. We denote by $\check{\cS}$ the combinatorial type of $(X,\check{D})$, see \cite[\sec 2F]{PaPe_ht_2} for definitions and notation.
\end{notation}

As in the case $\width=3$, we recover the vertical swap $\tau_{+}$ by reversing elementary swaps one by one as long as the inequality in Lemma \ref{lem:delPezzo_criterion} holds: this inequality now reads as \eqref{eq:ld-bound_h=2}. The following result, which is an analogue of Lemma \ref{lem:w=3_uniqueness}, shows that this process is essentially combinatorial, with minor exceptions if $\cha\kk=2$. 

\begin{lemma}[Reconstructing the minimalization $\psi$ step by step, cf.\ Lemma \ref{lem:w=3_uniqueness}]\label{lem:w=2_uniqueness}
	Let $\psi=\gamma\circ \gamma_{+}$ be as in Notation \ref{not:phi_+_h=2}. Then  the following hold.
	\begin{enumerate}
		\item\label{item:ld-bound_h=2} The log surface $(X_{\gamma},D_{\gamma})$ is the minimal log resolution of an lt del Pezzo surface of rank one. In particular,
		\begin{equation}\label{eq:ld-bound_h=2}
			\ld(H_{1}^{\gamma})+2\ld(H_{2}^{\gamma})>1.
		\end{equation}
		\item\label{item:w=2_A} We have $\Bs\gamma_{+}^{-1}\subseteq \sum_{i=0}^{\nu}A_{i}^{\gamma}$.
		\item\label{item:w=2_D} Assume that $\gamma_{+}$ has a base point $r\not\in D_{\gamma}$. Then the following hold.
		\begin{enumerate}
			\item\label{item:w=2_moduli} We have $\cha\kk=2$ and $p_{1}'\in \Bs\psi^{-1}$, see Figures \ref{fig:swap-to-nu=4_large}, \ref{fig:swap-to-nu=3}.
			\item\label{item:w=2_moduli_further} For any decomposition 
			$\gamma_{+}\colon 
			\begin{tikzcd}[cramped,column sep=scriptsize]
				(X,D)  \ar[r,"\theta_{+}",squiggly] & (X_{\theta},D_{\theta}) \ar[r,squiggly] & (X_{\gamma},D_{\gamma})
			\end{tikzcd}
			$
			we have $\Bs\theta_{+}^{-1}\subseteq D_{\theta}$.
			\item\label{item:w=2_GK} Assume furthermore that $\bar{X}$ has no descendant with elliptic boundary. Then $\nu=3$.
		\end{enumerate}
		\item\label{item:w=2_uniq} Put $d=\nu-3+\epsilon$, where  $\epsilon=1$ if $\gamma_{+}$ has a base point off $D_{\gamma}$ (see \ref{item:w=2_D} above), and $\epsilon=0$ otherwise. Then the following hold.
		\begin{enumerate}
			\item\label{item:w=2_uniq-hat}  The set $\cP_{+}(\check{\cS})$ has moduli dimension $d$, i.e.\ it is represented by a universal $\Aut(\check{\cS})$-faithful family over a base $B$ of dimension $d$ (see Section \ref{sec:moduli} for definitions). In particular, if $d=0$ then $\#\cP_{+}(\check{\cS})=1$, i.e.\ $\check{\cS}$ uniquely determines the isomorphism class of $(X,\check{D})$. 
			In case $d=1$ we have $B\cong \Astst$.
			\item\label{item:w=2_uniq-hi} Put $h^{i}=h^{i}(\lts{X}{D})$. Then $h^0=0$ and $h^1=d$. Moreover, $h^{2}=0$ if the corresponding surface $\bar{Y}$ from Lemma \ref{lem:w=2_swaps} is as in Example \ref{ex:w=2_cha_neq_2}, $h^2=1$ if it is as in  \ref{ex:w=2_cha=2}\ref{item:swap-to-nu=4_small},\ref{item:swap-to-nu=3}  and $h^2=2$ if it is as in \ref{ex:w=2_cha=2}\ref{item:swap-to-nu=4_large}.
		\end{enumerate}
	\end{enumerate}
\end{lemma}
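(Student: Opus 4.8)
The plan is to prove the four assertions of Lemma \ref{lem:w=2_uniqueness} in the order \ref{item:ld-bound_h=2}, \ref{item:w=2_A}, \ref{item:w=2_D}, \ref{item:w=2_uniq}, since each relies on the previous ones. For part \ref{item:ld-bound_h=2}, I would invoke Lemma \ref{lem:cascades}\ref{item:cascades-still-dP}: by Lemma \ref{lem:w=2_swaps} the morphism $\gamma_{+}$ is a vertical swap, so $(X_{\gamma},D_{\gamma})$ is again the minimal log resolution of a log terminal del Pezzo surface of rank one, and the inequality \eqref{eq:ld-bound_h=2} is then precisely the ampleness criterion of Lemma \ref{lem:delPezzo_criterion}, specialized to $D\hor=H_1^{\gamma}+H_2^{\gamma}$ with $H_1^{\gamma}\cdot F=1$, $H_2^{\gamma}\cdot F=2$ and $D\cdot F=3$.

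For part \ref{item:w=2_A}, I would argue exactly as in Lemma \ref{lem:w=3_uniqueness}\ref{item:w=3_no_C1}: since $\gamma_{+}$ is a vertical swap, every base point of $\gamma_{+}^{-1}$ lies on some vertical $(-1)$-curve, and by Lemma \ref{lem:w=2_psi}\ref{item:psi_p_i} the only vertical $(-1)$-curves lying over the base locus of $\psi^{-1}$ are the $A_i^{\gamma}$, $i\in\{0,\dots,\nu\}$. For part \ref{item:w=2_D}, the key point is that a base point $r\notin D_{\gamma}$ lying on some $A_i^{\gamma}$ means $A_i^{\gamma}$ meets $D_{\gamma}$ in a node rather than in two distinct smooth points; inspecting the vertically primitive models of Lemma \ref{lem:w=2_swaps} (Figures \ref{fig:w=2_swaps} and \ref{fig:w=2_cha=2}), the only $A_i$ meeting $D_Y$ in a node are the ones over $p_1$ in Examples \ref{ex:w=2_cha=2}\ref{item:swap-to-nu=4_small}, \ref{ex:w=2_cha=2}\ref{item:swap-to-nu=4_large}, \ref{ex:w=2_cha=2}\ref{item:swap-to-nu=3}, all requiring $\cha\kk=2$ with all lines tangent to $\cc$ and $p_1'\in\Bs\psi^{-1}$; this gives \ref{item:w=2_moduli}. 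Assertion \ref{item:w=2_moduli_further} follows because after the first blowup reversing the node, the curve $A_1$ becomes an elliptic tie unless the remaining swaps stay inside $D_{\theta}$, so I would verify directly that no further base point can leave $D_{\theta}$. For \ref{item:w=2_GK} I would use Lemma \ref{lem:tie}: the presence of $A_1$ meeting $D$ in a node, together with $\nu=4$, would produce an elliptic tie (the $(-1)$-curve $A_1$ inside the $\rD_4$-configuration, cf.\ Proposition \ref{prop:primitive}\ref{item:primitive-deb}), contradicting the hypothesis, so $\nu=3$.

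For part \ref{item:w=2_uniq}, I would combine part \ref{item:w=2_A} with Proposition \ref{prop:primitive}\ref{item:primitive-uniqueness}, which determines $\cP_{+}(\check{\cS}_{\phi})$ for the primitive model: it is a single point unless $\bar Y$ is as in Example \ref{ex:w=2_cha=2}\ref{item:swap-to-nu=4_small},\ref{item:swap-to-nu=4_large}, where it is an $\Aut(\check{\cS})$-faithful universal family over $\Astst$. Since by \ref{item:w=2_A} the morphism $(X,\check{D})\to(X_{\phi},\check{D}_{\phi})$ is inner (all centers are smooth points of the boundary), the universal property of blowing up transports these uniqueness and moduli-dimension statements to $\cP_{+}(\check{\cS})$, as in Lemma \ref{lem:w=3_uniqueness}\ref{item:ht=3_uniqueness}; the number $d=\nu-3+\epsilon$ counts exactly the extra free parameter contributed by the choice of the fourth tangent line when $\nu=4$ or the moduli of the primitive model when $\epsilon=1$. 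For \ref{item:w=2_uniq-hi}, the inner property gives $h^i(\lts{X}{D})=h^i(\lts{X_{\phi}}{D_{\phi}})$ via \cite[Lemmas 2.11(a),(b)]{PaPe_ht_2} and \cite[Lemma 1.5(4)]{FZ-deformations}, so the values of $h^i$ are read off from Proposition \ref{prop:primitive}\ref{item:primitive-hi}. The main obstacle I anticipate is part \ref{item:w=2_D}: ruling out base points off $D_{\gamma}$ in all non-exceptional configurations requires a careful case-by-case inspection of the boundary graphs in Figures \ref{fig:w=2_swaps} and \ref{fig:w=2_cha=2}, checking in each case that a blowup at such a point creates either a circular subdivisor, a non-admissible fork (contradicting Lemma \ref{lem:admissible_forks}\ref{item:long-twig}), or an elliptic tie, and correctly isolating the precise $\cha\kk=2$ situation where this genuinely occurs.
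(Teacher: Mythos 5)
Your treatment of parts \ref{item:ld-bound_h=2} and \ref{item:w=2_A} matches the paper's proof (Lemma \ref{lem:cascades}\ref{item:cascades-still-dP} plus Lemma \ref{lem:delPezzo_criterion}, and Lemma \ref{lem:w=2_psi}\ref{item:psi_p_i}), and your use of an elliptic tie for \ref{item:w=2_GK} is in the same spirit as the paper. The problems are in your mechanisms for \ref{item:w=2_D} and \ref{item:w=2_uniq}.

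For \ref{item:w=2_D}, your stated ``key point'' --- that a base point $r\notin D_{\gamma}$ on $A_i^{\gamma}$ \emph{means} $A_i^{\gamma}$ meets $D_{\gamma}$ in a node --- is a non sequitur, and it is in fact backwards relative to the configurations at stake. In the cases singled out by \ref{item:w=2_moduli} (Examples \ref{ex:w=2_cha=2}\ref{item:swap-to-nu=4_large}, \ref{ex:w=2_cha=2}\ref{item:swap-to-nu=3}, where $p_1'\in\Bs\psi^{-1}$) the curve $A_1$ meets $D$ \emph{normally in three points}, and $r$ lies on the complement $A_1\setminus D\cong\Astst$; the node case is precisely the remaining one, Example \ref{ex:w=2_cha=2}\ref{item:swap-to-nu=4_small} with $p_1'\notin\Bs\psi^{-1}$, where $A_1$ is even excluded from $\check{D}$ (Notation \ref{not:phi_+_h=2}). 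Your list of ``node'' examples lumps all three together and asserts they all have $p_1'\in\Bs\psi^{-1}$, which is internally inconsistent. The actual argument is the case-by-case check you only mention as an ``anticipated obstacle'': compose $\gamma$ with the blowup at $r$ and verify that the resulting boundary $D_{\eta}$ fails to be a sum of admissible chains and forks, contradicting part \ref{item:ld-bound_h=2}, except when $\cha\kk=2$ and $p_1'\in\Bs\psi^{-1}$; iterating this gives \ref{item:w=2_moduli_further}. Note also that elliptic ties cannot be used to exclude cases in \ref{item:w=2_moduli} or \ref{item:w=2_moduli_further}, since the hypothesis that $\bar{X}$ has no descendant with elliptic boundary is assumed only in \ref{item:w=2_GK}.

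For \ref{item:w=2_uniq}, your claim that ``by \ref{item:w=2_A} the morphism $(X,\check{D})\to(X_{\phi},\check{D}_{\phi})$ is inner'' is false precisely when $\epsilon=1$: part \ref{item:w=2_A} only places the centers on the curves $A_i$, not on the boundary, and $\epsilon=1$ means by definition that one center lies off $D_{\gamma}$, i.e.\ the factorization contains an \emph{outer} blowup. With your argument, the case $\epsilon=1$, $\nu=3$ would give $\#\cP_{+}(\check{\cS})=1$ and $h^1=h^1(\lts{X_{\phi}}{D_{\phi}})=0$ (Proposition \ref{prop:primitive}\ref{item:primitive-hi} for Example \ref{ex:w=2_cha=2}\ref{item:swap-to-nu=3}), contradicting the assertion $h^1=d=1$; moreover the modulus in that case is not ``the moduli of the primitive model'' (which is unique there, by Proposition \ref{prop:primitive}\ref{item:primitive-uniqueness}) but the position of the outer center on $A_{1}^{\gamma}\setminus D_{\gamma}\cong\Astst$. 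The paper handles this by applying \cite[Lemma 2.19(1)]{PaPe_ht_2} to that single outer blowup --- after observing that every automorphism of the weighted graph of $\check{D}_{\gamma}$ fixes the three points of $A_{1}^{\gamma}\cap D_{\gamma}$, so $\Aut(X_{\gamma},\check{D}_{\gamma})$ acts trivially on $A_{1}^{\gamma}\setminus D_{\gamma}$ --- and Lemma 2.18 loc.\ cit.\ to the inner ones. This step is missing from your proposal, so the moduli-dimension count and the computation of $h^1$ are unproved exactly in the case the lemma was designed to capture.
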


\begin{remark}[Moduli dimension in Theorem \ref{thm:ht=3} is at most $1$]
	The moduli dimension $d$ in Lemma \ref{lem:w=2_uniqueness}\ref{item:w=2_uniq} is at most $2$. It equals $2$ if and only if $\nu=4$ and $\epsilon=1$, in which case Lemma \ref{lem:w=2_uniqueness}\ref{item:w=2_GK} shows that  $\bar{X}$ has a descendant $(\bar{Y},\bar{T})$ with elliptic boundary. In these cases the $(-1)$-curve $A_1$ is an elliptic tie, cf.\ Figure \ref{fig:swap-to-nu=4_large}, so $\bar{T}$ is cuspidal and $\bar{Y}$ is of type $3\rA_1+\rD_4$ or $2\rA_1+\rD_6$.  
 	And indeed, case $(d,t)=1$ of \cite[Theorem E(e)]{PaPe_ht_2} shows that the class such surfaces $\bar{X}$, of a fixed singularity type, has moduli dimension $2$.
 	
 	We conclude that in the setting of Theorem \ref{thm:ht=3}, if $\#\cP_{+}(\check{\cS})>1$ then $\cP_{+}(\check{\cS})$ has moduli dimension $1$. More precisely, it is represented by an almost universal family over $\Astst$.
\end{remark}

\begin{proof}[Proof of Lemma \ref{lem:w=2_uniqueness}]
	\ref{item:ld-bound_h=2} As in the proof of Lemma \ref{lem:w=3_uniqueness}, the first statement follows from  Lemma \ref{lem:cascades}\ref{item:cascades-still-dP}, and the inequality \eqref{eq:ld-bound_h=2} is a restatement of \eqref{eq:ld_phi_H}. 
	
	\ref{item:w=2_A} This follows from Lemma \ref{lem:w=2_psi}\ref{item:psi_p_i}. 
	
	\ref{item:w=2_D} Let $\eta$ be a composition of $\gamma$ with a blowup at $r$. In case $\cha\kk\neq 2$, looking at Figure \ref{fig:w=2_swaps} we see that $D_{\eta}$ is not a sum of admissible chains and forks, contrary to  \ref{item:ld-bound_h=2}. Assume $\cha\kk=2$. Then a similar analysis of Figure \ref{fig:w=2_cha=2} shows that $p_{1}'\in \Bs\psi^{-1}$, i.e.\ $D_{\tau}$ is as in \ref{ex:w=2_cha=2}\ref{item:swap-to-nu=4_large} or \ref{item:swap-to-nu=3}; and either $r\in A_{1}^{\gamma}$ or $\nu=3$. Repeating this analysis after a blowup at $r$, we infer that each further blowup is centered on the boundary, i.e.\ \ref{item:w=2_moduli_further} holds. Furthermore, in case $\nu=4$ the morphism $\tau_{+}$ is a composition of blowups over $r$ and, possibly, one blowup over $A_j\cap H_2$ for some $j\in \{2,3,4\}$, so the curve $A_1$ is an elliptic tie, see Lemma \ref{lem:tie}, so \ref{item:w=2_GK} holds. We leave the details, analogous to the proof of Lemma \ref{lem:w=3_uniqueness}\ref{item:w=3_no_C1}, to the reader.
	
	\ref{item:w=2_uniq} 
	 Write $\psi=\phi\circ \phi_{+}$, where $\phi_{+}\colon (X,D)\sqto (Y,D_Y)$ is a vertical swap onto a log surface as in Example \ref{ex:w=2_cha_neq_2} or \ref{ex:w=2_cha=2}, see Lemma \ref{lem:w=2_swaps}. Let $\check{D}_{\phi}=\phi_{+}(\check{D})$ and let $\check{\cS}_{\phi}$ be the combinatorial type of $(X_{\phi},\check{D}_{\phi})$. By Proposition \ref{prop:primitive}\ref{item:primitive-uniqueness}  $\cP_{+}(\check{\cS}_{\phi})$ is represented by a universal $\Aut(\check{\cS}_{\phi})$-faithful family of dimension $\nu-3$, where $\Aut(\check{\cS}_{\phi})$ permutes degenerate fibers of the same type. The base of this family is a point if $\nu=3$ and $\Astst$ if $\nu=4$.

	The only automorphisms of the weighted graph of $\check{D}$ are permutations of degenerate fibers, so the morphism  $\phi_+\colon (X,\check{D})\to (X_\phi,\check{D}_\phi)$ is $\Aut(\check{\cS})$-equivariant. If $\epsilon=0$ then $\phi_+$ is inner, so \cite[Lemma 2.18]{PaPe_ht_2} yields \ref{item:w=2_uniq-hat} and the equality $h^i=h^i(\lts{X_{\phi}}{D_{\phi}})$, which together with Proposition \ref{prop:primitive}\ref{item:primitive-hi} implies \ref{item:w=2_uniq-hi}. Assume $\epsilon=1$, i.e.\ we are in the setting of part \ref{item:w=2_D}. Then $\phi_{+}$ is a composition of inner blowups and a unique outer one, centered at a point of $A^{\circ}\de A_{1}^{\gamma}\setminus D_{\gamma}$, see Figures \ref{fig:swap-to-nu=4_large}, \ref{fig:swap-to-nu=3}. Every automorphism of the weighted graph of $\check{D}_{\gamma}$ fixes the three points $A_{1}^{\gamma}\cap D_{\gamma}$, so the group $\Aut(X_{\gamma},\check{D}_{\gamma})$ acts trivially on $A^{\circ}$. Now we infer  \ref{item:w=2_uniq} by applying  \cite[Lemma 2.19(1)]{PaPe_ht_2} to this outer blowup and Lemma 2.18 loc.\ cit.\ to the remaining inner ones. If $d=1$ (so $\nu=3$) we get that $\cP_{+}(\check{\cS})$ is represented by an almost universal family over $A^{\circ}\cong \Astst$.
\end{proof}

Recall that we restrict our attention to surfaces which do not have descendants with elliptic boundary, since the latter are classified in \cite{PaPe_ht_2}. This additional restriction has the following easy consequence. We prove it now in case $\cha\kk\neq 2$ and postpone the other case until \hyperref[U-2-cha=2]{the end of this section}.

\begin{lemma}\label{lem:U-2}
	Let $\bar{X}$ be as in \eqref{eq:assumption_w=2}. Assume furthermore that $\bar{X}$ has no descendant with elliptic boundary. Then the connected component $U$ of $D$ containing $H_1$ is neither a $(-2)$-chain nor a $(-2)$-fork.
\end{lemma}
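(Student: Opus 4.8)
The plan is to argue by contradiction: I will assume that the connected component $U$ of $D$ containing $H_1$ is a $(-2)$-chain or a $(-2)$-fork, and then exhibit an elliptic tie on $(X,D)$, which by Lemma \ref{lem:tie} yields a descendant with elliptic boundary, contradicting our hypothesis. The search for the tie should be guided by the explicit structure of the degenerate fibers established in Lemma \ref{lem:w=2_fibers} together with the constraints from Lemma \ref{lem:w=2_basic}, since these pin down how $H_1$ sits inside $U$ and how $U$ attaches to the fibers.

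First I would recall from Lemma \ref{lem:w=2_basic}\ref{item:H1_meets} that $H_1$ meets the vertical part of every fiber $F_i$ for $i\in\{2,\dots,\nu\}$, and that $\nu\le\beta_D(H_1)+1\le 4$. If $U$ is a $(-2)$-fork, then $H_1$ is its branching component (it has branching number $3$), and all the twigs are $(-2)$-twigs; this severely constrains the $k_i$ via Lemma \ref{lem:w=2_basic}\ref{item:H2}, since $\sum_i k_i\ge 6$ forces some fiber to contribute a long chain to $U$, incompatible with all twigs being $[2]$ unless the excess self-intersection is absorbed in the component $W$ containing $H_2$. The key point is that when $U$ is all $(-2)$'s, the ``arithmetic genus'' constraint characteristic of an elliptic boundary can be met: a suitable $(-1)$-curve meeting $U$ and one other chain can be contracted so that the image of the relevant connected component becomes an irreducible curve of arithmetic genus $1$.

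The concrete construction I expect to carry out: I would look for a vertical $(-1)$-curve $L$ (one of the $A_j$ in the notation of Notation \ref{not:phi_+_h=2}, most plausibly the curve $A_0$ over $p_0$, since $\psi(H_1)=p_0$ and $U$ contains $H_1$) such that $L$ together with $U$ — or $U$ plus an adjacent chain — contracts, after blowing down the $(-1)$-curves inside the connected component $E$ containing $L$, to a single curve of arithmetic genus $1$. Because $U$ consists entirely of $(-2)$-curves, the contraction $\sigma\colon X\to Y$ of the appropriate $(-1)$-curves keeps $\sigma_*E$ irreducible with a single node or cusp, giving arithmetic genus one exactly when the numerical data (an $\mathrm{A}_n$ or $\mathrm{D}_n$ configuration glued to $L$) closes up; this matches the elliptic-tie patterns already used in Proposition \ref{prop:primitive}\ref{item:primitive-deb}, and the same kind of verification applies here.

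The hard part will be organizing the casework cleanly: there are a priori several shapes for $U$ (chain versus fork, and the various fiber types \ref{lem:w=2_fibers}\ref{item:FH=2}--\ref{item:F_fork} attaching to it), and for each I must locate the correct $(-1)$-curve and check that the contraction really produces arithmetic genus $1$ rather than $0$ (which would instead signal a log-terminal canonical piece). I anticipate that the numerical bookkeeping — tracking how $\beta_D$, the $k_i$, and the admissibility constraints of Lemma \ref{lem:admissible_forks} interact — will do most of the work, ruling out all but a short list of configurations, each of which visibly admits an elliptic tie. Since $\cha\kk=2$ is explicitly deferred (the restriction $p|_{H_2}$ being inseparable there allows extra totally-ramified fibers, changing the genus count), I would only treat $\cha\kk\neq 2$ here, where $p|_{H_2}$ is separable and the Hurwitz formula already limits the ramification, keeping the fiber structure rigid enough that the tie is forced.
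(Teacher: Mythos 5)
Your high-level strategy (contradiction plus an elliptic tie via Lemma \ref{lem:tie}) coincides with the paper's, but the concrete plan has gaps that would prevent it from closing. The paper's proof does not run on the fiber-type bookkeeping of Lemmas \ref{lem:w=2_fibers} and \ref{lem:w=2_basic}; it starts from Lemma \ref{lem:w=2_swaps}, which pins down the minimalization $\psi=\tau\circ\tau_+$ and the finitely many configurations of $D_\tau$ in Figure \ref{fig:w=2_swaps}, of which only three are compatible with $U_\tau$ consisting of $(-2)$-curves. The decisive observation, absent from your proposal, is that if $U$ consists of $(-2)$-curves then no base point of any partial swap $\gamma_+^{-1}$ can lie on $U_\gamma$, so the whole unknown cascade from the primitive model to $(X,D)$ takes place on the \emph{other} connected component $W$ (the one containing $H_2$). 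This is what lets one track a tie through all intermediate blowups: every blowup is centered over $A_1$ or away from $W+A_1-H_2$, hence the chain $W+A_1-H_2$ (of type $[3,1,2]$ or $[1,2,2]$ on the model) still contracts to a smooth point on $X$, and this contraction maps $H_2$ (or, in the second configuration, $L_1$) to a nodal curve of arithmetic genus one. So the tie is $A_1$, the $(-1)$-curve over $p_1$, and its connected component $E$ lies entirely on the $W$-side: your guess of $A_0$ and your heuristic that the tie should be \enquote{a $(-1)$-curve meeting $U$} point in exactly the wrong direction --- the hypothesis that $U$ is all $(-2)$'s is used to keep $U$ \emph{out} of the tie, not to build genus one from it. (Ironically, the patterns in Proposition \ref{prop:primitive}\ref{item:primitive-deb} that you cite already name the curve over $p_1$, not over $p_0$.)

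Two further gaps. First, your framework assumes the tie is always one of the vertical $(-1)$-curves $A_j$, but in the third admissible configuration (Figure \ref{fig:w=2_2A1+2A3_swap_T}) this fails: there admissibility forces $\psi=\tau$, and the tie is a \emph{new} $(-1)$-curve $A$ with $A\cdot D=A\cdot G_3=2$, produced by exhibiting a second $\P^1$-fibration and using Lemma \ref{lem:fibrations-Sigma-chi}\ref{item:Sigma} to identify the residual degenerate fiber $A_0+A=[1,1]$; no amount of numerical bookkeeping on $\beta_D$ and the $k_i$ produces such a curve. Second, dropping $\cha\kk=2$ leaves the lemma unproved in that case: the statement carries no characteristic hypothesis, and the characteristic-$2$ argument is not a routine variant --- it requires Lemma \ref{lem:further-swaps} and the Fano-plane symmetries of Remark \ref{rem:7A1}, which is exactly why the paper postpones (but does prove) that half.
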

\begin{proof}[Proof in case $\cha\kk\neq 2$]
	Suppose $U$ consists of $(-2)$-curves. Then for every decomposition $\psi=\gamma\circ \gamma_{+}$ as in Notation \ref{not:phi_+_h=2}, $U_{\gamma}$ consists of $(-2)$-curves and $\Bs\gamma_{+}^{-1}\cap U_{\gamma}=\emptyset$. By Lemma \ref{lem:tie} we need to find an elliptic tie. 
	
	Since $\cha\kk\neq 2$, Lemma \ref{lem:w=2_swaps} shows that $D_{\tau}$ is as in Figure \ref{fig:w=2_swaps}.  Since $U_{\tau}$ consists of $(-2)$-curves, $D_{\tau}$ is as in Figure  \ref{fig:w=2_A1+A2+A5_swap}, \ref{fig:w=2_2A1+2A3_swap_H} or \ref{fig:w=2_2A1+2A3_swap_T}. Consider the first two cases. Then $D_{\tau}$ has two connected components, $U_{\tau}$ and $W_{\tau}$. By Lemma \ref{lem:w=2_uniqueness}\ref{item:w=2_moduli} $\Bs\tau_{+}^{-1}\subseteq D_{\tau}$, so in fact $\Bs\tau_{+}^{-1}\subseteq W_{\tau}$, and by induction we get $\Bs\gamma_{+}^{-1}\subseteq W_{\gamma}$ for any $\gamma$. 
	
	Assume $\Bs\tau_{+}^{-1}\cap A_{0}^{\tau}=\emptyset$. Then every blowup in the decomposition of $\tau_{+}$ is centered over $A_{1}$ or away from $W+A_{1}-H_2$. Hence $W+A_1-H_2$ blows down to $W^{\tau}+A_{1}^{\tau}-H_{2}^{\tau}$. The latter is a chain $[3,1,2]$ or $[1,2,2]$, see Figure  \ref{fig:w=2_A1+A2+A5_swap} or \ref{fig:w=2_2A1+2A3_swap_H}, so blows down to a smooth point. Since $H_{2}^{\tau}$ meets this chain in tips, the contraction of $W+A_1-H_2$ maps $H_2$ to a nodal curve of arithmetic genus one, so $A_{1}$ is an elliptic tie. 
	
	Assume now that $A_{0}^{\tau}$ contains a base point $q$ of $\tau_{+}^{-1}$. Let $\gamma$ be a composition of $\tau$ with a blowup at $q$. In the first case (Figure  \ref{fig:w=2_A1+A2+A5_swap}) $U_{\gamma}$ is not admissible, so the second case holds (Figure \ref{fig:w=2_2A1+2A3_swap_H}). The admissibility of $U$ implies that $\Bs\gamma_{+}\cap W_{\gamma}\subseteq A_{1}^{\gamma}$. As before, we conclude that $W+A_1-L_1$ blows down to $W^{\gamma}+A_{1}^{\gamma}-L_{1}^{\gamma}=[3,1,2]$, hence to a smooth point. This blowdown maps $L_1$ to a nodal curve of arithmetic genus $1$. So $A_1$ an elliptic tie. 
	
	Consider the last case (Figure \ref{fig:w=2_2A1+2A3_swap_T}). Since $U$ is admissible and consists of $(-2)$-curves, we get $\psi=\tau$. We claim that there is  a $(-1)$-curve $A\subseteq X$ such that $A\cdot D=A\cdot G_3=2$: such $A$ will be the required elliptic tie.
	
	For $i,j\in \{1,2,3\}$ let $G_{j}'$ be the second exceptional curve over $p_j$, and let $L_{ij}$ be the proper transform of the line joining $p_i$ with $p_j$. Let $R$ be the proper transform of the conic passing through $p_0,p_2,p_3$ and tangent to $\cc$ at $p_1$. Consider a $\P^1$-fibration of $X$ induced by $|2G_{3}'+2A_3+L_3+H_2|$. The horizontal part of $D$ consists of $1$-sections $L_1$ and $H_1$, and a $2$-section $G_3$. Degenerate fibers are supported on $G_{3}'+A_{3}+L_{3}+H_{2}=\langle 2;[1],[2],[2]\rangle$, on $G_{1}+L_{13}+L_2=[2,1,2]$, on $L_{23}+G_{2}+R=[1,2,1]$ and on $A_0+A$ for some divisor $A$ which has no common components with $D$. By Lemma \ref{lem:fibrations-Sigma-chi}\ref{item:Sigma} there are no more degenerate fibers, and $A_0+A=[1,1]$. Since $A_0$ meets $L_{1}$ and $H_1$ we get $A\cdot D=A\cdot G_3=2$, as needed.
\end{proof}

We now focus on the case $\cha\kk=2$, $\nu=3$. In Lemma \ref{lem:further-swaps} below, we use the assumption \eqref{eq:assumption_w=2} and symmetries of $(X,D)$ to impose some additional restrictions of the map $\phi_{+}^{-1}$ which we need to reconstruct. 

\begin{lemma}[Additional blowup in case \ref{lem:w=2_swaps}\ref{item:w=2_cha=2_nu=3}]\label{lem:further-swaps}
	Assume $\cha\kk=2$ and $\nu=3$, see Figure \ref{fig:swap-to-nu=3}. Then one can choose the morphism $\psi$ as in Notation \ref{not:phi_+_h=2} so that $\phi_{+}^{-1}$ has a base point $q_{j}\in A_{j}^{\phi}$ for $j=1,2$. 
\end{lemma}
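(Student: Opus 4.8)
The plan is to exploit the fact that the vertically primitive surface $\bar Y$ attached to case \ref{lem:w=2_swaps}\ref{item:w=2_cha=2_nu=3} (namely the one from Example \ref{ex:w=2_cha=2}\ref{item:swap-to-nu=3}, see Figure \ref{fig:swap-to-nu=3}) has height $2$, together with the threefold symmetry of its defining plane configuration, to force the swap $\phi_+$ to be nontrivial over at least two of the three tangent points, and then to relabel so that these become $p_1$ and $p_2$.

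First I would record that $\phi_+\neq\id$. Indeed, by Proposition \ref{prop:primitive}\ref{item:primitive-ht} we have $\height(\bar Y)=2$, whereas $\height(\bar X)=3$ by \eqref{eq:assumption_w=2}, so $(X,D)\neq(Y,D_Y)$ and $\Bs\phi_+^{-1}\neq\emptyset$. By Lemma \ref{lem:w=2_uniqueness}\ref{item:w=2_A} every base point of $\phi_+^{-1}$ lies on one of the $(-1)$-curves $A_0^{\phi},\dots,A_3^{\phi}$ over $p_0,\dots,p_3$ (Notation \ref{not:phi_+_h=2}). Writing $S\subseteq\{1,2,3\}$ for the set of indices $j$ with $A_j^{\phi}\cap\Bs\phi_+^{-1}\neq\emptyset$, the goal becomes to choose $\psi$ so that $\{1,2\}\subseteq S$. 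I would first dispose of $A_0^{\phi}$: since $p_0$ is the common point of the lines and lies off $\cc$, the curve $A_0^{\phi}$ is vertical for every one of the competing conic pencils below, so base points on $A_0^{\phi}$ alone cannot lower the height; hence $S\neq\emptyset$.

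The main tool is the family of height-$2$ fibrations of $\bar Y$. By Proposition \ref{prop:primitive}\ref{item:primitive-ht} the pencil of conics tangent to $\ll_2$ at $p_2$ and to $\ll_3$ at $p_3$ pulls back to a height-$2$ fibration of $Y$. Since $\cha\kk=2$ forces all tangent lines of $\cc$ through the nucleus $p_0$, and since the configuration $\cc+\ll_1+\ll_2+\ll_3$ is unique up to projective equivalence (Proposition \ref{prop:primitive}\ref{item:primitive-uniqueness-Y}) and hence symmetric under permuting the three tangent points, I likewise obtain height-$2$ pencils for each pair $\{i,j\}\subseteq\{1,2,3\}$. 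Because $\height(\bar X)=3$, none of the three pullbacks to $X$ may have height $2$. The heart of the argument is to translate this into a condition on $S$: I would trace each such pencil through the blowups of $\phi_+^{-1}$, determine which $A_k^{\phi}$ are horizontal for it, and show that the pullback keeps height $2$ unless $\phi_+^{-1}$ has a base point over $p_i$ or $p_j$ in the direction transverse to $\cc$. As the three pairs exhaust all two-element subsets of $\{1,2,3\}$, killing all three pencils forces $S$ to meet every pair, so $|S|\geq 2$. The same bookkeeping, together with the fact that the distinguished fiber (the one with $\sigma(F)=2$, see \eqref{eq:w=2_Sigma} and Lemma \ref{lem:fibrations-Sigma-chi}) already carries both $A_0^{\phi}$ and $A_1^{\phi}$ in $Y$, should show that the distinguished index $1$ belongs to $S$.

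Finally, once $1\in S$ and $|S|\geq 2$, I would use the involution interchanging $\ll_2$ and $\ll_3$, which lifts to $\Aut(Y,D_Y)$ and therefore permutes the admissible choices of $\psi$, to relabel so that $2\in S$ as well; the two guaranteed base points are then the desired $q_1\in A_1^{\phi}$ and $q_2\in A_2^{\phi}$. The main obstacle I anticipate is precisely the height computation in the previous paragraph: determining, in characteristic $2$, how a base point on each individual $A_k^{\phi}$ changes the height of the competing conic pencils requires following the tangency data carefully through the resolution, and this is exactly where the inseparability of $p|_{H_2}$ is used. A secondary delicate point is confirming that the distinguished index indeed lies in $S$ (rather than only that $|S|\geq 2$); if a direct argument stalls, the fallback is to re-choose the witnessing $\P^1$-fibration of $X$ so that its $\sigma=2$ fiber lies over a line in $S$, which is legitimate since we are only asked to produce \emph{some} admissible $\psi$.
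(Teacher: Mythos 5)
Your opening step is sound and is exactly how the paper begins: if $A_i^{\phi}\cap\Bs\phi_+^{-1}=\emptyset$ for two indices $i\in\{1,2,3\}$, then the pencil of conics tangent to $\cc$ at the two corresponding points $p_i$ pulls back to a $\P^1$-fibration of height $2$ on $(X,D)$, contradicting $\height(\bar X)=3$; hence, after interchanging $\ll_2$ and $\ll_3$, one may assume $\phi_+^{-1}$ has base points on $A_2^{\phi}$ and $A_3^{\phi}$. The gap is in what comes next. The height argument gives only $S\supseteq\{2,3\}$ for your set $S$; your assertion that the distinguished index $1$ also lies in $S$ (\enquote{the same bookkeeping \dots\ should show}) has no justification, and the case $S=\{2,3\}$ is precisely what the lemma is about. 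Neither of your repair devices handles it: the involution interchanging $\ll_2$ and $\ll_3$ fixes $\ll_1$, so it can never create a base point on $A_1^{\phi}$; and the distinguished fiber is intrinsic to the chosen fibration (it is the unique degenerate fiber $F$ with $\sigma(F)=2$, see \eqref{eq:w=2_Sigma}), so \enquote{re-choosing the witnessing fibration so that its $\sigma=2$ fiber lies over a line in $S$} is not a relabelling of the existing data --- it requires constructing a genuinely different morphism $\psi$, and you give no construction.

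The missing idea, which is the heart of the paper's proof, is how to produce that new $\psi$. First, since $D$ has no circular subdivisor, for some $i\in\{2,3\}$, say $i=2$, the base point $r_2$ lies on $H_2^{\phi}\cup L_2^{\phi}$. One then passes to the log surface $(X_\alpha,D_\alpha)$ of Remark \ref{rem:7A1} via the swap contracting the $(-1)$-curve over $p_0$: this is the minimal resolution of the del Pezzo surface of type $7\rA_1$, i.e.\ the blow-up of the Fano plane $\P^2_{\F_2}$ at its seven rational points, and its automorphism group (coming from $\PGL_3(\F_2)$, which acts simply transitively on ordered non-collinear triples of Fano points) permutes the $(-1)$-curves $A_0^{\alpha},\dots,A_3^{\alpha}$ in ways that do \emph{not} lift to $X_\phi$. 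Taking the automorphism $\iota$ with $(A_0,A_2,A_3)\mapsto(A_2,A_0,A_1)$ if $r_2\in H_2$, resp.\ $(A_0,A_2,A_3)\mapsto(A_1,A_0,A_2)$ if $r_2\in L_2$, one arranges $\iota(r_2)$ to be exactly the point blown up by $X_\phi\to X_\alpha$ (this is where the precise location of $r_2$ is needed), so the twisted map lifts to a regular swap $\tilde\phi_+\colon(X,D)\sqto(X_\phi,D_\phi)$ whose inverse has base points on $A_1^{\phi}$ and $A_2^{\phi}$; replacing $\psi$ by $\phi\circ\tilde\phi_+$ proves the lemma. Geometrically this is a Cremona transformation trading the pencil of lines through $p_0$ for a pencil of conics --- precisely the step your fallback gestures at but does not carry out. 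Without this (or an equivalent) device, the case $S=\{2,3\}$ remains open, so the proposal does not establish the statement.
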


The proof of Lemma \ref{lem:further-swaps} is based on the following observation, which will be useful in Section \ref{sec:w=2_cha=2}, too. 

\begin{remark}[Reduction via symmetries of the Fano plane]
	\label{rem:7A1}
	Assume $\cha\kk=2$ and $\nu=3$. 
	Write $\psi=\alpha\circ\alpha_{+}$, where $\alpha\colon X_{\alpha}\to \P^2$ is a blowup at $p_0$ and $p_{j},p_{j}'$ for $j\in \{1,2,3\}$, so $(X_{\phi},D_{\phi})\sqto (X_{\alpha},D_{\alpha})$ is a swap contracting $A_0$. Then $X_{\alpha}$ is the minimal resolution of a del Pezzo surface of rank 1 and type $7\rA_1$. 
	It can be obtained by a blowup $\beta\colon X_{\alpha}\to \P^2$ of all seven $\F_{2}$-rational points on $\P^2$, see \cite[Corollary 5.11]{KN_Pathologies}. 
	
	The exceptional divisor of $\beta$ is a sum of seven disjoint $(-1)$-curves. They are: the $(-1)$-curves $A_{j}^{\alpha}$ over $p_j$, $j\in \{0,1,2,3\}$, and the proper transform of the lines joining $p_{i}$ with $p_{j}$, $i,j\in \{1,2,3\}$.  
	In turn, $D_{\alpha}$ is the proper transform of all $\F_{2}$-rational lines. 
	Put $q_j=\beta(A_{j}^{\alpha})\in \P^2$. Note that $q_1,q_2,q_3$ lie on a line $\beta(H_2)$, and $q_0\not\in \beta(H_2)$. 
	
	Choose $i_0,\bar{i}_0\in \{1,2,3\}$, and write $\{i,j,k\}=\{0,1,2,3\}\setminus \{i_0\}$, $\{\bar{i},\bar{j},\bar{k}\}=\{0,1,2,3\}\setminus \{\bar{i}_0\}$. Since the points $\{q_i,q_j,q_k\}$ and $\{q_{\bar{i}},q_{\bar{j}},q_{\bar{k}}\}$ are not collinear, there is a unique element of $\operatorname{GL}_3(\F_2)$ mapping $(q_i,q_j,q_k)$ to $(q_{\bar{i}},q_{\bar{j}},q_{\bar{k}})$; its lift to $(X_\alpha,D_{\alpha})$ maps $(A_{i}^{\alpha},A_{j}^{\alpha},A_{k}^{\alpha})$ to $(A_{\bar{i}}^{\alpha},A_{\bar{j}}^{\alpha},A_{\bar{k}}^{\alpha})$. If such an automorphism lifts to some intermediate log surface $(X_{\gamma},D_{\gamma})$ introduced in Notation \ref{not:phi_+_h=2}, then it maps $(A_{i}^{\gamma},A_{j}^{\gamma},A_{k}^{\gamma})$ to $(A_{\bar{i}}^{\gamma},A_{\bar{j}}^{\gamma},A_{\bar{k}}^{\gamma})$, and we denote it simply by $(A_i,A_j,A_k)\mapsto (A_{\bar{i}},A_{\bar{j}},A_{\bar{k}})$. 
\end{remark}

\begin{example}[See Figure \ref{fig:reduction}]\label{ex:7A1}
	 Assume that $\gamma$ is a composition of $\phi$ with a blowup at $A_2^{\phi}\cap H_{2}^{\phi}$. Then the log surface $(X_{\gamma},D_{\gamma})$ admits an involution $(A_0,A_1,A_2)\mapsto (A_2,A_1,A_0)$. Looking at its action on the $\F_2$-rational lines, we see that it maps $H_2$, $L_1$, $L_2$, $L_3$, $G_1$, $G_2$, $G_3$, to $L_1$, $H_2$, $L_2$, $G_2$, $G_1$, $L_3$, $G_3$, respectively.	
	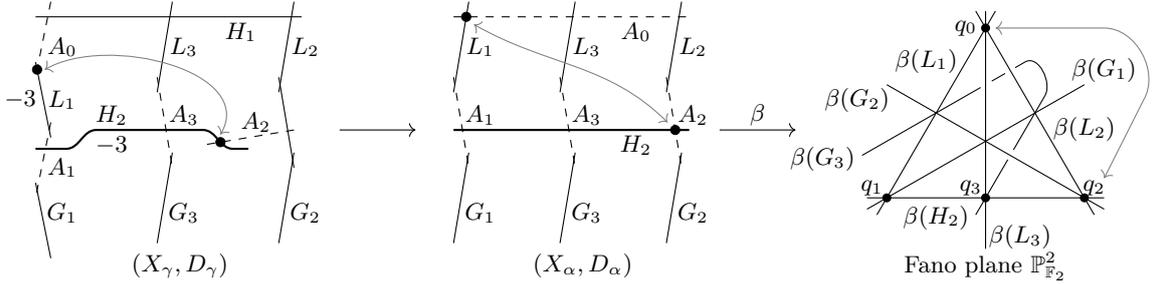
\begin{figure}[htbp]
		\begin{tikzpicture}
			\begin{scope}
				\draw (0,3) -- (3.5,3);
				\node at (2.7,2.8) {\small{$H_1$}};
				\draw[dashed, name path=A0] (0.2,3.2) -- (0,2.2);
				\node at (0.35,2.6) {\small{$A_0$}};
				\draw[name path = L1] (0,2.4) -- (0.2,1.4);
				\node at (0.35,1.9) {\small{$L_1$}};
				\node at (-0.2,1.9) {\small{$-3$}};
				\draw[dashed] (0.2,1.6) -- (0,0.6);
				\node at (0.35,1) {\small{$A_1$}};
				\draw (0,0.8) -- (0.2,-0.2);
				\node at (0.35,0.4) {\small{$G_1$}};
				\draw (1.8,3.2) -- (1.6,2);
				\node at (1.95,2.6) {\small{$L_3$}};
				\draw[dashed] (1.6,2.2) -- (1.8,1);
				\node at (1.95,1.7) {\small{$A_3$}};
				\draw (1.8,1.2) -- (1.6,0);
				\node at (1.95,0.4) {\small{$G_3$}}; 
				\draw (3.4,3.2) -- (3.2,2);
				\node at (3.55,2.6) {\small{$L_2$}};
				\draw (3.2,2.2) -- (3.4,1);
				\draw[dashed, name path=A2] (3.4,1.5) -- (2.2,1.3);
				\node at (2.9,1.6) {\small{$A_2$}};
				\draw (3.4,1.2) -- (3.2,0);
				\node at (3.55,0.4) {\small{$G_2$}};
				\draw[thick, name path=H] (0,1.25) -- (0.4,1.25) to[out=0,in=180] (0.8,1.5) -- (2.2,1.5) to[out=0,in=180] (2.6,1.25)-- (2.8,1.25);
				\node at (1,1.7) {\small{$H_2$}};
				\node at (1,1.3) {\small{$-3$}};
				\path [name intersections={of=A2 and H, by=X}];
				\filldraw (X) circle (0.06);
				\path [name intersections={of=A0 and L1, by=Y}];
				\filldraw (Y) circle (0.06);
				\draw[<->, gray] ($(X)+(0,0.1)$) to[out=60,in=30] ($(Y)+(0.1,0)$);
				\node at (1.9,-0.3) {\small{$(X_{\gamma},D_{\gamma})$}};
				\draw[->] (4,1.5) -- (5,1.5);
			\end{scope}
			\begin{scope}[shift={(5.5,0)}]
				\draw[dashed, name path = A0] (0,3) -- (3.1,3);
				\node at (2.4,2.8) {\small{$A_0$}};
				\draw[name path = L1] (0.2,3.2) -- (0,2);
				\node at (0.35,2.6) {\small{$L_1$}};
				\draw[dashed] (0,2.2) -- (0.2,1);
				\node at (0.35,1.7) {\small{$A_1$}};
				\draw (0.2,1.2) -- (0,0);
				\node at (0.35,0.4) {\small{$G_1$}};
				\draw (1.6,3.2) -- (1.4,2);
				\node at (1.75,2.6) {\small{$L_3$}};
				\draw[dashed] (1.4,2.2) -- (1.6,1);
				\node at (1.75,1.7) {\small{$A_3$}};
				\draw (1.6,1.2) -- (1.4,0);
				\node at (1.75,0.4) {\small{$G_3$}}; 
				\draw (3,3.2) -- (2.8,2);
				\node at (3.15,2.6) {\small{$L_2$}};
				\draw[dashed, name path = A2] (2.8,2.2) -- (3,1);
				\node at (3.15,1.7) {\small{$A_2$}};
				\draw (3,1.2) -- (2.8,0);
				\node at (3.15,0.4) {\small{$G_2$}};
				\draw[thick, name path = H] (0,1.5) -- (3.1,1.5);
				\node at (2.4,1.3) {\small{$H_2$}};
				\path [name intersections={of=A2 and H, by=X}];
				\filldraw (X) circle (0.06);
				\path [name intersections={of=A0 and L1, by=Y}];
				\filldraw (Y) circle (0.06);
				\draw[<->, gray] ($(Y)+(0.1,-0.1)$) to[out=-30,in=135] ($(X)+(-0.1,0.1)$);
				\node at (1.7,-0.3) {\small{$(X_{\alpha},D_{\alpha})$}};
				\draw[->] (3.5,1.5) -- (4.5,1.5);
				\node at (4,1.7) {\small{$\beta$}};
			\end{scope}
			\begin{scope}[shift={(12.5,0.6)}, scale=1.3]
				\draw[add = 0.1 and 0.1] (1,0) to (-1,0);
				\draw[add = 0.1 and 0.1] (1,0) to (0,1.732);
				\draw[add = 0.1 and 0.1] (-1,0) to (0,1.732);
				\draw[add = 0.3 and 0.1] (0,0) to (0,1.732);
				\draw[add = 0.2 and 1] (1,0) to (0,0.577);
				\node at (-1.3,1) {\small{$\beta(G_2)$}};
				\draw[add = 0.2 and 1] (-1,0) to (0,0.577);
				\node at (1.2,1.3) {\small{$\beta(G_1)$}};
				\filldraw (1,0) circle (0.04);
				\node at (1.1,0.1) {\small{$q_2$}};
				\filldraw (0,0) circle (0.04);
				\node at (-0.15,0.1) {\small{$q_3$}};
				\filldraw (-1,0) circle (0.04);
				\node at (-1.15,0.1) {\small{$q_1$}};
				\filldraw (-0,1.732,0) circle (0.04);
				\node at (-0.2,1.732) {\small{$q_0$}};
				\node at (-0.5,-0.2) {\small{$\beta(H_2)$}};
				\node at (1,0.7) {\small{$\beta(L_2)$}};
				\node at (0.35,-0.4) {\small{$\beta(L_3)$}};
				\node at (-0.6,1.4) {\small{$\beta(L_1)$}};
				\node at (0,-0.7) {\small{Fano plane $\P^2_{\F_2}$}};
				\draw[add = 0.2 and -0.55] (0,0) to (0.5,0.866);
				\draw[add = -0.47 and 0] (0,0) to (0.6,1.04);
				\draw[add = 1 and -0.4] (-0.5,0.866) to (0.25,1.299);
				\draw[add = -0.75 and -0.07] (-0.5,0.866) to (0.25,1.299);
				\draw (0.6,1.04) to[out=60,in=30] (0.4,1.386) -- (0.3,1.328);
				\node at (-1.65,0.4) {\small{$\beta(G_3)$}};
				\draw[<->, gray] (1.2,0.2) -- (1.6,1) to[out=60,in=0] (1.2,1.732) -- (0.1,1.732); 
 			\end{scope}
		\end{tikzpicture}\vspace{-1em}
		\caption{Example \ref{ex:7A1}: involution $(A_0,A_1,A_2)\mapsto (A_2,A_1,A_0)$ from Remark \ref{rem:7A1}.}\vspace{-0.5em}
		\label{fig:reduction}
	\end{figure}	
\end{example}

\begin{example}[How we use Remark \ref{rem:7A1}]\label{ex:7A1-application}
	Automorphisms from Remark \ref{rem:7A1} allow us to narrow down the possibilities for the morphism $\gamma_{+}$. To see how this works, let $\gamma$ be as in Example \ref{ex:7A1}. Assume that, for some reason, we know that  $\Bs\gamma_{+}^{-1}\cap A_{3}^{\gamma}=\emptyset$. Since $U_{\gamma}$ is a $(-2)$-chain and, by Lemma \ref{lem:U-2}, $U$ is not, we see that $\gamma_{+}^{-1}$ has a base point on $A_{0}^{\gamma}$ or $A_{2}^{\gamma}$. Thus applying $(A_0,A_1,A_2)\mapsto (A_2,A_1,A_0)$, we can assume that $A_{0}^{\gamma}\cap \Bs\gamma_{+}^{-1}\neq \emptyset$. 
	
	Note that in this reduction we have implicitly used the condition $A_{3}^{\gamma}\cap \Bs\gamma_{+}^{-1}=\emptyset$. Indeed, our automorphism maps $A_{3}^{\gamma}$ away from the base locus of the morphism $\gamma_{+}^{-1}$ which we reconstruct. In fact, it maps $A_{3}^{\gamma}$ to the $(-1)$-curve over the point $\beta(L_1)\cap \beta(G_2)\cap \beta(G_3)$, i.e.\ to the proper transform of the line joining $p_2$ with $p_3$. 
\end{example}

\begin{proof}[Proof of Lemma \ref{lem:further-swaps}]
If $A_i^{\phi}\cap \Bs\phi_{+}^{-1}=\emptyset$ for two indices $i\in \{1,2,3\}$ then the pencil of conics tangent to $\cc$ at both points $p_{i}$ pulls back to a $\P^1$-fibration of height $2$, which is impossible. Thus we can assume that $\phi_{+}^{-1}$ has a base point $r_{i}^{\phi}\in A_i^{\phi}$ for $i=2,3$. Since $D$ has no circular subdivisor, we have $r_i^{\phi}\in (H_2^{\phi}\cup L_i^{\phi})$ for some $i\in \{2,3\}$, and interchanging $\ll_2$ with $\ll_3$, if needed, we can assume $r_2^{\phi}\in (H_2^{\phi}\cup L_2^{\phi})$.

Let $\sigma\colon X_{\phi}\to X_{\alpha}$ be the contraction of $A_{1}$, and let $\alpha_{+}=\sigma\circ \phi_{+}$, so we have a factorization $\psi=\alpha\circ \alpha_{+}$ as in  Remark \ref{rem:7A1}. Put $r_{0}=\sigma(A_1)\in A_{0}^{\alpha}$ and $r_{i}=\sigma(r_{i}^{\phi})\in A_{i}^{\alpha}$ for $i=2,3$, so $r_{0},r_{2},r_{3}\in \Bs\alpha_{+}^{-1}$, and $r_{2}\in H_{2}^{\alpha}$ or $r_{2}\in L_{2}^{\alpha}$. Define $\iota\in \Aut(X_{\alpha},D_{\alpha})$ as $(A_{0},A_{2},A_{3})\mapsto (A_{2},A_{0},A_{1})$ if $r_{2}\in H_{2}$ or $(A_{0},A_{2},A_{3})\mapsto(A_{1},A_{0},A_{2})$ if $r_{2}\in L_{2}$; and let $\tilde{\alpha}_{+}=\iota^{-1}\circ\alpha_{+}$. Then $\iota(r_{2})=r_{0}$, so $\tilde{\phi}_{+}\de \sigma^{-1}\circ \tilde{\alpha}_{+}$ is regular, and $\iota(r_{i})\in A_{1},\iota(r_{j})\in A_{2}$ for $\{i,j\}=\{0,2\}$, so $\tilde{\phi}_{+}^{-1}$ has base points on $A_{1},A_{2}$. Thus replacing $\psi=\phi\circ\phi_{+}$ by $\phi\circ\tilde{\phi}_{+}$ we get the claim.
\end{proof}

\begin{proof}[Proof of Lemma \ref{lem:U-2} in case $\cha\kk=2$]\phantomsection\label{U-2-cha=2}
	Suppose $U$ consists of $(-2)$-curves. To get a contradiction, by Lemma \ref{lem:tie} we need to find an elliptic tie. By Lemma \ref{lem:w=2_swaps} we have $\nu=4$ or $\nu=3$ and $D_{\phi}$ is as in Figure \ref{fig:w=2_cha=2}.  
	
	If $\nu=4$ then since $U$ is admissible and consists of $(-2)$-curves, we see from Figure \ref{fig:w=2_cha=2} that $\Bs\tau_{+}^{-1}\subseteq A_{1}$ and $A_{1}$ is an elliptic tie. Thus we can assume $\nu=3$, so $D_{\phi}$ is as in Figure \ref{fig:swap-to-nu=3}. By Lemma \ref{lem:further-swaps} we can assume that $\phi_{+}^{-1}$ has a base point $q_{i}\in A_{i}^{\phi}$ for $i\in \{1,2\}$. The admissibility of $U$ and the fact that $U$ consists of $(-2)$-curves imply that $q_{1},q_{2}\in D_{\phi}-U_{\phi}$, in particular $q_{2}\in H_{2}^{\phi}\cup G_{2}^{\phi}$. 
	
	Consider the case $q_{2}\in G_{2}^{\phi}$. Since $U$ consists of $(-2)$-curves we have $q_{1}\in L_{1}^{\phi}$, and since $U$ is admissible we conclude that $\phi_{+}$ is a blowup at $q_{1},q_{2}$ and possibly at a point infinitely near to $q_{1}$ on the proper transform of $L_{1}$. Denoting by $A$ the proper transform of the line joining $p_{2}$ with $p_{3}$ we see that $A+L_{1}+G_{2}+G_{3}$ is a connected component of $D+A$, of type $\langle 1;[2],[3],[k]\rangle$ for some $k\in \{4,5\}$, so $A$ is an elliptic tie, as needed.
	
	Consider the case $q_{2}\in H_{2}^{\phi}$. Let $\eta$ be a composition of $\phi$ with a blowup at $q_{2}$. Since $U$ consists of $(-2)$-curves, all base points of $\eta_{+}^{-1}$, including the infinitely near ones, lie on the proper transforms of $L_{1}^{\eta}+H_{2}^{\eta}$ or on the preimage of $A_{1}^{\eta}$. Moreover, the admissibility of $U$ implies that if $L_{1}^{\eta}\cap \Bs\eta_{+}^{-1}\not\subseteq A_{1}^{\eta}$ then $H_{2}^{\eta}\cap \Bs\eta_{+}^{-1}\subseteq A_{1}^{\eta}$. Thus $L_{1}+H_{2}+G_{1}+\phi_{+}^{-1}(A_{1})$ is a connected component of $D+A_{1}$ which blows down to a fork $\langle 1;[2],[3],[k]\rangle$, where the $(-k)$-curve is the proper transform of $H_{2}$ or $L_{1}$. Hence $A_{1}$ is an elliptic tie, as needed.
\end{proof} 

\subsection{The list of singularity types: case $\cha\kk\neq 2$}\label{sec:w=2_cha_neq_2}

We can now state the classification results. Like in Lemma \ref{lem:w=3}, we describe the structure of the chosen witnessing $\P^1$-fibration by writing the singularity type (i.e.\ the weighted graph of $D$) as a sum of admissible chains $[a_1,\dots,a_n]$ and forks $\langle b;T_1,T_2,T_3\rangle$, and decorating it as explained in Section \ref{sec:notation}, that is: the bold numbers correspond to horizontal components, the underlined one to the $2$-section, and the ones decorated by $\dec{j}$ to the components meeting the vertical $(-1)$-curve $A_j$, see Notation \ref{not:phi_+_h=2}. The list of singularity types without decorations is given in Tables \ref{table:ht=3_char=0}--\ref{table:ht=3_char=2}.  We begin with case $\cha\kk\neq 2$, summarized in the second part of Table \ref{table:ht=3_char=0}.

\begin{lemma}[Classification, case $\width=2$, $\cha\kk\neq 2$, see Table \ref{table:ht=3_char=0}]\label{lem:w=2_cha_neq_2}
	Assume $\cha\kk\neq 2$. Let $\cS$ be a singularity type of a log terminal surface. Let $\bar{X}$ be a del Pezzo surface of rank $1$, height $3$, width $2$ and singularity type~$\cS$. Assume $\bar{X}$ has no descendant with elliptic boundary. Then $\cS$ is as below, and the following hold. 
	\begin{parts}
		\item\label{item:w=2-uniqueness} We have $\#\Pht^{\width=2}(\cS)=1$, i.e.\ $\bar{X}$ is unique up to an isomorphism.
		\item\label{item:w=2-classification} The minimal log resolution $(X,D)$ of $\bar{X}$ admits a $\P^1$-fibration $p$ such that $\bar{X}$ swaps vertically to one of the canonical surfaces $\bar{Y}$ from Example \ref{ex:w=2_cha_neq_2}, and the combinatorial type of $(X,D,p)$ is one of the following. 
	\end{parts} 
	\begin{enumerate}[itemsep=0.6em]
		\item\label{item:ht=3_exception_class} $\bar{Y}$ is of type $\rA_{1}+\rA_{7}+[3]$, see Example \ref{ex:w=2_cha_neq_2}\ref{item:ht=3_exception},  case \ref{lem:w=2_swaps}\ref{item:ht=3_exception_swap} holds, and $(X,D,p)$ is  
		\begin{longlist}
			\item\label{item:ht=3_exc_type} $[2,2\dec{3},2,\bs{2}\dec{0},2\dec{2},\ub{3}\dec{1,3},2\dec{2}]+[3\dec{0},2\dec{1},2]$,
		\setcounter{foo}{\value{longlisti}}
		\end{longlist}
		\item \label{item:w=2_A1+A2+A5_class} $\bar{Y}$ is of type $\rA_1+\rA_2+\rA_5$, see Example \ref{ex:w=2_cha_neq_2}\ref{item:w=2_A1+A2+A5}, case \ref{lem:w=2_swaps}\ref{item:w=2_A1+A2+A5_swap} holds, and $(X,D,p)$ is
		\begin{longlist}\setcounter{longlisti}{\value{foo}}	
			\item\label{item:ht=3_b}  $[2,2\dec{2},2,\bs{2}\dec{0},3\dec{3}]+\langle \ub{2}\dec{2},[2]\dec{1},[2,2]\dec{3},[3]\dec{0,1} \rangle$,
		\setcounter{foo}{\value{longlisti}}
		\end{longlist}
		\item\label{item:w=2_2A1+2A3_class} $\bar{Y}$ is of type $2\rA_1+2\rA_3$, see Example \ref{ex:w=2_cha_neq_2}\ref{item:w=2_2A1+2A3}, case \ref{lem:w=2_swaps}\ref{item:w=2_2A1+2A3_swap} holds, and $(X,D,p)$ is one of the following: 
		\begin{longlist}\setcounter{longlisti}{\value{foo}}	
		\item\label{item:ht=3_A22} $[2,3\dec{2},2,\bs{2}\dec{0},2,2\dec{3},2]+[2\dec{2},\ub{4}\dec{1,3},2\dec{0},2\dec{1}]$,
		\item\label{item:ht=3_A2B} $[2\dec{2},3,\bs{2}\dec{0},2,2\dec{3},2]+\ldec{1}[2,2\dec{0},\ub{3}\dec{1,3},3\dec{2},2]$,
		\item\label{item:ht=3_A2} $[3\dec{2},\bs{2}\dec{0},2,2\dec{3},2]+\ldec{1}[(2)_{k-2},2\dec{0},\ub{k}\dec{1,3},2\dec{2},2]$, $k\in \{3,4\}$,
		\item\label{item:ht=3_C2} $[2\dec{1},2\dec{0},\ub{3}\dec{1,3},2\dec{2},2,\bs{2}\dec{0},2,2\dec{3},2]+[3]\dec{2}$,
		\item\label{item:ht=3_A3E_l=3} $[2,k\dec{3},2,\bs{2}\dec{0},2\dec{2}]+\ldec{3}[(2)_{k-2},\ub{3}\dec{1,2},2\dec{0},2]\dec{1}+[2]\dec{2}$, $k\geq 3$,
		\item\label{item:ht=3_A3E_l=4} $[2,k\dec{3},2,\bs{2}\dec{0},2\dec{2}]+\ldec{3}[(2)_{k-2},\ub{4}\dec{1,2},2\dec{0},2,2]\dec{1}+[2]\dec{2}$, $k\in \{3,4\}$,
		\item\label{item:ht=3_A3E_l=5} $[2,3\dec{3},2,\bs{2}\dec{0},2\dec{2}]+\ldec{3}[2,\ub{5}\dec{1,2},2\dec{0},2,2,2]\dec{1}+[2]\dec{2}$,
		\item\label{item:ht=3_BF} $[2\dec{3},3,\bs{2}\dec{0},2\dec{2}]+[2,3\dec{3},\ub{3}\dec{1,2},2\dec{0},2,2\dec{1}]+[2]\dec{2}$,
		\item\label{item:ht=3_BD} $[3\dec{3},\bs{2}\dec{0},2\dec{2}]+[2,2\dec{3},\ub{k}\dec{1,2},2\dec{0},(2)_{k-1}]\dec{1}+[2]\dec{2}$, $k\in \{3,4\}$,
		\item\label{item:ht=3_C1} $[2\dec{1},2,2\dec{0},\ub{3}\dec{1,2},2\dec{3},2,\bs{2}\dec{0},2\dec{2}]+[3]\dec{3}+[2]\dec{2}$,
		\item\label{item:ht=3_C3} $[2\dec{1},2\dec{0},\ub{2}\dec{1,2},k\dec{3},2,\bs{2}\dec{0},2\dec{2}]+[3,(2)_{k-2}]\dec{3}+[2]\dec{2}$, $k\geq 3$,
		\item\label{item:ht=3_A0} $[2\dec{2},\bs{k}\dec{0},2,2\dec{3},2]+\langle 2;\ldec{0}[(2)_{k-2}],[2]\dec{1},[\ub{3}]\dec{1,2,3}\rangle+[2]\dec{2}$, $k\in \{3,4\}$,
		\item\label{item:ht=3_A3F} $\langle 2;[2],[3]\dec{3},[2\dec{2},\bs{2}\dec{0},2]\rangle+[2\dec{3},\ub{4}\dec{1,2},2\dec{0},2\dec{1}]+[2]\dec{2}$,
		\item\label{item:ht=3_A3EF} $\langle k;[2],\ldec{3}T,[2\dec{2},\bs{2}\dec{0},2]\rangle +\ldec{3}T^{*}*[(2)_{k-2},\ub{3}\dec{1,2},2\dec{0},2\dec{1}]+[2]\dec{2}$, $k\geq 3$, $d(T)\leq 3$.
	\end{longlist}
	\end{enumerate}
\end{lemma}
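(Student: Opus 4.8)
The plan is to follow the same two-step strategy as in the proof of Lemma \ref{lem:w=3}: first deduce the uniqueness statement \ref{item:w=2-uniqueness} from the classification \ref{item:w=2-classification}, and then establish the completeness of the list by reconstructing the vertical swaps one elementary swap at a time.

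For the first implication, fix a type $\cS$ from the list. In each listed type the canonical part of the minimal resolution has at most five components (the non-$(-2)$-curves are concentrated in one or two connected components), so Lemma \ref{lem:no-deb} shows that \emph{no} surface in $\Pht^{\width=2}(\cS)$ has a descendant with elliptic boundary. Hence the hypotheses of the lemma apply to every element of $\Pht^{\width=2}(\cS)$, and by \ref{item:w=2-classification} each such surface admits a witnessing $\P^1$-fibration as in the list. One checks directly that each $\cS$ appears exactly once, so $\cS$ determines the combinatorial type $\check{\cS}$ of $(X,D,p)$. Since $\cha\kk\neq 2$, Lemma \ref{lem:w=2_psi}\ref{item:psi_cha-neq-2} gives $\nu=3$ and Lemma \ref{lem:w=2_uniqueness}\ref{item:w=2_D}\ref{item:w=2_moduli} forces $\epsilon=0$, so the moduli dimension $d=\nu-3+\epsilon$ vanishes; Lemma \ref{lem:w=2_uniqueness}\ref{item:w=2_uniq-hat} then yields $\#\cP_{+}(\check{\cS})=1$ and therefore $\#\Pht^{\width=2}(\cS)\leq 1$. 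For the reverse inequality, each listed $(X,D,p)$ visibly exists and satisfies the ampleness inequality \eqref{eq:ld-bound_h=2}, so it resolves a del Pezzo surface of rank one; as $\cS$ does not occur in the height-$\leq 2$ classification of \cite{PaPe_ht_2}, this surface has height and width exactly $3$ and $2$, giving $\#\Pht^{\width=2}(\cS)\geq 1$.

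It remains to prove \ref{item:w=2-classification}. Choosing a witnessing $\P^1$-fibration as in Lemma \ref{lem:w=2_swaps} and keeping Notation \ref{not:phi_+_h=2}, I would treat the three cases \ref{lem:w=2_swaps}\ref{item:ht=3_exception_swap}, \ref{item:w=2_A1+A2+A5_swap}, \ref{item:w=2_2A1+2A3_swap} separately. In each case $\bar{X}$ is recovered from $(X_{\tau},D_{\tau})$ by reversing the elementary swaps comprising $\tau_{+}$, that is, by successively blowing up the common points of $D_{\gamma}$ with the $(-1)$-curves $A_{j}^{\gamma}$ (Lemma \ref{lem:w=2_uniqueness}\ref{item:w=2_A}) for as long as $D$ remains a disjoint union of admissible chains and forks and the inequality \eqref{eq:ld-bound_h=2} persists (Lemma \ref{lem:w=2_uniqueness}\ref{item:ld-bound_h=2}). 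The workhorse is to pin down, for each $A_{j}^{\gamma}$, on which components of $D_{\gamma}$ its base points may lie---localized inclusions in the spirit of the displayed \eqref{eq:forks_i}, \eqref{eq:forks_ii} in the proof of Lemma \ref{lem:w=3}. These follow from Lemma \ref{lem:admissible_forks}\ref{item:long-twig} (a fork cannot have two long non-$(-2)$-twigs), the fact that $D$ has no circular subdivisor, and the exclusion of the $(-2)$-chain/fork shape for the component $U\ni H_1$ by Lemma \ref{lem:U-2}. Whenever the reconstruction forces $\psi=\tau$, I would exhibit an explicit conic whose proper transform is an elliptic tie (as already done for the surfaces $\bar{Y}$ in Proposition \ref{prop:primitive}\ref{item:primitive-deb}), contradicting the standing hypothesis and showing $\tau_{+}\neq\id$.

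Case \ref{lem:w=2_swaps}\ref{item:ht=3_exception_swap} is the most rigid: both degenerate fibers off the central one are heavily blown up, the permissible further blowups are essentially forced, and one obtains the single type \ref{item:ht=3_exc_type}; case \ref{lem:w=2_swaps}\ref{item:w=2_A1+A2+A5_swap} similarly yields only \ref{item:ht=3_b}. The bulk of the work, and the main obstacle, is case \ref{lem:w=2_swaps}\ref{item:w=2_2A1+2A3_swap}, which splits into the twelve types \ref{item:ht=3_A22}--\ref{item:ht=3_A3EF}: here several curves $A_{j}$ can each carry a tower of infinitely near base points, so I would organize the bookkeeping by first fixing which curves carry base points, then parametrizing the resulting twig lengths by integers $k,l,\dots$ (as in items \ref{item:ht=3_A2}, \ref{item:ht=3_A3E_l=3}, \ref{item:ht=3_C3}, \ref{item:ht=3_A3EF}), and repeatedly reducing to a minimal configuration via vertical swaps before testing \eqref{eq:ld-bound_h=2}. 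As in Lemma \ref{lem:w=3}, the involutions of $(\P^2,B)$ permuting the lines $\ll_i$ lift to automorphisms of $(X_{\tau},D_{\tau})$ and roughly halve the number of subcases. The only genuinely delicate point is to check that the borderline configurations, in which the relevant combination of log discrepancies equals exactly the threshold in \eqref{eq:ld-bound_h=2}, are correctly ruled out; this requires careful log-discrepancy computations via Lemmas \ref{lem:discriminants} and \ref{lem:ld_formulas}.
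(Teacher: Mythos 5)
Your architecture is the paper's own: part \ref{item:w=2-uniqueness} is deduced from part \ref{item:w=2-classification} exactly as in the paper (Lemma \ref{lem:no-deb}, the fact that each $\cS$ occurs exactly once in the list, Lemma \ref{lem:w=2_uniqueness}\ref{item:w=2_uniq-hat} with $\nu=3$, $\epsilon=0$, and existence via \eqref{eq:ld_phi_H} together with the absence of $\cS$ from the height-$\leq 2$ classification and from Lemma \ref{lem:w=3}), and your reconstruction plan for \ref{item:w=2-classification} invokes the same tools (Lemma \ref{lem:w=2_uniqueness}, admissibility, Lemma \ref{lem:U-2}, line-permuting symmetries, \eqref{eq:ld-bound_h=2}). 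The concrete gap is that your toolkit omits one technique the paper cannot do without: replacing the witnessing $\P^1$-fibration itself in the middle of the reconstruction. In case \ref{lem:w=2_swaps}\ref{item:w=2_2A1+2A3_swap} with $q\in L_3$ (Figure \ref{fig:w=2_2A1+2A3_swap_L}), once one reduces to $\Bs\tau_{+}^{-1}\cap(A_{1}^{\tau}\cup A_{2}^{\tau})=\emptyset$, no contradiction is available---such surfaces genuinely exist---and no amount of admissibility or log-discrepancy bookkeeping reduces them to the listed decorated types. The paper handles this by passing to the $\P^1$-fibration induced by the pencil of conics tangent to $\cc$ at $p_1$ and $p_2$ (the same trick used inside the proofs of Lemmas \ref{lem:w=2_psi} and \ref{lem:w=2_swaps}), which puts the very same surface back into the already-treated subcase $q\in H_2$. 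Without this step you would be forced either to add new decorated types to the list, whereupon the argument for \ref{item:w=2-uniqueness}---which rests on each $\cS$ appearing exactly once---collapses, or to prove separately that the new types give surfaces isomorphic to listed ones, which is precisely what the re-fibration accomplishes.

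A second, smaller point: your plan to rule out $\psi=\tau$ by exhibiting elliptic ties is misplaced for this lemma. Here $\tau$ already includes the blowup at the underlined point from Lemma \ref{lem:w=2_swaps}, so $(X_{\tau},D_{\tau})$ sits one elementary swap above $(Y,D_Y)$; it carries no elliptic tie, and in cases \ref{item:ht=3_exception_class} and \ref{item:w=2_A1+A2+A5_class} the equality $\psi=\tau$ is not excluded---it is exactly what produces the two listed types \ref{item:ht=3_exc_type} and \ref{item:ht=3_b}. (The case $\psi=\phi$, i.e.\ $\bar{X}\cong\bar{Y}$, is already impossible because Lemma \ref{lem:w=2_swaps} guarantees the underlined base point; the tie on $\bar{Y}$ from Proposition \ref{prop:primitive}\ref{item:primitive-deb} lives on $\bar{Y}$, not on $\bar{X}_{\tau}$.) In the paper, all tie-based exclusions for width $2$, $\cha\kk\neq 2$ are packaged inside Lemma \ref{lem:U-2}; attempting tie constructions at $\psi=\tau$ would fail and, taken at face value, would delete two correct entries from your list. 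Beyond these two points the case analysis itself---which is the actual body of the proof---is left unexecuted, so the proposal is a correct strategy outline rather than a proof.
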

\begin{proof}
	As in the proof of Lemma \ref{lem:w=3}, we first prove that \ref{item:w=2-classification} implies \ref{item:w=2-uniqueness}. Fix a singularity type $\cS$ as in the above list. By Lemma \ref{lem:no-deb} no surface $\bar{X}\in \Pht^{\width=2}(\cS)$ has a descendant with elliptic boundary, so by \ref{item:w=2-classification} its minimal log resolution $(X,D)$ has a $\P^1$-fibration $p$ as above. Since $\cS$ appears exactly once on the above list, it uniquely determines the combinatorial type of $(X,D,p)$. The latter uniquely determines the isomorphism class of $\bar{X}$ by Lemma \ref{lem:w=2_uniqueness}\ref{item:w=2_uniq-hat}, case $\nu=3$, $\epsilon=0$. Thus $\#\Pht^{\width=2}(\cS)\leq 1$. To see that the equality holds we note that a triple $(X,D,p)$ of each combinatorial type listed above exists and satisfies inequality \eqref{eq:ld_phi_H}, so $(X,D)$ is indeed the minimal log resolution of a del Pezzo surface $\bar{X}$ of rank 1. Since $\cS$ does not appear in the classification of del Pezzo surfaces of rank 1 and height at most $2$ in \cite{PaPe_ht_2}, we have $\height(\bar{X})=3$, and since $\cS$ does not appear in Lemma \ref{lem:w=3}, either, we get $\width(\bar{X})=2$, so $\bar{X}\in \Pht^{\width=2}(\cS)$, as needed. 
	\smallskip 
	
	Thus it remains to prove \ref{item:w=2-classification}, i.e.\ to choose $p$ such that one of the above cases \ref{item:ht=3_exception_class}--\ref{item:w=2_2A1+2A3_class} hold. We keep Notation \ref{not:phi_+_h=2}, and consider each case of Lemma \ref{lem:w=2_swaps} separately.
	
	\smallskip
	\noindent\textbf{Case \ref{lem:w=2_swaps}\ref{item:ht=3_exception_swap}, see Figure \ref{fig:ht=3_exception_swap}.} If $\psi=\tau$ then $D$ is as in \ref{item:ht=3_exc_type}. Suppose $\psi\neq \tau$, and let $\eta$ be a composition of $\tau$ with a blowup at some $r\in \Bs\tau_{+}^{-1}$. By Lemma \ref{lem:w=2_uniqueness}\ref{item:w=2_A},\ref{item:w=2_D}, we have $r\in D_{\tau}\cap A_{j}^{\tau}$ for some $j\in \{0,\dots,3\}$. Recall from Notation \ref{not:phi_+_h=2} that $U_{\eta}$ denotes the connected component of $D_{\eta}$ containing $H_{1}^{\eta}$. Like all connected components of $D_{\eta}$, it is an admissible chain or an admissible fork, see Lemma \ref{lem:w=2_uniqueness}\ref{item:ld-bound_h=2}.
	
	Suppose $r\in A_{0}$. We have $r\in H_{1}^{\tau}$, since otherwise $U_{\eta}$ would be a non-admissible fork $\langle 2,[2],[3,2,2],[2,2,2]\rangle$. Now $U_{\eta}=[2,2,2,\bs{3},2,\ub{3},2]$, where, as usual, the bold numbers correspond to the horizontal components of $D_{\eta}$; and the underlined one corresponds to $H_{2}^{\eta}$. We compute $\ld(H_{1}^{\eta})+2\ld(H_{2}^{\eta})=\tfrac{11}{13}<1$, contrary to \eqref{eq:ld-bound_h=2}.
	
	Suppose $r\in A_2$. Like before, the admissibility of $U_{\eta}$ implies that $r\in L_{3}^{\tau}$. Now $U_{\eta}=[2,2,2,\bs{2},3,\ub{3},2,2]$, so $\ld(H_{1}^{\eta})+2\ld(H_{2}^{\eta})=\tfrac{25}{31}<1$, which again contradicts \eqref{eq:ld-bound_h=2}.
	
	Thus $r\in A_1$ or $r\in A_3$. It is sufficient to show that inequality \eqref{eq:ld-bound_h=2} fails in the first case. Indeed, in case $r\in A_3$ the weighted graph of $U_{\eta}$ is obtained from the one in case $r\in A_1$ either by attaching a vertex corresponding to a vertical curve, or by increasing a weight of such a vertex. In any case, by Lemma \ref{lem:Alexeev}, the log discrepancy of $H_{j}^{\eta}$ in case $r\in A_3$ does not exceed the corresponding one in case $r\in A_1$.  Therefore, assume $r\in A_1$. Now $U_{\eta}=[2,2,2,\bs{2},2,\ub{4},2]$ if $r\in H_{2}^{\tau}$ and $U_{\eta}=\langle \ub{3},[2],[2],[2,2,2,\bs{2},2]\rangle$ otherwise. The number $\ld(H_{1}^{\eta})+2\ld(H_{2}^{\eta})$ equals $1$ or $\tfrac{5}{7}<1$, so in both cases we get a contradiction with  \eqref{eq:ld-bound_h=2}, as needed.
	
	\smallskip
	\noindent\textbf{Case \ref{lem:w=2_swaps}\ref{item:w=2_A1+A2+A5_swap} with $q\in H_2$, see Figure \ref{fig:w=2_A1+A2+A5_swap}.} For $j\in \{2,3\}$ write $\{r_j\}=A_{j}^{\tau}\cap H_{2}^{\tau}$. If both $r_2$ and $r_3$ are base points of $\tau_{+}^{-1}$ then $U$ has two branching components, which is impossible. Hence interchanging $\ll_{2}$ with $\ll_{3}$ if necessary, we can assume $r_3\not\in \Bs\tau_{+}^{-1}$.
	
	Suppose $\Bs\tau_{+}^{-1}\subseteq \{r_2\}\cup A_{1}$.  If $r_{2}\not \in \Bs\tau_{+}^{-1}$ then $U$ is a $(-2)$-chain, contrary to Lemma \ref{lem:U-2}. If $r_2\in \Bs\tau_{+}^{-1}$ then $U$ is a fork, and the twig of $U$ containing $H_1$ has length $5$, so its remaining twigs are of type $[2]$. It follows that $\tau_{+}^{-1}$ has no base points infinitely near to $r_2$, so $U$ is a $(-2)$-fork, again contrary to Lemma \ref{lem:U-2}. 
	
	Thus $\tau_{+}^{-1}$ has a base point $r\neq r_2$, $r\not\in A_{1}$. Let $\eta$ be a composition of $\tau$ with a blowup at $r$. Suppose $r\in A_{3}$. Then by assumption $r\neq r_3$, so $r\in U_{\tau}$ and  $D_{\eta}=[2,3,2,\bs{2},2,2,2]+\langle \ub{3};[3],[2],[2]\rangle$, hence $\ld(H_{1}^{\eta})+2\ld(H_{2}^{\eta})=1$; a contradiction with  \eqref{eq:ld-bound_h=2}.  If $r\in A_2$ then by assumption $r\neq r_2$, so $r_2\not\in \Bs\tau_{+}^{-1}$, and interchanging  $\ll_2$ with $\ll_3$ we get back the case $r\in A_3$, excluded above. Thus $r\in A_{0}$. If $r\in L_{1}$ then $\beta_{U_{\eta}}(H_{1})=3$, and $H_1$ meets a twig $T_j\subseteq F_j$ with $\#T_j=3$ for $j\in \{2,3\}$, contrary to Lemma \ref{lem:admissible_forks}. Therefore, $\{r\}=H_{1}\cap A_{0}$, so $D_{\eta}=[(2)_{3},\bs{3},(2)_{3}]+[2,3,\ub{3},2]$, and again $\ld(H_{1}^{\eta})+2\ld(H_{2}^{\eta})=1$; a contradiction with \eqref{eq:ld-bound_h=2}.
	
	\smallskip
	\noindent\textbf{Case \ref{lem:w=2_swaps}\ref{item:w=2_A1+A2+A5_swap} with $q\in L_3$, see Figure \ref{fig:w=2_A1+A2+A5_swap_b}.} If $\psi=\tau$ then $D$ is as in \ref{item:ht=3_b}. Suppose $\psi\neq \tau$, and let $\eta$ be a composition of $\tau$ with a blowup at some $r\in \Bs\tau_{+}^{-1}$. Since $W_{\eta}$ is an admissible fork, we have $r\not\in A_1$, and $r\in W_{\tau}$ or $\{r\}=A_0\cap H_1$. In the latter case, $D_{\eta}=[2,2,2,\bs{3},3]+\langle \ub{2},[2],[2,2],[2,3]\rangle$, so $\ld(H_1^{\eta})+2\ld(H_2^{\eta})=\frac{137}{299}<1$, a contradiction with \eqref{eq:ld-bound_h=2}. Thus $r\in W_{\tau}\setminus A_1$. Now $D_{\eta}$ is $\langle \bs{2};[2],[2,2,2],[3]\rangle+\langle \ub{2},[2],[4],[2,2]\rangle$ if $r\in A_0$; $\langle 2;[2],[2],[3,\bs{2},2]\rangle+\langle \ub{3};[2],[3],[2,2]\rangle$ if $r\in A_2$, and $[2,2,2,\bs{2},3,2]+\langle \ub{2};[2],[3],[2,3]\rangle$ if $r\in A_3$. Thus $\ld(H_{1}^{\eta})+2\ld(H_{2}^{\eta})=\tfrac{17}{35}$, $\tfrac{13}{18}$ or $\tfrac{241}{391}$, respectively, so in each case we get a contradiction with inequality  \eqref{eq:ld-bound_h=2}.

	\smallskip
	\noindent\textbf{Case \ref{lem:w=2_swaps}\ref{item:w=2_2A1+2A3_swap} with $q\in H_2$, see Figure  \ref{fig:w=2_2A1+2A3_swap_H}.} By Lemma \ref{lem:U-2} $U$ is not a sum of $(-2)$-curves, so $\tau_{+}^{-1}$ has a base point $r\not\in A_{1}^{\tau}$. By Lemma \ref{lem:w=2_uniqueness}\ref{item:w=2_A},\ref{item:w=2_D} $r\in A_{j}^{\tau}\cap D_{\tau}$ for some $j\in \{0,2,3\}$. Let $\eta$ be a composition of $\tau$ with a blowup at some $r$; and for a base point $s$ of $\eta_{+}^{-1}$ let $\upsilon$ be a composition of $\eta$ with a blowup at $s$.

\begin{casesp}
	\litem{$r\in A_{0}^{\tau}$} Suppose $\{r\}=A_0\cap L_1$. Then $U_{\eta}$ is a $(-2)$-fork $\rD_{6}$. Since $U$ is not a $(-2)$-fork by Lemma \ref{lem:U-2}, the map $\eta_{+}^{-1}$ has a base point $s\not\in A_1^{\eta}$. After a blowup at $s$, the fork $U_{\upsilon}$ must still be admissible, so $s\in A_0$ or $\{s\}=A_j\cap U_{\eta}$ for some $j\in \{2,3\}$. If $\{s\}=A_0\cap L_1$ then $U_{\upsilon}$ is still a $(-2)$-fork, now of type $\rE_{7}$, and after one further blowup it becomes non-admissible, a contradiction. Thus $s\in U_{\eta}$. If $s\in A_{3}$ then $D_{\upsilon}=\langle \bs{2};[2],[2],[2,3,2]\rangle+[2,\ub{3},3,2]+[2]$, so $\ld(H_{1}^{\eta})+2\ld(H_{2}^{\eta})=1$, contrary to \eqref{eq:ld-bound_h=2}. Hence $s\in A_0\cup A_2$; so $U_{\upsilon}=\langle \bs{2},[2],[3],[2,2,2]\rangle$,  $\ld(H_{1}^{\upsilon})=\tfrac{1}{5}$ and therefore $\ld(H_{2}^{\upsilon})>\tfrac{2}{5}$ by \eqref{eq:ld-bound_h=2}. But $W_{\upsilon}=\langle 3;[2],[2],[\ub{3}]\rangle$ if $s\in A_0$ and $[2,3,\ub{3},2,2]$ if $s\in A_2$, so $\ld(H_{2}^{\upsilon})=\tfrac{2}{5}$ or $\tfrac{8}{29}<\tfrac{2}{5}$; a contradiction.
	
	Therefore, $\{r\}=A_0\cap H_1$. Suppose $\eta_{+}^{-1}$ has a base point $s\not \in A_{0}^{\tau}$. Consider the case $s\in A_1$. Then $U_{\upsilon}=[2,\bs{3},2,2,2]$, so $\ld(H_{1}^{\upsilon})=\frac{3}{7}$ and therefore $\ld(H_{2}^{\upsilon})>\frac{2}{7}$ by \eqref{eq:ld-bound_h=2}. But $W_{\upsilon}=\langle 2,[2],[2,2],[\ub{4}]\rangle$ if $s\in H_2$ or $\langle 2;[2],[3],[2,\ub{3}]\rangle$ otherwise, so $\ld(H_{2}^{\upsilon})=\frac{2}{7}$ or $\frac{5}{23}<\frac{2}{7}$, a contradiction. If $s\in A_{2}$ then $D_{\upsilon}$ equals $
	 [(2)_{3},\bs{3},(2)_{3}]+\langle 2;[2],[2],[\ub{4}]\rangle$ if $s\in H_2$,
	$[3,\bs{3},2,2,2]+\langle 2;[2],[2],[2,2,\ub{3}]\rangle$ if $s\in L_2$,
	and $\langle 2;[2],[2],[2,2,2,\bs{3},2,2,\ub{3}]\rangle+[3]$ otherwise, 
	so $\ld(H_{1}^{\upsilon})+2\ld(H_{2}^{\upsilon})=
	1$, 
	$\tfrac{37}{46}$ or 
	$\tfrac{1}{4}$; a contradiction with \eqref{eq:ld-bound_h=2}. Eventually, in case $s\in A_3$ we have 
	$D_{\upsilon}=\langle 2,[2],[2],[2,\bs{3},2]\rangle+\langle 2;[2],[2],[\ub{4}]\rangle+[2]$ if $s\in H_{2}$ and 
	 $D_{\upsilon}=[2,\bs{3},2,3,2]+\langle 2;[2],[2],[2,\ub{3}]\rangle+[2]$ otherwise,
	 so in either case $\ld(H_{1}^{\upsilon})+2\ld(H_{2}^{\upsilon})=1$; again a contradiction with \eqref{eq:ld-bound_h=2}.
	 
	 We conclude that $\Bs\eta_{+}^{-1}\subseteq A_0$. Let $\theta$ be a composition of $\tau$ with some $k-2\geq 1$ blowups at $r$ and its infinitely near points on the proper transforms of $H_1$, so that $A_{0}^{\theta}\cap H_{1}^{\theta}$ is not a base point of $\theta_{+}^{-1}$, i.e.\  $\Bs\theta_{+}^{-1}\subseteq A_0^{\theta}\cap W_{\theta}$. We have $U_{\theta}=[2,\bs{k},2,2,2]$ and $W_{\theta}=\langle 2;[2],[(2)_{k-2}],[\ub{3}]\rangle$. If $\psi=\theta$ then inequality \eqref{eq:ld-bound_h=2} gives $k\leq 4$, so $D$ is as in \ref{item:ht=3_A0}. Suppose $\psi\neq \theta$. Then by assumption $A_0^{\theta}\cap W_{\theta}$ is a base point of $\theta_{+}^{-1}$. Let $\gamma$ be a composition of $\theta$ with a blowup at that point. Then $D_{\gamma}=\langle \bs{k};[2],[2],[2,2,2]\rangle+\langle 2,[2],[3,(2)_{k-3}],[\ub{3}]\rangle+[2]$, a contradiction with 
	 \eqref{eq:ld-bound_h=2}.  
	
	\litem{$r\in A_{2}^{\tau}$ and $A_{0}^{\tau}\cap \Bs\tau_{+}^{-1}=\emptyset$} We have three cases: $r\in H_2$, $r\in L_2$ and $r\in G_2$.
	
	\begin{casesp}
	\litem{$\{r\}=A_{2}^{\tau}\cap H_2$} We have $U_{\eta}=[(2)_{3},\bs{2},(2)_{3}]$, so by Lemma \ref{lem:U-2}, $\eta_{+}^{-1}$ has a base point $s\not\in A_{1}^{\eta}$. Assumption  $A_{0}^{\tau}\cap \Bs\tau_{+}^{-1}=\emptyset$ implies that $s\in A_{2}\cup A_{3}$. Interchanging $\ll_2$ with $\ll_3$ if necessary, we can assume $s\in A_{2}$. Suppose $\{s\}=A_2\cap H_{2}$. Then after a blowup at $s$, $U_{\upsilon}$ is a $(-2)$-fork, so by Lemma \ref{lem:U-2}, $\upsilon_{+}^{-1}$ has a base point $z\in A_2\cup A_3$. Let $\theta$ be a composition of $\upsilon$ with a blowup at $z$. Since $U_{\theta}$ is an admissible fork, we have $\{z\}=A_3\cap U_{\upsilon}$. Now $D_{\theta}=\langle 2;[2],[2],[2,3,2,\bs{2},2]\rangle+[2,2,\ub{5},2]$, so $\ld(H_{1}^{\theta})+2\ld(H_{2}^{\theta})=\tfrac{53}{69}$, contrary to \eqref{eq:ld-bound_h=2}. 
	
	Thus $\{s\}=A_{2}\cap U_{\eta}$. If $\psi=\upsilon$ then $D$ is as in \ref{item:ht=3_A22}. Assume that $\psi\neq \upsilon$, and let $\theta$ be a composition of $\upsilon$ with a blowup at $z\in \Bs\upsilon_{+}^{-1}$. Suppose $z\in A_1$. Then $U_{\theta}=[2,3,2,\bs{2},2,2,2]$, so $\ld(H_{1}^{\theta})=\tfrac{3}{5}$ and thus  $\ld(H_{2}^{\theta})>\tfrac{1}{5}$ by \eqref{eq:ld-bound_h=2}. But $W_{\theta}=[2,2,2,\ub{5},2]$ if $z\in H_2$ and $\langle \ub{4};[2],[2],[3,2]\rangle$ otherwise, so $\ld(H_{2}^{\theta})=\tfrac{1}{5}$ or $\tfrac{1}{12}$; a contradiction. 
	
	Thus $z\in A_2\cup A_3$. If $z\in A_{2}$ then $D_{\theta}=
	[2,4,2,\bs{2},2,2,2]+[2,2,\ub{4},2,2]$ if $z\in U_{\upsilon}$ and 
	$\langle 3;[2],[2],[2,2,2,\bs{2},2]\rangle+[3,\ub{4},2,2]$ otherwise, so $\ld(H_{1}^{\theta})+2\ld(H_{2}^{\theta})=1$ or $\tfrac{55}{63}$; contrary to \eqref{eq:ld-bound_h=2}. Hence $z\in A_{3}$. If $z\in H_{2}$ then interchanging $\ll_2$ with $\ll_3$ we get back the case $\{s\}=A_2\cap H_2$, considered above, so we can assume $\{z\}=A_3\cap U_{\upsilon}$. Now $D_{\theta}=
	[2,3,2,\bs{2},2,3,2]+\langle \ub{4};[2],[2],[2,2]\rangle$, so $\ld(H_{1}^{\theta})+2\ld(H_{2}^{\theta})=\tfrac{13}{21}$; a contradiction with \eqref{eq:ld-bound_h=2}.

	\litem{$\{r\}=A_2^{\tau}\cap L_2$} Assume first that $\eta_{+}^{-1}$ has a base point $s\not\in A_{1}^{\eta}$. Suppose $s\in A_3$. Then $D_{\upsilon}=\langle 2;[2],[2],[3,\bs{2},2]\rangle+[2,2,\ub{4},2,2]$ if $s\in H_2$ and $[3,\bs{2},2,3,2]+\langle \ub{3};[2],[2,2],[2,2]\rangle$ otherwise, so $\ld(H_{1}^{\upsilon})+2\ld(H_{2}^{\upsilon})=1$ or $\tfrac{127}{175}$; a contradiction with \eqref{eq:ld-bound_h=2}. Thus $s\in A_2$, $\Bs\eta_{+}^{-1}\cap A_3^{\eta}=\emptyset$. If $s\in L_2$ then  $D_{\upsilon}=[4,\bs{2},2,2,2]+\langle 2;[2],[2],[2,2,\ub{3}]\rangle$,  so $\ld(H_{1}^{\upsilon})+2\ld(H_{2}^{\upsilon})=1$, contrary to \eqref{eq:ld-bound_h=2}. Hence $s\in W_{\eta}$, and $D_{\upsilon}=[2,3,\bs{2},2,2,2]+[2,2,\ub{3},3,2]$. If $\psi=\upsilon$ then $D$ is as in \ref{item:ht=3_A2B}. Assume $\psi\neq \upsilon$, and let $\theta$ be a composition of $\upsilon$ with a blowup at some $z\in \Bs\upsilon_{+}^{-1}$. If $z\in A_1$ then the admissibility of $W_{\theta}$ implies that $z\in H_2$, so $D_{\theta}=[2,3,\bs{2},2,2,2]+[2,2,2,\ub{4},3,2]$ and $\ld(H_{1}^{\theta})+2\ld(H_{2}^{\theta})=\tfrac{273}{323}$, contrary to \eqref{eq:ld-bound_h=2}. Thus $z\in A_2$, so $D_{\theta}=[3,3,\bs{2},2,2,2]+\langle 3;[2],[2],[2,2,\ub{3}]\rangle$ if $z\in U_{\upsilon}$ and 
	$[2,2,3,\bs{2},2,2,2]+[2,2,\ub{3},4,2]$ otherwise. We get  $\ld(H_{1}^{\eta})+2\ld(H_{2}^{\eta})=\tfrac{61}{77}$ or $\tfrac{933}{989}$, contrary to \eqref{eq:ld-bound_h=2}.
	
	It remains to consider the case $\Bs\eta_{+}^{-1}\subseteq A_{1}$. Then $U=[3,\bs{2},2,2,2]$, so $\ld(H_1)=\frac{7}{11}$, and therefore $\ld(H_2)>\frac{2}{11}$ by \eqref{eq:ld-bound_h=2}. Let $\theta$ be a composition of $\eta$ with some $k-3\geq 0$ blowups at $A_{1}^{\eta}\cap H_{2}^{\eta}$ and its infinitely near points on the proper transforms of $H_{2}^{\eta}$, so that $A_{1}^{\theta}\cap H_{2}^{\theta}\not\subseteq \Bs\theta_{+}^{-1}$. Then $W_{\theta}=[(2)_{k-1},\ub{k},2,2]$, and the inequality $\ld(H_2)>\frac{2}{11}$  gives $k\leq 4$. If $\psi=\theta$ then $D$ is as in \ref{item:ht=3_A2}, otherwise after one further blowup we get $W=\langle \bs{k};[2],[2,2],[3,(2)_{k-2}]\rangle$, so $k=3$ since $W$ is admissible, and $\ld(H_{2})=\frac{1}{37}<\frac{2}{11}$; a contradiction.
	
	\litem{$\{r\}=A_2^{\tau}\cap G_2$} We have $U_{\eta}=[2,2,2,\bs{2},2,2,\ub{3},2,2]$. If $\psi=\eta$ then $D$ is as in \ref{item:ht=3_C2}. Suppose $\psi\neq \eta$. Since $A_{0}\cap \Bs\eta_{+}^{-1}=\emptyset$ and  $U_{\upsilon}$ is admissible, we get 
	$\{s\}=
	 A_{1}\cap H_{2}$, 
	$A_{2}\cap U_{\eta}$ or 
	$A_{3}\cap H_2$. Now $U_{\upsilon}=
	 [2,2,2,\bs{2},2,2,\ub{4},2,2,2]$, 
	$[2,2,2,\bs{2},2,3,\ub{3},2,2]$ or 
	$\langle 2;[2],[2],[2,2,\ub{4},2,2,\bs{2},2]\rangle$, so $\ld(H_{1}^{\upsilon})+2\ld(H_{2}^{\upsilon})=
	\tfrac{57}{67}$, $\tfrac{65}{73}$ or $\tfrac{3}{7}<1$; a contradiction with inequality \eqref{eq:ld-bound_h=2}. 
	\end{casesp}
	
	\litem{$r\in A_{3}^{\tau}$ and $(A_{0}^{\tau}\cup A_{2}^{\tau})\cap \Bs\tau_{+}^{-1}=\emptyset$} Consider the case  $r\in H_2$. Then $U_{\eta}$ is a $(-2)$-fork. By Lemma \ref{lem:U-2}, $\eta_{+}^{-1}$ has a base point $s\not \in A_{1}$, so $s\in A_3$ by assumption. 
	If $s\in H_2$ then $U_{\upsilon}$ is still a $(-2)$-fork, so as before, $A_{3}^{\upsilon}$ contains a base point of $\upsilon_{+}^{-1}$, and after blowing up at this point we get a non-admissible $U$, which is impossible. Thus $\{s\}=A_{3}\cap U_{\eta}$, so $U_{\upsilon}=\langle 2;[2],[3],[2,\bs{2},2]\rangle$. In particular, $\ld(H_{1}^{\upsilon})=\frac{3}{5}$, so $\ld(H_{2})>\frac{1}{5}$ by \eqref{eq:ld-bound_h=2}. Like before, the admissibility of $U$ yields $\Bs\upsilon_{+}^{-1}\subseteq A_{1}$. If $\psi=\upsilon$ then $D$ is as in \ref{item:ht=3_A3F}, otherwise after one further blowup $W$ becomes $[2,\ub{5},2,2,2]$ or $\langle \ub{4};[2],[2],[3,2]\rangle$, so $\ld(H_2)=\frac{1}{5}$ or $\frac{1}{12}<\frac{1}{5}$, a contradiction.
	
	Consider the case $r\in U_{\tau}$. Replace $\eta$ with its composition with some $l-3\geq 0$ blowups over $A_1\cap H_2$, on the proper transforms of $H_2$, so that $A_{1}^{\eta}\cap H_{2}^{\eta}\not\subseteq \Bs\eta_{+}^{-1}$. Then $D_{\eta}=[2,3,2,\bs{2},2]+[2,\ub{l},(2)_{l-1}]+[2]$. If $A_1$ contains a base point of $\eta_{+}^{-1}$, say $s$, then $\{s\}=A_1\cap (D_{\tau}-H_2)$, so $D_{\upsilon}=[2,3,2,\bs{2},2]+\langle \ub{l};[2],[2],[3,(2)_{l-2}]\rangle+[2]$, contrary to \eqref{eq:ld-bound_h=2}. Thus $\Bs\eta_{+}^{-1}\subseteq A_{3}$. Again, replace $\eta_{+}$ with $k-2\geq 1$ blowups over $r$, on the proper transforms of $U_{\tau}$, so $D_{\eta}=[2,k,2,\bs{2},2]+[(2)_{k-2},\ub{l},(2)_{l-1}]+[2]$ for some $k,l\geq 3$. If $\psi=\eta$ then inequality \eqref{eq:ld-bound_h=2} gives $l=3$ or $l=4$, $k\in \{3,4\}$ or $(k,l)=(3,5)$, so $D$ is as in \ref{item:ht=3_A3E_l=3}--\ref{item:ht=3_A3E_l=5}. Assume $\psi\neq \eta$. Then the base point of $\eta_{+}^{-1}$ is $\{s\}=A_3\cap W_{\eta}$, and after blowing up at $s$ we get  $D_{\upsilon}=\langle k;[2],[2],[2,\bs{2},2]\rangle +[3,(2)_{k-3},\ub{l},(2)_{l-1}]+[2]$. Inequality \eqref{eq:ld-bound_h=2} gives $l=3$. By assumption, we have $\Bs\upsilon_{+}^{-1}\subseteq A_3$, so the admissibility of $U$ implies that $D$ is as in \ref{item:ht=3_A3EF}.
	\end{casesp}

	\noindent\textbf{Case \ref{lem:w=2_swaps}\ref{item:w=2_2A1+2A3_swap} with $q\in L_3$, see Figure  \ref{fig:w=2_2A1+2A3_swap_L}.} Assume first that $\tau_{+}^{-1}$ has a base point $r\in A_{2}$. The admissibility of $W$ implies that $r\in H_2$. Now interchanging $\ll_2$ with $\ll_3$ we are back in case \ref{lem:w=2_swaps}\ref{item:w=2_2A1+2A3_swap} with $q\in H_2$, considered above. Thus we can assume $A_{2}^{\tau}\cap \Bs\tau_{+}^{-1}=\emptyset$.
	
	Assume $A_{1}\cap \Bs\tau_{+}^{-1}=\emptyset$. Using a similar argument as in the proof of Lemma \ref{lem:w=2_swaps}, we will now find another witnessing $\P^1$-fibration which is again as in the previous case $q\in H_2$. 
	
	Recall that we assume $(A_{1}\cup A_{2})\cap \Bs\tau_{+}^{-1}=\emptyset$, so $A_1^{\tau},A_2^{\tau}\not\subseteq \tau_{+}(D)$. For $j\in \{1,2,3\}$ let $G_j$, $G_{j}'$ be the first and second exceptional curve over $p_j$. Let $\ll_{12}$, $\ll_{1}'$ be the line joining $p_1$ with $p_2$ and the line tangent to $\cc$ at $p_1$; and let $L_{12}=\tau^{-1}_{*}\ll_{12}$, $L_{1}'=\tau^{-1}_{*}\ll_{1}'$. The pencil of conics tangent to $\cc$ at $p_1$ and $p_2$ induces a $\P^{1}$-fibration $\tilde{p}\colon X_{\tau}\to \P^1$ such that $\tau_{+}(D)\hor$ consists of a $2$-section $L_3$ and $1$-section $L_1$, which are disjoint. Its degenerate fibers are supported on  $\tilde{F}_1\de A_0+H_1+L_2+L_{1}'=[1\aadec{1},2\aadec{2},2,1\aadec{2}]$, $\tilde{F}_2\de G_{1}+L_{12}+G_2=[2\aadec{1},1\aadec{2},2]$ and $\tilde{F}_3\de G_{3}'+G_{3}+H_{2}+A_{3}=\langle 2;[2],[2\aadec{1}],[1\aadec{2}]\rangle$, where the numbers decorated by $\aadec{1}$ and $\aadec{2}$ refer to the components meting the $1$-section $L_1$ and the $2$-section $L_3$, respectively. Let $\tilde{\tau}\colon X_{\tau}\to \P^2$ be the contraction of $(\tilde{F}_1-H_1)+(\tilde{F}_2-G_1)+(\tilde{F}_3-H_2)+L_1$. Replacing $p$ and $\psi$ with $\tilde{p}\circ\tau_{+}$ and $\tilde{\tau}\circ\tau_{+}$ we are back in case $q\in H_2$.
	
	Thus we can assume that $\tau_{+}^{-1}$ has a base point $r\in A_1$. As before, denote by $\eta$ the composition of $\tau$ with a blowup at $r$, and for a given base point $s$ of $\eta_{+}^{-1}$, denote by $\upsilon$ the composition of $\eta$ with a blowup at $s$.
	
	We have $\{r\}=A_1\cap H_2$, since otherwise $D_{\eta}=[3,\bs{2},2]+\langle \ub{2};[2],[2,2],[3,2]\rangle+[2]$ and $\ld(H_{1}^{\eta})+2\ld(H_{2}^{\eta})=1$, contrary to \eqref{eq:ld-bound_h=2}. Suppose $\eta_{+}^{-1}$ has a base point $s\in A_0$. Since $W_{\upsilon}$ is admissible, we get $s\in L_1$ and  $D_{\upsilon}=\langle \bs{2};[2],[2],[3]\rangle+[2,2,\ub{3},3,2,2]$, so again $\ld(H_{1}^{\upsilon})+2\ld(H_{2}^{\upsilon})=1$, contrary to \eqref{eq:ld-bound_h=2}. Therefore, $\Bs\eta_{+}^{-1}\subseteq A_{1}\cup A_{3}$.
	
	Assume that $\eta_{+}^{-1}$ has a base point $s\in A_{3}$. If $s\in L_{3}$ then $D_{\upsilon}=[4,\bs{2},2]+\langle 2;[2],[2],[2,2,2,\ub{3}]\rangle+[2]$, so $\ld(H_{1}^{\upsilon})+2\ld(H_{2}^{\upsilon})=1$, contrary to \eqref{eq:ld-bound_h=2}. Thus $\{s\}=A_{3}\cap W_{\eta}$. If $\psi=\upsilon$ then $D$ is as in \ref{item:ht=3_BF}. Suppose $\psi\neq \upsilon$, and let $\theta$ be a composition of $\upsilon$ with a blowup at some $z\in \Bs\upsilon_{+}^{-1}\subseteq  A_1\cup A_3$. If $z\in A_1$ then $z\in H_2$ since $W_{\theta}$ is admissible, so $D_{\theta}=[2,3,\bs{2},2]+[2,3,\ub{4},2,2,2,2]+[2]$ and $\ld(H_{1}^{\theta})+2\ld(H_{2}^{\theta})=\tfrac{71}{77}$, contrary to \eqref{eq:ld-bound_h=2}. Thus $z\in A_3$. We get $D_{\theta}=[3,3,\bs{2},2]+\langle 3;[2],[2],[2,2,2,\ub{3}]\rangle+[2]$ if $z\in U_{\upsilon}$ and $[2,2,3,\bs{2},2]+[2,4,\ub{3},2,2,2]+[2]$ if $z\in W_{\upsilon}$, so $\ld(H_{1}^{\theta})+2\ld(H_{2}^{\theta})=\tfrac{53}{63}$ or $1$; contrary to \eqref{eq:ld-bound_h=2}.
	
	It remains to consider the case $\Bs\eta_{+}^{-1}\subseteq A_{1}$. Replace $\eta$ with $k-2\geq 1$ blowups over $r$, on the proper transforms of $H_2$, so that $A_{1}\cap H_{2}\not\subseteq \Bs\eta_{+}^{-1}$. Then $D_{\eta}=[3,\bs{2},2]+[2,2,\ub{k},(2)_{k}]+[2]$. If $\psi\neq \eta$ then by construction $\{s\}=A_1\cap (D_{\eta}-H_{2})$ is a base point of $\eta_{+}^{-1}$, and the resulting fork $W_{\upsilon}$ is not admissible, which is impossible. Thus $\psi=\eta$. Inequality \eqref{eq:ld-bound_h=2} gives $k\leq 4$, so $D$ is as in \ref{item:ht=3_BD}.

\smallskip
\noindent\textbf{Case \ref{lem:w=2_swaps}\ref{item:w=2_2A1+2A3_swap} with $q\in G_3$, see Figure  \ref{fig:w=2_2A1+2A3_swap_T}.} Since $U_{\tau}$ is a $(-2)$-chain, by Lemma \ref{lem:U-2} we have $\psi\neq \tau$. Let $\eta$ be a composition of $\tau$ with a blowup at $r\in \Bs\tau_{+}^{-1}$. 

We can assume that $A_{2}^{\tau}\cap \Bs\tau_{+}^{-1}=\emptyset$. Indeed, if $r\in A_{2}^{\tau}$ then since $D$ has no circular subdivisor, we have $r\in H_2$ or $r\in L_2$, so interchanging $\ll_2$ with $\ll_3$ we are back in the case $q\in H_2$ or $q\in L_3$ considered above.

Suppose $r\in A_{0}$. Then $U_{\eta}=\langle \bs{2};[2],[2],[2,3,\ub{2},2,2] \rangle$ if $r\in L_{1}$ and $\langle 2;[2],[2],[2,\bs{3},2,2,\ub{2}]\rangle$ if $r\in H_1$. In both cases, we have  $\ld(H_{1}^{\eta})+2\ld(H_{2}^{\eta})=1$; a contradiction with \eqref{eq:ld-bound_h=2}. Thus $\Bs\tau_{+}^{-1}\subseteq A_1\cup A_3$.

Assume $r\in A_1$, so $r\in  H_2$ as $U$ is admissible. If $\psi=\eta$ then $D$ is as in \ref{item:ht=3_C1}. Suppose $\psi\neq \eta$, and let $\upsilon$ be a composition of $\eta$ with a blowup at $s\in \Bs\eta_{+}^{-1}\subseteq A_{1}\cup A_3$. Since $U_{\upsilon}$ is admissible, we have 
$\{s\}=A_1\cap H_2$ or $A_3\cap U_\eta$, so 
$U_{\upsilon}=[(2)_{4},\ub{4},2,2,\bs{2},2]$ or $[(2)_{3},\ub{3},3,2,\bs{2},2]$, hence $\ld(H_{1}^{\upsilon})+2\ld(H_{2}^{\upsilon})=1$, a contradiction with \eqref{eq:ld-bound_h=2}.

It remains to consider the case $\Bs\tau_{+}^{-1}\subseteq A_3$. The admissibility of $U$ implies that $\tau_{+}$ is a composition of $k-2\geq 1$ blowups on the proper transforms of $U_{\eta}$; hence $D$ is as in \ref{item:ht=3_C3}.
\end{proof}

\subsection{The list of singularity types: case $\cha\kk= 2$}\label{sec:w=2_cha=2}

We now turn to the case $\cha\kk= 2$. Here we need to reconstruct $(X,D)$ from log surfaces $(Y,D_Y)$ listed in Example \ref{ex:w=2_cha=2}, see Lemma \ref{lem:w=2_swaps}\ref{item:w=2_cha=2_nu=4},\ref{item:w=2_cha=2_nu=3}. The result is summarized in Lemma \ref{lem:w=2_cha=2}.  
As in Lemmas \ref{lem:w=3} and \ref{lem:w=2_cha_neq_2}, we list all singularity types together with decorations which encode the structure of the chosen witnessing $\P^1$-fibration, as explained in Section \ref{sec:notation}. Namely, the numbers decorated by $\dec{j}$ correspond to components of $D$ meeting the vertical $(-1)$-curve $A_j$, the bold ones correspond to the horizontal components, and the underlined one to the $2$-section. Singularity types without decorations are listed in Tables \ref{table:ht=3_char=2_moduli}--\ref{table:ht=3_char=2}. Recall from Notation \ref{not:P} that  $\Pht^{\width=2}(\cS)$ denotes the set of isomorphism classes of del Pezzo surfaces of rank one, height 3, and width 2.

We would like to reassure the reader that, although the resulting list turns out to be very long, the proof follows exactly the same lines as the proofs of Lemmas \ref{lem:w=2_cha_neq_2} and \ref{lem:w=3} above.

\begin{lemma}[Classification in case $\width=2$, $\cha\kk=2$]\label{lem:w=2_cha=2}
	Assume $\cha\kk= 2$. Let $\cS$ be a singularity type of a log terminal surface. Let $\bar{X}$ be a del Pezzo surfaces of rank 1, height 3, width 2 and type $\cS$, i.e.\ $\bar{X}\in \Pht^{\width=2}(\cS)$. Assume $\bar{X}$ has no descendant with elliptic boundary. Then $\cS$ is listed below, and the following hold.
	\begin{parts}
		\item\label{item:w=2-cha=2-moduli} If $\cS$ is as in \ref{item:ht=3_nu=4_id}--\ref{item:ht=3_nu_3_V_fork_off}, see Table \ref{table:ht=3_char=2_moduli}, then $\Pht^{\width=2}(\cS)$ has moduli dimension $1$. It is represented by an almost universal family over $\Astst$, with symmetry group $G$ listed in Table \ref{table:ht=3_char=2_moduli}; see Section \ref{sec:moduli} for definitions.
		\item\label{item:w=2-cha=2-uniqueness} If $\cS$ is as in \ref{item:ht=3_nu=3_XY'E}--\ref{item:qL3_[5]}, see Table \ref{table:ht=3_char=2}, then $\#\Pht^{\width=2}(\cS)=1$, i.e.\ $\bar{X}$ is unique up to an isomorphism.
		\item\label{item:w=2-cha=2-classification} The minimal log resolution $(X,D)$ of $\bar{X}$ admits a $\P^1$-fibration $p$ such that $\bar{X}$ swaps vertically to one of the  surfaces $\bar{Y}$ from Example \ref{ex:w=2_cha=2}, and the combinatorial type of $(X,D,p)$ is one of the following. 
	\end{parts}
	\begin{enumerate}[itemsep=0.6em]
		\item\label{item:list-swap-to-nu=4_small} $\bar{Y}$ is of type $3\rA_{1}+\rD_{4}+[2,3]$, see Example \ref{ex:w=2_cha=2}\ref{item:swap-to-nu=4_small}, and $(X,D,p)$ is one of the following: 
	\begin{longlist}
		\item \label{item:ht=3_nu=4_id} $\langle \bs{k}\dec{0};[2]\dec{2},[2]\dec{3},[2]\dec{4}\rangle+\ldec{0}[(2)_{k-2},2\dec{1},\ub{3}]\dec{1,2,3,4}+[2]\dec{2}+[2]\dec{3}+[2]\dec{4}$, $k\geq 3$,
		\item \label{item:ht=3_nu=4_id_C} $\langle \bs{2}\dec{0};[3]\dec{2},[2]\dec{3},[2]\dec{4}\rangle+[2,2\dec{2},\ub{3}\dec{1,3,4},2]\dec{0,1}+[2]\dec{3}+[2]\dec{4}$,
		\setcounter{foo}{\value{longlisti}}
	\end{longlist}
		\item \label{item:ht=3_nu=4} $\bar{Y}$ is of type $4\rA_{1}+\rD_{4}+[4]+[3]$, see Example \ref{ex:w=2_cha=2}\ref{item:swap-to-nu=4_large}, and $(X,D,p)$ is one of the following: 
	\begin{longlist}
		\setcounter{longlisti}{\value{foo}}	
		\item \label{item:ht=3_nu=4_id_X} $\langle \bs{k}\dec{0};[2]\dec{2},[2]\dec{3},[2]\dec{4}\rangle+\ldec{0}[(2)_{k-2},3]\dec{1}+[\ub{4}]\dec{1,2,3,4}+[2]\dec{1}+[2]\dec{2}+[2]\dec{3}+[2]\dec{4}$, $k\geq 3$,
		\item \label{item:ht=3_nu=4_V} $\langle \bs{3}\dec{0};[2]\dec{2},[2]\dec{3},[2]\dec{4}\rangle+[2\dec{0},3,2\dec{1},2]+[\ub{5}]\dec{1,2,3,4}+[2]\dec{2}+[2]\dec{3}+[2]\dec{4}$,
		\item \label{item:ht=3_nu=4_U} $\langle \bs{3}\dec{0};[2]\dec{2},[2]\dec{3},[2]\dec{4}\rangle+[2\dec{0},4\dec{1}]+[2,2\dec{1},\ub{4}]\dec{2,3,4}+[2]\dec{2}+[2]\dec{3}+[2]\dec{4}$,
		\item \label{item:ht=3_nu=4_id_XC} $\langle \bs{2}\dec{0};[3]\dec{2},[2]\dec{3},[2]\dec{4}\rangle+[3]\dec{0,1}+[2,2\dec{2},\ub{4}]\dec{1,3,4}+[2]\dec{1}+[2]\dec{3}+[2]\dec{4}$,
		\item \label{item:ht=3_nu=4_XC} $\langle \bs{2}\dec{0};[3]\dec{2},[2]\dec{3},[2]\dec{4}\rangle+[3\dec{0},2\dec{1},2]+[2,2\dec{2},\ub{5}]\dec{1,3,4}+[2]\dec{3}+[2]\dec{4}$,
		\setcounter{foo}{\value{longlisti}}
	\end{longlist}
	\item $\bar{Y}$ is of type $4\rA_{1}+\rA_{3}+[3]$, see Example \ref{ex:w=2_cha=2}\ref{item:swap-to-nu=3}, and $(X,D,p)$ is one of the following: 
	\begin{longlist}\setcounter{longlisti}{\value{foo}}
		\item\label{item:ht=3_nu_3_off}
		$\langle k\dec{2};[2],[\ub{3}]\dec{1,3},[2\dec{3},\bs{2}\dec{0},2]\rangle+[3\dec{0},2\dec{1},2]+[2]\dec{3}+[(2)_{k-2}]\dec{2}$, $k\geq 2$,
		\item\label{item:ht=3_nu_3_V_fork_off}
		$[2\dec{3},\bs{3}\dec{0},2,2\dec{2},2]+\langle k\dec{1};[2],[\ub{3}]\dec{3},[2\dec{0},3]\rangle+[(2)_{k-2}]\dec{1} +[2]\dec{3}$,
\medskip

		\item\label{item:ht=3_nu=3_XY'E} $\ldec{3}[(2)_{k-2},3\dec{3},\bs{3}\dec{0},2,2\dec{2},2]+[2,2\dec{1},\ub{3}\dec{2},k\dec{3},2]+[2\dec{0},4\dec{1}]$, $k\in \{2,3\}$,
		\item\label{item:ht=3_nu=3_XY'U_eps=0}
		$[3\dec{3},\bs{2}\dec{0},2,2\dec{2},2]+[2,k\dec{1},\ub{3}\dec{2},2\dec{3},2]+[4\dec{0},(2)_{k-2}]\dec{1}$, $k\leq 7$,
		\item\label{item:ht=3_nu=3_XY'U_eps=1}
		$[3\dec{3},\bs{2}\dec{0},2,2\dec{2},2]+[3\dec{0},k\dec{1},\ub{3}\dec{2},2\dec{3},2]+[3,(2)_{k-2}]\dec{1}$, $k\leq 5$,
		\item\label{item:ht=3_nu=3_XY'BE}
		$[3\dec{3},\bs{3}\dec{0},2,2\dec{2},2]+[2,2\dec{3},\ub{4}]\dec{1,2}+[2\dec{0},3,2\dec{1},2]$,
		\item\label{item:ht=3_nu=3_XY'BB} 
		$[3\dec{3},\bs{2}\dec{0},2,3\dec{2},2]+[2\dec{2},\ub{4}\dec{1},2\dec{3},2]+[3\dec{0},2\dec{1},2]$,
		\item\label{item:ht=3_nu=3_XY'BD} 	
		$\ldec{3}[(2)_{k-2},3,\bs{2}\dec{0},2,2\dec{2},2]+[2,k\dec{3},\ub{4}\dec{1,2}]+[3\dec{0},2\dec{1},2]$, $k\in \{3,4,5\}$,
		\item\label{item:ht=3_nu=3_XY'B_T=0}
		$[3\dec{3},\bs{2}\dec{0},2,2\dec{2},2]+\ldec{1}[(2)_{k-2},\ub{4}\dec{2},2\dec{3},2]+[3\dec{0},k\dec{1},2]$, $k\leq 9$,
		\item\label{item:ht=3_nu=3_ZY'A}
		$[2,2\dec{1},\ub{3}\dec{2},2\dec{3},2,\bs{3}\dec{0},2,2\dec{2},2]+[4]\dec{0,1}+[3]\dec{3}$,
		\item\label{item:ht=3_nu=3_ZY'B}
		$[\ub{4}\dec{1,2},k\dec{3},2,\bs{2}\dec{0},2,2\dec{2},2]+[3\dec{0},2\dec{1},2]+\ldec{3}[(2)_{k-2},3]$, $k\in \{2,3\}$,
		\item\label{item:ht=3_nu=3_ZY'BD}
		$[2\dec{1},\ub{4}\dec{2},2\dec{3},2,\bs{2}\dec{0},2,2\dec{2},2]+[3\dec{0},3\dec{1},2]+[3]\dec{3}$,
		\item\label{item:ht=3_nu=3_YYB}
		$[2,k\dec{2},2,\bs{2}\dec{0},2,2\dec{3},2]+[\ub{5}\dec{1,3},(2)_{k-2}]\dec{2}+[3\dec{0},2\dec{1},2]$, $k\in \{4,5\}$,
		\item\label{item:miss2}
		$[2,3\dec{2},2,\bs{2}\dec{0},2,2\dec{3},2]+[3\dec{0},2\dec{1},\ub{4},2\dec{2}]+[3]\dec{1}$,
		\item\label{item:ht=3_nu=3_YA}
		$[2,3\dec{2},2,\bs{2}\dec{0},2,2\dec{3},2]+[2,k\dec{1},\ub{4}\dec{3},2]\dec{2}+[4\dec{0},(2)_{k-2}]\dec{1}$, $k\in \{2,3\}$,
		\item\label{item:ht=3_nu=3_YB_T=0}
		$[2,3\dec{2},2,\bs{2}\dec{0},2,2\dec{3},2]+[2\dec{2},\ub{5}\dec{3},(2)_{k-2}]\dec{1}+[3\dec{0},k\dec{1},2]\dec{1}$, $k\in \{2,3,4\}$,
		\item\label{item:ht=3_nu=3_UA}
		$[2,2\dec{2},2,\bs{3}\dec{0},2,2\dec{3},2]+[2,2\dec{1},\ub{4}\dec{2,3}]+[2\dec{0},4\dec{1}]$,
		\item\label{item:ht=3_nu=3_UB}
		$[2,2\dec{2},2,\bs{3}\dec{0},2,2\dec{3},2]+[\ub{5}]\dec{1,2,3}+[2\dec{0},3,2\dec{1},2]$,
		\item \label{item:ht=3_nu=3_XX'} $[3\dec{2},\bs{k}\dec{0},3\dec{3}]+[2,2\dec{2},\ub{3}\dec{1},2\dec{3},2]+\ldec{0}[(2)_{k-2},3,2\dec{1},2]$, $k\in \{2,3\}$,
\medskip
		
		\item\label{item:ht=3_nu=3_XY'UV}
		$[3\dec{3},\bs{2}\dec{0},2,2\dec{2},2]+\langle 2;[2],[2]\dec{1};[2,2\dec{3},\ub{3}\dec{2}]\rangle+[5]\dec{0,1}$,
		\item\label{item:ht=3_nu=3_XY'BA}	
		$\langle 2;[2],[2]\dec{2},[3\dec{3},\bs{2}\dec{0},2]\rangle+[\ub{5}\dec{1,2},2\dec{3},2]+[3\dec{0},2\dec{1},2]$,
		\item\label{item:ht=3_nu=3_XY'BC} 
		$[4\dec{3},\bs{2}\dec{0},2,2\dec{2},2]+\langle 2;[2],[2]\dec{3},[\ub{4}]\dec{1,2}\rangle+[3\dec{0},2\dec{1},2]$,
		\item\label{item:miss} 
		$[2\dec{3},3,\bs{2}\dec{0},2,2\dec{2},2]+[\ub{5}\dec{1,2},3\dec{3},2]+\langle 2;[3]\dec{0},[2]\dec{1},[2]\rangle$,
		\item\label{item:ht=3_nu=3_XY'B_k=2}
		$[3\dec{3},\bs{2}\dec{0},2,2\dec{2},2]+[\ub{k}\dec{1,2},2\dec{3},2]+\langle 2;[2],[3]\dec{0},\ldec{1}[(2)_{k-4}]\rangle$, $k\in \{5,6,7\}$,
		\item\label{item:ht=3_nu=3_XY'B_k=2_T=[3]}
		$[3\dec{3},\bs{2}\dec{0},2,2\dec{2},2]+[2\dec{1},\ub{5}\dec{2},2\dec{3},2]+\langle 2;[2],[3]\dec{0},[3]\dec{1}\rangle$, 
		\item\label{item:ht=3_nu=3_XY'B_k=3}
		$[3\dec{3},\bs{2}\dec{0},2,2\dec{2},2]+[k\dec{1},\ub{4}\dec{2},2\dec{3},2]+\langle 3;[2],[3]\dec{0},\ldec{1}[(2)_{k-2}]\rangle$, $k\in \{3,4\}$, 
		\item\label{item:ht=3_nu=3_XY'B_k=4}
		$[3\dec{3},\bs{2}\dec{0},2,2\dec{2},2]+[3\dec{1},2,\ub{4}\dec{2},2\dec{3},2]+\langle 4;[2],[3]\dec{0},[2]\dec{1}\rangle$, 
		\item\label{item:ht=3_nu=3_ZY'BE}
		$[\ub{5}\dec{1,2},2\dec{3},2,\bs{2}\dec{0},2,2\dec{2},2]+\langle 2;[2],[2]\dec{1},[3]\dec{0}\rangle+[3]\dec{3}$,
		\item\label{item:ht=3_nu=3_YB_T-neq-0}
		$[2,3\dec{2},2,\bs{2}\dec{0},2,2\dec{3},2]+[2\dec{2},\ub{k}\dec{1,3}]+\langle 2;[2],[3]\dec{0},\ldec{1}[(2)_{k-5}]\rangle$, $k\in \{6,7\}$,
		\medskip		
		
		\item\label{item:ht=3_nu_3_VZ_chain_k=3}
		$[2\dec{3},\bs{3}\dec{0},2,2\dec{2},2]+[2\dec{0},3,k\dec{1},\ub{3}\dec{3}]+[3,(2)_{k-2}]\dec{1}+[2]\dec{3}$,
		\item\label{item:ht=3_nu_3_VZ_chain_k=4}
		$[2\dec{3},\bs{4}\dec{0},2,2\dec{2},2]+[2\dec{0},2,3,k\dec{1},\ub{3}\dec{3}]+[3,(2)_{k-2}]\dec{1}+[2]\dec{3}$, $k\in \{2,3,4,5\}$,
		\item\label{item:ht=3_nu_3_VZ_chain_m=2}
		$[2\dec{3},\bs{k}\dec{0},2,2\dec{2},2]+\ldec{0}[(2)_{k-2},3,2\dec{1},\ub{3}\dec{3}]+[3]\dec{1}+[2]\dec{3}$, $k\in \{5,6,7\}$,
		\item\label{item:ht=3_nu_3_YV_chain_l=2,k=3}
		$[2\dec{3},\bs{3}\dec{0},2,2\dec{2},2]+[2\dec{0},3,k\dec{1},2]+\ldec{1}[(2)_{k-2},\ub{4}\dec{2,3}]+[2]\dec{3}$,
		\item\label{item:ht=3_nu_3_YV_chain_l=2,k=4}
		$[2\dec{3},\bs{4}\dec{0},2,2\dec{2},2]+[2\dec{0},2,3,k\dec{1},2]+\ldec{1}[(2)_{k-2},\ub{4}\dec{2,3}]+[2]\dec{3}$, $k\leq 7$,
		\item\label{item:ht=3_nu_3_YV_chain_l=2,k>4}
		$[2\dec{3},\bs{k}\dec{0},2,2\dec{2},2]+\ldec{0}[(2)_{k-2},3,2\dec{1},2]+[\ub{4}]\dec{1,2,3}+[2]\dec{3}$, $k\in \{5,6\}$,	
		\item\label{item:ht=3_nu_3_YV_chain_k=2,l=3}
		$[2\dec{3},\bs{2}\dec{0},2,3\dec{2},2]+[3\dec{0},k\dec{1},2]+\ldec{1}[(2)_{k-2},\ub{4}\dec{3},2\dec{2}]+[2]\dec{3}$,
		\item\label{item:ht=3_nu_3_YV_chain_l=4}
		$[2\dec{3},\bs{2}\dec{0},2,4\dec{2},2]+[3\dec{0},k\dec{1},2]+\ldec{1}[(2)_{k-2},\ub{4}\dec{3},2,2]\dec{2}+[2]\dec{3}$, $k\leq 9$,	
		\item\label{item:ht=3_nu_3_YV_chain_k=2,l>4,m=2}
		$[2\dec{3},\bs{2}\dec{0},2,k\dec{2},2]+[3\dec{0},2\dec{1},2]+\ldec{1}[\ub{4}\dec{3},(2)_{k-2}]\dec{2}+[2]\dec{3}$, $k\geq 5$,
		\item\label{item:ht=3_nu_3_YV_chain_k=2,l>4,m=3}
		$[2\dec{3},\bs{2}\dec{0},2,k\dec{2},2]+[3\dec{0},3\dec{1},2]+\ldec{1}[2,\ub{4}\dec{3},(2)_{k-2}]\dec{2}+[2]\dec{3}$,  $10\geq k\geq 5$,
		\item\label{item:ht=3_nu_3_YV_chain_k=2,l>4,m=4}
		$[2\dec{3},\bs{2}\dec{0},2,5\dec{2},2]+[3\dec{0},4\dec{1},2]+\ldec{1}[2,2,\ub{4}\dec{3},2,2,2]\dec{2}+[2]\dec{3}$,
		\item\label{item:ht=3_nu_3_YV_chain_k=3,l>2,m=2}
		$[2\dec{3},\bs{3}\dec{0},2,k\dec{2},2]+[2\dec{0},3,2\dec{1},2]+\ldec{1}[\ub{4}\dec{3},(2)_{k-2}]\dec{2}+[2]\dec{3}$, $3\leq k\leq 7$,
		\item\label{item:ht=3_nu_3_YV_chain_k>3,l=3,m=2}
		$[2\dec{3},\bs{k}\dec{0},2,3\dec{2},2]+\ldec{0}[(2)_{k-2},3,2\dec{1},2]+\ldec{1}[\ub{4}\dec{3},2]\dec{2}+[2]\dec{3}$, $k\in \{4,5\}$,
		\item\label{item:ht=3_nu_3_WZ_l=2}
		$[2\dec{3},\bs{2}\dec{0},2,2\dec{2},\ub{2}\dec{3},k\dec{1},3\dec{0}]+[3,(2)_{k-2}]\dec{1}+[3]\dec{2}+[2]\dec{3}$, $k\geq 2$,
		\item\label{item:ht=3_nu_3_WZ_k=2}
		$[2\dec{3},\bs{2}\dec{0},2,k\dec{2},\ub{2}\dec{3},2\dec{1},3\dec{0}]+[3,(2)_{k-2}]\dec{2}+[3]\dec{1}+[2]\dec{3}$, $k\in \{3,4\}$,
		\item\label{item:ht=3_nu_3_WX_l=2}
		$[2\dec{3},\bs{2}\dec{0},2,k\dec{2},\ub{2}\dec{3},2\dec{1},2]+[4]\dec{0,1}+[3,(2)_{k-2}]\dec{2}+[2]\dec{3}$, $k\geq 3$,
		\item\label{item:ht=3_nu_3_WX_k=2}
		$[2\dec{3},\bs{k}\dec{0},2,2\dec{2},\ub{2}\dec{3},2\dec{1},2]+[4\dec{1},(2)_{k-2}]\dec{0}+[3]\dec{2}+[2]\dec{3}$, $k\geq 3$,
		\item\label{item:ht=3_nu_3_WX_k,l=3}
		$[2\dec{3},\bs{3}\dec{0},2,3\dec{2},\ub{2}\dec{3},2\dec{1},2]+[2\dec{0},4\dec{1}]+[2\dec{2},3]+[2]\dec{3}$,	
		\item\label{item:ht=3_nu_3_WY_k=2}
		$[2\dec{3},\bs{2}\dec{0},2,k\dec{2},\ub{3}\dec{1,3}]+[3\dec{0},2\dec{1},2]+[3,(2)_{k-2}]\dec{2}+[2]\dec{3}$,
		\item\label{item:ht=3_nu_3_WY_k=3}
		$[2\dec{3},\bs{3}\dec{0},2,k\dec{2},\ub{3}\dec{1,3}]+[2\dec{0},3,2\dec{1},2]+[3,(2)_{k-2}]\dec{2}+[2]\dec{3}$, $k\leq 5$,
		\item\label{item:ht=3_nu_3_WY_k>3}
		$[2\dec{3},\bs{k}\dec{0},2,l\dec{2},\ub{3}\dec{1,3}]+\ldec{0}[(2)_{k-2},3,2\dec{1},2]+[3,(2)_{l-2}]\dec{2}+[2]\dec{3}$, $k\in \{4,5,6\}$, $l\in \{2,3\}$,
		\item\label{item:qL3_A0_chain} $[3\dec{2},\bs{k}\dec{0},2\dec{3}]+[\ub{3}\dec{1,3},2\dec{2},2]+\ldec{0}[(2)_{k-2},3,2\dec{1},2]+[2]\dec{3}$,
		\item\label{item:qL3_k1} $[3\dec{2},\bs{k}\dec{0},2\dec{3}]+[2\dec{1},\ub{3}\dec{3},2\dec{2},2]+\ldec{0}[(2)_{k-2},3,3\dec{1},2]+[2]\dec{3}$, $3\leq k\leq 9$,
		\item\label{item:qL3_k2} $[3\dec{2},\bs{3}\dec{0},2\dec{3}]+\ldec{1}[(2)_{k-2},\ub{3}\dec{3},2\dec{2},2]+[2\dec{0},3,k\dec{1},2]+[2]\dec{3}$, $3\leq k\leq 9$,
		\item\label{item:qL3_4} $[3\dec{2},\bs{4}\dec{0},2\dec{3}]+\ldec{1}[2,2,\ub{3}\dec{3},2\dec{2},2]+[2\dec{0},2,3,4\dec{1},2]+[2]\dec{3}$,
\medskip

		\item\label{item:ht=3_nu_3_VZ_fork}
		$[2\dec{3},\bs{3}\dec{0},2,2\dec{2},2]+\langle k;[2]\dec{1},[\ub{3}]\dec{3},[2\dec{0},3]\rangle +[3,(2)_{k-3},3]\dec{1}+[2]\dec{3}$,
		\item\label{item:ht=3_nu_3_YV_fork_l=2,m=2}
		$[2\dec{3},\bs{4}\dec{0},2,2\dec{2},2]+\langle 2;[2],[2]\dec{1},\ldec{0}[2,2,3]\rangle+[\ub{5}]\dec{1,2,3}+[2]\dec{3}$,
		\item\label{item:ht=3_nu_3_YV_fork_l=2,m>2}
		$[2\dec{3},\bs{3}\dec{0},2,2\dec{2},2]+\langle k;[2],\ldec{1}T\trp,\ldec{0}[2,3]\rangle+\ldec{1}T^{*}*[(2)_{k-2},\ub{4}]\dec{2,3}+[2]\dec{3}$, $d(T)\leq 3$,
		\item\label{item:ht=3_nu_3_YV_fork_l=3}
		$[2\dec{3},\bs{2}\dec{0},2,3\dec{2},2]+\langle k;[2],\ldec{1}T\trp,\ldec{0}[3]\rangle+\ldec{1}T^{*}*[(2)_{k-2},\ub{4}\dec{3},2]\dec{2}+[2]\dec{3}$, $d(T)\leq 5$,
		\item\label{item:ht=3_nu_3_YV_fork_T=[2]_m=2}
		$[2\dec{3},\bs{2}\dec{0},2,k\dec{2},2]+\langle
		2;[2],[3]\dec{0},\ldec{1}[2]\rangle+[\ub{5}\dec{1,3},(2)_{k-2}]\dec{2}+[2]\dec{3}$, $k\geq 4$,
		\item\label{item:ht=3_nu_3_YV_fork_T=[2]_m>2}
		$[2\dec{3},\bs{2}\dec{0},2,4\dec{2},2]+\langle
		k;[2],[3]\dec{0},\ldec{1}[2]\rangle+[3\dec{1},(2)_{k-3},\ub{4}\dec{3},2,2]\dec{2}+[2]\dec{3}$, $k\in \{3,4\}$,	
		\item\label{item:ht=3_nu_3_YV_fork_T=[2,2]_m=3}
		$[2\dec{3},\bs{2}\dec{0},2,4\dec{2},2]+\langle
		3;[2],[3]\dec{0},\ldec{1}[2,2]\rangle+[4\dec{1},\ub{4}\dec{3},2,2]\dec{2}+[2]\dec{3}$,
		\item\label{item:ht=3_nu_3_YV_fork_T=[2,2]_m=2}
		$[2\dec{3},\bs{2}\dec{0},2,k\dec{2},2]+\langle
		2;[2],[3]\dec{0},\ldec{1}[2,2]\rangle+[\ub{6}\dec{1,3},(2)_{k-2}]\dec{2}+[2]\dec{3}$, $k\in \{4,5,6,7\}$,
		\item\label{item:ht=3_nu_3_YV_fork_T=[3]}
		$[2\dec{3},\bs{2}\dec{0},2,4\dec{2},2]+\langle 2;[2],[3]\dec{0},\ldec{1}[3]\rangle+[2\dec{1},\ub{5}\dec{3},2,2]\dec{2}+[2]\dec{3}$,
		\item\label{item:ht=3_nu_3_YV_fork_T=[2,2,2]}
		$[2\dec{3},\bs{2}\dec{0},2,4\dec{2},2]+\langle
		2;[2],[3]\dec{1},\ldec{0}[2,2,2]\rangle+[\ub{7}\dec{1,3},2,2]\dec{2}+[2]\dec{3}$,
		\item\label{item:ht=3_nu_3_ZV_U-3}
		$\langle \bs{2};[2]\dec{3},[3]\dec{0},[2,2\dec{2},2]\rangle+
		[2\dec{0},4,2\dec{1},\ub{3}\dec{2,3}]+[3]\dec{1}+[2]\dec{3}$,
		\item\label{item:ht=3_nu_3_ZV_U-2}
		$\langle \bs{3};[2]\dec{3},[2]\dec{0},[2,2\dec{2},2]\rangle+
		[3\dec{0},3,2\dec{1},\ub{3}\dec{2,3}]+[3]\dec{1}+[2]\dec{3}$,
		\item\label{item:ht=3_nu_3_YV_U-2_k=2}
		$\langle \bs{2};[2]\dec{3},[2]\dec{0},[2,3\dec{2},2]\rangle+
		[4\dec{0},2\dec{1},2]+
		[\ub{4}\dec{1,3},2\dec{2}]+[2]\dec{3}$,
		\item\label{item:ht=3_nu_3_YV_U-2_k>2}
		$\langle \bs{k};[2]\dec{3},[2]\dec{0},[2,2\dec{2},2]\rangle+
		[3\dec{0},(2)_{k-3},3,2\dec{1},2]+
		[\ub{4}]\dec{1,2,3}+[2]\dec{3}$, $k\geq 3$,
		\item\label{item:ht=3_nu_3_YV_U-2_m=3}
		$\langle \bs{3};[2]\dec{3},[2]\dec{0},[2,2\dec{2},2]\rangle+
		[3\dec{0},3,3\dec{1},2]+
		[2\dec{1},\ub{4}]\dec{2,3}+[2]\dec{3}$,
		\item\label{item:ht=3_nu_3_YV_U-3_k>2}
		$\langle \bs{k};[2]\dec{3},[3]\dec{0},[2,2\dec{2},2]\rangle+
		[2\dec{0},3,(2)_{k-3},3,2\dec{1},2]+
		[\ub{4}]\dec{1,2,3}+[2]\dec{3}$, $k\geq 3$,
		\item\label{item:ht=3_nu_3_YV_U-3_k=2}
		$\langle \bs{2};[2]\dec{3},[3]\dec{0},[2,2\dec{2},2]\rangle+
		[2\dec{0},4,k\dec{1},2]+
		\ldec{1}[(2)_{k-2},\ub{4}]\dec{2,3}+[2]\dec{3}$, $k\in \{2,3\}$,
		\item\label{item:ht=3_nu_3_YV_U'-2_l=3}
		$\langle 2;[2],[2]\dec{2},[2\dec{3},\bs{3}\dec{0},2]\rangle+
		[2\dec{0},3,2\dec{1},2]+
		[\ub{5}]\dec{1,2,3}+[2]\dec{3}$,
		\item\label{item:ht=3_nu_3_YV_U'-2_k=4}
		$\langle 4;[2],[2]\dec{2},[2\dec{3},\bs{2}\dec{0},2]\rangle+
		[3\dec{0},3\dec{1},2]+
		[2\dec{1},\ub{4}\dec{3},2,3\dec{2}]+[2]\dec{3}$,
		\item\label{item:ht=3_nu_3_YV_U'-2_k=3}
		$\langle 3;[2],[2]\dec{2},[2\dec{3},\bs{2}\dec{0},2]\rangle+
		[3\dec{0},k\dec{1},2]+
		\ldec{1}[(2)_{k-2},\ub{4}\dec{3},3\dec{2}]+[2]\dec{3}$, $k\in \{3,4\}$,
		\item\label{item:ht=3_nu_3_YV_U'-2_k>2_m=2}
		$\langle k;[2],[2]\dec{2},[2\dec{3},\bs{2}\dec{0},2]\rangle+
		[3\dec{0},2\dec{1},2]+
		[\ub{4}\dec{1,3},(2)_{k-3},3\dec{2}]+[2]\dec{3}$, $k\geq 3$,
		\item\label{item:ht=3_nu_3_YV_U'-3_chain_k>2}
		$\langle k;[2],[3]\dec{2},[2\dec{3},\bs{2}\dec{0},2]\rangle +
		[3\dec{0},2\dec{1},2]+
		[\ub{4}\dec{1,3},3,(2)_{k-3},3,2\dec{2}]+[2]\dec{3}$,
		\item\label{item:ht=3_nu_3_YV_U'-3_chain_k=2}
		$\langle 2;[2],[3]\dec{2},[2\dec{3},\bs{2}\dec{0},2]\rangle +
		[3\dec{0},k\dec{1},2]+
		\ldec{1}[(2)_{k-2},\ub{4}\dec{3},4,2\dec{2}]+[2]\dec{3}$, $k\in \{2,3,4\}$,
		\item\label{item:ht=3_nu_3_WXD}
		$\langle \bs{k};[2]\dec{0},[2]\dec{3},[2,2\dec{1},\ub{2}\dec{3},2\dec{2},2]\rangle+[4\dec{1},(2)_{k-3},3]\dec{0}+[3]\dec{2} + [2]\dec{3}$, $k\geq 3$,
		\item\label{item:ht=3_nu_3_WYV}
		$\langle k;[2]\dec{2},[\ub{3}]\dec{1,3},[2\dec{3},\bs{2}\dec{0},2]\rangle+ [3\dec{0},2\dec{1},2]+[3,(2)_{k-3},3]\dec{2}+[2]\dec{3}$, $k\geq 3$,
		\item\label{item:ht=3_nu_3_WYV_k=2} $\langle 2;[2]\dec{2},[\ub{3}]\dec{1,3},[2\dec{3},\bs{2}\dec{0},2]\rangle+ [3\dec{0},2\dec{1},2]+[4]\dec{2}+[2]\dec{3}$,
		\item\label{item:ht=3_nu_3_WYC}
		$\langle \bs{2};[2]\dec{0},[2]\dec{3},[\ub{3}\dec{1,3},2\dec{2},2]\rangle+[4\dec{0},2\dec{1},2]+[3]\dec{2}+[2]\dec{3}$,
		\item\label{item:ht=3_nu_3_WYT}
		$[2\dec{3},\bs{2}\dec{0},2,2\dec{2},\ub{3}\dec{3},(2)_{k-2}]*(T^{*})\dec{1}+\langle k;[2],[3]\dec{0},\ldec{1}T\rangle +[3]\dec{2}+[2]\dec{3}$, $d(T)\leq 5$,
		\item\label{item:ht=3_nu_3_WYU}
		$[2\dec{3},\bs{2}\dec{0},2,k\dec{2},\ub{4}\dec{1,3}]+\langle 2;[2],[3]\dec{0},[2]\dec{1}\rangle +[3,(2)_{k-2}]\dec{2}+[2]\dec{3}$, $k\geq 3$,
		\item\label{item:ht=3_nu_3_WYUU}
		$[2\dec{3},\bs{2}\dec{0},2,k\dec{2},\ub{5}\dec{1,3}]+\langle 2;[2],[3]\dec{0},[2\dec{1},2]\rangle +[3,(2)_{k-2}]\dec{2}+[2]\dec{3}$, $k\in \{3,4\}$
		\item\label{item:qL3_A0_fork} $\langle \bs{k};\ldec{0}T\trp,[3]\dec{2},[2]\dec{3}\rangle +[\ub{3}\dec{1,3},2\dec{2},2]+\ldec{0}T^{*}*[(2)_{k-2},3,2\dec{1},2]+[2]\dec{3}$, $d(T)\leq 5$,
		\item\label{item:qL3_rA0_T=[3]}
		$\langle \bs{2},[3]\dec{0},[3]\dec{2},[2]\dec{3}\rangle+\ldec{1}[2,\ub{3}\dec{3},2\dec{2},2]+[2\dec{0},4,3\dec{1},2]+[2]\dec{3}$,
		\item\label{item:qL3_rA0} $\langle \bs{2},[2]\dec{0},[3]\dec{2},[2]\dec{3}\rangle+\ldec{1}[(2)_{k-2},\ub{3}\dec{3},2\dec{2},2]+[4\dec{0},k\dec{1},2]+[2]\dec{3}$, $k\geq 3$,
		\item\label{item:qL3_rA0_l=3} $\langle \bs{k},[2]\dec{0},[3]\dec{2},[2]\dec{3}\rangle+[2\dec{1},\ub{3}\dec{3},2\dec{2},2]+[3\dec{0},(2)_{k-3},3\dec{1},2]+[2]\dec{3}$, $k\in \{3,4\}$,
		\item\label{item:qL3_rA0_s=2} $\langle \bs{2},[2\dec{0},2],[3]\dec{2},[2]\dec{3}\rangle+\ldec{1}[(2)_{k-2},\ub{3}\dec{3},2\dec{2},2]+[5\dec{0},k\dec{1},2]+[2]\dec{3}$, $3\leq k\leq 6$,
		\item\label{item:qL3_rA0_s=2_l=3} $\langle \bs{3},[2\dec{0},2],[3]\dec{2},[2]\dec{3}\rangle+[2\dec{1},\ub{3}\dec{3},2\dec{2},2]+[4\dec{0},2,3\dec{1},2]+[2]\dec{3}$,
		\item\label{item:qL3_rA0_s=3_l=3} $\langle \bs{2},[2\dec{0},2,2],[3]\dec{2},[2]\dec{3}\rangle+[2\dec{1},\ub{3}\dec{3},2\dec{2},2]+[6\dec{0},3\dec{1},2]+[2]\dec{3}$,
		\item\label{item:qL3_rA1} $[3\dec{2},\bs{k}\dec{0},2\dec{3}]+[\ub{4}\dec{1,3},2\dec{2},2]+\langle 2;\ldec{0}[(2)_{k-2},3],[2]\dec{1},[2]\rangle+[2]\dec{3}$, $k\in \{3,4\}$,
		\item\label{item:qL3_rA1_k=3} $[3\dec{2},\bs{3}\dec{0},2\dec{3}]+[3\dec{1},\ub{3}\dec{3},2\dec{2},2]+\langle 3;\ldec{0}[2,3],[2]\dec{1},[2]\rangle+[2]\dec{3}$,
		\medskip
		
		\item\label{item:ht=3_nu_3_YV_U'-2_T=[2]}
		$\langle k;[2],[2]\dec{2},[2\dec{3},\bs{2}\dec{0},2]\rangle+
		\langle 2;[2],[3]\dec{0},[2]\dec{1}\rangle+
		[\ub{5}\dec{1,3},(2)_{k-3},3\dec{2}]+[2]\dec{3}$, $k\geq 3$,
		\item\label{item:ht=3_nu_3_YV_U'-2_T=[2,2]}
		$\langle 3;[2],[2]\dec{2},[2\dec{3},\bs{2}\dec{0},2]\rangle+
		\langle 2;[2],[3]\dec{0},\ldec{1}[(2)_{k-4}]\rangle+
		[\ub{k}\dec{1,3},3\dec{2}]+[2]\dec{3}$, $k\in \{5,6\}$,
		\item\label{item:ht=3_nu_3_YV_U'-3_fork_k>2}
		$\langle k;[2],[3]\dec{2},[2\dec{3},\bs{2}\dec{0},2]\rangle +
		\langle 2;[2],[3]\dec{0},[2]\dec{2}\rangle+
		[\ub{5}\dec{1,3},3,(2)_{k-3},3,2\dec{2}]+[2]\dec{3}$, $k\geq 3$,
		\item\label{item:ht=3_nu_3_YV_U'-3_fork_k=2}
		$\langle 2;[2],[3]\dec{2},[2\dec{3},\bs{2}\dec{0},2]\rangle +
		\langle 2;[2],[3]\dec{0},\ldec{2}[(2)_{k-4}] \rangle+
		[\ub{k}\dec{1,3},4,2\dec{2}]+[2]\dec{3}$, $k\in \{5,6\}$,
		\item\label{item:qL3_[4]} $\langle \bs{2};[3]\dec{2},[2]\dec{0},[2]\dec{3}\rangle+ \ldec{1}T^{*}*[(2)_{k-2},\ub{3}\dec{3},2\dec{2},2]+ \langle k;[4]\dec{0},\ldec{1}T\trp,[2]\rangle+[2]\dec{3}$, $d(T)\leq 3$,
		\item\label{item:qL3_[5]} $\langle \bs{2};[3]\dec{2},[2\dec{0},2],[2]\dec{3}\rangle+ [\ub{4}\dec{1,3},2\dec{2},2]+ \langle 2;[5]\dec{0},[2]\dec{1},[2]\rangle+[2]\dec{3}$.
	\end{longlist}
	\end{enumerate}
\end{lemma}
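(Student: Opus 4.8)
The plan is to follow verbatim the two-step strategy already executed in the proofs of Lemmas \ref{lem:w=3} and \ref{lem:w=2_cha_neq_2}. First I would show that part \ref{item:w=2-cha=2-classification} implies parts \ref{item:w=2-cha=2-moduli} and \ref{item:w=2-cha=2-uniqueness}. Fix a type $\cS$ from the list. Since none of the singularities in $\cS$ is canonical, Lemma \ref{lem:no-deb} guarantees that no $\bar{X}\in \Pht^{\width=2}(\cS)$ has a descendant with elliptic boundary, so by \ref{item:w=2-cha=2-classification} its minimal log resolution carries a witnessing $\P^1$-fibration $p$ whose combinatorial type appears on the list. One then checks by inspection that each $\cS$ occurs \emph{exactly once} among the decorated types, so $\cS$ determines the combinatorial type $\check{\cS}$ of $(X,D,p)$ uniquely. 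Feeding this into Lemma \ref{lem:w=2_uniqueness}\ref{item:w=2_uniq} yields the count: the moduli dimension equals $d=\nu-3+\epsilon$, which is $0$ in the cases of part \ref{item:w=2-cha=2-uniqueness} (whence $\#\Pht^{\width=2}(\cS)=1$) and $1$ in the cases of part \ref{item:w=2-cha=2-moduli} (whence $\Pht^{\width=2}(\cS)$ is represented by an almost universal family over a base $B\cong \Astst$). The symmetry group in Table \ref{table:ht=3_char=2_moduli} is the image of $\Aut(\check{\cS})$ in $\Aut(B)$, read off exactly as in Lemma \ref{lem:w=2_uniqueness}\ref{item:w=2_uniq-hat}. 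Conversely, each listed combinatorial type is realized and satisfies \eqref{eq:ld_phi_H}, so it is the minimal resolution of a genuine del Pezzo surface of rank one; since these types appear neither in \cite{PaPe_ht_2} nor in Lemma \ref{lem:w=3}, such a surface has $\height=3$ and $\width=2$, giving $\#\Pht^{\width=2}(\cS)\geq 1$.

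The substance of the argument is part \ref{item:w=2-cha=2-classification}: verifying that the (admittedly enormous) list is complete. Here I would choose $p$ as in Lemma \ref{lem:w=2_swaps}, keep Notation \ref{not:phi_+_h=2}, and split according to the two possibilities \ref{lem:w=2_swaps}\ref{item:w=2_cha=2_nu=4} ($\nu=4$) and \ref{lem:w=2_swaps}\ref{item:w=2_cha=2_nu=3} ($\nu=3$). In each case I reconstruct the vertical swap $\phi_{+}$ by \emph{reversing elementary swaps one at a time}: writing $\psi=\gamma\circ\gamma_{+}$, every base point of $\gamma_{+}^{-1}$ is a point of $D_{\gamma}$ lying on one of the $(-1)$-curves $A_{0}^{\gamma},\dots,A_{\nu}^{\gamma}$ by Lemma \ref{lem:w=2_uniqueness}\ref{item:w=2_A},\ref{item:w=2_D}, so at each stage there are only finitely many candidate centers. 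A branch of the reconstruction is terminated as soon as either some connected component of $D_{\gamma}$ ceases to be an admissible chain or fork (detected via Lemma \ref{lem:admissible_forks}, in particular the long-twig and $[2]$-twig constraints, and forbidden by Lemma \ref{lem:cascades}\ref{item:cascades-still-dP}), or the ampleness inequality \eqref{eq:ld-bound_h=2} fails, computed through Lemmas \ref{lem:ld_formulas} and \ref{lem:discriminants}. Two structural inputs streamline this: Lemma \ref{lem:U-2} forbids the component $U$ containing $H_{1}$ from being a $(-2)$-chain or $(-2)$-fork, killing the branches that would otherwise produce an elliptic tie; and the monotonicity of log discrepancies (Lemma \ref{lem:Alexeev}, respectively Lemma \ref{lem:cascades}\ref{item:cascades-ld}) lets me reduce every infinite family ``$k\geq 3$'' to its smallest member before applying \eqref{eq:ld-bound_h=2}, so only finitely many numerical inequalities need be checked by hand.

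For the harder case $\nu=3$ I would exploit the symmetries isolated in Remark \ref{rem:7A1}. After contracting $A_{1}$ the surface becomes the minimal resolution of the canonical del Pezzo surface of type $7\rA_{1}$, on which $\operatorname{GL}_{3}(\F_{2})$ acts by permuting the seven $(-1)$-curves over the $\F_{2}$-rational points; as illustrated in Examples \ref{ex:7A1} and \ref{ex:7A1-application}, lifting suitable elements of this group lets me normalize which of the $A_{j}^{\gamma}$ carry base points of $\gamma_{+}^{-1}$, while Lemma \ref{lem:further-swaps} fixes base points $q_{1}\in A_{1}^{\phi}$, $q_{2}\in A_{2}^{\phi}$ at the outset; together these collapse many \emph{a priori} distinct branches. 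The split between parts \ref{item:w=2-cha=2-moduli} and \ref{item:w=2-cha=2-uniqueness} is then dictated by the moduli dimension $d=\nu-3+\epsilon$ of Lemma \ref{lem:w=2_uniqueness}\ref{item:w=2_uniq}. Since $\bar{X}$ has no descendant with elliptic boundary, the remark following that lemma forbids $(\nu,\epsilon)=(4,1)$, so every $\nu=4$ reconstruction has $\epsilon=0$ and $d=1$ (its modulus being the choice of the fourth tangent line, cf.\ Proposition \ref{prop:primitive}\ref{item:primitive-uniqueness}), landing in part \ref{item:w=2-cha=2-moduli}. A $\nu=3$ reconstruction has $d=1$ exactly when $\gamma_{+}$ acquires a base point off $D_{\gamma}$, which by Lemma \ref{lem:w=2_uniqueness}\ref{item:w=2_D} forces $p_{1}'\in\Bs\psi^{-1}$; these are the two items \ref{item:ht=3_nu_3_off}, \ref{item:ht=3_nu_3_V_fork_off}, while all remaining $\nu=3$ reconstructions have $d=0$ and give part \ref{item:w=2-cha=2-uniqueness}.

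The main obstacle I anticipate is purely one of \emph{scale and bookkeeping}: the branching tree of reconstructions has well over a hundred leaves, each requiring a separate admissibility check and a separate evaluation of $\ld(H_{1}^{\gamma})+2\ld(H_{2}^{\gamma})$. The conceptual content is entirely contained in the lemmas already proved; the difficulty lies in organizing the case analysis so that every surviving branch is matched against exactly one item of the list and no branch is overlooked, and in carrying the log-discrepancy computations out accurately across all the infinite families indexed by the integer $k$.
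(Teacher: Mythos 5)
Your proposal follows essentially the same two-step strategy as the paper: first deduce parts \ref{item:w=2-cha=2-moduli} and \ref{item:w=2-cha=2-uniqueness} from the list in \ref{item:w=2-cha=2-classification} via Lemma \ref{lem:w=2_uniqueness}\ref{item:w=2_uniq} and the moduli dimension $d=\nu-3+\epsilon$, then establish completeness of the list by reversing elementary swaps one at a time, pruning branches with admissibility (Lemma \ref{lem:admissible_forks}, Lemma \ref{lem:cascades}\ref{item:cascades-still-dP}), the ampleness inequality \eqref{eq:ld-bound_h=2}, Lemma \ref{lem:U-2}, monotonicity of log discrepancies, and the Fano-plane symmetries of Remark \ref{rem:7A1} together with Lemma \ref{lem:further-swaps}. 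This is exactly the paper's proof in outline, and your identification of which branches give $d=1$ (all $\nu=4$ reconstructions, plus the $\nu=3$ reconstructions with a base point off the boundary, yielding \ref{item:ht=3_nu_3_off} and \ref{item:ht=3_nu_3_V_fork_off}) matches the paper.

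There is, however, one concrete flaw in your reduction step. You justify that no surface of a listed type has a descendant with elliptic boundary by asserting that \emph{none of the singularities in $\cS$ is canonical} and invoking Lemma \ref{lem:no-deb}. That assertion is false: most listed types contain canonical singularities (isolated $\rA_1$'s, and in the two moduli families \ref{item:ht=3_nu_3_off}, \ref{item:ht=3_nu_3_V_fork_off} an entire chain $[(2)_{k-2}]$, i.e.\ an $\rA_{k-2}$-point). Lemma \ref{lem:no-deb} only requires the canonical part to have at most $5$ exceptional components, which saves you in almost all cases --- but precisely in \ref{item:ht=3_nu_3_off} and \ref{item:ht=3_nu_3_V_fork_off} the canonical part $[(2)_{k-2}]+[2]$ has $k-1$ components, which exceeds $5$ for large $k$, so the lemma simply does not apply there. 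This matters because these are exactly the two $\nu=3$ families carrying moduli, so your deduction of part \ref{item:w=2-cha=2-moduli} is incomplete for them. The paper closes this gap by comparing those two singularity types directly against the classification of surfaces admitting elliptic-boundary descendants (Theorem E of the prequel); you would need to add that comparison (or some substitute) to make the reduction step valid.
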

\begin{proof}
	As usual, 
	we first explain how to deduce the uniqueness results \ref{item:w=2-cha=2-moduli}, \ref{item:w=2-cha=2-uniqueness} from the list \ref{item:w=2-cha=2-classification}. 
	
	 Fix a singularity type $\cS$ as in the above list. No surface $\bar{X}\in \Pht^{\width=3}(\cS)$ has a descendant with elliptic boundary: this follows from Lemma \ref{lem:no-deb} in each case except \ref{item:ht=3_nu_3_off}--\ref{item:ht=3_nu_3_V_fork_off}, where one directly compares $\cS$ with \cite[Theorem~E]{PaPe_ht_2}. Thus by \ref{item:w=2-cha=2-classification}, the minimal log resolution $(X,D)$ of $\bar{X}\in \Pht^{\width=3}(\cS)$ admits a $\P^1$-fibration $p$ as in the above list. Each $\cS$ appears there exactly once, so it uniquely determines the combinatorial type of $(X,D,p)$, hence the combinatorial type $\check{\cS}$ of $(X,\check{D})$, see Notation \ref{not:phi_+_h=2}. Thus the set  of isomorphism classes of minimal log resolutions of  surfaces in $\Pht^{\width=2}(\cS)$, call it $\Phtres^{\width=2}(\cS)$, is contained in the image of $\cP(\check{\cS})$ in $\cP(\cS)$, where $\cP(*)$ is the set of isomorphic classes of log surfaces of combinatorial type $*$, see \cite[\sec 2F]{PaPe_ht_2}. 
	 In fact, these two sets are equal. Indeed, if $(X,\check{D})\in \cP(\check{\cS})$ and $(X,D)$ is its image in $\cP(\cS)$, i.e.\ $D$ equals $\check{D}$ minus all $(-1)$-curves, then $(X,D)$ satisfies the inequality \eqref{eq:ld_phi_H}, so it is the minimal log resolution of a del Pezzo surface of rank one and type $\cS$, which has height $3$ because $\cS$ does not appear in \cite{PaPe_ht_2}, and has width $2$ because $\cS$ does not appear in Lemma \ref{lem:w=3}, so $(X,D)\in \Phtres^{\width=2}(\cS)$, 
	 as claimed. 
	 
	 Let $d=\nu-3+\epsilon$ be the moduli dimension specified in Lemma \ref{lem:w=2_uniqueness}\ref{item:w=2_uniq}. In cases \ref{item:ht=3_nu=4_id}--\ref{item:ht=3_nu=4_U} we have $\nu=4$, so $\epsilon=0$ by Lemma \ref{lem:w=2_uniqueness}\ref{item:w=2_GK}, and therefore $d=1$. In  cases \ref{item:ht=3_nu_3_off}--\ref{item:ht=3_nu_3_V_fork_off} we have $\nu=3$, $\epsilon=1$, so again $d=1$. In the remaining cases we have $\nu=3$, $\epsilon=0$, so $d=0$. 
	 
	 If $d=0$ then Lemma \ref{lem:w=2_uniqueness}\ref{item:w=2_uniq} gives $\#\cP_{+}(\check{\cS})=1$, so $\#\Phtres^{\width=2}(\cS)=1$, 
	 as claimed in \ref{item:w=2-cha=2-uniqueness}. Assume $d=1$. Then by Lemma \ref{lem:w=2_uniqueness}\ref{item:w=2_uniq} the set $\cP_{+}(\check{\cS})$ is represented by a universal  $\Aut(\check{\cS})$-faithful family over $B\cong \Astst$. We check directly that  any automorphism of the weighted graph of $D$ preserves the vertices corresponding to the $1$- and $2$-section. Thus by \cite[Lemmas 2.21, 2.20(c)]{PaPe_ht_2} our family, viewed as one representing $\Phtres^{\width=2}(\cS)$, is almost faithful, so $\Phtres^{\width=2}(\cS)$ has moduli dimension $1$, as needed. Moreover, it follows that this family, say $f$, is $G$-equivariant, where $G\leq \Aut(\check{\cS})$ is trivial if $\nu=3$ and permutes degenerate fibers of the same type if $\nu=4$; and two fibers of $f$ are isomorphic if and only if they lie in the same $G$-orbit. We check directly that $G$ is as in Table \ref{table:ht=3_char=2_moduli}. Eventually, we note that the homomorphism $G\to \Aut(B)\cong S_3$ is injective, because  $\Aut(\check{\cS})$ fixes the unique fiber with two $(-1)$-curves, so $G$ is indeed the symmetry group of $f$, see Section \ref{sec:moduli}.

	\smallskip

	Therefore, it remains to prove \ref{item:w=2-cha=2-classification}, i.e.\ to verify the completeness of the above list. 
	We keep Notation \ref{not:phi_+_h=2}. 
	
	\paragraph{Case $\nu=4$, see Lemma \ref{lem:w=2_swaps}\ref{item:w=2_cha=2_nu=4}. and Figures \ref{fig:swap-to-nu=4_small}, \ref{fig:swap-to-nu=4_large}} 
	 Put $\delta=1$ if $p_{1}'\not \in \Bs\psi^{-1}$, i.e.\ if $\bar{X}$ swaps to a surface $3\rA_{1}+\rD_{4}+[2,3]$, and $\delta=0$ otherwise, see Figures \ref{fig:swap-to-nu=4_small} and \ref{fig:swap-to-nu=4_large}, respectively. Let $\gamma$ be a composition of $\tau$ with $k-2\geq 0$ blowups over $A_0^{\tau}\cap H_1^{\tau}$, at the proper transforms of $H_{1}^{\tau}$, so that $A_0^{\gamma}\cap H_{1}^{\gamma}\not\subseteq \Bs\gamma_{+}^{-1}$. If $\psi=\gamma$ then $k\geq 3$ by Lemma \ref{lem:U-2}, hence $D$ is as in \ref{item:ht=3_nu=4_id} if $\delta=1$ and as in \ref{item:ht=3_nu=4_id_X} if $\delta=0$. Thus, we can assume that $\gamma_{+}^{-1}$ has a base point $r$. If $r\in A_{0}$ then by the definition of $\gamma$ and by Lemma \ref{lem:w=2_uniqueness}\ref{item:w=2_D} we have $\{r\}=A_{0}^{\gamma}\cap H_{1}^{\gamma}$, so $\beta_{D}(H_1)>3$, which is impossible. Thus $A_{0}\cap \Bs\gamma_{+}^{-1}=\emptyset$. Let $\eta$ be a composition of $\gamma$ with a blowup at $r\in \Bs\gamma_{+}^{-1}$, and if $\psi\neq\eta$ let $\upsilon$ be a composition of $\eta$ with a blowup at a given $s\in \Bs\eta_{+}^{-1}$. 
	
	Assume first that $r\in A_{j}$ for some $j\in \{2,3,4\}$, say $r\in A_2$. By Lemma \ref{lem:w=2_uniqueness}\ref{item:w=2_D} we have three cases: $r\in G_2$, $r\in L_2$ or $r\in H_2$. Note that to get a contradiction with inequality \eqref{eq:ld-bound_h=2}, it is enough to show it for some minimal value of $k$, since we can reduce $k$ by vertical swaps over $p_0$.
	
	Consider the case $\{r\}=A_2\cap G_2$. For $k=2$ we have $U_{\eta}=\langle \bs{2};[2],[2],[(2)_{\delta},\ub{4-\delta},2,2]\rangle$, so $\ld(H_{1}^{\eta})+2\ld(H_{2}^{\eta})=1$. Thus \eqref{eq:ld-bound_h=2} fails for $k=2$, hence it fails for any $k\geq 2$, too, a contradiction.
	
	Consider the case  $\{r\}=A_2\cap H_2$. If $k=3$ then $U_{\eta}=\langle \bs{3};[2],[2],[2,2,2]\rangle$, so $\ld(H_{1}^{\eta})=\tfrac{1}{5}$, which by \eqref{eq:ld-bound_h=2} gives $\ld(H_{2}^{\eta})>\tfrac{2}{5}$. But $W_{\eta}=[2,2,\ub{4}]$ or $[\ub{5}]$, so $\ld(H_{2}^{\eta})=\tfrac{2}{5}$, a contradiction. As before, we conclude that $k=2$. Lemma \ref{lem:U-2} implies that $\eta_{+}^{-1}$ has a base point $s\in A_{j}^{\eta}$ for some $j\in \{2,3,4\}$. Suppose $s\not \in A_{2}$. Then, say, $s\in A_{3}$, and since $U$ is an admissible fork, we conclude that $\{s\}=A_{3}\cap L_{3}$. After a blowup at $s$ we get  $U_{\upsilon}=\langle \bs{2};[2,2,2],[3],[2]\rangle$, so $\ld(H_{2}^{\upsilon})>\tfrac{1}{2}(1-\ld(H_{1}^{\upsilon}))=\tfrac{2}{5}$, and $W_{\upsilon}=[(2)_{\delta},\ub{5-\delta},2,2]$, so $\ld(H^{\upsilon}_{2})=\tfrac{5}{17}$ or $\tfrac{4}{11}$; a contradiction. Therefore, $s\in A_{2}$. Since $H_{1}$ is the only branching component of $U$, we have $s\in U_{\eta}$. Now $U_{\upsilon}=\langle \bs{2};[2,3,2],[2],[2]\rangle$, $W_{\upsilon}=[(2)_{\delta},\bs{5-\delta},2]$, so $\ld(H_{1}^{\upsilon})+2\ld(H_{2}^{\upsilon})=1$; a contradiction with \eqref{eq:ld-bound_h=2}.
	
	Eventually, consider the case $\{r\}=A_2\cap L_2$. We have $U_{\eta}=\langle \bs{k};[3],[2],[2]\rangle$ and  $W_{\eta}=[(2)_{(k-1)\delta},\ub{4-\delta},2,2]$. If $k=3$ then $\ld(H_{1}^{\eta})+2\ld(H_2^{\eta})=1$, so from \eqref{eq:ld-bound_h=2} we infer that $k=2$. If $\psi=\eta$ then $D$ is as in \ref{item:ht=3_nu=4_id_C} if $\delta=1$ and as in \ref{item:ht=3_nu=4_id_XC} if $\delta=0$. Suppose $\psi\neq \eta$, and as before fix $s\in \Bs\eta_{+}^{-1}$. If $s\in A_{j}$ for some $j\in \{3,4\}$ then interchanging $\ll_2$ with $\ll_j$ if necessary we can assume $s\in L_3$. Now $U_{\upsilon}=\langle \bs{2};[3],[3],[2]\rangle$, so $\ld(H_{1}^{\upsilon})=\tfrac{1}{5}$ and thus $\ld(H_{2}^{\upsilon})>\frac{2}{5}$ by \eqref{eq:ld-bound_h=2}, but $W_{\upsilon}=\langle \ub{3},[2],[2,2],[2,2]\rangle$ or $[2,2,\ub{4},2,2]$, so $\ld(H_{2}^{\upsilon})=\frac{1}{7}$ or $\frac{1}{4}$, a contradiction. Assume $s\in A_{2}$. Then $U_{\upsilon}=\langle \bs{2};[4],[2],[2]\rangle$ if $s\in L_{2}$ and $U_{\upsilon}=\langle \bs{2};[2,3],[2],[2]\rangle$ otherwise; hence $\ld(H_{1}^{\upsilon})=\tfrac{1}{3}$, so $\ld(H_{2}^{\upsilon})>\tfrac{1}{3}$ by \eqref{eq:ld-bound_h=2}. But $W_{\upsilon}=\langle 2;[2],[2],[(2)_{\delta},\ub{4-\delta}]\rangle$ or $[(2)_{\delta},\ub{4-\delta},3,2]$, so $\ld(H_{2}^{\upsilon})=\tfrac{1}{3}$; a contradiction. We conclude that $\delta=0$ and $\Bs\eta_{+}^{-1}\subseteq A_{1}$. Now $U_{\upsilon}=\langle \bs{2};[3],[2],[2]\rangle$, so $\ld(H_{1}^{\upsilon})=\tfrac{1}{2}$ and $\ld(H_{2}^{\upsilon})>\tfrac{1}{4}$ by \eqref{eq:ld-bound_h=2}. Recall from Lemma \ref{lem:w=2_uniqueness}\ref{item:w=2_D} that $s\in D_{\eta}$. In fact, $s\in H_{2}$, since otherwise $W_{\upsilon}=[2,2,\ub{4},2,2]$ or $[3,2,\ub{4},2,2]$, so $\ld(H_{2}^{\upsilon})\leq \tfrac{1}{4}$, which is impossible. If $\psi=\upsilon$ then $D$ is as in \ref{item:ht=3_nu=4_XC}. Otherwise, after one further blowup, $W$ becomes $[\ub{6},2,2]$ or $[2,\ub{5},2,2]$, so $\ld(H_{2}^{\upsilon})=\tfrac{1}{4}$ or $\tfrac{5}{23}$; a contradiction.
	\smallskip
	
	It remains to consider the case $\Bs\gamma_{+}^{-1}\subseteq A_{1}$. Since $\psi\neq \gamma$, we have  $\delta=0$, see Figure \ref{fig:swap-to-nu=4_large}. We have $U=\langle \bs{k};[2],[2],[2]\rangle$, so $k\geq 3$ by Lemma \ref{lem:U-2}, and $\ld(H_{2})>\tfrac{1}{2}(1-\ld(H_1))=\tfrac{k-2}{2k-3}\geq \tfrac{1}{3}$. If $r\in G_1$ then $W_{\eta}=[(2)_{k-2},3,2,\ub{4}]$, so for $k=3$ we have $\ld(H_{2}^{\eta})=\tfrac{1}{3}$; a contradiction. Hence $r\in H_2$ or $L_1$, so $W_{\eta}=[\ub{5}]$ or $[2,2,\ub{4}]$ and thus $\ld(H_{2}^{\upsilon})=\tfrac{2}{5}$. The inequality $\tfrac{k-2}{2k-3}<\ld(H_2^{\upsilon})=\frac{2}{5}$ gives $k=3$. If $\psi=\upsilon$ then $D$ is as in \ref{item:ht=3_nu=4_V} or \ref{item:ht=3_nu=4_U}. Suppose $\psi\neq \upsilon$. If $r\in H_2$ then $W_{\upsilon}=[\ub{6}]$ or $[2,\ub{5}]$, so $\ld(H_{2}^{\upsilon})=\tfrac{1}{3}$, which is false. Thus $r\in L_1$, $W_{\upsilon}=[2,3,\ub{4}]$ or $\langle 2;[2],[2],[\ub{4}]\rangle$, and again $\ld(H_{2}^{\upsilon})=\tfrac{1}{3}$; a contradiction.
	
	\paragraph{Case $\nu=3$, see Lemma \ref{lem:w=2_swaps}\ref{item:w=2_cha=2_nu=3} and Figure \ref{fig:swap-to-nu=3}.} By Lemma \ref{lem:further-swaps}, the map $\phi_{+}^{-1}$ has base points $q_1\in A_{1}^{\phi}$ and $q_2\in A_{2}^{\phi}$. Let $\tau$ be a composition of $\phi$ with a blowup at $\{q_1,q_2\}$.
	
	As a warm-up, we consider the case with moduli, i.e.\ when for some decomposition $\psi=\gamma\circ \gamma_{+}$ as in Notation \ref{not:phi_+_h=2}, the morphism $\gamma_{+}$ has a base point $r\not\in D_{\gamma}$, see Lemma \ref{lem:w=2_uniqueness}\ref{item:w=2_D}. Assume first that $r\in A_{2}^{\gamma}$. Then the admissibility of $U$ implies that $r=q_2$, i.e.\ $r$ is the center of the first blowup over $p_{2}'$. Again, from the admissibility of $U$ we see that $\{q_1\}=A_1\cap H_2$ and $A_3\cap \Bs\phi_{+}^{-1}=\emptyset$. Thus $U_{\tau}=\langle 2;[\ub{3}],[2],[2,\bs{2},2]\rangle$. Applying once again the fact that $U$ is an admissible fork, we infer that  $\tau_{+}^{-1}$ is a composition of some $k-2\geq 0$ blowups over $A_2\cap U_{\tau}$, on the proper transforms of $U_{\tau}$. Thus case \ref{item:ht=3_nu_3_off} holds.
	
	Assume that $r\not\in A_{2}^{\gamma}$. Interchanging $\ll_2$ with $\ll_3$, if needed, we can assume $r\not\in A_{3}^{\gamma}$, too, so $r\in A_{0}^{\gamma}$ or $r\in A_{1}^{\gamma}$. In fact, $r\in A_{1}^{\gamma}$ since $U$ is admissible. In turn, the admissibility of $W$ implies that $r$ is the center of the first blowup over $p_{1}'$, i.e.\ $r=q_1$; and $\{q_2\}=A_{2}^{\phi}\cap L_{2}^{\phi}$ or $A_{2}^{\phi}\cap H_{2}^{\phi}$. In the first case, the admissibility of $U$ implies that $\tau_{+}^{-1}$ is a composition of blowups over $r$, so $D$ is of the same (non-decorated) type as in case \ref{item:ht=3_nu_3_off} found above. Applying the automorphism $(A_0,A_1,A_2)\mapsto (A_1,A_2,A_0)$ from Remark \ref{rem:7A1}, we get back the case \ref{item:ht=3_nu_3_off}. 
	
	Thus we can assume $\{q_2\}=A_{2}^{\phi}\cap H_{2}^{\phi}$. Then $U_{\tau}$ is a $(-2)$-chain, so by Lemma \ref{lem:U-2}, $\tau_{+}^{-1}$ has a base point $s$ on $A_{0}^{\tau}$, $A_{2}^{\tau}$ or $A_{3}^{\tau}$. Let $\eta$ be a composition of $\tau$ with a blowup at $s$. If $s\in A_{3}^{\tau}$ then it is easy to see that the admissibility of $W$ forces $U$ to be a $(-2)$-chain or fork, contrary to Lemma \ref{lem:U-2}. Thus $A_{3}^{\tau}\cap \Bs\tau_{+}^{-1}=\emptyset$. Applying the automorphism $(A_0,A_1,A_2)\mapsto (A_2,A_1,A_0)$ from Remark \ref{rem:7A1}, we can assume that $s\in A_{0}^{\tau}$. If $s\in L_{1}^{\tau}$ then as before we get a contradiction with Lemma \ref{lem:U-2}, so $s\in H_{1}^{\tau}$. Since $W$ is admissible, the morphism $\eta_{+}$ is a composition of blowups over $r$, on the proper transform of $W_{\tau}$. Thus case \ref{item:ht=3_nu_3_V_fork_off} holds.
	\smallskip
	
	In the remaining part of the proof, we can and will assume that for any decomposition $\psi=\gamma\circ\gamma_{+}$ as in Notation \ref{not:phi_+_h=2}, the base locus of $\gamma_{+}^{-1}$ is contained in $D_{\gamma}$. Like in case $\nu=4$ or in the proofs of Lemmas \ref{lem:w=3} and \ref{lem:w=2_cha_neq_2}, we will use this assumption without further comment. We now split the proof into cases according to the placement of the base points $q_j\in A_j^{\phi}$ from Lemma \ref{lem:further-swaps}. 
	
	\begin{casesp}	
	\litem{$q_2\in H_2$}\label{case:qH2}
	
	\begin{casesp}
	\litem{The map $\tau_{+}^{-1}$ has a base point $q_3\in A_3$} Let $\sigma$ be a composition of $\tau$ with a blowup at $q_3$. 
	
	\begin{casesp}	
	\litem{$q_3\in L_3$} Recall that we have a base point $q_1\in A_1$, lying on $L_1$, $G_1$ or $H_2$. 
	
	\begin{casesp}
	\litem{$q_1\in L_1\cup G_{1}$}
	Note that the weighted graphs of $U,W$ obtained in case $q_1\in L_1$ are weighted subgraphs of those for $q_1\in G_{1}$, so by Lemma \ref{lem:Alexeev}, if \eqref{eq:ld-bound_h=2} fails for the former, it fails for the latter, too. 
	
	Assume first that $\psi\neq \sigma$, and let $\eta$ be a composition of $\sigma$ with a blowup at some $r\in \Bs\sigma_{+}^{-1}$. 
	
	Suppose $r\in A_{2}$. Then for $q_1\in L_{1}$ we get $U_{\eta}+W_{\eta}=\langle 2;[2],[2],[3,\bs{2},2]\rangle+[2,2,\ub{4},2,2]$ if $r\in H_2$ and $[3,\bs{2},2,3,2]+\langle \ub{3};[2],[2,2],[2,2]\rangle$ otherwise; hence $\ld(H_{1}^{\eta})+2\ld(H_{2}^{\eta})=1$ or $\tfrac{127}{175}$. Thus inequality \eqref{eq:ld-bound_h=2} fails in this case, so by our observation it fails in case $q_1\in G_1$, too; a contradiction. 
	
	Similarly, if $\{r\}=A_0\cap L_1$ or $A_3\cap L_3$ then for $q_1\in L_1$ we get $U_{\eta}+W_{\eta}=\langle \bs{2};[2],[3],[2,2,2]\rangle+[2,2,\ub{3},2,2]$ or $[4,\bs{2},2,2,2]+\langle 2;[2],[2],[2,2,\ub{3}]\rangle$, respectively, so $\ld(H_{1}^{\eta})+2\ld(H_{2}^{\eta})=1$. As before, it follows that inequality \eqref{eq:ld-bound_h=2} fails both for $q_1\in L_1$ and $q_1\in G_1$; a contradiction.
	
	Assume $\{r\}=A_0\cap H_1$. Then $U_{\eta}=[3,\bs{3},2,2,2]$, so $\ld(H_{2})>\tfrac{1}{2}(1-\ld(H_{1}^{\eta}))=\tfrac{8}{23}$. If $q_1\in G_1$ then $W_{\eta}=[2,3,2,\ub{3},2,2]$, so $\frac{8}{23}<\ld(H_{2}^{\eta})=\tfrac{11}{41}$, which is impossible. Thus $q_1\in L_{1}$. If $\psi=\eta$ then $D$ is as in \ref{item:ht=3_nu=3_XY'E} with $k=2$. Suppose $\psi\neq \eta$, and let $\theta$ be a composition of $\eta$ with a blowup at $s\in \Bs\eta_{+}^{-1}$. We know that $s\not\in A_2$ and $\{s\}\neq A_3\cap L_3$. If $s\in W_{\eta}$ then $W_{\theta}=[2,3,\ub{3},2,2]$, so $\frac{8}{23}<\ld(H_{2}^{\theta})=\tfrac{8}{29}$; a contradiction. If $s\in A_1$ then, since $s\not\in W_{\eta}$, we get $W_{\theta}=\langle 2;[2],[2],[2,2,\ub{3}]\rangle$, so $\frac{8}{23}<\ld(H_{2}^{\theta})=\tfrac{1}{4}$; a contradiction. Thus $s\in A_{0}$, hence  $\ld(H_{2}^{\theta})=\ld(H_{2}^{\eta})=\tfrac{2}{5}$ and therefore $\ld(H_{1}^{\theta})>\tfrac{1}{5}$ by \eqref{eq:ld-bound_h=2}. But $U_{\theta}=[3,\bs{4},2,2,2]$ or $\langle \bs{3};[2],[3],[2,2,2]\rangle$, so $\ld(H_{1}^{\theta})=\tfrac{1}{5}$ or $\frac{1}{17}$; a contradiction.
	
	Assume $\{r\}=A_3\cap W_{\sigma}$. Then $U_{\eta}=[2,3,\bs{2},2,2,2]$, hence $\ld(H_{2})>\tfrac{1}{2}(1-\ld(H_{1}^{\eta}))=\tfrac{4}{17}$. If $q_1\in G_1$ then $W_{\eta}=[3,2,\ub{3},3,2]$, so $\ld(H_{1}^{\eta})=\tfrac{1}{5}<\frac{4}{17}$, a contradiction. Thus $q_1\in L_1$. If $\psi=\eta$ then $D$ is as in \ref{item:ht=3_nu=3_XY'E} with $k=3$. Suppose $\psi\neq \eta$, and as before let $\theta$ be a composition of $\eta$ with a blowup at $s\in \Bs\eta_{+}^{-1}$. We know that $s\in A_1\cup A_3$. If $s\in A_{1}$ then $W_{\theta}=[2,3,\ub{3},3,2]$ if $s\in W_{\eta}$ and $\langle 2;[2],[2],[2,3,\ub{3}]\rangle$ if $s\in L_{1}$, so $\ld(H_{2}^{\theta})=\tfrac{2}{11}$ or $\tfrac{1}{8}$; a contradiction. If $s\in A_3$ then similarly $W_{\theta}=[2,2,\ub{3},4,2]$ or $\langle 3;[2],[2],[2,2,\ub{3}]\rangle$, so $\ld(H_{2}^{\theta})=\tfrac{10}{43}$ or $\tfrac{1}{11}$; a contradiction.
	
	Therefore, we can assume $\Bs\sigma_{+}^{-1}\subseteq A_{1}$. Now $U=[3,\bs{2},2,2,2]$, so $\ld(H_{2})>\tfrac{1}{2}(1-\ld(H_{1}))=\tfrac{2}{11}$. Let $\theta$ be a composition of $\sigma$ with $k-2\geq 0$ blowups at $A_{1}\cap W_{\sigma}$ and its infinitely near points on the proper transforms of $W_{\sigma}$, so that $A_1\cap W_{\theta}\not\subseteq \Bs\theta_{+}^{-1}$. Now $W_{\theta}=[2+\epsilon,k,\ub{3},2,2]$, where $\epsilon=0$ if $q_1\in L_1$ and $\epsilon=1$ if $q_1\in G_1$. We compute that $\ld(H_{1}^{\theta})>\tfrac{2}{11}$ if and only if $k\leq 7-2\epsilon$, so if $\psi=\theta$ then $D$ is as in \ref{item:ht=3_nu=3_XY'U_eps=0} if $\epsilon=0$ and \ref{item:ht=3_nu=3_XY'U_eps=1} if $\epsilon=1$. Assume $\psi\neq \theta$. Since $\Bs\sigma_{+}^{-1}\subseteq A_1$, definition of $\theta$ shows that the unique base point of $\theta_{+}^{-1}$ is $A_1\cap (D_{\theta}-W_{\theta})$. Since $W$ is admissible, $\theta_{+}$ is a single blowup, and $q_1\in L_1$. Now $W=\langle k;[2],[2],[2,2,\ub{3}]\rangle$, so the inequality $\frac{2}{11}<\ld(H_2)=\frac{1}{7k-10}$ gives $k=2$, and $D$ is as in  \ref{item:ht=3_nu=3_XY'UV}.
	
	\litem{$q_1\in H_2$}
	We have $U_{\sigma}=[3,\bs{2},2,2,2]$, $W_{\sigma}=[\ub{4},2,2]$. Inequality \eqref{eq:ld-bound_h=2} gives $\ld(H_1)>1-2\ld(H_2^{\sigma})=\frac{1}{5}$. For a given base point $r\in \Bs\sigma_{+}^{-1}$ we denote by $\eta$ a composition of $\sigma$ with a blowup at $r$; and for a given $s\in \Bs\eta_{+}^{-1}$ we denote by $\theta$ a composition of $\eta$ with a blowup at $s$.
	
	Consider the case $r\in A_{0}$. If $r\in L_1$ then $U_{\eta}=\langle \bs{2},[2],[3],[2,2,2]\rangle$, so $\ld(H_1)=\frac{1}{5}$, which is false. Thus $\{r\}=A_0\cap H_1$. If $\psi=\eta$ then $D$ is as in \ref{item:ht=3_nu=3_XY'BE}. Suppose $\psi\neq \eta$, and fix $s\in \Bs\eta_{+}^{-1}$. If $s\in A_0$ then $U_{\theta}=[3,\bs{4},2,2,2]$ if $s\in H_1$ and $\langle \bs{3},[2],[3],[2,2,2]\rangle$  otherwise, so $\ld(H_{1}^{\theta})=\frac{1}{5}$ or $\frac{1}{17}$, which is false. Thus $s\not\in A_0$. Recall that $U_{\eta}=[3,\bs{3},2,2,2]$, so $\ld(H_2)>\frac{1}{2}(1-\ld(H_1^{\eta}))=\frac{8}{23}$. If $s\in A_1\cup A_2$ then $W_{\theta}=[\ub{5},2,2]$ or $[2,\ub{4},2,2]$, so $\ld(H_{2}^{\eta})=\frac{4}{13}$ or $\frac{5}{17}$, in any case $\ld(H_2^{\theta})<\frac{8}{23}$, which is false. Thus $s\in A_{3}$, so $W_{\theta}=\langle 2;[\ub{4}],[2],[2]\rangle$ if $s\in U_{\eta}$ and $[\ub{4},3,2]$ if $s\in W_{\eta}$, so $\ld(H_{2}^{\theta})=\frac{1}{3}<\frac{8}{23}$; a contradiction.
	
	Thus we can assume $\Bs\sigma_{+}^{-1}\cap A_{0}=\emptyset$. Consider the case $\{r\}=A_2\cap W_{\tau}$. If $\psi=\eta$ then $D$ is as in \ref{item:ht=3_nu=3_XY'BA}. Suppose $\psi\neq \eta$, so $\eta_{+}^{-1}$ has a base point $s\not\in A_0$. Because $U$ is admissible, $s\not\in A_{2}$. If $s\in A_{3}$ then $U_{\eta}+W_{\eta}=\langle 2;[2],[2],[4,\bs{2},2]\rangle+\langle 2;[2],[2],[\bs{5}]\rangle$ if $s\in U_{\eta}$ and $\langle 2;[2],[2],[2,3,\bs{2},2]\rangle+[\ub{5},3,2]$, so $\ld(H_{1}^{\eta})+2\ld(H_{2}^{\eta})=\tfrac{5}{6}$ or $\tfrac{59}{69}$, contrary to \eqref{eq:ld-bound_h=2}. Thus $s\in A_{1}$. We have $\ld(H_{1}^{\eta})=\ld(H_{1}^{\eta})=\tfrac{1}{2}$, so $\ld(H_{2}^{\eta})>\tfrac{1}{4}$ by \eqref{eq:ld-bound_h=2}. But $W_{\eta}=[\ub{6},2,2]$ if $s\in W_{\eta}$ and $[2,\ub{5},2,2]$ otherwise, so $\ld(H_{2}^{\eta})=\tfrac{1}{4}$ or $\tfrac{5}{23}$; a contradiction. 
	
	Consider the case $\{r\}=A_2\cap U_{\tau}$. Then $U_{\eta}=[3,\bs{2},2,3,2]$, so $\ld(H_2)>\frac{1}{2}(1-\ld(H_1^{\eta}))=\frac{7}{25}$. If $\psi=\eta$ then $D$ is as in \ref{item:ht=3_nu=3_XY'BB}. 	
	Suppose $\eta_{+}^{-1}$ has a base point $s$. Recall that $s\not\in A_0$. If $s\in A_1$ then $W_{\eta}=[2,\ub{5},2,2]$ if $s\in W_{\eta}$ and $\langle \ub{4};[2],[2],[2]\rangle$ otherwise, so $\ld(H_{2}^{\eta})=\tfrac{5}{23}<\frac{7}{25}$ or $\tfrac{1}{5}<\frac{7}{25}$, which is impossible. If $s\in A_3$ then $W_{\eta}=[2,\ub{4},3,2]$ if $s\in W_{\eta}$ and $\langle 2;[2],[2],[2,\ub{4}]\rangle$ otherwise, so $\ld(H_2^{\eta})=\frac{7}{31}<\frac{7}{25}$ or $\frac{1}{5}$, which, again, is impossible. Thus $s\in A_3$, so $W_{\eta}= [3,\ub{4},2,2]$ if $s\in W_{\eta}$ and $[2,2,\ub{4},2,2]$ otherwise, so $\ld(H_2^{\eta})=\frac{2}{9}$ or $\frac{1}{4}$; a contradiction since $\ld(H_2^{\eta})>\frac{7}{25}$.
	
	Therefore, we can assume $\Bs\sigma_{+}^{-1}\subseteq A_{1}\cup A_{3}$. Consider the case $\{r\}=A_{3}\cap U_{\tau}$. Then $U_{\eta}=[4,\bs{2},2,2,2]$, so $\ld(H_2)>\frac{1}{2}(1-\ld(H_{1}^{\eta}))=\frac{1}{4}$. If $\psi=\eta$ then $D$ is as in \ref{item:ht=3_nu=3_XY'BC}. Suppose $\psi\neq \eta$, so $\eta_{+}^{-1}$ has a base point $s\in A_1\cup A_3$. If $s\in A_1$ then $W_{\eta}=\langle 2;[2],[2],[\ub{5}]\rangle$ if $s\in H_2$ and $\langle 2;[2],[2],[2,\ub{4}]\rangle$ otherwise, so $\ld(H_{2}^{\eta})=\tfrac{1}{4}$ or $\tfrac{1}{5}<\frac{1}{4}$, which is impossible. Thus $s\in A_{3}$. Now $U_{\eta}+W_{\eta}=[5,\bs{2},2,2,2]+\langle 2;[2],[2,2],[\ub{4}]\rangle$ if $s\in U_{\eta}$ and $[2,4,\bs{2},2,2,2]+\langle 2;[2],[3],[\ub{4}]\rangle$ otherwise, so $\ld(H_{1}^{\eta})+2\ld(H_{2}^{\eta})=1$ or $\tfrac{283}{297}$; a contradiction with \eqref{eq:ld-bound_h=2}. 
	
	Consider the case $\{r\}=A_{3}\cap W_{\tau}$. Let $\upsilon$ be the composition of $\eta$ with $k-2\geq 1$ blowups over $r$, on proper transforms of $W_{\eta}$, so that $A_{3}\cap W_{\upsilon}\not\subseteq \Bs\upsilon_{+}^{-1}$. Then $U_{\upsilon}+W_{\upsilon}=[(2)_{k-2},3,\bs{2},2,2,2]+[\ub{4},k,2]$. Inequality  \eqref{eq:ld-bound_h=2} gives $k\leq 5$. If $\psi=\upsilon$ then $D$ is as in \ref{item:ht=3_nu=3_XY'BD}. Suppose $\psi\neq \upsilon$, and let $\gamma$ be a composition of $\upsilon$ with a blowup at $z\in \Bs\upsilon_{+}^{-1}$. If $z\in A_3$ then by the definition of $\upsilon$ we have $\{z\}=A_3\cap U_{\upsilon}$, so $U_{\gamma}+W_{\gamma}=[3,(2)_{k-3},3,\bs{2},2,2,2]+\langle k;[2],[2],[\ub{4}]\rangle$ which yields a contradiction with \eqref{eq:ld-bound_h=2}. Thus $z\in A_1$. For $k=3$ we have $\ld(H_1^{\eta})=\frac{9}{17}$, so in any case $\ld(H_2)>\frac{4}{17}$. 
	
	If $z\not\in H_2$ then $W_{\gamma}=[2,\ub{4},k,2]$, so  $\ld(H_{2}^{\gamma})\leq \frac{7}{31}<\frac{4}{17}$, a contradiction. Thus $\{z\}=A_1\cap H_2$. Inequality \eqref{eq:ld-bound_h=2} gives $k=3$. If $\psi=\gamma$ then $D$ is as in \ref{item:miss}. If $\psi\neq \gamma$ then after one further blowup over $\Bs\gamma_{+}^{-1}\subseteq A_1$ we get  $W=[\ub{6},3,2]$ or $[2,\ub{5},3,2]$, so $\ld(H_2)=\frac{3}{14}$ or $\frac{7}{41}$, a contradiction since $\ld(H_2)>\frac{4}{17}$.
	
	Eventually, we can assume $\Bs\sigma_{+}^{-1}\subseteq A_{1}$. Then $U=[3,\bs{2},2,2,2]$, so $\ld(H_2)>\frac{1}{2}(1-\ld(H_1))=\frac{2}{11}$. Now $D-U=T^{*}*[(2)_{k-2},\ub{4},2,2]+\langle k;T\trp,[3],[2]\rangle$ or $[(2)_{k-2},\ub{4},2,2]+[3,k,2]$ for some $k\geq 2$ and an admissible chain $T$; we put $T=0$ in the latter case. The inequality $\ld(H_2)>\frac{2}{11}$ implies that one of the following holds. 
	
	If $T=0$ then $k\leq 9$, so $D$ is as in \ref{item:ht=3_nu=3_XY'B_T=0}. Assume $T\neq 0$. If $k=2$ then $T=[(2)_{l}]$, $l\in \{1,2,3\}$ or $T=[3]$, so $D$ is as in \ref{item:ht=3_nu=3_XY'B_k=2} or \ref{item:ht=3_nu=3_XY'B_k=2_T=[3]}, respectively. If $k=3$ then $T=[2]$ or $[2,2]$, so $D$ is as in \ref{item:ht=3_nu=3_XY'B_k=3}. Otherwise, $k=4$, $T=[2]$, and $D$ is as in \ref{item:ht=3_nu=3_XY'B_k=4}.
	\end{casesp}

	\litem{$q_3\in G_3$}
	If $q_1\in G_1$ then $U_{\sigma}=[3,2,\ub{3},2,2,\bs{2},2,2,2]$, so $\ld(H_{1}^{\sigma})+2\ld(H_{2}^{\sigma})=1$, contrary to \eqref{eq:ld-bound_h=2}. Assume $q_1\in L_1$. If $\psi=\sigma$ then $D$ is as in \ref{item:ht=3_nu=3_ZY'A}. Suppose $\psi\neq\sigma$, and let $\eta$ be a composition of $\sigma$ with a blowup at $r\in \Bs\sigma_{+}^{-1}$. Since $U$ is admissible, we have $\{r\}=A_2\cap H_2$, $A_3\cap U_{\sigma}$, $A_1\cap L_1$, $A_1\cap U_{\sigma}$ or $A_0\cap H_1$. Then $U_{\eta}=
	\langle 2;[2],[2],[2,2,\ub{4},2,2,\bs{2},2]\rangle$,
	$[2,2,\ub{3},3,2,\bs{2},2,2,2]$, 
	$\langle 2;[2],[2],[2,2,2,\bs{2},2,2,\ub{3}]\rangle$, 
	$[2,3,\ub{3},2,2,\bs{2},2,2,2]$, or 
	$[2,2,\ub{3},2,2,\bs{3},2,2,2]$; so $\ld(H_{1}^{\eta})+2\ld(H_{2}^{\eta})=
	\tfrac{3}{7}$, 
	$\tfrac{65}{73}$, 
	$\tfrac{3}{4}$, 
	$\tfrac{57}{61}$ or 
	$\tfrac{9}{13}$; a contradiction with \eqref{eq:ld-bound_h=2}.
	
	Thus we can assume $q_1\in H_2$. If $\psi=\sigma$ then $D$ is as in  \ref{item:ht=3_nu=3_ZY'B} with $k=2$. Assume $\psi\neq\sigma$, and as before let $\eta$ be a composition of $\sigma$ with a blowup at $r\in\Bs  \sigma_{+}^{-1}$. If $r\in A_{0}$ then $r\in H_1$ since $U$ is admissible, so $U_{\eta}=[\ub{4},2,2,\bs{3},2,2,2]$ and $\ld(H_{1}^{\eta})+2\ld(H_{2}^{\eta})=\tfrac{27}{31}$, contrary to \eqref{eq:ld-bound_h=2}. If $r\in A_{2}$ then $U_{\eta}=\langle 2;[2],[2],[\ub{5},2,2,\bs{2},2]\rangle$ if $r\in H_2$ and $[2,\ub{4},2,2,\bs{2},2,3,2]$ otherwise, so $\ld(H_{1}^{\eta})+2\ld(H_{2}^{\eta})=\tfrac{3}{4}$ or $\tfrac{9}{13}$, again contrary to \eqref{eq:ld-bound_h=2}. 
	
	Thus $\Bs\sigma_{+}^{-1}\subseteq A_1\cup A_3$. Assume $r\in A_{3}$. Since $U$ is admissible we have $r\in U_{\sigma}$. If $\psi=\eta$ then $D$ is as in \ref{item:ht=3_nu=3_ZY'B} with $k=3$. In turn, if $\psi\neq \eta$ then after one blowup at some $s\in \Bs\eta_{+}^{-1}\subseteq A_1\cup A_3$, $U$ becomes $[\ub{4},4,2,\bs{2},2,2,2]$ if $s\in A_3$, and $[2,\ub{4},3,2,\bs{2},2,2,2]$ or $[\ub{5},3,2,\bs{2},2,2,2]$ if $s\in A_1$, so $\ld(H_1)+2\ld(H_2)=1$, $\tfrac{65}{79}$ or $\tfrac{55}{59}$, contrary to \eqref{eq:ld-bound_h=2}. Thus we can assume $\Bs\sigma_{+}^{-1}\subseteq A_{1}$. If $\psi=\eta$ then $D$ is as in \ref{item:ht=3_nu=3_ZY'BE} if $r\in H_2$ or \ref{item:ht=3_nu=3_ZY'BD} otherwise. After one further blowup, $U$ becomes $[\ub{6},2,2,\bs{2},2,2,2]$ or $[2,\ub{5},2,2,\bs{2},2,2,2]$ in the first case, and $[3,\ub{4},2,2,\bs{2},2,2,2]$ or 	$[2,2,\ub{4},2,2,\bs{2},2,2,2]$ in the second case, so $\ld(H_1)+2\ld(H_2)=1$, $\tfrac{15}{17}$, $\tfrac{51}{59}$ or $\tfrac{45}{52}$, contrary to \eqref{eq:ld-bound_h=2}.

	\litem{$q_3\in H_2$}
	Suppose that for both $j\in \{2,3\}$, the curve $A_{j}$ contains a base point of $\sigma_{+}^{-1}$, say $s_j$. Let $\upsilon$ be a composition of $\phi$ with a blowup at $q_2,s_2,q_3,s_3$ (so we do not blow up at $q_1$ yet). Since $U_{\upsilon}$ has at most one branching component, we have, say, $s_2\in U_{\sigma}$.  Now $U_{\upsilon}+W_{\upsilon}=[2,3,2,\bs{2},2,3,2]+[\ub{6}]$ if $s_3\in U_{\sigma}$ and $\langle 2;[2],[2],[2,3,2,\bs{2},2]\rangle+[\ub{5},2]$ if $s_3\in H_2$, so in either case $\ld(H_1^{\upsilon})+2\ld(H_{2}^{\upsilon})=1$, a contradiction with \eqref{eq:ld-bound_h=2}. Thus interchanging $\ll_2$ with $\ll_3$ if necessary, we can assume $A_3\cap \Bs\sigma_{+}^{-1}=\emptyset$. Since by Lemma \ref{lem:U-2} $U$ is not a $(-2)$-chain, the map $\sigma_{+}^{-1}$ has a base point $r\not\in A_1$, so $r\in A_0\cup A_2$.
	
	\begin{casesp}
	\litem{$r\in A_2$} Suppose $r\in H_2$. By Lemma \ref{lem:U-2} $U$ is not a $(-2)$-fork, so $\sigma_{+}^{-1}$ has another base point off $A_1$, and since $U$ is admissible, this base point is $A_0\cap H_1$. Like before, letting $\upsilon$ be a composition of $\phi$ with a blowup at $q_2$, $q_3$, $s$ and that point, we get $U_{\upsilon}+W_{\upsilon}=\langle 2;[2],[2],[2,2,2,\bs{3},2]\rangle+[\ub{5}]$, so $\ld(H_1^{\upsilon})+2\ld(H_{2}^{\upsilon})=1$, a contradiction with \eqref{eq:ld-bound_h=2}. 
	
	Thus $\{r\} = A_2\cap U_{\sigma}$. Let $\eta$ be a composition of $\sigma$ with a blowup at $r$, and for a given $s\in \Bs\eta_{+}^{-1}$, let $\theta$ be a composition of $\eta$ with a blowup at $s$. Suppose $s\in A_0$. Since $U$ is admissible, we have $s\in H_1$. Now $U_{\theta}=[2,3,2,\bs{3},2,2,2]$, so inequality \eqref{eq:ld-bound_h=2} gives $\ld(H_2)>\frac{1}{2}(1-\ld(H_1^{\theta}))=\frac{5}{13}$. But $W_{\theta}=[\ub{5},2]$ if $q_1\in H_2$; $[2,2,\ub{4},2]$ if $q_1\in L_1$ and $[2,3,2,\bs{4},2]$ if $q_1\in G_1$, so $\ld(H_2^{\theta})=\frac{1}{3}$, $\frac{5}{17}$ or $\frac{5}{23}$; a contradiction. Thus $\Bs\eta_{+}^{-1}\subseteq A_1\cup A_2$.
	
	Assume that $\eta_{+}^{-1}$ has a base point $s\in A_2$. If $s\in H_2$ then $U_{\theta}=\langle 3,[2],[2],[2,2,2,\bs{2},2]\rangle$, so \eqref{eq:ld-bound_h=2} gives $\ld(H_2)>\frac{1}{2}(1-\ld(H_1^{\eta}))=\frac{2}{7}$, but we have $W_{\theta}=[\ub{5},3]$, $[2,2,\ub{4},3]$ or $[3,2,\ub{4},3]$, so $\ld(H_2^{\theta})=\frac{2}{7}$, $\frac{6}{29}$ or $\frac{8}{49}$, a contradiction. Thus $\{s\}=A_2\cap U_{\eta}$. We have $U_{\theta}=[2,4,2,\bs{2},2,2,2]$, so $\ld(H_2)>\frac{1}{2}(1-\ld(H_1^{\theta}))=\frac{1}{4}$. If $q_1\in L_1$ or $G_1$ then $W_{\theta}$ is a chain $[2,2,\ub{4},2,2]$ or has this chain as a weighted subgraph, so $\ld(H_2)\leq \frac{1}{4}$, which is false. Thus $q_1\in H_2$. If $\theta_{+}^{-1}$ has a base point on $A_1$ then after blowing up at this point we get $W=[\ub{6},2,2]$ or $[2,\ub{5},2,2]$, so $\ld(H_2)=\frac{1}{4}$ or $\frac{5}{23}$, which is impossible as $\ld(H_2)>\frac{1}{4}$. Thus $\Bs\theta_{+}^{-1}\subseteq A_2$. Let $\upsilon$ be a composition of $\theta$ with $k-4\geq 0$ further blowups over $s$, on the proper transforms of $U_{\eta}$, so that $A_2\cap U_{\upsilon}\not\subseteq \Bs\upsilon_{+}^{-1}$. Now  $U_{\upsilon}=[2,k,2,\bs{2},2,2,2]$ and $W_{\upsilon}=[\ub{5},(2)_{k-2}]$, so inequality \eqref{eq:ld-bound_h=2} gives $k\leq 5$. If $\psi=\upsilon$ then $D$ is as in \ref{item:ht=3_nu=3_YYB}. Otherwise, by the definition of $\upsilon$ the unique base point of $\upsilon_{+}^{-1}$ is $A_2\cap W_{\upsilon}$, and after blowing up at that point we get $U+W=\langle k;[2],[2],[2,2,2,\bs{2},2]\rangle+[\ub{5},(2)_{k-3},3]$, which leads to a contradiction with inequality  \eqref{eq:ld-bound_h=2}. 
	
	It remains to consider the case $\Bs\eta_{+}^{-1}\subseteq A_1$. Now $U=[2,3,2,\bs{2},2,2,2]$, so $\ld(H_{2})>\tfrac{1}{2}(1-\ld(H_{1}))=\tfrac{1}{5}$ by \eqref{eq:ld-bound_h=2}. Consider the case $q_1\in G_1$. Then the admissibility of $W$ gives $\Bs\eta_{+}^{-1}\subseteq W_{\eta}$. If $\psi=\eta$ then $D$ is as in \ref{item:miss2}, otherwise after a blowup at the (unique) base point of $\eta_{+}^{-1}$ we get $W=[3,3,\bs{4},2]$, so $\ld(H_2)=\frac{1}{5}$, which is false.
	
	Consider the case $q_1\in L_1$. Let $\theta$ be a composition of $\eta$ with $k-2\geq 0$ blowups over $q_1$, on the proper transforms of $W_{\eta}$, so that $W_{\theta}\cap A_{1}^{\theta}\not\subseteq \Bs\theta_{+}^{-1}$. We have $W_{\theta}=[2,k,\ub{4},2]$, so the inequality $\frac{1}{5}<\ld(H_2^{\theta})=\frac{2k+1}{14k+11}$ gives $k\leq 3$. If $\psi=\theta$ then $D$ is as in \ref{item:ht=3_nu=3_YA}, otherwise after a blowup at the unique base point of $\theta_{+}^{-1}$ we get $W=\langle k;[2],[2],[2,\ub{4}]\rangle$, so $\ld(H_2)=\frac{k-1}{7k-9}\leq \frac{1}{5}$, a contradiction.
	
	Eventually, consider the case  $q_1\in H_{2}$. Let $\theta$ be a composition of $\eta$ with $k-2\geq 0$ blowups over $q_1$, on the proper transforms of $D_{\eta}-W_{\eta}$, so that $\Bs\theta_{+}^{-1}\subseteq W_{\theta}\cap A_{1}^{\theta}$. Then $W_{\theta}=[(2)_{k-2},\ub{5},2]$, so the inequality $\frac{1}{5}<\ld(H_{2}^{\theta})=\frac{k+1}{7k-5}$ gives $k\leq 4$. If $\psi=\theta$ then $D$ is as in \ref{item:ht=3_nu=3_YB_T=0}. Assume $\psi\neq \theta$, and let $\upsilon$ be the composition of $\theta$ with $l-5\geq 1$ blowups over $A_1\cap W_{\theta}$, on the proper transforms of $W_{\theta}$. Then $W_{\upsilon}=[\ub{l},2]$ if $k=2$, and $[l-2,(2)_{k-3},\ub{5},2]$ otherwise. The inequality $\ld(H_2)>\frac{1}{5}$ implies that the first case holds and $l\leq 7$. If $\psi=\upsilon$ then $D$ is as in \ref{item:ht=3_nu=3_YB_T-neq-0}; otherwise after one further blowup we get $W=[2,\ub{l},2]$, so $\ld(H_2)=\frac{1}{l-1}\leq \frac{1}{5}$, which is false.

	\litem{$r\in A_0$, $\Bs\sigma_{+}^{-1}\cap A_2=\emptyset$}
	Let $\eta$ be a composition of $\sigma$ with $k-2$ blowups over $A_0\cap H_1$, on the proper transforms of $H_1$, so that $A_0\cap H_1\not\subseteq \Bs\eta_{+}^{-1}$. Since $U$ is admissible, we get $A_0\cap \Bs\eta_{+}^{-1}=\emptyset$. Our assumptions imply that $r\in H_1^{\tau}$, so $k-2\geq 1$; and $\Bs\eta_{+}^{-1}\subseteq A_1$. 
	
	We have $U=[(2)_{3},\bs{k},(2)_{3}]$, so \eqref{eq:ld-bound_h=2} gives  $\ld(H_{2})>\frac{1}{2}(1-\ld(H_1^{\eta}))=\frac{k-2}{2k-3}\geq \frac{1}{3}$. If $q_1\in G_1$ then for $k=3$ we have $W_{\eta}=[2,3,2,\bs{4}]$, so $\ld(H_2)=\frac{1}{3}$, which in fact yields a contradiction for all $k$ (apply \eqref{eq:ld-bound_h=2} for $\gamma$ being a composition of $\tau$ with one blowup at $r$). Assume $q_1\in L_1$. Then $W_{\eta}=[2,2,\ub{4}]$, so the inequality $\frac{k-2}{2k-3}<\ld(H_{2})=\tfrac{2}{5}$ gives $k=3$. If $\psi=\eta$ then $D$ is as in \ref{item:ht=3_nu=3_UA}. If $\psi\neq \eta$ then after one blowup over $q_1$ we get $W=\langle 2;[2],[2],[\ub{4}]\rangle$ or $[2,3,\ub{4}]$, so $\ld(H_{2})=\tfrac{1}{3}$; a contradiction. 
	If $q_1\in H_1$ then $W_{\sigma}=[\ub{5}]$, so as before we get $k=3$; if $\psi=\eta$ then $D$ is as in \ref{item:ht=3_nu=3_UB}, otherwise after one blowup $W=[\ub{6}]$ or $[2,\ub{5}]$, so $\ld(H_2)=\tfrac{1}{3}$; a contradiction.	
\end{casesp}
\end{casesp}

	\litem{$\Bs\tau_{+}^{-1}\cap A_{3}=\emptyset$} Let $\gamma$ be a composition of $\phi$ with a blowup at $q_2$. 
	Then $(X_{\gamma},D_{\gamma})$ admits an involution $(A_0,A_1,A_2)\mapsto (A_2,A_1,A_0)$ from Remark \ref{rem:7A1}; call it $\iota$. We have $\iota(G_1^{\gamma})=G_1^{\gamma}$ and $\iota(L_1^{\gamma})=H_2^{\gamma}$. Therefore, replacing $\gamma_{+}$ with $\iota\circ\gamma_{+}$, if needed, we can assume that $q_1\in G_1$ or $q_1\in H_2$.

	\begin{casesp}
		\litem{$U$ is a chain}
		Then $U=[2,\bs{k},2,l,2]$ for some $k,l\geq 2$, and $(k,l)\neq 2$ by Lemma \ref{lem:U-2}. Here the $(-k)$-curve meets $A_0$, and the $(-l)$-curve meets $A_2$.
		
		\begin{casesp}
		\litem{$q_1\in G_1$}\label{case:q1G1} The involution $\iota\in \Aut(X_{\gamma},D_{\gamma})$ maps the divisors $A_1^{\gamma}$, $G_1^{\gamma}$ and $W_{\gamma}$ to themselves, and interchanges $A_0^{\gamma}$ with $A_{2}^{\gamma}$. Thus replacing $\gamma_{+}$ with $\iota\circ\gamma_{+}$ we remain in Case \ref{case:q1G1}, but with the roles of $k$ and $l$ interchanged. Thus we can assume $k\geq l$. In particular, we have $k\geq 3$, i.e.\ $A_{0}^{\tau}\cap H_1\subseteq \Bs\tau_{+}^{-1}$. 
		
		Consider the case when $W$ is a chain. Then $W=[(2)_{k-2},3,m,\ub{3},(2)_{l-2}]$ for some $m\geq 2$. Inequality \eqref{eq:ld-bound_h=2} gives $l=2$, and either $k=3$; or $k=4$, $m\leq 5$; or $k\leq 7$, $m=2$, hence $D$ is as in \ref{item:ht=3_nu_3_VZ_chain_k=3}--\ref{item:ht=3_nu_3_VZ_chain_m=2}. If $W$ is a fork then since $W$ is admissible and $k\geq 3$ we have $W=\langle m;[2],[\ub{3}],[2,3]\rangle$, so $(k,l)=(3,2)$ and $D$ is as in \ref{item:ht=3_nu_3_VZ_fork}. 
		\litem{$q_1\in  H_2$} Denote by $L$ the connected component of $D$ containing $L_1$. Assume first that $L$ is a chain. Then $W=[(2)_{m-2},\ub{4},(2)_{l-2}]$ for some $m\geq 2$. Since $(k,l)\neq (2,2)$,  inequality \eqref{eq:ld-bound_h=2} shows that either $l=2$ and $D$ is as in  \ref{item:ht=3_nu_3_YV_chain_l=2,k=3}--\ref{item:ht=3_nu_3_YV_chain_l=2,k>4}, or $k=2$ and $D$ is as in \ref{item:ht=3_nu_3_YV_chain_k=2,l=3}--\ref{item:ht=3_nu_3_YV_chain_k=2,l>4,m=4}, or $k,l\geq 3$, $m=2$ and $D$ is as in \ref{item:ht=3_nu_3_YV_chain_k=3,l>2,m=2}--\ref{item:ht=3_nu_3_YV_chain_k>3,l=3,m=2}. 
		
		Assume now that $L$ is a fork, so $L=\langle m;[2],[(2)_{k-2},3],T\trp\rangle$ for some $m\geq 2$ and admissible $T$; and $W=T^{*}*[(2)_{m-2},\ub{4},(2)_{l-2}]$. Since $\rho(\bar{X})=1$, the surface $\bar{X}$ is del Pezzo if and only if $0>K_{\bar{X}}\cdot \pi(A_0)=1-\ld(H_1)-\ld(C)$, where $C$ is the tip of $L$ meeting $A_0$. This inequality is equivalent to 
		\begin{equation}\label{eq:A0C}
			\ld(H_1)+\ld(C)>1,
		\end{equation}
		 which will be now more convenient than inequality \eqref{eq:ld-bound_h=2}, equivalent to $K_{\bar{X}}\cdot \pi(F)<0$ for a fiber $F$.

		Consider the case $k\geq 3$. Suppose $l\geq 3$. Then by Lemma \ref{lem:Alexeev}, log discrepancies of $H_1$ and $C$ do not exceed those in case $(k,l,m)=(3,3,2)$; $T=[2]$, when $U=[2,\bs{3},2,3,2]$ and $L=\langle 2;[\mathit{2},3],[2],[2]\rangle$, where $C$ is the italicized tip. In this case, $\ld(H_1)+\ld(C)=1$; contrary to \eqref{eq:A0C}. Thus $l=2$. If $k\geq 4$ then $T=[2]$ by the admissibility of $L$, so $U=[2,\bs{k},2,2,2]$, $L=\langle m;[\mathit{2},(2)_{k-3},3],[2],[2]\rangle$, and inequality \eqref{eq:A0C} gives $(k,m)=(4,2)$, so  $D$ is as in \ref{item:ht=3_nu_3_YV_fork_l=2,m=2}. If $k=3$ then $d(T)\leq 3$ by the admissibility of $L$, so $D$ is as in \ref{item:ht=3_nu_3_YV_fork_l=2,m>2}.
		
		Consider now the case $k=2$, so  $l\geq 3$ by assumption and $d(T)\leq 5$ by the admissibility of $L$. Let $B$ be the branching component of $L$. Inequality \eqref{eq:A0C} is equivalent to 
		\begin{equation}\label{eq:A0}
			\ld(B)>\frac{2l-7}{4l-5}.
		\end{equation}
		If $l=3$ then \eqref{eq:A0} holds and $D$ is as in \ref{item:ht=3_nu_3_YV_fork_l=3}. Assume $l\geq 4$, so $\ld(B)>\frac{1}{11}$, which implies that $T\in \{[2],[2,2]\}$ or  $T\in \{[3],[2,2,2]\}$ and $m=2$. If $T=[2]$ then
		$\ld(B)=\frac{1}{3m-4}$, so $m\leq 4$ and \eqref{eq:A0} shows that $D$ is as in \ref{item:ht=3_nu_3_YV_fork_T=[2]_m=2} if $m=2$ (so $\ld(B)=\frac{1}{2}$) or \ref{item:ht=3_nu_3_YV_fork_T=[2]_m>2} if $m>2$. If $T=[2,2]$ then $\ld(B)=\frac{1}{6m-9}$, so by \eqref{eq:A0} either $(m,l)=(3,4)$ and $D$ is as in \ref{item:ht=3_nu_3_YV_fork_T=[2,2]_m=3}, or $m=2$, $l\leq 7$, and $D$ is as in \ref{item:ht=3_nu_3_YV_fork_T=[2,2]_m=2}. In the remaining cases we have $m=2$, $\ld(B)=\frac{1}{5}$, so $l=4$ by \eqref{eq:A0}, and $D$ is as in \ref{item:ht=3_nu_3_YV_fork_T=[3]} if $T=[3]$ and as in \ref{item:ht=3_nu_3_YV_fork_T=[2,2,2]} if $T=[2,2,2]$.
		\end{casesp}
		
		\litem{$U$ is a fork} Since $U$ is admissible and has a twig of length three, one of the following holds.
		\begin{enumerate}[itemsep=0em]
			\item \label{item:22} $U=\langle \bs{k};[2],[2],[2,l,2]\rangle$ or $\langle k;[2],[2],[2,\bs{l},2]\rangle$ for some $k,l\geq 2$, $(k,l)\neq (2,2)$ by Lemma \ref{lem:U-2}.
			\item \label{item:23} $U=\langle \bs{k};[2],[3],[2,2,2]\rangle$ or $\langle k;[2],[3],[2,\bs{2},2]\rangle$ for some $k\geq 2$. Put $l=2$ in this case.
		\end{enumerate}		
		\begin{casesp} 
		\litem{$q_1\in G_1$} Like in Case \ref{case:q1G1}, applying the automorphism $(A_0,A_1,A_2)\mapsto (A_2,A_1,A_0)$ from Remark \ref{rem:7A1} we can assume that the long twig of $U$ meets $A_2$, i.e.\ $\beta_{D}(H_1)=3$.
		
		Put $\epsilon=1$ in case \ref{item:22} and $\epsilon=2$ in case \ref{item:23}. The proper transform of $A_{1}^{\phi}$, which is a component of $W$, meets twigs of $W$ of types  $[(2)_{\epsilon}]*[(2)_{k-2},3]$ and $[(2)_{l-2},\ub{3}]$. If $W$ is a fork then the admissibility of $W$ implies that $k=2$ and $\epsilon=1$, which is impossible. Thus $W=[(2)_{\epsilon}]*[(2)_{k-2},3,m,\ub{3},(2)_{l-2}]$ for some $m\geq 2$. In case \ref{item:23}, inequality \eqref{eq:ld-bound_h=2} gives $k=m=2$, so $D$ is as in \ref{item:ht=3_nu_3_ZV_U-3}. In case \ref{item:22} we have $W=[3,(2)_{k-3},3,m,\ub{3},(2)_{l-2}]$ if $k\geq 3$ and $W=[4,m,\ub{3},(2)_{l-2}]$, $l\geq 3$ if $k=2$. By \eqref{eq:ld-bound_h=2}, the first case holds and $(k,l,m)=(3,2,2)$, so $D$ is as in \ref{item:ht=3_nu_3_ZV_U-2}.
		
		\litem{$q_1\in H_2$} As before, denote by $L$ the connected component of $D$ containing $L_1$. 
		
		Assume $\beta_{D}(H_1)=3$. Suppose $L$ is a fork. Let $T_U,T_L$ be the components of $U$ and $L$ meeting $A_0$. Since $\bar{X}$ is del Pezzo, we have $0>\pi(A_0)\cdot K_{\bar{X}}=1-\ld(T_U)-\ld(T_L)$ by adjunction, so $\ld(T_U)+\ld(T_L)>1$. Passing to a weighted subgraph as in Lemma \ref{lem:Alexeev}, we can assume that one of the following holds: in case \ref{item:22}, $k\geq 3$: 
		$U=\langle 3,[\mathit{2}],[2],[2,2,2]\rangle$, $L=\langle 2;[\mathit{3},3],[2],[2]\rangle$; 
		in case \ref{item:22}, $l\geq 3$: 
		$U=\langle 2;[\mathit{2}],[2],[2,3,2]\rangle$, $L=\langle 2;[\mathit{4}],[2],[2]\rangle$;
		and in case \ref{item:23}: 
		$U=\langle 2;[\mathit{3}],[2],[2,2,2]\rangle$, $L=\langle 2;[\mathit{2},4],[2],[2]\rangle$; 
		where the italicized number refers to $T_U$ or $T_L$. In each case, we get $\ld(T_U)+\ld(T_L)=1$; a contradiction.
		
		Thus $L$ is a chain, so $W=[(2)_{m-2},\ub{4},(2)_{l-2}]$ for some $m\geq 2$. Consider the case $l\geq 3$, so $U$ is as in \ref{item:22}. Then inequality \eqref{eq:ld-bound_h=2} gives $(k,l,m)=(2,3,2)$, and $D$ is as in \ref{item:ht=3_nu_3_YV_U-2_k=2}. Thus we can assume $l=2$. In case \ref{item:22} we have $k\geq 3$ by assumption, so inequality \eqref{eq:ld-bound_h=2} gives $m=2$ or $(k,m)=(3,3)$, and $D$ is as in \ref{item:ht=3_nu_3_YV_U-2_k>2} or \ref{item:ht=3_nu_3_YV_U-2_m=3}. In case \ref{item:23} inequality \eqref{eq:ld-bound_h=2} gives $m=2$ or $(k,m)=(2,3)$, so $D$ is as in \ref{item:ht=3_nu_3_YV_U-3_k>2} or \ref{item:ht=3_nu_3_YV_U-3_k=2}.
		\smallskip
		
		Assume $\beta_{D}(H_1)=2$. Consider case \ref{item:22}, $k=2$. Then $l\geq 3$. Write $\psi=\sigma\circ\sigma_{+}$ so that $U_{\sigma}=\langle 2;[2],[2],[2,\bs{3},2]\rangle$, $W_{\sigma}=[\ub{5}]$. If $\psi=\sigma$ then $D$ is as in \ref{item:ht=3_nu_3_YV_U'-2_l=3}. Suppose $\psi\neq \sigma$, and let $\eta$ be a composition of $\sigma$ with a blowup at $r\in \Bs\sigma_{+}^{-1}$. If $r\in A_{1}$ then $W_{\eta}=[2,\ub{5}]$ or $[\ub{6}]$, so $\ld(H_{2})=\frac{1}{3}=\frac{1}{2}(1-\ld(H_{1}^{\sigma}))$, contrary to \eqref{eq:ld-bound_h=2}. Thus $\{r\}=A_0\cap H_1$, so $U_{\eta}=\langle 2;[2],[2],[2,\bs{4},2]\rangle$ and $\ld(H_{1})=\frac{1}{5}=1-2\ld(H_{2}^{\sigma})$, again a contradiction with \eqref{eq:ld-bound_h=2}. 
		
		For the remaining cases, recall that $\bar{X}$ is del Pezzo if and only $K_{\bar{X}}\cdot\pi(A_0)<0$. It is equivalent to inequality \eqref{eq:A0C}, where $C$ is the tip of $L$ meeting $A_0$.  When writing the type of $L$, we refer to $C$ by a number in italics.
		
		Suppose $l\geq 3$. Then case \ref{item:22} holds and the weighted graph of $U+L$ has a weighted subgraph $\langle 3;[2],[2],[2,\bs{3},2]\rangle+[\mathit{2},3,2,2]$. Using Lemma \ref{lem:Alexeev}, we get  $\ld(H_1)+\ld(C)\leq \frac{3}{11}+\frac{8}{11}=1$, a contradiction with \eqref{eq:A0C}. 
		
		Thus $l=2$, so $U=\langle k;[2],[2],[2,\bs{2},2]\rangle$ for some $k\geq 3$ or $\langle k,[3],[2],[2,\bs{2},2]\rangle$ for some $k\geq 2$. We compute $\ld(H_1)=\frac{2k-3}{4k-7}$ or $\frac{6k-9}{12k-19}$, respectively. In any case $\ld(H_1)\leq \frac{3}{5}$, so $\ld(C)>\frac{2}{5}$ by \eqref{eq:A0C}. 
		
		Assume that $L$ is a chain. Then $L=[\mathit{3},m,2]$ for some $m\geq 2$. We have $\ld(C)=\frac{2m}{6m-5}$, so inequality \eqref{eq:A0} implies that $D$ is as in \ref{item:ht=3_nu_3_YV_U'-2_k=4}--\ref{item:ht=3_nu_3_YV_U'-2_k>2_m=2} in case \ref{item:22} and as in \ref{item:ht=3_nu_3_YV_U'-3_chain_k>2}--\ref{item:ht=3_nu_3_YV_U'-3_chain_k=2} in case \ref{item:23}. 
		
		Assume that $L$ is a fork. If $m\geq 3$ then the weighted graph of $L$ has a weighted subgraph $\langle 3;[\mathit{3}],[2],[2]\rangle$, so $\ld(C)\leq \frac{2}{5}$, which is false. Thus $m=2$, so $L=\langle 2;[\mathit{3}],[2],T\rangle$ for some admissible $T$, $d(T)\leq 5$. If $T=[(2)_{s}]$ for some $s\geq 1$ then $\ld(C)=\frac{4}{7+s}$, so inequality \eqref{eq:A0C} implies that $D$ is as in \ref{item:ht=3_nu_3_YV_U'-2_T=[2]}--\ref{item:ht=3_nu_3_YV_U'-2_T=[2,2]} in case \ref{item:22} or as in  \ref{item:ht=3_nu_3_YV_U'-3_fork_k>2}--\ref{item:ht=3_nu_3_YV_U'-3_fork_k=2} in case \ref{item:23}. In the remaining cases, the weighted graph of $L$ has a weighted subgraph $\langle 2;[\mathit{3}],[2],[3]\rangle$ or $\langle 2;[\mathit{3}],[2],[3,2]\rangle$, so $\ld(C)\leq \frac{2}{5}$ or $\frac{6}{17}$; a contradiction because $\ld(C)>\frac{2}{5}$.
		\end{casesp}
	\end{casesp}
	\end{casesp}
	
	\litem{$q_2\in G_2$}\label{case:q2G2}
	Suppose $A_3$ contains a base point of $\phi_{+}^{-1}$, say $q_3$. Since $D$ has no circular subdivisor, we have $q_3\in L_3\cup H_2$. If $q_3\in H_2$ then interchanging $\ll_2$ with $\ll_3$  we get back in Case \ref{case:qH2}, so we can assume $q_3\in L_3$. Since $U$ is admissible, we have $q_1\in H_2$. Now after a blowup at $q_1,q_2,q_3$ we get $U=[3,\bs{2},2,2,\ub{3},2,2]$, so  $\ld(H_{1})+2\ld(H_{2})=1$, a contradiction with inequality \eqref{eq:ld-bound_h=2}. Thus $A_3\cap \Bs\phi_{+}^{-1}=\emptyset$. 
	\begin{casesp}
		\litem{$q_1\in G_1$} Suppose $\tau_{+}^{-1}$ has a base point $r\in A_{0}$, and let $\sigma$ be a composition of $\tau$ with a blowup at $r$. Then $U_{\sigma}=[2,\bs{3},2,2,\ub{2},2,3,2]$ if $r\in H_{1}$ and $\langle \bs{2};[2],[2],[4,2,\ub{2},2,2]\rangle$ if $r\in L_{1}$, so $\ld(H_{1}^{\sigma})+2\ld(H_{2}^{\sigma})=1$; a contradiction with \eqref{eq:ld-bound_h=2}. Thus $\Bs\tau_{+}^{-1}\subseteq A_1\cup A_2$. Since $U$ is admissible, $\tau_{+}$ is a composition of $k_{j}-2\geq 0$ blowups at $A_{j}\cap U_{\tau}$, each time on the proper transforms of $U_{\tau}$. Hence $U=[3,k_{1},\ub{2},k_{2},2,\bs{2},2]$. If $k_2=2$ then $D$ is as in \ref{item:ht=3_nu_3_WZ_l=2}. Otherwise, $k_1=2$, $k_2\leq 4$ by \eqref{eq:ld-bound_h=2}, and $D$ is as in \ref{item:ht=3_nu_3_WZ_k=2}.
		\smallskip
		
		Let $V$ be the proper transform of the line joining $p_2$ with $p_3$. Its image $V^{\gamma}\subseteq X_{\gamma}$ is a $(-1)$-curve satisfying  $V^{\gamma}\cdot D_{\gamma}=3$ and meeting $D_{\gamma}$ on  $L_{1}^{\gamma},G_{2}^{\gamma}$ and $G_{3}^{\gamma}$. 
		Let $\pi_{\gamma}\colon X_{\gamma}\to \bar{X}_{\gamma}$ be the contraction of $D_{\gamma}$. By Lemma  \ref{lem:w=2_uniqueness}\ref{item:ld-bound_h=2} $\bar{X}_{\gamma}$ is a del Pezzo surface. Thus $0>\pi_{\gamma}(V^{\gamma})\cdot K_{\bar{X}_{\gamma}}=2-\ld(L_{1}^{\gamma})-\ld(G_{2}^{\gamma})-\ld(G_{3}^{\gamma})$ by adjunction. The assumption $A_{3}\cap \Bs\phi_{+}^{-1}=\emptyset$ implies that $G_{3}^{\gamma}=[2]$ is a connected component of $D_{\gamma}$, so $\ld(G_{3}^{\gamma})=1$. We get 
		\begin{equation}\label{eq:ld_bound_A1'}
			\ld(L_{1}^{\gamma})+\ld(G_{2}^{\gamma})>1.
		\end{equation}
	Note that both \eqref{eq:ld_bound_A1'} and \eqref{eq:ld-bound_h=2} are equivalent to ampleness of $-K_{\bar{X}_{\gamma}}$: in the remaining part of Case \ref{case:q2G2}, it will be more convenient use \eqref{eq:ld_bound_A1'}. To this end, we introduce the following notation. 
			
		Let $L_{\gamma}$, $G_{\gamma}$ be the connected components of $D_{\gamma}$ containing $L_{1}$ and $G_{2}$, respectively. We write their types italicizing the number corresponding to $L_{1}$ and $L_{3}$. For example, if $\eta$ is a composition of $\phi$ with a blowup at $q_2$ then $L_{\eta}=[\mathit{3}]$ and $G_{\eta}=[\mathit{3}]$. In particular, $\ld(G_{2}^{\eta})=\frac{2}{3}$, so Lemma \ref{lem:Alexeev} implies that $\ld(G_{2}^{\gamma})\leq \frac{2}{3}$ whenever $\gamma=\eta\circ \tilde{\gamma}$ for a morphism $\tilde{\gamma}$. Inequality \eqref{eq:ld_bound_A1'} gives $\ld(L_{1}^{\gamma})>\tfrac{1}{3}$.
		
		\litem{$q_1\in L_1$}\label{case:q2G2_q1L1}
		If both $A_{0}^{\tau}$ and $A_{1}^{\tau}$ contain base points of $\tau_{+}^{-1}$ then after blowing up at these points, $L$ becomes $[\mathit{6}]$, $[\mathit{5},2]$ or $[2,\mathit{4},2]$, so $\ld(L_{1})=\tfrac{1}{3}$, which is impossible. Thus $\Bs\tau_{+}^{-1}\subseteq A_j\cup A_2$ for some $j\in \{0,1\}$. Replacing, if needed, $\tau_{+}$ with $\iota\circ\tau_{+}$, where $\iota\in \Aut(X_{\tau},D_{\tau})$ is the involution $(A_0,A_1,A_2)\mapsto (A_1,A_0,A_2)$ from Remark \ref{rem:7A1}, we can assume that $\Bs\tau_{+}^{-1}\subseteq A_0\cup A_2$.
		
		Let $\sigma$ be a composition of $\tau$ with $k_{j}-2\geq 0$ blowups at $A_{j}\cap U_{\sigma}$, $j\in \{0,2\}$, each time on the proper transform of $U_{\tau}$, so that $A_{j}^{\sigma}\cap U_{\sigma}\not\subseteq \Bs\sigma_{+}^{-1}$. Since $U$ is admissible, we have either $\psi=\sigma$, or $\sigma_{+}$ is a single blowup at $A_0\cap L$. Thus $G=[\mathit{3},(2)_{k_2-2}]$. If $\psi=\sigma$ then $L=[\mathit{4},(2)_{k_0-2}]$, otherwise $L=[\mathit{4},(2)_{k_0-3},3]$ if $k_0\geq 3$ or $[\mathit{5}]$ if $k_0=2$. If $k_2=2$ then Lemma \ref{lem:U-2} implies that $k_0\geq 3$, so $D$ is as in \ref{item:ht=3_nu_3_WX_l=2} or \ref{item:ht=3_nu_3_WXD}. If $k_2\geq 3$ then inequality \eqref{eq:ld_bound_A1'} implies that $\psi=\sigma$  and either $k_0=2$ or $(k_0,k_2)=(3,3)$, so $D$ is as in \ref{item:ht=3_nu_3_WX_k=2} or  \ref{item:ht=3_nu_3_WX_k,l=3}.
		
		\litem{$q_1\in H_2$}\label{case:q2G2_q1H2} Let $\sigma$ be a composition of $\tau$ with $k_{j}-2\geq 0$ blowups over $A_j\cap U_{\tau}$, $j\in \{0,2\}$, each time on the proper transform of $U_{\tau}$, so that $A_{j}^{\sigma}\cap U_{\sigma}\not\subseteq \Bs\sigma_{+}^{-1}$. If $\psi\neq \sigma$, we denote by $\eta$ a composition of $\sigma$ with a blowup at a given $r\in \Bs\sigma_{+}^{-1}$. Note that $\{r\}=A_0\cap L$, $\{r\}=A_2\cap G$ or $r\in A_1$.
		
		If $r\in A_{2}$ then $U_{\eta}=\langle k_2;[2],[\ub{3}],[2,\bs{k_{0}},2]\rangle$, so since $U$ is admissible, $k_0=2$, $\psi=\eta$, and $D$ is as in \ref{item:ht=3_nu_3_WYV} or \ref{item:ht=3_nu_3_WYV_k=2}. Thus we can assume $\Bs\sigma_{+}^{-1}\subseteq A_{0}\cup A_{1}$; hence $G=[\mathit{3},(2)_{k_{2}-2}]$ and by \eqref{eq:ld_bound_A1'}, $\ld(L_1)>1-\ld(G_{2})=\tfrac{1}{2}-\tfrac{1}{4k_2-2}\geq \tfrac{1}{3}$. Assume $r\in A_{0}$. If $k_0\geq 3$ then $L_{\eta}=[3,(2)_{k_0-3},\mathit{3},2,2]$, so $\ld(L_{1})=\tfrac{k_0}{4k_0-3}\leq \tfrac{1}{3}$, which is false. Hence $k_0=2$. We have $L_{\eta}=[\mathit{4},2,2]$, so $\ld(L_{1})=\tfrac{2}{5}$ and $k_2=2$ by \eqref{eq:ld_bound_A1'}. If $\psi=\eta$ then $D$ is as in \ref{item:ht=3_nu_3_WYC}, otherwise  $\Bs\eta_{+}^{-1}\subseteq A_1$ as $U$ is admissible, and after one further blowup we get $L=[\mathit{4},3,2]$ or $\langle 2;[2],[2],[\mathit{4}]\rangle$, so $\ld(L_{1})=\tfrac{1}{3}$, a contradiction.
		
		Therefore, we can assume $\Bs\sigma_{+}^{-1}\subseteq A_{1}$. Recall that $L_{\sigma}=[(2)_{k_0-2},\mathit{3},2,2]$ and $G_{\sigma}=[(2)_{k_2-2},\mathit{3}]$, so inequality  \eqref{eq:ld_bound_A1'} gives $k_0=2$; $k_0=3$, $k_2\leq 5$; or $k_0\in \{4,5,6\}$, $k_2\leq 3$. If $\psi=\sigma$ then $D$ is as in \ref{item:ht=3_nu_3_WY_k=2}, \ref{item:ht=3_nu_3_WY_k=3} or \ref{item:ht=3_nu_3_WY_k>3}.
		
		Assume $\psi\neq \sigma$. We have $L_{\eta}=[(2)_{k_0-2},\mathit{3},3,2]$ if $r\in L_{\sigma}$ and $\langle 2;[2],[2],[(2)_{k_0-2},\mathit{3}]\rangle$ if $r\in H_2$. If $k_0= 3$ then $\ld(L_{1}^{\eta})=\frac{1}{3}$, which is impossible; and any $k_0\geq 3$ can be reduced to $k_0=3$ by a vertical swap. Thus $k_0=2$. If $k_2=2$ then $D$ is as in \ref{item:ht=3_nu_3_WYT} (note that inequality \eqref{eq:ld_bound_A1'} is satisfied for any possible twig $T$, since $G=[\mathit{3}]$, so $\ld(G_2)=\frac{2}{3}$; and $L_1$ is a maximal twig of type $[3]$, so $\ld(L_1)>\tfrac{1}{3}$). Assume $k_2\geq 3$, so $\ld(G_2)=\frac{k_2}{2k_2-1}\leq \frac{3}{5}$ and thus $\ld(L_1)>\frac{2}{5}$ by \eqref{eq:ld_bound_A1'}. If $r\in L_{\sigma}$ then $L_{\eta}=[3,\mathit{3},2]$, so $\frac{2}{5}<\ld(L_1^{\eta})=\tfrac{5}{13}$, which is impossible. Hence $r\in H_2$. If $\psi=\eta$ then $D$ is as in \ref{item:ht=3_nu_3_WYU}. Assume $\psi\neq \eta$, and let $\theta$ be a composition of $\eta$ with blowup at $s\in \Bs\eta_{+}^{-1}\subseteq A_{1}$. If $s\in L_{\eta}$ then $L_{\theta}=\langle 2;[2],[3],[\mathit{3}]\rangle$, so $\ld(L_{1}^{\theta})=\tfrac{2}{5}$ which is impossible. Hence $s\in H_2$, $L_{\theta}=\langle 2;[2],[2,2],[\mathit{3}]\rangle$, and $\frac{4}{9}=\ld(L_{1}^{\theta})>\frac{k_2-1}{2k_2-1}$, so $k_2\leq 4$. If $\psi=\theta$ then $D$ is as in \ref{item:ht=3_nu_3_WYUU}, otherwise after one further blowup we get $\ld(L_1)=\tfrac{2}{5}$ or $\tfrac{6}{17}$; a contradiction because $\ld(L_1)>\frac{2}{5}$.	
	\end{casesp}

	\litem{$q_2\in L_2$} 
	\begin{casesp}
	\litem{The map $\tau_{+}^{-1}$ has a base point $q_{3}\in A_{3}^{\tau}$} 
	If $q_3$ lies on $H_2$ or $G_2$ then after interchanging $\ll_2$ with $\ll_3$  we get back to Case \ref{case:qH2} or \ref{case:q2G2}, so we can assume $q_3\in L_3$.  Let $\sigma$ be a composition of $\tau$ with a blowup at $q_3$.
	
	The admissibility of $W$ implies that $q_1\in H_2$. Hence $U_{\sigma}=[3,\bs{2},3]$, $W_{\sigma}=[2,2,\ub{3},2,2]$. Suppose $\sigma_{+}^{-1}$ has a base point $r\not\in A_0$, and let $\eta$ be a composition of $\sigma$ with a blowup at $r$.
	
	If $r\in A_{1}$ then $U_{\eta}$ is still a chain $[3,\bs{2},3]$, so $\ld(H_{1}^{\eta})=\tfrac{1}{2}$ and therefore $\ld(H_{2}^{\eta})>\tfrac{1}{4}$ by \eqref{eq:ld-bound_h=2}. But $W_{\eta}=[2,2,\ub{4},2,2]$ if $r\in H_2$  and $\langle \ub{3},[2],[2,2],[2,2]\rangle$, so $\ld(H_{2}^{\eta})=\tfrac{1}{4}$ or $\tfrac{1}{7}$; a contradiction. If $r\in A_j$ for some $j\in \{2,3\}$ then $U_{\eta}+W_{\eta}=[3,\bs{2},4]+\langle 2;[2],[2],[2,2,\ub{3}]\rangle$ if $r\in U_{\sigma}$ and $[3,\bs{2},3,2]+[2,2,\ub{3},3,2]$ if $r\in W_{\sigma}$, so $\ld(H_{1}^{\eta})+2\ld(H_{2}^{\eta})=\tfrac{31}{34}$ or $\tfrac{536}{551}$, contrary to \eqref{eq:ld-bound_h=2}. We conclude that $\Bs\sigma_{+}^{-1}\subseteq A_0$, so $\ld(H_{2})=\ld(H_{2}^{\sigma})=\tfrac{2}{5}$ and therefore $\ld(H_1)>\tfrac{1}{5}$ by \eqref{eq:ld-bound_h=2}. Let $\theta$ be a composition of $\sigma$ with $k-2\geq 0$ blowups over $A_0\cap H_1$, on the proper transforms of $H_1$. Now $U_{\theta}=[3,\bs{k},3]$, so $\ld(H_{1}^{\theta})=\tfrac{2}{3k-2}$, and the inequality $\ld(H_1)>\tfrac{1}{5}$ gives $k\in \{2,3\}$. If $\psi\neq \theta$ then after one blowup at $\Bs\theta_{+}^{-1}=A_{0}\cap (D_{\theta}-H_{1})$ we get $U=\langle \bs{k};[2],[3],[3]\rangle$, so $\ld(H_1)=\tfrac{1}{6k-7}\leq \tfrac{1}{5}$; a contradiction. Thus $\psi=\theta$, and $D$ is as in \ref{item:ht=3_nu=3_XX'}.
	\litem{$A_{3}\cap \Bs\tau_{+}^{-1}=\emptyset$}
	If $q_{1}\in L_{1}$ or $q_1\in G_1$ then replacing $\tau_{+}$ with $\iota\circ\tau_{+}$, where $\iota$ is the automorphism $(A_0,A_1,A_2)\mapsto (A_1,A_2,A_0)$ from Remark \ref{rem:7A1}, we get back to Case \ref{case:qH2}. Thus we can assume $q_1\in H_2$. Suppose each of the $(-1)$-curves $A_0,A_1,A_2$ contains a base point of $\tau_{+}^{-1}$, and let $\sigma$ be a composition of $\tau$ with a blowup at those three points. Then $U_{\sigma}=[4,\bs{3},2]$, $[2,3,\bs{3},2]$, $\langle \bs{2};[4],[2],[2]\rangle$, or $\langle \bs{2},[2,3],[2],[2]\rangle$, so $\ld(H_{1}^{\sigma})=\frac{1}{3}$; and $W_{\sigma}=[\ub{4},3,2]$, $[2,\ub{3},3,2]$, $\langle 2;[\ub{4}],[2],[2]\rangle$ or $\langle 2;[2,\ub{3}],[2],[2]\rangle$, so $\ld(H_{2}^{\sigma})=\frac{1}{3}$; a contradiction with \eqref{eq:ld-bound_h=2}. Thus $A_{j}\cap \Bs\tau_{+}^{-1}=\emptyset$ for some $j\in \{0,1,2\}$; and using the automorphism $(A_0,A_1,A_2)\mapsto (A_2,A_0,A_1)$ from Remark \ref{rem:7A1} we can assume $j=2$, i.e.\ $\Bs\tau_{+}^{-1}\subseteq A_0\cup A_1$.
	
	If $\Bs\tau_{+}^{-1}\subseteq A_j$ for some $j\in \{0,1\}$ then applying the above automorphism $(A_0,A_1,A_2)\mapsto (A_2,A_0,A_1)$ we can assume $j=0$; so $D$ is as in \ref{item:qL3_A0_chain} or \ref{item:qL3_A0_fork}. Note that in this case $\ld(H_2)=\frac{4}{7}>\frac{1}{2}$, so inequality \eqref{eq:ld-bound_h=2} is automatically satisfied. Thus we can assume that $\tau_{+}^{-1}$ has a base point on each $A_{j}$, $j\in \{0,1\}$. 
	
	As before, denote by $L_{\gamma}$ the connected component of $D_{\gamma}$ containing $L_1$. Let $\sigma$ be a composition of $\tau$ with $k_j-2\geq 0$ blowups over $A_j\cap (D_{\tau}-L_{\tau})$, $j\in \{0,1\}$, each time on the proper transforms of $D_{\tau}-L_{\tau}$, so that $A_j^{\sigma}\cap \Bs\sigma_{+}^{-1}\subseteq L_{\sigma}$. If $\psi=\sigma$ then by assumption $k_0,k_1\geq 3$, and inequality \eqref{eq:ld-bound_h=2} shows that $D$ is as in \ref{item:qL3_k1}--\ref{item:qL3_4}.

	Assume $\psi\neq \sigma$. Consider first the case $\Bs\sigma_{+}^{-1}\subseteq A_0$, so by assumption $k_1\geq 3$; and $U+W=\langle \bs{k_0};T,[3],[2]\rangle+[(2)_{k_1-2},\ub{3},2,2]$ for some admissible $T$, $d(T)\leq 5$. If $T=[4]$, $[(2)_{4}]$, $[3,2]$ or $[2,3]$ then for $(k_0,k_1)=(2,3)$ we get $\ld(H_1)+2\ld(H_2)=1$, $1$, $\frac{181}{187}$ or $\frac{241}{253}$, respectively, which contradicts \eqref{eq:ld-bound_h=2}. Using Lemma \ref{lem:Alexeev}, we conclude that $T=[3]$ or $T=[(2)_{s}]$ for some $s\in \{1,2,3\}$. If $T=[3]$ then inequality \eqref{eq:ld-bound_h=2} gives $(k_0,k_1)=(2,3)$, so $D$ is as in \ref{item:qL3_rA0_T=[3]}. If $T=[(2)_{s}]$ then inequality \eqref{eq:ld-bound_h=2} implies that $D$ is as in \ref{item:qL3_rA0}--\ref{item:qL3_rA0_s=3_l=3}.
	
	Consider now the case $\Bs\sigma_{+}^{-1}\subseteq A_1$. Then by assumption $k_0\geq 3$ and $U+W=[3,\bs{k_0},2]+T^{*}*[(2)_{k_1-2},\ub{3},2,2]$ for some admissible $T$, such that $L=\langle k_1;[(2)_{k_0-2},3],T\trp,[2]\rangle$. The admissibility of $L$ implies that either $T=[2]$, or $d(T)=3$ and $k_0=3$. In the latter case, \eqref{eq:ld-bound_h=2} gives $\ld(H_2)>\frac{1}{2}(1-\ld(H_1))=\frac{4}{13}$, but we compute $\ld(H_2)=\frac{3k_1-2}{12k_1-11}\leq \frac{4}{13}$ if $T=[2,2]$ and $\ld(H_2)=\frac{3k-1}{12k-7}\leq \frac{5}{17}<\frac{4}{13}$ if $T=[3]$; a contradiction. Thus the first case holds, and \eqref{eq:ld-bound_h=2} shows that $D$ is as in \ref{item:qL3_rA1} or \ref{item:qL3_rA1_k=3}.
	
	It remains to consider the case when $A_j\cap L_{\sigma}$ is a base point of $\sigma_{+}^{-1}$ for both $j\in \{0,1\}$. Let $\eta$ be a composition of $\sigma$ with a blowup at those two points; so $D_{\eta}=\langle \bs{k_0};[3],[2],[2]\rangle$+$[3,(2)_{k_1-3},\ub{3},2,2]+\langle k_1;[4],[2],[2]\rangle+[2]$; where $[3,(2)_{-1},\ub{3}]\de [\ub{4}]$. We have $\ld(H_1^{\sigma})=\frac{1}{3k_0-4}$ and $\ld(H_2^{\sigma})=\frac{k_1}{4k_1-3}\leq\frac{2}{5}$, so $\ld(H_1)>\frac{1}{5}$ by \eqref{eq:ld-bound_h=2}; in particular $k_0=2$. If $\Bs\eta_{+}^{-1}\subseteq A_1$ then $D$ is as in \ref{item:qL3_[4]}: note that any such $\bar{X}$ is del Pezzo since $\pi(A_0)\cdot K_{\bar{X}}=1-\ld(C)-\ld(L_1)<1-\frac{3}{4}-\frac{1}{4}=0$, where $C$ is the new $(-2)$-twig of $U$.
	
	Assume that $\eta_{+}^{-1}$ has a base point $r\in A_0$, and let $\theta$ be a composition of $\eta$ with a blowup at $r$. If $r\in L_1$ then $U_{\theta}=\langle \bs{2},[3],[3],[2]\rangle$, so $\ld(H_1^{\theta})=\frac{1}{5}$, which is impossible. Thus $r\in U_{\eta}$. Now $U_{\theta}=\langle \bs{2},[2,2],[3],[2]\rangle$, so $\ld(H_1^{\theta})=\frac{1}{3}$, and the inequality $\frac{k_1}{4k_1-3}=\ld(H_2^{\theta})>1-2\ld(H_1^{\theta})=\frac{1}{3}$ yields $k_1=2$. If $\psi=\theta$ then $D$ is as in \ref{item:qL3_[5]}. Suppose $\psi\neq \theta$. If $\theta_{+}^{-1}$ has a base point on $A_0$ then after one blowup at this base point we get $U=\langle \bs{2},[2,2,2],[3],[2]\rangle$ or $\langle \bs{2},[3,2],[3],[2]\rangle$, so $\ld(H_1)=\frac{1}{5}$ or $\frac{1}{17}<\frac{1}{5}$, which is impossible. Thus $\Bs\theta_{+}^{-1}\subseteq A_1$ and  $\ld(H_2)>\frac{1}{2}(1-\ld(H_1))=\frac{1}{3}$. After one blowup at a base point of $\Bs\theta_{+}^{-1}$ we get $W=[\ub{5},2,2]$ or $[2,\ub{4},2,2]$, so $\ld(H_2)=\frac{4}{13}$ or $\frac{5}{17}$, so $\ld(H_2)<\frac{1}{3}$, a contradiction.
	\qedhere
	\end{casesp}
	\end{casesp}
\end{proof}

\clearpage
\section{Case $\width(\bar{X})=1$}\label{sec:w=1}

In the previous sections we have classified del Pezzo surfaces of rank $1$, height $3$ and width at least $2$. Thus it remains to settle the case $\width=1$. We keep the following notation.
\begin{equation}\label{eq:assumption_w=1}
	\parbox{.9\textwidth}{
		$\bar{X}$ is a log terminal del Pezzo surface of rank one such that $\height(\bar{X})=3$ and $\width(\bar{X})=1$, $(X,D)$ is its minimal log resolution and $p\colon X\to \P^1$ is a $\P^1$-fibration such that $H\de D\hor$ is a $3$-section.
	}
\end{equation}

Recall that the condition $\height(\bar{X})=3$ means that for any $\P^1$-fibration of $X$ we have $F\cdot D\geq 3$ for a fiber $F$; and the condition $\width(\bar{X})=1$ means that whenever the equality holds, the horizontal part $H$ of $D$ is irreducible. Like in Section \ref{sec:w=2}, the analysis splits into two cases, according to the separability of the morphism $p|_{H}\colon H\to \P^1$ of degree $3$. Of course, $p|_{H}$ can be inseparable only if $\cha\kk=3$.
\smallskip

The assumption \eqref{eq:assumption_w=1} is very restrictive. In fact, if $\cha\kk\neq 2,3$ then we will see in Lemma \ref{lem:w=1_cha-neq-3} that a surface $\bar{X}$ satisfying \eqref{eq:assumption_w=1} and having no descendant with elliptic boundary is unique up to an isomorphism. If $\cha\kk=2$ there is no such surface; and if $\cha\kk=3$ there are only a few of them, listed in Lemma \ref{lem:w=1_cha=3}. 

This section is organized in the same way as the previous ones. First, in Lemma \ref{lem:w=1_basics} we roughly describe degenerate fibers of $p$ and we fix the notation. Next, in Lemma \ref{lem:w=1_psi} we infer the remaining cases of Propositions \ref{prop:ht=3_models} and \ref{prop:ht=3_swaps}. 
To prove Theorem \ref{thm:ht=3}, we treat cases $\cha\kk\neq 3$ and $\cha\kk=3$ separately in Sections \ref{sec:w=1_cha-neq-3} and \ref{sec:w=1_cha=3}.

\subsection{Proof of Propositions \ref{prop:ht=3_models} and \ref{prop:ht=3_swaps}}\label{sec:w=1_swaps}

\begin{lemma}[Degenerate fibers]\label{lem:w=1_basics}
	Let $(X,D)$ be as in \eqref{eq:assumption_w=1}. 
	Let $F_1,\dots,F_{\nu}$ be all degenerate fibers of $p$. Then each $F_j$ has a unique $(-1)$-curve, say $L_{j}$. Let $\tau_{+}\colon (X,D)\to (X_{\tau},\check{D}_{\tau})$ be the contraction of $\sum_{j}L_{j}$ and of all non-branching vertical $(-1)$-curves in the subsequent images of $D$. Put $\hat{F}_j=\tau_{+}(F_j)\redd$, $\hat{H}=\tau_{+}(H)$. Then for each $j\in \{1,\dots,\nu\}$ one of the following holds. 
	\begin{enumerate}
		\item\label{item:r=3} $\hat{F}_j=[3,1,2,2]$, $\hat{H}\cdot \hat{F}=1$ and $\hat{H}$ meets $\hat{F}_j$ in the $(-1)$-curve. In this case put $\mu_j=3$.
		\item\label{item:r=2} $\hat{F}=[2,1,2]$, $\hat{H}\cdot \hat{F}=2$ and $\hat{H}$ meets $\hat{F}_j$ in a tip and in the $(-1)$-curve. In this case put $\mu_j=2$ if $\hat{H}$ meets $\hat{F}_j$ normally; and $\mu_j=3$ otherwise (i.e.\ when $\hat{H}$ meets $\hat{F}_j$ in a node).
	\end{enumerate}
\end{lemma}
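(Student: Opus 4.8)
The plan is to reduce the statement to a purely local count on a single degenerate fiber, performed after a maximal sequence of vertical swaps, with the width hypothesis controlling the number of $(-1)$-curves. The easy half is the uniqueness of the $(-1)$-curve: since $\width(\bar{X})=1$, the horizontal part $H=D\hor$ is the single irreducible $3$-section, so $\#D\hor=1$. Lemma~\ref{lem:fibrations-Sigma-chi}\ref{item:Sigma} then gives $\sum_{F}(\sigma(F)-1)=\#D\hor-1=0$, and as every degenerate fiber satisfies $\sigma(F)\geq 1$ we conclude $\sigma(F_j)=1$ for all $j$; by part \ref{item:-1_curves} the remaining components of $F_j$ are exactly the vertical components of $D$ meeting $F_j$.

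Next I would pass to $X_\tau$. By Lemma~\ref{lem:cascades} the contraction $\tau_+$ produces the minimal log resolution $(X_\tau,D_\tau)$ of a rank one del Pezzo surface, on which $\hat{H}=\tau_+(H)$ is again the unique $3$-section, so $\hat{H}\cdot \hat{F}=H\cdot F=3$ for a general fiber. Applying Lemma~\ref{lem:fibrations-Sigma-chi} to $X_\tau$ shows $\sigma(\hat{F}_j)=1$, so $\hat{F}_j$ still carries a unique $(-1)$-curve $\hat{L}_j$. Because $\tau_+$ has already removed every non-branching vertical $(-1)$-curve arising from $D$, the survival of $\hat{L}_j$ forces it to be obstructed, and since it is non-branching in the vertical direction the only possible obstruction is that it meets the section: $\hat{L}_j\cdot \hat{H}\geq 1$. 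Moreover every other component of $\hat{F}_j$ lies in the true boundary $D_\tau$, hence has self-intersection $\leq -2$, so by the structure of a fiber with a single $(-1)$-curve recalled in Section~\ref{sec:P1-fibrations}, $(\hat{F}_j)\redd$ is a chain $[T,1,T^{*}]$ with $\hat{L}_j$ its central $(-1)$-curve and $T,T^{*}$ admissible.

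The heart of the argument is then a short numerical case analysis. Writing $m$ for the multiplicity of $\hat{L}_j$ in $\hat{F}_j$, a direct computation gives $m=d(T)=d(T^{*})\geq 2$. Expanding $\hat{H}\cdot \hat{F}_j=3$ as a sum of contributions $m_C\,(\hat{H}\cdot C)$ over the points of $\hat{H}\cap (\hat{F}_j)\redd$, and using that the only multiplicity-one components are the two tips while $\hat{H}$ must meet $\hat{L}_j$, exactly two patterns survive: either $\hat{H}$ meets $\hat{L}_j$ alone, forcing $m=3$ and hence $d(T)=3$, $T\in\{[3],[2,2]\}$ and $(\hat{F}_j)\redd=[3,1,2,2]$ (case~\ref{item:r=3}); or $\hat{H}$ meets $\hat{L}_j$ and one tip, forcing $m=2$, $d(T)=2$, $T=T^{*}=[2]$ and $(\hat{F}_j)\redd=[2,1,2]$ (case~\ref{item:r=2}). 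Every other pattern gives $\hat{H}\cdot \hat{F}_j\geq 4$, a contradiction. Reading off $\hat{H}\cdot(\hat{F}_j)\redd$ yields the stated values $1$ and $2$, and in case~\ref{item:r=2} I would set $\mu_j=2$ or $3$ according to whether $\hat{H}$ meets the tip and $\hat{L}_j$ at two distinct smooth points or passes through the node between them.

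The main obstacle I anticipate is the bookkeeping around $\tau_+$: one must argue cleanly that the surviving $(-1)$-curve meets $\hat{H}$ and that the two flanking boundary chains are admissible, since this is precisely what rules out longer chains and pins $d(T)$ to $\{2,3\}$. A second subtlety is the non-normal subcase of \ref{item:r=2}, where $\hat{H}$ passes through a node of $\hat{F}_j$; this is the totally ramified configuration of $p|_{H}$ that can occur only when $p|_{H}$ is inseparable, i.e.\ when $\cha\kk=3$, and it is exactly what drives the later split of the classification into the cases $\cha\kk\neq 3$ and $\cha\kk=3$.
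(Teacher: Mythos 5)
Your opening paragraph (uniqueness of $L_j$ via Lemma \ref{lem:fibrations-Sigma-chi}) and your closing multiplicity count match the paper, but the bridge between them rests on a step that is both unjustified and, in fact, false. You invoke Lemma \ref{lem:cascades} to claim that $\tau_+$ produces the minimal log resolution $(X_\tau,D_\tau)$ of a rank-one del Pezzo surface, and then apply Lemma \ref{lem:fibrations-Sigma-chi} on $X_\tau$ to conclude $\sigma(\hat F_j)=1$. Lemma \ref{lem:cascades} applies only to vertical swaps, and $\tau_+$ is not one: an elementary vertical swap requires the contracted $(-1)$-curve $L$ to satisfy $L\cdot D\le 2$, whereas $L_j$ may meet $D$ three times (two fiber components plus the $3$-section, as happens for the fiber $[3,1,2,2]$ in Example \ref{ex:w=1}\ref{item:w=1_cha-neq-3}). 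Moreover the conclusion fails in general: by \eqref{eq:H} one has $\hat H^2=6-2\nu_2-3\nu_3$, and at this stage nothing excludes $(\nu_2,\nu_3)=(2,1)$ or $(3,0)$ --- the first is exactly the configuration realized when $\cha\kk\neq 3$ --- so $\hat H^2\in\{-1,0\}$ is possible. Then either $\hat H\in D_\tau$, impossible for the minimal log resolution of a log terminal surface (all exceptional curves there have self-intersection $\le -2$), or $\hat H\notin D_\tau$, in which case Lemma \ref{lem:fibrations-Sigma-chi}\ref{item:Sigma} would read $\sum_F(\sigma(F)-1)=\#(D_\tau)\hor-1=-1$, which is absurd. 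With this step gone, you cannot rule out the two degenerate outcomes of $\tau_+$, namely $\hat F_j=[0]$ (the fiber contracts entirely) and $\hat F_j=[1,1]$, nor conclude that $\hat F_j$ carries a \emph{unique} $(-1)$-curve. The paper excludes $[0]$ and $[1,1]$ by a direct argument: in either case a single component of $D$ would have to meet $H$ at least twice, producing a circular subdivisor of $D$, impossible since $D$ is a disjoint union of admissible chains and forks. Your proposal contains no substitute for this.

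A second, smaller gap is your unconditional claim that $(\hat F_j)\redd$ is a chain $[T,1,T^*]$. The structure statement recalled in Section \ref{sec:P1-fibrations} is conditional: the chain form holds only when the two multiplicity-one tips lie in different connected components of $(\hat F_j)\redd-\hat L_j$; otherwise forks occur, e.g.\ $\langle 2;[2],[2],[1]\rangle$, whose unique $(-1)$-curve has multiplicity $2$ and whose other components are all $(-2)$-curves. This gap is repairable along the paper's lines: the definition of $\tau_+$ gives $\beta_{\check D_\tau}(\hat L_j)\ge 3$, while $\beta_{(\hat F_j)\redd}(\hat L_j)\le 2$ (the $(-1)$-curves in degenerate fibers are non-branching) and $\hat L_j\cdot\hat H\le\tfrac{1}{\mu_j}F_j\cdot H\le\tfrac{3}{2}$; hence $\beta_{(\hat F_j)\redd}(\hat L_j)=2$, $\hat L_j\cdot\hat H=1$ and $\mu_j\le 3$, which eliminates the fork configurations, and the relations $\hat F_j\cdot C=0$ for the fiber components $C$ then pin down $[3,1,2,2]$ and $[2,1,2]$. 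So your final count is sound, but only after the full branching equality (not merely $\hat L_j\cdot\hat H\ge 1$) and the exclusion of $[0]$ and $[1,1]$ have been established.
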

\begin{proof}
	By Lemma \ref{lem:fibrations-Sigma-chi}\ref{item:Sigma} each $F_j$ has only one $(-1)$-curve, say $L_{j}$, and by Lemma \ref{lem:fibrations-Sigma-chi}\ref{item:-1_curves}, $(F_j)\redd-L_{j}\subseteq D$. 
	
	Suppose $\hat{F}_j=[0]$. Then \cite[Lemma 2.8(c)]{PaPe_ht_2} implies that $\hat{F}_j\subseteq \check{D}_{\tau}$, so since $\hat{D}_{\tau}$ is snc, $\hat{H}$ meets $\hat{F}_j$ in three points. As $\tau_{+}^{-1}$ has only one base point on $\hat{F}_j$, namely the image of $L_j$, the proper transform of $\hat{F}_j$ is a component of $D$ meeting $H$ at least twice; a contradiction. 
	
	Suppose $\hat{F}_j=[1,1]$. Since $F_j$ has only one $(-1)$-curve, the unique base point of $\tau_{+}^{-1}$ on $\hat{F}_j$ is its node. In particular, $\hat{F}_j\subseteq \check{D}_{\tau}$. Since $\check{D}_{\tau}$ is snc, $\hat{H}$ meets $\hat{F}_j$ normally, so it meets one of the tips of $\hat{F}_j$ at least twice. The proper transform of that tip is a component of $D$ meeting $H$ at least twice; a contradiction as before.
	 
	Therefore, $\hat{F}_j$ has a unique $(-1)$-curve, say $\hat{L}_j$, so $\hat{L}_j$ has multiplicity $\mu_j\geq 2$ in $\hat{F}_j$. By the definition of $\tau_{+}$, we have $3\leq \beta_{\check{D}_{\tau}}(\hat{L}_j)=\hat{L}_j\cdot \hat{H}+\beta_{(\hat{F}_j)\redd}(\hat{L}_j)\leq \tfrac{1}{\mu_j}F_j\cdot H+2<4$, so $\beta_{(\hat{F}_j)\redd}(\hat{L}_j)=2$ and $\hat{L}_j\cdot \hat{H}=1$. It follows that $\mu_j\leq 3$ and $\hat{F}_j$ 
	is as in \ref{item:r=3} if $\mu_j=3$ and as in \ref{item:r=2} if $\mu_j=2$.
\end{proof}

\begin{lemma}[Propositions \ref{prop:ht=3_models} and \ref{prop:ht=3_swaps}, case $\width=1$]\label{lem:w=1_psi}
	We keep the notation and assumptions of Lemma \ref{lem:w=1_basics}. Then the following hold.
	\begin{enumerate}
		\item\label{item:nu=3} We have $\nu=3$, i.e.\ $p$ has exactly three degenerate fibers. One can order them so that $F_1$ is as in \ref{lem:w=1_basics}\ref{item:r=3}.
		\item\label{item:w=1_psi}  There is a birational morphism  $\psi\colon X\to \P^2$ such that $\psi_{*}D=\qq+\ll_1+\ll_2+\ll_3$ , where $\qq\de \psi_{*}H$ is a cubic with a cusp $p_1$, and for each $j\in \{1,2,3\}$, $\ll_j\de \psi_{*}F_j$ is a line passing through a point $p_0\not\in \qq$, tangent to $\qq$ with multiplicity $\mu_j$ at some point $p_j$.
		\item\label{item:w=1_psi-cases} One of the following holds.
		\begin{enumerate}
			\item\label{item:cha-neq-3_psi} $\cha\kk\neq 2,3$ and $\mu_j=2$ for $j\in \{2,3\}$,
			\item\label{item:cha=3-psi} $\cha\kk=3$ and $\mu_j=3$ for $j\in \{2,3\}$.
		\end{enumerate}
		\item\label{item:w=1_swaps} 
		The minimalization $\psi$ from \ref{item:w=1_psi} factors as $\psi=\phi\circ\phi_{+}$, where $\phi_{+}\colon (X,D)\sqto (Y,D_Y)$ is a vertical swap onto the minimal log resolution of one of the surfaces from Example \ref{ex:w=1}. 
	\end{enumerate}
\end{lemma}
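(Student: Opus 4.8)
The plan is to follow the same three-step strategy as in the cases $\width(\bar X)\in\{2,3\}$ (Lemmas \ref{lem:w=3_basics}--\ref{lem:nu_1=1} and \ref{lem:w=2_fibers}--\ref{lem:w=2_psi}): first build a minimalization $\psi\colon X\to\P^2$, then adjust $p$ so that $\psi$ factors through one of the models of Example \ref{ex:w=1}. First I would contract the unique $(-1)$-curve $L_j$ of each degenerate fiber (Lemma \ref{lem:fibrations-Sigma-chi}\ref{item:-1_curves}) and keep contracting vertical $(-1)$-curves in the successive images, reaching a relatively minimal ruled surface, necessarily a Hirzebruch surface $\F_m$; the $3$-section $H\cong\P^1$ maps to a $3$-section of $\F_m$. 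Using the fiber shapes of Lemma \ref{lem:w=1_basics} I would show $m=1$ and that the contracted negative section is disjoint from $H$, so after contracting it to a point $p_0$ one reaches $\P^2$ with $\qq\de\psi_*H$ a plane cubic, $p_0\notin\qq$, and each $\ll_j\de\psi_*F_j$ a line through $p_0$. Since $H\cong\P^1$ is rational while $\qq$ is a plane cubic, $\qq$ has arithmetic genus $1$ and $H$ is its normalization, so its total delta-invariant is $1$: $\qq$ has exactly one singular point, a node or a cusp. This already gives the shape asserted in \ref{item:w=1_psi} (with $(\ll_j\cdot\qq)_{p_j}=\mu_j$), once the singular point is identified.

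Next, part \ref{item:nu=3}. The decisive point is that the singular point of $\qq$ is a \emph{cusp}, produced by exactly one degenerate fiber of type \ref{lem:w=1_basics}\ref{item:r=3}: tracing the blow-down of the chain $[3,1,2,2]$ shows that it creates a cusp of $\psi(H)$, whereas a fiber of type \ref{lem:w=1_basics}\ref{item:r=2} only records a tangency of $\ll_j$ with $\qq\reg$ at a smooth point (including the inseparable, multiplicity-$3$ case). Since $p_a(\qq)=1$ forces a single cusp, exactly one fiber, which I order as $F_1$, is of type \ref{lem:w=1_basics}\ref{item:r=3}, so $\mu_1=3$. To bound $\nu$ I would analyze the connected component $D_0$ of $D$ containing $H$: each degenerate fiber attaches one or two twigs to $H$, and as $\bar X$ is log terminal, $D_0$ is an admissible chain or fork, so $\beta_D(H)\le 3$ and twig lengths are constrained by Lemma \ref{lem:admissible_forks}; combined with the del Pezzo inequality $\ld(H)>\tfrac13$ from Lemma \ref{lem:delPezzo_criterion} this gives $\nu\le 3$. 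For $\nu\ge 3$ I would argue as in Lemma \ref{lem:w=2_basic}\ref{item:eta}: with at most two degenerate fibers, an alternative pencil (of conics tangent to $\qq$) pulls back to a $\P^1$-fibration of height $\le 2$, contradicting $\height(\bar X)=3$. I expect this combinatorial step, isolating the admissible configurations around $H$, to be the main obstacle, exactly as in the previous sections.

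For \ref{item:w=1_psi-cases} I would use the finite morphism $p|_H\colon H\to\P^1$ of degree $3$, whose ramification indices are $\mu_1=3$ over $\ll_1$ and $\mu_2,\mu_3\ge 2$ over $\ll_2,\ll_3$, with no other ramification (a general fiber meets $H$ in three distinct points). If $p|_H$ is separable, Riemann--Hurwitz \cite[Corollary IV.2.4]{Hartshorne_AG} gives ramification degree $\deg R=4$. In $\cha\kk=2$ each of the three degenerate fibers contributes at least $2$ to $\deg R$ (index $3$ is tame with different exponent $2$, while an index-$2$ point is wild with exponent $\ge 2$), so $\deg R\ge 6$, a contradiction; in $\cha\kk=3$ the wild point $F_1$ contributes at least $3$ and $F_2,F_3$ at least $1$ each, so $\deg R\ge 5$, again a contradiction. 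Hence $\cha\kk\ne 2,3$, everything is tame, and $4=2+(\mu_2-1)+(\mu_3-1)$ forces $\mu_2=\mu_3=2$, which is case \ref{item:cha-neq-3_psi}. If instead $p|_H$ is inseparable, then $\cha\kk=3$ and $p|_H$ is purely inseparable of degree $3$, hence totally ramified over every point, so $\mu_1=\mu_2=\mu_3=3$, which is case \ref{item:cha=3-psi}. That the resulting configuration $\qq+\ll_1+\ll_2+\ll_3$ is unique up to projective equivalence, with exactly these tangency multiplicities, is \cite[Lemma 5.5]{PaPe_MT}, recalled in Example \ref{ex:w=1}.

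Finally, for \ref{item:w=1_swaps} I would compare the base points of $\psi^{-1}$ with the blow-ups defining $\phi\colon Y\to\P^2$ in Example \ref{ex:w=1}. Using Lemma \ref{lem:w=1_basics} to describe $\Bs\psi^{-1}$ over $p_0,p_1,p_2,p_3$, I would factor $\psi=\phi\circ\phi_+$, where $\phi$ blows up the reduced configuration common to $X$ and to the model of Example \ref{ex:w=1}, and $\phi_+\colon(X,D)\sqto(Y,D_Y)$ reorganizes the remaining blow-ups as a sequence of elementary vertical swaps. That the target is the minimal log resolution of the stated vertically primitive surface $\bar Y$ then follows from Proposition \ref{prop:primitive}. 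The only bookkeeping is to match $\cha\kk\ne 2,3$ with Example \ref{ex:w=1}\ref{item:w=1_cha-neq-3}, and $\cha\kk=3$ with Example \ref{ex:w=1}\ref{item:w=1_cha=3_GK} or \ref{item:w=1_cha=3_not-GK} according to the number of blow-ups over $p_3$, which is controlled by the multiplicities $\mu_j$ found in part \ref{item:w=1_psi-cases}.
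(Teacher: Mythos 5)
There are genuine gaps, and they sit exactly where the real content of this lemma lies, namely in parts \ref{item:nu=3} and \ref{item:w=1_psi}.

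First, your route to $\P^2$ through $\F_1$ cannot be set up as described, and it is circular. If the blow-down keeps the image of $H$ smooth, then adjunction forbids $m=1$ outright: a smooth rational $3$-section of $\F_m$ has class $3C_0+bf$ ($C_0$ the negative section), and $-2=\bar H\cdot(\bar H+K_{\F_m})=4b-6m-6$ gives $b=\tfrac{3m+2}{2}$, so $m$ is even, and then $\bar H\cdot C_0=b-3m\ge 0$ forces $m=0$; there is \emph{no} smooth rational $3$-section on $\F_1$ at all. This is why the paper lands on $\P^1\times\P^1$ with $\bar H^2=6$. You can only reach $\F_1$ if the contraction itself creates the cusp of the image of $H$, which requires a fiber with $\mu_j=3$ contracted in the cusp-producing order --- but the existence of such a fiber is precisely what you then try to read off from the cusp, so nothing is proved. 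Moreover, which singularity (if any) a given fiber produces on $\psi_*H$ depends on the order of contraction, not on the fiber type: a type-\ref{lem:w=1_basics}\ref{item:r=2} fiber contracted in the other order produces a node, and a type-\ref{lem:w=1_basics}\ref{item:r=3} fiber can be contracted keeping the image smooth. So the dichotomy ``type \ref{lem:w=1_basics}\ref{item:r=3} $\Leftrightarrow$ cusp, type \ref{lem:w=1_basics}\ref{item:r=2} $\Leftrightarrow$ smooth tangency'' is not intrinsic, and the conclusion you draw from it --- that \emph{exactly one} fiber is of type \ref{lem:w=1_basics}\ref{item:r=3} --- is false: in $\cha\kk=3$ the surfaces of Example \ref{ex:w=1}\ref{item:w=1_cha=3_GK},\ref{item:w=1_cha=3_not-GK} satisfy \eqref{eq:assumption_w=1} and have two, respectively three, fibers of type \ref{lem:w=1_basics}\ref{item:r=3}, because the fibers over the lines with inseparable multiplicity-$3$ tangency at smooth points of $\qq$ are also chains $[3,1,2,2]$ meeting $H$ only in the $(-1)$-curve. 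Only $\nu_3\ge 1$ is true, and it needs a separate proof.

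Second, your bound $\nu\le 3$ collapses for the same reason: type-\ref{lem:w=1_basics}\ref{item:r=3} fibers attach \emph{no} twigs to $H$, so ``each degenerate fiber attaches one or two twigs to $H$'' is wrong, and $\beta_D(H)$ together with Lemma \ref{lem:admissible_forks} cannot see $\nu_3$. In Example \ref{ex:w=1}\ref{item:w=1_cha=3_not-GK} the horizontal curve is an entire connected component of $D$, with $\beta_D(H)=0$, yet $\nu=3$. The missing ingredient --- and the actual engine of the paper's proof of \ref{item:nu=3} --- is the identity $\hat H^2=6-2\nu_2-3\nu_3$, which follows from $\bar H^2=6$ on $\P^1\times\P^1$; fed into the criterion $\ld(\hat H)>\tfrac13$ of Lemma \ref{lem:delPezzo_criterion} (via Lemma \ref{lem:Alexeev}) it excludes $\nu\ge 4$, and a fork analysis plus the same identity excludes $\nu_3=0$, producing the fiber with $\mu_1=3$ that everything else rests on. Part \ref{item:w=1_swaps} is also not mere bookkeeping: in the case $\nu_2=2$ one has to pin down the base point of the swap on $\hat H$ itself, which the paper does by two separate contradiction arguments (a height-$2$ fibration, and an $\ld=\tfrac13$ computation), and your sketch omits this. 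On the positive side, your treatment of \ref{item:w=1_psi-cases} is correct and genuinely different from the paper's: the wild-ramification count in Riemann--Hurwitz excludes $\cha\kk=2$ and separability in $\cha\kk=3$ directly, where the paper instead quotes the non-existence of the plane configuration from \cite[Lemma 5.5]{PaPe_MT}.
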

\begin{proof}
	Let $\nu_{3}$, $\nu_{2}$ be the number of degenerate fibers $F_j$ of $p$ such that $\hat{F}_j$ is as in Lemma \ref{lem:w=1_basics}\ref{item:r=3} and \ref{item:r=2}, respectively. Clearly, $\nu=\nu_2+\nu_3$. 
	Let $\bar{\phi}_{+}$ be the composition of $\tau_{+}$ with the contraction of all vertical $(-1)$-curves in the subsequent images of $D$ which meet the image of $H$ once, normally. Put $\bar{F}_j=(\bar{\phi}_{*}F_j)\redd$, $\bar{H}=\bar{\phi}(H)$. Note that $\bar{F}_j=[0]$ is the image of a tip of $\hat{F}_j$, so the proper transform $V_j\de \bar{\phi}_{*}^{-1}\bar{F}_j$ is a component of $D$. Put $\hat{V}_j=\tau_{+}(V_j)$. By the definition of $\mu_j$, we have $(\bar{F}_j\cdot \bar{H})_{q_j}=\mu_j$ for some point $q_j\in \bar{F}_j$.
	
	The curve $\bar{H}$ is a smooth, rational $3$-section on a Hirzebruch surface $\F_{m}$ for some $m\geq 0$. Write $\bar{H}\sim af+3C$, where $f$ is, fiber, $C$ is a section of self-intersection $m$, and $a=\bar{H}\cdot\Sec_{m}\geq 0$, where $\Sec_{m}$ is the negative section. By adjunction $-2=\bar{H}\cdot (\bar{H}+K_{\F_m})=4a-6m-6$, so $m=\tfrac{2}{3}(1-a)\leq \tfrac{2}{3}$. Thus $(m,a)=(0,1)$, so $\bar{H}^2=6$ and 
	\begin{equation}\label{eq:H}
		\hat{H}^{2}=6-2\nu_{2}-3\nu_{3}.
	\end{equation}
	
	\ref{item:nu=3} We need to show that $\nu=3$ and $\nu_3\geq 1$. The pullback of the other projection of $\P^1\times \P^1$ is a $\P^1$-fibration of $X$ such that $D\hor$ consists of $\nu+1$ $1$-sections, namely $H$ and $V_1,\dots,V_{\nu}$. Since $\width(\bar{X})=1$, we get $\nu\geq 3$. 
	
	Suppose $\nu\geq 4$. Formula \eqref{eq:H} gives $\hat{H}^2=6-2\nu-
	\nu_{3}\leq -2$. Thus $\tau_{+}$ defines a vertical swap $(X,D)\sqto (X_{\tau},D_{\tau})$, where $D_{\tau}$ equals $\check{D}_{\tau}$ minus all vertical $(-1)$-curves. By Lemma \ref{lem:cascades}\ref{item:cascades-still-dP}  $(X_{\tau},D_{\tau})$ is the minimal log resolution of some del Pezzo surface of rank one. Applying \eqref{eq:H} again, we get $-\hat{H}^2=3\nu-6-\nu_2$. By Lemma  \ref{lem:w=1_basics}\ref{item:r=2}, $\hat{H}$ meets a $(-2)$-tip of each of the $\nu_2$ fibers of type $[2,1,2]$. Thus $\nu_2=\beta_{D_{\tau}}(\hat{H})\leq 3$, and for $\nu_2=0,1,2$ or $3$ the connected component of $D_{\tau}$ containing $\hat{H}$ is of type $[\bs{3\nu-6}]$, $[2,\bs{3\nu-7}]$, $[2,\bs{3\nu-8},2]$ or $\langle \bs{3\nu-9},[2],[2],[2]\rangle$,  respectively. As usual, the bold number refers to $\hat{H}$. Lemma \ref{lem:Alexeev} implies that in any case $\ld(\hat{H})\leq \frac{2}{3\nu-6}\leq \frac{1}{3}$, a contradiction with Lemma \ref{lem:delPezzo_criterion}.
	
	Thus $\nu=3$. Suppose $\nu_3=0$, so $\nu_2=3$. By Lemma \ref{lem:w=1_basics}\ref{item:r=2}, $\hat{H}$ meets each $\hat{F}_j$ once in a $(-2)$-tip $\hat{V}_j$ and once in a $(-1)$-curve, call it $A_j$. In particular, $H$ meets $V_j$ for all $j\in \{1,2,3\}$. It follows that the connected component of $D$ containing $H$, call it $U$, is an admissible fork, with a branching component $H$ and maximal twigs, say, $T_j$, meeting $H$ in $V_j$. Moreover, by the definition of $\tau_{+}$ either $T_j=V_j$ and $\tau_{+}$ is an isomorphism near $F_j$, or $T_j=(\tau_{+}^{-1})_{*}\hat{F}_j=[2,2,2]$ and $(F_j)\redd\wedge \Exc\tau_{+}$ is a $(-1)$-tip of $(F_j)\redd$. Indeed, otherwise the proper transform of $A_{j}$ would be a branching component of $T_j$, which is impossible. Since $U$ is an admissible fork, we have $T_{j}=[2,2,2]$ for at most one $j$. Thus $\#\Exc\tau_{+}\leq 1$, so $H^2\geq \hat{H}^2-1=-1$ by \eqref{eq:H}; a contradiction.
	\smallskip
	
	\ref{item:w=1_psi} Part \ref{item:nu=3} implies that $(\bar{F}_1\cdot \bar{H})_{q_{1}}=3$. Let $\eta\colon \P^1\times \P^1\map \P^2$ be a composition of a blowup at $q_{1}$ and the contraction of the proper transforms of the horizontal and vertical lines through $q_{1}$. Clearly, $\psi\de \eta\circ\bar{\phi}$ is a morphism. Now $\qq\de \psi_{*}H$ is a cubic with a cusp at the image of $V_1$, call it $p_1$; and $\ll_{j}\de \psi_{*}F_{j}$ are lines meeting at $p_{0}\not\in \qq$. We have $(\ll_1\cdot \qq)_{p_1}=3$ and $(\ll_j\cdot \qq)_{p_j}=\mu_j$ for $j\in \{2,3\}$, where $p_{j}\de \eta(q_j)$. 
	\smallskip
	
	\ref{item:w=1_psi-cases} The morphism $p|_{H}\colon H\to \P^1$ of degree three is ramified at $H\cap F_{j}$ with multiplicity $\mu_{j}$. If $p|_{H}$ is not separable, then $\cha\kk=3$ and \ref{item:cha=3-psi} holds. Assume $p|_{H}$ is separable. By the Hurwitz formula, see \cite[Corollary IV.2.4]{Hartshorne_AG}, we have $\sum_{j=1}^{3}\mu_{j}\leq 7$. Since $\mu_1=3$, we get $\mu_{2}=\mu_{3}=2$. By \cite[Lemma 5.5]{PaPe_MT} a configuration $\qq+\ll_1+\ll_2+\ll_3$ as in \ref{item:w=1_psi} with $(\ll_j\cdot \qq)_{p_j}=\mu_j$ exists only if $\cha\kk\neq 2,3$, so  \ref{item:cha-neq-3_psi} holds.
	\smallskip
	
	\ref{item:w=1_swaps} By \ref{item:nu=3}, $\nu_{2}\leq 2$. If $\nu_2\leq 1$ then $\mu_{j}=3$ for some $j\in \{2,3\}$, so case \ref{item:cha=3-psi} holds, and $\tau_{+}$ defines a vertical swap onto a log surface as in Example \ref{ex:w=1}\ref{item:w=1_cha=3_GK} if $\nu_2=1$ and \ref{ex:w=1}\ref{item:w=1_cha=3_not-GK} if $\nu_2=0$.
	
	Assume $\nu_{2}=2$. Formula \eqref{eq:H} gives $\hat{H}^2=-1$. If $\Bs\tau_{+}^{-1}\subseteq \hat{F}_1$ then $\hat{V}_{2}+\hat{H}+\hat{V}_{3}=[2,1,2]$ supports a fiber of a $\P^1$-fibration whose pullback to $X$ has heoght $2$ with respect to $D$, which is impossible. Hence the $(-1)$-curve in, say, $\hat{F}_{2}$ contains a base point of $\tau_{+}^{-1}$, call it $r$. It follows from definition of $\tau_{+}$ that $\hat{H}$ meets $\hat{F}_{2}$ in two points, i.e.\ $\mu_2=2$, so case \ref{item:cha-neq-3_psi} holds, see Figure \ref{fig:w=1_cha-neq-3}. It remains to prove that $r\in \hat{H}$. Suppose the contrary. Since $D$ has no circular subdivisor, we have $r\in \hat{V}_2$. Since $H^2<-1=\hat{H}^2$, the curve $\hat{H}$ contains another base point of $\tau_{+}^{-1}$, say $s$; and interchanging $\ll_2$ with $\ll_3$ if necessary, we can assume $\{s\}=\hat{H}\cap \hat{F}_{1}$. Write $\tau_{+}=\sigma\circ \sigma_{+}$, where $\sigma$ is a blowup at $r$ and $s$. Then $H^{\sigma}\de \sigma_{+}(H)$ is a $(-2)$-curve, so $\sigma_{+}$ defines a vertical swap $(X,D)\sqto (X_{\sigma},D_{\sigma})$. By Lemma \ref{lem:cascades}\ref{item:cascades-still-dP}, $(X_{\sigma},D_{\sigma})$ is a minimal resolution of some del Pezzo surface of rank one. The connected component of $D_{\sigma}$ containing $H^{\sigma}$ is of type $\langle\bs{2},[2],[3],[2,2]\rangle$ with branching component $H^{\sigma}$, so $\ld(H^{\sigma})=\tfrac{1}{3}$; a contradiction with Lemma \ref{lem:delPezzo_criterion}.
\end{proof}

\phantomsection\label{proofs}
Given the above results, the proofs of Propositions \ref{prop:ht=3_models} and \ref{prop:ht=3_swaps} can now be completed as follows.

\begin{proof}[Proof of Proposition \ref{prop:ht=3_models}]
	Let $\bar{X}$ be a del Pezzo surface of rank one and height $3$, and let $(X,D)$ be its minimal log resolution.  f $\width(\bar{X})=3$ then $\bar{X}$ satisfies the assumption \eqref{eq:assumption_ht=3}, so by Lemma \ref{lem:nu_1=1} the minimalization $\psi\colon X\to \P^1\times \P^1$ from Lemma \ref{lem:H_disjoint} is as in Proposition \ref{prop:ht=3_models}\ref{item:w=3_models}. If $\width(\bar{X})=2$ then $\bar{X}$ satisfies~\eqref{eq:assumption_w=2}, so Lemma \ref{lem:w=2_psi} provides a minimalization $\psi\colon X\to \P^2$ as in Proposition \ref{prop:ht=3_models}\ref{item:w=2_models}. Eventually, if $\width(\bar{X})=1$ then the condition \eqref{eq:assumption_w=1} holds, and by Lemma \ref{lem:w=1_psi}\ref{item:w=1_psi-cases} the minimalization $\psi\colon X\to \P^2$ from  \ref{lem:w=1_psi}\ref{item:w=1_psi} is as in Proposition~\ref{prop:ht=3_models}\ref{item:w=1_models}. 
\end{proof}

\begin{proof}[Proof of Proposition \ref{prop:ht=3_swaps}]
	Like before, let $\bar{X}$ be a del Pezzo surface of rank one and height $3$, let $(X,D)$ be its minimal log resolution. If $\width(\bar{X})=3$ then by Lemma \ref{lem:w=3_swaps}\ref{item:w=3_swaps} the surface $\bar{X}$ vertically swaps to one of the canonical surfaces from Example \ref{ex:w=3}, so Proposition \ref{prop:ht=3_swaps}\ref{item:ht=w=3} holds. If $\width(\bar{X})=2$ then by Lemma \ref{lem:w=2_swaps} $\bar{X}$ vertically swaps to a surface from Example \ref{ex:w=2_cha_neq_2} if $\cha\kk\neq 2$ or \ref{ex:w=2_cha=2} if $\cha\kk=2$,  so \ref{prop:ht=3_swaps}\ref{item:ht=3,w=2,cha-neq-2_can},\ref{item:ht=3,w=2,cha-neq-2_GK} or \ref{prop:ht=3_swaps}\ref{item:swap_cha=2_GK} holds, respectively. Eventually, if $\width(\bar{X})=1$ then by Lemma  \ref{lem:w=1_psi}\ref{item:w=1_psi-cases} we have $\cha\kk\neq 2$; and by Lemma \ref{lem:w=1_psi}\ref{item:w=1_swaps} $\bar{X}$ vertically swaps to a surface from Example \ref{ex:w=1}\ref{item:w=1_cha-neq-3}. By Proposition \ref{prop:primitive}\ref{item:primitive-deb} the surfaces from Example \ref{ex:w=1}\ref{item:w=1_cha-neq-3},\ref{item:w=1_cha=3_GK} have descendants with elliptic boundary. We conclude that \ref{prop:ht=3_swaps}\ref{item:swap_ht=3,w=1_cha-neq-3},\ref{item:swap_ht=3,w=1_cha=3_GK} or \ref{prop:ht=3_swaps}\ref{item:swap_cha=3} holds, as claimed. 
\end{proof}

\subsection{Further restrictions and notation} 

We  now reconstruct the vertical swap $\phi_{+}$ from Lemma \ref{lem:w=1_psi}\ref{item:w=1_swaps}. To do this, we first introduce Notation \ref{not:phi_+_h=1} which is analogous to \ref{not:phi_+_H} and \ref{not:phi_+_h=2}. Next, we prove Lemma \ref{lem:w=1_uniqueness}, which, just like Lemma \ref{lem:w=3_uniqueness} or \ref{lem:w=2_uniqueness}, asserts that recovering $\phi_{+}$ is a combinatorial task: simply blow up at one of the common points of a vertical $(-1)$-curve and a boundary, as long as the inequality \eqref{eq:ld-bound_h=1} is satisfied. In particular, we will see that a surface $\bar{X}$ as in \eqref{eq:assumption_w=1} is determined uniquely, up to an isomorphism, by its singularity type. 

\begin{notation}[Recovering the minimalization $\psi$, cf.\ Notations \ref{not:phi_+_H}, \ref{not:phi_+_h=2}]\label{not:phi_+_h=1}
	Let $(X,D)$ be as in \eqref{eq:assumption_w=1}. Let $\psi\colon X\to \P^2$ be the minimalization from Lemma \ref{lem:w=1_psi}\ref{item:w=1_psi}, and let $\psi=\phi\circ\phi_{+}$ be its factorization given by Lemma \ref{lem:w=1_psi}\ref{item:w=1_swaps}. We study factorizations $\psi=\gamma\circ\gamma_{+}$, where $\gamma$ is a composition of $\phi$ with some morphism and $\gamma_{+}\colon (X,D)\sqto (X_{\gamma},D_{\gamma})$ is a vertical swap. For any such factorization we put $H^{\gamma}=(D_{\gamma})\hor$ and denote by $U_{\gamma}$ the connected component of $D_{\gamma}$ containing $H^{\gamma}$. For $j=1,2,3$ we denote by $A_{j}^{\gamma}$ the unique $(-1)$-curve in $\gamma^{-1}(p_j)$, so $\check{D}_{\gamma}\de \gamma_{+}(D)=D_{\gamma}+\sum_{j}A_{j}^{\gamma}$. We put $L_{j}=\phi^{-1}_{*}\ll_j$. As usual, we skip the superscript $\gamma$ if it is clear from the context.
\end{notation}

\begin{lemma}[Recovering $\psi$ step by step, cf.\ Lemmas \ref{lem:w=3_uniqueness} and  \ref{lem:w=2_uniqueness}]\label{lem:w=1_uniqueness}
	Let $\bar{X}$ be a del Pezzo surface as in \eqref{eq:assumption_w=1}; i.e.\ of height $3$ and width $1$. For every factorization $\psi=\gamma\circ\gamma_{+}$ as in Notation \ref{not:phi_+_h=1}, the following hold.
	\begin{enumerate}
		\item\label{item:ld-bound_ht=1} The log surface $(X_{\gamma},D_{\gamma})$ is a minimal resolution of an lt del Pezzo surface of rank one. In particular, 
		\begin{equation}\label{eq:ld-bound_h=1}
			\ld(H^{\gamma})>\frac{1}{3}.
		\end{equation}
		\item \label{item:w=1_no_C1} Every base point of $\gamma_{+}^{-1}$ is a common point of $D_{\gamma}\reg$ and of a vertical $(-1)$-curve $A_{j}^{\gamma}$ for some $j\in \{1,2,3\}$.
		\item \label{item:w=1_uniqueness} The isomorphism class of the log surface $(X,\check{D})$ is uniquely determined by the weighted graph of $\check{D}$.
		\item \label{item:w=1_hi} Put $h^i=h^i(\lts{X}{D})$. Then $h^0=h^1=0$ and $h^2=0,1,2$ if $\bar{X}$ swaps vertically to a surface from Example \ref{ex:w=1}\ref{item:w=1_cha-neq-3}, \ref{item:w=1_cha=3_GK}, \ref{item:w=1_cha=3_not-GK}, respectively; see Lemma \ref{lem:w=1_psi}\ref{item:w=1_swaps}.
	\end{enumerate}
\end{lemma}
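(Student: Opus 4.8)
The statement of Lemma \ref{lem:w=1_uniqueness} has four parts, and the plan is to treat them in order, following the template established by the analogous Lemmas \ref{lem:w=3_uniqueness} and \ref{lem:w=2_uniqueness} in the width $3$ and width $2$ cases. For part \ref{item:ld-bound_ht=1}, the first assertion is immediate: since $\gamma_{+}\colon (X,D)\sqto (X_{\gamma},D_{\gamma})$ is a vertical swap and $(X,D)$ is the minimal log resolution of a log terminal del Pezzo surface of rank one, Lemma \ref{lem:cascades}\ref{item:cascades-still-dP} gives that $(X_\gamma,D_\gamma)$ is again such a resolution. The inequality \eqref{eq:ld-bound_h=1} is then just the ampleness criterion \eqref{eq:ld_phi_H} of Lemma \ref{lem:delPezzo_criterion} specialized to width $1$: here $D\hor=H^\gamma$ is a single $3$-section, so $h=1$, $H^\gamma\cdot F=3$, and $D\cdot F=3$, whence \eqref{eq:ld_phi_H} reads $3\,\ld(H^\gamma)>3-2=1$, i.e. $\ld(H^\gamma)>\tfrac13$.

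For part \ref{item:w=1_no_C1}, I would argue exactly as in Lemma \ref{lem:w=3_uniqueness}\ref{item:w=3_no_C1}. Any base point $r\in\Bs\gamma_{+}^{-1}$ lies on a vertical $(-1)$-curve since $\gamma_{+}$ is a vertical swap; by Notation \ref{not:phi_+_h=1} this curve is one of the $A_{j}^{\gamma}$, and $r\notin\Sing D_\gamma$ because otherwise $D_\gamma+A_j^\gamma$ would acquire a circular subdivisor. The content is to exclude $r\in A_{j}^{\gamma}\setminus D_{\gamma}$. Supposing such $r$ exists, let $\eta$ be the composition of $\gamma$ with the blowup at $r$; by Lemma \ref{lem:Alexeev} (passing to weighted subgraphs) it suffices to derive a contradiction with the admissibility of the connected components of $D_\eta$ starting from the primitive model $\phi$, i.e. from the three shapes in Figure \ref{fig:w=1}. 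Here $A_j^\phi\cdot D_\phi=2$ (each vertical $(-1)$-curve in Example \ref{ex:w=1} meets $D_Y$ in two points), so $A_j^\phi$ meets two connected components, and blowing up at a point of $A_j^\phi$ off $D_\phi$ creates a fork whose twig lengths violate Lemma \ref{lem:admissible_forks}\ref{item:long-twig} — precisely as in the fork-length contradictions used repeatedly in Lemma \ref{lem:w=3}. I would inspect each of the three surfaces of Example \ref{ex:w=1} and record the offending twig lengths; this case-check is the main obstacle, though it is short because there are only three primitive models and the boundaries are explicit.

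Part \ref{item:w=1_uniqueness} follows formally from \ref{item:w=1_no_C1} together with the uniqueness of the primitive models. By Proposition \ref{prop:primitive}\ref{item:primitive-uniqueness}, for each relevant combinatorial type $\check{\cS}_\phi$ of $(X_\phi,\check{D}_\phi)$ we have $\#\cP_{+}(\check{\cS}_\phi)=1$ (the exceptional families of Example \ref{ex:w=2_cha=2} do not occur in the width $1$ setting). By \ref{item:w=1_no_C1} every blowup reconstructing $(X,\check D)$ from $(X_\phi,\check D_\phi)$ is centered at a point of $\check D_\phi\reg$, hence the morphism $(X,\check D)\to(X_\phi,\check D_\phi)$ is inner; the universal property of blowing up then yields $\#\cP_{+}(\check{\cS})=1$, exactly as in Lemma \ref{lem:w=3_uniqueness}\ref{item:ht=3_uniqueness} via \cite[Lemma 2.18]{PaPe_ht_2}. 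This gives that the weighted graph of $\check D$ determines $(X,\check D)$ up to isomorphism.

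Finally, for part \ref{item:w=1_hi}, the vanishing $h^0=h^1=0$ and the computation of $h^2$ would be obtained by transporting the cohomology of $\lts{Y}{D_Y}$ along the inner blowup $\phi_+$. Since $\Bs\gamma_+^{-1}\subseteq D_\gamma\reg$ by \ref{item:w=1_no_C1}, the morphism $(X,\check D)\to(Y,\check D_Y)$ is inner, so by \cite[Lemma 1.5(4)]{FZ-deformations} (cf. \cite[Lemmas 2.11, 2.12]{PaPe_ht_2}) the numbers $h^i(\lts{X}{D})$ equal $h^i(\lts{Y}{D_Y})$, which are recorded in Proposition \ref{prop:primitive}\ref{item:primitive-hi}: for the three surfaces of Example \ref{ex:w=1}\ref{item:w=1_cha-neq-3},\ref{item:w=1_cha=3_GK},\ref{item:w=1_cha=3_not-GK} one has $h^0=h^1=0$ and $h^2=0,1,2$ respectively. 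The only subtlety is the surface of Example \ref{ex:w=1}\ref{item:w=1_cha=3_not-GK}, where the corresponding swap involves an outer blowup (the $(-1)$-curve over $p_3$ meets $D$ in a node); there I would follow the short exact sequence argument of Proposition \ref{prop:primitive}\ref{item:primitive-hi}, contracting the extra $(-2)$-curve $G$ to reduce to the $h^2=1$ case and then using the normal-bundle sequence $0\to\lts{X}{D}\to\lts{X}{(D-G)}\to\cN_G\to 0$ to recover $h^2=2$. This last point, matching the swap type to the correct primitive model and handling the inner-versus-outer distinction, is where care is needed, but all the required inputs are already assembled in Proposition \ref{prop:primitive}.
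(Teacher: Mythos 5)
Your overall route coincides with the paper's: part \ref{item:ld-bound_ht=1} via Lemma \ref{lem:cascades}\ref{item:cascades-still-dP} and Lemma \ref{lem:delPezzo_criterion} (your specialization $3\ld(H^{\gamma})>1$ is exactly right), part \ref{item:w=1_uniqueness} via innerness of $(X,\check{D})\to(Y,\check{D}_Y)$ plus Proposition \ref{prop:primitive}\ref{item:primitive-uniqueness} and \cite[Lemma 2.18]{PaPe_ht_2}, and part \ref{item:w=1_hi} by transporting $h^{i}$ along the inner morphism to the values in Proposition \ref{prop:primitive}\ref{item:primitive-hi}. These three parts are fine as written.

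The problem is in your justification of part \ref{item:w=1_no_C1}. You assert that \emph{each} vertical $(-1)$-curve in Example \ref{ex:w=1} satisfies $A_{j}^{\phi}\cdot D_{\phi}=2$, and that blowing up at a point of $A_{j}^{\phi}$ off $D_{\phi}$ therefore creates a fork whose twig lengths contradict Lemma \ref{lem:admissible_forks}\ref{item:long-twig}. Both claims are false. In Example \ref{ex:w=1}\ref{item:w=1_cha-neq-3}, for instance, the $(-1)$-curve over the cusp $p_1$ meets $D_Y$ in \emph{three} points (the $3$-section, the $[3]$-tip, and a $(-2)$-curve of the $\rA_2$ component), and so does the $(-1)$-curve over $p_3$; only the one over $p_2$ meets $D_Y$ twice, and then \emph{both} intersection points lie on the same connected component (the $\rA_5$ chain), so the contradiction after an outer blowup is a \emph{circular subdivisor}, not a fork-length violation. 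When $A_{j}^{\phi}\cdot D_{\phi}=3$ the contradiction is instead that the proper transform of $A_{j}$ becomes branching while another component (e.g.\ the image of the $3$-section inside $\rA_5$) is already branching, or that the resulting fork is non-admissible. This is exactly the dichotomy the paper records: looking at Figure \ref{fig:w=1}, the proper transform of $A_{j}^{\gamma}$ is either a component of a circular subdivisor of $U$, or a branching component of $U$ with $U$ not an admissible fork. In no case is the contradiction of the twig-length type you describe, so the case check you defer would not go through as announced; it would have to be replaced by the circular/branching analysis above. Relatedly, your one-line exclusion of base points at $\Sing D_{\gamma}$ (``$D_{\gamma}+A_{j}^{\gamma}$ would acquire a circular subdivisor'') is not the right reason: the clean argument is that an elementary vertical swap requires the contracted $(-1)$-curve $L$ to satisfy $L\cdot D\leq 2$, which fails when the blowup center is a node of $D_{\gamma}$, since $L$ would then meet the two separated branches and the proper transform of $A_{j}^{\gamma}$.
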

\begin{proof}
	The proof is analogous to the proof of Lemma \ref{lem:w=3_uniqueness}, so we give only a sketch. Part \ref{item:ld-bound_ht=1} follows from Lemma \ref{lem:cascades}\ref{item:cascades-still-dP} and Lemma \ref{lem:delPezzo_criterion}. Part \ref{item:w=1_no_C1} is a consequence of the admissibility of $U$. Indeed, if \ref{item:w=1_no_C1} fails then some base point of $\gamma_{+}^{-1}$ lies on $A_{j}^{\gamma}\setminus D_{\gamma}$ for some $j\in \{1,2,3\}$ and looking at Figure \ref{fig:w=1} we see that the proper transform of $A_{j}^{\gamma}$ is either a component of a circular subdivisor of $U$, or a branching component of $U$ in case $U$ is not an admissible fork. Parts \ref{item:w=1_uniqueness} and \ref{item:w=1_hi} follow from  \ref{item:w=1_no_C1} and \cite[Lemma 2.18]{PaPe_ht_2} in the same way as Lemma \ref{lem:w=3_uniqueness}\ref{item:ht=3_uniqueness},\ref{item:ht=3_h1} follows from \ref{lem:w=3_uniqueness}\ref{item:w=3_no_C1}. Indeed, by  \ref{item:w=1_no_C1} the morphism $(X,\check{D})\to (Y,\check{D}_Y)$ is inner, where $(Y,D_Y)$ is as in Example \ref{ex:w=1} and $\check{D}_Y$ is the sum of $D_Y$ and $(-1)$-curves shown there. Since by Proposition \ref{prop:primitive}\ref{item:primitive-uniqueness} the latter is unique, so is $(X,\check{D})$. The number $h^i$ equals $h^i(\lts{Y}{D_Y})$, which is listed in Proposition \ref{prop:primitive}\ref{item:primitive-hi}.
\end{proof}

\begin{lemma}
	\label{lem:U-2_w=1}
	Let $\bar{X}$ be as in \eqref{eq:assumption_w=1}, and assume that $\bar{X}$ has no descendant with elliptic boundary. Then the connected component $U$ of $D$ containing $H$ is neither a $(-2)$-chain nor a $(-2)$-fork.
\end{lemma}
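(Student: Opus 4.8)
The plan is to argue by contradiction: assuming that $U$ is a $(-2)$-chain or a $(-2)$-fork, I will produce an elliptic tie on $(X,D)$, which by Lemma \ref{lem:tie} makes $\bar{X}$ have a descendant with elliptic boundary, contrary to hypothesis. The strategy mirrors the proof of Lemma \ref{lem:U-2}: first propagate the $(-2)$-property down to the vertically primitive model, then read off the tie from the explicit surfaces of Example \ref{ex:w=1}.

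First I would record that the $(-2)$-property is preserved by every swap in Notation \ref{not:phi_+_h=1}. Precisely, for each factorization $\psi=\gamma\circ\gamma_{+}$ the component $U_{\gamma}$ of $D_{\gamma}$ containing $H^{\gamma}$ again consists of $(-2)$-curves and $\Bs\gamma_{+}^{-1}\cap U_{\gamma}=\emptyset$. This is the argument opening the proof of Lemma \ref{lem:U-2}: by Lemma \ref{lem:cascades}\ref{item:cascades-ld} log discrepancies do not decrease along a swap, while $U_{\gamma}$ is admissible; since every component of $U$ is a $(-2)$-curve (log discrepancy $1$) and reverse swaps only lower self-intersections, no base point of $\gamma_{+}^{-1}$ can lie on $U_{\gamma}$ and no component of $U_{\gamma}$ can have self-intersection $<-2$. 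Applying this to $\gamma=\phi$ shows that the swap $(X,D)\sqto(Y,D_Y)$ relating $\bar{X}$ to the vertically primitive model of Lemma \ref{lem:w=1_psi}\ref{item:w=1_swaps} is an isomorphism near $U$, so $U\cong U_{\phi}$ is a $(-2)$-configuration.

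Next I would pin down the primitive model. By Lemma \ref{lem:w=1_psi}\ref{item:w=1_psi-cases} we have $\cha\kk\neq 2$, and by \ref{lem:w=1_psi}\ref{item:w=1_swaps} the surface $\bar{X}$ swaps to one of the three surfaces of Example \ref{ex:w=1}. The $3$-section $H^{\phi}=\phi^{-1}_{*}\qq$ satisfies $(H^{\phi})^{2}=-m$, where $m$ is the number of blowups over $p_3$ (one computes $9-6-3-m$ from $\qq^2=9$, the cusp contributing $-6$ and $p_2$ contributing $-3$). Hence $H^{\phi}$ is a $(-3)$-curve in Example \ref{ex:w=1}\ref{item:w=1_cha=3_not-GK} (where $m=3$) but a $(-2)$-curve in \ref{ex:w=1}\ref{item:w=1_cha-neq-3} and \ref{ex:w=1}\ref{item:w=1_cha=3_GK} (where $m=2$). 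Since $U_{\phi}$ contains $H^{\phi}$ and is a $(-2)$-configuration, case \ref{ex:w=1}\ref{item:w=1_cha=3_not-GK} is excluded, so $\bar{X}$ swaps to the surface of Example \ref{ex:w=1}\ref{item:w=1_cha-neq-3} or \ref{ex:w=1}\ref{item:w=1_cha=3_GK}. For both of these Proposition \ref{prop:primitive}\ref{item:primitive-deb} exhibits an elliptic tie on $(Y,D_Y)$, namely the proper transform of the cubic $\cc$ with a double point at $p_1$ and $(\cc\cdot\qq)_{p_2}=3$, $(\cc\cdot\qq)_{p_3}=2$ in the first case, and the proper transform of the line joining $p_1$ and $p_3$ in the second.

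The last and hardest step is to transport this tie from $Y$ back to $X$. Since the swaps relating $X$ and $Y$ have no base points on $U_{\gamma}$ at any stage, they occur in the degenerate fibers away from $U$; using Figures \ref{fig:w=1_cha-neq-3} and \ref{fig:w=1_cha=3_GK} I would check in each case that the tie $L_Y$ and the connected component of $D_Y+L_Y$ that it contracts to a genus-$1$ curve meet the boundary only in components left invariant by these swaps, so that the proper transform of $L_Y$ on $X$ is again a $(-1)$-curve realizing an elliptic tie on $(X,D)$ in the sense of Definition \ref{def:tie}. The main obstacle is exactly this verification: one must ensure that the reverse swaps reconstructing $(X,D)$ over the primitive model do not disturb the contractible genus-$1$ configuration, i.e.\ that the curves carrying the tie remain $(-1)$-curves meeting $D$ precisely as required. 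Once this is established, $(X,D)$ has an elliptic tie, which by Lemma \ref{lem:tie} contradicts the assumption that $\bar{X}$ has no descendant with elliptic boundary, completing the proof.
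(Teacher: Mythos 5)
Your overall route coincides with the paper's: assume $U$ consists of $(-2)$-curves, pass to the vertically primitive model, and contradict the hypothesis by exhibiting an elliptic tie via Lemma \ref{lem:tie}. Steps 1--2 are essentially sound: since blow-downs only increase self-intersections, $(H^{\phi})^{2}=-2$, which excludes Example \ref{ex:w=1}\ref{item:w=1_cha=3_not-GK}, exactly as in the paper. (Your justification of Step 1 is looser than you suggest: a component of $U_{\gamma}$ is not a priori the image of a component of $U$, so the log-discrepancy argument does not apply to it directly; the clean statement is that $\ld_{D_{\gamma}}(H^{\gamma})\geq \ld_{D}(H)=1$ by Lemma \ref{lem:cascades}\ref{item:cascades-ld}, which forces the whole connected admissible component $U_{\gamma}$ to be a $(-2)$-configuration. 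This is repairable.)

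The genuine gap is Step 3, which you yourself call the main obstacle and never carry out -- and the criterion you propose for it would not succeed. First, knowing $\Bs\gamma_{+}^{-1}\cap U_{\gamma}=\emptyset$ does not mean the swaps happen "away from $U$'' in any useful sense: a reverse swap centered at a point of $A_{j}^{\phi}\cap D_{\phi}$ lying on a component \emph{outside} $U_{\phi}$ inserts the proper transform of $A_{j}$ into the boundary and thereby reconnects components -- it can attach a $(-3)$-curve to the component of $H$, create a circular subdivisor, or produce a non-admissible fork. Ruling these out is the actual content of the proof: in case $\cha\kk\neq 3$ it shows $\phi_{+}=\id$, so that $\bar{X}$ \emph{is} the model surface and nothing needs transporting, while in case $\cha\kk=3$ it confines $\Bs\phi_{+}^{-1}$, after applying an automorphism from Lemma \ref{lem:w=1_cha=3_GK_Aut} interchanging the two $[3,1,2,2]$-fibers, to the single point $A_{2}^{\phi}\cap L_{2}^{\phi}$. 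Your framework never performs this localization. Second, even granted it, your test -- that the tie configuration meets only components "left invariant by these swaps'' -- fails exactly where it is needed: in case $\cha\kk=3$ the tie of Example \ref{ex:w=1}\ref{item:w=1_cha=3_GK} is the line through $p_{1}$ and $p_{3}$, whose configuration $\langle 1;[3],[3],[2]\rangle$ involves both $(-3)$-curves $T_{1},T_{2}$ of Lemma \ref{lem:w=1_cha=3_GK_Aut}, while the allowed blowups are centered precisely on one of them (at $A_{i}\cap T_{i}$). The tie does survive, but only because the modified configuration $\langle 1;[3],[3+j],[2]\rangle$ still contracts to an irreducible curve of arithmetic genus one -- now by contracting onto the twig that was \emph{not} blown up in the original verification -- which is a fresh check, not an invariance statement. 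Without locating the base points and redoing this contraction argument, the proof is incomplete at its decisive step.
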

\begin{proof}
		Suppose the contrary. If $\cha\kk\neq 3$ then $\phi_{+}=\id$, 
		i.e.\ $\bar{X}$ is the surface from Example \ref{ex:w=1}\ref{item:w=1_cha-neq-3}, which by Proposition \ref{prop:primitive}\ref{item:primitive-deb} has a descendant with elliptic boundary; a contradiction. Thus $\cha\kk=3$. Now $\bar{X}$ swaps vertically to the surface from Example \ref{ex:w=1}\ref{item:w=1_cha=3_GK}, and, after interchanging  $F_1$ with $F_2$ if necessary (cf.\ Lemmas \ref{lem:w=1_cha=3_GK_Aut} and \ref{lem:w=1_cha=3_not-GK_Aut}) we have $\Bs\phi_{+}^{-1}\subseteq A_2^{\phi}\cap L_{2}^{\phi}$, see Figure \ref{fig:w=1_cha=3_GK}. In particular, $\phi_{+}$ is an isomorphism near the proper transform of the line joining $p_1$ with $p_2$, which therefore remains an elliptic tie; 
		a contradiction.
\end{proof}

\subsection{The list of singularity types: case $\cha\kk\neq 3$}\label{sec:w=1_cha-neq-3}

\begin{lemma}[Classification in case $\width=1$, $\cha\kk\neq 3$, see Table \ref{table:ht=3_char=0}]\label{lem:w=1_cha-neq-3}
	Assume $\cha\kk\neq 3$. Assume that $\bar{X}$ is a del Pezzo surface of rank one, height $3$, and width $1$, and $\bar{X}$ has no descendant with elliptic boundary. Then $\cha\kk\neq 2$ and the following hold.
	\begin{parts}
		\item\label{item:w=1-uniqueness} The surface $\bar{X}$ is unique up to an isomorphism.
		\item\label{item:w=1-classification} The minimal log resolution $(X,D)$ of $\bar{X}$ admits a $\P^1$-fibration $p$ such that $\bar{X}$ swaps vertically to the surface from Example \ref{ex:w=1}\ref{item:w=1_cha-neq-3}. The combinatorial type of $(X,D,p)$ is 
		\begin{equation*}
			[2,2,2\dec{1},3]+[2,2\dec{2},2,\bs{3}\dec{1,2,3},2\dec{3}]+[2]\dec{3},
		\end{equation*}
		where the numbers decorated by $\dec{j}$ correspond to components meeting the $j$-th vertical $(-1)$-curve, and the bold number corresponds to the horizontal component of $D$, see Section \ref{sec:notation}.
	\end{parts}
\end{lemma}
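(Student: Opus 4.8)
The plan is to follow exactly the same two-step pattern used for Lemmas \ref{lem:w=3}, \ref{lem:w=2_cha_neq_2} and \ref{lem:w=2_cha=2}: first deduce the uniqueness statement \ref{item:w=1-uniqueness} from the classification \ref{item:w=1-classification}, and then establish \ref{item:w=1-classification} by reconstructing the vertical swap $\phi_{+}$ step by step. The hypothesis $\cha\kk\neq 3$ immediately forces $\cha\kk\neq 2$: by Lemma \ref{lem:w=1_psi}\ref{item:w=1_psi-cases} a surface satisfying \eqref{eq:assumption_w=1} can only exist if $\cha\kk\neq 2,3$ (case \ref{item:cha-neq-3_psi}) or $\cha\kk=3$ (case \ref{item:cha=3-psi}), and the latter is excluded, so we land in case \ref{item:cha-neq-3_psi} with $\mu_2=\mu_3=2$.

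For the implication \ref{item:w=1-classification}$\Rightarrow$\ref{item:w=1-uniqueness}, I would argue as usual. Fix the singularity type $\cS$ displayed in \ref{item:w=1-classification}. None of its singularities is canonical, so Lemma \ref{lem:no-deb} guarantees that no surface of type $\cS$, height $3$, width $1$ has a descendant with elliptic boundary; hence by \ref{item:w=1-classification} the minimal log resolution of any such surface carries a witnessing $\P^1$-fibration with the displayed combinatorial type $\check{\cS}$. By Lemma \ref{lem:w=1_uniqueness}\ref{item:w=1_uniqueness} the weighted graph of $\check{D}$ determines $(X,\check{D})$ up to isomorphism, so there is at most one such $\bar{X}$. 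Existence follows because the displayed $(X,D,p)$ does exist and satisfies the ampleness inequality \eqref{eq:ld_phi_H} (equivalently \eqref{eq:ld-bound_h=1}, which one checks via Lemmas \ref{lem:ld_formulas} and \ref{lem:discriminants}), so $(X,D)$ resolves a del Pezzo surface of rank one; since $\cS$ appears neither in \cite{PaPe_ht_2} nor in Lemmas \ref{lem:w=3}, \ref{lem:w=2_cha_neq_2}, the surface has height exactly $3$ and width exactly $1$.

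It remains to prove \ref{item:w=1-classification}, i.e.\ to verify that the displayed combinatorial type is the only possibility. By Lemma \ref{lem:w=1_psi}\ref{item:w=1_swaps} we may choose $p$ so that $\bar{X}$ swaps vertically to the surface $\bar{Y}$ of Example \ref{ex:w=1}\ref{item:w=1_cha-neq-3}, whose minimal log resolution $(Y,D_Y)$ together with its three vertical $(-1)$-curves $A_1,A_2,A_3$ is drawn in Figure \ref{fig:w=1_cha-neq-3}. Keeping Notation \ref{not:phi_+_h=1}, I would reconstruct $\phi_{+}$ by reversing elementary swaps: each base point of $\phi_{+}^{-1}$ is, by Lemma \ref{lem:w=1_uniqueness}\ref{item:w=1_no_C1}, a node of $D_{\phi}\reg$ with one of the $A_j^{\phi}$, so I blow up such points one at a time and test the ampleness inequality \eqref{eq:ld-bound_h=1}, $\ld(H)>\tfrac13$. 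The admissibility of the fork $U_\gamma$ containing $H$, together with Lemma \ref{lem:admissible_forks}\ref{item:long-twig} and Lemma \ref{lem:U-2_w=1} (which rules out $U$ being a $(-2)$-fork or $(-2)$-chain), severely restricts where blowups may occur: one analyses the three curves $A_1,A_2,A_3$ meeting the central $(-3)$-curve $H$ and shows that every additional blowup beyond the displayed configuration makes $\ld(H)\leq\tfrac13$, contradicting \eqref{eq:ld-bound_h=1}. The main obstacle is the bookkeeping of log discrepancies of $H$ as a branching component of a fork: each candidate blowup produces a new fork type $\langle b;T_1,T_2,T_3\rangle$ whose value $\ld(H)$ must be computed by Lemma \ref{lem:ld_formulas}\ref{item:ld_fork} and compared against $\tfrac13$, and one must be careful to use Lemma \ref{lem:Alexeev} to reduce longer twigs to a minimal case before evaluating, exactly as in the proofs of Lemmas \ref{lem:w=2_cha_neq_2} and \ref{lem:w=3}. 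Since the width-$1$, $\cha\kk\neq3$ case admits only a single primitive model with very little room to grow, this analysis is short, and it terminates at the unique displayed combinatorial type.
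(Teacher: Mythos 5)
Your proposal is correct and takes essentially the same approach as the paper: choose $p$ via Lemma \ref{lem:w=1_psi}\ref{item:w=1_swaps}, reconstruct $\phi_{+}$ blowup by blowup using Lemma \ref{lem:w=1_uniqueness}\ref{item:w=1_no_C1} together with the bound $\ld(H)>\tfrac{1}{3}$ of \eqref{eq:ld-bound_h=1}, note $\psi\neq\phi$ by Lemma \ref{lem:U-2_w=1}, and get uniqueness from Lemma \ref{lem:w=1_uniqueness}\ref{item:w=1_uniqueness}. The case analysis you defer is precisely the paper's short check: a base point on $A_2$ or $A_3$ yields either a non-admissible fork or $\ld(H)=\tfrac{1}{3}$, so the only admissible blowup is at $A_1\cap H$, and a second blowup over that point drops $\ld(H)$ to $\tfrac{3}{11}$ or $\tfrac{1}{5}$, leaving exactly the displayed combinatorial type.
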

\begin{proof}
	We use Notation \ref{not:phi_+_h=1}. Lemma \ref{lem:U-2_w=1} implies that $\psi\neq \phi$.  Let $\sigma$ be a composition of $\phi$ with a blowup at $r\in \Bs\phi_{+}^{-1}$. Recall from Lemma \ref{lem:w=1_uniqueness}\ref{item:w=1_no_C1} that $r\in D_{\phi}\cap A_{j}^{\phi}$ for some $j\in \{1,2,3\}$.
	
	Suppose $r\in A_{3}$. We have $r\in L_1\cup H$, because $D$ has no circular subdivisor. If $r\in L_1$ then $U_{\sigma}$ is a non-admissible fork $\langle \bs{2};[3],[2,2],[2,2,2]\rangle$, which is impossible. Thus  $\{r\}=A_{3}\cap H$. Now $U_{\sigma}=[(2)_{3},\bs{3},(2)_{3}]$, so $\ld(H^{\sigma})=\tfrac{1}{3}$; a contradiction with inequality \eqref{eq:ld-bound_h=1}. 
	Suppose $r\in A_{2}$. Then $U_{\sigma}=\langle 2;[2],[2],[2,\bs{3},2]\rangle$ if $r\in H$ and $\langle \bs{2};[2],[2],[2,3,2]\rangle$ otherwise, so $\ld(H^{\sigma})=\tfrac{1}{3}$; again a contradiction with \eqref{eq:ld-bound_h=1}.
	
	Therefore, $\Bs\phi_{+}^{-1}\subseteq A_{1}$. The admissibility of $U$ implies that $r\in H$. If $\psi=\sigma$ then  \ref{item:w=1-classification} holds. Suppose $\psi\neq \sigma$, and let $\eta$ be a composition of $\sigma$ with a blowup at some $s\in \Bs\sigma_{+}^{-1}\subseteq A_1$. Now $U_{\eta}=[2,\bs{4},2,2,2]$ if $s\in H$ and $\langle \bs{3},[2],[2],[2,2,2]\rangle$ otherwise, hence $\ld(H^{\eta})=\tfrac{3}{11}<\frac{1}{3}$ or $\tfrac{1}{5}<\frac{1}{3}$; a contradiction with \eqref{eq:ld-bound_h=1}. 
	
	We conclude that the surface $\bar{X}$ is obtained by an elementary vertical swap from the surface constructed in  Example \ref{ex:w=1}\ref{item:w=1_cha-neq-3}. Such a surface has no descendant with elliptic boundary by Lemma \ref{lem:no-deb}. It has height $3$ and width $1$, because its singularity type does not appear in the classification \cite{PaPe_ht_2}, or in Lemmas \ref{lem:w=3}, \ref{lem:w=2_cha_neq_2}. It is unique up to an isomorphism by Lemma \ref{lem:w=1_uniqueness}\ref{item:w=1_uniqueness}.
\end{proof}

\subsection{The list of singularity types: case $\cha\kk=3$}\label{sec:w=1_cha=3}

\begin{lemma}[Classification in case $\width=1$, $\cha\kk=3$, see Table \ref{table:ht=3_char=3}]\label{lem:w=1_cha=3}
	Assume $\cha\kk=3$. Let $\cS$ be a singularity type of a log terminal surface. Let $\bar{X}$ be a del Pezzo surface of rank $1$, height $3$, width $1$ and type $\cS$, i.e.\ $\bar{X}\in \Pht^{\width=1}(\cS)$, see Notation \ref{not:P}. Assume $\bar{X}$ has no descendant with elliptic boundary. 
	Then $\cS$ is one of the types listed below, and the following hold.
	\begin{parts}
		\item\label{item:w=1-cha=3-uniqueness} We have $\#\Pht^{\width=1}(\cS)=1$, i.e.\ $\bar{X}$ is unique up to an isomorphism.
		\item\label{item:w=1-cha=3-classification} The minimal log resolution $(X,D)$ of $\bar{X}$ admits a $\P^1$-fibration $p$ such that $\bar{X}$ swaps vertically to a surface $\bar{Y}$ from Example \ref{ex:w=1}\ref{item:w=1_cha=3_GK} or \ref{ex:w=1}\ref{item:w=1_cha=3_not-GK}, and the combinatorial type of $(X,D,p)$ is one of the following. 
	\end{parts}
	\begin{enumerate}[itemsep=0.6em]
	\item\label{item:cha=3_not-GK} $\bar{Y}$ is of type $4\cdot [3]+3\rA_{2}$, see Example \ref{ex:w=1}\ref{item:w=1_cha=3_not-GK}, and $(X,D,p)$ is one of the following:
	\begin{longlist}
		\item\label{item:w=1_0_id} $[\bs{3}]\dec{1,2,3}+[2,2]\dec{1}+\ldec{1}[3]+[2,2]\dec{2}+\ldec{2}[3]+[2,2]\dec{3}+\ldec{3}[3]$,
		\item\label{item:w=1_0} $\ldec{2,3}[\bs{4},(2)_{k-2}]\dec{1}+[2,2,k\dec{1},3]+[2,2]\dec{2}+\ldec{2}[3]+[2,2]\dec{3}+\ldec{3}[3]$, $k\geq 2$,
		\item\label{item:w=1_0B_k=2} $[\bs{5}]\dec{1,2,3}+\langle 2;[2]\dec{1},[3],[2,2]\rangle +[2,2]\dec{2}+\ldec{2}[3]+[2,2]\dec{3}+\ldec{3}[3]$,	
		\item\label{item:w=1_0B} $\ldec{2,3}[\bs{4},(2)_{k-3},3]\dec{1}+\langle k;[2]\dec{1},[3],[2,2]\rangle +[2,2]\dec{2}+\ldec{2}[3]+[2,2]\dec{3}+\ldec{3}[3]$, $k\geq 3$,	
		\item\label{item:w=1_0Y} $[\bs{5}]\dec{1,2,3}+[2,2,2\dec{1},3]+[2,2,2\dec{2},3]+[2,2]\dec{3}+\ldec{3}[3]$,
		\item\label{item:w=1_0Z} $[2,2,2,\bs{4}]\dec{1,2,3}+[2,2,2\dec{1},3]+[4]\dec{2}+[2,2]\dec{3}+\ldec{3}[3]$,		
		\item\label{item:w=1_0X} $[3,2,\bs{4}]\dec{1,2,3}+[2,2,2\dec{1},3]+[2,3]\dec{2}+[2,2]\dec{3}+\ldec{3}[3]$,
		\item\label{item:w=1_0A} $[3,k\dec{1},\bs{3}]\dec{2,3}+[2,3,(2)_{k-2}]\dec{1}+[2,2]\dec{2}+[3]\dec{2}+[2,2]\dec{3}+\ldec{3}[3]$, $k\geq 2$,
		\item\label{item:w=1_0AA_k=2} $\langle 2;[2]\dec{1},[3],[\bs{3}]\dec{2,3}\rangle +[2,4]\dec{1}+[2,2]\dec{2}+[3]\dec{2}+[2,2]\dec{3}+\ldec{3}[3]$,		
		\item\label{item:w=1_0AA} $\langle k;[2]\dec{1},[3],[\bs{3}]\dec{2,3}\rangle +[2,3,(2)_{k-3},3]\dec{1}+[2,2]\dec{2}+[3]\dec{2}+[2,2]\dec{3}+\ldec{3}[3]$, $k\geq 3$,
		\setcounter{foo}{\value{longlisti}}
	\end{longlist}
	\item\label{item:cha=3_GK} $\bar{Y}$ is of type $2\cdot [3]+\rA_{1}+3\rA_{2}$, see Example \ref{ex:w=1}\ref{item:w=1_cha=3_GK}, and $(X,D,p)$ is one of the following:
	\begin{longlist}\setcounter{longlisti}{\value{foo}}	
		\item\label{item:w=1_1} $[2\dec{3},\bs{3}\dec{2,3},(2)_{k-2}]\dec{1}+[2,2,k\dec{1},3]+[2,2]\dec{2}+\ldec{2}[3]+[2]\dec{3}$, $k\geq 2$,
		\item\label{item:w=1_1B_k=2} $[2\dec{3},\bs{4}]\dec{1,2,3}+\langle 2;[2]\dec{1},[3],[2,2]\rangle +[2,2]\dec{2}+\ldec{2}[3]+[2]\dec{3}$,
		\item\label{item:w=1_1B} $[2\dec{3},\bs{3}\dec{2,3},(2)_{k-3},3]\dec{1}+\langle k;[2]\dec{1},[3],[2,2]\rangle +[2,2]\dec{2}+\ldec{2}[3]+[2]\dec{3}$, $k\geq 3$,
		\item\label{item:w=1_1Y} $[\bs{4}\dec{1,2,3},2\dec{3}]+[2,2,2\dec{1},3]+[2,2,2\dec{2},3]+[2]\dec{3}$,
		\item\label{item:w=1_1Z} $[2,2,2,\bs{3}\dec{1,2,3},2\dec{3}]+[2,2,2\dec{1},3]+[4]\dec{2}+[2]\dec{3}$,		
		\item\label{item:w=1_1X} $[3,2,\bs{3}\dec{1,2,3},2\dec{3}]+[2,2,2\dec{1},3]+[2,3]\dec{2}+[2]\dec{3}$,
		\item\label{item:w=1_1C} $[2,2,k\dec{1},\bs{2}\dec{2,3},2\dec{3}]+[4,(2)_{k-2}]\dec{1}+[2,2]\dec{2}+[3]\dec{2}+[2]\dec{3}$, $k\geq 3$,
		\item\label{item:w=1_1CB} $\langle k;[2]\dec{1},[2,2],[2\dec{3},\bs{2}]\dec{2,3}\rangle+[4,(2)_{k-3},3]\dec{1}+[2,2]\dec{2}+[3]\dec{2}+[2]\dec{3}$, $k\geq 3$.
	\end{longlist}
	\end{enumerate}
\end{lemma}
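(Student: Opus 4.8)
The plan is to follow the same three-step strategy used in Lemmas \ref{lem:w=3}, \ref{lem:w=2_cha_neq_2} and \ref{lem:w=2_cha=2}, adapted to the width-one setting via the combinatorial machinery of Lemma \ref{lem:w=1_uniqueness}. First I would establish that part \ref{item:w=1-cha=3-classification} implies part \ref{item:w=1-cha=3-uniqueness}. Fixing a type $\cS$ from the list, I note that every $\bar{X}\in\Pht^{\width=1}(\cS)$ has no descendant with elliptic boundary: for most entries this follows from Lemma \ref{lem:no-deb} (the canonical part has few components), and for the remaining entries one compares $\cS$ directly against the classification in \cite[Theorem E]{PaPe_ht_2}. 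Hence by \ref{item:w=1-cha=3-classification} the minimal log resolution admits a witnessing $p$ of the listed combinatorial type. Since each $\cS$ occurs exactly once on the list, $\cS$ determines the combinatorial type of $(X,D,p)$, hence the combinatorial type $\check{\cS}$ of $(X,\check{D})$; Lemma \ref{lem:w=1_uniqueness}\ref{item:w=1_uniqueness} then gives $\#\Pht^{\width=1}(\cS)\le 1$. For the reverse inequality I would check that each listed $(X,D,p)$ genuinely exists, satisfies \eqref{eq:ld_phi_H} (equivalently \eqref{eq:ld-bound_h=1}), and that $\cS$ appears neither in \cite{PaPe_ht_2} nor in Lemmas \ref{lem:w=3}, \ref{lem:w=2_cha_neq_2}, \ref{lem:w=2_cha=2}, \ref{lem:w=1_cha-neq-3}, forcing $\height=3$ and $\width=1$.

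The substance of the proof is part \ref{item:w=1-cha=3-classification}: verifying that the list of combinatorial types is complete. Here I would keep Notation \ref{not:phi_+_h=1} and use Lemma \ref{lem:w=1_psi}\ref{item:w=1_swaps}, which tells me that $\bar{X}$ swaps vertically either to the surface of type $4\cdot[3]+3\rA_2$ from Example \ref{ex:w=1}\ref{item:w=1_cha=3_not-GK} (giving case \ref{item:cha=3_not-GK}), or to the surface of type $2\cdot[3]+\rA_1+3\rA_2$ from Example \ref{ex:w=1}\ref{item:w=1_cha=3_GK} (giving case \ref{item:cha=3_GK}). I would treat these two starting configurations separately, in each case writing $\psi=\phi\circ\phi_+$ and reconstructing $\phi_+$ by reversing elementary swaps one at a time. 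By Lemma \ref{lem:w=1_uniqueness}\ref{item:w=1_no_C1} each base point of $\gamma_+^{-1}$ lies at a common point of $D_\gamma\reg$ with one of the three vertical $(-1)$-curves $A_1,A_2,A_3$, so the reconstruction is a finite case analysis organized by which $A_j$ carries the next base point and whether that point lies on $H$, on $L_j$, or on an exceptional $(-2)$-curve. At each stage I compute $\ld(H^\gamma)$ using Lemmas \ref{lem:ld_formulas} and \ref{lem:discriminants} and discard any branch where $\ld(H^\gamma)\le\tfrac13$, which violates \eqref{eq:ld-bound_h=1}; by Lemma \ref{lem:Alexeev} it suffices to test the minimal value of each free parameter $k$, since larger $k$ only decreases log discrepancies (one reduces $k$ by a vertical swap before applying \eqref{eq:ld-bound_h=1}). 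Whenever a branch survives with $\psi=\gamma$, it yields one of the listed types; whenever it survives only after further blowups, I continue.

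Two further ingredients streamline the width-one analysis. Lemma \ref{lem:U-2_w=1} rules out the possibility that the connected component $U$ of $D$ containing $H$ is a $(-2)$-chain or $(-2)$-fork, which eliminates the branches where $\phi_+$ would be trivial or nearly so (those lead to elliptic ties, contradicting our standing hypothesis). The admissibility of $U$ — a consequence of Lemma \ref{lem:w=1_uniqueness}\ref{item:ld-bound_ht=1} forcing $(X_\gamma,D_\gamma)$ to be a minimal resolution of a log terminal del Pezzo — together with Lemma \ref{lem:admissible_forks}, sharply restricts the shape of $U$ after each blowup: a fork can have at most one twig of length $\ge 3$, and at least one twig must be $[2]$, so most configurations where two long twigs would meet $H$ are immediately excluded. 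In the highly symmetric case \ref{item:cha=3_not-GK}, I would exploit the automorphism group $S_4$ of the starting surface described in Lemma \ref{lem:w=1_cha=3_not-GK_Aut} (permuting the three degenerate fibers and the involution \eqref{eq:Bernasconi_involution}) to reduce the number of essentially distinct placements of base points; in case \ref{item:cha=3_GK} the dihedral group $\rD_6$ of Lemma \ref{lem:w=1_cha=3_GK_Aut} plays the analogous role, letting me fix which fiber is the distinguished one.

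The main obstacle is the sheer bookkeeping of the case analysis together with the repeated log-discrepancy computations, and in particular ensuring that the symmetry reductions are applied correctly so that no genuine combinatorial type is missed while no type is double-counted. Unlike the width-two case $\cha\kk=2$ of Lemma \ref{lem:w=2_cha=2}, here there are no moduli (each $\nu=3$ with $\epsilon=0$, so $d=0$), which removes the subtlety of identifying symmetry groups and almost-universal families; but the tree-versus-chain branching of $U$ under successive blowups, and the need to track when a base point lies on $H$ versus on an exceptional $(-2)$-curve, generate the long list \ref{item:w=1_0_id}--\ref{item:w=1_1CB}. I expect the care needed to verify that the surviving branches are exhausted by this list — especially distinguishing the subcases by the placement of the final base point and by the value and admissibility of each parameter $k$ — to be the crux, with each individual step being a routine (if lengthy) discrepancy calculation of the type already illustrated in the proof of Lemma \ref{lem:w=1_cha-neq-3}.
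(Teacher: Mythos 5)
Your proposal follows essentially the same route as the paper's proof: the same reduction of uniqueness to the classification via Lemma \ref{lem:w=1_uniqueness}\ref{item:w=1_uniqueness} together with realizability checks against earlier lists, and the same reconstruction of $\phi_{+}$ by reversing elementary swaps, organized by which $A_j$ carries a base point and whether it lies on $H$, the $(-3)$-curve $T_j$, or the $(-2)$-curve $G_j$, pruned by the bound \eqref{eq:ld-bound_h=1}, Lemma \ref{lem:U-2_w=1}, admissibility via Lemma \ref{lem:admissible_forks}, and crucially the symmetry reductions through the groups $S_4$ and $\rD_6$ of Lemmas \ref{lem:w=1_cha=3_not-GK_Aut} and \ref{lem:w=1_cha=3_GK_Aut}. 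This is exactly the paper's argument in outline; the only caveat is the routine but lengthy discrepancy bookkeeping you defer, which is where all the listed types and parameter ranges actually get pinned down.
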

\begin{proof}
	As in the proof of Lemmas \ref{lem:w=3}, \ref{lem:w=2_cha_neq_2} or \ref{lem:w=2_cha=2}, we first note that the classification \ref{item:w=1-cha=3-classification} implies the uniqueness result \ref{item:w=1-cha=3-uniqueness}. Fix a singularity type $\cS$ as in the above list. Lemma \ref{lem:no-deb} and a direct application of \cite[Theorem E]{PaPe_ht_2} in case \ref{item:w=1_0_id}, shows that no surface $\bar{X}\in \Pht^{\width=2}(\cS)$ has a descendant with elliptic boundary. Hence by \ref{item:w=2-classification}, the minimal log resolution $(X,D)$ of $\bar{X}$ has a $\P^1$-fibration $p$ as above. Since $\cS$ appears exactly once on the above list, it uniquely determines the combinatorial type of $(X,D,p)$. The latter uniquely determines the isomorphism class of $\bar{X}$ by Lemma \ref{lem:w=1_uniqueness}\ref{item:w=1_uniqueness}, so $\#\Pht^{\width=1}(\cS)\leq 1$. We claim that the equality holds. Clearly, a triple $(X,D,p)$ of each combinatorial type above exists. We check that it satisfies the inequality \eqref{eq:ld_phi_H}, so $(X,D)$ is the minimal log resolution of a del Pezzo surface $\bar{X}$ of rank 1. We have $\height(\bar{X})=3$ because $\cS$ does not appear in \cite{PaPe_ht_2}, and $\width(\bar{X})=1$ since $\cS$ does not appear in Lemmas \ref{lem:w=3} or \ref{lem:w=2_cha_neq_2}. Thus $\bar{X}\in \Pht^{\width=1}(\cS)$, as needed. 
	\smallskip
	
	Therefore, it is enough to prove \ref{item:w=1-cha=3-classification}, i.e.\ to verify the completeness of the above list. 
	
	We use Notation \ref{not:phi_+_h=1}. By Lemma \ref{lem:w=1_psi}\ref{item:w=1_swaps} we have a factorization $\psi=\phi\circ \phi_{+}$, where $\phi_{+}\colon (X,D)\sqto (X_{\phi},D_{\phi})$ is a vertical swap onto the minimal log resolution of a surface $\bar{Y}$ from Example \ref{ex:w=1}\ref{item:w=1_cha=3_GK} or \ref{item:w=1_cha=3_not-GK}. Put $\epsilon=1$ in the first case and $\epsilon=0$ in the second case. By the definition of $\phi_{+}$ we have $\Bs\phi_{+}^{-1}\subseteq A_{1}^{\phi}\cup A_{2}^{\phi}\cup A_{3}^{\phi}$ if $\epsilon=0$ and $\Bs\phi_{+}^{-1}\subseteq A_{1}^{\phi}\cup A_{2}^{\phi}$ if $\epsilon=1$. In any case, $\Bs\phi_{+}^{-1}\subseteq D_{\phi}$ by Lemma \ref{lem:w=1_uniqueness}\ref{item:w=1_no_C1}. 	
	For any factorization $\psi=\gamma\circ\gamma_{+}$ as in Notation \ref{not:phi_+_h=1}, we denote by $T_{j}^{\gamma}$, $G_{j}^{\gamma}$ the proper transforms of the $(-3)$- and $(-2)$-curve meeting $A_j^{\phi}$. 
	
	If $\psi=\phi$ then, since $U$ is not a $(-2)$-chain by Lemma \ref{lem:U-2_w=1}, we have $\epsilon=0$ and $D$ is as in \ref{item:w=1_0_id}. Assume $\psi\neq \phi$. Reordering the degenerate fibers, if needed, we can assume that $\phi_{+}^{-1}$ has a base point $q\in A_1$. Let $\upsilon$ be a composition of $\phi$ with some $k-1\geq 1$ blowups over $q$, on the proper transforms of $A_{1}^{\phi}$. By Lemma \ref{lem:w=2_uniqueness}\ref{item:w=1_no_C1} we can choose $\upsilon$ so that $\Bs\upsilon_{+}^{-1}\cap A_{1}^{\upsilon}\subseteq U_{\upsilon}$. If $\psi\neq \upsilon$ then for a given $r\in \Bs\upsilon_{+}^{-1}$ we denote by $\sigma$ a composition of $\upsilon$ with a blowup at $r$; and similarly, if $\psi\neq \sigma$ we let $\eta$ be a composition of $\sigma$ with a blowup at $s\in \Bs\sigma_{+}^{-1}$.

	\begin{casesp}
	\litem{$q\in H^{\tau}$}\label{case:qH} If $\psi=\upsilon$ then $D$ is as in \ref{item:w=1_0} if $\epsilon=0$ and  as in \ref{item:w=1_1} if $\epsilon=1$. Assume $\psi\neq \upsilon$ and fix $r\in \Bs\upsilon_{+}^{-1}$.
	\begin{casesp}
	\litem{$r\in A_{1}$}\label{case:rA1} By the definition of $\upsilon$ we have $\{r\}=A_1\cap U_{\upsilon}$. Assume $\psi=\sigma$. If $k=2$ then $D$ is as in \ref{item:w=1_0B_k=2} or as in \ref{item:w=1_1B_k=2}, and if $k\geq 3$ then $D$ is as in \ref{item:w=1_0B} or as in \ref{item:w=1_1B}. Assume $\psi\neq\sigma$. Since all forks in $D$ are admissible, we have $s\not\in A_{1}$. After possibly interchanging  $F_2$ with $F_3$ in case $\epsilon=0$ we get $s\in A_{2}$. If $s\in U_{\sigma}$ then $U_{\eta}=[2]*[(2)_{k-2},\bs{5-\epsilon},(2)_{\epsilon}]$; so $\ld(H^{\eta})=\frac{k-1}{4k-5}\geq \frac{1}{3}$ if $\epsilon=0$ and $\frac{2k-1}{10k-11}\geq \frac{1}{3}$ if $\epsilon=1$; contrary to \eqref{eq:ld-bound_h=1}. If $s\in T_{2}$ then $U_{\eta}=[2]*[(2)_{k-2},\bs{4},2,2,2]$ if $\epsilon=0$; $[2,\bs{4},2,2,2]$ if $(\epsilon,k)=(1,2)$ and $\langle \bs{3};[2],[3],[2,2,2]\rangle$ otherwise, so $\ld(H^{\eta})=\frac{2k+1}{18k-19},\frac{3}{11}$, or $\frac{1}{17}$; contrary to \eqref{eq:ld-bound_h=1}. Thus $s\in G_2$ so $U_{\eta}=[2]*[(2)_{k-2},\bs{4},2,3]$ if $\epsilon=0$, $[2,\bs{4},2,3]$ if $(\epsilon,k)=(1,2)$ and $\langle \bs{3},[2],[3],[3,2]\rangle$ otherwise, so $\ld(H^{\eta})=\frac{k+1}{12k-13},\frac{7}{29}$ or $\frac{1}{47}$; a contradiction with \eqref{eq:ld-bound_h=1}. 
	
	\litem{$A_1\cap \Bs\upsilon_{+}^{-1}=\emptyset$} After possibly interchanging  $F_2$ with $F_3$ in case $\epsilon=0$, we can assume $r\in A_{2}$.
	 
	Suppose $k=3$. Then $U_{\sigma}=[(2)_{\epsilon},\bs{5-\epsilon},2]$ if $r\in H$; 
	$U_{\sigma}=[2,\bs{4},2,2,2]$ or $\langle \bs{3};[2],[2],[2,2,2]\rangle$ if $r\in T_2$; and $U_{\sigma}=[2,\bs{4},2,3]$ or $\langle \bs{3};[2],[2],[3,2]\rangle$ if $r\in G_2$. For such $U_{\sigma}$, the number $\ld(H^{\sigma})$ equals $\tfrac{1}{3}$; $\tfrac{3}{11}$,$\tfrac{1}{5}$ and $\tfrac{7}{29}$,$\tfrac{1}{7}$, so \eqref{eq:ld-bound_h=1} fails. If $k\geq 4$, we get a contradiction with \eqref{eq:ld-bound_h=1} after a vertical swap to the previous case $k\geq 3$. Thus $k=2$. If $\psi=\sigma$ then $D$ is as in \ref{item:w=1_0Y} or \ref{item:w=1_1Y} if $r\in H$, as in \ref{item:w=1_0Z} or \ref{item:w=1_1Z} if $r\in T_2$, and as in \ref{item:w=1_0X}, \ref{item:w=1_1X} if $r\in G_2$.
	
	Assume $\psi\neq \sigma$. If $r\in H$ then reordering the fibers, if needed we get back to the Case \ref{case:rA1}, with  $\{s\}=A_2\cap H_1$. 
	Thus we can assume $H\cap \Bs\sigma_{+}^{-1}=\emptyset$. 
	
	\begin{casesp}
	\litem{$r\in T_{2}$}\label{case:rL2} If $\epsilon=1$ then $s\in A_{2}$, so $U_{\eta}=[2,2,3,\bs{3},2]$ if $s\in U_{\sigma}$ and $U_{\eta}=\langle 2;[2],[2,2],[2,\bs{3}]\rangle$ otherwise, so $\ld(H^{\eta})=\tfrac{9}{29}$ or $\tfrac{3}{13}$, contrary to \eqref{eq:ld-bound_h=1}. Thus $\epsilon=0$. If $s\in A_2$ then $U_{\eta}=[2,2,3,\bs{4}]$ if $s\in U_{\sigma}$ and $U_{\eta}=\langle 2;[2],[2,2],[\bs{4}]\rangle$ if $s\in T_2$, so $\ld(H^{\eta})=\tfrac{8}{25}$ or $\tfrac{2}{7}$, contrary to \eqref{eq:ld-bound_h=1}. Hence $s\in A_3$. By assumption $s\not\in H$, so $s\in T_3$ or $s\in G_3$. We get $U_{\eta}=[(2)_{3},\bs{4},(2)_{3}]$ or $U_{\eta}=[(2)_{3},\bs{4},2,3]$, so $\ld(H^{\eta})=\tfrac{1}{5}$ or $\tfrac{9}{53}$, contrary to \eqref{eq:ld-bound_h=1}. 
	
	\litem{$r\in G_2$} Suppose $s\in A_{2}$. If $\epsilon=1$ then $s\in U_{\sigma}$ since $U_{\eta}$ is admissible; hence $U_{\eta}=[3,3,\bs{3},2]$ and $\ld(H^{\eta})=\tfrac{5}{17}<\frac{1}{3}$; contrary to \eqref{eq:ld-bound_h=1}. Thus $\epsilon=0$, so  $U_{\eta}=[3,3,\bs{4}]$ if $s\in U_{\sigma}$ and $\langle 2;[2],[3],[\bs{4}]\rangle$ otherwise, and $\ld(H^{\eta})=\tfrac{9}{29}$ or $\tfrac{3}{11}$; a contradiction with \eqref{eq:ld-bound_h=1}. Thus $s\in A_{3}$; in particular, $\epsilon=0$. We have $s\not \in H$ by assumption, and if $s\in T_3$ then interchanging  $F_2$ with $F_3$ we get back to Case \ref{case:rL2}. Thus we can assume $s\in G_3$, so $U_{\eta}=[3,2,\bs{4},2,3]$ and $\ld(H^{\eta})=\tfrac{1}{7}$; a contradiction with \eqref{eq:ld-bound_h=1}.
	\end{casesp}
	\end{casesp}

	\litem{$q\in T_{1}$, $H\cap \Bs\phi_{+}^{-1}=\emptyset$}  Suppose $\upsilon_{+}^{-1}$ has a base point $r\not\in A_1$, so, say, $r\in A_2$. By assumption $r\not\in H$, so $r\in T_2\cup G_2$. If $\epsilon=0$ then $U_{\sigma}$ is a non-admissible fork, which is impossible, so $\epsilon=0$. Now for $k=2$ we get 
	$U_{\eta}=[(2)_{3},\bs{3},(2)_{3}]$ if $r\in T_2$ and $U_{\eta}=[(2)_{3},\bs{3},2,3]$ if $r\in G_2$, so $\ld(H^{\eta})=\frac{1}{3}$ or $\frac{3}{11}$, contrary to \eqref{eq:ld-bound_h=1}. Thus for any $k\geq 2$, we get a contradiction with \eqref{eq:ld-bound_h=1} after a vertical swap. Thus $\Bs\upsilon_{+}^{-1}\subseteq A_1$. 
	
	Assume $\epsilon=0$. Conjugating the automorphism \eqref{eq:Bernasconi_involution} by a permutation $(F_1,F_2,F_3)\mapsto (F_3,F_2,F_1)$ we get an involution $\iota\in \Aut(X_{\phi},D_{\phi})$ mapping $(T_1,A_1)$ to $(H,A_1)$. Under the isomorphism $\Aut(X_{\phi},D_{\phi})\cong S_{4}$ explained in Remark \ref{rem:cube}, this involution corresponds to a $180^{\circ}$ rotation about the horizontal axis in Figure \ref{fig:cube}. Now replacing $\phi_{+}$ with $\iota\circ\phi_{+}$ we get back to Case \ref{case:qH}. 
	
	Therefore, we can assume $\epsilon=1$. If $\psi=\upsilon$ then $k\geq 3$ by Lemma \ref{lem:U-2_w=1}, and  $D$ is as in \ref{item:w=1_1C}. Assume $\psi\neq \upsilon$. By our assumptions, the unique base point of $\upsilon_{+}^{-1}$ is $\{r\}= A_{1}\cap T_{1}$. The admissibility of $U$ implies that $\upsilon_{+}$ is a single blowup at $r$. As before, Lemma \ref{lem:U-2_w=1} gives $k\geq 3$, so $D$ is as in \ref{item:w=1_1CB}.
	
	\litem{$q\in G_{1}^{\tau}$, $(H\cup \bigcup_{i}T_i)\cap \Bs\phi_{+}^{-1}=\emptyset$} Suppose $r\in A_{2}$. Then by assumption $\{r\}=A_{2}\cap G_{2}$, so the admissibility of $U$ implies that $\epsilon=0$. Now for $k=2$ we have $U_{\sigma}=[3,2,\bs{3},2,3]$, so  $\ld(H^{\sigma})=\tfrac{2}{9}<\tfrac{1}{3}$, contrary to \eqref{eq:ld-bound_h=1}; and as before the case $k\geq 3$ can be reduced to $k=2$ by a vertical swap. Thus $\Bs\phi_{+}^{-1}\subseteq A_{1}$. 
	
	Assume $\epsilon=1$. Let $\sigma_{\textnormal{\ref{item:Aut_symmetry}}}\in \Aut (X_{\phi},D_{\phi})$ be as in Lemma \ref{lem:w=1_cha=3_GK_Aut}\ref{item:Aut_symmetry}: under the isomorphism $\Aut(X_{\phi},D_{\phi})\cong D_{6}$ explained in Remark \ref{rem:hexagon}, $\sigma_{\textnormal{\ref{item:Aut_symmetry}}}$ corresponds to the symmetry through the vertical axis in Figure \ref{fig:hexagon}. Now, replacing  $\phi_{+}$ with $\sigma_{\textnormal{\ref{item:Aut_symmetry}}}\circ \phi_{+}$ we get back to Case \ref{case:qH}. 
	
	Thus we can assume $\epsilon=0$. If $\psi=\upsilon$ then $D$ is as in \ref{item:w=1_0A}. Assume $\psi\neq \upsilon$. Then the $\Bs\upsilon_{+}^{-1}=A_{1}\cap G_{1}$. The admissibility of $U$ implies that $\upsilon_{+}$ is a single blowup at $A_1\cap G_1$, so $D$ is as in \ref{item:w=1_0AA_k=2} 
	if $k=2$ and \ref{item:w=1_0AA} 
	if $k\geq 3$.\qedhere
	\end{casesp}
\end{proof}

\phantomsection\label{final}
\begin{proof}[Proof of Theorem \ref{thm:ht=3}]
	Let $\cS$ be a singularity type of a log terminal surface. 
	As in Notation \ref{not:P}, let $\Pht^{\width=w}(\cS)$ be the set of isomorphism classes of del Pezzo surfaces of rank $1$, height $3$, width $w$ and singularity type $\cS$. 
	
	Assume $\Pht^{\width=3}(\cS)\neq \emptyset$. Then by Lemma \ref{lem:w=3} $\cS$ is as in Table \ref{table:ht=3_char=0} (top part), and either $\#\Pht^{\width=3}(\cS)=1$ or $\cS$ is as in Theorem \ref{thm:ht=3}\ref{item:uniq_exotic} and $\Pht^{\width=3}(\cS)$ consists of the two surfaces from  Example \ref{ex:ht=3_pair}. 
	
	Assume $\Pht^{\width=2}(\cS)\neq \emptyset$. If $\cha\kk\neq 2$ then Lemma \ref{lem:w=2_cha_neq_2} shows that $\cS$ is as in Table \ref{table:ht=3_char=0} (part $\cha\kk\neq 2$), and $\#\Pht^{\width=2}(\cS)=1$. If $\cha\kk=2$, Lemma \ref{lem:w=2_cha=2} shows that either $\Pht^{\width=2}(\cS)$ has moduli dimension $1$ (with a representing family over $\Astst$) and $\cS$ is as in Table \ref{table:ht=3_char=2_moduli}, or $\#\Pht^{\width=2}(\cS)=1$ and $\cS$ is as in Table \ref{table:ht=3_char=2}. 
	
	Eventually, assume $\Pht^{\width=1}(\cS)\neq \emptyset$. If $\cha\kk\neq 3$ then by Lemma \ref{lem:w=1_cha-neq-3} we have  $\cha\kk\neq 2$, $\#\Pht^{\width=1}(\cS)=1$ and $\cS$ is as in the last row of Table \ref{table:ht=3_char=0}. If $\cha\kk=3$ then by Lemma \ref{lem:w=1_cha=3} $\#\Pht^{\width=1}(\cS)=1$ and  $\cS$ is as in Table~\ref{table:ht=3_char=3}.
	
	We check directly that each $\cS$ appears in at most one of the above lemmas, so for each $\cS$ only one of the sets $\Pht^{\width=w}(\cS)$ is nonempty. Moreover, the classification in \cite{PaPe_ht_2} shows that none of the above singularity types can be realized by a del Pezzo surface of rank one and height at most $2$, see Tables 7--10 loc.\ cit. Thus the set $\Pht^{\width=3}(\cS)\cup \Pht^{\width=2}(\cS) \cup \Pht^{\width=1}(\cS)$ described above equals the set $\Phtl(\cS)$ from Theorem \ref{thm:ht=3}, as needed. 
\end{proof}

\begin{proof}[Proof of Proposition \ref{prop:rigidity}]
	In cases $\width(\bar{X})=3,2$ and $1$ the numbers $h^i$ are computed in Lemmas \ref{lem:w=3_uniqueness}\ref{item:ht=3_h1}, \ref{lem:w=2_uniqueness}\ref{item:w=2_uniq-hi} and \ref{lem:w=1_uniqueness}\ref{item:w=1_hi}, respectively.
\end{proof}

\clearpage

\section{Comparison with Lacini's list}\label{sec:comparison}

Assume $\cha\kk\neq 2,3$. In this case, all (non-canonical) log terminal del Pezzo surfaces of rank one are arranged in 24 broad series described in \cite[\sec 6.1]{Lacini}. We note that the proof of Theorem \ref{thm:ht=3} is independent from \cite{Lacini}, which follows a different method, based on \cite{Keel-McKernan_rational_curves}. In this section, we check which surfaces in \cite[\sec 6.1]{Lacini} have height at least $3$, and identify those of height $3$ with the ones in Theorem \ref{thm:ht=3}. This comparison is summarized in Table \ref{table:Lacini} below, for the comparison in the other direction see Table \ref{table:ht=3_char=0}. 

We recall that the description in \cite{Lacini} does not address the question of uniqueness of del Pezzo surfaces constructed there. In fact, as we will see below, the 24 series in loc.\ cit.\ are not disjoint.
We will also see from the comparison that surfaces from Lemmas \ref{lem:w=3}\ref{item:ht=3_XA_a=3_c=3} and \ref{lem:w=2_cha_neq_2}\ref{item:ht=3_b} seem to be missing in \cite{Lacini}.

\begin{small}
\begin{table}[htbp]
\begin{minipage}[t]{.4\textwidth}\vspace{0pt}	
	\centering
		\begin{tabular}{r|l}
			\cite{Lacini} & this article \\ \hline\hline	
			1 & \ref{lem:w=1_cha-neq-3} \\
			2--4 & $\height=4$ \\ 
			5 & \ref{lem:w=2_cha_neq_2}\ref{item:ht=3_exc_type} \\
			6 & $\height=4$ \\ 
			7 & \ref{lem:w=2_cha_neq_2}\ref{item:ht=3_A0} \\ 
			8 & \ref{lem:w=3}\ref{item:ht=3_XE} \\
			9($\rA_1+\rA_5$) & \ref{lem:w=2_cha_neq_2}\ref{item:ht=3_A22} \\
			9($3\rA_2$) & $\height=4$ \\ 
			10 	& \ref{lem:w=2_cha_neq_2}\ref{item:ht=3_A3E_l=4}, \ref{item:ht=3_A3E_l=5} \\ 
			11 & \ref{lem:w=2_cha_neq_2}\ref{item:ht=3_A3E_l=4} \\
			12 & \ref{lem:w=2_cha_neq_2}\ref{item:ht=3_A0} \\
			13 & see 1st table on the right 
			\\
	14 & \ref{lem:w=3}\ref{item:ht=3_chains} \\
	15 & \ref{lem:w=3}\ref{item:ht=3_YDYD}\\
	16 & \ref{lem:w=3}\ref{item:ht=3_XY_b=4_T-2}\\
	17 (1),(2) & \ref{lem:w=3}\ref{item:ht=3_chains}\\
	17 (3) & \ref{lem:w=3}\ref{item:nu_3=1_e1}, \ref{item:nu=3_C}\\
	18& \ref{lem:w=3}\ref{item:ht=3_chains}\\
	19& \ref{lem:w=3}\ref{item:both_B}\\
	20 & $\height\leq 2$ \\
	21 & has d.e.b. \\ 
	22 & $\height \leq 2$ or has d.e.b. \\
	23 &  see 2nd table on the right \\
	24 & $\height\leq 2$
	\end{tabular}
\end{minipage}	
\begin{minipage}[t]{.5\textwidth}\vspace{0pt}
	\centering	
	\begin{tabular}{r|l}	
			\cite[13]{Lacini} & this article \\ \hline\hline	
			(1) & \ref{lem:w=3}\ref{item:rivet_AC}, \ref{item:rivet_0} \\
				(2) & \ref{lem:w=3}\ref{item:nu_3=1_c2}, \ref{item:nu_3=1_e2} \\
				 (3) & \ref{lem:w=2_cha_neq_2}\ref{item:ht=3_C2}\\
				 (4) & \ref{lem:w=2_cha_neq_2}\ref{item:ht=3_A2B}, \ref{item:ht=3_A2}\\
				 (5)--(8) & $\height=4$ \\ 
				 (9) & \ref{lem:w=2_cha_neq_2}\ref{item:ht=3_BF}, \ref{item:ht=3_BD} \\
				 (10) & \ref{lem:w=2_cha_neq_2}\ref{item:ht=3_C1}\\
				 \multicolumn{2}{c}{}\\ 
				 \multicolumn{2}{c}{}\\
		\cite[23]{Lacini} & this article \\ \hline\hline	
			 5.1(3) & \ref{lem:w=3}\ref{item:nu_3=1_s3}, \ref{item:ht=3_YG}, \ref{item:ht=3_XA_c=2},\\
			 & \ref{item:ht=3_XAA_T=[2,2]},  \ref{item:ht=3_XAA_T=[3]}, \ref{item:ht=3_chains}\\
			 5.1(4) &  \ref{lem:w=3}\ref{item:ht=3_XY_a=3}, \ref{item:ht=3_chains}\\
			 5.1(5: $\rA_1+\rA_2+\rA_5$)& \ref{lem:w=3}\ref{item:rivet_A}, \ref{item:nu_3=1_s1},
			 \ref{item:nu=3_fork_s1=1},
			\ref{item:ht=3_YDYD},\\
			&
			\ref{item:ht=3_XY_b=3_v},
			\ref{item:ht=3_XY_b=3},
			\ref{item:ht=3_chains}
			\\
			 5.1(5: $2\rA_1+2\rA_3$)& \ref{lem:w=2_cha_neq_2}\ref{item:ht=3_A3E_l=3}, \ref{item:ht=3_C3}, 
			 \ref{item:ht=3_A3F},  \ref{item:ht=3_A3EF}\\
			 5.1(5: $4\rA_2$)	& $\height=4$ \\ 
			others & $\height \leq 2$ or has d.e.b. 
		\end{tabular}
\end{minipage}	
		\caption{Del Pezzo surfaces from \cite{Lacini}, cf.\ Table \ref{table:ht=3_char=0}.}
		\label{table:Lacini}
	\end{table}
\end{small}

For completeness, we explain how to deduce the above comparison. Let $\bar{X}$ be a surface from \cite{Lacini}, and let $\cS$ be its singularity type. In each case, we first check if $\height(\bar{X})\leq 2$ or $\bar{X}$ has a descendant with elliptic boundary (this is usually apparent from the constructions in loc.\ cit.). Assume that neither of those conditions hold. If $\cS$ is not listed in Table \ref{table:ht=3_char=0}, then by Theorem \ref{thm:ht=3} we get $\height(\bar{X})\geq 4$. We will discuss such surfaces  in our forthcoming article, where we will show that in fact $\height(\bar{X})=4$. Assume now that $\cS$ is listed in Table \ref{table:ht=3_char=0}, and let $\bar{X}'$ be a corresponding surface from Theorem \ref{thm:ht=3}. The latter result shows that $\bar{X}'$ is uniquely determined by $\cS$, unless $\cS$ is the exceptional type \eqref{eq:25} from Theorem \ref{thm:ht=3}\ref{item:uniq_exotic}, in which case we get two surfaces $\bar{X}_1'$, $\bar{X}_2'$. Our goal is to show that, in case $\cS\neq\mbox{\eqref{eq:25}}$, there is an isomorphism $\bar{X}\cong \bar{X}'$, and in the exceptional case $\cS=\mbox{\eqref{eq:25}}$, there is an isomorphism $\bar{X}\cong \bar{X}_i'$ for some $i\in \{1,2\}$. (In fact, we will see that type \eqref{eq:25} is realized by two surfaces $\bar{X}_1,\bar{X}_2$ in \cite{Lacini}, and we will find isomorphisms $\bar{X}_i\cong \bar{X}_i'$ for $i=1,2$).
 	 
Let $(X,D)$ and $(X',D')$ be the minimal log resolutions of $\bar{X}$ and $\bar{X}'$, respectively. Lemmas \ref{lem:w=3}, \ref{lem:w=2_cha_neq_2} and \ref{lem:w=1_cha-neq-3} cited in Theorem \ref{thm:ht=3} show that on $X'$ there are  $(-1)$-curves $A_j'$ meeting $D'$ in components corresponding to numbers decorated by $\dec{j}$, see Section \ref{sec:notation} for the notation. Using the description of each series in \cite{Lacini}, we find $(-1)$-curves $A_{j}$ on $X$ which meet the exceptional divisor $D$ in the same way as the curves $A_{j}'$ meet $D'$, i.e.\ so that the weighted graphs of $D+\sum_{j}A_{j}$ and $D'+\sum_{j}A_{j}'$ are isomorphic. Contracting $\sum_{j}A_{j}'$ and some new $(-1)$-curves in the subsequent images of $D'$, we get a minimalization $(X',D')\to (Z',B')$ as in Proposition \ref{prop:ht=3_models}. Contracting the corresponding subdivisor of $D+\sum_{j}A_{j}$ we get a minimalization $(X,D)\to (Z,B)$ such that $\rho(Z)=\rho(Z')$ and the weighted graphs of $B$ and $B'$ are isomorphic. The list of minimal log surfaces $(Z',B')$ in Proposition \ref{prop:ht=3_models} shows that there is an isomorphism $(Z,B)\cong (Z',B')$. By the universal property of blowing up, cf.\ \cite[Lemma 2.18]{PaPe_ht_2}, this isomorphism  lifts to an isomorphism $(X,D)\cong (X',D')$, which descends to the required isomorphism $\bar{X}\cong \bar{X}'$.
\smallskip

Now, we follow this strategy for each series LDP1--LDP24 from \cite{Lacini}. We use the notation from loc.\ cit.

\paragraph{LDP 1} This is the surface from Lemma \ref{lem:w=1_cha-neq-3}: the $(-1)$-curves  over $r'$, $p'$ and $q'$ correspond to $\dec{1}$, $\dec{2}$, $\dec{3}$ in \ref{lem:w=1_cha-neq-3}.

\paragraph{LDP 2--4} These surfaces have singularity types $[2,2,3,2]+[(2)_{5},3]$ or, if $\cha\kk=5$:  $[2]+[3]+[2,3]+[2,3]+[2,2,2,2]$, $[2]+[2,3]+[2,4]+[(2)_{5},3]$ or $[2]+[2,4]+[2,3,2,2]+[2,2,2,2]$. They are of height $4$.

\paragraph{LDP 5} 
  This is the surface from Lemma \ref{lem:w=2_cha_neq_2}\ref{item:ht=3_exc_type}: the $(-1)$-curves over $L\cap C$, $p$, and the cusp of $C$ correspond to $\dec{3}$, $\dec{2}$ and $\dec{1}$; while the proper transform of the line joining $L\cap C$ with the cusp corresponds to $\dec{0}$.

\paragraph{LDP 6} This surface is of type $[2,2,3,2,2]+[3,2,2]+[2,2]$ and height $4$.

\paragraph{LDP 7}\label{LDP7} These are the surfaces from Lemma \ref{lem:w=2_cha_neq_2}\ref{item:ht=3_A0}. 
Let $r$ be the cusp of $C$ and let $s$ be the inflection point of $C$, i.e.\ $L\cap C=\{s\}$. The $(-1)$-curve over $r$ corresponds to $\dec{0}$. The proper transforms of the lines joining $r$ with $p$ and $s$ correspond to $\dec{2}$ and $\dec{3}$, respectively. It remains to find a $(-1)$-curve corresponding to $\dec{1}$, i.e.\ meeting $D$ exactly in the first and the second exceptional curve over $r$. 

Let $\sigma\colon \P^2\map \P^2$ be the standard quadratic transformation centered at $p,r,s$, and let $\ll_{p},\ll_{r},\ll_{s}$ be lines contracted by $\sigma^{-1}$ to $p,r,s$, respectively. Then $\cc\de \sigma_{*}C$ is a conic  tangent to $\ll_{r}$ and passing through $\ll_{p}\cap \ll_{s}$; and $\ll\de \sigma_{*}L$ is a line passing through $\ll_r\cap \ll_p$ and tangent to $\cc$ at $\cc\cap \ll_s$. Let $\cc'$ be the conic passing through $\ll_{r}\cap \cc$ and tangent to $\ll$, $\ll_{p}$ at their common points with $\ll_{s}$. We have $\cc'\neq \cc$ since $\cc$ is not tangent to $\ll_p$. Now, the proper transform of $\cc'$ corresponds to $\dec{1}$, as needed.

\paragraph{LDP 8}\label{LDP8} This is the surface from Lemma \ref{lem:w=3}\ref{item:ht=3_XE}. 
The $(-1)$-curves mapped to points of $B$ and $C$ correspond to $\dec{3}$ and $\dec{5}$, respectively. To see the remaining curves, let $\phi\colon \tilde{S}(2\rA_1+\rA_3)\to \F_2$ be the contraction of $B+C$ and the preimages of the $\rA_{1}$-points. Then $\phi(A)\in|2\sigma_2|$, where $|\sigma_2|$ is the linear system of positive sections, i.e.\ sections of self-intersection $2$. Let $F$ be the fiber passing through the node of $\phi(A)$, and let $H_B,H_C\in |\sigma_2|$ be the positive sections  tangent to the node of $\phi(A)$, at the branch which gets blown up twice, and passing through $\phi(B)$ and $\phi(C)$, respectively. Then the proper transforms of $F$, $H_B$ and $H_C$ are $(-1)$-curves corresponding to $\dec{1}$, $\dec{2}$ and $\dec{4}$, respectively.

\paragraph{LDP 9}\label{LDP9} Here we have two cases: $S_2=S(\rA_1+\rA_5)$ or $S_2=S(3\rA_2)$. In the latter case, the resulting surface is of type $[2,4,2,2]+[2,2,3,2,2]+[2,2]$, and height $4$. 

In case $S_2=S(\rA_1+\rA_5)$, we get the surface from Lemma \ref{lem:w=2_cha_neq_2}\ref{item:ht=3_A22}. The $(-1)$-curves over the node of $A$ and over $A\cap B$ correspond to $\dec{1}$ and $\dec{2}$, respectively. To see the remaining ones, let $C$ be the preimage of the $\rA_1$-point, and let $\phi\colon \tilde{S}(\rA_1+\rA_5)\to \P^2$ be the contraction of $B$ and the preimage of the $\rA_5$-point. Then $\phi(A)$ is a nodal cubic, and $\phi(C)$ is a conic tangent to $\phi(A)$ at some smooth point $p$, with multiplicity $6$. Let $\ll$ be the line tangent $\phi(C)$ at $p$; and let $\cc$ be the conic tangent to $\phi(C)$ at $p$ with multiplicity $4$ and passing though the node of $\phi(A)$. Then the proper transforms of $\cc$ and $\ll$ correspond to $\dec{0}$ and $\dec{3}$, respectively. 

\paragraph{LDP 10, 11} Here we get the surface from Lemma  \ref{lem:w=2_cha_neq_2}\ref{item:ht=3_A3E_l=5} if we blow up five times over the node of $A$ (so we are in case LDP 10), and the one from Lemma \ref{lem:w=2_cha_neq_2}\ref{item:ht=3_A3E_l=4} if we blow up four times (we get $k=3$ in case LDP 10 and $k=4$ in case LDP 11). The $(-1)$-curves over the node of $A$ and over $A\cap B$ correspond do $\dec{1}$ and $\dec{3}$, respectively; and the proper transforms of $F$ and $C$ from \hyperref[LDP8]{LDP 8} above correspond to $\dec{0}$ and $\dec{2}$, respectively.

\paragraph{LDP 12} This is the surface from Lemma \ref{lem:w=2_cha_neq_2}\ref{item:ht=3_A0} for $k=3$. 
The $(-1)$-curves over $A\cap B$ and over the node of $A$ correspond to $\dec{0}$ and $\dec{1}$, respectively. To see the remaining ones, let $\phi\colon \tilde{S}(\rA_1+\rA_5)\to \P^2$, $p\in \P^2$ and $\cc\subseteq \P^2$ be as in \hyperref[LDP9]{LDP 9} above; and let $\ll'$ be the line joining $p$ with the node of $\phi(A)$. Then the proper transforms of $\ll'$ and $\cc$ correspond to $\dec{2}$ and $\dec{3}$, respectively. Note that this surface has already appeared in \hyperref[LDP7]{LDP 7} above.

\paragraph{LDP 13 (1)}\label{LDP13(1)} This is the surface from Lemma \ref{lem:w=3}\ref{item:rivet_0} for $k=s-3\in \{3,4\}$ if $t=1$; and from \ref{lem:w=3}\ref{item:rivet_AC} if $t=2$. The $(-1)$-curves over the node of $A$ and over the $\rA_{1}$-point $p$ correspond to $\dec{1}$ and $\dec{3}$, respectively. 

To see the remaining ones, let $\phi\colon \tilde{S}(\rA_1+\rA_2)\to \P^2$ be the contraction of $B$ and the preimage of the $\rA_2$-point. Now $\qq\de \phi(A)$ is cubic with a node, call it $r$, and the total transform of the $\rA_{1}$-point is a line tangent to $\qq$ at an inflection point, cal it $q$. Let $\ll_{qr}$ be the line joining $q$ with $r$, let $\ll$ be the line tangent to the chosen branch $\beta$ of $\qq$ at the node $r$, and let $\cc$ be the conic such that $(\cc\cdot \qq)_{q}=2$, $(\cc\cdot \beta)_{r}=3$. 
Now the proper transforms of $\ll$, $\ll_{qr}$ and $\cc$ correspond to $\dec{2}$, $\dec{4}$ and $\dec{5}$, respectively.
\paragraph{LDP 13 (2)} This is the surface from Lemma \ref{lem:w=3}\ref{item:nu_3=1_c2} for $k=s-3$ if $t=1$, and from \ref{lem:w=3}\ref{item:nu_3=1_e2} for $k=t-1$ if $t>1$. The exceptional curves over the node of $A$ and over the $\rA_2$-point $p$ correspond to $\dec{2}$, $\dec{5}$, respectively; and the proper transforms of $\ll_{qr}$, $\cc$ and $\ll$ defined in case \hyperref[LDP13(1)]{(1)} above correspond to $\dec{1}$, $\dec{3}$ and $\dec{4}$.

\paragraph{LDP 13 (3), (4)} In case (3), we get the surface from Lemma  \ref{lem:w=2_cha_neq_2}\ref{item:ht=3_C2}. In case (4), we get the one from  \ref{lem:w=2_cha_neq_2}\ref{item:ht=3_A2}, with $k=s$ if $t=1$; and from \ref{lem:w=2_cha_neq_2}\ref{item:ht=3_A2B} if $t=2$. The $(-1)$-curves over the node of $A$ and over $p$ correspond to $\dec{1}$ and $\dec{2}$; respectively; and the proper transforms of $\cc$ and $\ll$ from \hyperref[LDP9]{LDP 9} above correspond to $\dec{0}$ and $\dec{3}$.

\paragraph{LDP 13 (5)--(8)} These surfaces are of types: $[2,2,3,2,2,2]+[3,2]+[2,2]$ in case (5); $[2,3,2,2,2]+[3,2,2,2,2]$ in case (6), $[2,3,(2)_{6}]+[3,2]$ in case (7); $[2,3,2,2]+[2,3,2]+[2,2,2]$ in case (8) if the curve $B$ meets the preimage of the $\rA_1$-point; and $[2,3,2,2,2,2]+[2,2,3]+[2]$ otherwise. They are of height $4$.

\paragraph{LDP 13 (9), (10)} In case (9) we get the surface from \ref{lem:w=2_cha_neq_2}\ref{item:ht=3_BD}, $k=s-1$ if $t=1$; and from  \ref{lem:w=2_cha_neq_2}\ref{item:ht=3_BF} if $t=2$. In case (10), we get the one from   \ref{lem:w=2_cha_neq_2}\ref{item:ht=3_C1}. The $(-1)$-curves over the node $A$ and over $p$ correspond to $\dec{1}$ and $\dec{3}$, respectively, and the proper transforms of curves $F$ and $C$ from \hyperref[LDP8]{LDP 8} above correspond to $\dec{0}$ and $\dec{2}$.

\paragraph{LDP 14--16} Here we get the surface from \ref{lem:w=3}\ref{item:ht=3_chains}, $(a,b,c,d)=(2,s-2,t,2)$ in case LDP 14 and the one from \ref{lem:w=3}\ref{item:ht=3_YDYD}, \ref{item:ht=3_XY_b=4_T-2}  in cases LDP 15, 16. 
The $(-1)$-curves over the node of $A$ and over $A\cap B$ correspond to $\dec{2}$ and $\dec{3}$ in case LDP 14, and $\dec{3}$, $\dec{2}$ in cases LDP 15, 16; while the proper transforms of $\ll$, $\ll_{qr}$ and $\cc$ defined in \hyperref[LDP13(1)]{LDP 13(1)} above correspond to $\dec{1}$, $\dec{4}$, and $\dec{5}$.

\paragraph{LDP 17}\label{LDP17} Write the morphism $\tilde{S}\to \F_{2}$ from loc.\ cit.\ as $\pi_1\circ \pi_2$, where $\pi_2$ is described in (1)--(3). Put $k=\rho(\pi_2)$. For $x\in \{p,u\}$ let $L_{x}\subseteq \tilde{S}$ be the proper transform of the exceptional $(-1)$-curve of $\pi_1$ over the point $x$, and let $L$ be the exceptional $(-1)$-curve of $\pi_{2}$. For $y\in \{p,q\}$ let $F_{y}\subseteq \tilde{S}$ be the proper transforms of the fibers of $\F_2$ passing through the point $y$; and for $x\in \{p,q\}$, $\{H,H'\}=\{A,B\}$ let $C_{x,H}\subseteq \tilde{S}$ be the proper transform of the unique section of self-intersection $2$ which is tangent to $H$ at $x$ and passes through $H'\cap F$. In cases (1)--(3) we get the following surfaces: 
\begin{description}
	\item[(1), $(s,r)=(3,2)$] \ref{lem:w=3}\ref{item:ht=3_chains} with $(a,b,c,d)=(3,2,k-1,2)$; 
	$L_p$, $L_u$, $L$, $F_p$, $F_q$ correspond to $\dec{4}$, $\dec{1}$, $\dec{3}$, $\dec{5}$, $\dec{2}$;
	\item[(1), $(s,r)=(4,1)$] \ref{lem:w=3}\ref{item:ht=3_chains} with $(a,b,c,d)=(k,2,3,2)$; 
	$L_p$, $L_u$, $L$, $F_p$, $F_q$ correspond to $\dec{3}$, $\dec{4}$, $\dec{1}$, $\dec{2}$, $\dec{5}$;
	\item[(2)] \ref{lem:w=3}\ref{item:ht=3_chains} with $(a,b,c,d)=(k,r+1,2,s-1)$: here 
	$L_p$, $L_u$, $L$, $F_p$, $C_{p,B}$ correspond to $\dec{4}$, $\dec{2}$, $\dec{1}$, $\dec{3}$, $\dec{5}$;
	\item[(3)] \ref{lem:w=3}\ref{item:nu=3_C} if $(s,r)=(3,2)$; \ref{lem:w=3}\ref{item:nu_3=1_e1} if $(s,r)=(4,1)$: here $L_p$, $L_u$, $L$, $F_p$, $C_{p,B}$ correspond to $\dec{2}$, $\dec{5}$, $\dec{1}$, $\dec{4}$, $\dec{3}$.
\end{description}

\paragraph{LDP 18} 
We use notation from \hyperref[LDP17]{LDP 17} above, and we treat the case $(s,r)=(3,2)$ as a subcase of (2). In cases (1)--(3) we get the surface from \ref{lem:w=3}\ref{item:ht=3_chains} for the following values of parameters $(a,b,c,d)$: 
\begin{description}
	\item[(1)] $(a,b,c,d)=(s+1,2,k-2,2)$: here $L_p$, $L_u$, $L$, $F_q$, $C_{q,A}$  correspond to $\dec{1}$, $\dec{5}$, $\dec{3}$, $\dec{4}$, $\dec{2}$;
	\item[(2)] $(a,b,c,d)=(2,s,r+1,k-1)$: here $L_p$, $L_u$, $L$, $F_p$, $C_{p,B}$ correspond to $\dec{2}$, $\dec{3}$, $\dec{4}$, $\dec{1}$, $\dec{5}$;
	\item[(3)] $(a,b,c,d)=(s,2,2,k-1)$: here $L_p$, $L_u$, $L$, $F_p$, $C_{p,A}$ correspond to $\dec{1}$, $\dec{2}$, $\dec{4}$, $\dec{3}$, $\dec{5}$.
\end{description}

\paragraph{LDP 19\protect\footnote{The second blowup over $u$ should be centered on the proper transform of $F$, not $A$. Moreover, the last blowup over $p$, the one \enquote{away from $A$}, is centered at the common point of the last two exceptional curves over $p$.}}  This is the surface from Lemma \ref{lem:w=3}\ref{item:both_B}. As in \hyperref[LDP17]{LDP 17} above, let $F_{p}$ be the proper transform of the fiber passing through $p$, let $L_{p},L$ be the $(-1)$-curves over $p$ such that $L_{p}$ meets the proper transform of $B$, and let $C$ be the proper transform of the section of self-intersection number $+2$ which passes through $u$ and is tangent to $B$ at $p$. Now $C$, $L$, $F_{p}$ and $L_{p}$ correspond to $\dec{1}$, $\dec{2}$, $\dec{4}$ and $\dec{5}$, respectively. 

To see $\dec{3}$, let $\phi\colon \F_{2}\map \P^2$ be a blowup at $p$ followed by the contraction of the proper transform of $F_p$ and of the negative section. Then $\phi_{*}A$, $\phi_{*}B$, $\phi_{*}F$ and the image of the exceptional curve over $p$, call it $\ll$, are lines in a general position. Let $\cc$ be the conic passing through $\ll\cap \phi_{*}A$ and tangent to $\phi_{*}F$, $\phi_{*}B$ at their common points with $\phi_{*}A$ and $\ll$, respectively. Then the proper transform of $\cc$ is the $(-1)$-curve corresponding to $\dec{3}$.

\paragraph{LDP 20} This surface has height at most $2$. Indeed, the  $\P^1$-fibration of $\F_n$ pulls back to a $\P^1$-fibration of $X$ such that the only horizontal components of $D$ are the proper transforms of two disjoint sections.

\paragraph{LDP 21} This surface has a descendant with elliptic boundary, described in Example B.11 loc.\ cit.

\paragraph{LDP 22--23} These surfaces have tigers, and are described in Theorem 6.2(2)--(4) loc.\ cit, as follows. Let $(S,C)$ be a log canonical log surface such that $S$ is a del Pezzo surface of rank one, and $C$ is reduced, and $K_{S}+C$ is anti-nef. They are classified in Section 5 loc.\ cit. The corresponding surface from LDP 22 is simply $S$, see Theorem 6.2(4) loc.\ cit. For LDP 23, let $(\tilde{S},\tilde{C}+D_{\tilde{S}})$ be the minimal log resolution of $(S,C)$, where $\tilde{C}$ is the proper transform of $C$; let $\pi\colon X\to \tilde{S}$ be a sequence of blowups over some  point $p\in \tilde{C}$, and define $D$ as $\pi^{*}(\tilde{C}+D_{\tilde{S}})$ minus the $(-1)$-curve. Contracting $D$, we get the corresponding del Pezzo surface $\bar{X}$ from LDP~23, see Theorem 6.2(3) loc.\ cit. 
The initial pair $(S,C)$ is as in one of the following results of \sec 5 loc.\ cit:

\subparagraph{5.1(1), 5.2(1)--(6), 5.3--5.5, 5.6(1),(3), 5.7(1),(3),(4), 5.9(2)--(4),(7),(8), 5.11(2),(3),(5)--(7),(9),(10)} Here by construction we have $\height(\bar{X})\leq \height(S,C)\leq 2$, or $S=\P^2$ and some pencil of lines pulls back to a $\P^1$-fibration of height at most $2$ on $(X,D)$.

\subparagraph{5.2(7), 5.6(2), 5.7(2), 5.9(1)} We claim that $\height(S,C)\leq 2$, so $\height(\bar{X})\leq 2$ as above.  

In these cases there is a birational morphism $\upsilon\colon (\tilde{S},\tilde{C}+D_{\tilde{S}})\to (Y,D_Y)$ contracting $\tilde{C}$, such that $D_Y=E_Y+T_0+T_1+T_2$, where $T_j$ are zero or disjoint admissible chains, $E_Y$ is a smooth rational curve meeting each nonzero $T_j$ once; and either $T_0=0$ or $T_1,T_2=[2]$. Moreover, $\upsilon^{-1}$ has exactly one base point, say $p$, such that $p\in E_Y$; either $p\in T_j$ or $\beta_{D_{Y}}(E_Y)\leq 1$, and if $T_0\neq 0$ then $p\in T_0$. Since $\tilde{C}+D_{\tilde{S}}$ is not negative definite, neither is $D_Y$, which implies that $E_Y^2\geq -1$. Since all components of $\tilde{C}+D_{\tilde{S}}$ have self intersection number at most $-1$, there is a decomposition $\upsilon=\theta\circ\theta_{+}$ such that either $\theta^{-1}_{*}E_{Y}=[0]$, or $\theta^{-1}_{*}(T_1+E_{Y}+T_2)=[2,1,2]$ supports a fiber of a $\P^1$-fibration whose pullback has height at most two, as needed.

\subparagraph{5.1(2), 5.10, 5.11(1),(4),(8)} In these cases, by construction $\bar{X}$ has a descendant with elliptic boundary.

\subparagraph{5.9(5)} We claim that $\bar{X}$ has a descendant with elliptic boundary, or $\height(\bar{X})\leq \height(S,C)\leq 2$ as above.

By construction, there is a morphism $(\tilde{S},\tilde{C}+D_{\tilde{S}})\to(Y,D_Y)$ contracting $\tilde{C}$ such that $(Y,D_Y)\to(S,E)$ is the minimal log resolution of a log canonical log surface such that $K_{S}+E\equiv 0$ and $E$ has exactly one singular point, which is a node. Lemma A.19(2) loc.\ cit.\ implies that the reduced total transform of $E$, call it $R$, is a rational circular connected component of $D_Y$. Let $\theta\colon (Y,D_Y)\to (Z,D_Z)$ be the contraction of $D_Y-R$ and all $(-1)$-curves in $R$ and its images. If $D_Z$ is irreducible then $(Z,D_Z)$ is a descendant of $\bar{X}$ with elliptic boundary. Otherwise, $D_Z$ is a rational circular divisor in $Z\reg$ with a component $G$ satisfying $G^2\geq 0$. As before, using the fact that all components of $\tilde{C}+D_{\tilde{S}}$ have negative self-intersection numbers, we conclude that the total transform of $G$ contains a support of a fiber of a $\P^1$-fibration of height $2$, as needed.

\subparagraph{5.9(6)} Here $(S,C)$ is as in Lemma 5.8 loc.\ cit. In cases 5.8(2),(3), we have by construction $\height(S,C)\leq 2$. In the remaining case 5.8(1), there is a birational morphism $(\tilde{S},\tilde{C})\to (S,A+B)$, where $S$ is a del Pezzo surface of rank one, and $A+B$ are two smooth rational curves meeting normally at two points; contained in the smooth locus of $S$ (see the proof of 5.8 loc.\ cit). Arguing as in case 5.2(7) above we see that $\height(\bar{X})\leq 2$.

\subparagraph{5.1(3)--(5)} Here $S$ is a canonical surface of type\footnote{Types $\rA_1+2\rA_3$, $4\rA_1$ in 5.1(5) loc.\ cit.\ should be replaced by $2\rA_1+2\rA_3$ and $4\rA_2$, respectively, see \cite[13.5]{Keel-McKernan_rational_curves}.} $\rA_1+2\rA_3$, $3\rA_2$, $\rA_1+\rA_2+\rA_5$, $2\rA_1+2\rA_3$, or $4\rA_2$. Recall that in case LDP 22 we put $\bar{X}=S$, so  $\height(\bar{X})\leq 2$ or $\bar{X}$ has a descendant with elliptic boundary.

It remains to analyze series LDP 23, with $S$ as in 5.1(3)--(5). In this case, we have a morphism $\pi\colon (X,D)\to (\tilde{S},D_{\tilde{S}}+\tilde{C})$. By the definition of $C$ in 5.1 we have $K_{S}+C=0$ and $C$ is as in \cite[19.2, 13.5]{Keel-McKernan_rational_curves}. Its proper transform $\tilde{C}$ on $\tilde{S}$ satisfies $\tilde{C}\cdot D_{\tilde{S}}=3$ and meets tips of  of three different chains which are connected components of $D_{\tilde{S}}$. Denote these connected components by $T_{i}$, $i\in \{1,2,3\}$, put $d_{i}=d(T_i)$, so $T_{i}=[(2)_{d_i-1}]$, and order each chain $T_i$ so that it meets $\tilde{C}$ in its last tip. Now the condition $K_{S}+C=0$ implies that
\begin{equation}\label{eq:C}
	\tilde{C}=-K_{\tilde{S}}-\sum_{i=1}^{3}\sum_{j=1}^{d_i-1}\frac{j}{d_{i}}T_{i}\cp{j},\qquad\mbox{so}\quad \tilde{C}=[\#D_{\tilde{S}}-7] \quad\mbox{and}\quad \sum_{i=1}^{3}\frac{1}{d_i}=1
\end{equation}
by Noether and adjunction formulas.  Write $\pi^{-1}_{*}\tilde{C}=[k]$, let $A$ be the $(-1)$-curve in $\Exc\pi$, let $U$ be the connected component of $D$ containing $\pi^{-1}_{*}\tilde{C}$, and let $\{p\}=\pi(A)\subseteq \tilde{C}$ be the base point of $\pi^{-1}$. Note that $p\in D_{\tilde{S}}$, since otherwise $U$ would contain a fork whose twigs $T_{i}$ satisfy $\sum_{i}\frac{1}{d(T_i)}=1$ by \eqref{eq:C}, which is impossible. 

 We use the construction of $\tilde{S}$ given in \cite[\sec 5A]{PaPe_ht_2} and \cite[Example 7.1]{PaPe_MT}, in particular, we use notation $A_j$  and $L_j$ for $(-1)$- and $(-2)$-curves on $\tilde{S}$ shown in \cite[Figures 19(a)--(c), 20(b)]{PaPe_ht_2} and \cite[Figure 6(a)]{PaPe_MT}.

\subparagraph{5.1(3): $S$ is of type $\rA_1+2\rA_3$, see \cite[Figure 19(b)]{PaPe_ht_2}} Say that $T_{i}=[2,2,2]$ for $i\in \{1,2\}$, so $T_3=L_1=[2]$. We have $(-1)$-curves $A_0,\dots, A_3$ meeting $D_{\tilde{S}}$ twice each, such that $A_{i-1}$ meets $L_1$ and $T_{i}\cp{2}$, $i\in \{1,2\}$, and the divisor $T_1+A_2+T_2+A_3$ is circular. Intersecting each $A_j$ with both sides of the equation \eqref{eq:C} we infer that $\tilde{C}\cdot A_j=0$, and the tips of $T_1,T_2$ meeting $\tilde{C}$ do not meet the same curve $A_{j}$. Applying an automorphism of $\tilde{S}$, if needed, we can assume that $\tilde{C}$ meets $L_1$, $L_2$ and the $(-2)$-curve over $p_3$, and $\{p\}=L_1\cap \tilde{C}$ or $\{p\}=L_2\cap \tilde{C}$. 

If $\{p\}=L_1\cap \tilde{C}$ then the admissibility of $U$ implies that $U$ is a chain and $\bar{X}$ is the surface from Lemma \ref{lem:w=3}\ref{item:nu_3=1_s3}, where the proper transforms of $A$, $A_0$, $A_1$, $A_2$ and $A_3$ correspond to $\dec{1}$, $\dec{5}$, $\dec{3}$, $\dec{2}$ and $\dec{4}$, respectively.

Assume that $\{p\}=L_2\cap \tilde{C}$. Then $U$ is a chain $[2,k,2,2,2]$ or a fork $\langle k;[2],[2,2,2],T\rangle$ with $d(T)\leq 3$; and $W\de D-U$ is a chain $[2,2,3,(2)_{k-1}]$ or $[2,2,3,(2)_{k-1}]*T^{*}$; respectively. Put $T=0$ in the former case. Let $L=[1]$ be the proper transform of the line joining $p_1$ with $\phi(C)\cap \ll_{2}$; it meets $D$ in the first and fourth component of $W$, and is disjoint from $A$, $A_0$, $A_2$ and $A_3$.
	
	Consider the case $k=2$. If $T=[3]$ then $\bar{X}$ is the surface from Lemma \ref{lem:w=3}\ref{item:ht=3_YG}: indeed, $A$, $A_0$, $A_2$, $A_3$ and $L$ correspond to $\dec{2}$, $\dec{1}$, $\dec{4}$, $\dec{5}$ and $\dec{3}$, respectively. Otherwise, we have $T=[(2)_{d-1}]$ for some $d\in \{1,2,3\}$; $W=[d,3,2,2]$ and $W+L$ is circular. If $d\in \{1,2\}$ then $W+L$ contains a chain $[d,1,(2)_{d-1}]$, which supports a fiber of a $\P^1$-fibration of height $2$. If $d=3$ then $L$ is an elliptic tie, so $\bar{X}$ has a descendant with elliptic boundary. 
	
	Consider the case $k\geq 3$. If $U$ is a chain then $\bar{X}$ is the surface from Lemma \ref{lem:w=3}\ref{item:ht=3_chains} for $(a,b,c,d)=(k,2,2,2)$: indeed, the $(-1)$-curves $A$, $A_0$, $A_1$, $A_2$ and $A_3$  correspond to $\dec{1}$, $\dec{5}$, $\dec{2}$, $\dec{3}$ and $\dec{4}$, respectively. If $U$ is a fork then $\bar{X}$ is the surface from Lemma \ref{lem:w=3}\ref{item:ht=3_XA_c=2}, \ref{item:ht=3_XAA_T=[2,2]}, or \ref{item:ht=3_XAA_T=[3]} if $T=[2]$, $[2,2]$ or $[3]$, respectively: indeed, the $(-1)$-curves $A$, $A_0$, $A_1$, $A_2$ and $A_3$ correspond to $\dec{1}$, $\dec{5}$, $\dec{2}$, $\dec{3}$ and $\dec{4}$, respectively.

\subparagraph{5.1(4): $S$ is of type $3\rA_2$, see \cite[Figure 19(a)]{PaPe_ht_2}} We have $(-1)$-curves $L_1$, $A_0$, $A_{1}$ on $\tilde{S}$ such that the divisor $T\de D_{\tilde{S}}+L_1+A_0+A_1$ is circular. 
Intersecting each component of $T$ with both sides of the equation \eqref{eq:C}, we see that $\tilde{C}$ meets every third component of $T$. Applying an automorphism of $\tilde{S}$, if needed, we can assume that $\tilde{C}$ meets $T$ in $L_2$ and in the first (resp.\ second) exceptional curve over $p_0$ (resp.\ $p_1$); and $\{p\}=\tilde{C}\cap L_2$. Thus $\phi(\tilde{C})$ is a conic passing through $p_0$ and tangent to $\cc$ at $p_1$. Let $L$ be the proper transform of the line joining $\phi(p)$ with $p_1$; and let $L'$ be the proper transform of the line tangent to $\phi(\tilde{C})$ at $\phi(p)$. 

We have $L=[1]$, $L'=[1]$. As before, we check that $U=[2,2,k,2,2]$ or $\langle k;[2,2],[2,2],[2]\rangle$; and $W\de D-U=[2,3,(2)_{k}]$ or $[2,3,(2)_{k-1},3]$, respectively. If $k=2$ then $W+L'$ is circular, so either it contains a chain $[2,1,2]$, or $L'$ is an elliptic tie, hence either $\height(\bar{X})\leq 2$ or $\bar{X}$ has a descendant with elliptic boundary. Assume $k\geq 3$. If $U$ is a fork then $\bar{X}$ is the surface from Lemma \ref{lem:w=3}\ref{item:ht=3_XY_a=3}: indeed the $(-1)$-curves, $A$, $A_0$, $A_1$, $L$ and $L'$ correspond to $\dec{3}$, $\dec{1}$, $\dec{5}$, $\dec{4}$ and $\dec{2}$, respectively. If $U$ is a chain then $\bar{X}$ is as in Lemma \ref{lem:w=3}\ref{item:ht=3_chains} for $(a,b,c,d)=(3,2,k-1,2)$: indeed, the $(-1)$-curves $A$, $A_0$, $A_1$, $L$ and $L'$ correspond to $\dec{3}$, $\dec{1}$, $\dec{5}$, $\dec{4}$ and $\dec{2}$, respectively.

\subparagraph{5.1(5) with $S$ of type $\rA_1+\rA_2+\rA_5$, see \cite[Figure 19(c)]{PaPe_ht_2}} Like before, intersecting both sides of the equation \eqref{eq:C} with $A_j$ and possibly applying an automorphism of $\tilde{S}$, we infer that $\tilde{C}\cdot A_j=0$ and $\tilde{C}$ meets $L_1$ and the $(-2)$-curve over $p_2$ and $q_3$, call them $V_2$ and $V_3$, respectively. Thus $\phi(\tilde{C})$ is the line joining $p_2$ with $q_3$. The admissibility of $D$ implies that $p\in D_{\tilde{S}}$, so we have three cases: $\{p\}=\tilde{C}\cap L_1$, $\tilde{C}\cap V_2$ or $\tilde{C}\cap V_{3}$.

\begin{casesp}
	\litem{$\{p\}=\tilde{C}\cap L_1$} Then $U=[2,k,(2)_{5}]$ or $\langle k;[2],[2],[(2)_{5}]\rangle$. If $k\geq 3$ then $\bar{X}$ is the surface from Lemma \ref{lem:w=3}\ref{item:nu_3=1_s1} or \ref{item:nu=3_fork_s1=1}, respectively: indeed, the $(-1)$-curves $A$, $A_0$, $A_1$, $A_2$ and $A_3$ correspond to $\dec{5}$, $\dec{3}$, $\dec{1}$, $\dec{2}$ and $\dec{4}$, respectively. If $k=2$ then $\bar{X}$ has a descendant with elliptic boundary: indeed, an elliptic tie is the proper transform of the conic passing through $p_1$ and tangent to the lines $\ll_2$ and $\ll_3$ at points $p_2$ and $q_3$.

\litem{$\{p\}=\tilde{C}\cap V_2$} Then $U=[2,k,2,2]$ or $\langle k;[2],[2,2],T\rangle$ for some admissible chain $T$ with $d(T)\leq 5$. If $U$ consists of $(-2)$-curves then $A_2$ is an elliptic tie, so $\bar{X}$ has a descendant with elliptic boundary. If $k\geq 3$ then $\bar{X}$ is the surface from Lemma \ref{lem:w=3}\ref{item:ht=3_chains} for $(a,b,c,d)=(2,3,k-1,2)$ if $U$ is a chain and from Lemma \ref{lem:w=3}\ref{item:ht=3_XY_b=3} if $U$ is a fork: indeed, the $(-1)$-curves $A$, $A_0$, $A_1$, $A_2$, $A_3$ correspond to $\dec{3}$, $\dec{5}$, $\dec{4}$, $\dec{2}$ and $\dec{1}$, respectively. Eventually, assume that $k=2$, $U$ is a fork and $T$ is not a $(-2)$-chain. Then $\bar{X}$ is the surface from Lemma \ref{lem:w=3}\ref{item:ht=3_YDYD} with $k=4$ if $T=[3]$ and from \ref{lem:w=3}\ref{item:ht=3_XY_b=3_v}, otherwise: indeed, the $(-1)$-curves $A$, $A_0$, $A_1$, $A_2$, $A_3$ correspond to $\dec{2}$, $\dec{5}$, $\dec{4}$, $\dec{3}$ and $\dec{1}$, respectively.

\litem{$\{p\}=\tilde{C}\cap V_3$} Then $U=[2,2,k,(2)_{5}]$. If $k\geq 3$ then $\bar{X}$ is as in Lemma \ref{lem:w=3}\ref{item:rivet_A}: indeed, the curves $A$, $A_0$, $A_1$, $A_2$ and $A_3$ correspond to $\dec{2}$, $\dec{5}$, $\dec{1}$, $\dec{4}$ and $\dec{3}$, respectively. Assume $k=2$. We claim that $\bar{X}$ has a descendant with elliptic boundary. To prove this, it is enough to find an elliptic tie, i.e.\ a $(-1)$-curve meeting $D$ only in the proper transform of $V_3$, twice. Let $V_i$ be the $(-2)$-curve over $p_i$, $i\in \{0,1\}$, and let $\tilde{\phi}\colon \tilde{S}\to \P^2$ be the contraction of $A_0+(A_1+V_1)+(A_2+L_2)+(A_3+L_3)+\tilde{C}$. Then $\tilde{\phi}(V_0)$, $\tilde{\phi}(L_1)$, $\tilde{\phi}(V_2)$ and $\tilde{\phi}(C)$ are lines in a general position, and $\tilde{\phi}(V_3)$ is the line joining the points $\tilde{\phi}(\tilde{C})=\tilde{\phi}(L_1)\cap \tilde{\phi}(V_2)$ and $\tilde{\phi}(A_3)=\tilde{\phi}(V_0)\cap \tilde{\phi}(C)$. The required elliptic tie is the proper transform of the conic passing through the point $\tilde{\phi}(A_0)=\tilde{\phi}(V_0)\cap \tilde{\phi}(L_1)$ and tangent to the lines $\tilde{\phi}(C)$, $\tilde{\phi}(V_2)$ at points $\tilde{\phi}(A_1)=\tilde{\phi}(C)\cap \tilde{\phi}(L_1)$ and $\tilde{\phi}(A_2)=\tilde{\phi}(V_2)\cap \tilde{\phi}(V_0)$. 
\end{casesp}

\subparagraph{5.1(5) with $S$ of type $2\rA_1+2\rA_3$, see \cite[Figure 20(b)]{PaPe_ht_2}} The curve $\tilde{C}$ meets either two connected components of $D_{\tilde{S}}$ of type $[2]$ and one of type $[2,2,2]$, or one of type $[2]$ and two of type $[2,2,2]$. In the first case, intersecting both sides of the equation \eqref{eq:C} with $A_2$ and $A_3$ we get that $\tilde{C}$ does not meet the connected components of $D_{\tilde{S}}$ of type $[2,2,2]$, which is false. Thus the second case holds. Applying an automorphism of $\tilde{S}$, if needed, we can assume $\tilde{C}=A_0$ and $p\not\in L_3$, so $p\in L_1\cup L_2$. The admissibility of $D$ implies that $p\in D_{\tilde{S}}$, so we have two cases: $\{p\}=\tilde{C}\cap L_1$ or $\tilde{C}\cap L_2$. 
Let $L=[1]$ be the proper transform of the line joining $p_2$ with $p_3$.

\begin{casesp}
\litem{$\{p\}= \tilde{C}\cap L_1$} If $U$ consists of $(-2)$-curves then the proper transform of $A_1$ is an elliptic tie, so $\bar{X}$ has a descendant with elliptic boundary. In the other case, $\bar{X}$ is the surface from Lemma \ref{lem:w=2_cha_neq_2}\ref{item:ht=3_A3EF} if $k=2$, from \ref{lem:w=2_cha_neq_2}\ref{item:ht=3_A3E_l=3} if $U$ is a chain, and from \ref{lem:w=2_cha_neq_2}\ref{item:ht=3_A3F} otherwise. Indeed, in each case the $(-1)$-curves $A$, $A_1$, $A_3$ and $L$ correspond to $\dec{3}$, $\dec{1}$, $\dec{0}$ and $\dec{2}$, respectively.

\litem{$\{p\}=\tilde{C}\cap L_2$} 
If $k=2$ then $\bar{X}$ has a descendant with elliptic boundary: indeed, an elliptic tie is the proper transform of a conic $\qq$ such that $(\qq\cdot \ll_1)_{p_1}=2$, $(\qq\cdot \cc)_{p_3}=3$. If $k\geq 3$ then $\bar{X}$ is the surface from Lemma \ref{lem:w=2_cha_neq_2}\ref{item:ht=3_C3}: as before, the $(-1)$-curves $A$, $A_1$, $A_3$ and $L$ correspond to $\dec{3}$, $\dec{1}$, $\dec{0}$ and $\dec{2}$, respectively.
\end{casesp}

\subparagraph{5.1(5) with $S$ of type $4\rA_2$, see \cite[Figure 6(a)]{PaPe_MT}} Applying an automorphism of $\tilde{S}$, see \cite[Lemma 7.2(d)]{PaPe_MT}, we can assume that $\tilde{C}=A_1$ and $\{p\}=\tilde{C}\cap Q$. If $k=2$ then the proper transform of $A_0$ is an elliptic tie, so $\bar{X}$ has a descendant with elliptic boundary. If $k\geq 3$, then $\bar{X}$ is of type $[2,2,k,2,2]+[(2)_{k-2},3,2]+[2,2]$ or $\langle k;[2],[2,2],[2,2]\rangle+[3,(2)_{k-3},3,2]+[2,2]$. We will see in a forthcoming article that in this case $\height(\bar{X})=4$.

\paragraph{LDP 24} By construction, surfaces in this class are of height at most two.

\clearpage
\newgeometry{top=2cm, bottom=1.8cm, left=2cm, right=2cm, twoside=false}
\section*{Tables}

We now list all singularity types of surfaces in Theorem \ref{thm:ht=3}, that is, del Pezzo surfaces of rank one and height $3$, which have no descendants with elliptic boundary. Each such singularity type is of the form $T_1+\dots+T_{k}$, where $T_{i}$ is a type of an admissible chain or an admissible fork, which is a minimal resolution graph of the corresponding singular point. We use conventions summarized in Section \ref{sec:notation}. Thus $[a_1,\dots,a_k]$ is a chain whose subsequent components have self-intersection numbers $-a_1,\dots,-a_n$; and $\langle b;T_1,T_2,T_3\rangle$ is a  fork with a branching component $[b]$ and twigs $T_1$, $T_2$, $T_3$. For an integer $k\geq 0$ we write $(2)_{k}$ for a sequence consisting of the integer $2$ repeated $k$ times, which corresponds to a chain of $k$ $(-2)$-curves. We also use notation the \enquote{$*$} and \enquote{$(2)_{-1}$} introduced in formula \eqref{eq:conventions_Tono}.

\begin{small}
\begin{table}[h!]
	\addtolength{\leftskip} {-1cm} 
	\addtolength{\rightskip}{-1cm}
	\begin{tabular}{c|r|lll|l|c|c}
		$\cha \kk$ && \multicolumn{4}{c|}{Singularities} & swaps to & \cite{Lacini} \\ \hline\hline
		any & \eqref{eq:25} & $[2,2,3,(2)_{5}]$ &$+\ [3,2]$ &&\ref{lem:w=3}\ref{item:rivet_A}, $k=3$ & \ref{ex:w=3}\ref{item:ht=3_A1+A2+A5} &23[5.1(5)] \\
		&&&&& \  or \ref{item:nu_3=1_c2}, $k=3$ && 13(2) \\
		& \ref{lem:w=3}\ref{item:rivet_A} & $[2,2,k,(2)_{5}]$ &$+\ [3,(2)_{k-2}]$ && $k\geq 4$ & & 23[5.1(5)] \\
		& \ref{item:rivet_AC} & $[2,2,3,3,(2)_{5}]$& $+\ [3,2]$ &&& & 13(1) \\		
		& \ref{item:rivet_0} & $[2,2,2,k,(2)_{k+2}]$ & $+\ [3]$ && $k\in \{3,4\}$ & & 13(1) \\
		& \ref{item:nu_3=1_c2} & $[2,2,k,(2)_{k+2}]$ & $+\ [3,2]$ && $k\in \{4,5\}$ & & 13(2)\\
		& \ref{item:nu_3=1_e1} & $[2,2,3,k,(2)_{4}]$ & $+\ [3,(2)_{k-1}]$&& $k\in \{3,4\}$ & & 17(3) \\
		& \ref{item:nu_3=1_e2} & $[2,k,3,(2)_{5}]$ & $+\ [2,3,(2)_{k-2}]$ && $k\in \{3,4\}$ & & 13(2) \\
		& \ref{item:nu_3=1_s1} & $[2,k,(2)_{5}]$ & $+\ [2,3,(2)_{k-2}]$ && $k\geq 3$ & & 23[5.1(5)] \\
		& \ref{item:nu_3=1_s3} & $[2,2,2,k,2,2,2]$ & $+\ [3,(2)_{k-1}]$ && $k\geq 3$ & & 23[5.1(3)] \\
		& \ref{item:nu=3_C} & $[2,3,2,3,2,2,2]$ & $+\ [2,3,2,2]$ &&& & 17(3) \\
		& \ref{item:nu=3_fork_s1=1} & $\langle k,[(2)_{5}],[2],[2]\rangle$ & $+\ [2,3,(2)_{k-3},3]$ && $k\geq 3$ & & 23[5.1(5)] \\\cline{3-8}
		& \ref{item:both_B} &  $\langle 2;[2,2,3],[2],[2]\rangle$ & $+\ [3,3,2,2,2]$ &&& \ref{ex:w=3}\ref{item:ht=3_2A4}
		& 19 \\
		& \ref{item:ht=3_YG}, \ref{item:ht=3_XAA_T=[3]} & $\langle k;[2,2,2],[3],[2]\rangle$ & $+\ [2,3,(2)_{k-2},3,2,2]$ && $k\geq 2$ & & 21[5.1(3)] \\
		& \ref{item:ht=3_YDYD} & $\langle 2;[2,2],[3],[2]\rangle$ & $+\ [2,k,(2)_{k}]$ && $k\in \{4,5,6\}$ & & 15, 23[5.1(5)] \\
		& \ref{item:ht=3_XY_b=3_v}, \ref{item:ht=3_XY_b=3}& $\langle k;T,[2,2],[2]\rangle$ & $+\ [(2)_{4},3,(2)_{k-2}]*T^{*}$ && $k\geq 3$, $d(T)\leq 5$; or & & 23[5.1(5)] \\
		&&&&& $k=2$, $ T\neq[(2)_{l}]$,& & \\
		&&&&& $\ \ d(T)\in \{4,5\}$ & & \\
		& \ref{item:ht=3_XE} & $\langle 2;[3,2,2],[2],[2]\rangle$ & $+\ [2,3,2,2]$ &&& & 8 \\
		& \ref{item:ht=3_XA_a=3_c=3} & $\langle 3;[2,2,2,2],[2],[2]\rangle$ & $+\ [3,2,4,2,2]$ && & 
		  & - \\
		& \ref{item:ht=3_XA_c=2} & $\langle k;[2,2,2],[2],[2]\rangle$ & $+\ [3,(2)_{k-2},3,2,2]$ && $k\geq 3$ & & 23[5.1(3)] \\
		& \ref{item:ht=3_XAA_T=[2,2]} & $\langle k;[2,2,2],[2,2],[2]\rangle$ & $+\ [4,(2)_{k-2},3,2,2]$ && $k\geq 3$ & & 23[5.1(3)] \\
		& \ref{item:ht=3_XY_a=3} & $\langle k;[2,2],[2,2],[2]\rangle$ & $+\ [3,(2)_{k-1},3,2]$ && $k\geq 3$ & & 23[5.1(4)] \\
		& \ref{item:ht=3_XY_b=4_T-2} & $\langle 3;[2,2],[2],[2]\rangle$ & $+\ [3,4,(2)_{5}]$ && & & 16 \\
		& \ref{item:ht=3_chains} & $[(2)_{c-1},b,d,a,(2)_{b-1}]$ & $+\ [(2)_{a-1},c+1,(2)_{d}]$ && see Table \ref{table:abcd} & & many, see \sec\ref{sec:comparison} \\ \hline \hline
		$\neq 2$ & \ref{lem:w=2_cha_neq_2}\ref{item:ht=3_exc_type}  & $[2,3,(2)_{5}]$ &$+\ [3,2,2]$ &&&
		\ref{ex:w=2_cha_neq_2}\ref{item:ht=3_exception} & 5 \\ \cline{3-8}		
		& \ref{item:ht=3_b} & $\langle 2;[2,2],[3],[2]\rangle$  & $+\ [3,(2)_{4}]$ &&& \ref{ex:w=2_cha_neq_2}\ref{item:w=2_A1+A2+A5} & - \\ \cline{3-8}
		& \ref{item:ht=3_A22}  & $[2,3,(2)_{5}]$ & $+\ [2,4,2,2]$ &&& \ref{ex:w=2_cha_neq_2}\ref{item:w=2_2A1+2A3} & 9 \\
		& \ref{item:ht=3_A2B}  & $[2,3,3,2,2]$ & $+\ [2,3,2,2,2,2]$ && & & 13(4) \\
		& \ref{item:ht=3_A2}  & $[2,2,k,(2)_{k-1}]$ & $+\ [3,2,2,2,2]$ && $k\in \{3,4\}$ & & 13(4) \\
		& \ref{item:ht=3_C2} & $[2,2,3,(2)_{6}]$ & $+\ [3]$ &&&
		 & 13(3) \\
		& \ref{item:ht=3_A3E_l=3}  & $[(2)_{k-2},3,2,2]$ & $+\ [2,k,2,2,2]$ & $+\ [2]$ & $k\geq 3$ & & 23[5.1(5)] \\
		& \ref{item:ht=3_A3E_l=4}  & $[(2)_{k-2},4,2,2,2]$ & $+\ [2,k,2,2,2]$ & $+\ [2]$ & $k\in \{3,4\}$ & & 10, 11 \\
		& \ref{item:ht=3_A3E_l=5}  & $[2,5,2,2,2,2]$ & $+\ [2,3,2,2,2]$ & $+\ [2]$ && & 10 \\
		& \ref{item:ht=3_BF}  & $[2,3,3,2,2,2]$ & $+\ [2,3,2,2]$ & $+\ [2]$ && & 13(9) \\
		& \ref{item:ht=3_BD}  & $[2,2,k,(2)_{k}]$ & $+\ [3,2,2]$ & $+\ [2]$ & $k\in \{3,4\}$ & & 13(9) \\
		& \ref{item:ht=3_C1}  & $[2,2,2,3,(2)_{4}]$ & $+\ [3]$ & $+\ [2]$ && & 13(10) \\
		& \ref{item:ht=3_C3}  & $[2,2,2,k,2,2,2]$ & $+\ [3,(2)_{k-2}]$ & $+\ [2]$ & $k\geq 3$ & & 23[5.1(5)] \\
		& \ref{item:ht=3_A0}  & $\langle 2;[(2)_{k-2}],[3],[2]\rangle$ & $+\ [2,k,2,2,2]$ & $+\ [2]$ & $k\in \{3,4\}$ & & 7, 12 \\
		& \ref{item:ht=3_A3F}  & $\langle 2;[2,2,2],[3],[2]\rangle$ & $+\ [2,4,2,2]$ & $+\ [2]$ && & 23[5.1(5)] \\ 
		& \ref{item:ht=3_A3EF}  & $\langle k;T,[2,2,2],[2]\rangle $ & $+\ [2,2,3,(2)_{k-2}]*T^{*}$ & $+\ [2]$ & $k\geq 3$ $d(T)\leq 3$ & & 23[5.1(5)] \\
		\hline \hline
		$\neq 2,3$ & \ref{lem:w=1_cha-neq-3} & $[2,3,2,2,2]$ & $+\ [3,2,2,2]$ & $+\ [2]$ && \ref{ex:w=1}\ref{item:w=1_cha-neq-3} & 1 \\
	\end{tabular}

	\caption{Del Pezzo surfaces of rank one and height $3$, not admitting descendants with elliptic boundary, occurring in $\cha\kk\neq 2,3$. Type \eqref{eq:25} is realized by two surfaces, see  Example \ref{ex:ht=3_pair}, all remaining types are realized by exactly one, up to an isomorphism.}
	\label{table:ht=3_char=0}
\end{table}
\clearpage

\begin{table}
	\begin{tabular}{r|lllllll|l|c}
		& \multicolumn{8}{c|}{Singularities \qquad($\cha\kk=3$)} & swaps to \\ \hline\hline
		\ref{lem:w=1_cha=3}\ref{item:w=1_0_id} & 
		$[2,2]$ & $+\ [2,2]$ &$+\ [2,2]$ & $+\ [3]$ & $+\ [3]$ & $+\ [3]$ & $+\ [3]$ && \ref{ex:w=1}\ref{item:w=1_cha=3_not-GK} \\
		\ref{item:w=1_0} & $[3,k,2,2]$ & $+\ [4,(2)_{k-2}]$ & $+\ [2,2]$ &$+\ [2,2]$ & $+\ [3]$ & $+\ [3]$ && $k\geq 2$ & \\
		\ref{item:w=1_0B_k=2} & $\langle 2;[2,2],[3],[2]\rangle$ & $+\ [2,2]$ & $+\ [2,2]$ &$+\ [5]$ & $+\ [3]$ & $+\ [3]$ &&& \\	
		\ref{item:w=1_0B} & $\langle k;[2,2],[3],[2]\rangle$ & $+\ [4,(2)_{k-3},3]$ & $+\ [2,2]$ &$+\ [2,2]$ & $+\ [3]$ & $+\ [3]$ && $k\geq 3$ & \\	
		\ref{item:w=1_0Y} & $[3,2,2,2]$ & $+\ [3,2,2,2]$ & $+\ [2,2]$ &$+\ [5]$ & $+\ [3]$ &&&& \\
		\ref{item:w=1_0Z} & $[4,2,2,2]$ & $+\ [3,2,2,2]$ & $+\ [2,2]$ &$+\ [4]$ & $+\ [3]$ &&&& \\		
		\ref{item:w=1_0X} & $[3,2,2,2]$ & $+\ [4,2,3]$ & $+\ [3,2]$ &$+\ [2,2]$ & $+\ [3]$  &&&& \\	
		\ref{item:w=1_0A} & $[2,3,(2)_{k-2}]$ & $+\ [3,k,3]$ &  $+\ [2,2]$ &$+\ [2,2]$ & $+\ [3]$ & $+\ [3]$ && $k\geq 2$ & \\
		\ref{item:w=1_0AA_k=2} & $\langle 2;[3],[3],[2]\rangle$ & $+\ [4,2]$ & $+\ [2,2]$ &$+\ [2,2]$ & $+\ [3]$ & $+\ [3]$  &&& \\
		\ref{item:w=1_0AA} & $\langle k;[3],[3],[2]\rangle$ & $+\ [3,(2)_{k-3},3,2]$ & $+\ [2,2]$ &$+\ [2,2]$ & $+\ [3]$ & $+\ [3]$ && $k\geq 3$  & \\
		\cline{2-10}					
		\ref{item:w=1_1} & $[3,k,2,2]$ & $+\ [2,3,(2)_{k-2}]$ & $+\ [2,2]$ &$+\ [3]$ & $+\ [2]$ &&& $k\geq 2$ &
		\ref{ex:w=1}\ref{item:w=1_cha=3_GK}	\\
		\ref{item:w=1_1B_k=2} & $\langle 2;[2,2],[3],[2]\rangle$  & $+\ [4,2]$ & $+\ [2,2]$ &$+\ [3]$ & $+\ [2]$ &&&& \\
		\ref{item:w=1_1B} & $\langle k;[2,2],[3],[2]\rangle$ & $+\ [3,(2)_{k-3},3,2]$ & $+\ [2,2]$ &$+\ [3]$ & $+\ [2]$ &&& $k\geq 3$ & \\
		\ref{item:w=1_1Y} & $[3,2,2,2]$ & $+\ [3,2,2,2]$ & $+\ [4,2]$ &$+\ [2]$ &&&&&\\
		\ref{item:w=1_1Z} & $[2,3,2,2,2]$ & $+\ [3,2,2,2]$ & $+\ [4]$ &$+\ [2]$	&&&&&\\	
		\ref{item:w=1_1X} & $[3,2,3,2]$ & $+\ [3,2,2,2]$ & $+\ [3,2]$ &$+\ [2]$ &&&&&\\
		\ref{item:w=1_1C} & $[2,2,k,2,2]$ & $+\ [4,(2)_{k-2}]$ & $+\ [2,2]$ &$+\ [3]$ & $+\ [2]$ &&& $k\geq 3$ &\\
		\ref{item:w=1_1CB} & $\langle k;[2,2],[2,2],[2]\rangle$ & $+\ [4,(2)_{k-3},3]$ & $+\ [2,2]$ &$+\ [3]$ & $+\ [2]$ &&& $k\geq 3$ &
	\end{tabular}
	\caption{Del Pezzo surfaces of rank one and height $3$, not admitting descendants with elliptic boundary, occurring only in $\cha\kk=3$. Each type is realized by a unique surface.}
	\label{table:ht=3_char=3}
\end{table}

\begin{table}
	\begin{tabular}{r|lllllll|l|c|c}
		& \multicolumn{8}{c|}{Singularities \qquad($\cha\kk=2$)} & swaps to & $G$ \\ \hline\hline
		\ref{lem:w=2_cha=2}\ref{item:ht=3_nu=4_id} & $\langle   k  ;[2] ,[2] ,[2] \rangle$ & $+\ [3,(2)_{k-1}]$ & $+\ [2] $ & $+\ [2]$ & $+\ [2]$ &&& $k\geq 3$ &\ref{ex:w=2_cha=2}\ref{item:swap-to-nu=4_small} & $S_3$ 
		\\ \ref{item:ht=3_nu=4_id_C} & $\langle 2  ;[3] ,[2] ,[2] \rangle$ & $+\ [2,3,2,2]$ & $+\ [2] $ & $+\ [2]$ & &&&& & $\Z/2$  
		\\ \cline{2-11} \ref{item:ht=3_nu=4_id_X} & 
		$\langle  k  ;[2] ,[2] ,[2] \rangle$ & $+\ [3,(2)_{k-2}] $ & $+\ [4]$ & $+\ [2]$ & $+\ [2]$ & $+\ [2] $
		& $+\ [2]$ & $k\geq 3$ & \ref{ex:w=2_cha=2}\ref{item:swap-to-nu=4_large} & $S_3$
		\\ \ref{item:ht=3_nu=4_V} & $\langle 3  ;[2] ,[2] ,[2] \rangle$ & $+\ [2, 3  ,2 ,2]$ & $+\ [5]$ & $+\ [2]$ & $+\ [2] $ & $+\ [2]$ &&& & 
		\\ \ref{item:ht=3_nu=4_U} & $\langle 3  ;[2] ,[2] ,[2] \rangle$ & $+\ [4,2,2]$ & $+\ [4,2]$ & $+\ [2]$ & $+\ [2] $
		& $+\ [2] $ &&& &
		 \\ 	\cline{11-11}  \ref{item:ht=3_nu=4_id_XC} & $\langle 2  ;[3] ,[2] ,[2] \rangle$ & $+\ [4,2,2]$ & $+\ [3]$ & $+\ [2]$ & $+\ [2]$ & $+\ [2]$ &&& & $\Z/2$
		\\ \ref{item:ht=3_nu=4_XC} & $\langle 2  ;[3] ,[2] ,[2] \rangle$ & $+\ [5,2 ,2]$ & $+\ [3,2,2]$ & $+\ [2]$ & $+\ [2]$ & $+\ [2]$ &&& & 
		\\ \cline{2-11}  \ref{item:ht=3_nu_3_off} &
		$\langle k ;[2,2,2],[3],[2] \rangle$ & $+\ [3  ,2 ,2]$ & $+\ [(2)_{k-2}] $ & $+\ [2] $ &&&& $k\geq 2$ & \ref{ex:w=2_cha=2}\ref{item:swap-to-nu=3} & $\{\id\}$
		\\ \ref{item:ht=3_nu_3_V_fork_off} &
		$\langle k ;[2,3],[3],[2]\rangle$ & $+\ [2 ,3  ,2,2 ,2]$ & $+\ [(2)_{k-2}]  $ & $+\ [2]$ &&&& $k\geq 2$ & &
	\end{tabular}
	\caption{Log terminal del Pezzo surfaces of rank one, height $3$, not admitting descendants with elliptic boundary, occurring only in $\cha\kk=2$. Part 1: types with moduli dimension $1$ (
	represented by a family over $\Astst$, with symmetry group $G$ as above).}
	\label{table:ht=3_char=2_moduli}
\end{table}

	\begin{longtable}{r|llll|l}\phantomsection\label{table:ht=3_char=2}
		& \multicolumn{4}{c}{Singularities \qquad($\cha\kk=2$)} & \\ \hline\hline		
		\endhead
		\caption{Log terminal del Pezzo surfaces of rank one, height $3$, not admitting descendants with elliptic boundary, occurring only in $\cha\kk=2$. Part 2: types realized by a unique surface. They all swap  vertically to the surface from Example \ref{ex:w=2_cha=2}\ref{item:swap-to-nu=3}.\hfill (\emph{continues on the next pages})}
		\endfirstfoot
		\caption{(\emph{continued}).}
		\endfoot
		\ref{lem:w=2_cha=2}\ref{item:ht=3_nu=3_XY'E} & $[(2)_{k-2},3 ,3  ,2,2 ,2]$ & $+\ [2,2 , 3 ,k ,2]$ & $+\ [4,2]$ && $k\in \{2,3\}$
		\\ \ref{item:ht=3_nu=3_XY'U_eps=0} &
		$[2,k , 3 ,2 ,2]$ & $+\ [3 ,2  ,2,2 ,2]$ & $+\ [4  ,(2)_{k-2}]$ && $7\geq k \geq 2$ 
		\\ \ref{item:ht=3_nu=3_XY'U_eps=1} &
		$[3  ,k , 3 ,2 ,2]$ & $+\ [3 ,2  ,2,2 ,2]$ & $+\ [3,(2)_{k-2}] $ && $5\geq k \geq 2$ 
		\\ \ref{item:ht=3_nu=3_XY'BE} &
		$[3 ,3  ,2,2 ,2]$ & $+\ [2,3,2,2]$ & $+\ [4,2,2]$ & &
		\\ \ref{item:ht=3_nu=3_XY'BB} &
		$[3 ,2  ,2,3 ,2]$ & $+\ [2 ,4 ,2 ,2]$ & $+\ [3  ,2 ,2]$ & &
		\\ \ref{item:ht=3_nu=3_XY'BD} &	
		$[(2)_{k-2},3,(2)_{4}]$ & $+\ [4,k ,2]$ & $+\ [3  ,2 ,2]$ && $k\in \{3,4,5\}$ 
		\\ \ref{item:ht=3_nu=3_XY'B_T=0} &
		$[3 ,2  ,2,2 ,2]$ & $+\ [2,2,4,(2)_{k-2}]$ & $+\ [3  ,k ,2]$ && $9\geq k\geq 2$ 
		\\ \ref{item:ht=3_nu=3_ZY'A} &
		$[2,2 , 3,(2)_{6}]$ & $+\ [4]  $ & $+\ [3] $ && 
		\\ \ref{item:ht=3_nu=3_ZY'B} &
		$[4,k,(2)_{5}]$ & $+\ [3  ,2 ,2]$ & $+\ [(2)_{k-2},3] $ && $k\in \{2,3\}$
		\\ \ref{item:ht=3_nu=3_ZY'BD} &
		$[2 ,4,(2)_{6}]$ & $+\ [3  ,3 ,2]$ & $+\ [3] $ &&
		\\ \ref{item:ht=3_nu=3_YYB} &
		$[2,k ,(2)_{5}]$ & $+\ [5,(2)_{k-2}] $ & $+\ [3  ,2 ,2]$ && $k\in \{4,5\}$ 
		\\ \ref{item:ht=3_nu=3_YA} &
		$[2,3 ,(2)_{5}]$ & $+\ [2,k ,4 ,2] $ & $+\ [4  ,(2)_{k-2}] $  && $k\in \{2,3\}$ 
		\\ \ref{item:ht=3_nu=3_YB_T=0} &
		$[2,3 ,(2)_{5}]$ & $+\ [2 ,5 ,(2)_{k-2}] $ & $+\ [3  ,k ,2] $ && $k\in \{2,3,4\}$ 
		\\ \ref{item:ht=3_nu=3_UA} &
		$[2,2 ,2,3  ,2,2 ,2]$ & $+\ [4,2,2]$ & $+\ [4,2]$ &&
		\\ \ref{item:ht=3_nu=3_UB} &
		$[2,2 ,2,3  ,2,2 ,2]$ & $+\ [2  ,3,2 ,2]$ & $+\ [5]$ &&
		\\ \ref{item:ht=3_nu=3_XX'} & $[2,2,3,(2)_{k-2}]$ & $+\ [2,2 , 3 ,2 ,2]$ & $+\ [3 ,  k  ,3 ]$ &&  $k\in \{2,3\}$
		\\[.5em] 
		 \ref{item:ht=3_nu=3_XY'UV} &
		$\langle 2;[2,2 ,3 ],[2],[2]\rangle$ &	$+\ [3 ,2  ,2,2 ,2]$ & $+\ [5]$ & &
		\\ \ref{item:ht=3_nu=3_XY'BA} &	
		$\langle 2;[3 ,2  ,2],[2],[2]\rangle$ & $+\ [5,2 ,2]$ & $+\ [3  ,2 ,2]$ && 
		\\ \ref{item:ht=3_nu=3_XY'BC} &
		$\langle 2;[4],[2] ,[2]\rangle$ & $+\ [4 ,2  ,2,2 ,2]$ & $+\ [3  ,2 ,2]$	 &&
		\\ \ref{item:miss} &
		$\langle 2;[3],[2],[2]\rangle$ & $+\ [2,3,2,2,2,2]$ & $+\ [5,3,2]$ && 		
		\\ \ref{item:ht=3_nu=3_XY'B_k=2} &
		$\langle 2;[(2)_{k-4}],[3]  ,[2]\rangle$ & $+\ [3 ,2  ,2,2 ,2]$ & $+\ [k,2 ,2]$ && $k\in \{5,6,7\}$ 
		\\ \ref{item:ht=3_nu=3_XY'B_k=2_T=[3]} &
		$\langle 2;[3],[3]  ,[2] \rangle$ & $+\ [3 ,2  ,2,2 ,2]$ & $+\ [2 ,5 ,2 ,2]$ & &
		\\ \ref{item:ht=3_nu=3_XY'B_k=3} &
		$\langle 3;[(2)_{k-2}],[3]  ,[2]\rangle$ & $+\ [3 ,2  ,2,2 ,2]$ & $+\ [k ,4 ,2 ,2]$ && $k\in \{3,4\}$ 
		\\ \ref{item:ht=3_nu=3_XY'B_k=4} &
		$\langle 4;[3],[2]  ,[2] \rangle$ & $+\ [3 ,2  ,2,2 ,2]$ & $+\ [3 ,2,4 ,2 ,2]$ &&
		\\ \ref{item:ht=3_nu=3_ZY'BE} &
		$\langle 2;[3],[2] ,[2]  \rangle$ & $+\ [5,(2)_{6}]$ & $+\ [3] $ &&		
		\\ \ref{item:ht=3_nu=3_YB_T-neq-0} &
		$\langle 2;[(2)_{k-5}],[3]  ,[2]\rangle$ & $+\ [2,3 ,(2)_{5}]$ & $+\ [2 ,k] $ &&  $k\in \{6,7\}$ 
		\\[.5em]
	    \ref{item:ht=3_nu_3_VZ_chain_k=3} &
		$[2 ,3  ,2,2 ,2]$ & $+\ [3,k,3,2]$ & $+\ [3,(2)_{k-2}] $ & $+\ [2] $ &  $k\geq 2$ 
		\\ \ref{item:ht=3_nu_3_VZ_chain_k=4} &
		$[2 ,4  ,2,2 ,2]$ & $+\ [3,k,3,2,2]$ & $+\ [3,(2)_{k-2}] $ & $+\ [2] $ & $5\geq k \geq 2$ 
		\\ \ref{item:ht=3_nu_3_VZ_chain_m=2} &
		$[2 ,  k  ,2,2 ,2]$ & $+\ [3,2,3,(2)_{k-2}]$ & $+\ [3] $ & $+\ [2] $ & $k\in \{5,6,7\}$ 
		\\  \ref{item:ht=3_nu_3_YV_chain_l=2,k=3} &
		$[2 ,3  ,2,2 ,2]$ & $+\ [2,k,3,2]$ & $+\ [4,(2)_{k-2}]$ & $+\ [2] $ & $k\geq 2$
		\\ \ref{item:ht=3_nu_3_YV_chain_l=2,k=4} &
		$[2 ,4  ,2,2 ,2]$ & $+\ [2,k,3,2  ,2]$ & $+\ [4,(2)_{k-2}]$ & $+\ [2] $ & $7\geq k \geq 2$ 
		\\ \ref{item:ht=3_nu_3_YV_chain_l=2,k>4} &
		$[2 ,  k  ,2,2 ,2]$ & $+\ [2,2,3,(2)_{k-2}]$ & $+\ [4]$ & $+\ [2] $ & $k\in \{5,6\}$ 		
		\\ \ref{item:ht=3_nu_3_YV_chain_k=2,l=3} &
		$[2,3,2,2,2]$ & $+\ [2,4,(2)_{k-2}]$ &  $+\ [3,k,2]$ & $+\ [2]$ & $k\geq 2$  
		\\ \ref{item:ht=3_nu_3_YV_chain_l=4} & 
		$[2,4,2,2,2]$ & $+\ [2,2,4,(2)_{k-2}]$ & $+\ [3,k,2]$ & $+\ [2]$ & $9\geq k\geq 2$	
		\\ \ref{item:ht=3_nu_3_YV_chain_k=2,l>4,m=2} &
		$[2,k,2,2,2]$ & $+\ [4,(2)_{k-2}]$ & $+\ [3,2,2]$ & $+\ [2]$ &  $k\geq 5$  
		\\ \ref{item:ht=3_nu_3_YV_chain_k=2,l>4,m=3} &
		$[2,k,2,2,2]$ & $+\ [2,4,(2)_{k-2}]$ & $+\ [3,3,2]$ & $+\ [2]$ & $10\geq k\geq 5$  
		\\ \ref{item:ht=3_nu_3_YV_chain_k=2,l>4,m=4} &
		$[2,5,2,2,2]$ & $+\ [2,2,4,2,2,2]$ & $+\ [3,4,2]$ & $+\ [2]$ &
		\\ \ref{item:ht=3_nu_3_YV_chain_k=3,l>2,m=2} & 
		$[2,3,2,k,2]$ & $+\ [2,3,2,2]$ & $+\ [4,(2)_{k-2}]$ &$+\ [2]$ & $7\geq k\geq 3$  
		\\ \ref{item:ht=3_nu_3_YV_chain_k>3,l=3,m=2} & 
		$[2,k,2,3,2]$ & $+\ [(2)_{k-2},3,2,2]$ & $+\ [4,2]$ & $+\ [2]$ & $k\in \{4,5\}$ 
		\\ \ref{item:ht=3_nu_3_WZ_l=2} &
		$[3,k,(2)_{5}]$ & $+\ [3,(2)_{k-2}] $ & $+\ [3] $ & $+\ [2] $ & $k\geq 2$  
		\\ \ref{item:ht=3_nu_3_WZ_k=2} &
		$[3 ,2  ,2,k ,2 ,2,2]$ & $+\ [3,(2)_{k-2}] $ & $+\ [3] $ & $+\ [2] $ & $k\in \{3,4\}$ 
		\\ \ref{item:ht=3_nu_3_WX_l=2} &
		$[2 ,2  ,2,k ,2 ,2 ,2]$ & $+\ [3,(2)_{k-2}]$ & $+\ [4] $ & $+\ [2] $ & $k\geq 3$ 
		\\ \ref{item:ht=3_nu_3_WX_k=2} &
		$[2 ,  k, (2)_{5}]$ & $+\ [4 ,(2)_{k-2}] $ & $+\ [3]$ & $+\ [2] $ & $k\geq 3$ 
		\\ \ref{item:ht=3_nu_3_WX_k,l=3} &
		$[2 ,3  ,2,3 ,2 ,2 ,2]$ & $+\ [4  ,2 ]$ & $+\ [3 ,2]$ & $+\ [2] $ &		
		\\ \ref{item:ht=3_nu_3_WY_k=2} &
		$[3,k,2,2,2]$ & $+\ [3  ,2 ,2]$ & $+\ [3,(2)_{k-2}] $ & $+\ [2] $ & $k\geq 2$ 
		\\ \ref{item:ht=3_nu_3_WY_k=3} &
		$[3,k,2,3,2]$ & $+\ [2  ,3,2 ,2]$ & $+\ [3,(2)_{k-2}] $ & $+\ [2] $ & $5\geq k\geq 2$ 
		\\ \ref{item:ht=3_nu_3_WY_k>3} &
		$[2,k,2,l,3]$ & $+\ [(2)_{k-2} ,3,2 ,2]$ & $+\ [(2)_{l-2},3] $ & $+\ [2] $ & $6\geq k\geq 4$, $l\in \{2,3\}$ 	
		\\ \ref{item:qL3_A0_chain} &  
		$[3,k,2]$ & $+\ [3,2,2]$ & $+\ [(2)_{k-2},3,2,2]$ & $+\ [2]$ & $k\geq 2$  
		\\ \ref{item:qL3_k1} &
		$[3,k,2]$ & $+\ [2,3,2,2]$ & $+\ [(2)_{k-2},3,3,2]$ & $+\ [2]$ & $9\geq k\geq 3$  
		\\ \ref{item:qL3_k2} &
		$[3,3,2]$ & $+\ [(2)_{k-2},3,2,2]$ & $+\ [2,3,k,2]$ & $+\ [2]$ & $9\geq k\geq 3$ 
		\\ \ref{item:qL3_4} &
		$[3,4,2]$ & $+\ [2,2,3,2,2]$ & $+\ [2,2,3,4,2]$ & $+\ [2]$ &
		\\[.5em] 
		 \ref{item:ht=3_nu_3_VZ_fork} &
		$\langle k;[2,3],[3],[2]\rangle $ & $+\ [2 ,3  ,2,2 ,2]$ & $+\ [3,(2)_{k-3},3] $ & $+\ [2] $ &
		\\ \ref{item:ht=3_nu_3_YV_fork_l=2,m=2} &
		$\langle 2;[2,2,3],[2],[2]\rangle$ &	$+\ [2,4,2,2 ,2]$ &  $+\ [5]$ & $+\ [2] $ &
		\\ \ref{item:ht=3_nu_3_YV_fork_l=2,m>2} &
		$\langle k;T,[2,3],[2]\rangle$ & $+\ [2 ,3  ,2,2 ,2]$ & $+\ [4,(2)_{k-2}]*T^{*}$ &  $+\ [2] $ & $k\geq 2$, $d(T)\leq 3$ 
		\\ \ref{item:ht=3_nu_3_YV_fork_l=3} &
		$\langle k;T,[3],[2]\rangle$ &	$+\ [2,3,2 ,2  ,2]$ & $+\ [2,4,(2)_{k-2}]*T^{*}$ &  $+\ [2] $ & $k\geq 2$, $d(T)\leq 5$
		\\ \ref{item:ht=3_nu_3_YV_fork_T=[2]_m=2} &
		$\langle
		2;[3],[2] ,[2]\rangle$ &	$+\ [2 ,k  ,2,2 ,2]$ &  $+\ [5,(2)_{k-2}] $ & $+\ [2] $ & $k\geq 4$ 
		\\ \ref{item:ht=3_nu_3_YV_fork_T=[2]_m>2} &
		$\langle
		k;[3],[2] ,[2]\rangle$ & 	$+\ [2 ,4  ,2,2 ,2]$ & $+\ [3,(2)_{k-3},4,2,2] $ &  $+\ [2] $ & $k\in \{3,4\}$ 
		\\ \ref{item:ht=3_nu_3_YV_fork_T=[3]} &
		$\langle 2;[3],[3] ,[2]\rangle$ & 	$+\ [2,4,2 ,2  ,2]$ & $+\ [2 ,5 ,2,2] $ & $+\ [2] $ &
		\\ \ref{item:ht=3_nu_3_YV_fork_T=[2,2]_m=3} &
		$\langle
		3;[2,2],[3] ,[2]\rangle$ &	$+\ [2 ,4  ,2,2 ,2]$ &  $+\ [4 ,4 ,2,2] $ & $+\ [2] $ &
		\\ \ref{item:ht=3_nu_3_YV_fork_T=[2,2]_m=2} &
		$\langle
		2;[2,2],[3] ,[2]\rangle$ &	$+\ [2 ,k  ,2,2 ,2]$ &  $+\ [6,(2)_{k-2}] $ & $+\ [2] $ & $7\geq k \geq 4$ 
		\\ \ref{item:ht=3_nu_3_YV_fork_T=[2,2,2]} &
		$\langle
		2;[2,2,2],[3] ,[2]\rangle$ & 	$+\ [2,4,2 ,2  ,2]$ & $+\ [7,2,2] $ & $+\ [2] $ &
		\\ \ref{item:ht=3_nu_3_ZV_U-3} &
		$\langle 2;[2,2,2] ,[3]  ,[2]\rangle$ & $+\ 
		[2  ,4,2 ,3]$ & $+\ [3] $ & $+\ [2] $ &
		\\ \ref{item:ht=3_nu_3_ZV_U-2} &
		$\langle 3;[2,2,2] ,[2]  ,[2]\rangle$ & $+\ 
		[3  ,3,2 ,3]$ & $+\ [3]$ & $ +\ [2] $ &
		\\ \ref{item:ht=3_nu_3_YV_U-2_k=2} &
		$\langle 2;[2,3,2] ,[2],[2]\rangle$ & $+\ 
		[4  ,2 ,2]$ & $+\ 
		[4,2 ]$ & $+\ [2] $ &
		\\ \ref{item:ht=3_nu_3_YV_U-2_k>2} &
		$\langle   k;[2,2,2] ,[2]  ,[2]\rangle$ & $+\ 
		[3,(2)_{k-2},3,2]$ & $+\ 
		[4]$ & $+\ [2] $ & $k\geq 3$ 
		\\ \ref{item:ht=3_nu_3_YV_U-2_m=3} &
		$\langle 3;[2,2,2] ,[2]  ,[2]\rangle$ & $+\ 
		[3  ,3,3 ,2]$ & $+\ 
		[4,2]$ & $+\ [2] $ &
		\\ \ref{item:ht=3_nu_3_YV_U-3_k>2} &
		$\langle   k;[2,2,2] ,[3]  ,[2]\rangle$ & $+\ 
		[2  ,3,(2)_{k-3},3,2 ,2]$ & $+\ 
		[4]$ & $+\ [2] $ & $k\geq 3$ 
		\\ \ref{item:ht=3_nu_3_YV_U-3_k=2} &
		$\langle 2;[2,2,2] ,[3]  ,[2]\rangle$ & $+\ 
		[2  ,4,k ,2]$ & $+\ 
		[4,(2)_{k-2}]$ & $+\ [2] $ & $k\in \{2,3\}$ 
		\\ \ref{item:ht=3_nu_3_YV_U'-2_l=3} &
		$\langle 2;[2,3,2],[2] ,[2]\rangle$ & $+\ 
		[2  ,3,2 ,2]$ & $+\ 
		[5]$ & $+\ [2] $ &
		\\ \ref{item:ht=3_nu_3_YV_U'-2_k=4} &
		$\langle 4;[2,2,2],[2] ,[2]\rangle$ & $+\ 
		[3  ,3 ,2]$ & $+\ 
		[2 ,4 ,2,3 ]$ & $+\ [2] $ &
		\\ \ref{item:ht=3_nu_3_YV_U'-2_k=3} &
		$\langle 3;[2,2,2],[2] ,[2]\rangle$ & $+\ 
		[3  ,k ,2]$ & $+\ 
		[(2)_{k-2},4 ,3 ]$ & $+\ [2] $ & $k\in \{3,4\}$ 
		\\ \ref{item:ht=3_nu_3_YV_U'-2_k>2_m=2} &
		$\langle k;[2,2,2],[2] ,[2]\rangle$ & $+\ 
		[3  ,2 ,2]$ & $+\ 
		[4,(2)_{k-3},3 ]$ & $+\ [2] $ & $k\geq 3$ 
		\\ \ref{item:ht=3_nu_3_YV_U'-3_chain_k>2} &
		$\langle k;[2,2,2],[3] ,[2]\rangle $ & $+\ 
		[4,3,(2)_{k-3},3,2 ]$& $+\ 
		[3  ,2 ,2]$  & $+\ [2] $ & $k\geq 3$ 
		\\ \ref{item:ht=3_nu_3_YV_U'-3_chain_k=2} &
		$\langle 2;[2,2,2],[3],[2]\rangle $ & $+\ 
		[2,4,4,(2)_{k-2}]$& $+\ 
		[3  ,k ,2]$  & $+\ [2] $ & $k\in \{2,3,4\}$ 
		\\ \ref{item:ht=3_nu_3_WXD} &
		$\langle   k;[(2)_{5}]  ,[2] ,[2]\rangle$ & $+\ [4 ,(2)_{k-3},3]  $ & $+\ [3]$ & $+\ [2]$ & $k\geq 3$
		\\ \ref{item:ht=3_nu_3_WYV} &
		$\langle k;[2,2,2] ,[3],[2]\rangle$ & $+\ [3  ,2 ,2]$ & $+\ [3,(2)_{k-3},3]$ & $+\ [2] $ & $k\geq 3$ 
		\\ \ref{item:ht=3_nu_3_WYV_k=2} &
		$\langle 2;[2,2,2] ,[3],[2]\rangle$ & $+\ [3  ,2 ,2]$ & $+\ [4]$ & $+\ [2] $ &  
		\\ \ref{item:ht=3_nu_3_WYC} &
$\langle 2;[3,2,2]  ,[2] ,[2]\rangle$ & $+\ [4  ,2 ,2]$ & $+\ [3] $ & $+\ [2] $ &
		\\ \ref{item:ht=3_nu_3_WYT} &
		$\langle k;T,[3],[2]\rangle $ & $+\ [(2)_{4},3 ,(2)_{k-2}]*T^{*} $ & $+\ [3] $ & $+\ [2] $ &$k\geq 2$, $d(T)\leq 5$ 
		\\ \ref{item:ht=3_nu_3_WYU} &
		$\langle 2;[3],[2] ,[2] \rangle $ & $+\ [4,k,2 ,2  ,2]$ & $+\ [3,(2)_{k-2}] $ & $+\ [2] $ & $k\geq 3$ 
		\\ \ref{item:ht=3_nu_3_WYUU} &
		$\langle 2;[2,2],[3],[2]\rangle $ & $+\ [5,k,2 ,2  ,2]$ & $+\ [3,(2)_{k-2}] $ & $+\ [2] $ & $k\in \{3,4\}$ 
		\\ \ref{item:qL3_A0_fork} &
		$\langle k;T,[3],[2]\rangle$  & $+\ [2,2,3,(2)_{k-2}]*T^{*}$ & $+\ [3,2,2]$ & $+\ [2]$ & $k\geq 2$,  $d(T)\leq 5$ 
		\\ \ref{item:qL3_rA0_T=[3]} &
		$\langle {2};[3],[3],[2]\rangle$ & $+\ [2,{3},2,2]$ & $+\ [2,4,3,2]$ & $+\ [2]$ &		 
		\\ \ref{item:qL3_rA0} & $\langle {2},[2],[3],[2]\rangle$ & $+\ [(2)_{k-2},{3},2,2]$ & $+\ [4,k,2]$ & $+\ [2]$ & $k\geq 3$  
		\\ \ref{item:qL3_rA0_l=3} & 
		$\langle {k};[2],[3],[2]\rangle$ & $+\ [2,{3},2,2]$ & $+\ [3,(2)_{k-3},3,2]$ & $+\ [2]$ & $k\in \{3,4\}$ 
		\\ \ref{item:qL3_rA0_s=2} & 
		$\langle {2};[2,2],[3],[2]\rangle$ & $+\ [(2)_{k-2},{3},2,2]$ & $+\ [5,k,2]$ & $+\ [2]$ &  $6\geq k\geq 3$ 
		\\ \ref{item:qL3_rA0_s=2_l=3} &
		$\langle {3};[2,2],[3],[2]\rangle$ & $+\ [2,{3},2,2]$ & $+\ [4,2,3,2]$ & $+\ [2]$ &
		\\ \ref{item:qL3_rA0_s=3_l=3} &
		$\langle {2};[2,2,2],[3],[2]\rangle$ & $+\ [2,{3},2,2]$ & $+\ [6,3,2]$ & $+\ [2]$ &
		\\ \ref{item:qL3_rA1} &
		$\langle 2;[(2)_{k-2},3],[2],[2]\rangle$ & $+\ [3,{k},2]$ & $+\ [{4},2,2]$ & $+\ [2]$ & $k\in \{3,4\}$  
		\\ \ref{item:qL3_rA1_k=3} &
		$\langle 3;[2,3],[2],[2]\rangle$ & $+\ [3,{3},2]$ & $+\ [3,{3},2,2]$ & $+\ [2]$ &
		\\[.5em] 
		\ref{item:ht=3_nu_3_YV_U'-2_T=[2]} &
		$\langle k;[2,2,2],[2] ,[2]\rangle$ & $+\ 
		\langle 2;[3],[2]  ,[2] \rangle$ & $+\ 
		[5,(2)_{k-3},3 ]$ & $+\ [2] $ & $k\geq 3$	
		\\ \ref{item:ht=3_nu_3_YV_U'-2_T=[2,2]} &
		$\langle 3;[2,2,2],[2] ,[2]\rangle$ & $+\ 
		\langle 2;[(2)_{k-4}],[3]  ,[2]\rangle$ & $+\ 
		[k,3 ]$ & $+\ [2] $ & $k\in \{5,6\}$
		\\ \ref{item:ht=3_nu_3_YV_U'-3_fork_k>2} &
		$\langle k;[2,2,2],[3] ,[2]\rangle $ & $+\ 
		\langle 2;[3],[2]  ,[2] \rangle$ & $+\ 
		[5,3,(2)_{k-3},3,2 ]$ & $+\ [2] $ & $k\geq 3$ 
		\\ \ref{item:ht=3_nu_3_YV_U'-3_fork_k=2} &
		$\langle 2;[2,2,2],[3] ,[2]\rangle $ & $+\ 
		\langle 2;[(2)_{k-4}],[3], [2]\rangle$ & $+\ 
		[k,4,2 ]$ & $+\ [2] $ & $k\in \{5,6\}$ 
		\\ 	\ref{item:qL3_[4]} &
		$\langle 2;[3],[2],[2]\rangle$ & $+\ \langle k;[4],T,[2]\rangle$ & $+\ [2,2,3,(2)_{k-2}]*T^{*}$ & 
		$+\ [2]$ & $d(T)\leq 3$  
		\\ \ref{item:qL3_[5]} &
		$\langle 2;[3],[2,2],[2]\rangle$ & $+\ \langle 2;[5],[2]\dec{1},[2]\rangle$ & 
		$+\ [4,2,2]$ & $+\ [2]$ &
	\end{longtable}
\vspace{-1em}

\begin{table}[htbp]
	\begin{tabular}{r|llll|l}
		\cite[3.12]{PaPe_ht_2} & \multicolumn{5}{c}{Singularities ($\cha\kk=2$)} \\ \hline\hline
		(1) & $\langle k;T, [2],[3]\rangle$ & $+\ [2,2,3,(2)_{k-1}]*T^{*}$ & $+\ [3,2,2]$ &$+\ [2]$ & $d(T)=6$, $k\geq 2$  \\
		(2) & $\langle k;T, [2],[3]\rangle$ & $+\ [2,4,(2)_{k-1}]*T^{*}$ & $+\ [2,3,2,2,2]$ & $+\ [2]$ & $d(T)=6$, $k\geq 2$  \\
		(3) & $\langle k;T,[2,2,2],[2] \rangle$ & $+\ [4,(2)_{k-1}]*T^{*}$ & $+\ [3,2,2]$ & $+\ [2]$ & $d(T)=4$, $k\geq 2$ 
	\end{tabular}
	\caption{Log canonical, but non--log terminal del Pezzo surfaces of rank one and height $3$, not admitting descendants with elliptic boundary; necessarily $\cha\kk=2$. Each type is realized by a unique surface, which swaps vertically to the one from Example \ref{ex:w=2_cha=2}\ref{item:swap-to-nu=3}.}
	\label{table:ht=3_non-lt}
\end{table}
\end{small}
\vspace{-1em}

\bibliographystyle{amsalpha}
\bibliography{bibl}
\end{document}